\definecolor{darkblue}{rgb}{0,0,0.7}
\newtheorem{theorem}{Theorem}
\newtheorem{proposition}{Proposition}
\newtheorem{lemma}[proposition]{Lemma}
\newtheorem{corollary}[proposition]{Corollary}
\theoremstyle{remark}
\theoremstyle{definition}
\newtheorem{definition}[proposition]{Definition}
\newtheorem{remark}[proposition]{Remark}
\numberwithin{equation}{section}
\numberwithin{proposition}{section}
\newcommand{\Z}{\mathbb{Z}}
\newcommand{\N}{\mathbb{N}}
\newcommand{\R}{\mathbb{R}}
\newcommand{\E}{\mathbb{E}}
\renewcommand{\P}{\mathbb{P}}
\newcommand{\F}{\mathcal{F}}
\newcommand{\Zd}{\mathbb{Z}^d}
\newcommand{\Rd}{{\mathbb{R}^d}}
\newcommand{\Bd}{\mathcal{B}_d}
\newcommand{\Ed}{E_d}
\newcommand{\ep}{\varepsilon}
\renewcommand{\a}{\mathbf{a}}
\newcommand{\ahom}{{\overbracket[1pt][-1pt]{\a}}}  
\renewcommand{\subset}{\subseteq}
\newcommand{\cu}{{\scaleobj{1.2}{\square}}}
\renewcommand{\fint}{\strokedint}
\DeclareMathOperator{\dist}{dist}
\DeclareMathOperator{\var}{var}
\DeclareMathOperator{\diam}{diam}
\DeclareMathOperator{\sign}{sign}
\DeclareMathOperator{\supp}{supp}
\DeclareMathOperator{\size}{size}
\DeclareMathOperator{\intr}{int}
\DeclareMathOperator{\cl}{cl}
\DeclareMathOperator{\card}{Card}
\renewcommand{\bar}{\overline}
\renewcommand{\tilde}{\widetilde}
\newcommand{\indc}{\mathds{1}}
\newcommand{\C}{\mathscr{C}}
\newcommand{\Pa}{\mathcal{P}}
\newcommand{\Pas}{\mathcal{P}_*}
\newcommand{\Qa}{\mathcal{Q}}
\newcommand{\Ra}{\mathcal{R}}
\newcommand{\G}{\mathcal{G}}
\renewcommand{\S}{\mathcal{S}}
\newcommand{\M}{\mathcal{M}}
\renewcommand{\O}{\mathcal{O}}
\newcommand{\A}{\mathcal{A}}
\newcommand{\zbar}{\overline{z}}
\newcommand{\pc}{\mathfrak{p}_{\mathfrak{c}}}
\newcommand{\p}{\mathfrak{p}}
\newcommand{\e}{\mathfrak{e}}
\newcommand{\aconn}{\leftrightarrow_\a}
\newcommand{\T}{\mathcal{T}}
\newcommand{\g}{\mathbf{G}}
\newcommand{\uhom}{u_\mathrm{hom}}
\newcommand{\tuhom}{\tilde{u}_{\mathrm{hom}}}
\newcommand{\X}{\mathcal{X}}
\begin{document}

\title[Regularity and homogenization on percolation clusters]{Elliptic regularity and quantitative homogenization on percolation clusters}

\begin{abstract}
We establish quantitative homogenization, large-scale regularity and Liouville results for the random conductance model on a supercritical (Bernoulli bond) percolation cluster. The results are also new in the case that the conductivity is constant on the cluster. The argument passes through a series of renormalization steps: first, we use standard percolation results to find a large scale above which the geometry of the percolation cluster behaves (in a sense made precise) like that of Euclidean space. Then, following the work of Barlow~\cite{Ba}, we find a succession of larger scales on which certain functional and elliptic estimates hold. This gives us the analytic tools to adapt the quantitative homogenization program of Armstrong and Smart~\cite{AS} to estimate the yet larger scale on which solutions on the cluster can be well-approximated by harmonic functions on~$\Rd$. This is the first quantitative homogenization result in a porous medium and the harmonic approximation allows us to estimate the scale on which a higher-order regularity theory holds. The size of each of these random scales is shown to have at least a stretched exponential moment. As a consequence of this regularity theory, we obtain a Liouville-type result that states that, for each $k\in\N$, the vector space of solutions growing at most like $o(|x|^{k+1})$ as $|x|\to \infty$ has the same dimension as the set of harmonic polynomials of degree at most~$k$, generalizing a result of Benjamini, Duminil-Copin, Kozma, and Yadin~\cite{BDKY} from~$k\leq1$ to $k\in\N$.
\end{abstract}

\author[S. Armstrong]{Scott Armstrong}
\address[S. Armstrong]{Courant Institute of Mathematical Sciences, New York University, 251 Mercer St., New York, NY 10012}
\email{scotta@cims.nyu.edu}

\author[P. Dario]{Paul Dario}
\address[P. Dario]{Universit\'e Paris-Dauphine, PSL Research University, CNRS, UMR [7534], CEREMADE, Paris, France}
\email{paul.dario@dauphine.eu}

\keywords{}
\subjclass[2010]{}
\date{\today}

\maketitle

\setcounter{tocdepth}{1}
\tableofcontents

\section{Introduction}
\label{s.introduction}

\subsection{Motivation and informal summary of results}
\label{ss.summary}

Consider the random conductance model on the infinite percolation cluster for supercritical bond percolation on the graph $(\Zd,\Bd)$ in dimension $d\geq 2$. Here $\Bd$ is the set of \emph{bonds}, that is, unordered pairs $\{x,y\}$ with $x,y\in\Zd$ satisfying $|x-y|=1$. We are given $\lambda \in (0,1)$ and a function 
\begin{equation*} \label{}
\a :\Bd \longrightarrow \{0 \} \cup [\lambda,1].
\end{equation*}
We call $\a(\{x,y\})$ the \emph{conductance} of the bond $\{x,y\} \in \Bd$ and we assume that $\{ a(e) \}_{e \in \Bd}$ is an i.i.d. ensemble. We assume that the Bernoulli random variable $\indc_{\{ \a(e)\neq 0\}}$ has parameter $\p > \pc(d)$, where $\pc(d)$ is the bond percolation threshold for the lattice $\Zd$. It follows that the graph $(\Zd,\mathcal{E}(\a))$, where $\mathcal{E}(\a)$ is the set of edges $e\in\Bd$ for which $\a(e)\neq 0$, has a unique infinite connected component, which we denote by $\C_\infty = \C_\infty(\a)$.

\smallskip

Our interest in this paper is in the elliptic finite difference equation
\begin{equation}
\label{e.eq}
-\nabla \cdot \a \nabla u = 0 \quad \mbox{in} \ \C_\infty.
\end{equation}
Here the elliptic operator $-\nabla\cdot \a\nabla$ is defined on functions $u:\C_\infty\to \R$ by
\begin{equation} \label{e.op}
\left( -\nabla \cdot \a \nabla u\right)(x):= \sum_{y\sim x} \a((x,y)) \left( u(x) - u(y) \right). 
\end{equation}
The  operator $-\nabla \cdot \a \nabla$ is the generator of a continuous-time Markov chain~$\left\{ X(t) \right\}_{t\geq 0}$ which can be briefly described as follows. Each edge $e\in\Bd$ is endowed with a clock which rings after exponential waiting times with expectation $\a^{-1}(e)$. The random walker begins at the origin, i.e., $X(0)= 0$. When $X(t) = x\in\Zd$, the random walker waits until one of the clocks at an adjacent edge to $x$ rings, and then instantly moves across the edge to the neighboring point.  The reader may choose to focus on the special case that the conductance~$\a$ takes only the values $\{0,1\}$ and the model reduces to the simple random walk on the supercritical percolation cluster, with the generator being the Laplacian. The results in this paper are new even in this simpler situation.

\smallskip

Of primary interest is the scaling limit of this random walk (conditioned on the event that $0 \in \C_\infty$), and more generally its long-time behavior. A \emph{quenched invariance principle} for the random walk $X(t)$ in the case~$\a \in\{0,1\}$ was first proved in dimensions $d\geq 4$ by Sidoravicius and Sznitman~\cite{SS} and later, in every dimension $d\geq 2$ by Berger and Biskup~\cite{BB} and, independently, Mathieu and Piatnitski~\cite{MP}. It states that, $\P\left[\cdot \,\vert\, 0 \in \C_\infty\right]$--a.s., the process $\left\{ \ep X \left( \ep^{-2} t\right)\right\}_{t\geq 0}$ converges in law, as $\ep \to 0$, to a non-degenerate Brownian motion with covariance matrix $\sigma I_d$. This result was extended to the setting considered here (and to even greater generality) by Biskup and Prescott~\cite{BP} and Mathieu~\cite{M} (see also Andres, Barlow, Deuschel and Hambly~\cite{ABDH}). We refer to the survey of Biskup~\cite{biskupsurvey} and the references therein for more on the many recent works on this problem. 

\smallskip

The quenched invariance principle for the process $\{ X_t\}_{t\geq 0}$ is closely related to questions of \emph{homogenization}, that is, the study of the solutions of~\eqref{e.eq} on large length scales. The basic qualitative homogenization result states that, $\P$--a.s., a solution $u_r$ of~\eqref{e.eq} in $\C_\infty \cap B_r$ converges, as $r \to \infty$, to solutions of the (continuum) partial differential equation
\begin{equation*}
-\nabla \cdot \ahom \nabla \bar{u}_r = 0 \quad \mbox{in} \ B_r
\end{equation*}
in the sense that 
\begin{equation}
\label{e.DPlim}
\limsup_{r\to\infty} \frac1{r^2} \sum_{x\in \C_\infty \cap B_r} \left| u_r(x) - \bar{u}_r(x)\right|^2 = 0,
\end{equation}
where $u_r$ and $\bar{u}_r$ are given the same Dirichlet boundary condition $f_r(x) = r f\left( \frac xr  \right)$ for a fixed function $f:\partial B_1 \to \R$. The matrix $\ahom = \frac12 \sigma^2 I_d$, where $\sigma>0$ is the covariance of the limiting Brownian motion from the invariance principle. The study of~\eqref{e.eq} can be motivated independently, from the PDE perspective, by the desire to extend the theory of elliptic homogenization to random porous media (supercritical bond percolation being a very natural model of a random porous medium). However, from the probability point of view, the important point is that a homogenization result is essentially equivalent to an invariance principle. Certainly a quenched invariance principle implies a qualitative homogenization result, while \emph{quantitative} homogenization results give \emph{quantitative} invariance principles. Indeed, perhaps the main difficulty  in proving a quenched invariance principle is establishing the sublinear growth of correctors (cf.~\cite{BB,biskupsurvey}), which is nothing but a quantitative homogenization estimate. (The definition of the correctors is given in the comments following Theorem~\ref{t.reg}, below.) 

\smallskip

It is a well-known open problem to obtain quantitative information (for instance, rates of convergence) for this model, both in terms of the quenched invariance principle as well as homogenization. It is mentioned for example in~\cite[Section 4.4]{biskupsurvey},~\cite[below Theorem 1.2]{BH} and~\cite{LNO}. The obstacle is that the various qualitative proofs of the quenched invariance principle rely on an appeal to the ergodic theorem which is difficult to quantify. On the other hand, in the uniformly elliptic setting (when all bonds are open and $\a \in [\lambda,1]$), there is now a very precise quantitative theory due to Gloria and Otto~\cite{GO1,GO2} (see also~\cite{GNO}) with implications to random walks explained in~\cite{EGMN}. However, it is not obvious to see how to extend the methods of these papers to the case of percolation clusters since they rely heavily on uniform ellipticity and seem to require the geometry of the random environment to posses some homogeneity down to the smallest scales. In a recent work, Lamacz, Neukamm and Otto~\cite{LNO} adapt these methods to the case of Bernoulli bond percolation which is modified so that all bonds in a fixed unit direction are open. However, this model has the property that every lattice point of~$\Zd$ belongs to the infinite cluster and is still quite far from the setting of supercritical percolation clusters. 

\smallskip

In this paper, we prove the first quantitative homogenization results for the random conductance model on supercritical percolation clusters (see Theorems~\ref{t.homog} and~\ref{t.reg} below for the precise statements). In particular, we give explicit bounds on the sublinear growth of correctors and rates of convergence for the limit~\eqref{e.DPlim}. We also prove a higher-order \emph{regularity theory}, extending recent results in the uniformly elliptic case~\cite{AS,AM,GNO} to this setting. In particular, we prove Liouville-type results of every order, which characterize the set of solutions on the infinite cluster which exhibit polynomial growth. Such a regularity theory also provides important gradient estimates which are an essential ingredient for obtaining an \emph{optimal} quantitative theory and obtaining scaling limits for correctors. We expect that the results in this paper will open the way for the development of such a theory on percolation clusters and to resolve several open problems mentioned for example in~\cite{BB,biskupsurvey}. Indeed, in a forthcoming sequel~\cite{AD2} to this paper, we establish optimal bounds on the scaling of correctors as well as the decay of the gradient of the Green's function. 

\smallskip

A main source of our inspiration comes from the work of the first author and Smart~\cite{AS}, who recently introduced an alternative approach to quantitative theory of homogenization in the uniformly elliptic setting. Their method is based on studying certain subadditive quantities related to the variational formulation of the equation (i.e., the Dirichlet form) and quantifying their convergence by an iteration argument. At each step of the iteration, one passes information from a certain (large) length scale to a multiple of the length scale, showing that the error contracts by a factor less than~$1$. In this way the method resembles a renormalization argument. Critically, information at the smallest scales can be ``forgotten" and we only need to ensure that the model behaves like a uniformly elliptic equation, in some sense, on large scales. This adds some flexibility and robustness to the approach and, as we show here, it is well-suited to handling difficulties encountered in attempting a generalization to percolation clusters.

\smallskip

The ideas are therefore relatively straightforward, even if the details are many and the proofs are long. We begin in Sections~\ref{s.partition} and~\ref{s.functional} by finding a large (random) scale on which the percolation cluster has geometric properties which are close to those of~$\Rd$. Since this random scale is not uniformly bounded (there will be some large regions where the cluster is badly behaved) we partition $\Zd$ into triadic cubes of different sizes such that every cube is well-connected in the sense of Antal and Pisztora~\cite{AP}, using a Calder\'on-Zygmund-type stopping time argument. In regions where this partition is rather coarse, the geometry of the cluster is less well-behaved and where it is finer, the cluster is well-connected. Inspired by the work of Barlow~\cite{Ba} (which was itself inspired by the earlier work of Mathieu and Remy~\cite{MR}), we continue to coarsen the graph in stages, obtaining functional and elliptic inequalities on larger and larger scales: first we pass to a larger scale on which a Sobolev-Poincar\'e holds, then obtain a scale on which solutions of~\eqref{e.eq} satisfy a reverse H\"older inequality, and again to get a scale on which the gradients of solutions satisfy a Meyers-type higher integrability estimate.

\smallskip

This provides us with the elliptic estimates needed to run the arguments of~\cite{AS}, which takes up the bulk of the analysis in the paper. In Section~\ref{s.quantities}, we introduce analogues of the subadditive energy quantities and show that they possess similar properties to the ones in the uniformly elliptic setting, at least on large scales and with high probability. The main part of the analysis comes in Section~\ref{s.convergence}, where we show convergence of the subadditive quantities to their deterministic limits. The main difficulty compared to the analysis of~\cite{AS} is to deal with the possibility that the energy density of the solutions may be very large in regions of the cluster in which the connectivity is poor (i.e., where the cube partition mentioned above is quite coarse). The resolution comes by using the gain of integrability from Meyers estimate to show that \emph{spatial averages} of the gradients of the solutions cannot concentrate in small regions, which gives us just what we need.

\smallskip

At this stage in the development of our theory, the difference in difficulty between the uniformly elliptic case and the percolation cluster has basically vanished. We conclude by showing first in Section~\ref{s.dirichlet}, by a deterministic argument resembling a numerical analysis exercise, that the convergence of the subadditive quantities implies control of the error in homogenization for the Dirichlet problem. This concludes the proof of our first main result and gives us the harmonic approximation we need to run the arguments of~\cite{AS,AM,GNO,AKM2} to obtain the quantitative $C^{k,1}$ regularity theory and, in particular, the Liouville results. The latter is summarized in Section~\ref{s.regularity}.  

\smallskip

We continue in the next two subsections by giving the precise assumptions and then the statements of the main results. 

\subsection{Notation and assumptions}
\label{ss.notation}

Let~$\Zd$ be the standard~$d$-dimensional hypercube lattice and~$\Bd:=\left\{ \{ x,y\}\,:\, x,y\in\Zd, |x-y|=1 \right\}$ denote the set of nearest neighbor, non-oriented edges. We denote the standard basis in $\Rd$ by $\{\e_1,\ldots,\e_d\}$. For~$x,y\in\Zd$, we write~$x\sim y$ if~$x$ and~$y$ are nearest neighbors. We usually denote a generic edge by~$e$. We fix a parameter $\lambda \in (0,1]$ and denote by~$\Omega$ the set of all functions $\a: \Bd \to \{ 0\} \cup [\lambda,1]$, in other words,~$\Omega = \left( \{ 0\} \cup [\lambda,1] \right)^{\Bd}$ and we let $\a$ denote the canonical element of~$\Omega$. The Borel~$\sigma$-algebra on $\Omega$ is denoted by $\F$. For each $U \subseteq \Zd$, we let $\F(U)\subseteq\F$ denote the smallest $\sigma$-algebra such that each of the random variables $\a\mapsto \a(\{x,y\})$, for $x,y\in U$ with $x \sim y$, is $\F(U)$-measurable.

\smallskip

We fix an i.i.d.~probability measure~$\P$ on $(\Omega,\F)$, that is, a measure of the form $\P = \P_0^{\Bd}$ where $\P_0$ is the law of a random variable~$\a(0)$ taking values in $\{0\}\cup[\lambda,1]$ with the property that, for a fixed edge $e$,
\begin{equation*} \label{}
\p := \P_0 \left[ \a(e) \neq 0 \right] > \pc(d)
\end{equation*}
and $\pc(d)$ is the bond percolation threshold for the lattice $\Zd$. We denote by~$\E$ the expectation with respect to~$\P$. 

\smallskip

Given $\a\in\Omega$, we say that an edge $e\in\Bd$ is \emph{occupied} if $\a(e)>0$ and \emph{vacant} if $\a(e)=0$. Given vertices $x,y\in\Zd$, a \emph{path connecting $x$ and $y$} is a sequence of occupied edges of the form $\{x,z_1\}, \{z_1,z_2\}, \ldots, \{z_{n},z_{n+1}\},\ldots,\{z_N,y\}$.  We say that \emph{$x$ and $y$ are connected} and write $x \aconn y$ if there exists a path connecting $x$ and $y$. A \emph{cluster} is a subset $\C\subseteq \Zd$ with the property that, for every $x,y\in\C$, there exists a path connecting $x$ and $y$ consisting only of edges between elements of $\C$. The \emph{$\a$-interior} of a subset $U \subseteq\Zd$  is the subset 
\begin{equation*} \label{}
\intr_\a(U) := \left\{ x\in U\,:\, \mbox{$y\sim x$ and $\a(\{ x,y\})\neq 0$} \implies y\in U\right\}
\end{equation*}
The \emph{$\a$-boundary of $U$} is $\partial_\a U:= U \setminus \intr_\a (U)$. The interior and boundary with respect to the nearest-neighbor lattice $(\Zd,\Bd)$ are denoted by
\begin{equation*} \label{}
\intr(U):= \left\{ x\in U \,:\, y\sim x \implies y\in U \right\} \quad \mbox{and}  \quad \partial U := U \setminus \intr(U). 
\end{equation*}
We write $x \aconn \infty$ if $x$ belongs to an unbounded cluster and we denote by
\begin{equation*} \label{}
\C_\infty:= \left\{ x\in \Zd \,:\, x \aconn \infty \right\}
\end{equation*}
the maximal unbounded cluster, which $\P$--almost surely exists and is unique~\cite{BK}. 

\smallskip

We next introduce our notation for vector fields and Dirichlet forms. We let $\Ed:= \left\{ (x,y)\,:\, x,y\in\Zd, x\sim y \right\}$ denote the set of oriented nearest-neighbor pairs and $\Ed(U):=\left\{ (x,y)\,:\, x,y\in U, x\sim y \right\}$ denote the edges lying in a subset $U \subseteq \Zd$. A \emph{vector field} $G$ on $U$ is a function 
\begin{equation*} \label{}
G: \Ed(U) \to \R
\end{equation*}
which is antisymmetric, that is, $G(x,y) = -G(y,x)$ for every $(x,y)\in \Ed(U)$. If $u: U \to \R$, then $\nabla u$ is the vector field defined by
\begin{equation*} \label{}
(\nabla u)(x,y) := u(x)-u(y)
\end{equation*}
and $\a\nabla u$ is the vector field defined by
\begin{equation*} \label{}
\left( \a \nabla u\right)(x,y) := \a\left( \{x,y\} \right) \left( u(x)-u(y) \right). 
\end{equation*}
If $q \in\Rd$, we also let $q$ denote the constant vector field given by 
\begin{equation*} \label{}
q(x,y) := q\cdot (x-y). 
\end{equation*}
If $F$ is a vector field on $U$, then we define, for  each $x\in U$, 
\begin{equation} 
\label{e.magF}
\left| F \right|(x):= \left( \frac12 \sum_{y\in U,\, y \sim x} \left| F(x,y) \right|^2 \right)^{\frac12}.
\end{equation}
We note that the definition of $|F|$ depends on the underlying domain $U$, but we do not display this dependence explicitly since it is always clear from the context. We put an inner product $\left\langle \cdot,\cdot \right\rangle_U$ on the space of vector fields on $U$, defined by
\begin{equation*} \label{}
\left\langle F,G \right\rangle_U 
:= \sum_{x,y\in U, \, x\sim y} F(x,y) G(x,y).
\end{equation*}
We denote by $\left \langle F \right\rangle_U$ the unique vector in $\Rd$ such that, for every $p\in\Rd$, 
\begin{equation*}
p\cdot \left \langle F \right\rangle_U = \left\langle p,F \right\rangle_U.
\end{equation*}
Given $\a\in\Omega$ and two functions $u,v:U \to \R$, the (bilinear) Dirichlet form can be written in this notation as
\begin{equation*} \label{}
\left\langle \nabla u,\a\nabla v \right\rangle_U 
= \frac12 \sum_{x,y\in U, \, x\sim y} \left( u(x)- u(y) \right) \a\left(\{x,y\} \right) \left( v(x) - v(y) \right). 
\end{equation*}
The elliptic operator $-\nabla\cdot\a\nabla$ is defined for each $u:U\to \R$ and $x\in U$ by 
\begin{equation*} \label{}
\left( -\nabla\cdot\a\nabla u\right)(x):= \sum_{y\in U, \, x\sim y} (\a\nabla u)(x,y) =  \sum_{y\in U, \, x\sim y}\a(\{x,y\})(u(x)-u(y)). 
\end{equation*}
We denote set of solutions (i.e., $\a$-harmonic functions) on a subset $U \subseteq \Zd$ by
\begin{equation}
\label{e.defA}
\mathcal{A}(U):= \left\{ u: U \to \R \,:\, -\nabla\cdot\a\nabla u(x) = 0 \ \ \mbox{for every} \  x \in \intr_\a(U) \right\}. 
\end{equation}
We denote by $\mathcal{C}_0^\a(U)$ the set of functions $w:U \to \R$ with compact support and satisfying $w=0$ on $\partial_\a U$. Then it is easy to check that
\begin{equation}
\label{e.Avarchar0}
u\in \A(U) 
\iff 
\left\langle \nabla w, \a\nabla u \right\rangle_U = 0 \quad \mbox{for every} \ w \in \mathcal{C}_0^\a(U). 
\end{equation}

\smallskip

We next introduce our notation for keeping track of the sizes and stochastic integrability of random variables. Given $s,\theta >0$ and a random variable $X$ on $(\Omega,\F)$, we write 
\begin{equation*} \label{}
X \leq \O_s(\theta) \iff \E \left[  \exp\left( \left( \frac{X}{\theta} \right)^s \right)   \right] \leq 2.  
\end{equation*}
Note that by Markov's inequality, $X\leq \O_s(\theta)$ implies that, for every $t >0$, 
\begin{equation*} \label{}
\P \left[ X \geq \theta t  \right]  \leq 2\exp\left( -t^s \right), 
\end{equation*}
so roughly the notation means that $X$ has characteristic size at most $\theta$ with the tails of the distribution of~$\theta^{-1}X$ decaying at most like $\lesssim\exp\left( -t^s\right)$. If~$Y$ is another random variable and $a>0$, we also write 
\begin{equation*} \label{}
X \leq Y + a \O_s(\theta) \iff X-Y \leq \O_s(a\theta)
\end{equation*}
and, if $Y$ is nonnegative, 
\begin{equation*}
X \leq  \O_s(\theta) Y  \iff \frac{X}{Y} \leq \O_s(\theta).\end{equation*}
This notation is transitive in the sense that (cf.~\cite[Lemma 2.3(i)]{AKM2}), for a universal constant $C$ depending only on $s$, which may be taken to be~$1$ if $s\geq 1$, \begin{equation}
\label{e.Osums}
X \leq \O_s(\theta_1) \ \mbox{and} \ Y \leq \O_s(\theta_2) \implies X+Y \leq \O_s(C(\theta_1+\theta_2))
\end{equation}
Moreover, by~\cite[Lemma 2.3(ii)]{AKM2}, for any $s>0$ there exists $C(s)<\infty$ such that, for every measure space $(X,\F,\mu)$ and measurable function $f:E \to \R_+$ and jointly measurable family $\{ X(z) \}_{z\in E}$ of nonnegative random variables,
\begin{equation}
\label{e.Oavgs}
\forall z\in E, \ X(z) \leq \O_s(1) \implies \int_E X(z) \, d\mu(z) \leq \O_s(C). 
\end{equation}
Young's and H\"older's inequalities imply (cf.~\cite[Remark 2.2]{AKM2}) 
\begin{equation}
\label{e.Oprods}
|X| \leq \O_{s_1}(\theta_1) \ \mbox{and} \ |Y| \leq \O_{s_2}(\theta_2) \implies |XY| \leq \O_{\frac{s_1s_2}{s_1+s_2}}\left( \theta_1\theta_2 \right). 
\end{equation}
It is easy to check from the Young's inequality that, for every $s_1,s_2 \in (0,\infty)$, every $t \in [0,1]$ and random variable $X$, we have for $\alpha = \frac{ts_1}{ts_1 + (1-t) s_2}$
\begin{equation}
\label{e.improves}
\ X \le \O_{s_1}(\theta_1)  \mbox{ and }  \ X \le \O_{s_2}(\theta_2)
\implies 
X \le \O_{ts_1 + (1-t) s_2}(\theta_1^{\alpha}\theta_2^{1-\alpha}).
\end{equation}

\smallskip

For a finite $U\subseteq \Zd$ and $w:U \to \R$, we often denote sums by integrals; for example,
\begin{equation}
\label{e.integralconvention}
\mbox{we often write} \quad \int_{U} w(x)\,dx 
\quad
\mbox{in place of} 
\quad 
\sum_{x\in U} w(x). 
\end{equation}
If $U$ is a finite set, we denote its cardinality by $|U|$. Sometimes we also use $|V|$ to denote the Lebesgue measure of a subset $V\subseteq \Rd$, but the meaning will always be clear from context. The normalized integral for a function $w:U\to \R$ for a finite subset $U\subseteq\Zd$ is denoted 
\begin{equation*}
\fint_U w(x)\,dx = \frac{1}{|U|} \int_U w(x)\,dx  = \frac{1}{|U|} \sum_{x\in U} w(x). 
\end{equation*}
For $p\in [1,\infty)$, we denote the $L^p$ and normalized $L^p$ norms of $w$ by
\begin{equation*}
\left\| w \right\|_{L^p(U)} := \left( \int_U \left|w(x)\right|^p\,dx\right)^{\frac1p} 
\quad \mbox{and} \quad 
\left\| w \right\|_{\underline{L}^p(U)} := \left( \fint_U \left|w(x)\right|^p\,dx\right)^{\frac1p}
\end{equation*}
and $\left\| w \right\|_{L^\infty(U)} := \sup_{x\in U} \left| w(x)\right|$. We define the distance function $\dist$ with respect to the $\ell_\infty$ norm on the coordinates, i.e., $\dist(x,y) = \max_{i=1,\ldots,d} |x_i-y_i|$ and extend this to subsets of $\Rd$ by $\dist(U,V) = \inf_{x\in U, y\in V} \dist(x,y)$. 

\smallskip

A \emph{cube} is a subset of $\Zd$ of the form 
\begin{equation*} \label{}
\Zd \cap \left( z + \left[ 0, N \right)^d \right), \quad z\in \Zd, \ N \in \N. 
\end{equation*}
We define the \emph{center} and \emph{size} of the cube given in the previous display above to be $z$ and $N$, respectively, and denote the size of a cube $\cu$ by $\size(\cu)$. For a cube $\cu$ and $r>0$, we use the nonstandard convention of denoting by $r\cu$ the cube with size $\left\lfloor r\size(\cu) \right\rfloor$ and \emph{having the same center as $\cu$}. A \emph{triadic cube} is a cube of the form
\begin{equation*}
\cu_m(z):= \Zd \cap \left( z + \left( -\frac12 3^m, \frac12 3^m \right)^d \right), \quad z\in 3^m\Zd, \ m \in\N. 
\end{equation*}
We also write $\cu_m = \cu_m(0)$. Observe that $\size(\cu_m) = 3^m$. For every $m,n\in\N$ with $n\leq m$, each triadic cube $\cu_m(z)$ may be uniquely partitioned into exactly $3^{d(m-n)}$ disjoint triadic cubes of the form $\cu_n(y)$, $y\in 3^n\Zd$. Moreover, any two triadic cubes (of possibly different sizes) are either disjoint or else one is a subset of the other. We denote the collection of triadic cubes by $\T$ and the set of triadic cubes of size $3^n$ by $\T_n$. Note that $\T_n:=\left\{ z+\cu_n \,:\, z\in 3^n\Zd \right\}$. For each $\cu \in \T$, the \emph{predecessor of $\cu$} is the unique triadic cube $\tilde \cu\in\T$ satisfying 
\begin{equation} 
\label{e.predecessor}
\cu \subseteq \tilde \cu \quad \mbox{and} \quad \frac{\size(\tilde\cu) }{ \size(\cu)} = 3.
\end{equation}
If~$\tilde\cu$ is the predecessor of~$\cu$, then we also say that~$\cu$ is a \emph{successor} of~$\tilde\cu$. Note that, since we are working with subsets of the discrete lattice $\Zd$, disjoint triadic cubes will be separated by a distance of at least~$1$. In fact, two disjoint cubes $\cu$, $\cu'$ are neighbors if and only if~$\dist(\cu,\cu')=1$.

\subsection{Statement of the main results}
\label{ss.results}

The first main result gives an estimate of the length scale on which the homogenization approximation holds, up to an algebraic error threshold. In the statement, we use the notation $\C_*(\cu_m)$ which is not defined until Section~\ref{s.partition}, but roughly denotes the largest connected component of $\C_\infty\cap \cu_m$ (which is also the same as $\C_\infty\cap \cu_m$, up to a small number of vertices near the boundary of~$\cu_m$). 

\begin{theorem}[Quantitative homogenization]
\label{t.homog}
Fix an exponent $p>2$. Then there exist $s(p,d,\p,\lambda)>0$ and $\alpha(p,d,\p,\lambda)>0$, a constant $C(p,d,\p,\lambda)<\infty$, a symmetric matrix $\ahom$ such that 
\begin{equation*} \label{}
\frac{1}{C} I_d \leq \ahom\leq CI_d
\end{equation*}
and a nonnegative random variable $\X$ satisfying 
\begin{equation*} \label{}
\X \leq \O_s \left( C \right)
\end{equation*}
such that the following holds: for every $m\in\N$ such that $3^m \geq \X$ and function $u : \C_*(\cu_{m})\to \R$ satisfying
\begin{equation*} \label{}
-\nabla \cdot \a\nabla u = 0 \quad \mbox{in} \  \C_*(\cu_{m})\setminus \partial \cu_m,
\end{equation*}
there exists an $\ahom$-harmonic function $u_{\mathrm{hom}}$ on $\left[ -\frac12 (3^m-1), \frac12(3^m-1)\right]^d$ satisfying
\begin{equation} \label{e.bcs}
u (x) = u_{\mathrm{hom}} (x) \quad \mbox{for every} \ x \in \C_*(\cu_m) \cap \partial \left(  -\frac12 (3^m-1), \frac12(3^m-1) \right)^d,
\end{equation}
\begin{equation} \label{e.homogbnd}
\fint_{\left(-\frac12(3^m-1),\frac12(3^m-1)\right)^d} \left|\nabla u_{\mathrm{hom}}(x)\right|^2\,dx \leq \frac{1}{|\cu_m|} \sum_{x\in \C_*(\cu_m)} \left| \nabla u \indc_{\{\a\neq0\}} \right|^p(x) 
\end{equation}
and
\begin{multline} \label{e.homog}
3^{-m} \left( \frac{1}{|\cu_m|} \sum_{x \in \C_*(\cu_m)} \left| u(x) - u_{\mathrm{hom}}(x) \right|^2 \right)^{\frac12} 
\\
 \leq C 3^{-m\alpha} \left( \frac{1}{|\cu_m|} \sum_{x\in \C_*(\cu_m)} \left| \nabla u \indc_{\{\a\neq0\}} \right|^p(x)  \right)^{\frac1p} .
\end{multline}
\end{theorem}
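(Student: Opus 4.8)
The plan is to deduce the statement from two ingredients established in the preceding sections, joined by a purely deterministic argument (the content of Section~\ref{s.dirichlet}). The first ingredient, proved in Section~\ref{s.convergence}, is the convergence of a family of subadditive energy quantities: for a triadic cube $\cu$ and $p,q\in\Rd$ one sets $\mu(\cu,p)$ to be the minimal (volume--normalized) Dirichlet energy on $\C_*(\cu)$ over functions with affine boundary data $x\mapsto p\cdot x$, and $\mu^*(\cu,q)$ to be the corresponding dual quantity built from fluxes, and one shows that there are an exponent $\alpha_0(d,\p,\lambda)>0$, a constant $C$ and a random scale $\X_0\le\O_{s_0}(C)$ so that, on every $\cu_m$ with $3^m\ge\X_0$,
\begin{equation*}
\sup_{|p|\le1}\Bigl|\mu(\cu_m,p)-\tfrac12 p\cdot\ahom p\Bigr|+\sup_{|q|\le1}\Bigl|\mu^*(\cu_m,q)-\tfrac12 q\cdot\ahom^{-1}q\Bigr|\le C\,3^{-m\alpha_0};
\end{equation*}
informally, at scale $3^m$ the mean gradient and mean flux of a solution are already related by $\ahom$, with relative error $3^{-m\alpha_0}$. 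The second ingredient is the package of deterministic--looking functional inequalities from Sections~\ref{s.partition}--\ref{s.functional}, valid on triadic cubes above further random scales $\X_1,\dots,\X_k\le\O_{s_i}(C)$: a Sobolev--Poincar\'e inequality on $\C_*(\cu_m)$, a Caccioppoli/reverse--H\"older inequality for solutions, and a Meyers--type higher--integrability estimate upgrading $L^2$ bounds on $\nabla u$ to $L^{p}$ bounds for some $p>2$. The last of these is what produces the $L^p$ norm of $\nabla u$ on the right--hand sides of \eqref{e.homogbnd} and \eqref{e.homog}.

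Granting these, set $\X:=\max_i\X_i$, which satisfies $\X\le\O_s(C)$ by \eqref{e.Osums} with $s=\min_i s_i$, fix $m$ with $3^m\ge\X$ and $u$ as in the statement. First I would construct $\uhom$: extend the boundary values of $u$ along $\partial\cu_m\cap\C_*(\cu_m)$ to a Lipschitz datum on the boundary of the continuum cube $Q_m:=\left(-\tfrac12(3^m-1),\tfrac12(3^m-1)\right)^d$, and let $\uhom$ solve $-\nabla\cdot\ahom\nabla\uhom=0$ in $Q_m$ with that datum; this makes \eqref{e.bcs} hold, and \eqref{e.homogbnd} follows from an energy comparison against the piecewise--affine interpolation of $u$ together with the Meyers estimate. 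The heart of the matter is then the mesoscopic comparison. Pick an intermediate scale $n:=\lfloor\theta m\rfloor$, $\theta\in(0,1)$ to be chosen, and partition $\cu_m$ into subcubes $\cu_n(z)$. On each $\cu_n(z)$ away from $\partial\cu_m$, replace $\uhom$ by the affine map with slope $\fint_{\cu_n(z)}\nabla\uhom$ and let $v_z$ be the minimizer defining $\mu(\cu_n(z),\fint_{\cu_n(z)}\nabla\uhom)$; gluing the $v_z$ by a partition of unity subordinate to the subcubes --- using the Sobolev--Poincar\'e inequality to bound the gluing error and the Meyers estimate to bound the contribution of subcubes sitting in poorly--connected regions (where $|\nabla u|$ can a priori be large) --- yields a global competitor for the variational problem solved by $u$. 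Comparing energies, invoking the convergence of $\mu$ on each subcube, summing, and recognizing the result as a Riemann sum for the smooth field $\nabla\uhom$ (whose oscillation across a subcube is $O(3^{n-m})$ times its size) gives
\begin{equation*}
\left|\,\fint_{\C_*(\cu_m)}\nabla u\cdot\a\nabla u-\fint_{Q_m}\nabla\uhom\cdot\ahom\nabla\uhom\,\right|\le C\bigl(3^{-n\alpha_0}+3^{n-m}\bigr)\,\mathrm{E},
\end{equation*}
where $\mathrm{E}:=\tfrac1{|\cu_m|}\sum_{x\in\C_*(\cu_m)}\bigl|\nabla u\,\indc_{\{\a\neq0\}}\bigr|^{p}(x)$ is the higher--integrability energy, finite by Meyers; the reverse inequality is symmetric, using $\mu^*$ and the fluxes of $u$ to build a competitor for $\uhom$.

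To convert this into \eqref{e.homog}, note that $w:=u-\uhom$ vanishes on $\partial\cu_m\cap\C_*(\cu_m)$ by \eqref{e.bcs}. Since the two convex variational problems carry the same boundary data, near--equality of the energies upgrades, through a parallelogram (strong--convexity) identity, to smallness of the flux difference $\a\nabla u-\ahom\nabla\uhom$ in a discrete $H^{-1}(\cu_m)$--type norm, again of size $C(3^{-n\alpha_0}+3^{n-m})\,\mathrm{E}$. A duality argument --- testing the equation for $w$ against the solution of the adjoint continuum Dirichlet problem on $Q_m$, using the Poincar\'e inequality together with the interior $W^{2,2}$ (indeed $C^{1,1}$) bounds for the constant--coefficient operator $\ahom$ --- then gives $3^{-2m}\fint_{\C_*(\cu_m)}|w|^2\le C(3^{-n\alpha_0}+3^{n-m})^{2}\,\mathrm{E}$, after separately estimating the boundary layer of width $\asymp 3^n$ near $\partial\cu_m$ (whose energy is again controlled by $\mathrm{E}$ via Meyers). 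Optimizing $\theta$ so that $3^{-n\alpha_0}\asymp 3^{n-m}$ produces an exponent $\alpha=\alpha(\alpha_0,d)>0$ and, after collecting the random scales into $\X$ with \eqref{e.Osums} and enlarging $C$, the bounds \eqref{e.homogbnd} and \eqref{e.homog} as stated (with the reverse--H\"older/Meyers inequalities used to move between the $L^p$ norm on $\cu_{m}$ and one on a slightly larger cube if necessary).

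The step I expect to be the main obstacle is the mesoscopic comparison of the second paragraph. In the uniformly elliptic setting of \cite{AS,AKM2} the energy density $|\nabla u|^2$ of a solution is controlled in $L^2$ on every cube, so the gluing error and the Riemann--sum error are immediately dominated by the energy; on a percolation cluster, however, $|\nabla u|^2$ can genuinely be large on the (random, unbounded) regions where the cube partition of Section~\ref{s.partition} is coarse, and these contributions must be shown not to spoil the comparison. The resolution is precisely the Meyers exponent $p>2$: spatial averages of $\nabla u$ cannot concentrate on a small fraction of the volume, which makes both the gluing and the contribution of the badly--connected subcubes negligible. This is the essential new input compared with \cite{AS}; once it is in place, the remainder of the argument follows the structure of \cite{AS,AKM2} closely, and the stretched--exponential moment bound $\X\le\O_s(C)$ is inherited directly from the moment bounds on $\X_0,\dots,\X_k$.
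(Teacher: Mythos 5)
Your proposal is correct and takes essentially the same route as the paper: Proposition~\ref{p.subadd} for the convergence of the primal/dual subadditive quantities, the Sobolev--Poincar\'e, Caccioppoli and Meyers estimates above random minimal scales, and the deterministic two-sided mesoscopic energy comparison of Lemmas~\ref{l.lemmaA1} and~\ref{l.lemmaA2} (patching mesoscopic minimizers with affine data, with the exponent $p>2$ absorbing the coarse-partition regions exactly as you anticipate). The only divergence is the final conversion to the $L^2$ estimate: instead of your $H^{-1}$ flux bound and duality against an adjoint continuum problem, the paper applies strong convexity directly to $u$ and the glued discrete competitor $\tilde{u}_{\mathrm{hom}}$, which share boundary data, and then uses the cluster Sobolev--Poincar\'e inequality together with the $L^2$ closeness of $\tilde{u}_{\mathrm{hom}}$ to $u_{\mathrm{hom}}$ built into the construction --- a somewhat more elementary finish than the duality argument you sketch.
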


In view of~\eqref{e.bcs}, Theorem~\ref{t.homog} can be thought of as an error estimate for the Dirichlet problem. Indeed, as we will see in the proof in Section~\ref{s.dirichlet}, the function $u_{\mathrm{hom}}$ is constructed by solving the Dirichlet problem with boundary data obtained by smoothing out $u$ itself near $\partial \cu_m$. The first estimate~\eqref{e.homogbnd} just says that the gradient of~$u_{\mathrm{hom}}$ in $L^p$ is no larger than that of $u$ itself. The second estimate~\eqref{e.homog} is the main part of the conclusion, which states that the (properly scaled) $L^2$ difference between $u$ and $u_{\mathrm{hom}}$ is smaller than the size of $\nabla u$ in $L^p$ by the factor $3^{-m\alpha}$, that is, some power of the length scale. 

\smallskip

We turn to an important consequence of Theorem~\ref{t.homog}, namely the large-scale regularity theory for solutions on the infinite cluster, which we denote by~$\mathcal{A}(\C_\infty)$. For each $k\in\N$, we also let $\mathcal{A}_k(\C_\infty)$ denote the subspace of $\mathcal{A}(\C_\infty)$ consisting of functions growing more slowly at infinity than a polynomial of degree $k+1$:
\begin{equation*} \label{}
\mathcal{A}_k(\C_\infty):= \left\{ u\in \mathcal{A}(\C_\infty) \,:\, 
\limsup_{R\to \infty} R^{-(k+1)} \left\| u \right\|_{\underline{L}^2(\C_\infty\cap B_R)} = 0  \right\}. 
\end{equation*}
Our second main result (see Theorem~\ref{t.reg} below) concerns the  structure of~$\mathcal{A}_k(\C_\infty)$.

\begin{theorem}[Regularity theory]
\label{t.reg}
There exist $s(d,\p,\lambda)>0$, $\delta(d,\p,\lambda)>0$ and a nonnegative random variable $\X$ satisfying
\begin{equation}
 \label{e.Xbound}
\X \leq \O_s(C(d,\p,\lambda))
\end{equation}
such that the following hold:
\begin{enumerate}

\item[(i)] 
For each $k\in\N$, there exists a constant $C(k,d,\p,\lambda)<\infty$ such that, for every $u\in \A_k(\C_\infty)$, there exists $p\in \overline{\A}_k$ such that, for every $r\geq \X$, 
\begin{equation}
\label{e.upcaptcha}
\left\| u - p \right\|_{\underline{L}^2(\C_\infty\cap B_r)} \leq C r^{-\delta} \left\| p \right\|_{\underline{L}^2(B_r)}.
\end{equation}

\smallskip
\item[(ii)]
For every $k\in\N$ and $p\in\overline{\A}_k$, there exists $u\in \A_k$ such that, for every $r\geq \X$, the inequality~\eqref{e.upcaptcha} holds. 

\smallskip
\item[(iii)]
For each $k\in\N$, there exists $C(k,d,\p,\lambda)<\infty$ such that, for every $R\geq 2X$ and $u\in \A(\C_\infty \cap B_R)$, there exists $\phi\in \A_k(\C_\infty)$ such that, for every $r\in \left[ \X ,\frac12 R \right]$, we have
\begin{equation*} \label{}
\left\| u - \phi \right\|_{\underline{L}^2(\C_\infty\cap B_r)} \leq C \left( \frac rR\right)^{k+1} \left\| u \right\|_{\underline{L}^2(\C_\infty\cap B_R)}.
\end{equation*}

\end{enumerate} 
\end{theorem}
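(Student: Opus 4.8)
The plan is to derive Theorem~\ref{t.reg} from the single-scale homogenization estimate of Theorem~\ref{t.homog} via the now-standard ``large-scale $C^{k,1}$ regularity'' bootstrap, adapted to the percolation cluster. The backbone is a $C^{0,1}$-type excess-decay lemma: if $u\in\A(\C_\infty\cap B_R)$, then comparing $u$ on $B_r$ (for $\X\le r\le R$) with its $\uhom$-harmonic approximation $u_{\mathrm{hom}}$ from Theorem~\ref{t.homog} and then with the affine (or degree-$k$ harmonic polynomial) best-fit of $u_{\mathrm{hom}}$, one shows that a suitably defined excess quantity
\[
D_k(u;B_r):=\inf_{q\in\overline{\A}_k}\ \frac1{r^{k+1}}\left\|u-q\right\|_{\underline L^2(\C_\infty\cap B_r)}
\]
satisfies $D_k(u;B_{\theta r})\le \tfrac12\,D_k(u;B_r)$ for a fixed $\theta\in(0,1)$ depending only on $(k,d,\p,\lambda)$, plus an error $C3^{-m\alpha}(\cdots)$ coming from~\eqref{e.homog}. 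Here one uses classical interior $C^{k,1}$ estimates for $\uhom$-harmonic functions on $\Rd$ (constant-coefficient), $|\Rd|$-volume comparison between $\C_*(\cu_m)$ and $\cu_m$ on scales above $\X$ (available from the partition results of Sections~\ref{s.partition}--\ref{s.functional}), and a Caccioppoli/Meyers bound to control $\|\nabla u\indc_{\{\a\neq0\}}\|_{\underline L^p}$ on $B_r$ by $r^{-1}\|u-q\|_{\underline L^2(B_{2r})}$ for any constant (or polynomial) $q$. Iterating this one-step decay down the triadic scales from $R$ to $\X$ yields part~(iii), with $\phi$ obtained as the limit of the best-fit polynomials; the subtraction of polynomials is legitimate because $\overline{\A}_k$ is finite-dimensional and, by part~(ii), lifts to genuine cluster solutions.

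Concretely I would proceed as follows. \textbf{Step 1 (harmonic polynomials on the cluster).} First construct, for each harmonic polynomial $p$ of degree $\le k$ on $\Rd$ (i.e.\ $p\in\overline{\A}_k$), a corrected function $u\in\A_k(\C_\infty)$ with $\|u-p\|_{\underline L^2(\C_\infty\cap B_r)}\le Cr^{-\delta}\|p\|_{\underline L^2(B_r)}$; this is part~(ii). This is done inductively in $k$: given the first-order correctors $\chi_e$ (whose sublinearity, $\|\chi_e\|_{\underline L^2(\C_\infty\cap B_r)}\le C r^{1-\delta}$, follows from Theorem~\ref{t.homog} applied to affine data, exactly as sublinearity of correctors is equivalent to a quantitative homogenization estimate), one builds higher-order correctors by solving $-\nabla\cdot\a\nabla$ against the lower-order corrected polynomials and controlling growth again through Theorem~\ref{t.homog}. \textbf{Step 2 (one-step improvement of flatness).} Prove the excess-decay inequality above. \textbf{Step 3 (iteration).} Iterate Step~2 from scale $R$ down to $\X$ to get part~(iii), reading off $\phi\in\A_k(\C_\infty)$ as the limit of the approximating polynomials lifted via Step~1. \textbf{Step 4 (Liouville / parts (i)--(ii)).} For $u\in\A_k(\C_\infty)$, apply part~(iii) on an increasing sequence $R_j\to\infty$; the $o(|x|^{k+1})$ growth forces the ``remainder after subtracting the degree-$k$ piece'' to vanish, which pins down a unique $p\in\overline{\A}_k$ and gives~\eqref{e.upcaptcha}, i.e.\ part~(i). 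Combined with Step~1 (part~(ii)), this also yields $\dim\A_k(\C_\infty)=\dim\overline{\A}_k$.

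For the random scale $\X$: it should be taken as the maximum of the scale from Theorem~\ref{t.homog} (which is $\O_s(C)$), the scale above which $\C_*(\cu_m)=\C_\infty\cap\cu_m$ up to boundary vertices and $|\C_*(\cu_m)|\asymp|\cu_m|$ with the density constant close to $\p$-percolation density (from Sections~\ref{s.partition}--\ref{s.functional}), and the scale above which Sobolev--Poincar\'e and the Meyers higher-integrability estimate hold on cubes (also $\O_s$-bounded by the construction described in Section~\ref{s.functional}). By~\eqref{e.Osums} the maximum of finitely many $\O_s$ random variables is again $\O_s(C)$, giving~\eqref{e.Xbound}; note the exponent $s$ here may be smaller than in Theorem~\ref{t.homog} because $\delta$ is obtained by an interpolation (via~\eqref{e.improves}) losing a bit of stochastic integrability, but it remains a fixed positive constant depending only on $(d,\p,\lambda)$, and in particular is independent of $k$ (only the multiplicative constants $C$ depend on $k$).

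The main obstacle will be \textbf{Step 2}, the one-step flatness improvement, and specifically the interface between the continuum $\uhom$-harmonic function and the cluster. Unlike the uniformly elliptic case, $\C_*(\cu_m)$ is only close to $\cu_m$ ``on average'': there are sparse bad regions where connectivity is poor and where the energy density of $u$ could a priori be large. The resolution is the same mechanism used in Section~\ref{s.convergence}: the Meyers estimate gives $\nabla u\indc_{\{\a\neq0\}}\in \underline L^p$ with $p>2$ on these scales, and this gain of integrability prevents $\fint|\nabla u|^2$ from concentrating in the sparse bad set, so that the $\underline L^p$-norm appearing on the right of~\eqref{e.homogbnd}--\eqref{e.homog} is genuinely controlled by the $\underline L^2$-oscillation of $u$ (after subtracting a polynomial). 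Carefully propagating this $L^p$ control through the iteration — in particular checking that the exponent $p$ can be fixed once and for all and that the constants do not degenerate as one descends scales — is the delicate point; everything else is a faithful transcription of the Euclidean large-scale regularity argument of~\cite{AS,AM,AKM2}.
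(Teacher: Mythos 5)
Your Steps 2--4 are essentially the paper's route: post-process Theorem~\ref{t.homog} (via Caccioppoli, the Meyers exponent $p=2+\ep$, and the coarsening lemmas) into a harmonic-approximation statement, run an excess-decay iteration against harmonic polynomials down to a random scale $\X=\O_s(C)$, and then deduce the Liouville statements; this is exactly what Lemmas~\ref{l.harmapproxcoarse}, \ref{l.kiter} and \ref{l.mesoreg} do, followed by the argument of~\cite[Proposition 3.1]{AKM2}. You have also correctly identified the two genuinely percolation-specific difficulties (the $u$ versus $\left[u\right]_\Pa$ comparison and the use of Meyers' estimate to ``H\"older away'' the partition), and the mild loss of stochastic integrability in $s$.

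The genuine gap is your Step 1, and it propagates into Step 3. You propose to prove part~(ii) \emph{first}, by an induction on $k$ in which higher-order correctors are ``built by solving $-\nabla\cdot\a\nabla$ against the lower-order corrected polynomials and controlling growth again through Theorem~\ref{t.homog}.'' But Theorem~\ref{t.homog} is a finite-volume Dirichlet-problem error estimate; it does not by itself produce a solution on the infinite cluster, let alone one whose growth is $o(|x|^{k+1})$ uniformly over all scales $r\geq\X$ for a single random $\X$. Producing such global objects is precisely the content of the Liouville/existence statement, and in the uniformly elliptic theory (\cite{AS,AM,AKM2}) it is obtained \emph{after} the regularity iteration, by solving Dirichlet problems on $B_R$ with boundary data $p\in\overline{\A}_k$, applying the (already proven) decay estimate uniformly in $R$, and extracting a limit as $R\to\infty$ using the finite dimensionality of $\overline{\A}_k$. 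As written, your construction of higher-order correctors for $k\geq 2$ has no growth control, and since your Step 3 produces the function $\phi\in\A_k(\C_\infty)$ in part~(iii) by ``lifting'' polynomials through Step 1, the circularity cannot be closed. The fix is to reorder: state and prove the mesoscopic $C^{k,1}$ estimate with the comparison class $\overline{\A}_k$ (harmonic polynomials), which requires no knowledge of $\A_k(\C_\infty)$ — this is the paper's Lemma~\ref{l.mesoreg}, whose iteration lemma also requires first settling the $k=0$ case to control $\sup_s D_0(s)/s$ in the error term, a point your sketch skips — and only then derive (ii), then (iii), then (i) by the limiting argument of~\cite[Proposition 3.1]{AKM2}. (Your $k=1$ base case is less problematic, since first-order correctors exist by classical arguments and their sublinearity does follow from Theorem~\ref{t.homog}, but that does not rescue the induction for $k\geq2$.)
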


A consequence of statements (i) and (ii) of Theorem~\ref{t.reg} is that the vector space $\A_k(\C_\infty)$ has the same dimension as~$\overline{\A}_k$, that is, the same dimension as the space of harmonic polynomials of order at most~$k$. This was previous proved in the case $k=1$ and $\a \in \{0,1\}$ by~Benjamini, Duminil-Copin, Kozma, and Yadin~\cite{BDKY}. For~$k>1$, it was previously proved (in greater generality) that the subspace of $\A(\C_\infty)$ of functions growing at most like $O(|x|^k)$ had finite dimension: see Sapozhnikov~\cite{Sap}.

\smallskip

The Liouville result summarized in (i) and (ii) imply, in the case $k=1$, that every element $u$ of $\mathcal{A}_1(\C_\infty)$ can be written as 
\begin{equation*}
u(x) = c + p\cdot x + \chi_p(x)
\end{equation*}
where $c\in\R$, $p\in\Rd$ and $\chi_p$ is a function satisfying, for every $r\geq \X$, 
\begin{equation}
\label{e.chi}
\left\| \chi_p \right\|_{\underline{L}^2(\C_\infty\cap B_r)} \leq C|p|r^{1-\delta}. 
\end{equation}
These functions $\{ \chi_p \,:\, p\in\Rd \}$ are called the \emph{correctors} and their sublinear growth is a very important property that was previously proved only qualitatively (cf.~\cite{BB,biskupsurvey}). Together with the bound on~$\X$ in~\eqref{e.Xbound}, this provides the first quantitative bound on the sublinearity of $\chi_p$.

\smallskip

Moreover, the qualitative Liouville result is quantified by the third statement~(iii), which tells us much more: any $\a$-harmonic function may be expanded to arbitrary order in terms of elements of $\A_k(\C_\infty)$ in the same way that analytic functions can be approximated by Taylor polynomials of degree~$k$. Even this statement for $k=0$ is new and, combining it with Caccioppoli inequality (see Lemma~\ref{l.caccioppoli}), gives the following gradient bound may be compared to the results of~\cite{AS,AM}: for every $\X \leq r \leq R$, 
\begin{equation}
\label{e.lipschitzestimate}
\left\| \nabla u \indc_{\{\a\neq0\}} \right\|_{\underline{L}^2(\C_\infty\cap B_r)} \leq C \left\| \nabla u \indc_{\{\a\neq0\}} \right\|_{\underline{L}^2(\C_\infty\cap B_R)}.
\end{equation}

Gradient estimates like~\eqref{e.lipschitzestimate} play an important role in obtaining optimal quantitative homogenization estimates, in particular estimates for the sublinearity of the correctors in the uniformly elliptic setting; see~\cite{GNO3}. In the forthcoming sequel~\cite{AD2} to this paper, we explore the consequences of~\eqref{e.lipschitzestimate} in the setting of supercritical percolation clusters and show that they allow us to prove optimal estimates on the decay of the Green's function, its gradient as well as optimal bounds on the scaling of the correctors. In particular, we improve~\eqref{e.chi} to the optimal sublinearity bound in all dimensions.

\begin{remark}
The gradient bound~\eqref{e.lipschitzestimate} allows us to  immediately upgrade the bound in~\eqref{e.chi} from $\underline{L}^2(\C_\infty\cap B_r)$ to $L^\infty(\C_\infty\cap B_r)$. To see this, we use the following interpolation inequality for $L^\infty$ between $L^2$ and $C^{0,1}$: there exists $C(d)<\infty$ such that, for every $w : \Zd \cap B_r \to \R$ with zero mean on $\Zd\cap B_r$, we have
\begin{equation*} \label{}
\left\| w  \right\|_{L^\infty(\Zd\cap B_r)} 
\leq 
C \left\| w \right\|_{\underline{L}^2(\Zd\cap B_r)}^{\frac 23} \left( r \left\| \nabla w \right\|_{L^\infty(\Zd\cap B_r)}\right)^{\frac13}. 
\end{equation*}
Next, we deduce from~\eqref{e.Xbound},~\eqref{e.chi} and~\eqref{e.lipschitzestimate} the bound 
\begin{equation*} \label{}
\left| \nabla \chi_p \indc_{\{\a\neq 0\}}\right|(0)   \leq C\X^{\frac d2} \left\| \nabla \chi_p \indc_{\{\a\neq0\}} \right\|_{\underline{L}^2(\C_\infty\cap B_\X)} \leq C |p| \X^{\frac d2} \leq \O_{2s/d} \left( C |p| \right).
\end{equation*}
Therefore, by stationarity and~\eqref{e.Oavgs}, for every $m>1$,
\begin{equation*} \label{}
\left\| \nabla \chi_p\indc_{\{\a\neq 0\}} \right\|_{L^\infty(\C_\infty \cap B_r)}
\leq 
Cr^{\frac dm} \left\| \nabla \chi_p\indc_{\{ \a\neq 0\} } \right\|_{\underline{L}^m(\C_\infty \cap B_r)}
\leq \O_{2s/d} \left( C r^{\frac dm} |p| \right).
\end{equation*}
Here the $C$ depends additionally on $m$. We deduce that, for every $\beta>0$, there exists $s(d,\p,\lambda)>0$ and $C(\beta,d,\p,\lambda)<\infty$ such that
\begin{equation} \label{e.poompoom}
\left\| \nabla \chi_p \indc_{\{ \a\neq 0 \}} \right\|_{L^\infty(\C_\infty \cap B_r)} \leq \O_s\left( C |p| r^{\beta} \right).
\end{equation}
We may now apply the interpolation inequality above to the coarsened function $\left[ \chi_p \right]_{\Pa}$ (see Definition~\ref{def.coarsenfunction}), using the previous estimates~\eqref{e.chi} and~\eqref{e.poompoom} and Lemmas~\ref{l.coarseLs} and~\ref{l.coarsegrads} to obtain, for some $s(d,\p,\lambda)>0$ and $C(d,\p,\lambda)<\infty$,
\begin{equation}
\label{e.chicoarse}
\left\| \left[ \chi_p \right]_{\Pa} \right\|_{{L}^\infty(B_r)} \leq \O_s\left( C|p|r^{1-\delta/2} \right). 
\end{equation}
Finally, we can then use Lemma~\ref{l.coarseLs} and the previous inequality to get
\begin{equation}
\label{e.chilinfty}
\left\|  \chi_p \right\|_{{L}^\infty(\C_\infty \cap B_r)} \leq \O_s\left( C|p|r^{1-\delta/2} \right),
\end{equation}
as desired. 
\end{remark}

We conclude with some comments regarding some of the parameters appearing in the main theorems. First, we do not obtain the optimal exponent~$s(d,\p,\lambda)>0$ for the stochastic integrability appearing in Theorems~\ref{t.homog} and~\ref{t.reg}. In the uniformly elliptic setting, it is proved in~\cite{AS} that we may take any $s \in (0,d)$, which is the optimal stochastic integrability (in the sense that the results are  false for $s>d$). The arguments here do lead to an explicit estimate of~$s$, although we expect that it is impossible to find the optimal exponent~$s$ (which should depend only on $d$) without a very deep understanding of the geometry of the percolation cluster which, at least for $\p$ close to $\p_c$ and $d>2$, remains elusive. We therefore have not made any attempt to optimize or even keep track of the explicit~$s$ we obtain. Likewise, it would be very interesting to show that the constant~$c(d,\p,\lambda)>0$ in the lower bound for the effective diffusivity we obtain in~\eqref{e.ahombounds} depends on~$\p$ like a power of~$\p-\p_c$ that is, for some $\beta >0$, 
\begin{equation*} \label{}
c(d,\p,\lambda) \geq c_0(d,\lambda) \cdot \left( \p - \p_c \right)^{\beta}.
\end{equation*}
Our arguments actually give such a bound provided we can quantify the constants $c(d,\p)$ and $C(d,\p)$ in Lemma~\ref{l.AP} below, which is proved in Antal and Pisztora~\cite{AP} and contains the basic geometric information about the supercritical percolation clusters that all of our renormalization argument rely on. We are not aware of any work which estimates these constants, even crudely, and such an estimate would obviously be of fundamental interest to the study of percolation clusters beyond its implications to random walks on the clusters. 

\smallskip

The basic theme we wish to emphasize is that the bottleneck to getting estimates of these parameters, and improving our quantitative understanding in other ways, lies not in improving our quantitative homogenization methodology but rather in obtaining a better quantitative, geometric understanding of supercritical percolation clusters (especially near criticality). 

\smallskip

As a final remark, we would like to mention that the arguments in this paper give similar results for other random graphs (besides the particular case of a supercritical percolation cluster), provided that we have some quantitative estimate, like the one in Lemma~\ref{l.AP}, which says that the graph behaves like Euclidean space above some random scale.

\section{Triadic partitions of good cubes}
\label{s.partition}

The geometry of the percolation cluster~$\C_\infty$ and, more generally, the behavior of solutions of~\eqref{e.eq}, is highly irregular on small scales but becomes more regular as we look on larger scales. A large part of our effort in this paper is to quantify this vague assertion for various notions of ``good behavior." To this end, we will find it very useful to partition~$\Zd$ and certain subsets of~$\Zd$ into ``good" cubes (in which the percolation cluster and solutions of~\eqref{e.eq} are well-behaved in some sense). These partitions will be random and in particular the sizes of the cubes will necessarily be non-uniform, but we will prove quantitative estimates on the size of a typical cube (which will depend on what ``good" means). The coarseness of the partition therefore provides a measure of the local scale above which the system is well-behaved.

\smallskip

In this section, we give a general scheme for creating such partitions. Then, as a first application, we partition~$\Zd$ into ``well-connected" cubes which greatly simplifies the geometry of the cluster and will allow us in the next section to prove functional inequalities (for example, a Sobolev inequality) for functions on subsets of $\C_\infty$. 

\subsection{A general scheme for partitions of good cubes}
The construction of the partition is accomplished by a stopping time argument reminiscent of a Calder\'on-Zygmund-type decomposition. We are given a notion of ``good cube" represented by an $\F$-measurable function which maps $\Omega$ into the set of all subsets of $\T$. In order words, for each $\a\in\Omega$, we are given a subcollection $\mathcal{G}(\a) \subseteq \T$ of triadic cubes. We think of $\cu\in\T$ as being a good cube if $\cu\in\mathcal{G}(\a)$. As usual, we typically drop the dependence on~$\a$ and just write $\mathcal{G}$. 

\begin{proposition}
\label{p.partitions}
Let $\G \subseteq \T$ be a random collection of triadic cubes, as above. Suppose that $\G$ satisfies, for every $\cu = z + \cu_n \in \T$,
\begin{equation}
\label{e.goodlocal}
\mbox{the event $\left\{ \cu \not\in\G \right\}$ is $\F(z + \cu_{n+1})$--measurable,}
\end{equation}
and, for some constants $K,s>0$,
\begin{equation}
\label{e.probabilitygood}
\sup_{z\in 3^n\Zd} \P \left[ z+ \cu_n \not\in\G \right] \leq K \exp\left( -K^{-1}3^{ns} \right).
\end{equation}
Then, $\P$--almost surely, there exists a partition $\S\subseteq \T$ of $\Zd$ into triadic cubes with the following properties:
\begin{enumerate}

\item[(i)] All predecessors of elements of $\S$ are good: for every $\cu,\cu'\in\T$, 
\begin{equation*}
\cu' \subseteq \cu \ \mbox{and} \ \cu'\in\S \implies \cu \in \G.
\end{equation*}

\item[(ii)] Neighboring elements of $\S$ have comparable sizes: for every $\cu,\cu'\in \S$ such that $\dist(\cu,\cu') \leq 1$, we have
\begin{equation*}
\frac13 \leq \frac{\size(\cu')}{\size(\cu)} \leq 3.
\end{equation*}

\item[(iii)] Estimate for the coarseness of $\S$: if we denote $\cu_\S(x)$ the unique element of $\S$ containing a point $x\in\Zd$, then there exists $C(s,K,d)<\infty$ such that, for every $x\in\Zd$,
\begin{equation*}
\size\left( \cu_\S(x) \right) = \O_s (C). 
\end{equation*}

\item[(iv)] Approximate locality of $\S$: for each $m\in\N$ with $m\geq n$ and $z\in 3^m\Zd$, there exists a constant $C(s,K,d)<\infty$ and a partition $\S_{\mathrm{loc}}(z+\cu_m) \subseteq \T$ of $z+\cu_m$ which is $\F\left(z+\cu_m\right)$-measurable, finer than~$\S$ and satisfies, for 
\begin{equation*} \label{}
\cu_m^{(n)} := \left\{ x\in \cu_m\,:\, \dist(x,\partial \cu_m)\geq 3^n \right\},
\end{equation*}
the estimate
\begin{equation*}
\qquad \P \left[ \exists x\in z+\cu_m^{(n)}, \, \cu_\S(x) \neq \cu_{\S_{\mathrm{loc}}(z+\cu_m)}(x) \right] \leq C3^{m-n} \exp\left( -C^{-1} 3^{ns}\right). 
\end{equation*}
\end{enumerate}
\end{proposition}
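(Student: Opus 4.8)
The natural approach is a Calder\'on--Zygmund-type stopping time construction. First I would define the partition $\S$ as follows: a triadic cube $\cu \in \T$ is declared a \emph{maximal bad-predecessor cube} (and placed into $\S$) if $\cu \notin \G$ but the predecessor $\tilde\cu$, all of $\tilde\cu$'s ancestors up to a suitable point, \emph{are} in $\G$ --- more precisely, I would let $\S$ consist of those $\cu\in\T$ such that every triadic cube strictly containing $\cu$ lies in $\G$, and $\cu$ itself is either in $\G$ with its predecessor a minimal such cube, or... Actually the cleanest formulation: for $x\in\Zd$, let $\cu_\S(x)$ be the \emph{smallest} triadic cube $\cu \ni x$ such that $\cu$ and all of its ancestors (cubes strictly containing it) belong to $\G$; equivalently, walk up the triadic hierarchy from the unit cube at $x$ and stop at the first cube all of whose ancestors are good. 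One must check this is $\P$--a.s.\ well-defined (i.e.\ the stopping scale is a.s.\ finite at every point), that the resulting cubes genuinely partition $\Zd$, and that the construction is consistent across points (two points are in the same element of $\S$ iff their stopping cubes coincide). Finiteness of the stopping scale and property~(iii) both follow from a Borel--Cantelli argument using~\eqref{e.probabilitygood}: the probability that $\size(\cu_\S(x)) \geq 3^{n}$ is bounded by $\sum_{k\geq n}\P[\,\text{the ancestor of scale }3^k\text{ at }x\notin\G\,]\leq \sum_{k\geq n} K\exp(-K^{-1}3^{ks})$, which is itself $\leq C\exp(-c 3^{ns})$, giving exactly $\size(\cu_\S(x))\leq \O_s(C)$.

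Property~(i) is then essentially immediate from the definition: if $\cu'\in\S$ and $\cu\supseteq\cu'$ is a strict ancestor, then $\cu$ is good by construction; and $\cu'$ itself --- one needs to be slightly careful about whether $\cu'$ is required to be good, but the statement only asks about \emph{predecessors}, so this is fine. Property~(ii), the comparability of sizes of neighbors, is the one genuinely combinatorial point and I expect it to be the main obstacle. The idea is: suppose $\cu,\cu'\in\S$ are neighbors with $\size(\cu') \leq \tfrac19\size(\cu)$. Consider the ancestor $\cu''$ of $\cu'$ with $\size(\cu'')=\tfrac13\size(\cu)$; since $\cu'\in\S$, all ancestors of $\cu'$ are good, so $\cu''\in\G$. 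But $\cu''$ is a neighbor of (or contained in a small neighborhood of) $\cu$, and one would like to derive a contradiction with $\cu$ being a \emph{maximal} good-ancestor stopping cube --- the point is that $\cu$ and $\cu''$ are ``close'' and of comparable or related scales, and the stopping rule should have forced them to be merged or to have the same stopping scale. Making this precise requires tracking the geometric relationship between a cube, its predecessor, and spatially adjacent cubes in the triadic grid; this is the routine-but-fiddly part where one must use that disjoint triadic cubes at the same scale are at $\ell^\infty$-distance $\geq 1$ and that a cube's predecessor contains a fixed neighborhood of it. I would handle this by choosing the stopping rule to also require goodness of cubes in the $3\times\cdots$-dilate (or of a bounded cluster of neighboring cubes), which is exactly why~\eqref{e.goodlocal} is stated at scale $n+1$ rather than $n$.

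For property~(iv), the approximate locality, the plan is to define $\S_{\mathrm{loc}}(z+\cu_m)$ by running the \emph{same} stopping construction but only using information inside $z+\cu_m$: for $x\in z+\cu_m$, let $\cu_{\S_{\mathrm{loc}}}(x)$ be the smallest triadic cube $\cu\ni x$ with $\cu\subseteq z+\cu_m$ such that $\cu$ and all its ancestors \emph{that are contained in} $z+\cu_m$ lie in $\G$ --- this is $\F(z+\cu_m)$-measurable by~\eqref{e.goodlocal} since deciding $\cu_n\notin\G$ only needs $\F$ of the $3\times$-dilate. It is automatically finer than $\S$ (we stop no later, since we have fewer ancestors to require good). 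The two partitions agree at $x$ unless the global construction needed to look at a cube of scale $> $ the ancestor scale reaching $\partial(z+\cu_m)$, i.e.\ unless some triadic ancestor of $x$ of scale $\geq 3^n$ that pokes outside $z+\cu_m$ is bad --- but if $x\in z+\cu_m^{(n)}$ then the ancestor of $x$ at scale $3^n$ is well inside $z+\cu_m$, so disagreement forces a bad cube of scale $\geq 3^n$ among $\lesssim 3^{m-n}$ candidate cubes near $\partial(z+\cu_m)$, and a union bound over these using~\eqref{e.probabilitygood} gives the claimed $C 3^{m-n}\exp(-C^{-1}3^{ns})$. I expect the bookkeeping in~(ii) and the precise definition of the stopping rule that makes~(i),~(ii),~(iv) simultaneously clean to be where most of the care is needed; everything else is Borel--Cantelli plus union bounds.
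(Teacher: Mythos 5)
Your skeleton (stopping-time construction, Borel--Cantelli for (iii), ``good by default'' localization for (iv)) matches the paper's, but there are two gaps, one minor and one substantive. The minor one: your pointwise, bottom-up rule (``the smallest cube containing $x$ all of whose ancestors are good'') does not define a partition. The set of scales at which that condition holds is upward-closed along the ancestor chain of $x$, so for $y\in\cu_\S(x)$ one only gets $\cu_\S(y)\subseteq\cu_\S(x)$, and strict inclusion genuinely occurs (take $x$ with a bad ancestor at scale $3$, so $\cu_\S(x)$ has size $9$, and $y$ in a different size-$3$ child of $\cu_\S(x)$ with all ancestors good, so $\cu_\S(y)$ has size $1$). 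You flag consistency as something ``to check,'' but it is false for the rule as stated; the standard fix, and the paper's, is the top-down formulation: $\cu_\S(x)$ is the \emph{largest} cube containing $x$ lying in the (predecessor-closed) good collection and having a successor outside it.

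The substantive gap is property (ii). You correctly observe that a purely vertical rule fails and propose to strengthen ``good'' by requiring goodness of the $3\times$-dilate or of a bounded cluster of same-scale neighbors. This is not enough. To rule out neighbors $\cu,\cu'\in\S$ with $\size(\cu')\geq 9\size(\cu)$, you must show that \emph{every} successor of $\cu'$ already satisfies the strengthened goodness condition, so that the stopping for $\cu'$ could not have occurred at scale $\size(\cu')$. But a successor of $\cu'$ on the far side from $\cu$ sits at distance of order $\size(\cu')$ from $\cu$ and from all of $\cu$'s ancestors of comparable scale, so goodness of a bounded cluster around those ancestors says nothing about it; worse, you would need not merely its goodness but that of its own cluster, which lies still further away. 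The requirement must therefore expand geometrically with scale. This is exactly the paper's cone $\mathcal{K}(\cu)$ (cubes reachable by moving up one triadic scale per step to a cube whose predecessor is within distance $1$ of the current one), whose two key features are the closure property $\cu'\in\mathcal{K}(\cu)\implies\mathcal{K}(\cu')\subseteq\mathcal{K}(\cu)$, which makes (ii) a one-line verification, and the growth bound $\dist(\cu,\cu')\leq C\size(\cu')$ for $\cu'\in\mathcal{K}(\cu)$, which keeps the union bounds for (iii) and (iv) summable. This construction is the actual content of the proof rather than a routine fiddly step, and it is what your proposal is missing. (Also, \eqref{e.goodlocal} being stated at scale $n+1$ is a measurability hypothesis needed for the localization in (iv); it is not what repairs (ii).)
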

\begin{proof}
We begin by giving an algorithmic construction of $\S$. First, we shrink~$\G$ to make it closed under taking predecessors and neighbors of predecessors. This requires the following definition: for each $\cu\in\T$, we set
\begin{multline*} \label{}
\mathcal{K}(\cu):= \big\{ \cu' \in\T\,:\, \exists n\in\N \ \mbox{and} \ \cu^0,\ldots,\cu^n\in\T \ \mbox{such that} \
\cu^0 = \cu, \ \cu^n = \cu', \\
\forall m \in \lbrace 1, \cdots , n \rbrace, \, \size(\cu^m) = 3^m\size(\cu), \ \mbox{and} \  
\dist\left(\tilde{\cu}^m,\cu^{m-1} \right) \leq 1
\big\}.
\end{multline*}
In other words, $\mathcal{K}(\cu)$ is the collection of triadic cubes we can obtain in finitely many steps starting from $\cu$ and where, in the $m$th step, we move from a cube $\cu^{m-1}$ of size $3^{m-1}\size(\cu)$ to a cube $\cu^{m}$ of size $3^{m}\size(\cu)$ whose predecessor is the neighbor of, or contains,~$\cu^{m-1}$. Recall that the predecessor $\tilde{\cu}$ of $\cu$ is defined in the sentence ending in~\eqref{e.predecessor}. 

\smallskip

It is clear from the definition of $\mathcal{K}(\cu)$ that
\begin{equation}
\label{e.mclKclosed}
\cu' \in \mathcal{K}(\cu) \implies \mathcal{K}(\cu') \subseteq \mathcal{K}(\cu). 
\end{equation}
We now define
\begin{equation*} \label{}
\overline{\G}:= \left\{ \cu \in \G \,:\, \mathcal{K}(\cu) \subseteq \G \right\}. 
\end{equation*}
By~\eqref{e.mclKclosed} we see immediately that 
\begin{equation} 
\label{e.sillyincl}
\cu \in \overline{\G} \implies \mathcal{K}(\cu) \subseteq \overline{\G}.
\end{equation}
To estimate the probability that a cube belongs to $\overline{\G}$, we use a union bound,~\eqref{e.probabilitygood} and the fact that, for each $m\in\N$, there are at most~$C$  distinct cubes of size $3^m\size(\cu)$ belonging to $\mathcal{K}(\cu)$ (this is a consequence of \eqref{e.growthK}, below). The union bound gives, for each $\cu\in\G$,
\begin{equation*} \label{}
\P \left[ \cu \not\in \overline{\G} \right] \leq \sum_{m=0}^\infty C K \exp\left( - K^{-1} 3^{ms}\size(\cu)^s \right) \leq C \exp\left( -C^{-1} \size(\cu)^s \right). 
\end{equation*}
It follows immediately that, $\P$--a.s.,~every element of~$\Zd$ belongs to infinitely many elements of~$\overline{\G}$. In particular,~$\overline{\G}$ covers~$\Zd$. 

\smallskip

We then introduce the partition~$\S$ by defining, for each~$x\in\Zd$, the cube~$\cu_\S(x)$ to be the largest element of~$\overline{\G}$ containing~$x$ which has a successor which does not belong to~$\overline{\G}$. If there is no such cube, we set~$\cu_\S(x):=x+\cu_0 = \{x\}$, which must then ($\P$--a.s.) belong to~$\overline{\G}$. It is easy to see from this construction that~$\S$ is indeed a partition. Moreover, we can estimate, for each $n\in\N$,
\begin{equation*} \label{}
\P \left[ \size\left(\cu_\S(x) \right) = 3^n \right] \leq 3^d \sup_{z\in 3^{n-1}\Zd} \P\left[ z+\cu_{n-1} \not\in\overline{\G} \right] 
\leq C \exp\left( -C^{-1} 3^{(n-1)s} \right).
\end{equation*}
It follows that 
\begin{equation*} \label{}
\size(\cu_\S(x)) = \O_{s}\left( C \right),
\end{equation*}
which confirms property~(iii). 

\smallskip

To check property~(i), we note simply that $\S \subseteq \overline{\G} \subseteq \G$ and that~$\overline{\G}$ is closed under taking predecessors. 

\smallskip

To check property~(ii), consider an element~$\cu\in \S$ and another cube~$\cu'\in \T$ with~$\dist(\cu,\cu') =1$ and~$\size(\cu')\geq 9\size(\cu)$. To see that~$\cu'$ cannot belong to~$\S$, observe that, since $\dist(\cu',\cu)\leq 1$, each of the successors of $\cu'$ belongs to~$\mathcal{K}(\cu)$. Therefore each of the successors of $\cu'$ belongs~$\overline{\G}$ by~\eqref{e.sillyincl} and the fact that~$\cu\in \S \subseteq \overline{\G}$. Thus $\cu'\not\in\S$ by the definition of~$\S$.

\smallskip

We have left to check property~(iv), which is accomplished by localizing the construction above. First we observe that there exists $C(d)<\infty$ such that 
\begin{equation}
\label{e.growthK}
\cu \in \T \ \mbox{and} \ \cu' \in \mathcal{K}(\cu) \implies \dist(\cu,\cu') \leq C \size(\cu').
\end{equation}
To see this, suppose that $\cu,\cu^1\in \T$ are such that 
\begin{equation*} \label{}
\size(\cu_1)=3\size(\cu) \quad \mbox{and} \quad \dist\left(\tilde{\cu}_1,\cu \right) \leq 1.
\end{equation*}
This implies that 
\begin{equation*}
\dist(\cu^1,\cu) \leq \diam\left( \tilde{\cu}^1 \right) + \dist\left(\tilde{\cu}^1,\cu \right) \leq 9 \diam(\cu) + 1 \leq C \size(\cu).
\end{equation*}
Then if $\cu^0,\ldots,\cu^n$ are as in the definition of $\mathcal{K}(\cu)$, we obtain
\begin{equation*} \label{}
\dist(\cu,\cu') \leq C\sum_{j=1}^n \size(\cu^j) =C \size(\cu) \sum_{j=1}^n 3^j \leq C 3^n \size(\cu) = C\size(\cu').
\end{equation*}
This yields~\eqref{e.growthK}. 

\smallskip

The implication~\eqref{e.growthK} allows us to localize the previous construction of~$\S$ by looking only at the triadic cubes for which we can evaluate membership in $\mathcal{G}$ by only looking at the edges in $z+\cu_m$. All other triadic cubes will be considered to be ``good'' by default. This motivates the definition
\begin{equation*}
\G_{\mathrm{loc}}\left(z+\cu_m\right) := \G \cup \left( \cup \left\{ y + \cu_{n} \, : \, n\in\N,\, y\in 3^n\Zd, \, y + \cu_{n+1} \not\subseteq  z + \cu_m \right\} \right). 
\end{equation*}
Using the property~\eqref{e.goodlocal}, we see that for every $\cu' \in \T$, 
\begin{equation*}
\mbox{the event } \left\{ \a \in \Omega \, : \, \cu' \in \G_{\mathrm{loc}}\left(z+\cu_m\right) \right\} \mbox{ is } \mathcal{F}(z + \cu_m) \mbox{--measurable.}
\end{equation*}
We now define $\S_{\mathrm{loc}}\left(z+\cu_m\right)$ to be the partition obtained by applying the previous construction to $\G_{\mathrm{loc}}\left(z+\cu_m\right)$. We write $\S_{\mathrm{loc}}=\S_{\mathrm{loc}}\left(z+\cu_m\right)$ for short. 
It is clear from the construction that $\S_{\mathrm{loc}}$ is completely determined by the environment in $z+\cu_m$, that is, $\S_{\mathrm{loc}}$ is $\F(z+\cu_m)$--measurable. Moreover, we see immediately that $\cu_\S(x)$ and $\cu_{\S_{\mathrm{loc}}}(x)$ may differ only if there exists $\cu'\in \mathcal{K}(\cu_{\S_{\mathrm{loc}}}(x)) \setminus \G$ such that $\cu' = y + \cu_n$ and $ y + \cu_{n+1} \not \subseteq z+\cu_m$. In this case, we have
\begin{align*}
\dist\left(x,\partial (z+\cu_m)\right)
& \leq \size\left(\cu_{\S_{\mathrm{loc}}}(x) \right) + \dist\left(\cu_{\S_{\mathrm{loc}}}(x) ,\cu' \right) + 3 \diam(\cu')\\
& \leq \size\left(\cu_{\S_{\mathrm{loc}}}(x) \right) + C \size(\cu') \\
& \leq C\size(\cu'). 
\end{align*}
Thus there exists $k_0(d)\in\N$ such that, for every $x\in z+\cu_m$,
\begin{multline*} \label{}
\cu_S(x) \neq \cu_{\S_{\mathrm{loc}}}(x)  \\
\implies
\exists \cu,\cu'\in \T \ \mbox{s.t.} \ x\in \cu, \ \cu'\in \mathcal{K}(\cu)\setminus \G, \ \dist\left(x,\partial (z+\cu_m)\right) \leq 3^{k_0} \size(\cu').
\end{multline*}
By~\eqref{e.growthK}, for each $j\in\N$ with $j\leq m$, there are at most $C3^{m-j}$ elements of the set 
\begin{equation*} \label{}
\left\{ \cu' \,:\, \exists \cu\in \T, \ \cu \cap z+\cu_m \neq \emptyset,\ \cu'\in\mathcal{K}(\cu),\ \size(\cu') = 3^j \right\}
\end{equation*}
while, for $j\geq m$, then there are at most $C$ elements of this set. 
Thus by a union bound and~\eqref{e.probabilitygood}, 
\begin{align*}
\P \left[ \exists x\in z+\cu_m^{(n)}, \ \cu_S(x)\neq \cu_{\S_{\mathrm{loc}}}(x) \right] 
& \leq CK \sum_{j=n-k_0}^\infty (3^{m-j} +1)\exp\left( -K^{-1} 3^{js}\right)  \\
& \leq C 3^{m-n} \exp\left( -c3^{ns} \right). 
\end{align*}
This completes the proof of~(iv).
\end{proof}

Many times in this paper we will be required to estimate, with $\S$ as in the previous proposition and for a finite subset $U\subseteq \Zd$ and exponent $t\geq1$, the random variable
\begin{equation}
\label{e.defLambdas}
\Lambda_t(U,\S):= \frac{1}{|U|} \sum_{x\in \cl_\S(U)} \size(\cu_\S(x))^{t}   
=   \frac{1}{|U|} \sum_{\S \ni \cu \subseteq \cl_\S(U)} \size(\cu)^{d+t} ,
\end{equation}
where, given an arbitrary subset $U \subseteq \Zd$, we define the \emph{closure $\cl_\S(U)$ of $U$ with respect to a partition~$\S$} by 
\begin{equation*} \label{}
\cl_\Pa(U):= \bigcup_{z\in U} \cu_\S(z). 
\end{equation*}
As in the statement of Proposition~\ref{p.partitions}, we let $\cu_\S(x)$ denote the unique element of~$\S$ containing~$x\in\Zd$. 

\smallskip

An immediate consequence of~Proposition~\ref{p.partitions}(iii) and ~\eqref{e.Oavgs} is the estimate
\begin{equation}
\label{e.LambdaOsbound}
\Lambda_t(U,\S) \leq \O_{\frac st}(C),
\end{equation}
for some constant $C := C(t,s,K,d) < + \infty$.
While~\eqref{e.LambdaOsbound} is quite useful, some of our arguments require something slightly stronger: the existence of a \emph{random scale} $\mathcal{M}_t(\S)\in\N$, with good quantitative bounds on the size of $\mathcal{M}_t(\S)$, and a \emph{deterministic} constant $C(t,K,s,d,\lambda,\p)<\infty$ such that, for every $m\in\N$,
\begin{equation*}
m \geq \mathcal{M}_t(\S) \implies \Lambda_t(\cu_m,\S) \leq C.
\end{equation*}
The precise result is stated below in Proposition~\ref{p.minimalscales}.
To prove it, we need to use independence and thus the localization provided by Proposition~\ref{p.partitions}(iv). In the following lemma, we put the localization statement into a more convenient form, in terms of $\Lambda_t(U,\S)$.

\begin{lemma}
\label{l.localizeLambda}
Let $K,s>0$, $\S$ and $\S_{\mathrm{loc}}$ be as in the statement of Proposition~\ref{p.partitions}. Fix $t\in [1,\infty)$. Then, for every $t' > 1$, there exists $C(t',t,K,s,d)<\infty$ such that, for every $m\in\N$ and $z\in 3^m\Zd$, 
\begin{equation} 
\label{e.comparelocalS}
\Lambda_t(z+\cu_m,\S) 
\leq  \Lambda_t(z+\cu_{m},\S_{\mathrm{loc}}(z+\cu_{m+1})) + \O_{\frac s{t+t'}}\left( C3^{-mt'} \right).
\end{equation}
\end{lemma}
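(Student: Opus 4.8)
The plan is to compare $\Lambda_t(z+\cu_m,\S)$ with $\Lambda_t(z+\cu_m,\S_{\mathrm{loc}})$ by decomposing the difference into contributions from points $x$ where $\cu_\S(x) = \cu_{\S_{\mathrm{loc}}}(x)$ (which cancel) and points where they differ. Writing $E$ for the event in Proposition~\ref{p.partitions}(iv) applied to $z+\cu_{m+1}$ with the inner scale taken to be some $3^n$ with $n \le m$, we have that on the complement of $E$ the two partitions agree on $(z+\cu_{m+1})^{(n)} \supseteq z+\cu_m$, so $\Lambda_t(z+\cu_m,\S) = \Lambda_t(z+\cu_m,\S_{\mathrm{loc}})$ there. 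The strategy is then to choose $n = n(m)$ proportional to $m$ (say $n = \lceil m/2 \rceil$ or, to get the exact exponent $t'$, some fixed fraction depending on $t,t'$) so that the probability of $E$ is summable and so that on $E$ the (deterministic, by Proposition~\ref{p.partitions}(iii)-type bounds — but actually we only have $\O_s$ bounds, not deterministic ones) overshoot is controlled.

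The main point is to estimate the error term $\left(\Lambda_t(z+\cu_m,\S) - \Lambda_t(z+\cu_m,\S_{\mathrm{loc}})\right)_+$. First I would bound it crudely by $\Lambda_t(z+\cu_m,\S)$ itself (using that $\Lambda_t$ is nonnegative), which by \eqref{e.LambdaOsbound} and stationarity satisfies $\Lambda_t(z+\cu_m,\S) \le \O_{s/t}(C)$. Then I would combine this with the probability estimate from Proposition~\ref{p.partitions}(iv): on the low-probability event $E$ the error is at most $\O_{s/t}(C)$, and off $E$ it vanishes. Concretely, for any $\theta>0$,
\[
\P\left[\Lambda_t(z+\cu_m,\S) - \Lambda_t(z+\cu_m,\S_{\mathrm{loc}}) \ge \theta\right] \le \P[E] + \P\left[\Lambda_t(z+\cu_m,\S)\indc_E \ge \theta\right],
\]
and the second term can be bounded via Cauchy–Schwarz (or Hölder, splitting the $\O_{s/t}$ bound into a piece with slightly worse stochastic integrability) by $\P[E]^{1/2}$ times moments of $\Lambda_t$, which are finite. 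Since $\P[E] \le C3^{m-n}\exp(-C^{-1}3^{ns})$, choosing $n = cm$ for a suitable small constant $c = c(t,t',s)$ makes $\P[E]$ decay faster than $\exp(-3^{mns/2})$, which certainly beats any $\exp(-(3^{mt'}/\theta)^{s/(t+t')})$-type tail. Assembling this into the $\O_{s/(t+t')}$ notation — i.e. verifying $\E[\exp((3^{mt'}(\Lambda_t(\S)-\Lambda_t(\S_{\mathrm{loc}}))_+/C)^{s/(t+t')})] \le 2$ — is a routine computation splitting the expectation over $E$ and its complement and using \eqref{e.Oprods}/\eqref{e.improves}-type manipulations on the product of the indicator of $E$ (which has a stretched-exponential tail in $m$) with $\Lambda_t$ (which has an $\O_{s/t}$ tail).

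The main obstacle I anticipate is bookkeeping the stochastic integrability exponent so that it comes out exactly as $\frac{s}{t+t'}$ rather than something weaker. The point is that $(\Lambda_t(\S) - \Lambda_t(\S_{\mathrm{loc}}))_+$ is a product of a bounded-in-probability quantity ($\Lambda_t$, which is $\O_{s/t}$) and something that is zero except on an event whose probability is stretched-exponentially small in $3^{mn}$ — and one must trade the $3^{m-n}$ polynomial prefactor and the choice of $n$ against the desired $3^{-mt'}$ scaling and the $\frac{s}{t+t'}$ integrability using the product rule \eqref{e.Oprods}: the indicator contributes an $\O_{s'}$-type factor with $s'$ essentially unbounded (since its tail in the relevant variable is a true exponential, not stretched) while $\Lambda_t$ contributes the $\O_{s/t}$ factor, so the product lands at integrability $\frac{s/t \cdot s'}{s/t + s'} \to s/t$; one then interpolates this against the trivial deterministic-in-$m$ decay to reach $\frac{s}{t+t'}$ with the factor $3^{-mt'}$, using \eqref{e.improves}. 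Getting the arithmetic of this interpolation right — in particular verifying that $t' > 1$ (rather than $t' \ge 0$) is exactly what is needed for the geometric-series sum over scales $j$ in the union bound to converge with room to spare — is the only delicate part; everything else is a direct application of Proposition~\ref{p.partitions}(iii)–(iv) and the $\O_s$-calculus from Section~\ref{ss.notation}.
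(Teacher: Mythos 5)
Your proposal follows essentially the same route as the paper's proof: split according to the event $D_m$ that $\S$ and $\S_{\mathrm{loc}}(z+\cu_{m+1})$ disagree somewhere on $z+\cu_m$, note that off this event the two quantities $\Lambda_t$ coincide, and on it bound $\Lambda_t(z+\cu_m,\S)\leq\O_{s/t}(C)$ and exploit the stretched-exponential smallness of $\P[D_m]$ coming from Proposition~\ref{p.partitions}(iv). Two corrections to your bookkeeping, which you rightly flag as the delicate part: there is no need for a mesoscale $n=cm$, since $z+\cu_m\subseteq(z+\cu_{m+1})^{(m)}$ allows you to apply (iv) with $n=m$ directly, giving $\P[D_m]\leq C\exp\left(-c3^{ms}\right)$; and the exponent $\frac{s}{t+t'}$ does not arise from interpolating a near-$\O_{s/t}$ bound against some ``deterministic-in-$m$ decay'' (no such deterministic bound exists for $\Lambda_t(\cdot,\S)$), but from the single observation that $\indc_{D_m}\leq\O_{s/t'}\left(C3^{-mt'}\right)$ together with \eqref{e.Oprods}, since $\frac{(s/t)(s/t')}{s/t+s/t'}=\frac{s}{t+t'}$; in particular the hypothesis $t'>1$ has nothing to do with convergence of the geometric sum over scales, which is internal to the proof of Proposition~\ref{p.partitions}(iv) and converges regardless.
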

\begin{proof}
Fix $m\in\N$ and $z\in 3^m\Zd$ and write $\S_{\mathrm{loc}}$ in place of $\S_{\mathrm{loc}}(z+\cu_{m+1})$ to lighten the notation and let $D_m$ denote the event
\begin{equation*} \label{}
D_{m}:= \left\{ \exists x \in z+\cu_m \ \mbox{such that} \ \cu_\S(x) \neq \cu_{\S_{\mathrm{loc}}}(x)  \right\}.
\end{equation*}
According to Proposition~\ref{p.partitions}, since $\cu_m \subseteq \cu_{m+1}^{(m)}$, we have 
\begin{equation}
\label{e.probDm}
\P\left[ D_m \right] \leq C\exp\left( -c3^{ms} \right).
\end{equation}
We estimate
\begin{align*}
\Lambda_t(z+\cu_m,\S) \indc_{\Omega\setminus D_{m}} 
& = \left( \frac{1}{|\cu_m|} \sum_{x\in z+\cu_m} \size(\cu_\S(x))^{t} \right)  \indc_{\Omega\setminus D_{m}}   \\
& \leq \Lambda_t(z+\cu_{m},\S_{\mathrm{loc}})
\end{align*}
On the other hand,~\eqref{e.probDm} implies that, for every $t'> 1$, 
\begin{equation*} \label{}
\indc_{D_{m}} = \O_{\frac s{t'}}\left( C3^{-mt'} \right) 
\end{equation*}
and, therefore by~\eqref{e.Oprods},
\begin{equation*} \label{}
\Lambda_t(z+\cu_m,\S) \indc_{D_{m}}  
\leq \O_{\frac st} (C) \cdot \O_{\frac s{t'}}\left( C3^{-mt'} \right) 
\leq \O_{\frac s{t+t'}}\left( C3^{-mt'} \right). 
\end{equation*}
Combining the above displays yields~\eqref{e.comparelocalS}.
\end{proof}

We need the following technical lemma. 

\begin{lemma}
\label{l.OtoM}
Fix $K\geq 1$, $s>0$ and $\beta>0$ and suppose that $\{ X_n \}_{n\in\N}$ is a sequence of nonnegative random variables satisfying, for every $n\in\N$,
\begin{equation} 
\label{e.XnbdnsOM}
X_n \leq \O_s\left( K3^{-n\beta} \right). 
\end{equation}
Then there exists $C(s,\beta,K) <\infty$ such that the random scale
\begin{equation*}
M := \sup \left\{ 3^n\in\N \,:\, X_n \geq 1 \right\}
\end{equation*}
satisfies the estimate
\begin{equation} 
\label{e.OtoM}
M \leq \O_{s\beta}(C). 
\end{equation}
\end{lemma}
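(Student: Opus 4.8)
The plan is to convert the stretched-exponential moment bounds on the individual $X_n$ into a single moment bound on the random scale $M$ via a union bound over scales, exploiting the geometric decay $3^{-n\beta}$ built into the hypothesis~\eqref{e.XnbdnsOM}. The key observation is that, for a threshold $t\geq 1$, the event $\{M \geq 3^N\}$ is contained in $\bigcup_{n\geq N}\{X_n \geq 1\}$, and for each such $n$ the hypothesis gives the tail bound $\P[X_n \geq 1] \leq 2\exp(-(K^{-1}3^{n\beta})^s)$. Since the exponents $3^{n\beta s}$ grow geometrically in $n$, the sum over $n\geq N$ of these probabilities is dominated (up to a multiplicative constant depending only on $s,\beta,K$) by its first term, so that $\P[M \geq 3^N] \leq C\exp(-c\, 3^{N\beta s})$ for suitable $C,c$ depending only on $s,\beta,K$.

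Next I would turn this tail estimate for $M$ into the desired $\O_{s\beta}(C)$ bound, i.e. into a bound on $\E[\exp((M/C)^{s\beta})]$. Writing $m = \log_3 M$ informally, the tail decays like $\exp(-c\,3^{m\beta s}) = \exp(-c\, M^{\beta s})$, which is exactly the shape required for stochastic integrability exponent $s\beta$. Concretely: since $M$ takes values in $\{3^n : n\in\N\} \cup \{0\}$ (with the convention that the supremum of the empty set is, say, $0$ or $1$), one can compute $\E[\exp((M/\theta)^{s\beta})] = \sum_{n} \exp((3^n/\theta)^{s\beta})\, \P[M = 3^n] \leq \sum_n \exp((3^n/\theta)^{s\beta}) \cdot C\exp(-c\, 3^{n\beta s})$, and choosing $\theta = C(s,\beta,K)$ large enough that $\theta^{-s\beta} \leq c/2$ makes the summand decay like $\exp(-\tfrac{c}{2}3^{n\beta s})$, whose sum over $n$ is a finite constant that can be absorbed by enlarging $\theta$ once more so the total is at most $2$. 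This yields~\eqref{e.OtoM}.

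The only mild subtlety — and the step I would be most careful about — is the first one: checking that the geometric growth of the exponents $3^{n\beta s}$ really does let the tail sum over $n\geq N$ be controlled by a constant times its leading term, uniformly in $N$, and likewise that in the final summation the competition between $\exp((3^n/\theta)^{s\beta})$ (growing) and $\exp(-c\,3^{n\beta s})$ (decaying) is won by the decaying factor once $\theta$ is a large enough constant. Both reduce to the elementary fact that for $a>1$ and $c>0$, $\sum_{n\geq N} \exp(-c\, a^n) \leq C(a,c)\exp(-c\, a^N)$, which follows by comparing consecutive terms (the ratio $\exp(-c(a^{n+1}-a^n))$ is bounded away from $1$) or simply by noting $a^n \geq a^N + (n-N)(a-1)$ for $n\geq N$ when $a \geq 2$ — and here $a = 3^{\beta s}$, which may be less than $2$, so one should instead use $a^n \geq a^N \cdot a^{n-N}$ together with $a^{n-N} \geq 1 + (n-N)\log a$ to extract geometric decay in $n-N$. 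None of this requires new ideas beyond the $\O_s$ calculus already set up in the excerpt; the lemma is essentially a bookkeeping statement that a random scale defined as the last time a rapidly-shrinking family of random variables exceeds a fixed threshold inherits a quantified moment bound.
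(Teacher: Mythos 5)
Your proposal is correct and follows essentially the same route as the paper: Chebyshev's inequality applied to~\eqref{e.XnbdnsOM} to get $\P\left[X_n \geq 1\right] \leq 2\exp\left(-\left(K^{-1}3^{n\beta}\right)^s\right)$, a union bound over scales $n \geq k$ to control the tail of $M$, and then a direct estimate of the exponential moment of $M$ with stochastic integrability exponent $s\beta$ after choosing the normalizing constant large enough that the decaying factor dominates. The only difference is bookkeeping: the paper bounds $\E\left[\exp\left(\delta M^{s\beta}\right)\right]$ by summing $\P\left[M>3^k\right]$ against the weights $\delta t\, 3^{(k+1)t}\exp\left(\delta 3^{(k+1)t}\right)$ rather than first extracting a clean tail bound $\P\left[M\geq 3^N\right]\leq C\exp\left(-c\,3^{N\beta s}\right)$ as you do, and your version correctly fixes the sign of the exponent ($3^{n\beta}$, not $3^{-n\beta}$) that appears as a typo in the paper's display.
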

\begin{proof}
Chebyshev's inequality and~\eqref{e.XnbdnsOM} imply
\begin{equation*} \label{}
\P \left[ X_n \geq 1 \right] \leq 2 \exp\left( - \left( K^{-1}3^{-n\beta} \right)^s \right).
\end{equation*}
Fix $t>0$ and $\delta>0$ and compute
\begin{align*} \label{}
\E \left[ \exp\left( \delta M^t  \right) \right] 
& \leq 1 + \sum_{k=0}^\infty \delta t 3^{(k+1)t} \exp\left(\delta 3^{(k+1)t}  \right)\P \left[ M > 3^k\right] \\
& \leq 1 + \sum_{k=0}^\infty \delta t 3^{(k+1)t} \exp\left(\delta 3^{(k+1)t}  \right) \sum_{n=k}^\infty \P \left[ X_n \geq 1 \right] \\
& \leq 1 + \sum_{k=0}^\infty 2\delta t 3^{(k+1)t} \exp\left(\delta 3^{(k+1)t}  \right) \sum_{n=k}^\infty \exp\left( - \left( K^{-1}3^{-n\beta} \right)^s \right) \\
& \leq 1 + C\delta t\sum_{k=0}^\infty  3^{(k+1)t} \exp\left(\delta 3^{(k+1)t}  \right)  \exp\left( - \left( C^{-1}3^{-k\beta} \right)^s \right).
\end{align*}
Taking $t:=s\beta$ and imposing the condition $\delta < \frac{3^{-t}}{2}C^{-s}$, we get
\begin{equation*} \label{}
\E \left[ \exp\left( \delta M^t  \right) \right]  \leq 1 + C\delta s\beta\sum_{k=0}^\infty  3^{(k+1)t} \exp\left( - \frac12\left( C^{-1}3^{-k\beta} \right)^s \right) \leq 1 + C\delta s\beta. 
\end{equation*}
Taking $\delta< Cs\beta$ gives $\E \left[ \exp\left( \delta M^t  \right) \right]  \leq 2$, thus
\begin{equation*} \label{}
M \leq \O_{s\beta} \left( \delta^{-1} \right),
\end{equation*}
and the proof of~\eqref{e.OtoM} is complete.
\end{proof}

\begin{proposition}[{Minimal scales for $\S$}]
\label{p.minimalscales}
Let $K,s>0$ and $\S$ be as in Proposition~\ref{p.partitions}. Fix $t\in [1,\infty)$. Then there exist  $C(t,K,s,d,\p)<\infty$, an $\N$-valued random variable $\mathcal{M}_t(\S)$ and, for every exponent $r\in \left(0,\frac {sd}{d+t+s}\right)$, a constant $C'(r,t,K,s,d,\p)<\infty$ such that 
\begin{equation}
\label{e.minimalbound}
\mathcal{M}_t(\S) = \O_r(C')
\end{equation}
and
\begin{equation} \label{e.minimalboundbis}
m\in \N, \, 3^m\geq \mathcal{M}_t(\S) \implies \Lambda_t(\cu_m,\S) \leq C 
\quad \mbox{and} \quad 
\sup_{x\in \cu_m} \size(\cu_{\S}(x)) \leq 3^{\frac{dm}{d+t}}.
\end{equation}
\end{proposition}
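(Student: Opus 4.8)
\emph{Strategy.} The plan is to realize $\mathcal{M}_t(\S)$ as the maximum of three random scales, each obtained from Lemma~\ref{l.OtoM}: one forcing the maximal‑cube‑size bound in~\eqref{e.minimalboundbis}, and two combining to force $\Lambda_t(\cu_m,\S)\le C$. Throughout I fix the intermediate exponent
\begin{equation*}
\theta:=\frac{d}{d+t+s}\in(0,1),\qquad n=n(m):=\lfloor\theta m\rfloor,
\end{equation*}
which is the value that balances the error terms below, and $C,c$ denote positive constants depending only on $(t,K,s,d,\p)$. I will repeatedly use the elementary fact that a $\{0,1\}$‑valued random variable $Y$ with $\E Y\le C\exp(-c\,3^{\gamma m})$ satisfies $Y\le\O_{s}(C_{s,\beta}\,3^{-m\beta})$ for every pair $(s,\beta)$ with $s\beta\le\gamma$ (one checks this directly from the definition of the $\O_s$ notation, choosing $C_{s,\beta}$ large enough); via Lemma~\ref{l.OtoM} the associated scale $\sup\{3^m:Y_m\ge1\}$ is then $\O_r(C_r')$ for every $r<\gamma$.

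\emph{The maximal cube size.} For the second assertion in~\eqref{e.minimalboundbis}, let $E_m^{(1)}$ be the event that some $\cu_\S(x)$ with $x\in\cu_m$ has size exceeding $3^{dm/(d+t)}$. By Proposition~\ref{p.partitions}(iii) and a union bound, $\P[E_m^{(1)}]\le 3^{dm}\cdot2\exp(-c\,3^{dms/(d+t)})\le C\exp(-c\,3^{dms/(d+t)})$. Since $3^{dm/(d+t)}<3^m=\dist(\cu_m,\partial\cu_{m+1})$, on $\Omega\setminus E_m^{(1)}$ every cube of $\S$ meeting $\cu_m$ is contained in $\cu_{m+1}$, so $\cl_\S(\cu_m)\subseteq\cu_{m+1}$ there. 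By the remark above, $\mathcal{M}^{(1)}:=\sup\{3^m:E_m^{(1)}\text{ occurs}\}$ satisfies $\mathcal{M}^{(1)}\le\O_r(C_r')$ for every $r<ds/(d+t)$, and $3^m\ge\mathcal{M}^{(1)}$ forces $\sup_{x\in\cu_m}\size(\cu_\S(x))\le 3^{dm/(d+t)}$.

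\emph{Reduction of $\Lambda_t(\cu_m,\S)$ to localized, independent pieces.} On $\Omega\setminus E_m^{(1)}$ we have $\cl_\S(\cu_m)\subseteq\cu_{m+1}$; partitioning $\cu_{m+1}$ into the $N:=3^{d(m+1-n)}$ triadic cubes $z+\cu_n$ it contains and using $\sum_{x\in z+\cu_n}\size(\cu_\S(x))^t\le|\cu_n|\,\Lambda_t(z+\cu_n,\S)$ gives
\begin{equation*}
\Lambda_t(\cu_m,\S)\,\indc_{\Omega\setminus E_m^{(1)}}\ \le\ \frac{3^{dn}}{3^{dm}}\sum_{z}\Lambda_t(z+\cu_n,\S)\ \le\ \frac{3^{d}}{N}\sum_{z}\Lambda_t(z+\cu_n,\S).
\end{equation*}
By Lemma~\ref{l.localizeLambda} (with the auxiliary parameter $t'>1$, taken large at the end), $\Lambda_t(z+\cu_n,\S)\le W_z+V_z$ with $W_z:=\Lambda_t(z+\cu_n,\S_{\mathrm{loc}}(z+\cu_{n+1}))$ and $V_z\le\O_{\frac{s}{t+t'}}(C\,3^{-nt'})$ uniformly in $z$. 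Since $\S_{\mathrm{loc}}(z+\cu_{n+1})$ is finer than $\S$ (Proposition~\ref{p.partitions}(iv)), $W_z\le\Lambda_t(z+\cu_n,\S)$, so by~\eqref{e.LambdaOsbound} and stationarity, uniformly in $z$ and $n$, $\E[W_z]\le C$ and $W_z\le\O_{s/t}(C)$. Averaging and using~\eqref{e.Oavgs} on the $V_z$'s,
\begin{equation*}
\Lambda_t(\cu_m,\S)\,\indc_{\Omega\setminus E_m^{(1)}}\ \le\ \frac{3^d}{N}\sum_{z}W_z\ +\ \O_{\frac{s}{t+t'}}\bigl(C\,3^{-\theta t'm}\bigr)\ =:\ \frac{3^d}{N}\sum_{z}W_z+\bar V_m.
\end{equation*}
The key gain is that $W_z$ is $\F(z+\cu_{n+1})$–measurable; splitting the $z$'s into the $3^d$ sub‑collections on which the cubes $z+\cu_{n+1}$ are pairwise disjoint, the $W_z$ within each sub‑collection are i.i.d.

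\emph{Concentration, assembly, and the obstacle.} Applying a standard upper‑tail concentration estimate for an average of $\asymp N$ i.i.d.\ nonnegative variables with mean $\le C$ and stretched‑exponential tails of exponent $s/t$ (via a truncation at level $(N)^{1/(1+s/t)}$), the average over each of the $3^d$ sub‑collections exceeds $C$ with probability at most $C\exp(-c\,N^{s/(s+t)})$. Letting $E_m^{(2)}$ be the union of these $3^d$ events and using $N\ge 3^{d(1-\theta)m}$ with $d(1-\theta)\cdot\tfrac{s}{s+t}=\tfrac{ds}{d+t+s}$ gives $\P[E_m^{(2)}]\le C\exp(-c\,3^{\,dsm/(d+t+s)})$, so on $\Omega\setminus(E_m^{(1)}\cup E_m^{(2)})$ the last display reads $\Lambda_t(\cu_m,\S)\le C_0+\bar V_m$ for a deterministic $C_0$. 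The term $\Lambda_t(\cu_m,\S)\,\indc_{E_m^{(1)}\cup E_m^{(2)}}$ is, by~\eqref{e.Oprods} and the remark of the first paragraph (applied with exponent $\sigma\downarrow0$), bounded by $\O_{\sigma}(C_r\,3^{-m\beta})$ with $\sigma\beta$ as close to $ds/(d+t+s)$ as desired; and $\bar V_m\le\O_{\frac{s}{t+t'}}(C\,3^{-\theta t'm})$ has exponent–product $\tfrac{s\theta t'}{t+t'}\uparrow s\theta=\tfrac{ds}{d+t+s}$ as $t'\to\infty$. Feeding these into Lemma~\ref{l.OtoM} produces, for each $r<ds/(d+t+s)$, scales $\mathcal{M}^{(2)},\mathcal{M}^{(3)}\le\O_r(C'_r)$ beyond which $E_m^{(2)}$ fails and $\bar V_m<1$; setting $\mathcal{M}_t(\S):=\max(\mathcal{M}^{(1)},\mathcal{M}^{(2)},\mathcal{M}^{(3)},C)$, estimate~\eqref{e.Osums} gives $\mathcal{M}_t(\S)\le\O_r(C'_r)$ for every $r<ds/(d+t+s)$, and $3^m\ge\mathcal{M}_t(\S)$ forces both conclusions in~\eqref{e.minimalboundbis} with $C:=C_0+1$. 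The main obstacle is precisely the passage from the global subadditivity bound on $\Lambda_t(\cu_m,\S)$ to genuinely independent summands: one must localize via Lemma~\ref{l.localizeLambda} at an intermediate scale $3^n$, and the whole estimate hinges on choosing $n\approx\theta m$ with $\theta=d/(d+t+s)$ so that the concentration error $\exp(-cN^{s/(s+t)})$ and the localization error $\O_{s/(t+t')}(C3^{-\theta t'm})$ decay at exactly the rate yielding stochastic integrability $r<ds/(d+t+s)$ — any other $\theta$ degrades one of the two.
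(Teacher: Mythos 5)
Your proposal is correct and follows essentially the same route as the paper: localize $\Lambda_t$ at the mesoscale $3^n$ with $n\approx \frac{dm}{d+t+s}$ via Lemma~\ref{l.localizeLambda}, exploit independence of the $\F(z+\cu_{n+1})$--measurable localized quantities over the $3^d$ sub-collections of disjoint enlarged cubes, handle the maximal cube size by a union bound using Proposition~\ref{p.partitions}(iii), and convert everything into minimal scales through Lemma~\ref{l.OtoM}, taking the maximum at the end. The only real deviation is in the concentration sub-step, where you use a truncation at level $N^{t/(s+t)}$ together with the $\O_{s/t}(C)$ tails of $\Lambda_t$ from~\eqref{e.LambdaOsbound}, whereas the paper runs an exponential-moment computation exploiting the almost sure bound $\Lambda_t(z+\cu_n,\S_{\mathrm{loc}})\leq 3^{nt}$; both yield the same decay $C\exp\left(-c\,3^{\frac{ds}{d+t+s}m}\right)$ and hence the same stochastic integrability $r<\frac{sd}{d+t+s}$, so the argument goes through as you outline it.
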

\begin{proof}
The proof is organized as follows. In steps 1 and 2, we prove that there exists a random variable $\mathcal{M}_t^0(\S)$ satisfying~\eqref{e.minimalbound} and 
\begin{equation*}
m\in \N, \, 3^m\geq \mathcal{M}_t^0(\S) \implies \Lambda_t(\cu_m,\S) \leq C.
\end{equation*}
In step 3 we prove that there exists andom variable $\mathcal{M}_t^1(\S)$ satisfying~\eqref{e.minimalbound} and 
\begin{equation*}
m\in \N, \, 3^m\geq \mathcal{M}_t^1(\S) \implies \sup_{x\in \cu_m} \size(\cu_{\S}(x)) \leq 3^{\frac{dm}{d+t}}.
\end{equation*}
We then define $\mathcal{M}_t(\S) := \max( \mathcal{M}_t^0(\S), \mathcal{M}_t^1(\S))$ which, in view of the previous results, clearly satisfies~\eqref{e.minimalbound} and~\eqref{e.minimalboundbis}.

\smallskip

\emph{Step 1.} Fix $t \in [1, \infty)$ and $r\in\left(0,\frac {sd}{d+t+s}\right)$. Also fix $m,n\in\N$ with $m>n$. To shorten the notation, we write $\S_{\mathrm{loc}}(z') := \S_{\mathrm{loc}}(z'+\cu_{n+1})$ for each $z'\in 3^n\Zd$. Using Lemma~\ref{l.localizeLambda}, we have 
\begin{align}
\label{e.Lambdatbounds}
\lefteqn{
\Lambda_t(z+\cu_m,\S) 
} \qquad & \\  \notag
& = \frac{|\cu_n|}{|\cu_m|} \sum_{z'\in 3^n\Zd\cap(z+\cu_m)} \Lambda_t(z'+\cu_n,\S) \\   \notag
& \leq \frac{|\cu_n|}{|\cu_m|} \sum_{z'\in 3^n\Zd\cap(z+\cu_m)} \Lambda_t\left(z'+\cu_{n},\S_{\mathrm{loc}}(z') \right) + \O_{\frac s{t+t'}}\left( C3^{-nt'} \right).
\end{align}
Denote
\begin{equation*} \label{}
Z:= \frac{|\cu_n|}{|\cu_m|} \sum_{z'\in 3^n\Zd\cap(z+\cu_m)} \Lambda_t\left(z'+\cu_{n},\S_{\mathrm{loc}}(z') \right).
\end{equation*}
We estimate $Z$, using independence. The claim is that 
\begin{equation}
\label{e.Zbound}
Z \leq C + \O_1 \left( C3^{nt-d(m-n)} \right). 
\end{equation}
First we note that, by the properties of $\S_{\mathrm{loc}}(z')$, we have 
\begin{equation*} \label{}
\Lambda_t(z'+\cu_n,\S_{\mathrm{loc}}(z')) \leq 3^{nt}, \quad \mbox{$\P$--a.s.}
\end{equation*}
Thus $Z \leq 3^{nt}$, $\P$--a.s. 

\smallskip

We now take an enumeration $\{ z^j : 1 \leq j \leq 3^{d(m-n-1)} \}$ of the elements of the set $3^{n+1}\Zd \cap \cu_m$. Next, for each $1\leq j \leq 3^{md}$, we let $\{ z^{i,j} \,:\, 1 \leq i \leq 3^d\}$ be an enumeration of the elements of the set $3^n \Zd \cap \left( z^j + \cu_{n+1} \right)$, such that, for every $1\leq j,j'\leq 3^{d(m-n-1)}$ and $1\leq i \leq 3^d$, $z^j - z^{j'} = z^{i,j} - z^{i,j'}$.The point of this is that, for every $1 \leq i \leq 3^d$ and $1 \leq j < j' \leq 3^{d(m-n-1)}$, we have $\dist\left(z^{i,j} + \cu_n ,z^{i,j'}+ \cu_n\right) \geq 3^n$ and therefore, $\Lambda_t\left(z^{i,j}+\cu_n,\S_{\mathrm{loc}}\left(z^{i,j}\right)\right)$ and $\Lambda_t\left(z^{i,j'}+\cu_n,\S_{\mathrm{loc}}\left(z^{i,j'}\right)\right)$ are $\P$--independent.
Now fix $h > 0$ and compute, using the H\"older inequality and the independence
\begin{align*}
\lefteqn{
\log \E\left[ \exp \left( h3^{-nt} Z \right)  \right] 
} \qquad & \\
& = \log \E \left[ \prod_{i =1}^{3^d}  \prod_{ j =1 }^{ 3^{d(m-n-1)}} \exp\left( h 3^{-nt-d(m-n)} \Lambda_t\left(z^{i,j} +\cu_n,\S_{\mathrm{loc}}(z^{i,j}) \right) \right) \right] \\
& \leq 3^{-d} \sum_{i =1}^{3^d} \log \E \left[  \prod_{ j =1 }^{ 3^{d(m-n-1)}}   \exp\left( h 3^{-nt-d(m-n-1)}  \Lambda_t\left(z^{i,j}+\cu_n,\S_{\mathrm{loc}}(z^{i,j}) \right) \right) \right] \\
& \leq 3^{-d} \sum_{i =1}^{3^d}   \sum_{ j =1 }^{ 3^{d(m-n-1)}}  \log \E \left[  \exp\left( h 3^{-nt-d(m-n-1)} \Lambda_t\left(z^{i,j}+\cu_n,\S_{\mathrm{loc}}(z^{i,j}) \right) \right) \right].
\end{align*}
This inequality can be rewritten
\begin{multline*}
\log \E\left[ \exp \left( h3^{-nt} Z \right)  \right]  \\
\leq 3^{-d} \sum_{z'\in 3^n\Zd\cap(z+\cu_m)}  \log \E \left[  \exp\left( h 3^{-nt-d(m-n-1)} \Lambda_t\left(z+\cu_n,\S_{\mathrm{loc}}(z') \right) \right) \right].
\end{multline*}
Next we use the elementary inequality
\begin{equation*} \label{}
\forall y \in [0,1], \quad \exp(y) \leq 1+2y
\end{equation*}
to get, for every $h \in [0,3^{d(m-n-1)}]$, 
\begin{multline*} \label{}
\exp\left( h 3^{-nt-d(m-n-1)} \Lambda_t\left(z'+\cu_n,\S_{\mathrm{loc}}(z') \right) \right) \\
\leq 1+2h 3^{-nt-d(m-n-1)} \Lambda_t\left(z'+\cu_n,\S_{\mathrm{loc}}(z') \right).
\end{multline*}
Taking the expectation of this, applying the previous display and using the elementary inequality
\begin{equation*}
\forall y\geq 0, \ \log(1+y) \leq y,
\end{equation*}
we get
\begin{align*} \label{}
\log \E\left[ \exp \left( h3^{-nt} Z \right)  \right] 
& \leq 3^{d(m-n)} \log \left( 1 + 2 h 3^{-nt-d(m-n-1)} \E \left[  \Lambda_t\left(z'+\cu_n,\S_{\mathrm{loc}}(z') \right) \right] \right)  \\
& \leq 2h 3^{-nt+d} \E \left[  \Lambda_t\left(z'+\cu_n,\S_{\mathrm{loc}}(z') \right) \right] \\
& \leq Ch3^{-nt}.
\end{align*}
Taking $h:= 3^{d(m-n-1)}$ yields
\begin{equation*}
\E\left[ \exp \left( 3^{d(m-n-1)-nt} Z \right)  \right] \leq \exp\left( C3^{d(m-n)-nt} \right). 
\end{equation*}
From this and Chebyshev's inequality, we obtain a constant~$C$ such that 
\begin{equation*}
\P \left[ Z \geq C + h \right] \leq \exp\left( -h 3^{d(m-n)-nt} \right)
\end{equation*}
This implies~\eqref{e.Zbound}.

\smallskip

\emph{Step 2.} We complete the proof by applying union bounds. Combining~\eqref{e.Lambdatbounds} and~\eqref{e.Zbound} yields
\begin{equation*} \label{}
\Lambda_t(z+\cu_m,\S)  \leq C_0 + \O_1 \left( C3^{nt-d(m-n)} \right)+ \O_{\frac s{t+t'}}\left( C3^{-nt'} \right).
\end{equation*}
We now choose
\begin{equation*} \label{}
n:= \left\lceil \frac{dm}{d+t+s} \right\rceil 
\end{equation*}
so that the previous line becomes 
\begin{equation*} \label{}
\Lambda_t(z+\cu_m,\S)  \leq C_0 + \O_1 \left( C3^{-\frac{sd}{d+t+s}m} \right)+ \O_{\frac s{t+t'}}\left( C3^{-\frac{dt'}{d+t+s}m}  \right).
\end{equation*}
Define 
\begin{equation*} \label{}
\M_t^0(\S):= \sup\left\{ 3^m \,:\, \Lambda_t(z+\cu_m,\S)  \geq C_0 +2 \right\}
\end{equation*}
and apply Lemma~\ref{l.OtoM} to find that 
\begin{equation*} \label{}
\M_t^0(\S) \leq \O_{\frac{sd}{d+t+s}}(C) + \O_{\frac{st'd}{(t+t')(d+t+s)}} (C) \leq\O_{\frac{st'd}{(t+t')(d+t+s)}} (C). 
\end{equation*}
Taking $t'$ sufficiently large, depending on $(r,d,s,t)$, then 
\begin{equation*} \label{}
\frac{st'd}{(t+t')(d+t+s)} > r
\end{equation*}
and thus we obtain
\begin{equation*} \label{}
\M_t^0(\S) \leq \O_{r} (C).
\end{equation*}

\smallskip

\emph{Step 3.} By Proposition~\ref{p.partitions} (iii), we have, for every $m \in \N$
\begin{align*}
\P \left[  \sup_{x\in \cu_m} \size(\cu_{\S}(x)) > 3^{\frac{dm}{d+t}} \right] & \leq \sum_{x\in \cu_m} \P\left[ \size(\cu_{\S}(x)) > 3^{\frac{dm}{d+t}} \right] 
 \leq 2 \cdot 3^{dm} \exp \left( - C^{-1}3^{\frac{dsm}{d+t}} \right).
\end{align*}
From this we deduce that for every $m \in \N$
\begin{equation*}
\indc_{\left\lbrace  \sup_{x\in \cu_m} \size(\cu_{\S}(x)) > 3^{\frac{dm}{d+t}} \right\rbrace} \leq \O_1 \left( C3^{-\frac{dsm}{d+t}}\right).
\end{equation*}
Applying Lemma~\ref{l.OtoM} with $s=1$, $\beta = \frac{ds}{d+t}$ and 
\begin{equation*}
X_m := 2 \cdot \indc_{\left\lbrace  \sup_{x\in \cu_m} \size(\cu_{\S}(x)) > 3^{\frac{dm}{d+t}} \right\rbrace}
\end{equation*}
shows that the random variable
\begin{equation*}
\M_t^1(\S) := \sup \left\lbrace 3^m \in \N ~:~ X_m \geq 1 \right\rbrace
\end{equation*}
satisfies
\begin{equation*}
\M_t^1(\S) \leq \O_{\frac{dm}{d+t}}(C).
\end{equation*}
Since $r \leq \frac{dm}{d+t+s} \leq \frac{dm}{d+t}$, we also have
\begin{equation*}
\M_t^1(\S) \leq \O_{r}(C)
\end{equation*}
and by definition of $\left( X_m\right)_{m \in \N}$ and $\M_t^1(\S)$, we have
\begin{equation*}
m\in \N, \, 3^m\geq \mathcal{M}_t^1(\S) \implies \sup_{x\in \cu_m} \size(\cu_{\S}(x)) \leq 3^{\frac{dm}{d+t}}.
\end{equation*}
The proof is complete.
\end{proof}

\subsection{The partition $\Pa$ of well-connected cubes}

We apply the construction of the previous subsection to obtain a random partition~$\Pa$ of $\Zd$ which simplifies the geometry of the percolation cluster. This partition plays an important role in the rest of the paper. For bounds on the ``good event" which allows us to construct the partition, we use the important results of Pisztora~\cite{P}, Penrose and Pisztora~\cite{PP} and Antal and Pisztora~\cite{AP}. We first recall some definitions introduced in those works. 

\begin{definition}[Crossability and Crossing cluster]
We say that a cube $\cu$ is \emph{crossable} (with respect to $\a \in\Omega$) if each of the $d$ pairs of opposite $(d-1)$--dimensional faces of $\cu$ are joined by an open path in $\cu$. We say that a cluster~$\C \subseteq \cu$ is a \emph{crossing cluster for $\cu$} if $\C$ intersects each of the $(d-1)$--dimensional faces of $\cu$. 
\end{definition}

\begin{definition}[Good cube] \label{def.goodcube}
We say that a triadic cube $\cu\in\T$ is \emph{well-connected} if there exists a crossing cluster~$\C$ for the cube $\cu$ such that:
\begin{enumerate}

\item[(i)] each cube $\cu'$ with $\size(\cu') \in \left[ \frac{1}{10} \size(\cu), \frac 12 \size(\cu) \right]$ and $\cu' \cap \frac 34 \cu \neq \emptyset$ is crossable; and 

\item[(ii)] for every cube $\cu'$ as in (i) and every path~$\gamma\subseteq \cu'$ with~$\diam(\gamma) \geq \frac{1}{10} \size(\cu)$, we have that~$\gamma$ is connected to~$\C$ within~$\cu'$. That is, there is another path~$\gamma'$ within~$\cu'$ which connects a point of~$\gamma$ to a point of~$\mathscr{C}$. 
\end{enumerate}
We say that~$\cu\in\T$ is a \emph{good cube} if $\size(\cu) \geq 3$, $\cu$ is well-connected and each of the~$3^d$ successors of~$\cu$ are well-connected. We say that $\cu\in\T$ is a \emph{bad cube} if it is not a good cube. 
\end{definition}

The following estimate on the probability of the cube $\cu_n$ being good is a consequence~\cite[Theorem 3.2]{P} and~\cite[Theorem 5]{PP}, as recalled in~\cite[(2.24)]{AP}.

\begin{lemma}[{\cite[(2.24)]{AP}}]
\label{l.AP}
There exist $C(d,\p)<\infty$ and $c(d,\p)\in(0,\frac12]$ such that, for every $m\in\N$, 
\begin{equation} \label{e.pgoodness}
\P \left[ \cu_n \ \mbox{is good} \right] \geq 1 - C \exp\left( - c3^{n} \right).
\end{equation}
\end{lemma}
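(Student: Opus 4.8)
The statement to prove is Lemma~\ref{l.AP}, which asserts a stretched-exponential-type bound (in fact an exponential bound, up to the $3^n$ in the exponent) on the probability that the triadic cube $\cu_n$ fails to be a good cube in the sense of Definition~\ref{def.goodcube}. Since the paper explicitly attributes this to~\cite[(2.24)]{AP}, which in turn combines~\cite[Theorem 3.2]{P} and~\cite[Theorem 5]{PP}, the plan is essentially to assemble these cited ingredients rather than to reprove them from scratch. First I would recall the two input estimates from supercritical percolation theory: (a) the probability that a cube of side length $L$ fails to be crossable (in the sense that some pair of opposite faces is not joined by an open path inside the cube) is bounded by $C\exp(-cL)$ for $\p>\pc(d)$ — this is the Pisztora renormalization / coarse-graining estimate; and (b) the probability that a crossable cube of side length $L$ fails to contain a \emph{unique} crossing cluster, or that some path of diameter at least a fixed fraction of $L$ inside a sub-box fails to connect to that crossing cluster, is also bounded by $C\exp(-cL)$ — this is the Penrose--Pisztora / Antal--Pisztora control on the geometry of the maximal cluster in a box.

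The main step is then a union bound over the (finitely many, and polynomially-in-$3^n$ many) sub-events whose failure would make $\cu_n$ bad. Concretely, $\cu_n$ is good provided: $\size(\cu_n)=3^n\geq 3$ (automatic for $n\geq1$); $\cu_n$ itself and each of its $3^d$ successors admit a crossing cluster; condition~(i) holds, i.e.\ every cube $\cu'$ with $\size(\cu')\in[\tfrac1{10}3^n,\tfrac12 3^n]$ meeting $\tfrac34\cu_n$ is crossable; and condition~(ii) holds, i.e.\ every long path inside such a $\cu'$ connects to the crossing cluster $\C$ within $\cu'$. The number of cubes $\cu'$ of the relevant intermediate scales intersecting $\tfrac34\cu_n$ is at most $C 3^{dn}$ (polynomially many), and each has side length comparable to $3^n$, so by (a)--(b) each contributes a failure probability at most $C\exp(-c3^n)$. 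Multiplying the polynomial factor $C3^{dn}$ by $\exp(-c3^n)$ and absorbing the polynomial into the exponential (replacing $c$ by a smaller constant and $C$ by a larger one, valid for all $n\geq1$, with the finitely many remaining small $n$ handled trivially by possibly enlarging $C$) yields $\P[\cu_n \text{ is bad}]\leq C\exp(-c3^n)$, which is~\eqref{e.pgoodness}. The consistency of the crossing cluster across $\cu_n$ and its successors — needed so that the single cluster $\C$ in the definition can be taken simultaneously for all of them — follows from the uniqueness part of the Penrose--Pisztora estimate, which guarantees that on the good event the crossing clusters of overlapping boxes coincide.

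The only genuinely nontrivial point is that all the geometric statements being controlled — crossability at several intermediate scales, existence and uniqueness of the crossing cluster, and the ``every long path connects to the crossing cluster'' property of~(ii) — are all covered by the estimates in~\cite{P,PP,AP} with exponential rates in the side length; granting those, the present lemma is a bookkeeping union bound. I therefore expect the main obstacle (in a fully self-contained treatment) to be item~(b), the connectivity statement underlying condition~(ii), since establishing that \emph{every} macroscopic path in a sub-box hooks onto the crossing cluster requires the full strength of the Pisztora/Penrose--Pisztora uniqueness-of-giant-component machinery; but since this is precisely the content of~\cite[(2.24)]{AP}, which we are entitled to cite, the proof reduces to the union bound described above. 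I would conclude by noting that the constants $C(d,\p)$ and $c(d,\p)$ inherited here are exactly the (unquantified) constants of~\cite{AP}, a point the authors flag in the discussion preceding the lemma.
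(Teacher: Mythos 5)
The paper offers no proof of this lemma at all: it is stated as a direct citation of~\cite[(2.24)]{AP} (itself assembled from~\cite[Theorem 3.2]{P} and~\cite[Theorem 5]{PP}), and your reconstruction — union bound over the polynomially many intermediate-scale sub-cubes, with each crossability/connectivity failure controlled exponentially in $3^n$ by the cited estimates, and the polynomial prefactor absorbed by shrinking $c$ — is exactly the standard derivation those references carry out. So your proposal is correct and consistent with the paper's treatment; there is nothing in the paper's own argument to compare against beyond the citation.
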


It follows from Definition~\ref{def.goodcube} that, for every good cube $\cu$, there exists a unique maximal crossing cluster for $\cu$ which is contained in $\cu$. We denote this cluster by $\C_*(\cu)$. In the next lemma, we record the observation that adjacent triadic cubes which have similar sizes and are both good have connected clusters. 

\begin{lemma}
\label{l.connectivity}
Let $n,n'\in\N$ with $|n-n'|\leq 1$ and $z,z'\in 3^n\Zd$ such that 
\begin{equation*} \label{}
\dist\left(\cu_{n}(z), \cu_{n'}(z') \right) \leq1.
\end{equation*}
Suppose also that $\cu_{n}(z)$ and $\cu_{n'}(z')$ are good cubes. Then there exists a cluster~$\C$ such that 
\begin{equation*} \label{}
\C_*( \cu_{n}(z) ) \cup \C_*( \cu_{n'}(z') )\subseteq  \C \subseteq \cu_{n}(z) \cup \cu_{n'}(z').
\end{equation*}
\end{lemma}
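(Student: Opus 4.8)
\textbf{Proof plan for Lemma~\ref{l.connectivity}.}

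The plan is to argue that the two maximal crossing clusters $\C_*(\cu_n(z))$ and $\C_*(\cu_{n'}(z'))$ must be connected to each other \emph{within} the union $\cu_n(z) \cup \cu_{n'}(z')$, and then take $\C$ to be the connected component of this union that contains both of them. Once such a connection is produced, the inclusions in the statement are immediate: the connection stays inside $\cu_n(z)\cup\cu_{n'}(z')$, so the resulting component $\C$ is contained in that union, and it contains $\C_*(\cu_n(z))\cup\C_*(\cu_{n'}(z'))$ by construction. So the whole content of the lemma is the existence of a path, inside the union, joining the two crossing clusters.

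The mechanism for producing that path is part~(ii) of the definition of well-connected (Definition~\ref{def.goodcube}). The idea is to exhibit a ``witness'' path $\gamma$ that lies in a small sub-cube of one of the good cubes, has diameter at least $\tfrac1{10}$ of that cube's size, and passes near the common boundary region; then part~(ii) forces $\gamma$ to be connected to the maximal crossing cluster of that cube within the small sub-cube. First I would use that $\C_*(\cu_{n'}(z'))$ is a \emph{crossing} cluster for $\cu_{n'}(z')$: it meets every $(d-1)$-dimensional face of $\cu_{n'}(z')$, in particular a face that is within distance $1$ of $\cu_n(z)$ (this is where $\dist(\cu_n(z),\cu_{n'}(z'))\le 1$ and $|n-n'|\le1$ are used — the cubes are genuinely adjacent, with comparable sizes, so a face of one is close to the bulk of the other). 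Take a point $w$ of $\C_*(\cu_{n'}(z'))$ on that face. Now, since the sizes differ by at most a factor of $3$, one can fit a cube $\cu''$ with $\size(\cu'')\in[\tfrac1{10}\size(\cu),\tfrac12\size(\cu)]$ (taking $\cu = \cu_n(z)$, say, as the reference cube) that intersects $\tfrac34\cu_n(z)$ and also reaches across to contain $w$ and a chunk of $\C_*(\cu_{n'}(z'))$ near $w$. Because $\cu_n(z)$ is good, $\cu''$ is crossable, so it contains a crossing path $\gamma$; this $\gamma$ has diameter $\ge\tfrac1{10}\size(\cu_n(z))$, so by part~(ii) of the definition applied in $\cu_n(z)$ it is connected within $\cu''$ to $\C_*(\cu_n(z))$. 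A symmetric argument (or a direct connectivity statement about the portion of $\C_*(\cu_{n'}(z'))$ entering $\cu''$, again via part~(ii), this time applied to the good cube $\cu_{n'}(z')$ and the sub-cube $\cu''$ rescaled appropriately) connects $\gamma$ — or a point of $\C_*(\cu_{n'}(z'))$ lying in $\cu''$ — to $\C_*(\cu_{n'}(z'))$. Chaining these connections gives a path from $\C_*(\cu_n(z))$ to $\C_*(\cu_{n'}(z'))$ staying inside $\cu_n(z)\cup\cu_{n'}(z')$, and we are done.

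The main obstacle, and the step requiring the most care, is the \emph{geometric bookkeeping}: verifying that one really can choose an intermediate cube $\cu''$ that simultaneously (a) has size in the admissible window $[\tfrac1{10}\size(\cu),\tfrac12\size(\cu)]$ for \emph{both} good cubes (here the hypotheses $|n-n'|\le1$ and $\dist\le1$ are essential, since they make the windows overlap and make ``straddling'' possible), (b) intersects $\tfrac34$ of the relevant cube so that clauses~(i)--(ii) of Definition~\ref{def.goodcube} actually apply, and (c) contains enough of each crossing cluster near the shared face that part~(ii)'s diameter hypothesis $\diam(\gamma)\ge\tfrac1{10}\size(\cdot)$ is met by the relevant sub-paths. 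This is elementary but fiddly, and it is worth doing a small case analysis on whether $n'=n$, $n'=n+1$, or $n'=n-1$. I would handle the equal-size case $n=n'$ in detail and remark that the off-by-one cases are analogous after adjusting constants, since $\tfrac1{10}$ and $\tfrac12$ leave plenty of room for a factor of $3$. The rest — that a connected subgraph meeting all faces of a cube is a ``crossing cluster,'' that crossable cubes contain crossing paths, that concatenation of paths through a common vertex yields a path — is immediate from the definitions.
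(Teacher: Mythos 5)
Your overall mechanism is the one the paper uses: take a subcube straddling the shared face, use clause (i) of Definition~\ref{def.goodcube} (goodness of $\cu_n(z)$) to extract a crossing path $\gamma$ of that subcube reaching into both cubes, then use clause (ii) on each side to hook $\gamma$ onto the two maximal crossing clusters, with all connections staying in the straddling cube and hence in $\cu_n(z)\cup\cu_{n'}(z')$. Two issues with the execution. A minor one: locating a point $w\in\C_*(\cu_{n'}(z'))$ on the shared face and forcing $\cu''$ to contain it buys you nothing, since clause (ii) connects \emph{any} path of diameter $\geq\frac1{10}\size(\cdot)$ inside an admissible subcube to the crossing cluster, whether or not the subcube meets that cluster; and indeed you fall back on (ii) anyway.

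The genuine gap is the case $|n-n'|=1$, where your claim that $\tfrac1{10}$ and $\tfrac12$ ``leave plenty of room for a factor of $3$'' is false for the $\tfrac34$-intersection requirement. Say $n'=n+1$. A single cube $\cu''$ admissible for both reference cubes must have size at most $\tfrac12 3^n$ (the window for $\cu_n(z)$), but to meet $\tfrac34\cu_n(z)$ it must penetrate about $\tfrac18 3^n$ into $\cu_n(z)$, and to meet $\tfrac34\cu_{n+1}(z')$ it must penetrate about $\tfrac18\cdot 3^{n+1}=\tfrac38 3^n$ into $\cu_{n+1}(z')$; these penetrations already sum to $\tfrac12 3^n$, so with the discrete conventions (floors, and the unit gap between adjacent cubes) no such cube exists — the size windows overlap, but the two $\tfrac34$-conditions cannot both hold. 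The paper's device is to never apply (i)--(ii) to the larger cube directly: replace $\cu_{n+1}(z')$ by its successor $\cu_n'$ adjacent to $\cu_n(z)$, which is well-connected precisely because Definition~\ref{def.goodcube} requires all successors of a good cube to be well-connected (a clause your argument never invokes), and use $\C_*(\cu_n')\subseteq\C_*(\cu_{n+1}(z'))$; then both reference cubes have size $3^n$ and one straddling cube of size $\tfrac12 3^n$ centered at the face center is admissible on both sides. Your alternative ``rescale $\cu''$ for the second application'' can be repaired, but then you must either check that the enlarged cube stays inside $\cu_n(z)\cup\cu_{n'}(z')$ (an enlarged straddling cube can protrude laterally out of the union, and (ii) only gives a connection inside that cube) or apply (ii) to the sub-path of $\gamma$ lying in $\cu_{n+1}(z')$ after verifying its diameter is at least $\tfrac1{10}3^{n+1}$; neither verification appears in your sketch.
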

\begin{proof}
We may suppose that $n\leq n'$. Let~$x$ be the center point on the face of $\cu_n(z)$ which is adjacent to $\cu_{n'}(z')$. If $n' = n+1$, then we let ${\cu}_{n}'$ be the successor of $\cu_{n'}(z')$ which is adjacent to $\cu_n(z)$ and otherwise, if $n'=n$, we set ${\cu}_{n}':= \cu_{n'}(z')$. Consider the cube $\cu$ of size~$\frac123^n$ centered at~$x$. Since $\cu_n(z)$ is a good cube and therefore well-connected, $\cu$ is crossable. Let $\gamma\subseteq\cu$ be a path which connects the two faces of $\cu$ which are parallel to the face of $\cu_n(z)$ containing~$x$. There are two subpaths of $\gamma \subseteq \cu$ which, respectively, lie inside of $\cu_{n}(z)$ and $\tilde{\cu}_{n}$ and have length at least $\frac14 3^n$. Therefore, since both of the cubes $\cu_n(z)$ and ${\cu}_{n}'$ are well-connected, we conclude that $\gamma$ intersects both $\C_*(\cu_n(z))$ and $\C_*({\cu}_n')\subseteq\C_*(\cu_{n'}(z'))$. Taking $\C$ to be the cluster
\begin{equation*} \label{}
\C:= \gamma \cup \C_*(\cu_n(z)) \cup \C_*(\cu_{n'}(z'))
\end{equation*}
completes the proof. 
\end{proof}

We next define our partition~$\Pa$.

\begin{definition}
\label{d.Pa}
We let $\Pa\subset \T$ be the partition~$\S$ of $\Zd$ obtained by applying Proposition~\ref{p.partitions} to the collection
\begin{equation*} \label{}
\G:= \left\{ \cu\in \T\,:\, \cu \ \mbox{ is good} \right\}.
\end{equation*}
We also let $\Pa_{\mathrm{loc}}(z+\cu_n)$ denote the local partitions $\S_{\mathrm{loc}}(z+\cu_n)$ in the statement of Proposition~\ref{p.partitions}(iv). 
\end{definition}

The (random) partition $\Pa$ plays an important role throughout the rest of the paper. We also denote by $\Pas$ the collection of triadic cubes which contain some element of $\Pa$, that is, 
\begin{equation*} \label{}
\Pas:= \left\{ \cu\,:\,  \mbox{$\cu$ is a triadic cube and $\cu \supseteq \cu'$ for some $\cu'\in \Pa$} \right\}.
\end{equation*}
Notice that every element of $\Pas$ can be written in a unique way as a disjoint union of elements of $\Pa$.
According to Proposition~\eqref{p.partitions}(i), every triadic cube containing an element of $\Pa$ is good. By Propositions~\ref{p.partitions}(iii) and Lemma~\ref{l.AP}, there exists $C(d,\p)<\infty$ such that, for every $x\in\Zd$,
\begin{equation}
\label{e.partitionO1}
\size\left( \cu_{\Pa}(x) \right) \leq \O_1(C).
\end{equation}
By the properties of~$\Pa$ given in Proposition~\ref{p.partitions}(i) and (ii) and Lemma~\ref{l.connectivity}, the maximal crossing cluster $\C_*(\cu)$ of an element $\cu\in \Pas$ must satisfy $\C_*(\cu) \subseteq \C_\infty$, since the union of all crossing clusters of elements of $\Pa$ is unbounded and connected. Indeed, we have the stronger property that, for every $n\in\N$ and $z\in 3^n\Zd$, 
\begin{equation} 
\label{e.goodness}
\cu_{n+1}(z) \in \Pas
 \implies 
\emptyset\neq \C_*(\cu_n(z)) \subseteq \C_*(\cu_{n+1}(z)) \cap \cu_n(z)  = \C_\infty \cap \cu_n(z).
\end{equation}
Given $\cu\in \Pa$ with $\cu = \cu_{n+1}(z)$ for $n\in\N$ and $z\in3^{n+1}\Zd$, we let $\zbar(\cu)$ denote the element of~$\C_*(\cu)\cap \cu_n(z)$ which is closest to~$z$ in the manhattan distance; if this is not unique, then we break ties by the lexicographical order.

\begin{corollary}
\label{c.paths}
For every $x,y\in\Zd$ and path $\gamma_0$ connecting $x$ and $y$, there exists a path $\gamma \subseteq\C_\infty$ connecting $\zbar(\cu_{\Pa}(x))$ and $\zbar(\cu_{\Pa}(y))$ such that 
\begin{equation*} \label{}
\gamma \subseteq \bigcup_{z\in \gamma_0} \cu_{\Pa}(z). 
\end{equation*}
\end{corollary}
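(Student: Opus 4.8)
The plan is to follow the given path $\gamma_0$ through the cubes of $\Pa$ that it visits and, inside each such cube and between consecutive ones, to stitch together short pieces of clusters joining the marked points $\zbar(\cdot)$.

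First I would write the vertices of $\gamma_0$ as $x = w_0, w_1, \dots, w_M = y$ with $w_i \sim w_{i+1}$ for every $i$, and set $\cu^i := \cu_{\Pa}(w_i)$. Since $w_i \sim w_{i+1}$ (so $\dist(w_i,w_{i+1}) = 1$) while two distinct elements of $\Pa$ are disjoint triadic cubes and hence lie at distance at least $1$ apart, for each $i$ one has either $\cu^i = \cu^{i+1}$ or $\dist(\cu^i,\cu^{i+1}) = 1$, i.e., $\cu^i$ and $\cu^{i+1}$ are neighbors. In the latter case Proposition~\ref{p.partitions}(ii) forces $\size(\cu^i)$ and $\size(\cu^{i+1})$ to differ by a factor of at most $3$, so writing $\cu^i = \cu_{m_i}(z_i)$ we obtain $|m_i - m_{i+1}| \le 1$. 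I would also record that, by Proposition~\ref{p.partitions}(i) together with Definition~\ref{d.Pa}, every $\cu^i$ is a good cube; in particular $\size(\cu^i) \ge 3$, the point $\zbar(\cu^i)$ is well-defined, and $\zbar(\cu^i) \in \C_*(\cu^i) \subseteq \C_\infty$ by~\eqref{e.goodness}.

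The core step is to join $\zbar(\cu^i)$ to $\zbar(\cu^{i+1})$ by a path contained in $\C_\infty \cap (\cu^i \cup \cu^{i+1})$. If $\cu^i = \cu^{i+1}$ there is nothing to prove. Otherwise $\cu^i$ and $\cu^{i+1}$ are good, neighboring, and of comparable size, so Lemma~\ref{l.connectivity} supplies a cluster $\C$ with $\C_*(\cu^i) \cup \C_*(\cu^{i+1}) \subseteq \C \subseteq \cu^i \cup \cu^{i+1}$. Being connected and meeting $\C_\infty$, the cluster $\C$ lies in $\C_\infty$, and it contains both $\zbar(\cu^i)$ and $\zbar(\cu^{i+1})$, which are therefore joined by a path within $\C \subseteq \C_\infty \cap (\cu^i \cup \cu^{i+1})$.

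It then remains to concatenate these pieces over $i = 0, \dots, M-1$: the terminal vertex $\zbar(\cu^i)$ of the $i$-th piece is the initial vertex of the $(i+1)$-st, so the concatenation is a path $\gamma \subseteq \C_\infty$ from $\zbar(\cu^0) = \zbar(\cu_{\Pa}(x))$ to $\zbar(\cu^M) = \zbar(\cu_{\Pa}(y))$. Since the $i$-th piece lies in $\cu^i \cup \cu^{i+1} = \cu_{\Pa}(w_i) \cup \cu_{\Pa}(w_{i+1}) \subseteq \bigcup_{z \in \gamma_0} \cu_{\Pa}(z)$, we conclude $\gamma \subseteq \bigcup_{z \in \gamma_0} \cu_{\Pa}(z)$, as required. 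I expect no serious difficulty here: the only point that needs care is the reduction to Lemma~\ref{l.connectivity} — verifying that consecutive visited cubes are neighboring good cubes of comparable size — while everything else is routine bookkeeping about paths in clusters.
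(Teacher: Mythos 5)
Your proof is correct and follows essentially the same route as the paper, which deduces the corollary directly from Lemma~\ref{l.connectivity} together with Proposition~\ref{p.partitions}(ii); you have merely spelled out the bookkeeping (consecutive cubes along $\gamma_0$ are equal or neighboring good cubes of comparable size, then concatenate the connecting paths) that the paper leaves implicit.
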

\begin{proof}
This is immediate from Lemma~\ref{l.connectivity} and the property of~$\Pa$ from~Proposition~\ref{p.partitions}(ii).
\end{proof}

\section{Elliptic and functional inequalities on clusters}
\label{s.functional}

In this section, we use the partition $\Pa$ constructed in the previous section to tame the large-scale geometry of the percolation cluster. In particular, we give direct arguments leading to a quantitative Sobolev-Poincar\'e inequality. This allows us to develop some basic elliptic estimates we will need later in the paper. Many of the results in this section are similar to, and overlap with, those of Barlow~\cite{Ba}. Our approach however is somewhat different and, we believe, can be pushed beyond the particular case of a Bernoulli bond percolation cluster considered in this paper. Also, some of the estimates we prove (e.g., the Meyers estimate in Proposition~\ref{p.meyers}) are new and needed in the following sections.

\subsection{Functional inequalities on clusters}
As in the previous section, for each $\cu\in \Pas$, we let $\C_*(\cu)$ denote the unique maximal crossing cluster for $\cu$. Recall that, given an arbitrary subset $U \subseteq \Zd$, we define the \emph{closure $\cl_\Pa(U)$ of $U$ with respect to $\Pa$} by 
\begin{equation*} \label{}
\cl_\Pa(U):= \bigcup_{z\in U} \cu_\Pa(z). 
\end{equation*}
We then define $\C_*(U)$ to be the maximal cluster contained in $\cl_\Pa(U)$ which contains each of the clusters $\C_*(\cu_\Pa(z))$ for every $z\in U$. 

\begin{definition} \label{def.coarsenfunction}
Given a function $w: \C_*(U) \to \R$, the \emph{coarsening $\left[ w \right]_{\Pa}$ of $w$ with respect to $\Pa$} is a function $\cl_\Pa(U)\to \R$ defined by 
\begin{equation*} \label{}
\left[ w \right]_{\Pa}\!(x):= w\left( \zbar\left(\cu_\Pa(x) \right) \right),\quad x\in \cl_\Pa(U).
\end{equation*}
where $ \zbar\left(\cu_\Pa(x) \right)$ is the point of $\C_*(\cu_\Pa(x))$ which is the closest to the center of the cube for the infinite norm $|\cdot|_\infty$ (if there is more than one candidat, pick the one that comes first for the lexicographical order). In particular, $\left[ w \right]_{\Pa}$ we note that is defined on the closure $\cl_{\Pa}(U)$ and is constant on the elements of~$\Pa$.
\end{definition}

 The advantage of~$\left[ w \right]_{\Pa}$ is that it allows us to make use of the simpler and more favorable geometric structure of~$\Zd$ compared to the percolation clusters. The price to pay is the difference between~$w$ and~$\left[ w \right]_{\Pa}$, which depends of course on the coarseness of the partition~$\Pa$ and the control one has on~$\nabla u$. Indeed, we show next that the difference~$w-\left[ w \right]_{\Pa}$ can be controlled in $L^s(\C_*(U))$ for $s\in [1,\infty)$ in terms of a weighted~$L^s(\C_*(U))$ norm of~$\nabla w$. The weight function represents the coarseness of the partition~$\Pa$ in~$\cl_\Pa(U)$.
In what follows, $\nabla w \indc_{\{\a\neq0\}}$ denotes the vector field of $\nabla w$ restricted to the open edges:
\begin{equation*}
\left(\nabla w\indc_{\{\a\neq 0\}} \right)(e):= \nabla w(e) \indc_{ \{ \a(e)\neq 0\}}, \quad e\in \Ed(U).
\end{equation*}
Recall that the notation $|F|(x)$ for a vector field $F$ is defined in~\eqref{e.magF}.

\begin{lemma}
\label{l.coarseLs}
For every bounded~$U\subseteq\Zd$, $1\leq s<\infty$ and $w: \C_*(U) \to \R$,
\begin{multline}
\label{e.coarseLs}
\int_{\C_*(U)} \left| w(x) - \left[ w \right]_\Pa\!(x) \right|^s\,dx \\
\leq C^s \sum_{\cu \in \Pa,\, \cu \subseteq \cl_\Pa(U)}  \size(\cu)^{sd} \int_{\cu\cap \C_*(U)}\left| \nabla w\indc_{\{\a\neq 0\}}\right|^s(x)  \,dx.
\end{multline}
\end{lemma}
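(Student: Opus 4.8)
The plan is to prove the bound pointwise on each cube $\cu \in \Pa$ with $\cu \subseteq \cl_\Pa(U)$, then sum. Fix such a cube $\cu$. On $\cu$ the coarsened function $[w]_\Pa$ is the constant value $w(\zbar(\cu))$, so for $x \in \cu \cap \C_*(U)$ we have $|w(x) - [w]_\Pa(x)| = |w(x) - w(\zbar(\cu))|$. The first step is to connect $x$ to $\zbar(\cu)$ by a path inside $\C_*(U)$ and estimate the increment of $w$ along it. Using the well-connectedness of good cubes (Definition~\ref{def.goodcube}) together with Lemma~\ref{l.connectivity} and the comparability of neighboring cube sizes from Proposition~\ref{p.partitions}(ii), I would produce a path $\gamma(x)$ from $x$ to $\zbar(\cu)$ lying in $\C_*(U)$ whose length is at most $C\size(\cu)$ and which stays within a bounded number of cubes of $\Pa$ of size comparable to $\size(\cu)$ near $\cu$. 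Then
\begin{equation*}
|w(x) - w(\zbar(\cu))| \leq \sum_{e \in \gamma(x)} |\nabla w(e)| = \sum_{e \in \gamma(x)} \bigl|\nabla w \indc_{\{\a \neq 0\}}\bigr|(e),
\end{equation*}
where the last equality holds because the edges of a path are open by definition.

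Next I would raise this to the $s$-th power. By Jensen (or H\"older) applied to the sum over the at most $C\size(\cu)$ edges of $\gamma(x)$,
\begin{equation*}
|w(x) - w(\zbar(\cu))|^s \leq \bigl(C\size(\cu)\bigr)^{s-1} \sum_{e \in \gamma(x)} \bigl|\nabla w \indc_{\{\a\neq0\}}\bigr|^s(e).
\end{equation*}
Now sum over $x \in \cu \cap \C_*(U)$. The key combinatorial point is that each edge $e$ lying in the relevant neighborhood of $\cu$ is used by at most $C\size(\cu)^d$ of the paths $\{\gamma(x)\}_x$ (a crude bound, since there are only $\size(\cu)^d$ starting points $x$ and each path is counted once). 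Interchanging the order of summation therefore gives
\begin{equation*}
\sum_{x \in \cu \cap \C_*(U)} |w(x) - w(\zbar(\cu))|^s \leq C^s \size(\cu)^{s-1} \cdot \size(\cu)^d \sum_{e} \bigl|\nabla w\indc_{\{\a\neq0\}}\bigr|^s(e),
\end{equation*}
where the sum on the right runs over edges in a bounded neighborhood $N(\cu)$ of $\cu$ consisting of $\cu$ and finitely many neighboring elements of $\Pa$, all of size within a factor $3$ of $\size(\cu)$; hence $\sum_e(\cdots) \leq C \sum_{\cu' \in \Pa, \cu' \cap N(\cu) \neq \emptyset} \int_{\cu' \cap \C_*(U)} |\nabla w\indc_{\{\a\neq0\}}|^s\,dx$ with $\size(\cu')$ comparable to $\size(\cu)$. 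Absorbing $\size(\cu)^{s-1}$ into a larger power and using $\size(\cu) \leq C\size(\cu')$ for each neighbor, this is bounded by $C^s \sum_{\cu' \cap N(\cu) \neq \emptyset} \size(\cu')^{sd} \int_{\cu' \cap \C_*(U)} |\nabla w\indc_{\{\a\neq0\}}|^s\,dx$ (note $s - 1 + d \le sd$ since $s \ge 1$, $d \ge 2$, with room to spare). Finally, summing over all $\cu \in \Pa$ with $\cu \subseteq \cl_\Pa(U)$ and using that the covering $\{N(\cu)\}_\cu$ has bounded overlap (each $\cu'$ lies in $N(\cu)$ for only $C$ many $\cu$, again by Proposition~\ref{p.partitions}(ii)) produces exactly the right-hand side of~\eqref{e.coarseLs}, after replacing $C$ by a larger constant.

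The main obstacle is the geometric step: constructing, for every $x$ in a good cube, a path to $\zbar(\cu)$ that (a) stays in $\C_*(U)$ rather than wandering outside $\cl_\Pa(U)$, and (b) has length controlled by $C\size(\cu)$ uniformly. This is where the full strength of the ``good cube'' definition is needed --- in particular property (ii) of Definition~\ref{def.goodcube}, which guarantees that any macroscopic path segment inside a subcube of $\cu$ connects to the crossing cluster $\C_*(\cu)$, so that a generic point can be joined to $\C_*(\cu)$ at bounded cost, and then $\C_*(\cu)$ itself has diameter comparable to $\size(\cu)$ and contains $\zbar(\cu)$. One must also be careful at the boundary of $\cl_\Pa(U)$: since $\C_*(U)$ was defined precisely to contain all the $\C_*(\cu_\Pa(z))$ for $z \in U$ and is connected within $\cl_\Pa(U)$, Lemma~\ref{l.connectivity} lets the path cross from $\cu$ into adjacent cubes of $\Pa$ while remaining in $\C_*(U)$, so the construction does not leave the allowed region. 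Everything else --- the Jensen step, the overlap counting, the bookkeeping of size exponents --- is routine once this path-building lemma is in hand, and I would likely isolate that as a preliminary sublemma.
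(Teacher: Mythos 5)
Your argument is correct and follows essentially the same route as the paper: a path from $x$ to $\zbar(\cu_\Pa(x))$ contained in the elements of $\Pa$ inside $\cl_\Pa(U)$ adjacent to $\cu_\Pa(x)$, a telescoping bound along it, Jensen/H\"older to pass to the $s$-th power, and a sum over cubes using the comparability of neighboring cube sizes. The paper is even cruder at the counting step --- it bounds the path increment by the full $L^1$-norm of $\nabla w\indc_{\{\a\neq0\}}$ over those cubes and pays the factor $\left|\cu\right|^{s-1}$, landing on $\size(\cu)^{sd}$ directly --- so your finer path-length and edge-multiplicity bookkeeping is sound but not needed.
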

\begin{proof}
For each $x\in \C_*(U)$, there is a non-self intersecting path connecting $x$ and $\zbar\left(\cu_\Pa(x) \right)$ which belongs to the union of the elements $\cu$ of $\Pa$ which are contained in $\cl_{\Pa}(U)$ and satisfy $\dist(\cu,\cu_\Pa(x)) \leq 1$. It follows that
\begin{align} \label{e.justintegrate}
\left| w(x) - \left[ w \right]_\Pa\!(x) \right|
& = \left| w(x) - w\!\left( \zbar\left(\cu_\Pa(x) \right)\right) \right|   \\
& \leq \sum_{\cu\in \Pa, \ \dist(\cu,\cu_\Pa(x))\leq 1} \int_{\cu \cap \C_*(U)} \left| \nabla w\indc_{\{\a\neq 0\}}\right| (y) \,dy. \notag
\end{align}
Summing over $x\in \cu\cap \C_*(U)$ for a fixed $\cu\in\Pa$ with $\cu\subseteq \cl_\Pa(U)$ and using property~(ii) from Proposition~\ref{p.partitions} for $\Pa$ yields
\begin{align*}
\int_{\cu\cap \C_*(U)} \left| w(x) - \left[ w \right]_\Pa\!(x) \right|^s\,dx
& \leq C^s \left| \cu \right| \left( \int_{\cu\cap \C_*(U)}\left| \nabla w\indc_{\{\a\neq 0\}}\right|(x) \,dx \right)^s \\
& \leq C^s \left| \cu \right|^{s} \int_{\cu\cap \C_*(U)}\left| \nabla w\indc_{\{\a\neq 0\}} \right|^s(x) \,dx.
\end{align*}
Summing over $\cu\in \Pa$ with $\cu\subseteq \cl_{\Pa}(U)$ yields the lemma. 
\end{proof}

We next show that we can control $L^s$ norms of $\left| \nabla \left[ w \right]_{\Pa}\right|$ by those of $\left| \nabla w \indc_{\{\a\neq0\}}\right|$ and the coarseness of the partition~$\Pa$. The proof is very simple and similar to that of the previous lemma.

\begin{lemma}
\label{l.coarsegrads}
For every bounded $U\subseteq \Zd$, $1\leq s < \infty$ and $w:\C_*(U) \to \R$, 
\begin{multline} 
\label{e.coarsegrads}
\int_{\cl_{\Pa}(U)} \left| \nabla  \left[ w \right]_{\Pa} \right|^s(x) \,dx \\
\leq C^s  \sum_{\cu\in\Pa,\, \cu\subseteq \cl_{\Pa}(U)} \size(\cu)^{sd-1} \int_{\cu\cap\C_*(U)} \left|\nabla w \indc_{\{\a\neq 0\}} \right|^s(x)\,dx.
\end{multline}
\end{lemma}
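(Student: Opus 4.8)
The plan is to mimic the proof of Lemma~\ref{l.coarseLs} almost verbatim, replacing the pointwise difference $|w(x)-[w]_\Pa(x)|$ by the edge increment $|\nabla[w]_\Pa|(e)$ and carrying out the same kind of path-integral estimate and resummation over the cubes of $\Pa$. The key observation is that $[w]_\Pa$ is constant on each element of $\Pa$, so $\nabla[w]_\Pa(x,y)$ can only be nonzero when $x$ and $y$ lie in \emph{distinct} cubes of $\Pa$, which (by the discreteness of the lattice) forces $\dist(\cu_\Pa(x),\cu_\Pa(y))\le 1$; moreover by Proposition~\ref{p.partitions}(ii) these two cubes have comparable sizes.

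First I would fix such a pair of neighboring cubes $\cu,\cu'\in\Pa$ with $\dist(\cu,\cu')\le1$ and consider an edge $e=(x,y)$ with $x\in\cu$, $y\in\cu'$. Then $\nabla[w]_\Pa(e)=w(\zbar(\cu'))-w(\zbar(\cu))$, and using Corollary~\ref{c.paths} (or directly Lemma~\ref{l.connectivity}) there is a path in $\C_\infty$ from $\zbar(\cu)$ to $\zbar(\cu')$ staying inside $\cu\cup\cu'$. Hence
\begin{equation*}
\left|\nabla[w]_\Pa\right|(e) \le \sum_{\cu''\in\Pa,\ \dist(\cu'',\cu)\le1}\ \int_{\cu''\cap\C_*(U)}\left|\nabla w\indc_{\{\a\neq0\}}\right|(z)\,dz.
\end{equation*}
Summing $|\nabla[w]_\Pa|^s(x)$ over $x\in\cu$ for a fixed $\cu\in\Pa$ with $\cu\subseteq\cl_\Pa(U)$: there are at most $Cd$ boundary edges per vertex and, crucially, the number of lattice points $x$ in $\cu$ that sit adjacent to a \emph{different} cube of $\Pa$ is $\lesssim \size(\cu)^{d-1}$ (only the boundary layer of $\cu$ contributes), so the vertex sum of $|\nabla[w]_\Pa|^s$ over $\cu$ picks up a factor $\size(\cu)^{d-1}$ rather than $\size(\cu)^{d}$. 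Combining this with the H\"older estimate on the path integral (which produces $|\cu|^{s-1}=\size(\cu)^{d(s-1)}$ as in the proof of Lemma~\ref{l.coarseLs}) gives, using Proposition~\ref{p.partitions}(ii) to absorb the sizes of the neighboring cubes into $\size(\cu)$,
\begin{equation*}
\int_{\cu}\left|\nabla[w]_\Pa\right|^s(x)\,dx \le C^s\,\size(\cu)^{sd-1}\!\!\sum_{\cu''\in\Pa,\ \dist(\cu'',\cu)\le1}\ \int_{\cu''\cap\C_*(U)}\left|\nabla w\indc_{\{\a\neq0\}}\right|^s(z)\,dz.
\end{equation*}
Finally I would sum over all $\cu\in\Pa$ with $\cu\subseteq\cl_\Pa(U)$; since each $\cu''$ has only boundedly many $\Pa$-neighbors and their sizes are comparable, the double sum collapses (up to a constant $C=C(d)$) to the single sum $\sum_{\cu''\in\Pa,\ \cu''\subseteq\cl_\Pa(U)}\size(\cu'')^{sd-1}\int_{\cu''\cap\C_*(U)}|\nabla w\indc_{\{\a\neq0\}}|^s$, which is exactly~\eqref{e.coarsegrads}.

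The only mildly delicate point — and the one I would be most careful about — is the counting that yields the exponent $sd-1$ rather than $sd$: one must check that $\nabla[w]_\Pa$ is genuinely supported on the boundary layers of the $\Pa$-cubes (which is immediate since $[w]_\Pa$ is piecewise constant on $\Pa$ and neighboring $\Pa$-cubes are at $\ell^\infty$-distance exactly $1$), and that the magnitude $|F|(x)$ from~\eqref{e.magF} only sees the $O(1)$ edges at $x$. Everything else is the same resummation bookkeeping as in Lemma~\ref{l.coarseLs}, using Proposition~\ref{p.partitions}(ii) to keep the cube sizes comparable across the finitely many neighbors.
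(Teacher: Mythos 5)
Your proposal is correct and follows essentially the same route as the paper's own proof: identify that $\nabla[w]_\Pa$ is supported on edges between neighboring $\Pa$-cubes of comparable size, bound each such increment by the path integral of $|\nabla w\indc_{\{\a\neq0\}}|$ over $\cu\cup\cu'$, use the $\size(\cu)^{d-1}$ count of boundary-layer contributions together with H\"older (producing $|\cu\cup\cu'|^{s-1}$) to get the exponent $sd-1$, and collapse the double sum using the bounded number of comparably sized neighbors. No gaps.
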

\begin{proof}
The gradient $\nabla\!\left[ w \right]_{\Pa}$ is supported on the edges $\{x,y\}$ such $x\in \cu$ and $y\in \cu'$ for two disjoint, neighboring elements $\cu\sim\cu'$ of $\Pa$. On such edges, we have
\begin{align*} \label{}
\left| \nabla  \left[ w \right]_{\Pa} (\{x,y\}) \right|
& = \left| w(\zbar(\cu(x))) - w(\zbar(\cu'(y)))\right|
\end{align*}
Recalling that there exists a path between $\zbar(\cu(x))$ and $\zbar(\cu'(y))$ which lies entirely in $\cu\cup \cu'$ and summing over the edges along this path, we find that 
\begin{equation*} \label{}
 \left| w(\zbar(\cu(x))) - w(\zbar(\cu'(y)))\right| \leq C \int_{(\cu\cup \cu') \cap \C_*(U)} \left| \nabla w\indc_{\{\a\neq 0\}} \right|(z)\,dz.
\end{equation*}
Since two neighboring elements of $\Pa$ have sizes within a factor of three, a property given by Proposition~\ref{p.partitions}(ii), the number of such edges between~$\cu$ and~$\cu'$ is at least
\begin{equation*} \label{}
c \size(\cu)^{d-1}. 
\end{equation*}
Finally, we note that every $\cu\in\Pa$ has at least $2^d$ neighboring elements of~$\Pa$. 

\smallskip

The above assertions imply that 
\begin{align*} \label{}
\lefteqn{
\int_{\cl_\Pa(U)} \left| \nabla  \left[ w \right]_{\Pa} \right|^s(x) \,dx
}  \ \  & \\
& \leq C^s \sum_{\cu\in\Pa,\, \cu\subseteq \cl_\Pa(U)} \size(\cu)^{d-1} \sum_{\cu' \in\Pa, \, \cu\sim \cu'} \left( \int_{(\cu\cup \cu') \cap \C_*(U)} \left| \nabla w \indc_{\{\a\neq 0\}}\right|(z)\,dz \right)^s \\
& \leq C^s  \sum_{\cu\in\Pa,\, \cu\subseteq \cl_\Pa(U)} \size(\cu)^{d-1} \sum_{\cu' \in\Pa, \, \cu\sim \cu'} \left| \cu \cup \cu' \right|^{s-1} \int_{(\cu\cup \cu') \cap \C_*(U)} \left| \nabla w\indc_{\{\a\neq 0\}} \right|^s(z)\,dz \\
& \leq C^s  \sum_{\cu\in\Pa,\, \cu\subseteq \cl_\Pa(U)} \size(\cu)^{d-1+d(s-1)} \int_{\cu\cap \C_*(U)} \left|\nabla w\indc_{\{\a\neq 0\}} \right|^s(z)\,dz.
\end{align*}
This completes the proof. 
\end{proof}

The previous two lemmas imply a Sobolev-Poincar\'e-type inequality on the clusters, borrowing the result from the classical inequalities on $\Rd$ by comparing $w$ to $\left[ w \right]_{\Pa}$. This is strong evidence of our informal assertion that ``the geometry of $\C_\infty$ is quantitatively like that of~$\Rd$ on scales larger than~$\Pa$." 

\smallskip

Before giving the statement, we recall that if~$s\in [1,\infty)$ then the Sobolev conjugates~$s^*$ and $s_*$ of~$s$ in dimension~$d$ are defined by
\begin{equation*} \label{}
s^* := \left\{ 
\begin{aligned}
& \frac{sd}{d-s} & \mbox{if} & \ s < d, \\
&  \infty & \mbox{if} & \ s \geq d.
\end{aligned} 
\right.
\end{equation*}
If $s \in \left[ \frac{d}{d-1},\infty\right)$, then we also define
\begin{equation*} \label{}
s_*:= \frac{sd}{s+d}
\end{equation*}
so that $(s_*)^* = s$. 

\begin{proposition}[{Sobolev inequality for $\C_*(\cu)$}]
\label{p.sobolev}
Suppose that $s\in [\frac{d}{d-1},\infty)$ and $\cu\in\Pas$. Let $w:\C_*(\cu)\to \R$ satisfy one of the following conditions:  
\begin{equation}
\label{e.sobolevass}
\left\{
\begin{aligned}
& \int_{\C_*(\cu)} w(x) \,dx = 0 
\quad \mbox{or} \quad 
 w(x) = 0 \ \mbox{for every} \ x \in \partial_\a\cu,  \\
& \int_{\cu} \left[ w \right]_{\Pa}(x) \,dx = 0
\quad \mbox{or} \quad 
 \left[w\right]_{\Pa} (x) = 0 \ \mbox{for every} \ x \in \partial_\a\cu.
\end{aligned}
\right.
\end{equation}
Then there exists $C(s,d,\p)<\infty$ such that
\begin{equation} 
\label{e.sobolev}
\int_{\C_*(\cu)} \left| w(x) \right|^s\,dx 
\leq C \left( \sum_{\cu'\in\Pa,\, \cu'\subseteq \cu} \size(\cu')^{sd} \int_{\cu'\cap\C_*(\cu)} \left|\nabla w \indc_{\{\a\neq 0\}}\right|^{s_*}(x)\,dx \right)^{\frac{s}{s_*}}
\end{equation}
\end{proposition}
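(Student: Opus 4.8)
The plan is to decompose $w=(w-[w]_\Pa)+[w]_\Pa$ on $\C_*(\cu)$ and estimate the two pieces separately: the fluctuation $w-[w]_\Pa$ directly via Lemma~\ref{l.coarseLs}, and the coarsening $[w]_\Pa$ via the classical Sobolev--Poincar\'e inequality on the Euclidean cube $\cu$ combined with Lemma~\ref{l.coarsegrads}. This is legitimate because $\cu\in\Pas$ forces $\cl_\Pa(\cu)=\cu$, so $[w]_\Pa$ is an honest function on the triadic cube $\cu\subseteq\Zd$ that is constant on the cells of $\Pa$. The assumption $s\geq\frac{d}{d-1}$ will enter precisely to guarantee that the Sobolev conjugate $s_*=\frac{sd}{s+d}$ lies in $[1,d)$, so that $(s_*)^*=s$ is the genuine Sobolev exponent and the inequality $\|v\|_{L^s(\cu)}\leq C\|\nabla v\|_{L^{s_*}(\cu)}$ holds for $v$ on $\cu$ with either zero mean on $\cu$ or appropriate vanishing on $\partial_\a\cu$.

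\textbf{The coarsened part.} By the second line of hypothesis~\eqref{e.sobolevass}, $[w]_\Pa$ either has zero mean on $\cu$ or vanishes on $\partial_\a\cu$; in the two cases I would apply, respectively, the mean-zero (Poincar\'e--Sobolev) and the zero-boundary versions of the classical Sobolev inequality on $\cu$, obtaining $\|[w]_\Pa\|_{L^s(\cu)}\leq C\|\nabla[w]_\Pa\|_{L^{s_*}(\cu)}$. Raising to the power $s_*$ and invoking Lemma~\ref{l.coarsegrads} with exponent $s_*$ and $U=\cu$ bounds $\|\nabla[w]_\Pa\|_{L^{s_*}(\cu)}^{s_*}$ by $C\sum_{\cu'\in\Pa,\,\cu'\subseteq\cu}\size(\cu')^{s_*d-1}\int_{\cu'\cap\C_*(\cu)}|\nabla w\indc_{\{\a\neq0\}}|^{s_*}$. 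Since $\C_*(\cu)\subseteq\cu$, $\size(\cu')\geq1$ and $s_*d-1\leq sd$, raising to the power $s/s_*\geq1$ and using the elementary superadditivity $\sum_i t_i^{s/s_*}\leq\bigl(\sum_i t_i\bigr)^{s/s_*}$ gives the required bound for $\int_{\C_*(\cu)}|[w]_\Pa|^s$.

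\textbf{The fluctuation.} Lemma~\ref{l.coarseLs} with exponent $s$ and $U=\cu$ bounds, with no hypothesis on $w$, the quantity $\int_{\C_*(\cu)}|w-[w]_\Pa|^s$ by $C\sum_{\cu'\in\Pa,\,\cu'\subseteq\cu}\size(\cu')^{sd}\int_{\cu'\cap\C_*(\cu)}|\nabla w\indc_{\{\a\neq0\}}|^s$. On each cell $\cu'$ I would pass from $L^s$ to $L^{s_*}$ of the gradient using that, for counting measure and $s_*\leq s$, one has $\|f\|_{\ell^s(\cu')}\leq\|f\|_{\ell^{s_*}(\cu')}$; hence $\size(\cu')^{sd}\int_{\cu'}|\nabla w\indc_{\{\a\neq0\}}|^s\leq\bigl(\size(\cu')^{s_*d}\int_{\cu'}|\nabla w\indc_{\{\a\neq0\}}|^{s_*}\bigr)^{s/s_*}\leq\bigl(\size(\cu')^{sd}\int_{\cu'}|\nabla w\indc_{\{\a\neq0\}}|^{s_*}\bigr)^{s/s_*}$, and one more use of superadditivity of $t\mapsto t^{s/s_*}$ produces the desired form. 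Combining the two pieces with $|a+b|^s\leq 2^{s-1}(|a|^s+|b|^s)$ (valid since $s>1$) yields~\eqref{e.sobolev}.

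\textbf{Main obstacle.} There is essentially no deep obstacle: the substance has already been packaged into Lemmas~\ref{l.coarseLs} and~\ref{l.coarsegrads} and into the geometric properties of $\Pa$ from Proposition~\ref{p.partitions}. The only points requiring a bit of care are (i) reconciling the two alternatives in~\eqref{e.sobolevass} with the mean-zero versus zero-boundary forms of the Euclidean Sobolev--Poincar\'e inequality for $[w]_\Pa$, in particular handling that the vanishing is imposed on $\partial_\a\cu$ rather than on all of $\partial\cu$; and (ii) the exponent bookkeeping that trades an $L^s$ gradient estimate for an $L^{s_*}$ one at the cost of powers of $\size(\cu')$ that remain dominated by $\size(\cu')^{sd}$ — which is exactly where the constraint $s\geq\frac{d}{d-1}$, equivalently $s_*<d$, is used.
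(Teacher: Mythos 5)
Your core argument is the same as the paper's: split $w$ into $[w]_\Pa$ plus the fluctuation $w-[w]_\Pa$, treat the coarsening with the classical Sobolev inequality on the cube together with Lemma~\ref{l.coarsegrads}, treat the fluctuation with Lemma~\ref{l.coarseLs}, and do the exponent bookkeeping with the monotonicity of $\ell^p$ norms on counting measure and the superadditivity of $t\mapsto t^{s/s_*}$. Up to that point your write-up matches the paper's proof step for step.

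The gap is in the hypotheses you actually cover. The proposition asserts the estimate under \emph{any one} of the four alternatives in~\eqref{e.sobolevass}, and your argument invokes ``the second line of hypothesis~\eqref{e.sobolevass}'' as if a normalization on $[w]_\Pa$ were always available. When the assumption is only on $w$ itself — either $\int_{\C_*(\cu)}w=0$ or $w=0$ on $\partial_\a\cu$ — the coarsening does not inherit it: $[w]_\Pa$ need not have zero mean on $\cu$, and it need not vanish on $\partial_\a\cu$ (the value on a boundary cube of $\Pa$ is $w(\bar z(\cu'))$, with $\bar z(\cu')$ generally an interior point of the cluster). So the classical Sobolev inequality cannot be applied to $[w]_\Pa$ directly in those cases, and some reduction is required. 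The paper supplies it: for the mean-zero case one applies the already-proved estimate to $w$ shifted by the average of $[w]_\Pa$ and controls that shift via Lemma~\ref{l.coarseLs} (using $\int_{\C_*(\cu)}w=0$ to rewrite it as an average of $w-[w]_\Pa$); for the boundary case one replaces $[w]_\Pa$ by a modified coarsening $v$ that is set equal to zero on the boundary layer $\partial_\Pa\cu$ of the partition, checks that the analogues of Lemmas~\ref{l.coarseLs} and~\ref{l.coarsegrads} still hold for $v$ (this uses $w=0$ on $\partial_\a\cu$ to provide a zero value to connect to within neighboring cubes), and then applies the zero-boundary Sobolev inequality to $v$. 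This last construction also resolves the issue you flagged but left open, namely that vanishing on $\partial_\a\cu$ is weaker than vanishing on all of $\partial\cu$, so the $W^{1,s_*}_0$-type Sobolev inequality is not directly applicable to $[w]_\Pa$ even in the case you did treat as the boundary alternative. As written, your proof establishes the proposition only under the $[w]_\Pa$-normalizations; the remaining two cases need the additional argument above.
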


Before giving the proof of Proposition~\ref{p.sobolev}, let us comment on the form of the right side in~\eqref{e.sobolev}. The classical Sobolev inequality for functions $w \in W_0^{1,s_*}(\cu)$ (or mean-zero functions $w \in W^{1,s_*}(\cu)$) in a cube $\cu \subseteq\Rd$ states that 
\begin{equation*}
\int_\cu \left| w(x) \right|^s\,dx \leq C \left( \int_{\cu} \left| \nabla w (x) \right|^{s_*}\,dx \right)^{\frac s{s_*}}.
\end{equation*}
The term on the right side of this inequality is similar to the first term on the right side of~\eqref{e.sobolev}, except there is the weight $\size(\cu')^{sd}$ representing the size of the local cube in the partition~$\Pa$. The size of the elements of~$\Pa$ are of course not uniformly bounded, however they are typically of unit size, by~\eqref{e.partitionO1}, with exponential stochastic integrability. In particular, one may use H\"older's inequality to separate this weight from the function $\left|\nabla w\right|^{s_*}$ in the integrand at the cost of an arbitrarily small loss of the exponent $s_*$ while the sums over the weights can be controlled above a minimal scale by Proposition~\ref{p.minimalscales}.

\begin{proof}[{Proof of Proposition~\ref{p.sobolev}}]
Rather than~\eqref{e.sobolevass}, we first prove the proposition under the assumption that 
\begin{equation}
\label{e.sobolevass2}
\int_{\cu} \left[ w \right]_{\Pa}(x) \,dx = 0 \quad \mbox{or} \quad  \left[ w \right]_{\Pa}= 0 \ \mbox{on} \ \partial_\a \cu. 
\end{equation}
In this case, the usual Sobolev inequality on $\Zd$ (which follows easily from the one on $\Rd$ by affine interpolation, for example) applied to $\left[ w \right]_{\Pa}$ gives us that 
\begin{equation*} \label{ususobineqZd}
\left( \int_{\cu} \left| \left[ w \right]_{\Pa}\!(x) \right|^s\,dx \right)^{\frac1s} \leq C\left( \int_{\cu}  \left| \nabla \left[ w\right]_{\Pa} \right|^{s_*}\!(x) \,dx \right)^{\frac1{s_*}}.
\end{equation*}
We then apply Lemma~\ref{l.coarsegrads} to estimate the right side, which gives
\begin{align*} \label{}
\left( \int_{\cu} \left| \left[ w \right]_{\Pa}\!(x) \right|^s\,dx \right)^{\frac1s} 
& \leq \left( \sum_{\cu\in\Pa,\, \cu'\subseteq \cu} \size(\cu')^{sd-1} \int_{\cu'\cap\C_*(\cu)} \left|\nabla w \indc_{\{\a\neq 0\}}\right|^{s_*}(x)\,dx \right)^{\frac1{s_*}},
\end{align*}
and use Lemma~\ref{l.coarseLs} and the triangle inequality to estimate the left side and combine this with the previous inequality to get 
\begin{multline*} \label{}
\left( \int_{\C_*(\cu)} \left| w(x) \right|^s\,dx \right)^{\frac1s}
\leq C \left( \sum_{\cu'\in\Pa,\, \cu'\subseteq \cu} \size(\cu')^{sd-1} \int_{\cu'\cap\C_*(\cu)} \left|\nabla w \indc_{\{\a\neq 0\}}\right|^{s_*}(x)\,dx \right)^{\frac1{s_*}} \\
+ C \left(  \sum_{\cu'\in\Pa,\, \cu'\subseteq \cu} \size(\cu')^{sd} \int_{\cu'\cap\C_*(\cu)} \left|\nabla w \indc_{\{\a\neq 0\}}\right|^{s}(x)\,dx \right)^{\frac1{s}}.
\end{multline*}
We may estimate the second term on the right side thanks to the following inequality: since $\frac{s}{s_*} > 1$, we have that, for every $n \in \N$ and finite sequence of nonnegative real numbers $\left\{ a_i\right\}_{1 \leq i \leq n}$,
\begin{equation*}
\sum_{i=1}^n a_i^{\frac{s}{s_*}} \leq \left( \sum_{i=1}^n a_i \right)^{\frac{s}{s_*}}.
\end{equation*}
Applying this to the second term on the right side gives
\begin{multline*}
\left(  \sum_{\cu'\in\Pa,\, \cu'\subseteq \cu} \size(\cu')^{sd} \int_{\cu'\cap\C_*(\cu)} \left|\nabla w \indc_{\{\a\neq 0\}}\right|^{s}(x)\,dx \right)^{\frac1{s}} \\
\leq \left(  \sum_{\cu'\in\Pa,\, \cu'\subseteq \cu} \size(\cu')^{s_*d} \int_{\cu'\cap\C_*(\cu)} \left|\nabla w \indc_{\{\a\neq 0\}}\right|^{s_*}(x)\,dx \right)^{\frac1{s_*}}.
\end{multline*}
Noticing that $s_* d \leq sd$ and $sd -1 \leq sd$ completes the proof of the proposition under the assumption~\eqref{e.sobolevass2}.

\smallskip

To prove the proposition under the assumption
\begin{equation} \label{e.sobolevassbis}
\int_{\C_*(\cu)} w(x) \, dx =0,
\end{equation}
we apply the result to the function $w - \frac{1}{|\C_*(\cu)|} \int_{\C_*(U)}   \left[ w \right]_\Pa\!(x) \,dx$ which satisfies assumption~\eqref{e.sobolevass2}. This yields
\begin{multline*} \label{}
\left( \int_{\C_*(\cu)} \left| w(x) - \frac{1}{|\C_*(\cu)|} \int_{\C_*(U)}   \left[ w \right]_\Pa\!(x) \,dx \right|^s\,dx \right)^{\frac1s} \\
\leq C \left( \sum_{\cu'\in\Pa,\, \cu'\subseteq \cu} \size(\cu')^{sd-1} \int_{\cu'\cap\C_*(\cu)} \left|\nabla w \indc_{\{\a\neq 0\}}\right|^{s_*}(x)\,dx \right)^{\frac1{s_*}} \\
+ C \left(  \sum_{\cu'\in\Pa,\, \cu'\subseteq \cu} \size(\cu')^{sd} \int_{\cu'\cap\C_*(\cu)} \left|\nabla w \indc_{\{\a\neq 0\}}\right|^{s}(x)\,dx \right)^{\frac1{s}}.
\end{multline*}
To complete the proof we use Lemma~\ref{l.coarseLs} to obtain
\begin{align*}
\lefteqn{\left|\frac{1}{|\C_*(\cu)|} \int_{\C_*(U)}   \left[ w \right]_\Pa\!(x) \,dx \right|^s} \qquad & \\
		& \leq \left|\frac{1}{|\C_*(\cu)|} \int_{\C_*(U)}    w (x) \,dx - \frac{1}{|\C_*(\cu)|} \int_{\C_*(U)}   \left[ w \right]_\Pa\!(x) \,dx \right|^s \\
		& \leq \frac{1}{|\C_*(\cu)|} \int_{\C_*(U)} \left|  w(x) -  \left[ w \right]_\Pa\!(x) \right|^s \, dx  \\
		& \leq \frac{C^s}{|\C_*(\cu)|}  \sum_{\cu' \in \Pa,\, \cu' \subseteq \cu}  \size(\cu')^{sd} \int_{\cu' \cap \C_*(\cu)}\left| \nabla w\indc_{\{\a\neq 0\}}\right|^s(x)  \,dx.
\end{align*}
Combining the two previous displays completes the proposition under the assumption~\eqref{e.sobolevassbis}.

\smallskip

We finally prove the proposition under the assumption
\begin{equation*}
w(x) = 0 \mbox{ for every } x \in \partial_\a \cu
\end{equation*}
The main idea is to apply the Sobolev inequality under the assumption $\left[ w \right]_\Pa =0$ on $\partial \cu$. To do so we define the following function $v$ on $\cu$
\begin{equation*}
 v(x) := \left\lbrace
  \begin{array}{lll}
   &  \left[ w \right]_\Pa (x)& \mbox{ if } \cu_\Pa(x) \notin \partial_\Pa \cu, \\
   & 0 & \mbox{ if } \cu_\Pa(x) \in \partial_\Pa \cu. \\
  \end{array}
\right.
\end{equation*}
The function $v$ is almost equal to $\left[ w \right]_\Pa$ with a slight modification on the boundary cubes of the partition where we set $v=0$. Since $u=0$ on $\partial_\a \cu$ and in view of Definition~\ref{def.coarsenfunction}, one can observe that the results of Lemmas~\ref{l.coarseLs} and~\ref{l.coarsegrads} hold with $v$ instead of $\left[ w \right]_\Pa$. We then complete the proof of the Sobolev inequality by adapting the argument as in the case $\left[ w \right]_\Pa =0 $ on $\partial \cu$ with $v$ instead of $\left[ w \right]_\Pa$.
\end{proof}

\subsection{Basic elliptic estimates on clusters}
In this subsection, we record some basic elliptic estimates and show how these allow us to improve some of the estimates from the previous subsection for $\a$-harmonic functions. We remark that the estimates in this section do not use the independence of the ensemble $\{ \a(e) \}_{e\in \Ed}$, merely the independence of the ensemble $\{ \indc_{\{ \a(x)\neq 0\}}\}_{e\in \Ed}$, and so they work for general coefficient fields defined on the percolation clusters.

\smallskip

We begin with Caccioppoli's inequality, following the standard argument.  

\begin{lemma}[{Caccioppoli inequality}]
\label{l.caccioppoli}
Assume $U\subseteq \Zd$ is a cluster and $V \subseteq U$ such that $\dist(V,\partial_\a U) \geq r\geq 1$. Suppose that $u \in \A(U)$. Then there exists $C(\lambda)<\infty$ such that 
\begin{equation}
\label{e.caccioppoli}
\int_{V} \left| \nabla u \indc_{\{\a\neq 0\}}\right|^2(x)\,dx \leq \frac{C}{r^2} \int_{U \setminus \intr(V)} \left| u(x) \right|^2\,dx.
\end{equation}
\end{lemma}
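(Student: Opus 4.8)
The plan is to follow the classical Caccioppoli argument adapted to the discrete setting on the cluster $U$. First I would introduce a cutoff function $\eta: U \to [0,1]$ that is $\a$-harmonic-friendly: take $\eta$ to be a Lipschitz function on $\Zd$ (restricted to $U$) with $\eta = 1$ on $\intr(V)$, $\eta = 0$ on $\{x \in U : \dist(x, V) \geq r\}$ (in particular $\eta = 0$ near $\partial_\a U$, since $\dist(V, \partial_\a U) \geq r$), and $|\nabla \eta| \leq C/r$ edgewise. One concrete choice is $\eta(x) := \max\bigl(0, 1 - \tfrac{1}{r}\dist(x, \intr(V))\bigr)$, whose discrete gradient across any edge is bounded by $1/r$. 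Then $w := \eta^2 u$ has compact support in $U$ and vanishes on $\partial_\a U$, so $w \in \mathcal{C}_0^\a(U)$ and the variational characterization~\eqref{e.Avarchar0} gives
\begin{equation*}
\left\langle \nabla(\eta^2 u), \a \nabla u \right\rangle_U = 0.
\end{equation*}

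Next I would expand the left-hand side using the discrete Leibniz rule. On an oriented edge $(x,y)$ one has $\nabla(\eta^2 u)(x,y) = \eta^2(x) u(x) - \eta^2(y) u(y)$, and the standard algebraic identity lets us write this as roughly $\tfrac12(\eta^2(x)+\eta^2(y))\nabla u(x,y) + \tfrac12(u(x)+u(y))\nabla(\eta^2)(x,y)$, while $\nabla(\eta^2)(x,y) = (\eta(x)+\eta(y))\nabla\eta(x,y)$. Substituting and using $0 = \langle \nabla(\eta^2 u), \a\nabla u\rangle_U$, one isolates the ``good'' term $\sum_{x\sim y} \a(\{x,y\})\,\tfrac12(\eta^2(x)+\eta^2(y))\,|\nabla u(x,y)|^2$ on one side and bounds the ``cross'' term $\sum_{x\sim y}\a(\{x,y\})(u(x)+u(y))(\eta(x)+\eta(y))\nabla\eta(x,y)\nabla u(x,y)$ on the other. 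The cross term is estimated by Cauchy--Schwarz and then Young's inequality $|ab| \leq \tfrac14 \delta a^2 + \delta^{-1} b^2$ with $a$ carrying a factor $\eta\nabla u$ and $b$ carrying $u\nabla\eta$; the factor-of-two discrepancies between $\eta(x)$ and $\eta(y)$ on an edge are harmless because $|\eta(x)-\eta(y)| \leq 1/r$ and $\eta \leq 1$, so $\eta^2(x) + \eta^2(y) \leq C(\eta^2(x) \wedge \eta^2(y)) + C/r^2$ up to lower-order terms that get absorbed. Choosing $\delta$ small relative to $\lambda$ absorbs the $\eta^2|\nabla u|^2$ contribution of the cross term into the left side, using uniform ellipticity $\a(e) \in [\lambda,1]$ on occupied edges to pass between $\a$-weighted and unweighted sums. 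This leaves
\begin{equation*}
\int_U \eta^2 \left|\nabla u \indc_{\{\a\neq 0\}}\right|^2(x)\,dx \leq \frac{C}{r^2}\int_U \left(\eta(x)^2 + 1\right)|u(x)|^2 \indc_{\{\dist(x,V)\leq r\}}\,dx,
\end{equation*}
and since $\eta \equiv 1$ on $\intr(V)$ and $\eta$ is supported within distance $r$ of $V$ (hence inside $U \setminus \intr(V)$ off of $V$), restricting the left integral to $V$ and bounding the right integral by $\|u\|_{L^2(U\setminus\intr(V))}^2$ — using that $\eta$ vanishes on $\intr(V)$'s complement appropriately, or more simply that the support of $\nabla\eta$ lies in $U \setminus \intr(V)$ — yields~\eqref{e.caccioppoli}.

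The main obstacle I anticipate is purely bookkeeping: on the discrete lattice the Leibniz rule produces edge-averages of $\eta^2$ and of $u$ rather than pointwise values, and one must be careful that the term one wants to keep, $\sum \a \,\tfrac12(\eta^2(x)+\eta^2(y))|\nabla u|^2$, genuinely dominates a constant times $\int_V |\nabla u\indc_{\{\a\neq0\}}|^2$ — this is where $\eta \equiv 1$ on $\intr(V)$ is used, noting $|\nabla u|(x)$ as defined in~\eqref{e.magF} only sees edges within $U$. There is also a minor subtlety that edges straddling $\partial V$ contribute to $|\nabla u|(x)$ for $x \in V$; since $\dist(V, \partial_\a U) \geq r \geq 1$ those neighbors lie in $U$, so the sum is well-defined and the right-hand side domain $U \setminus \intr(V)$ correctly captures the boundary layer where $u$ is evaluated. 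None of this requires probabilistic input — as the remark preceding the lemma notes, only the deterministic ellipticity bound on occupied edges is used.
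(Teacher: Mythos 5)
Your proposal is correct and is essentially the paper's own argument: the paper also tests the equation with a cutoff multiple of $u$ (it uses $\eta u$ with a cutoff satisfying $|\nabla\eta|^2\leq C\eta/r^2$, while you use the equivalent classical variant $\eta^2 u$ with a Lipschitz cutoff), then absorbs the cross term via Young's inequality and uses $\a\geq\lambda\indc_{\{\a\neq0\}}$. One small repair: take $\eta\equiv 1$ on all of $V$ (as in the paper's condition $\indc_V\leq\eta\leq 1$), e.g.\ $\eta(x)=\max\bigl(0,1-\tfrac1r\dist(x,V)\bigr)$, rather than on $\intr(V)$ only; with your choice the left side only controls $\int_{\intr(V)}$ (and degenerates entirely if $\intr(V)=\emptyset$, e.g.\ when $V$ is a single point), whereas with $\eta\equiv1$ on $V$ the edges with differing $\eta$-values still have both relevant endpoints outside $\intr(V)$, so the right-hand side $\int_{U\setminus\intr(V)}|u|^2$ is unchanged and~\eqref{e.caccioppoli} follows as claimed.
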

\begin{proof}
Select a function $\eta \in C^1(\Rd)$ satisfying 
\begin{equation}
\label{e.etaprops}
\indc_{V} \leq \eta \leq 1, \quad \eta \equiv 0 \ \mbox{on} \ \partial_\a U, \quad \mbox{and} \quad \left| \nabla \eta \right|^2 \leq \frac{C\eta}{r^2}.
\end{equation}
Testing the equation for $u$ with $\eta u$ (that is, applying~\eqref{e.Avarchar0} with $w=\eta u)$ yields 
\begin{align*}
0  & = \sum_{x,y\in U, x\sim y} \left( \eta(x)u(x) - \eta(y)u(y) \right) \a(\{ x,y\}) \left( u(x) - u(y) \right) \\
& = \sum_{x,y\in U, x\sim y} \eta(x) \left( u(x) - u(y) \right) \a(\{ x,y\}) \left( u(x) - u(y) \right) \\
& \qquad + \sum_{x,y\in U, x\sim y} u(y) \left( \eta(x) - \eta(y) \right) \a(\{ x,y\}) \left( u(x) - u(y) \right).
\end{align*}
Thus we obtain
\begin{align*}
\lefteqn{
\sum_{x,y\in U, x\sim y} \eta(x) \a(\{ x,y\}) \left( u(x) - u(y) \right)^2
} \qquad & \\
& \leq  \sum_{x,y\in U, x\sim y} \left| u(y)\right|  \left| \eta(x) - \eta(y) \right| \a(\{ x,y\}) \left( u(x) - u(y) \right)\\
& \qquad + \sum_{x,y\in U, x\sim y} \zeta((x,y)) \a(\{x,y\}) \eta(x) \left(u(x) - u(y) \right)  \\
& \leq C \sum_{x,y\in U, x\sim y} \frac{\left|\eta(x) - \eta(y)\right|^2}{\eta(x)+ \eta(y)} \left| u(y)\right|^2 \\
& \qquad  + \frac14 \sum_{x,y\in U, x\sim y} \left( \eta(x) + \eta(y) \right)  \a(\{ x,y\})^2 \left( u(x) - u(y) \right)^2 \\
& \leq \frac C{r^2}\sum_{x,y\in U, x\sim y} \indc_{\{\eta(x) \neq \eta(y)\}} (u(y))^2+ \frac12 \sum_{x,y\in U, x\sim y}  \eta(x) \a(\{ x,y\})^2 \left( u(x) - u(y) \right)^2.
\end{align*}
We obtain~\eqref{e.caccioppoli} after absorbing the last term on the right back on the left side and rewriting the expression, using $\a \geq \lambda \indc_{\{\a\neq0\}}$ and~\eqref{e.etaprops}. 
\end{proof}


An important tool for the arguments later in the paper is Meyers' improvement of integrability for the gradients of solutions, adapted to percolation clusters. In the classical setup (for uniformly elliptic equations in $\Rd$), this is a very simple consequence of the Caccioppoli and Sobolev inequalities which imply a reverse H\"older inequality and thus, after an application of the Gehring lemma, the desired estimate. The situation is more complicated in our setting, since the Sobolev inequality is not uniform and depends on the local coarseness of the partition~$\Pa$, as we have seen. The first step is therefore to quantify the probability of a (deterministic) reverse H\"older inequality on large triadic cubes. This will give us another notion of ``good cube" and thus another triadic partition~$\Ra$ which we will use to prove our generalization of Meyers' estimate.

\smallskip

To simplify the statement, we define, for each cube $\cu$ and exponent $s>\frac12(d+2)$ (so the H\"older conjugate $s'$ satisfies $s' < \frac{2}{2_*}$), the quantity
\begin{equation*}
\mathrm{RH}_{s}(\cu) :=
\left\{
\begin{aligned}
& \sup_{u\in \A(\C_\mathrm{max}(3\cu))} 
\frac{\left( \frac{1}{|\cu|} \int_{\C_*(\cu)} \left| \nabla u \indc_{\{\a\neq 0\}}\right|^2(x)\,dx  \right)^{\frac12}}
{\left( \frac1{|3\cu|}\int_{\C_\mathrm{max}(3\cu)} \left|\nabla u \indc_{\{\a\neq 0\}}\right|^{s'2_*}(x)\,dx \right)^{\frac1{s'2_*}}}
   & \ \mbox{if} & \ \cu \ \mbox{is good,} \\
& +\infty & \ \mbox{if} & \ \cu \ \mbox{is bad,}
\end{aligned}
\right.
\end{equation*}
where $\C_\mathrm{max}(3\cu)$ denote the maximal cluster of $3\cu$ containing $\C_*(\cu)$. Notice that if both $\cu$ and $3\cu$ are good cubes (which is in particular the case if $\cu \in \Pa_*$) then $\C_\mathrm{max}(3\cu) = \C_*(3\cu)$ but thanks to this definition the random variable $\mathrm{RH}_{s}(\cu)$ is $\F(3\cu)$--measurable and thus hypothesis~\eqref{e.goodlocal} will be satisfied when we apply Proposition~\ref{p.partitions} is Definition~\ref{def.minscaleR} below. Also we obviously mean the supremum to exclude constant functions. 
In other words, $\mathrm{RH}_s(\cu)$ is the smallest constant $C$ such that every $u\in\A(\C_\mathrm{max}(3\cu))$ satisfies the reverse H\"older inequality 
\begin{multline}
\label{e.reverseHolder}
\left( \frac{1}{|\cu|} \int_{\cu \cap \C_*(\cu)} \left| \nabla u \indc_{\{\a\neq 0\}}\right|^2(x)\,dx  \right)^{\frac12}  \\
\leq C \left( \frac1{|3\cu|}\int_{\C_\mathrm{max}(3\cu)} \left|\nabla u \indc_{\{\a\neq 0\}}\right|^{s'2_*}(x)\,dx \right)^{\frac1{s'2_*}}.
\end{multline}
We next estimate the probability that $\mathrm{RH}_s(\cu_m)$ is larger than a fixed deterministic constant. Recall that the random variable $\mathcal{M}_t(\Pa)$ is given in Proposition~\ref{p.minimalscales}. 

\begin{lemma}[Reverse H\"older inequality]
\label{l.reverse.Holder}
Fix an exponent $s>\frac12(d+2)$. Then there exists a constant $C(s,d,\lambda,\p)<\infty$ such that, for every $m\in\N$,
\begin{equation}
\label{e.reverse.Holder}
3^m \geq \mathcal{M}_{2sd}(\Pa) +C \implies \mathrm{RH}_s(\cu_m) \leq C.
\end{equation}
In particular, for every $m\in\N$ and exponent $t\in \left(0,\frac{d}{d+1 + 2sd}\right)$, there exists a constant $C'(t,s,d,\lambda,\p)<\infty$ such that 
\begin{equation}
\label{e.reverse.Holder2}
\P \left[  \mathrm{RH}_s(\cu_m) > C \right] \leq C' \exp\left( -3^{mt} \right).
\end{equation}
\end{lemma}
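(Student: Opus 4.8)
The plan is to establish~\eqref{e.reverse.Holder} first, and then deduce~\eqref{e.reverse.Holder2} as a routine consequence of the moment bound on $\mathcal{M}_{2sd}(\Pa)$ from Proposition~\ref{p.minimalscales}. So fix $s>\frac12(d+2)$, and suppose $m\in\N$ is such that $3^m\geq \mathcal{M}_{2sd}(\Pa)+C$ for a large constant $C$ to be chosen. In particular $\cu_m$ is good (since $\cu_m\supseteq$ some element of $\Pa$ once $m$ is large enough, using Proposition~\ref{p.partitions}(i) and the definition of $\mathcal{M}_{2sd}(\Pa)$ which forces $\Lambda_{2sd}(\cu_m,\Pa)\leq C$, hence in particular the cubes of $\Pa$ inside $\cu_m$ have finite size), so $\C_{\mathrm{max}}(3\cu_m)=\C_*(3\cu_m)$. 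Let $u\in\A(\C_*(3\cu_m))$. The goal is a reverse H\"older inequality~\eqref{e.reverseHolder}: bound the $\underline{L}^2$ norm of $\nabla u\indc_{\{\a\neq0\}}$ over $\cu_m\cap\C_*(\cu_m)$ by the $\underline{L}^{s'2_*}$ norm over the larger cube $3\cu_m$.

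First I would use the Caccioppoli inequality (Lemma~\ref{l.caccioppoli}), applied on a chain of triadic subcubes of size comparable to $\frac13 3^m$ (say a partition of $\frac32\cu_m$ into subcubes $\cu'$ of size $3^{m-1}$ together with their doubles, so that $2\cu'\subseteq 3\cu_m$ with $\dist(\cu'\cap\C,\partial_\a(2\cu'\cap\C))\gtrsim 3^m$), to get
\begin{equation*}
\int_{\cu_m\cap\C_*(\cu_m)}\left|\nabla u\indc_{\{\a\neq0\}}\right|^2(x)\,dx \leq \frac{C}{3^{2m}}\int_{\C_*(\tfrac32\cu_m)}\left|u(x)-\bar u\right|^2\,dx
\end{equation*}
for any constant $\bar u$, where I subtract a constant exploiting that~\eqref{e.caccioppoli} applies to $u-\bar u$ as well. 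Then I would apply the Sobolev inequality for $\C_*(\tfrac32\cu_m)$ from Proposition~\ref{p.sobolev}, with the exponent $s_*=s'2_*$ in the role of the lower exponent and $s=2$ in the role of the target exponent (note $2_*=\frac{2d}{d+2}$, so $s'2_*<\frac{2d}{d+2}\cdot\frac{2}{2_*}\cdot\tfrac12\cdots$ — the key numerology is that $s>\frac12(d+2)$ ensures $s'<\frac{2}{2_*}$, which is exactly what makes $(s'2_*)^*\geq 2$ so the Sobolev embedding $W^{1,s'2_*}\hookrightarrow L^2$ holds in dimension $d$), choosing $\bar u$ to be the mean of $[u]_\Pa$ so the hypothesis~\eqref{e.sobolevass} is met. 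This yields
\begin{equation*}
\int_{\C_*(\tfrac32\cu_m)}\left|u(x)-\bar u\right|^2\,dx \leq C\left(\sum_{\cu'\in\Pa,\,\cu'\subseteq \tfrac32\cu_m}\size(\cu')^{2d}\int_{\cu'\cap\C_*}\left|\nabla u\indc_{\{\a\neq0\}}\right|^{s'2_*}(x)\,dx\right)^{\frac{2}{s'2_*}}.
\end{equation*}

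The main obstacle is handling the weight $\size(\cu')^{2d}$ in the Sobolev inequality, which is where the condition $3^m\geq\mathcal{M}_{2sd}(\Pa)+C$ enters. I would apply H\"older's inequality in the form of a splitting with a small exponent loss: writing $\theta=s'2_*$ and using H\"older with exponents chosen so that the weight gets raised to a power controlled by $\Lambda_t(\cdot,\Pa)$ for $t=2sd/(\text{something})$ — more precisely, one wants $\sum_{\cu'}\size(\cu')^{2d\cdot q}$ with $q$ slightly bigger than $1$, which is $\lesssim|\cu_m|\cdot\Lambda_{2dq-d}(\cu_m,\Pa)$, and by choosing the H\"older exponents one arranges $2dq-d\leq 2sd$ (possible since $s>\frac12(d+2)>\tfrac12$ gives room) and invokes~\eqref{e.minimalboundbis} to bound this by $C|\cu_m|$ deterministically. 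The cost is replacing the exponent $\theta$ by a slightly larger one $\theta'$ still below $\frac{2}{2_*}\cdots$; since the statement of the lemma is in terms of a fixed $s$, I would simply start the argument with a slightly smaller $s_0<s$ at the beginning (still satisfying $s_0>\frac12(d+2)$ by shrinking, or rather: prove the claim for $s$ replaced by any $s$ in the allowed range and note $s'2_*<(s_0)'2_*$), absorbing this loss. After this, collecting the estimates gives $\mathrm{RH}_s(\cu_m)\leq C$ whenever $3^m\geq\mathcal{M}_{2sd}(\Pa)+C$, which is~\eqref{e.reverse.Holder}.

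Finally, for~\eqref{e.reverse.Holder2}: by Proposition~\ref{p.minimalscales} applied with $t'=2sd$, the random variable $\mathcal{M}_{2sd}(\Pa)$ satisfies $\mathcal{M}_{2sd}(\Pa)\leq\O_r(C)$ for every $r<\frac{sd}{d+2sd+s}$ — wait, with $t=2sd$ the bound is $r<\frac{sd}{d+2sd+s}$; the exponent in~\eqref{e.reverse.Holder2}, namely $t<\frac{d}{d+1+2sd}$, is of the same flavor and (after possibly shrinking) is dominated by what Proposition~\ref{p.minimalscales} provides, using also that $\mathcal{M}$ depends on $\Pa$ whose goodness probabilities come from Lemma~\ref{l.AP} with $s=1$ there, so the relevant stochastic integrability exponent feeding into Proposition~\ref{p.minimalscales} is $1$, giving $r<\frac{d}{d+2sd+1}$ which matches the stated range. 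Then from~\eqref{e.reverse.Holder}, the event $\{\mathrm{RH}_s(\cu_m)>C\}$ is contained in $\{\mathcal{M}_{2sd}(\Pa)>3^m-C\}\subseteq\{\mathcal{M}_{2sd}(\Pa)>3^{m}/2\}$ for $m$ large, and Chebyshev applied to the $\O_t$ bound gives $\P[\mathcal{M}_{2sd}(\Pa)>3^m/2]\leq 2\exp(-c\,3^{mt})$, which after adjusting constants yields~\eqref{e.reverse.Holder2}. I expect the Sobolev-weight bookkeeping (the H\"older exponent juggling to match $2sd$ and keeping $s'2_*$ in the admissible range throughout) to be the only genuinely delicate point; everything else is a direct combination of Caccioppoli, Sobolev, and the already-established minimal-scale estimates.
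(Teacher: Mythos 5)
Your overall architecture -- Caccioppoli, then the weighted Sobolev inequality of Proposition~\ref{p.sobolev}, then a H\"older step to separate the partition weight using $3^m\geq\mathcal{M}_{2sd}(\Pa)+C$, and finally Chebyshev via Proposition~\ref{p.minimalscales} for~\eqref{e.reverse.Holder2} -- is the paper's, and your treatment of the probability statement (stochastic integrability exponent $1$ from Lemma~\ref{l.AP} feeding into Proposition~\ref{p.minimalscales} with moment exponent $2sd$, giving the range $t<\frac{d}{d+1+2sd}$) is fine. The gap is in the central step. The inequality you display after ``applying Sobolev with lower exponent $s'2_*$ and target $2$,''
\begin{equation*}
\int_{\C_*\left(\tfrac32\cu_m\right)}\left|u(x)-\bar u\right|^2dx \leq C\Bigl(\sum_{\cu'\in\Pa,\,\cu'\subseteq\tfrac32\cu_m}\size(\cu')^{2d}\int_{\cu'\cap\C_*}\left|\nabla u\indc_{\{\a\neq0\}}\right|^{s'2_*}(x)\,dx\Bigr)^{\frac{2}{s'2_*}},
\end{equation*}
is not an instance of Proposition~\ref{p.sobolev}: that proposition ties the weight to the target exponent, pairing target $L^{\sigma}$ with gradient exponent $\sigma_*$ and weight $\size(\cu')^{\sigma d}$, so with target $L^2$ the gradient exponent must be $2_*$, not $s'2_*$. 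Worse, the displayed inequality is simply false: take $\a\equiv 1$ (so $\C_*=\Zd\cap\cu$, all elements of $\Pa$ have bounded size) and $u$ affine with unit slope, which is $\a$-harmonic; the left side is of order $3^{m(d+2)}$ while the right side is of order $3^{2md/(s'2_*)}=3^{m(d+2)/s'}$, strictly smaller since $s'>1$. The missing volume factor $\left|\cu_m\right|^{1/s}$ is exactly what the larger weight exponent $2sd+d$ supplies in the correct argument, and it is the reason the hypothesis is calibrated to $\mathcal{M}_{2sd}(\Pa)$ rather than to a weight of order $\size^{2d}$.

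Your proposed repair (H\"older with a small exponent loss, compensated by running the argument with $s_0<s$) is also backwards: shrinking $s$ increases $s'=\frac{s}{s-1}$ and hence increases $s'2_*$, i.e.\ it weakens the target norm, whereas an exponent loss in the H\"older step pushes you above the target; in any case no loss is needed. The correct bookkeeping is a single exact split: apply Caccioppoli and Proposition~\ref{p.sobolev} with the pair $(2,2_*)$, producing the weight $\size(\cu')^{2d}$ and gradient exponent $2_*$; then apply H\"older with exponents $(s,s')$ to the sum over partition cubes, writing each summand as $\size(\cu')^{2d}\,|\cu'|\cdot\bigl(\frac{1}{|\cu'|}\int_{\cu'}|\nabla u\indc_{\{\a\neq0\}}|^{2_*}\bigr)$, which gives the two factors $\bigl(\sum_{\cu'}\size(\cu')^{2sd+d}\bigr)^{1/s}$ and $\bigl(\sum_{\cu'}|\cu'|\bigl(\frac{1}{|\cu'|}\int_{\cu'}|\nabla u\indc_{\{\a\neq0\}}|^{2_*}\bigr)^{s'}\bigr)^{1/s'}$. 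Jensen moves the power $s'$ inside the normalized integral, so the second factor is at most $\bigl(\int|\nabla u\indc_{\{\a\neq0\}}|^{s'2_*}\bigr)^{1/s'}$ with the gradient exponent landing on $s'2_*$ exactly, while the first factor equals $\bigl(\sum_{x}\size(\cu_\Pa(x))^{2sd}\bigr)^{1/s}\leq C|\cu_{m+1}|^{1/s}$ by the minimal-scale hypothesis; this $|\cu_{m+1}|^{1/s}$, combined with the $3^{-2m}$ from Caccioppoli, produces precisely the normalization $|\cu_{m+1}|^{2/(s'2_*)}$ required in the definition of $\mathrm{RH}_s(\cu_m)$. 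With that replacement your argument closes; as written, the key step fails.
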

\begin{proof}
To setup the argument, fix an exponent 
\begin{equation*} \label{}
s> \frac12(d+2).
\end{equation*}
We also fix an integer $m\in\N$ satisfying
\begin{equation*}
3^m \geq \mathcal{M}_{2sd}(\Pa)
\end{equation*}
and a function $u \in \A(\C_*(\cu_m))$. Note that $m \geq \mathcal{M}_{2sd}(\Pa)$ implies that 
\begin{equation}
\label{e.goodnessm}
\frac{1}{\left| \cu_m \right|} \sum_{x \in \cu_m} \size\left(\cu_\Pa(x) \right)^{2sd} \leq C 
\end{equation}
and, in particular, that~$\cu_m$ is good. The goal is to prove~\eqref{e.reverseHolder} for a deterministic constant $C(s,d,\lambda,\p)<\infty$.

\smallskip
We begin by applying~\eqref{e.caccioppoli} and then~\eqref{e.sobolev}, which give us 
\begin{align}
\label{e.reverseHolderpre}
\lefteqn{
\int_{\C_*(\cu_m)} \left| \nabla u \indc_{\{\a\neq 0\}}\right|^2(x)\,dx 
}  \quad & \\
& \leq C 3^{-2m} \inf_{a \in \R} \int_{\C_*(\cu_{m+1})} \left| u(x)-a \right|^2\,dx \notag \\
& \leq  C3^{-2m} \left( \sum_{\cu\in\Pa,\, \cu\subseteq \cu_{m+1}} \size(\cu)^{2d} \int_{\cu\cap\C_*(\cu_{m+1})} \left|\nabla u \indc_{\{\a\neq 0\}}\right|^{2_*}\!(x)\,dx \right)^{\frac{2}{2_*}} \notag.
\end{align}
We turn our attention to the first term on the right of~\eqref{e.reverseHolderpre}. Using the H\"older inequality with exponents $s$ and its H\"older conjugate $s':= \frac{s}{s-1}<\frac{2}{2_*}$, we get
\begin{align*}
\lefteqn{
 \sum_{\cu\in\Pa,\, \cu\subseteq \cu_{m+1}} \size(\cu')^{2d} \int_{\cu\cap\C_*(\cu_{m+1})} \left|\nabla u \indc_{\{\a\neq 0\}}\right|^{2_*}(x)\,dx 
 } \qquad & \\
 & \leq \left(   \sum_{\cu\in\Pa,\, \cu\subseteq \cu_{m+1}}\size(\cu)^{2sd+d}\right)^{\frac1s} \\
 & \qquad \times \left(   \sum_{\cu\in\Pa,\, \cu\subseteq \cu_{m+1}} \left| \cu \right|  \left( \frac{1}{|\cu|}\int_{\cu\cap\C_*(\cu_{m+1})}\left|\nabla u \indc_{\{\a\neq 0\}}\right|^{2_*}(x)\,dx \right)^{s'}\right)^{\frac1{s'}}\\
 & \leq \left(  \sum_{x\in \cu_{m+1}} \size(\cu_\Pa(x))^{2sd}\right)^{\frac1s} 
 \left( \int_{\C_*(\cu_{m+1})} \left|\nabla u \indc_{\{\a\neq 0\}}\right|^{s'2_*}(x)\,dx \right)^{\frac1{s'}} \\
 & \leq C \left| \cu_{m+1} \right|^{\frac1s}\left( \int_{\C_*(\cu_{m+1})} \left|\nabla u \indc_{\{\a\neq 0\}}\right|^{s'2_*}(x)\,dx \right)^{\frac1{s'}},
\end{align*}
where in the last line we used the first inequality of~\eqref{e.goodnessm}. 
Combining the above displays, we obtain
\begin{align*}
\lefteqn{
\frac{1}{|\cu_m|} \int_{ \C_*(\cu_m)} \left| \nabla u \indc_{\{\a\neq 0\}}\right|^2(x)\,dx 
} \qquad & \\
& \leq C3^{-m\left(d+2-\frac{2d}{s2_*}\right)} \left( \int_{\C_*(\cu_{m+1})} \left|\nabla u \indc_{\{\a\neq 0\}}\right|^{s'2_*}(x)\,dx \right)^{\frac2{s'2_*}} \\
& = C \left( \frac1{|\cu_{m+1}|}\int_{\C_*(\cu_{m+1})} \left|\nabla u \indc_{\{\a\neq 0\}}\right|^{s'2_*}(x)\,dx \right)^{\frac2{s'2_*}}.
\end{align*}
This completes the proof of~\eqref{e.reverseHolder} and therefore of~\eqref{e.reverse.Holder}. The second statement is obtained from the first and~\eqref{e.minimalbound}.
\end{proof}

\begin{definition}[{The partition~$\Ra$ and minimal scale $\mathcal{M}_t(\Ra)$}] \label{def.minscaleR}
We denote by~$\Ra$  the partition obtained by applying Proposition~\ref{p.partitions} to the family of ``good events" $\mathcal{G}:= \left\{\cu\in\T\,:\, \mathrm{RH}_{d+2}(\cu) \leq C\right\}$ in which a deterministic reverse H\"older inequality holds for gradients of elements of $\A(\C_*(\cu))$ with exponent $s'2_* = \frac{2d}{d+1}$ and with $C(d,\lambda,\p)<\infty$ as in the statement of Lemma~\ref{l.reverse.Holder}. Given an exponent $t\geq 1$ and according to Proposition~\ref{p.minimalscales}, we denote the minimal scale for this partition $\mathcal{M}_t(\Ra)$, which we note has integrability
\begin{equation}
\label{e.Mtint}
\mathcal{M}_t(\Ra) = \O_{r}(C'(r,t,d,\lambda,\p)) \quad \mbox{for every} \ r \in \left( 0, \frac{d^2}{(d+t)(2d^2+5d+1)+d} \right). 
\end{equation}
\end{definition}

%
%

We next obtain a version of the Meyers improvement of integrability estimate. For uniformly elliptic equations in Euclidean space, this estimate asserts the existence of an exponent $\ep>0$ (depending only on dimension and ellipticity) and a constant $C$ such that, for every solution $u$ in the ball $B_R$, 
\begin{equation*}
\left( \fint_{B_R} \left| \nabla u(x) \right|^{2+\ep}\,dx \right)^{\frac2{2+\ep}} 
\leq C  \fint_{B_R} \left| \nabla u(x) \right|^2\,dx.
\end{equation*}
This deterministic gain in integrability is an important ingredient in the theory developed in~\cite{AS}. In our setting, the analogue presented below (which holds only above a random scale) plays an even more essential role in the developments in Section~\ref{s.convergence} because it allows us to ``H\"older away" the sizes of random partitions from our estimates without giving up any exponent. 

\smallskip

We define, for each triadic $\cu\in\T$ and exponent $\ep>0$, the random variable
\begin{equation*}
\mathrm{ME}_{\ep}(\cu) :=
\left\{
\begin{aligned}
& \sup_{u\in \A(\C_\mathrm{max}(3\cu))} 
\frac{\left( \frac{1}{|\cu|} \int_{\C_*(\cu)} \left| \nabla u \indc_{\{\a\neq 0\}}\right|^{2+\ep}(x)\,dx  \right)^{\frac1{2+\ep}}}
{\left( \frac1{|3\cu|}\int_{\C_\mathrm{max}(3\cu)} \left|\nabla u \indc_{\{\a\neq 0\}}\right|^{2}(x)\,dx \right)^{\frac1{2}}}
   & \ \mbox{if} & \ 3\cu \ \mbox{is good,} \\
& +\infty & \ \mbox{if} & \ 3\cu \ \mbox{is bad.}
\end{aligned}
\right.
\end{equation*}
We are interesting in showing that, for an exponent $\ep(d,\lambda,\p)>0$, the quantity $\mathrm{ME}_{\ep}(\cu)$ is bounded by a deterministic constant above a random scale. The statement is given in the following proposition.

\begin{proposition}[{Meyers estimate}]
\label{p.meyers}
There exist $\ep(d,\lambda,\p)>0$ and $t(d,\lambda,\p)<\infty$ and a deterministic constant $C(d,\lambda,\p)<\infty$ such that, for every $m\in\N$, 
\begin{equation}
\label{e.Meyersimpl}
3^m \geq \mathcal{M}_{t}(\Ra) +C \implies \mathrm{ME}_{\ep}(\cu_m) \leq C.
\end{equation}
In particular, there exists $\delta(d,\lambda,\p)>0$ and $C(d,\lambda,\p)<\infty$ such that, for every $m\in\N$,  
\begin{equation}
\label{e.Meyerscheby}
\P \left[  \mathrm{ME}_\ep (\cu_m) > C \right] \leq C' \exp\left( -3^{m\delta} \right).
\end{equation}
\end{proposition}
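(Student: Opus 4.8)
The plan is to run the classical Gehring lemma argument, but localized to a single large triadic cube where we have a deterministic reverse Hölder inequality (Lemma~\ref{l.reverse.Holder}), which gives us self-improvement of integrability with a deterministic exponent and constant. Fix $s = d+2 > \frac12(d+2)$, so $s'2_* = \frac{2d}{d+1}$; let $t$ be the exponent in Definition~\ref{def.minscaleR} and take $m$ with $3^m \geq \mathcal{M}_t(\Ra) + C$. The key point is that the definition of $\Ra$ forces a uniform reverse Hölder inequality: whenever $3^m \geq \mathcal{M}_t(\Ra)$, the cube $\cu_m$ lies in the partition $\Ra$ only through cubes $\cu' \in \Ra$ with $\cu' \subseteq \cu_m$ whose predecessors are all ``good'' in the sense of $\mathrm{RH}_{d+2} \leq C$. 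But we need something slightly stronger: a reverse Hölder inequality that holds on \emph{every} subcube of $\cu_m$ down to unit scale, with a uniform constant, so that Gehring applies. This is exactly the content of $3^m \geq \mathcal{M}_t(\Ra)$: by Proposition~\ref{p.partitions}(i) applied to the collection $\G = \{\cu : \mathrm{RH}_{d+2}(\cu) \leq C\}$, every triadic cube $\cu'$ with $\cu_{\Ra}\text{-element} \subseteq \cu' \subseteq \cu_m$ satisfies $\mathrm{RH}_{d+2}(\cu') \leq C$; below the scale of $\Ra$ we instead use the crude bound that $\C_*(\cu')$ contains at most a bounded number of points, so a trivial reverse Hölder holds there too (after absorbing the coarseness into the constant via $3^m \geq \mathcal{M}_t(\Ra)$).

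First I would make precise the scale-by-scale reverse Hölder statement: there is a deterministic $C$ such that for every $u \in \A(\C_*(\cu_m))$ and every triadic subcube $\cu' \subseteq \cu_m$ with $3\cu' \subseteq \cu_m$,
\begin{equation*}
\left( \fint_{\cu' \cap \C_*(\cu')} \left| \nabla u \indc_{\{\a\neq0\}} \right|^2 \right)^{\frac12}
\leq C \left( \fint_{3\cu' \cap \C_\mathrm{max}(3\cu')} \left| \nabla u \indc_{\{\a\neq0\}} \right|^{s'2_*} \right)^{\frac1{s'2_*}}.
\end{equation*}
For cubes $\cu'$ of size $\geq \mathcal{M}_t(\Ra)$ this is Lemma~\ref{l.reverse.Holder} together with the fact that $\cu' \in \G$; for smaller cubes one uses that $\size(\cu_\Pa(x))^{2sd}$ summed over $\cu_m$ is bounded (definition of the minimal scale), so the local Sobolev–Caccioppoli argument of Lemma~\ref{l.reverse.Holder} still closes with a deterministic constant — here I would redo the chain \eqref{e.reverseHolderpre} but on $\cu'$ instead of $\cu_m$, noting every ingredient (Caccioppoli, the Sobolev inequality \eqref{e.sobolev}, the Hölder splitting of the $\Pa$-weights) is available uniformly because of $3^m \geq \mathcal{M}_{2sd}(\Pa)$, which is implied by $3^m \geq \mathcal{M}_t(\Ra)$ for $t$ chosen large enough relative to $2sd$ (monotonicity of the minimal scales in $t$, or simply redefine $\mathcal{M}_t(\Ra) := \max(\mathcal{M}_t(\Ra), \mathcal{M}_{2sd}(\Pa))$).

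Second, I would apply the Gehring lemma (in the form for reverse Hölder inequalities with an "up to the boundary" version over the fixed cube $\cu_m$; see e.g. the statement used in~\cite{AS}). The function $g := |\nabla u \indc_{\{\a\neq0\}}|^{s'2_*}$, viewed on $\cu_m$ and extended by zero off $\C_*(\cu_m)$, satisfies a reverse Hölder inequality $\left(\fint_{\cu'} g^{p_0}\right)^{1/p_0} \leq C \fint_{3\cu'} g$ with $p_0 = \frac{2}{s'2_*} = \frac{d+1}{d} > 1$ on all triadic subcubes, and Gehring upgrades this to $\left(\fint_{\cu_m} g^{p_0(1+\sigma)}\right)^{1/(p_0(1+\sigma))} \leq C \left(\fint_{\frac{3}{2}\cu_m} g\right)$ for some $\sigma(d,\lambda,\p) > 0$ and $C(d,\lambda,\p) < \infty$. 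Unwinding, this says precisely that $\mathrm{ME}_\ep(\cu_m) \leq C$ with $\ep := s'2_* \cdot p_0\sigma = 2\sigma$, which is \eqref{e.Meyersimpl}. (One must double-check the geometry: the cluster $\C_*(\cu')$ of a subcube sits inside $\C_\mathrm{max}(3\cu')$, and for nested triadic cubes these clusters are nested once the predecessors are good, by Lemma~\ref{l.connectivity} and \eqref{e.goodness} — so the $g$ on the left and right of the reverse Hölder inequality are restrictions of the same function, which is what Gehring needs.) Finally, \eqref{e.Meyerscheby} follows from \eqref{e.Meyersimpl} and the stochastic bound \eqref{e.Mtint} on $\mathcal{M}_t(\Ra)$: $\P[\mathrm{ME}_\ep(\cu_m) > C] \leq \P[3^m < \mathcal{M}_t(\Ra) + C] \leq C' \exp(-3^{m\delta})$ for a suitable $\delta(d,\lambda,\p) > 0$, using that $\mathcal{M}_t(\Ra) = \O_r(C')$ for some $r > 0$.

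The main obstacle, I expect, is not Gehring itself (which is a black box once its hypotheses are in place) but verifying the \emph{uniform} scale-by-scale reverse Hölder inequality below the scale $\mathcal{M}_t(\Ra)$ — i.e. bookkeeping that the Sobolev constant in \eqref{e.sobolev}, which carries the non-uniform weights $\size(\cu_\Pa(x))^{sd}$, can be absorbed into a deterministic constant uniformly over all subcubes of $\cu_m$ once $m$ exceeds the minimal scale. This requires the relation $\sum_{x \in \cu_m}\size(\cu_\Pa(x))^{2sd} \leq C|\cu_m|$ from Proposition~\ref{p.minimalscales} to be applied not just to $\cu_m$ but, via the definition of the minimal scale, to control the corresponding sums on every subcube, which is handled by the monotonicity built into $\Lambda_t$ but must be stated carefully; and it requires handling boundary subcubes $\cu'$ with $3\cu' \not\subseteq \cu_m$, for which one replaces $u \in \A(\C_*(\cu_m))$ by its restriction and uses the interior cube $\cu_m$ itself as the enlarged domain in a Caccioppoli estimate with cutoff supported away from $\partial\cu_m$.
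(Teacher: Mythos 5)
Your overall architecture (reverse H\"older $\Rightarrow$ Gehring $\Rightarrow$ convert the exponent, then Chebyshev via the bound on $\mathcal{M}_t(\Ra)$ for~\eqref{e.Meyerscheby}) matches the paper, and you correctly flagged where the difficulty lies. But the step you propose to resolve it does not work, and this is a genuine gap. You want a \emph{uniform, deterministic} reverse H\"older inequality for $|\nabla u\indc_{\{\a\neq0\}}|$ on \emph{every} triadic subcube of $\cu_m$ down to unit scale, and you argue it follows from $3^m\geq \mathcal{M}_{2sd}(\Pa)$ because the Sobolev weights can be absorbed. The minimal-scale condition, however, only gives $\frac{1}{|\cu_m|}\sum_{x\in\cu_m}\size(\cu_\Pa(x))^{2sd}\leq C$, i.e.\ a bound on the \emph{average over the whole cube} $\cu_m$; it says nothing about the corresponding averages over small subcubes $\cu'$. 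If $\cu'$ sits inside (or near) a coarse region of the partition, the local weighted sum is of order $\size(\cu_\Pa)^{2sd}\gg 1$ and the chain~\eqref{e.reverseHolderpre} does not close with a deterministic constant. Your fallback for cubes ``below the scale of $\Ra$'' --- that $\C_*(\cu')$ has a bounded number of points --- is also false: under $3^m\geq\mathcal{M}_t(\Ra)$ the largest element of $\Ra$ inside $\cu_m$ can still have size up to $3^{dm/(d+t)}$ (second conclusion of Proposition~\ref{p.minimalscales}), so bad subcubes contain polynomially many (in $3^m$) points and no trivial norm-equivalence constant is available. In short, the scale-by-scale reverse H\"older inequality for the un-coarsened gradient is simply not true uniformly, which is exactly the obstruction the paper identifies. (A smaller issue: $\mathcal{M}_{2sd}(\Pa)$ is not implied by $\mathcal{M}_t(\Ra)$ for any choice of $t$; these are minimal scales of different partitions, so at best one takes the maximum, as you note parenthetically.)

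The paper's proof circumvents this by never asking for a reverse H\"older inequality for $|\nabla u|$ at small scales. It introduces the $\Ra$-coarsened energy density
\begin{equation*}
f(x):=\sum_{\cu\in\Ra}\indc_{\{\lceil x\rceil\in\cu\}}\left(\frac{1}{|\cu|}\int_{\cu\cap\C_*(\cu_{m+1})}\left|\nabla u\indc_{\{\a\neq0\}}\right|^{\frac{2d}{d+1}}(y)\,dy\right),
\end{equation*}
which is \emph{constant on each element of $\Ra$}, so on scales below $\Ra$ the reverse H\"older inequality for $f$ is trivial, while on scales above it follows from the deterministic $\mathrm{RH}$ bound built into the definition of $\Ra$ (this is Step~1, inequality~\eqref{e.reverseHolder.f}, valid for \emph{all} continuum cubes $Q$). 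Gehring is then applied to $f$, not to $|\nabla u|$, and only afterwards does one pass back from $f$ to $|\nabla u\indc_{\{\a\neq0\}}|^{2+\ep}$ by H\"older's inequality, paying weights $|\cu_\Ra(x)|^{(2+\ep)(1+\ep)(d+1)/(2\ep d)}$ whose average over $\cu_m$ is controlled by $3^m\geq\mathcal{M}_t(\Ra)$ with $t:=\frac{(2+\ep)(1+\ep)(d+1)}{2\ep}$. To repair your argument you would need to replace your ``uniform reverse H\"older on all subcubes'' step with this coarsening device (or some equivalent mechanism that trivializes the small scales); the Gehring and Chebyshev parts of your proposal can then stand as written.
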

\begin{proof}  
The classical proof of the Meyers estimate (cf.~\cite{GG}) is a combination of the reverse H\"older inequality (given by the Caccioppoli and Sobolev inequalities, as in the previous lemma) and the Gehring lemma (cf.~\cite{Giu}). Here the situation is more complicated, because we only have the reverse H\"older inequality above a random scale, and therefore we must modify the strategy slightly by applying the Gehring lemma to a coarsening of $\left|\nabla u\indc_{\{\a\neq0\}}\right|^2$ with respect to $\Ra$, using the reverse H\"older inequality holds on scales larger than $\Ra$, to obtain an improvement of integrability. Then we use the H\"older's inequality to get rid of the partition. 

\smallskip

We fix $m\in\N$ with $m\geq \mathcal{M}_t(\Ra) +C$ and $t$ is a fixed exponent to be selected at the end of the proof in such a way that it depends only on $(d,\lambda,\p)$. We fix a solution $u\in \A(\C_*(\cu_{m+1}))$ and introduce the coarsened function
\begin{equation*} \label{}
f(x):= \sum_{\cu \in\Ra} \indc_{\left\{ \left\lceil x \right\rceil \in \cu\right\}} \left( \frac{1}{|\cu|} \int_{\cu \cap \C_*(\cu_{m+1})} \left| \nabla u \indc_{\{\a\neq0\}} \right|^{\frac{2d}{d+1}}(y)\,dy \right), \quad x\in \Rd. 
\end{equation*}

\smallskip

\emph{Step 1.} We write the reverse H\"older estimate from the previous lemma in terms of~$f$. The claim is that there exists a constant~$C(d,\lambda,\p)<\infty$ such that, for every cube $Q \subseteq \Rd$, 
\begin{equation}
\label{e.reverseHolder.f}
 \fint_{Q} f(x)^{\frac{d+1}{d}} \,dx  \leq C \left( \fint_{63Q} f(x)\,dx\right)^{\frac{d+1}{d}}.
\end{equation}
First, observe that if the size of $Q$ is smaller than $100$, then since $f$ is constant on all cubes of the form $z+[-\frac12,\frac12]^d$ with $z\in\Zd$, it is easy to see that 
\begin{equation*} \label{}
\sup_{x\in Q} f(x)\leq 2^d \frac{1}{|Q|\wedge 1} \int_{2Q} f(x)\,dx\leq C \fint_{2Q} f(x)\,dx. 
\end{equation*}
Therefore we need only to check~\eqref{e.reverseHolder.f} for cubes of size larger than 100. Note that every cube~$Q$ of size at least 100 contains some cube of the form $\frac19\cu$ for some triadic cube $\cu\in \T$.

\smallskip

Let $\cu$ be the largest triadic cube satisfying $\frac19\cu\subseteq Q$. It follows from simple geometric considerations that $Q \subseteq \cu$. If $\cu\not\in\Ra_*$, then $Q\subseteq \cu'$ for some $\cu'\in \Ra$. In this case, $f$ is constant on $Q$ and therefore the bound~\eqref{e.reverseHolder.f} is obvious. We may therefore assume $\cu\in \Ra_*$. We now compute, using~\eqref{e.reverseHolder},
\begin{align*}
\fint_Q f(x)^{\frac{d+1}{d}}\,dx
& \leq C \fint_{\cu} f(x)^{\frac{d+1}{d}}\,dx \\
& \leq C \fint_{\cu \cap\C_*(\cu_m)} \left| \nabla u \indc_{\{ \a\neq 0\}} \right|^2(y)\,dy \\
& \leq C \left( \frac1{|3\cu|}\int_{\C_*(3\cu)} \left|\nabla u \indc_{\{\a\neq 0\}}\right|^{\frac{2d}{d+1}}(x)\,dx \right)^{\frac{d+1}{d}}.
\end{align*}
Let~$W$ be the union of elements of~$\Ra$ which have nonempty intersection with~$3\cu$. It is easy to check from the fact that $\Ra$ has property~(ii) of Proposition~\ref{p.partitions} that $W \subseteq 7\cu$. Thus
\begin{align*}
\lefteqn{
\left( \frac1{|3\cu|}\int_{\C_*(3\cu)} \left|\nabla u \indc_{\{\a\neq 0\}}\right|^{\frac{2d}{d+1}}(x)\,dx \right)^{\frac{d+1}d}
} \qquad & \\
& \leq C \left( \frac1{|W|}\int_{W\cap \C_*(\cu_m)} \left|\nabla u \indc_{\{\a\neq 0\}}\right|^{\frac{2d}{d+1}}(x)\,dx \right)^{\frac{d+1}d} \\
& =  C \left( \fint_{W} f(x)\,dx \right)^{\frac{d+1}d} \leq  C \left( \fint_{7\cu} f(x)\,dx \right)^{\frac{d+1}d}. 
\end{align*}
Since $7\cu \subseteq 63Q$, this completes the proof of~\eqref{e.reverseHolder.f}.

\smallskip

\emph{Step 2.} We apply the Gehring Lemma to the function $f$ and show that the result implies~\eqref{e.meyers}. By an application of~Theorem 6.6 \& Corollary 6.1 from~\cite{Giu} (we again can get the result in the discrete case from the continuum case by using affine interpolation), we obtain the existence of an exponent~$\ep(d,\lambda,\p)>0$ and a constant $C(d,\lambda,\p)<\infty$ such that
\begin{equation}
\label{e.Gehringapp.f}
\fint_{\cu_m} f(x)^{\frac{d+1}{d}(1+\ep)}\,dx \leq C \left( \fint_{\cu_{m+1}} f(x)^{\frac{d+1}{d}}\,dx \right)^{1+\ep}.
\end{equation}
It is obvious that 
\begin{equation*}
 \int_{\cu_{m+1}} f(x)^{\frac{d+1}{d}}\,dx \leq \int_{\C_*(\cu_{m+1})} \left| \nabla u \indc_{\{ \a\neq 0\}} \right|^2(x)\,dx.
\end{equation*}
To bound the left side of~\eqref{e.Gehringapp.f} from below, observe that for each $\cu\in\Ra$ and $x\in \C_*(\cu_{m+1})$, we have
\begin{equation*}
f(x) \geq \left| \cu_{\Ra}\left(\left\lceil x\right\rceil\right) \right|^{-1} \left| \nabla u \indc_{\{\a\neq0\}} \right|^{\frac{2d}{d+1}}\left(\left\lceil x\right\rceil\right).
\end{equation*}
Therefore, by H\"older's inequality,
\begin{align*}
\lefteqn{ 
\int_{\C_*(\cu_m)}  \left| \nabla u \indc_{\{ \a\neq 0\}} \right|^{2+\ep}(x)\, dx 
} \qquad & \\
& \leq \left( \int_{\C_*(\cu_m)}  \left| \nabla u \indc_{\{ \a\neq 0\}} \right|^{2+2\ep}(x)\,\left| \cu_{\Ra}(x) \right|^{-\frac{d+1}d(1+\ep)} dx \right)^{\frac{2+\ep}{2+2\ep}}\\
& \qquad \times \left( \int_{\C_*(\cu_m)} \left| \cu_{\Ra}(x) \right|^{\frac{(2+\ep)(1+\ep)(d+1)}{2\ep d}} \,dx\right)^{\frac{\ep}{2+2\ep}} \\
& \leq C \left( \int_{\cu_m} f(x)^{\frac{d+1}{d}(1+\ep)}\,dx \right)^{\frac{2+\ep}{2+2\ep}} \left( \int_{\C_*(\cu_m)} \left| \cu_{\Ra}(x) \right|^{\frac{(2+\ep)(1+\ep)(d+1)}{2\ep d}} \,dx\right)^{\frac{\ep}{2+2\ep}} \\
& \leq C\left|\cu_m\right| \left( \frac{1}{ \left|\cu_{m+1}\right|}  \int_{\C_*(\cu_{m+1})} \left| \nabla u \indc_{\{ \a\neq 0\}} \right|^2(x)\,dx \right)^{\frac{2+\ep}{2}} \\
& \qquad \times
\left( \frac{1}{ \left|\cu_m\right|} \int_{\C_*(\cu_m)} \left| \cu_{\Ra}(x) \right|^{\frac{(2+\ep)(1+\ep)(d+1)}{2\ep d}} \,dx\right)^{\frac{\ep}{2+2\ep}}.
\end{align*}
We now choose $t:=\frac{(2+\ep)(1+\ep)(d+1)}{2\ep}$ which, as required, depends only on $(d,\lambda,\p)$. Under the assumption that $m\geq \mathcal{M}_t(\Ra)$, the second factor on the right is then bounded by $C(d,\lambda,\p)$ and we obtain, for every $u\in \A(\C_*(\cu))$,
\begin{multline}
\label{e.meyers}
 \frac{1}{ \left|\cu_m\right|}\int_{\C_*(\cu_m)}  \left| \nabla u \indc_{\{ \a\neq 0\}} \right|^{2+\ep}(x)\, dx \\
  \leq C \left( \frac{1}{ \left|\cu_{m+1}\right|}  \int_{\C_*(\cu_{m+1})} \left| \nabla u \indc_{\{ \a\neq 0\}} \right|^2(x)\,dx \right)^{\frac{2+\ep}{2}}.
\end{multline}
Thus $\mathrm{ME}_\ep(\cu_m) \leq C$, completing the proof of~\eqref{e.Meyersimpl}. The bound~\eqref{e.Meyerscheby} follows from~\eqref{e.Meyersimpl},~\eqref{e.Mtint} and the Chebyshev inequality. 
\end{proof}
We finish this section with the definition of the partition~$\Qa$ quantifying the local scale on which the Meyers estimate holds.
\begin{definition}
[{The partition~$\Qa$ and minimal scale $\mathcal{M}_t(\Qa)$}]
\label{d.partQa}
We denote by~$\Qa$  the partition obtained by applying Proposition~\ref{p.partitions} to the family of ``good events" $\mathcal{G}:= \left\{\cu\in\T\,:\, \mathrm{ME}_\ep(\cu_m) \leq C\right\}$ in which a deterministic Meyers estimates holds for gradients of elements of $\A(\C_*(\cu))$ with exponent $\ep(d,\lambda,\p) > 0 $ and with $C(d,\lambda,\p)<\infty$ as in the statement of Proposition~\ref{p.meyers}. We denote, for each exponent $t\geq 1$, the minimal scale for this partition (given by Proposition~\ref{p.minimalscales}) by $\mathcal{M}_t(\Qa)$. Note that 
\begin{equation}
\label{e.Mtint2}
\mathcal{M}_t(\Qa) = \O_{r}(C'(r,t,d,\lambda,\p)) \quad \mbox{for every} \ r \in \left( 0, \frac{\delta d}{d+t+\delta} \right). 
\end{equation}
where $\delta := \delta(d,\lambda,\p) > 0$ is as in the statement of Proposition~\ref{p.meyers}.
\end{definition}

%
%
%

\smallskip

\section{Subadditive energy quantities and basic properties}
\label{s.quantities}

Here we introduce the subadditive energy quantities, which are modeled on the ones from~\cite{AS}, and record their basic properties. We also prove estimates on their uniform convexity, boundedness, the ordering relation between them and their subadditivity. These properties are mostly trivial in the uniformly elliptic case, but more technical in our setting since we must take into account the geometry of the percolation clusters (using the results from the previous two sections).

\smallskip

\subsection{Definition of the subadditive energy quantities}

\smallskip

We next introduce the subadditive quantities. These are based on similar quantities introduced in the continuum, uniformly elliptic setting in~\cite{AS} and variants of the quantities which have been recently used to obtain optimal estimates and scaling limits in stochastic homogenization of uniformly elliptic equations~\cite{AKM1,AKM2}. 

\smallskip

We first define, for each finite subset $U \subseteq \Zd$, the set 
\begin{equation*} \label{}
\A_*(U) := \left\{ u : \C_*(U) \to \R \,:\, -\nabla \cdot \a \nabla u(x) = 0, \ \forall x \in \C_*(U) \setminus \partial \cl_\Pa(U) \right\}. 
\end{equation*}
Note that $\A( \C_*(U))\subseteq \A_*(U)$ since $\partial_\a \C_*(U) \subseteq \partial \cl_\Pa(U)$, but neither of these inclusions in necessarily an equality. As in~\eqref{e.Avarchar0}, we have,
\begin{equation}
\label{e.Avarchar}
u\in \A_*(U) 
\iff 
\left\langle \nabla w, \a\nabla u \right\rangle_U = 0 \quad \mbox{for every} \ w \in \mathcal{C}_0(U),
\end{equation}
where $\mathcal{C}_0(U)$ denotes the set of functions $w : \C_*(U) \rightarrow \R$ equal to $0$ on $\C_*(U) \cap \cl_\Pa(U)$.
\begin{definition}
For each  $U \subseteq \Zd$ and $p,q\in\Rd$, we define the random variables
\begin{equation*} \label{}
\mu(U,q):= 
\inf_{u \in \A_*(U) } \frac1{|\cl_\Pa(U)|} 
\left(
\frac12 \left\langle \nabla u, \a\nabla u \right\rangle_{\C_*(U)} 
 - \left\langle q, \nabla\! \left[ u \right]_{\Pa} \right\rangle_{\cl_\Pa(U)}
 \right)
\end{equation*}
and
\begin{equation*} \label{}
\nu(U,p) := 
\sup_{v \in \A_*(U)} \frac1{|\cl_\Pa(U)|} 
\left( 
- \frac12 \left\langle  \nabla v, \a\nabla v \right\rangle_{\C_*(U)} 
+ \left\langle p, \a\nabla v \right\rangle_{\C_*(U)}
\right).
\end{equation*}
\end{definition}
The optimization problems in the above definitions of $\mu(U,q)$ and $\nu(U,p)$ are strictly convex and concave, respectively, and therefore they have unique optimizers in~$\A_*(U)$, up to additive constants,  which we denote by
\begin{equation*} \label{}
u(\cdot,U,q):= \mbox{minimizing element of $\A_*(U)$ in the definition of $\mu(U,q)$} 
\end{equation*}
and
\begin{equation*} \label{}
v(\cdot,U,p):= \mbox{maximizing element of $\A_*(U)$ in the definition of $\nu(U,q)$.} 
\end{equation*}
We choose the additive constants for~$u(\cdot,U,q)$ and~$v(\cdot,U,p)$ so that 
\begin{equation} \label{e.additiveconstants}
\fint_{\cl_\Pa(U)} \left[ u(\cdot,U,q) \right]_{\Pa}\!(x)\,dx = 0
\quad \mbox{and} \quad 
\fint_{\cl_\Pa(U)} \left[ v(\cdot,U,p) \right]_{\Pa}\!(x) \,dx = 0.
\end{equation}
Notice that, for each bounded $U\subseteq\Zd$,  
\begin{equation*} \label{}
q \mapsto -\mu(U,q) \quad \mbox{and} \quad p \mapsto \nu(U,p) \quad \mbox{are nonnegative and quadratic.}
\end{equation*}
In particular, these maps are convex. 

\smallskip

In Lemma~\ref{l.nuisdirichlet}, below, we will show that the function $v(\cdot,U,p)$ is the solution of the Dirichlet problem in $\C_*(U)$ with affine data $x\mapsto p\cdot x + c$ on $\C_*(U) \cap \partial \cl_\Pa(U)$, for some $c\in\R$. Therefore $\nu(U,p)$ is just (up to the normalization) the energy of the familiar cell problem solution in $\C_*(U)$. The quantity $\mu$ represents the energy of the ``dual" cell problem introduced in~\cite{AS}. It is important here that the linear term in the definition of $\mu$ is \emph{not} $\left\langle q, \nabla u \right\rangle_{\C_*(U)}$, which is what one might naively guess when attempting to generalize from the uniformly elliptic case. This will not possess the correct convex dual relationship with $\nu$: in particular,~\eqref{e.munucomparison} would be false, rendering attempts at proving Proposition~\ref{p.subadd} hopeless. 
Indeed, if $u$ is close to an affine function with slope $p$ (for example, the function $v(\cdot,U,p)$), there is no reason to expect that $\left\langle q, \nabla u \right\rangle_{\C_*(U)}$ should be close to $q\cdot p$, because we are ``missing'' the contribution of~$\nabla u$ in the closed edges. While the exact form of the linear term is not very important, we need something that will be close to $q\cdot p$ if (as expected on large scales) $u(\cdot,U,q)$ is close to an affine function with slope $p$. Using the spatial average of the gradient $\nabla \left[ u \right]_{\Pa}$ of the coarsened function satisfies this property and turns out to be very convenient. One of the central ideas of~\cite{AS} is that one should focus on the spatial averages of the gradients and energy densities of the solutions. We do the same in the generalization here, except that when it comes to gradient we consistently replace a solution~$u$ with its coarsening~$\left[ u \right]_{\Pa}$. 

\smallskip

In the next section, we quantify the convergence of quantities $\mu(\cu,q)$ and $\nu(\cu,p)$ for $\cu \in \T$ as $\size(\cu) \to \infty$, see Proposition~\ref{p.subadd}. In the rest of this section, we prepare for this analysis by presenting some basic properties of these quantities. The geometry of the percolation cluster forces us to give up some very nice properties possessed by $\mu$ and $\nu$ in the continuum uniformly elliptic setting~\cite{AS}. For example, $-\mu$ and $\nu$ are not (strictly speaking) \emph{uniformly} convex independently of~$U$, because in general the partition can be quite coarse and the geometry of the percolation cluster very complicated. They are not \emph{stationary} with respect to $\Zd$-translations (because the partition $\Pa$ is not stationary), nor are they \emph{local} quantities (since they depend on the coefficient field~$\a(\cdot)$ on the whole of $\Bd$, since $\Pa$ does), nor are they precisely subadditive. Most of this section is therefore consumed by the quite technical task of showing that each of these important properties does in fact hold in an approximate sense which is quantified with sufficiently strong stochastic integrability. 

\smallskip

We conclude this subsection by computing the first and second variations of the optimization problems in the definitions of $\mu$ and $\nu$ and then checking that the function~$v$ is the solution of the Dirichlet problem with affine data as claimed above.

\begin{lemma}[First and second variations]
\label{l.variations}
Fix a bounded $U\subseteq \Zd$ and $p,q\in\Rd$. For every $w\in \A_*(U)$,
\begin{equation}
\label{e.firstvarmu}
\left\langle  \nabla w, \a \nabla u(\cdot ,U,q) \right\rangle_{\C_*(U)}= \left\langle q, \nabla\!\left[ w\right]_{\Pa}\!\right\rangle_{\cl_\Pa(U)},
\end{equation}
\begin{multline}
\label{e.secondvarmu}
\frac1{|\cl_\Pa(U)|} \left( \frac12  \left\langle  \nabla w, \a\nabla w\right\rangle_{\C_* (U)} -  \left\langle q, \nabla\! \left[ w \right]_{\Pa}\! \right\rangle_{\cl_{\Pa}(U)} \right) \\
= \mu(U,q) + \frac1{|\cl_\Pa(U)|} \frac12   \left\langle  \nabla (w-u(\cdot,U,q)) ,\a\nabla (w-u(\cdot,U,q)) \right\rangle_{\C_*(U)},
\end{multline}
\begin{equation} \label{e.firstvarnu}
\left\langle \nabla w, \a\nabla v(\cdot,U,p)\right\rangle_{\C_*(U)}  =\left\langle  p, \a \nabla w \right\rangle_{\C_*(U)},
\end{equation}
and
\begin{multline}
 \label{e.secondvarnu}
\frac1{|\cl_\Pa(U)|} \left(   -\frac12  \left\langle \nabla w, \a\nabla w \right\rangle_{\C_*(U)}  + \left\langle  p, \a \nabla w \right\rangle_{\C_*(U)} \right)  \\
= \nu(U,p) - \frac1{|\cl_\Pa(U)|}  \left\langle  \frac12 \nabla (w-v(\cdot,U,p)), \a\nabla (w-v(\cdot,U,p)) \right\rangle_{\C_*(U)} .
\end{multline}
\end{lemma}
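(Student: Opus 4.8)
The plan is to derive all four identities from the Euler--Lagrange characterization~\eqref{e.Avarchar} of $\A_*(U)$ together with the fact that the maps $q\mapsto-\mu(U,q)$ and $p\mapsto\nu(U,p)$ are nonnegative quadratics whose optimizers are characterized by a first-order condition on the affine space $\A_*(U)$. Concretely, for a fixed $q\in\Rd$ consider the functional
\begin{equation*}
J_q(u):=\frac12\left\langle\nabla u,\a\nabla u\right\rangle_{\C_*(U)}-\left\langle q,\nabla\!\left[u\right]_\Pa\right\rangle_{\cl_\Pa(U)}
\end{equation*}
on the affine space $\A_*(U)$. Both terms are defined on all functions $\C_*(U)\to\R$, the first is a nonnegative quadratic form in $u$ and the second is linear in $u$ (the coarsening $u\mapsto\left[u\right]_\Pa$ is linear, and the pairing $\left\langle q,\cdot\right\rangle$ is linear); hence $J_q$ is strictly convex on $\A_*(U)$ modulo additive constants, giving the existence and uniqueness of the minimizer $u(\cdot,U,q)$ asserted just before the lemma.

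First I would prove~\eqref{e.firstvarmu}. Given $w\in\A_*(U)$ and $t\in\R$, the function $u(\cdot,U,q)+tw$ again lies in the affine space $\A_*(U)$ (since $\A_*(U)-\A_*(U)$ is a linear space containing $w$), so $t\mapsto J_q(u(\cdot,U,q)+tw)$ is a quadratic in $t$ minimized at $t=0$; differentiating at $t=0$ and using bilinearity of $\left\langle\cdot,\a\nabla\cdot\right\rangle_{\C_*(U)}$ and linearity of the coarsening gives
\begin{equation*}
\left\langle\nabla w,\a\nabla u(\cdot,U,q)\right\rangle_{\C_*(U)}=\left\langle q,\nabla\!\left[w\right]_\Pa\right\rangle_{\cl_\Pa(U)},
\end{equation*}
which is~\eqref{e.firstvarmu}. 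For~\eqref{e.secondvarmu}, I would write $w=u(\cdot,U,q)+(w-u(\cdot,U,q))$, expand $J_q(w)$ using bilinearity, identify the zeroth-order term as $|\cl_\Pa(U)|\,\mu(U,q)$, observe that the cross term vanishes by~\eqref{e.firstvarmu} applied with test function $w-u(\cdot,U,q)$, and collect the remaining pure quadratic term; dividing by $|\cl_\Pa(U)|$ yields the claim. The identities~\eqref{e.firstvarnu} and~\eqref{e.secondvarnu} for $\nu$ are proved identically, now with the concave functional
\begin{equation*}
I_p(v):=-\frac12\left\langle\nabla v,\a\nabla v\right\rangle_{\C_*(U)}+\left\langle p,\a\nabla v\right\rangle_{\C_*(U)}
\end{equation*}
whose maximizer is $v(\cdot,U,p)$; differentiating $t\mapsto I_p(v(\cdot,U,p)+tw)$ at $t=0$ gives $\left\langle\nabla w,\a\nabla v(\cdot,U,p)\right\rangle_{\C_*(U)}=\left\langle p,\a\nabla w\right\rangle_{\C_*(U)}$, i.e.~\eqref{e.firstvarnu}, and expanding $I_p(w)$ about $v(\cdot,U,p)$ with the cross term killed by~\eqref{e.firstvarnu} gives~\eqref{e.secondvarnu}. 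Note the linear term in $I_p$ is $\left\langle p,\a\nabla w\right\rangle_{\C_*(U)}$, which is genuinely linear in $w$, so the same quadratic-in-$t$ argument applies.

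I do not expect any serious obstacle here: the only point requiring minor care is the bookkeeping of which ambient bilinear form ($\left\langle\cdot,\cdot\right\rangle_{\C_*(U)}$ versus $\left\langle\cdot,\cdot\right\rangle_{\cl_\Pa(U)}$) appears in each term and the fact that $w\mapsto\left[w\right]_\Pa$ is linear so that variations pass through it — both of which are immediate from the definitions in Section~\ref{ss.notation} and Definition~\ref{def.coarsenfunction}. One should also record, for use in later sections, the special case of~\eqref{e.secondvarmu} and~\eqref{e.secondvarnu} with $w=0$, which shows $\mu(U,q)\le0\le\nu(U,p)$ and that these quantities equal $-\tfrac1{2|\cl_\Pa(U)|}\left\langle\nabla u(\cdot,U,q),\a\nabla u(\cdot,U,q)\right\rangle_{\C_*(U)}+\cdots$; but strictly speaking that is a corollary rather than part of the present lemma, so I would simply note it in passing.
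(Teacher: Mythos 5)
Your proposal is correct and follows essentially the same route as the paper: a perturbation of the optimizer within the linear space $\A_*(U)$ (the paper uses one-sided perturbations $u+hw$, $h>0$, plus the substitution $w\mapsto -w$, which amounts to your differentiation of the quadratic in $t$ at its minimum), followed by expanding the functional about the optimizer and killing the cross term with the first variation. No gaps.
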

\begin{proof}
Let $w \in \A(\C_*(U))$. For each $h\in [0,1)$, define $u_h := u(\cdot,U,q) + hw$. By comparing $u_h$ to $u:=u_0=u(\cdot,U,q)$ in the definition of $\mu(U,q)$, we obtain, for every $h> 0$, 
\begin{align*} \label{}
0 & \geq  \frac12 \left\langle  \nabla u, \a \nabla  u \right\rangle_{\C_*(U)}  -   \frac 12 \left\langle \nabla u_h, \a \nabla  u_h \right\rangle_{\C_*(U)}
 -  \left\langle q , \nabla \left[ u - u_h \right]_{\Pa}\! \right\rangle_{\cl_\Pa(U)} \\
& = -\frac12 h^2 \left\langle \nabla w, \a \nabla w \right\rangle_{\C_*(U)} - h \left\langle \nabla w, \a \nabla u \right\rangle_{\C_*(U)} 
+ h \left\langle q,  \nabla \left[ w \right]_{\Pa}\! \right\rangle_{\cl_\Pa(U)}.
\end{align*}
Rearranging this and dividing by $h>0$ gives
\begin{equation*} \label{}
-   \left\langle \nabla u, \a \nabla w \right\rangle_{\C_*(U)}
+  \left\langle q, \nabla \left[ w \right]_{\Pa}\!  \right\rangle_{\cl_\Pa(U)} 
\leq \frac12 h\left\langle \nabla w, \a \nabla w \right\rangle_{\C_*(U)}.
\end{equation*}
Sending $h\to 0$ yields, for every $w \in \A(\C_*(U))$,
\begin{equation*} \label{}
\left\langle  q, \nabla \left[ w \right]_{\Pa}\! \right\rangle_{\cl_\Pa(U)}  \leq
\left\langle \nabla u, \a \nabla w  \right\rangle_{\C_*(U)} .
\end{equation*}
The reverse of the previous inequality follows by replacing~$w$ by~$-w$, which completes the proof of~\eqref{e.firstvarmu}. Returning now to the first display and inserting~\eqref{e.firstvarmu}, we obtain~\eqref{e.secondvarmu} for $u_h$ in place of $w$.

\smallskip

The proofs of~\eqref{e.firstvarnu} and~\eqref{e.secondvarnu} are similar and thus omitted.
\end{proof}

For future reference, we record some identities which are consequences of those in Lemma~\ref{l.variations}. By combining~\eqref{e.firstvarnu} and~\eqref{e.secondvarnu} with $w=v(\cdot,U,p)$, we get
\begin{equation}
\label{e.nuform1}
\nu(U,p) = \frac1{|\cl_\Pa(U)|} \frac12  \left\langle \nabla v(\cdot,U,p), \a\nabla v(\cdot,U,p) \right\rangle_{\C_*(U)}.
\end{equation}
Next, inserting this into~\eqref{e.secondvarmu} with $w=v(\cdot,U,p)$, we get
\begin{multline}
\label{e.omegaform0}
\nu(U,p) - \mu(U,q) -  \left\langle q, \nabla\!\left[ v(\cdot,U,p)\right]_{\Pa}\!\right\rangle_{\cl_\Pa(U)} \\
= \frac1{|\cl_\Pa(U)|} \frac12   \left\langle  \nabla (v(\cdot,U,p)-u(\cdot,U,q)) ,\a\nabla (v(\cdot,U,p)-u(\cdot,U,q)) \right\rangle_{\C_*(U)}.
\end{multline}

We next show that $v(\cdot,\cu,p)$ is the solution of the Dirichlet problem with affine boundary data. Recall that the space $\mathcal{C}_0(U)$ is defined between~\eqref{e.defA} and~\eqref{e.Avarchar}, above. 

\begin{lemma}
\label{l.nuisdirichlet}
There exists $c\in \R$ such that 
\begin{equation} \label{e.vaffine}
v(x,U,p) = p\cdot x + c \quad \mbox{for every} \ x \in \C_*(U) \cap \partial \cl_\Pa(U)
\end{equation}
and
\begin{equation*} \label{e.nucellproblem}
\nu(U,p) = \inf_{w \in \mathcal{C}_0(\cl_\Pa(U))} \frac{1}{2|\cl_\Pa(U)|} \left\langle \left( p+\nabla w \right),\a \left( p+\nabla w \right) \right\rangle_{\C_*(U)}.
\end{equation*}
\end{lemma}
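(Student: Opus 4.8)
The plan is to characterize $v(\cdot,U,p)$ through its Euler--Lagrange equation and a suitable choice of additive constant, then verify both the boundary identification and the energy formula.

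\textbf{Step 1: The function $v(\cdot,U,p)$ is harmonic away from the boundary and constant on boundary elements of $\Pa$.} By definition $v(\cdot,U,p)\in\A_*(U)$, so $-\nabla\cdot\a\nabla v = 0$ at every $x\in\C_*(U)\setminus\partial\cl_\Pa(U)$. The key additional observation is the first-variation identity~\eqref{e.firstvarnu}: for every $w\in\A_*(U)$ we have $\langle\nabla w,\a\nabla v(\cdot,U,p)\rangle_{\C_*(U)} = \langle p,\a\nabla w\rangle_{\C_*(U)}$. I would like to extract from this that $v(\cdot,U,p)$ agrees with an affine function on $\C_*(U)\cap\partial\cl_\Pa(U)$. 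The natural route is to test~\eqref{e.firstvarnu} against the harmonic extension (inside $\C_*(U)$) of a boundary perturbation, or more directly to argue by the minimization characterization below. In fact I would reverse the order and do Step 2 first, then deduce~\eqref{e.vaffine} from it.

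\textbf{Step 2: Rewrite $\nu(U,p)$ as a Dirichlet energy.} Given $u\in\A_*(U)$, set $w := u - \ell_p$, where $\ell_p(x) = p\cdot x$; then $\nabla u = p + \nabla w$ on $\Ed(\C_*(U))$, so
\begin{equation*}
-\tfrac12\langle\nabla u,\a\nabla u\rangle_{\C_*(U)} + \langle p,\a\nabla u\rangle_{\C_*(U)} = \tfrac12\langle p,\a p\rangle_{\C_*(U)} - \tfrac12\langle p+\nabla w,\a(p+\nabla w)\rangle_{\C_*(U)} + \langle p, \a(p+\nabla w)\rangle_{\C_*(U)} - \langle p,\a p\rangle_{\C_*(U)},
\end{equation*}
which, after collecting terms, equals $-\tfrac12\langle p+\nabla w,\a(p+\nabla w)\rangle_{\C_*(U)} + \tfrac12\langle p,\a p\rangle_{\C_*(U)}$; a cleaner bookkeeping gives directly
\begin{equation*}
-\tfrac12\langle\nabla u,\a\nabla u\rangle_{\C_*(U)} + \langle p,\a\nabla u\rangle_{\C_*(U)} = \tfrac12\langle p,\a p\rangle_{\C_*(U)} - \tfrac12\langle p+\nabla w,\a(p+\nabla w)\rangle_{\C_*(U)}.
\end{equation*}
Wait --- this identity holds as an exact algebraic fact once one expands, but one must be slightly careful because $\ell_p$ is not defined intrinsically on the cluster; here it is enough to use $\nabla\ell_p(x,y) = p\cdot(x-y)$, which is exactly the constant vector field $p$ in the paper's notation, so the computation is legitimate on $\Ed(\C_*(U))$. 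Now, as $u$ ranges over $\A_*(U)$, the function $w = u-\ell_p$ ranges over all of $\{\ell_c - \ell_p\} + (\text{functions that are }\a\text{-harmonic in the interior})$; by the variational characterization~\eqref{e.Avarchar}, maximizing over $u\in\A_*(U)$ is the same as minimizing $\langle p+\nabla w,\a(p+\nabla w)\rangle_{\C_*(U)}$ over $w$ in the affine class $\mathcal{C}_0(\cl_\Pa(U))$ plus the boundary value of $\ell_p$ --- precisely because a minimizer of a Dirichlet energy over a class with fixed boundary data is characterized by being $\a$-harmonic in the interior. The constant $\tfrac12\langle p,\a p\rangle$ term cancels when one compares: more carefully, I will phrase it as $\nu(U,p) = \sup_{u\in\A_*(U)}(\cdots) = \tfrac1{2|\cl_\Pa(U)|}\langle p,\a p\rangle_{\C_*(U)} - \inf \tfrac1{2|\cl_\Pa(U)|}\langle p+\nabla w,\a(p+\nabla w)\rangle$, and then re-examine: actually the stated formula in the Lemma has no $\langle p,\a p\rangle$ term, so the correct reading is that the supremum of $-\tfrac12 E(u) + \langle p,\a\nabla u\rangle$ over $\A_*(U)$ equals the infimum of $\tfrac12\langle p+\nabla w,\a(p+\nabla w)\rangle$ over $w\in\mathcal{C}_0(\cl_\Pa(U))$; this is seen by instead writing $u$ itself (not $u-\ell_p$) and noting $-\tfrac12\langle\nabla u,\a\nabla u\rangle + \langle p,\a\nabla u\rangle = \tfrac12\langle p+\nabla u,\a(p+\nabla u)\rangle|_{\text{...}}$ is not quite it either. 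The honest approach: complete the square in $u$. We have $-\tfrac12\langle\nabla u,\a\nabla u\rangle + \langle p,\a\nabla u\rangle = -\tfrac12\langle\nabla u - p,\a(\nabla u - p)\rangle + \tfrac12\langle p,\a p\rangle$, valid on $\C_*(U)$. So $\nu(U,p) = \tfrac1{2|\cl_\Pa(U)|}\langle p,\a p\rangle_{\C_*(U)} - \tfrac1{2|\cl_\Pa(U)|}\inf_{u\in\A_*(U)}\langle\nabla u - p,\a(\nabla u-p)\rangle_{\C_*(U)}$. The displayed formula in the Lemma must then be interpreted with the understanding that minimizing over $w\in\mathcal{C}_0$ of $\langle p+\nabla w,\a(p+\nabla w)\rangle$ produces the solution of the Dirichlet problem with data $-p\cdot x$; I will reconcile signs by substituting $w\mapsto -w$ and using antisymmetry, and by noting that the minimizer $u$ of $\langle\nabla u - p,\a(\nabla u-p)\rangle$ over $\A_*(U)$ exists and is unique up to constants (strict convexity).

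\textbf{Step 3: Identify the minimizer with the Dirichlet solution and deduce~\eqref{e.vaffine}.} The minimization $\inf\{\langle p+\nabla w,\a(p+\nabla w)\rangle_{\C_*(U)} : w\in\mathcal{C}_0(\cl_\Pa(U))\}$ has a unique minimizer $w_0$ up to constants, characterized by $\langle\nabla\varphi,\a(p+\nabla w_0)\rangle_{\C_*(U)}=0$ for all $\varphi\in\mathcal{C}_0(\cl_\Pa(U))$, i.e., $\ell_p + w_0$ is $\a$-harmonic in $\C_*(U)\setminus(\C_*(U)\cap\cl_\Pa(U))^c$; comparing with~\eqref{e.firstvarnu} and using uniqueness of optimizers (stated just after the definition of $\mu,\nu$), we get $v(\cdot,U,p) = \ell_p + w_0 + c$ for some constant $c$, up to the additive normalization~\eqref{e.additiveconstants}. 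Since $w_0 = 0$ on $\C_*(U)\cap\cl_\Pa(U)$ --- that is, $w_0$ vanishes precisely where $\mathcal{C}_0$ forces it, which includes $\C_*(U)\cap\partial\cl_\Pa(U)$ --- we conclude $v(x,U,p) = p\cdot x + c$ for $x\in\C_*(U)\cap\partial\cl_\Pa(U)$, giving~\eqref{e.vaffine}. Finally, combining with~\eqref{e.nuform1} (or directly with the square-completion of Step 2), and observing that on the minimizer $v$ one has $\langle\nabla v - p,\a(\nabla v - p)\rangle = \langle p,\a p\rangle - \langle\nabla v,\a\nabla v\rangle + 2(\langle\nabla v,\a\nabla v\rangle - \langle p,\a\nabla v\rangle)$... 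I will instead just plug $w := v(\cdot,U,p) - \ell_p - c$ into the infimum to verify it attains the claimed value.

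\textbf{Main obstacle.} The delicate point is the careful matching of the variational class $\mathcal{C}_0(\cl_\Pa(U))$ (functions on $\C_*(U)$ vanishing on $\C_*(U)\cap\cl_\Pa(U)$, per the definition between~\eqref{e.defA} and~\eqref{e.Avarchar}) with the admissible perturbations in $\A_*(U)$, together with keeping the sign conventions and the non-intrinsic affine function $\ell_p$ straight; in particular one must check that adding an affine function to an element of $\A_*(U)$ stays in $\A_*(U)$ only up to the boundary condition, and that the boundary set $\partial\cl_\Pa(U)$ appearing in the definition of $\A_*(U)$ is exactly where $\mathcal{C}_0$ forces vanishing. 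The rest is routine convex-duality/square-completion bookkeeping.
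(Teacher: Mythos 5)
Your overall strategy --- dualize by introducing the Dirichlet minimizer $w_0$ of $w\mapsto\left\langle p+\nabla w,\a(p+\nabla w)\right\rangle_{\C_*(U)}$ over $\mathcal{C}_0(\cl_\Pa(U))$, identify $\ell_p+w_0$ (with $\ell_p(x)=p\cdot x$) with $v(\cdot,U,p)$ up to a constant, and read off both \eqref{e.vaffine} and the energy identity --- is viable, and it differs from the paper's argument, which never constructs $w_0$: the paper instead picks an auxiliary $w\in\A_*(U)$ whose boundary values on $\C_*(U)\cap\partial\cl_\Pa(U)$ equal $v-\ell_p$, tests \eqref{e.firstvarnu} with this $w$ and tests \eqref{e.Avarchar} for $w$ against $w-(v-\ell_p)\in\mathcal{C}_0$, and sums the two identities to get $\left\langle\nabla w,\a\nabla w\right\rangle_{\C_*(U)}=0$, so $w$ is constant and \eqref{e.vaffine} follows directly; the energy formula is then immediate because $v$ is $\a$-harmonic with affine boundary data.

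The genuine gap is in your Step 3, at the words ``comparing with \eqref{e.firstvarnu} and using uniqueness of optimizers.'' The Euler--Lagrange equation for $w_0$ only gives orthogonality $\left\langle\nabla\varphi,\a(p+\nabla w_0)\right\rangle_{\C_*(U)}=0$ against $\varphi\in\mathcal{C}_0(\cl_\Pa(U))$, which (via \eqref{e.Avarchar}) shows $\ell_p+w_0\in\A_*(U)$ but is \emph{not} the identity \eqref{e.firstvarnu}, whose test functions range over $\A_*(U)$; membership in $\A_*(U)$ alone says nothing about being the maximizer, so ``uniqueness of optimizers'' cannot be invoked yet. The missing step is to verify that $u_0:=\ell_p+w_0$ itself satisfies \eqref{e.firstvarnu}, i.e.\ that $\left\langle\nabla w,\a\nabla w_0\right\rangle_{\C_*(U)}=0$ for every $w\in\A_*(U)$ --- which is exactly \eqref{e.Avarchar} read in the other direction, with $w_0\in\mathcal{C}_0$ as the test function --- and then to note that, by expanding the (concave quadratic) objective around $u_0$, any element of $\A_*(U)$ satisfying \eqref{e.firstvarnu} is the maximizer, whence $v=\ell_p+w_0+c$ and \eqref{e.vaffine} and, via \eqref{e.nuform1}, the energy identity follow. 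Separately, the square-completion detour in Step 2 produces the representation $\nu(U,p)=\frac1{2|\cl_\Pa(U)|}\bigl(\left\langle p,\a p\right\rangle_{\C_*(U)}-\inf_{u\in\A_*(U)}\left\langle\nabla u-p,\a(\nabla u-p)\right\rangle_{\C_*(U)}\bigr)$, which is a different (dual) statement from the lemma's formula --- the extra $\left\langle p,\a p\right\rangle$ term and the infimum over $\A_*(U)$ rather than over $\mathcal{C}_0$ are not a sign issue and cannot be ``reconciled by substituting $w\mapsto -w$''; that paragraph should be deleted, since once the identification of Step 3 is repaired the lemma's formula follows without it.
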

\begin{proof}
From Lemma~\ref{l.variations} we have that, for every $w \in \A_*(U)$,
\begin{equation}\label{e.v-p=0}
 \left\langle \nabla w, \a(\left( \nabla v(\cdot,U,p) - p \right) \right\rangle_{\C_*(U)} = 0.
\end{equation}
Pick $w \in \A_*(U)$ such that $w(x) =  v(x,U,p) - p \cdot x$ for every $ x \in \C_*(U) \cap \partial \cl_\Pa(U)$. Then $w - v(\cdot,U,p) - p \cdot x \in \mathcal{C}_0(\cl_\Pa(U))$, since $w\in \A_*(U)$, we have
\begin{equation} \label{ebis.v-p=0}
\left\langle \nabla w, \a\left( \nabla w - v(\cdot,U,p) + p \right)\right\rangle_{\C_*(U)} = 0.
\end{equation}
Summing the equations \eqref{e.v-p=0} and \eqref{ebis.v-p=0} gives
\begin{equation*}
\left\langle \nabla w, \a \nabla w \right\rangle_{\C_*(U)} = 0.
\end{equation*}
Therefore $w$ is constant and so $x\mapsto v(x,U,p) - p \cdot x$ is constant on $\C_*(U) \cap \partial \cl_\Pa(U)$.

\smallskip

Since $v \in \A_*(U)$ and $v(x,U,p) = p\cdot x +c$ on $\C_*(U) \cap \partial \cl_\Pa(U)$, we have, by~\eqref{e.nuform1},
\begin{align*}
\nu(U,p)
& = \left\langle  \nabla v(\cdot,U,p), \a \nabla v(\cdot,U,p) \right\rangle_{\C_*(U)}   \\
& =  \inf_{w \in \mathcal{C}_0(\C_*(U))} \left\langle \left( p+\nabla w \right), \a \left( p+\nabla w \right) \right\rangle_{\C_*(U)}.
\end{align*}
The proof of~\eqref{e.nucellproblem} is complete.
\end{proof}

\subsection{Subadditivity, boundedness, uniform convexity, and ordering}
\smallskip

The purpose of this subsection is to prove 
that $\mu$ and $\nu$ retain most of their essential properties from the uniformly elliptic setting, with errors arising due to the coarseness of the random partition~$\Pa$.

\smallskip

We begin by proving the upper bounds for $-\mu$ and $\nu$. While the one for $\nu$ is obvious, the one for $-\mu$ uses the estimate~\eqref{e.partitionO1}. 

\begin{lemma}
\label{l.upperbounds}
There exists a constant $C(d,\p,\lambda) < \infty$ such that, for each $\cu\in\T$ and $p,q\in\Rd$, 
\begin{equation}
\label{upperbound.mu}
-\mu(\cu,q) \leq C|q|^2 \fint_{\cu} \size\left( \cu_\Pa(x) \right)^{2d-2} \,dx
\end{equation}
and
\begin{equation}
\label{e.nuupbound}
\nu(\cu,p) \leq C|p|^2.
\end{equation}
\end{lemma}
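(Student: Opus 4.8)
The plan is to prove both bounds by exhibiting an explicit admissible competitor in each variational problem and exploiting the structure of the coarsening operator $[\cdot]_\Pa$ together with the elementary estimate~\eqref{e.partitionO1}.

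For the bound~\eqref{e.nuupbound} on $\nu$, I would simply take $v\equiv 0$ as a competitor in the supremum defining $\nu(\cu,p)$. Since $0\in\A_*(\cu)$ and the functional evaluated at $v=0$ is $0$, this gives $\nu(\cu,p)\geq 0$; but that is the wrong direction. Instead, recall from Lemma~\ref{l.nuisdirichlet} that $\nu(\cu,p)$ equals the infimum over $w\in\mathcal{C}_0(\cl_\Pa(\cu))$ of $\frac{1}{2|\cl_\Pa(\cu)|}\langle p+\nabla w,\a(p+\nabla w)\rangle_{\C_*(\cu)}$. Taking $w\equiv 0$ here and using $\a\leq 1$ (so $\langle p,\a p\rangle_{\C_*(\cu)}\leq \langle p,p\rangle_{\C_*(\cu)}$, which is bounded by $|p|^2$ times the number of oriented edges in $\C_*(\cu)\subseteq\cl_\Pa(\cu)$, itself $\leq C|\cl_\Pa(\cu)|$), we get $\nu(\cu,p)\leq C|p|^2$. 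This is immediate and requires no information about the partition.

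For the bound~\eqref{upperbound.mu} on $-\mu$, the competitor $u\equiv 0$ is again admissible in the infimum defining $\mu(\cu,q)$, and evaluating the functional at $u=0$ gives exactly $0$; hence $\mu(\cu,q)\leq 0$, i.e. $-\mu(\cu,q)\geq 0$. To get the upper bound on $-\mu$, I would instead bound $-\mu(\cu,q)$ from above by controlling the linear term. Since for any admissible $u$ the quadratic term $\frac12\langle\nabla u,\a\nabla u\rangle_{\C_*(\cu)}$ is nonnegative, we have
\begin{equation*}
-\mu(\cu,q) = \sup_{u\in\A_*(\cu)} \frac{1}{|\cl_\Pa(\cu)|}\left(\langle q,\nabla[u]_\Pa\rangle_{\cl_\Pa(\cu)} - \frac12\langle\nabla u,\a\nabla u\rangle_{\C_*(\cu)}\right) \leq \sup_{u\in\A_*(\cu)}\frac{\langle q,\nabla[u]_\Pa\rangle_{\cl_\Pa(\cu)}}{|\cl_\Pa(\cu)|}.
\end{equation*}
Wait—this supremum could a priori be infinite since $u$ is unconstrained in magnitude, so I must keep the quadratic term. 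The correct approach: by Young's inequality, $\langle q,\nabla[u]_\Pa\rangle_{\cl_\Pa(\cu)} \leq \frac12\langle\nabla u,\a\nabla u\rangle_{\C_*(\cu)} + \frac12 R$, where $R$ absorbs $|q|^2$ times a geometric factor; the point is to bound $\|\nabla[u]_\Pa\|^2$ on closed/cross edges by $\langle\nabla u,\a\nabla u\rangle_{\C_*(\cu)}$ up to the partition coarseness. Concretely, using Lemma~\ref{l.coarsegrads} with $s=2$, $\int_{\cl_\Pa(\cu)}|\nabla[u]_\Pa|^2 \leq C\sum_{\cu'\in\Pa,\cu'\subseteq\cl_\Pa(\cu)}\size(\cu')^{2d-1}\int_{\cu'\cap\C_*(\cu)}|\nabla u\indc_{\{\a\neq0\}}|^2 \leq C\left(\max_{\cu'}\size(\cu')^{2d-1}\right)\lambda^{-1}\langle\nabla u,\a\nabla u\rangle_{\C_*(\cu)}$. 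Then Cauchy–Schwarz gives $\langle q,\nabla[u]_\Pa\rangle_{\cl_\Pa(\cu)} \leq |q|\,|\cl_\Pa(\cu)|^{1/2}\,\|\nabla[u]_\Pa\|_{L^2(\cl_\Pa(\cu))}$, and after applying Young's inequality to split this against the quadratic term, the surviving piece is $C|q|^2$ times $\frac{1}{|\cl_\Pa(\cu)|}\sum_{\cu'\subseteq\cl_\Pa(\cu)}\size(\cu')^{2d-1}|\cu'|\big/\big(\text{something}\big)$, which after bookkeeping yields $-\mu(\cu,q)\leq C|q|^2\fint_\cu\size(\cu_\Pa(x))^{2d-2}\,dx$; the exponent $2d-2$ arises as $(2d-1)+d-1-d$ from the weight $\size(\cu')^{2d-1}$, the volume factor $|\cu'|=\size(\cu')^d$, and normalization. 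The main obstacle is getting this exponent bookkeeping exactly right and making sure the averaging over $\cu'\in\Pa$ cleanly produces the spatial average $\fint_\cu\size(\cu_\Pa(x))^{2d-2}\,dx$ rather than some larger power; the subtlety is that $\cl_\Pa(\cu)$ can be slightly larger than $\cu$, but this is controlled since $\cl_\Pa(\cu)\subseteq 3\cu$ up to lower-order terms and $|\cl_\Pa(\cu)|\geq|\cu|$, so all normalizations are comparable to $|\cu|$ up to constants.
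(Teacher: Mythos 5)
Your bound for $\nu$ is correct: taking $w\equiv 0$ in the characterization from Lemma~\ref{l.nuisdirichlet} gives $\nu(\cu,p)\leq \frac{1}{2|\cl_\Pa(\cu)|}\left\langle p,\a p\right\rangle_{\C_*(\cu)}\leq C|p|^2$, and there is no circularity since that lemma precedes this one; the paper gets the same conclusion even more directly by applying Young's inequality to the term $\left\langle p,\a\nabla v\right\rangle_{\C_*(\cu)}$ in the definition of $\nu$, so this part is essentially the same.

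The $\mu$ bound, however, has a genuine gap exactly at the ``bookkeeping'' step you flag. Your route is a global Cauchy--Schwarz $\left\langle q,\nabla\!\left[u\right]_\Pa\right\rangle_{\cl_\Pa(\cu)}\leq C|q|\,|\cl_\Pa(\cu)|^{1/2}\|\nabla\!\left[u\right]_\Pa\|_{L^2(\cl_\Pa(\cu))}$ followed by Lemma~\ref{l.coarsegrads} with $s=2$, where you must pull out $\max_{\cu'}\size(\cu')^{2d-1}$ to compare with $\left\langle \nabla u,\a\nabla u\right\rangle_{\C_*(\cu)}$. Carrying that through Young's inequality yields $-\mu(\cu,q)\leq C|q|^2\max_{\Pa\ni\cu'\subseteq\cl_\Pa(\cu)}\size(\cu')^{2d-1}$, which is \emph{not} bounded by $C|q|^2\fint_\cu\size(\cu_\Pa(x))^{2d-2}\,dx$: if all elements of $\Pa$ inside $\cu$ have size $M$, your bound is $M^{2d-1}$ while the claimed right side is $M^{2d-2}$, so a full factor of the partition size is lost, and no exponent arithmetic recovers it once the maximum has been extracted. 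The paper's argument stays at the $L^1$ level: apply Lemma~\ref{l.coarsegrads} with $s=1$, then Cauchy--Schwarz \emph{locally on each} $\cu'\in\Pa$ (giving $\size(\cu')^{\frac32 d-1}\bigl(\int_{\cu'\cap\C_*(\cu)}|\nabla u\indc_{\{\a\neq0\}}|^2\bigr)^{1/2}$), then Cauchy--Schwarz over the elements, so that $\int_{\cl_\Pa(\cu)}|\nabla\!\left[u\right]_\Pa|\leq C\bigl(\int_{\cl_\Pa(\cu)}\size(\cu_\Pa(x))^{2d-2}\,dx\bigr)^{1/2}\bigl(\int_{\C_*(\cu)}|\nabla u\indc_{\{\a\neq0\}}|^2\,dx\bigr)^{1/2}$, and only then use Young's inequality against the energy; this is what produces the spatial average with exponent $2d-2$. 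A smaller point: your remark that $\cl_\Pa(\cu)\subseteq 3\cu$ up to lower-order terms is false in general, since $\cl_\Pa(\cu)$ may be a single huge element of $\Pa$ containing $\cu$; the correct observation (used in the paper) is that in that case $\size(\cu_\Pa(\cdot))$ is constant on $\cl_\Pa(\cu)$, so $\fint_{\cl_\Pa(\cu)}\size(\cu_\Pa(x))^{2d-2}\,dx=\fint_{\cu}\size(\cu_\Pa(x))^{2d-2}\,dx$ and the normalizations cause no loss.
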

\begin{proof}
We begin by noticing that the bound~\eqref{e.nuupbound} is immediate from Young's inequality. We thus focus on the bound for~$-\mu$.
For convenience, we denote $u:= u(\cdot,\cu,q)$. 
Applying Lemma~\ref{l.coarsegrads} with $s=1$ gives
\begin{equation*}
\int_{\cl_\Pa(\cu)} \left|\nabla\! \left[ u \right]_{\Pa}\right|(x) \, dx \leq C \sum_{\cu' \in \Pa, \cu' \subseteq \cl_\Pa(\cu)} \size(\cu')^{d-1} \int_{\cu' \cap \C_*(\cu)} \left|\nabla u \indc_{\{\a\neq0\}} \right|(x) \, dx.
\end{equation*}
By the H\"older inequality, we deduce that
\begin{align}
\label{e.coarsegradbounds}
\lefteqn{
\int_{\cl_\Pa(\cu)} |\nabla\! \left[ u \right]_{\Pa}|(x) \, dx 
} \qquad & \\ \notag
& \leq C \sum_{\cu' \in \Pa, \cu' \subseteq \cl_\Pa(\cu)} \size(\cu')^{\frac 32d-1} \left( \int_{\cu' \cap \C_*(\cu)} \left|\nabla u\indc_{\{\a\neq0\}}\right|^2(x) \, dx \right)^{\frac 12} \\ \notag
										& \leq C \left( \sum_{\cu' \in \Pa, \cu' \subseteq \cl_\Pa(\cu)} \size(\cu')^{3d-2} \right)^{\frac 12} \left( \int_{\C_*(\cu)} \left|\nabla u\indc_{\{\a\neq0\}}\right|^2(x) \, dx \right)^{\frac 12} \\ \notag
										& \leq C \left(  \int_{\cl_\Pa(\cu)} \size(\cu_\Pa(x))^{2d-2}\, dx \right)^{\frac 12}  \left( \int_{\C_*(\cu)} \left|\nabla u\indc_{\{\a\neq0\}}\right|^2(x) \, dx \right)^{\frac 12}. 
\end{align}
Young's inequality then yields
\begin{equation*}
\left\langle q, \nabla\! \left[ u \right]_{\Pa}\right\rangle_{\cl_\Pa(\cu)} \leq C|q|^2 \int_{\cl_\Pa(\cu)} \size(\cu_\Pa(x))^{2d-2}\, dx + \frac12 \left\langle\nabla u,\a \nabla u \right\rangle_{\C_*(\cu)}.
\end{equation*}
Hence
\begin{align*}
-\mu(\cu,q) 
& = \frac{1}{\left| \cl_\Pa(\cu) \right|}\left( -\frac12 \left\langle\nabla u,\a \nabla u \right\rangle_{\C_*(\cu)} +  \left\langle q, \nabla\! \left[ u \right]_{\Pa}\right\rangle_{\cu} \right)  \\
& \leq C|q|^2 \fint_{\cl_\Pa(\cu)}  \size(\cu_\Pa(x))^{2d-2}\, dx.
\end{align*}
Notice that if $\cl_\Pa(\cu) \neq \cu$ then for each $x \in \cl_\Pa(\cu), \,  \size(\cu_\Pa(x)) = \size(\cl_\Pa(\cu))$ and thus
\begin{equation} \label{meanvaluesareequal}
\fint_{\cl_\Pa(\cu)}  \size(\cu_\Pa(x))^{2d-2}\, dx = \size(\cl_\Pa(\cu))^{2d-2} = \fint_\cu \size(\cu_\Pa(x))^{2d-2}\, dx.
\end{equation}
Combining the two previous displays gives~\eqref{upperbound.mu}. 
\end{proof}

Observe that~\eqref{e.partitionO1} and~\eqref{e.muupbound} imply that 
\begin{equation}
\label{e.muupbound}
-\mu(\cu,q) \leq \O_{\frac1{2d-2}}\left(C|q|^2 \right).
\end{equation}
It is also useful to notice that we can bound the right side slightly differently (so that the random part is scaling better) to obtain
\begin{equation}
\label{e.muupboundwhip}
-\mu(\cu,q) \leq C|q|^2 + \O_{\frac{1}{2d+1}} \left(C|q|^2\size^{-1}(\cu)\right).
\end{equation}
To see this, we combine~\eqref{upperbound.mu} with the bounds for the minimal scale $\mathcal{M}_{2d-2}(\Pa)$ given in Proposition~\ref{p.minimalscales} to obtain
\begin{align}
\label{e.boundwhip}
 \fint_{\cu_n} \size\left( \cu_\Pa(x) \right)^{2d-2} \,dx 
& \leq   C +  \fint_{\cu_n} \size\left( \cu_\Pa(x) \right)^{2d-2} \,dx\, \indc_{\{ \mathcal{M}_{2d-2} (\Pa) > 3^n \} } \\ \notag
& \leq C +  3^{-n} \mathcal{M}_{2d-2} (\Pa)\fint_{\cu_n} \size\left( \cu_\Pa(x) \right)^{2d-2} \,dx \\ \notag
& \leq  C +  \O_{\frac13}\left(C3^{-n}\right) \cdot \O_{\frac{1}{2d-2}} (C)  \\ \notag
& \leq  C + \O_{\frac{1}{2d+1}} \left(C3^{-n} \right). 
\end{align}
For future reference, we also note that~\eqref{e.muupbound} and~\eqref{e.nuupbound} imply upper bounds for the $L^2$ norm of the gradients of $u(\cdot,\cu,q)$, $v(\cdot,\cu,p)$ and their coarsenings. Indeed, by the first variations~\eqref{e.firstvarmu} and~\eqref{e.firstvarnu}, we have
\begin{equation}
\label{e.munufirstvariden}
\left\{
\begin{aligned}
& \mu(U,q) = \frac{1}{\left|\cl_\Pa(U)\right|} \frac12 \left\langle \nabla u(\cdot,U,q) , \a \nabla u(\cdot,U,q) \right\rangle_{\cl_{\Pa}(U)}, \quad \mbox{and} \\
&\nu(U,p) = \frac{1}{\left| \cl_\Pa(U) \right|} \frac12 \left\langle \nabla v(\cdot,U,p) , \a \nabla v(\cdot,U,p) \right\rangle_{\cl_{\Pa}(U)} 
\end{aligned}
\right.
\end{equation}
and thus~\eqref{upperbound.mu} and~\eqref{e.boundwhip} imply
\begin{align}
\label{e.uupbound}
\lefteqn{
\frac{1}{|\cl_\Pa(\cu)|} \int_{\C_*(\cu)} \left| \nabla u(\cdot,U,q) \indc_{\{\a\neq 0\}} \right|^2(x)\,dx 
} \qquad & \\ \notag
& \leq C|q|^2 \fint_{\cu} \size\left(\cu_{\Pa}(x)\right)^{2d-2}\,dx \\ \notag
& \leq \O_{\frac1{2d-2}}(C|q|^2) \wedge \left( C|q|^2 + \O_{\frac{1}{2d+1}} \left(C|q|^2\size^{-1}(\cu) \right) \right)
\end{align}
and~\eqref{e.nuupbound} implies
\begin{equation}
\label{e.vupbound}
 \frac{1}{|\cl_\Pa(\cu)|} \int_{\C_*(\cu)} \left| \nabla v(\cdot,U,p) \indc_{\{\a\neq 0\}} \right|^2(x)\,dx 
\leq C |p|^2.
\end{equation}
Combining these with~\eqref{e.coarsegradbounds},~\eqref{meanvaluesareequal} and the analogous bound for $v(\cdot,\cu,p)$, we get the following bounds for the $L^1$ norm of the coarsened functions:
\begin{multline}
\label{e.coarsegradu}
\fint_{\cl_\Pa(\cu)} \left| \nabla \left[ u(\cdot,\cu,q) \right]_{\Pa} \right|(x)\,dx \\
\leq C|q| \fint_{\cu} \size\left( \cu_{\Pa}(x) \right)^{2d-2}\,dx \leq \O_{\frac{1}{2d-2}}\left(C|q|\right)
\end{multline}
and
\begin{multline}
\label{e.coarsegradv}
\fint_{\cl_\Pa(\cu)} \left| \nabla \left[ v(\cdot,\cu,p) \right]_{\Pa} \right|(x)\,dx \\
\leq C|p| \left(\fint_{\cu} \size\left( \cu_{\Pa}(x) \right)^{2d-2}\,dx\right)^\frac12 \leq \O_{\frac{1}{d-1}}\left(C|p|\right).
\end{multline}
By~\eqref{e.partitionO1}, we have that for every $t > 0$
\begin{align*}
\P\left[ \sup_{x \in \cu} \size\left( \cu_\Pa(x) \right) \geq t \right] & \leq \sum_{x \in \cu} \P \left[ \size\left( \cu_\Pa(x) \right) \geq t \right] \\
													& \leq C \size(\cu)^d \exp \left(- C^{-1} t\right).
\end{align*}
From this we deduce that for every $\delta > 0$, there exists $C := C(d,\p,\delta) < \infty$ such that
\begin{equation*}
\sup_{x \in \cu} \size\left( \cu_\Pa(x) \right) \leq \O_1 \left(C \size(\cu)^\delta \right).
\end{equation*}
Combining this with Lemma~\ref{l.coarsegrads} (with $s = 2$) gives
\begin{align} \label{e.coarsegraduL2}
\fint_{\cl_\Pa(\cu)} \left| \nabla \left[ u(\cdot,\cu,q) \right]_{\Pa} \right|^2(x)\,dx &
\leq C|q|^2 \left( \sup_{x \in \cu} \size\left( \cu_\Pa(x) \right)^{2d-1} \right) \fint_{\cu} \size(\cu)^{2d-2} \, dx \\ \notag
& \leq |q|^2 \O_{\frac1{2d-1}} \left(C \size(\cu)^\delta \right) \O_{\frac1{2d-2}}(C) \\ \notag
& \leq \O_{\frac1{4d-3}} \left(C|q|^2 \size(\cu)^\delta \right). \notag
\end{align}
Similarly we obtain for each $\delta > 0$ and for some $C := C(d,\p,\delta) < \infty$
\begin{align} \label{e.coarsegradvL2}
\fint_{\cl_\Pa(\cu)} \left| \nabla \left[ v(\cdot,\cu,p) \right]_{\Pa} \right|^2(x)\,dx &
\leq C|p|^2 \left( \sup_{x \in \cu} \size\left( \cu_\Pa(x) \right)^{2d-1} \right) \left( \fint_{\cu} \size(\cu)^{2d-2} \, dx\right)^\frac12 \\ \notag
& \leq |p|^2 \O_{\frac1{2d-1}} \left(C \size(\cu)^\delta \right) \O_{\frac1{d-1}}(C) \\ \notag
& \leq  \O_{\frac1{3d-2}} \left(C |p|^2 \size(\cu)^\delta \right). \notag
\end{align}

%

We next prove the ordering relation between $\mu$ and $\nu$. In the uniformly elliptic case, this is proved in one line and comes from testing the definition of $\mu$ with the minimizer of $\nu$ and using an integration by parts to see that the spatial average of $\nabla v$ is exactly $p$. In our situation, the latter computation is not exact but holds up to a coarsening error, as stated in the following lemma.

\begin{lemma} 
\label{l.nablavaffine}
There exists a constant $C(d,\lambda,\p)<\infty$ such that, for every $\cu \in \T$ and $q,p \in \Rd$,
\begin{equation}
\label{e.nablavaffine}
\left| \frac{1}{|\cl_\Pa(\cu)|}\left\langle q,\nabla\! \left[v(\cdot,\cu,p) \right]_{\Pa} \right\rangle_{\cl_\Pa( \cu)} - p\cdot q \right|  \leq  \O_{\frac{2}{2d-1}}\left(C|p||q| \size(\cu)^{-\frac12} \right).
\end{equation}
\end{lemma}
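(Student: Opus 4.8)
The key identity to exploit is the first variation~\eqref{e.firstvarnu}, which says that for every $w\in\A_*(\cu)$,
\[
\left\langle \nabla w, \a\nabla v(\cdot,\cu,p)\right\rangle_{\C_*(\cu)} = \left\langle p, \a\nabla w\right\rangle_{\C_*(\cu)}.
\]
In the uniformly elliptic Euclidean setting one would simply take $w$ to be a linear function $x\mapsto q\cdot x$, integrate by parts, and conclude that $\langle \nabla\! [v]_\Pa\rangle = p$ exactly. Here, the obstacle is that a linear function is not in $\A_*(\cu)$ and, more seriously, the left-hand side of~\eqref{e.nablavaffine} involves the coarsened gradient $\nabla\![v]_\Pa$ rather than $\nabla v$ (which is what appears, weighted by $\a$, in the variational identity). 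So the proof must (a) relate $\langle q,\nabla\![v]_\Pa\rangle_{\cl_\Pa(\cu)}$ to a quantity involving $\nabla v$ on $\C_*(\cu)$, and (b) run a discrete integration-by-parts/summation argument that produces the boundary term $p\cdot q$ up to controllable errors.

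\textbf{Step 1: reduce to a statement about $\nabla v$ via the coarsening.} I would first write
\[
\left\langle q,\nabla\! \left[v\right]_{\Pa} \right\rangle_{\cl_\Pa( \cu)}
= \left\langle q,\nabla\! \left[v\right]_{\Pa} \right\rangle_{\cl_\Pa( \cu)} - \left\langle q, \a\nabla v\right\rangle_{\C_*(\cu)} + \left\langle q, \a\nabla v\right\rangle_{\C_*(\cu)},
\]
and handle the last term using~\eqref{e.firstvarnu} (see Step 3) and the difference of the first two terms using the coarsening estimates. The difference $\langle q,\nabla\![v]_\Pa\rangle_{\cl_\Pa(\cu)} - \langle q,\a\nabla v\rangle_{\C_*(\cu)}$ should be estimated by combining the pointwise bound $|\a|\le 1$, the comparison between $\nabla\![v]_\Pa$ and $\nabla v\indc_{\{\a\neq0\}}$ furnished by Lemmas~\ref{l.coarseLs} and~\ref{l.coarsegrads} (which control how coarsening distorts gradients in terms of $\size(\cu_\Pa(\cdot))$), and the $L^2$ gradient bound~\eqref{e.vupbound}. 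A clean way to organize this: write the telescoping identity that relates the spatial average of $\nabla\![v]_\Pa$ over $\cl_\Pa(\cu)$ to a boundary sum (this is a discrete divergence theorem for the piecewise-constant function $[v]_\Pa$ on the partition $\Pa$), and do the same for $\a\nabla v$; the interior contributions match because $v\in\A_*(\cu)$, and the boundary contributions differ by an amount controlled by $\size$ of the boundary cubes of $\Pa$ inside $\cu$, whose total contribution carries the Cauchy–Schwarz factor $\size(\cu)^{-1/2}$ after using~\eqref{e.vupbound} and the minimal-scale bounds of Proposition~\ref{p.minimalscales}.

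\textbf{Step 2: the boundary term.} By Lemma~\ref{l.nuisdirichlet}, $v(x,\cu,p)=p\cdot x + c$ on $\C_*(\cu)\cap\partial\cl_\Pa(\cu)$. Summing $\nabla\! [v]_\Pa$ by parts over $\cl_\Pa(\cu)$ (i.e.\ using $\sum_{\cl_\Pa(\cu)} \nabla\!\big( (q\cdot x)[v]_\Pa\big) = 0$-type manipulations against the affine test function $x\mapsto q\cdot x$), the only surviving term is the boundary sum of $[v]_\Pa\,q\cdot\nu$, which equals $p\cdot q\,|\cl_\Pa(\cu)|$ up to: (i) the difference between $v$ and $[v]_\Pa$ near $\partial\cl_\Pa(\cu)$, controlled by Lemma~\ref{l.coarseLs}; and (ii) the discrepancy between $|\cl_\Pa(\cu)|$ and $|\cu|$, which is controlled because $\cl_\Pa(\cu)$ differs from $\cu$ only in a boundary layer of width $\sup_{x\in\partial\cu}\size(\cu_\Pa(x))$. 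Both error contributions, normalized by $|\cl_\Pa(\cu)|\sim 3^{dm}$, are of order $\size(\cu)^{-1}$ times a power of $\sup_{x\in\cu}\size(\cu_\Pa(x))$, and after absorbing this sup at the cost of an arbitrarily small power $\size(\cu)^\delta$ (exactly as in~\eqref{e.coarsegraduL2}–\eqref{e.coarsegradvL2}) and using $|q||p|$ homogeneity, they fit inside $\O_{2/(2d-1)}(C|p||q|\size(\cu)^{-1/2})$.

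\textbf{Step 3: assemble the stochastic integrability.} The deterministic skeleton of the argument produces a bound of the form
\[
\left| \frac{1}{|\cl_\Pa(\cu)|}\left\langle q,\nabla\! \left[v(\cdot,\cu,p) \right]_{\Pa} \right\rangle_{\cl_\Pa( \cu)} - p\cdot q \right|
\le C|p||q|\,\size(\cu)^{-1}\Big(\fint_\cu \size(\cu_\Pa(x))^{2d-2}\,dx\Big)^{1/2}\cdot (\text{powers of }\sup\size),
\]
after which I invoke~\eqref{e.partitionO1}, the minimal-scale bound~\eqref{e.boundwhip}, and the product rule~\eqref{e.Oprods} together with~\eqref{e.improves} to convert these into the stated $\O_{2/(2d-1)}$ bound; the exponent $2/(2d-1)$ is exactly what comes out of pairing a $\O_{1/(2d-2)}$-type coarseness factor with an $\O_1$-type factor via~\eqref{e.Oprods}, consistently with~\eqref{e.coarsegradv}. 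The main obstacle, and the step requiring genuine care rather than bookkeeping, is Step 1–2: making the discrete integration by parts against the affine function $q\cdot x$ rigorous on the irregular domain $\cl_\Pa(\cu)$ and showing that all interior terms cancel (using $v\in\A_*(\cu)$ via~\eqref{e.Avarchar}) so that only a thin boundary layer contributes, whose width is governed by the partition coarseness — this is where the geometry of $\Pa$, rather than just the ellipticity, enters, and it is what forces the loss from the optimal $\size(\cu)^{-1}$ down to $\size(\cu)^{-1/2}$ (one factor of $\size(\cu)^{1/2}$ being spent on Cauchy–Schwarz over the boundary layer against the bulk energy bound~\eqref{e.vupbound}).
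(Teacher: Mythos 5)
Your Step 2 contains the right mechanism, and it is essentially the paper's proof: apply the discrete Stokes formula to $\left[v\right]_\Pa$ on $\cu$, use Lemma~\ref{l.nuisdirichlet} to replace the boundary values by $p\cdot x$ up to a gradient sum over the boundary partition cubes plus $C|p|\size(\cu_\Pa(x))$, then Cauchy--Schwarz against the energy bound $\nu(\cu,p)\le C|p|^2$ (via~\eqref{e.nuform1}) and convert $\frac{|\partial_\Pa\cu|}{|\cu|}$ and $\frac1{|\cu|}\sum_{x\in\partial\cu}\size(\cu_\Pa(x))^{2d-1}$ into $\O$-bounds using~\eqref{e.partitionO1}. (The paper also disposes of the degenerate case $\cl_\Pa(\cu)\neq\cu$ trivially, since then $\left[v\right]_\Pa$ is constant.)

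However, your Step 1 is a genuine error, not just loose bookkeeping. You insert the flux term $\left\langle q,\a\nabla v\right\rangle_{\C_*(\cu)}$ and claim the difference with $\left\langle q,\nabla\!\left[v\right]_\Pa\right\rangle_{\cl_\Pa(\cu)}$ is an error controlled by the coarsening lemmas and $|\a|\le 1$. It is not: by the first variation and~\eqref{e.nuform1}, $\frac1{|\cl_\Pa(\cu)|}\left\langle p,\a\nabla v\right\rangle_{\C_*(\cu)}=2\nu(\cu,p)\to p\cdot\ahom p$, so the normalized flux average converges to $q\cdot\ahom p$, whereas the lemma you are proving says the coarsened-gradient average converges to $q\cdot p$. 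Their difference is of order $|p||q|$ (it is exactly the $\ahom$-versus-identity discrepancy that homogenization is about), so no coarsening estimate can make it small, and~\eqref{e.firstvarnu} cannot be invoked for the affine test field $q$ anyway since $x\mapsto q\cdot x$ is not in $\A_*(\cu)$ and~\eqref{e.Avarchar} requires test functions vanishing on the boundary. Relatedly, your repeated claim that ``interior terms cancel because $v\in\A_*(\cu)$'' is a misconception: in the correct argument the identity $\frac1{|\cu|}\left\langle q,\nabla\!\left[v\right]_\Pa\right\rangle_{\cu}=\frac1{|\cu|}\int_{\partial\cu}\left[v\right]_\Pa(x)\,q\cdot\mathbf{n}(x)\,dx$ is purely algebraic (a summation by parts for any function), with no interior terms to cancel; the equation for $v$ enters only through the boundary condition of Lemma~\ref{l.nuisdirichlet} and through the energy bound. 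If you delete Step 1 and carry out Step 2 carefully --- in particular, choosing in each boundary cube $\cu'\subseteq\partial_\Pa\cu$ a point of $\C_*(\cu)\cap\partial\cl_\Pa(\cu)$ where $v$ is exactly affine, and noting that the additive constant $c$ integrates to zero against $q\cdot\mathbf{n}$ --- you recover the paper's argument and the stated $\O_{\frac{2}{2d-1}}\bigl(C|p||q|\size(\cu)^{-\frac12}\bigr)$ bound.
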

\begin{proof}
Fix $p\in\Rd$ and denote $v:= v(\cdot,\cu,p)$. The estimate~\eqref{e.nablavaffine} is a consequence of~\eqref{e.partitionO1} and the following claimed inequality:
\begin{multline} \label{e.nablavaffine.tech}
\left| \frac{1}{|\cl_\Pa(\cu)|}\left\langle q,\nabla\! \left[v \right]_{\Pa} \right\rangle_{\cl_\Pa (\cu)} - p \cdot q \right|  \\
\leq C |p||q| \left( \frac{|\partial_\Pa \cu|}{|\cu|}
+  \left(\frac{1}{| \cu|}  \sum_{x \in \partial \cu} \size\left(\cu_\Pa (x)\right)^{2 d-1} \right)^\frac12 \right).
\end{multline}
Recall that
\begin{equation*} \label{}
\partial_\Pa \cu:= \bigcup\left\{ \cu' \in\Pa\,:\, \cu' \subseteq \cu, \ \dist\left(\cu',\partial\cu\right) = 0 \right\}.
\end{equation*}
First, we deal with the case $\cl_\Pa(\cu) \neq \cu$. In that case, we know, by definition of the partition $\Pa$, that $\left[v \right]_{\Pa}$ is constant, thus
\begin{equation*}
 \frac{1}{|\cl_\Pa(\cu)|}\left\langle q,\nabla\! \left[v \right]_{\Pa} \right\rangle_{\cl_\Pa (\cu)} = 0.
\end{equation*}
We also have by definition of $\partial_\Pa \cu$
\begin{equation*}
\cu \subset \cl_\Pa(\cu) = \partial_\Pa \cu 
\end{equation*}
Thus
\begin{align*}
\left| \frac{1}{|\cl_\Pa(\cu)|}\left\langle q,\nabla\! \left[v \right]_{\Pa} \right\rangle_{\cl_\Pa (\cu)} - p \cdot q \right| & = \left| p \cdot q \right| \\
																								& \leq  |p||q|  \\
																								& \leq C |p||q|  \frac{|\partial_\Pa \cu|}{|\cu|}.
\end{align*}
which shows~\eqref{e.nablavaffine.tech}. We now assume for the rest of the proof $\cl_\Pa(\cu) = \cu$.

For $x \in \partial \cu$, denote the outer normal vector~$\mathbf{n}(x) \in \Rd$ to~$\partial\cu$ at~$x$ by
\begin{equation*}
\mathbf{n}(x) := \sum_{i =1}^d \left( e_i \indc_{\{x - e_i \in \cu\}} - e_i  \indc_{\{x + e_i \in \cu\}} \right).
\end{equation*}
Note that $|\mathbf{n}(x)| \leq d$. Applying the discrete Stokes formula, we find that 
\begin{align*}
\left|  \frac{1}{|\cu|}\left\langle q, \nabla\! \left[v \right]_{\Pa} \right\rangle_{\cu} - p \cdot q \right|
& = \left| \frac{1}{|\cu|}\int_{\partial \cu} \left( \left[ v\right]_\Pa(x) -p\cdot x\right) q \cdot \mathbf{n}(x) \,dx \right| \\
& \leq \frac{d |q|}{|\cu|} \int_{\partial \cu} \left|  \left[v \right]_{\Pa}(x) - p\cdot x \right| \,dx.
\end{align*}
For each $\cu' \in\Pa$ with $\cu' \subseteq \partial_{\Pa}\cu$, we can find a point $\bar{y}(\cu') \in \partial_\a\C_*(\cu)$ and thus, by Lemma~\ref{l.nuisdirichlet}, for each $x\in \partial \cu$,
\begin{align*} \label{}
\left|\left[ v \right]_{\Pa}(x) - p\cdot x \right| 
& =  \left| v\left(\bar{z}\left(\cu_{\Pa}(x)\right)\right) - v\left(\bar{y}\left(\cu_{\Pa}(x)\right)\right) \right| + \left| p\cdot \left( x-\bar{y}\left(\cu_{\Pa}(x)\right) \right)\right|\\
& \leq \int_{\cu_{\Pa}(x) \cap \C_*(\cu)} \left| \nabla v \indc_{\{\a\neq 0\}} \right|  (x')\,dx' + C|p| \size\left(\cu_{\Pa}(x)\right).
\end{align*}
Integrating this over $x\in \partial \cu$ and using the H\"older inequality gives
\begin{align*}
\lefteqn{
\int_{\partial \cu} \left|  \left[v \right]_{\Pa}\! (x) - p\cdot x \right| \,dx
} \quad & \\
& \leq \int_{\partial \cu} \left( \int_{\C_*(\cu) \cap  \cu_\Pa(x)} \left| \nabla v \indc_{\{\a\neq 0\}} \right| (y) \,dy + C|p| \size\left( \cu_{\Pa}(x) \right) \right)\,dx \\
& \leq C |p| \left|\partial_{\Pa} \cu\right|  +  \int_{\partial \cu} |\cu_\Pa(x) |^\frac12 \left( \int_{\C_*(\cu) \cap \cu_\Pa(x)} \left| \nabla v \indc_{\{\a\neq 0\}} \right|^2 (y) \,dy \right)^\frac 12\,dx \\
& =  C |p| \left|\partial_{\Pa} \cu\right|  + \left( \sum_{\Pa\ni\cu' \subseteq \partial_\Pa\cu} \size(\cu')^{\frac32 d-1}\left( \int_{\C_*(\cu)\cap \cu'} \left| \nabla v \indc_{\{\a\neq 0\}} \right|^2(y) \,dy\right)^\frac 12  \right)\\
& \leq  C |p| \left|\partial_{\Pa} \cu\right|  + \left( \sum_{\Pa\ni\cu' \subseteq \partial_\Pa\cu} \size(\cu')^{3 d-2} \right)^\frac12 \left(\sum_{\Pa\ni\cu' \subseteq \partial_\Pa\cu} \int_{\C_*(\cu)\cap \cu'} \left| \nabla v \indc_{\{\a\neq 0\}} \right|^2(y) \,dy  \right)^\frac12\\
& \leq C |p| \left|\partial_{\Pa} \cu\right| +  \left( \sum_{x \in \partial \cu} \size(\cu_\Pa (x))^{2 d-1} \right)^\frac12 \left(\int_{\C_*(\cu)} \left| \nabla v \indc_{\{\a\neq 0\}} \right|^2(y) \,dy  \right)^\frac12.
\end{align*}
The first variation~\eqref{e.firstvarnu} for $\nu$ combined with~\eqref{e.nuupbound} yields that  
\begin{equation*} \label{}
\frac{1}{\left| \cl_\Pa(\cu) \right|} \frac12 \left\langle \nabla v, \a\nabla v \right\rangle_{\C_*(\cu)} = \nu(\cu,p)\leq C|p|^2. 
\end{equation*}
Thus 
\begin{equation*} \label{}
\left(\int_{\C_*(\cu)} \left| \nabla v \indc_{\{\a\neq 0\}} \right|^2(y) \,dy  \right)^\frac12 \leq C|p|\left| \cl_\Pa(\cu) \right|. 
\end{equation*}
Combining the above inequalities gives the desired bound~\eqref{e.nablavaffine.tech}.

\smallskip

To obtain~\eqref{e.nablavaffine}, we observe that~\eqref{e.partitionO1} implies that
\begin{equation}
\label{e.boundaryclip}
\frac{|\partial_\Pa \cu|}{|\cu|} \leq \O_1\left( \size(\cu)^{-1} \right)
\end{equation}
and
\begin{equation*} \label{}
\left(\frac{1}{| \cu|}  \sum_{x \in \partial \cu} \size\left(\cu_\Pa (x)\right)^{2 d-1} \right)^\frac12 \leq \O_{\frac{2}{2d-1}} \left( C \left( \frac{\left| \partial\cu\right|}{\left|\cu\right|}\right)^{-\frac{1}2} \right) = \O_{\frac{2}{2d-1}} \left( C \size(\cu)^{-\frac{1}2} \right). \qedhere
\end{equation*}
\end{proof}

\begin{corollary}
\label{c.munucomparison}
There exists a constant $C(d,\lambda,\p)<\infty$ such that, for every $\cu\in \T$ and $p,q\in\Rd$,
\begin{multline}
\label{e.munucomparison}
\Bigg| \left( \nu(\cu,p) - \mu(\cu,q) -  p\cdot q\right) \\
- \frac1{|\cl_\Pa(\cu)|} \frac12   \left\langle  \nabla (v(\cdot,\cu,p)-u(\cdot,\cu,q)) ,\a\nabla (v(\cdot,\cu,p)-u(\cdot,\cu,q)) \right\rangle_{\C_*(\cu)} \Bigg|  \\
\leq \O_{\frac{2}{2d-1}}\left(C|p||q| \size(\cu)^{-\frac12} \right).
\end{multline}

\end{corollary}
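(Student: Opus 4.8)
The plan is to combine the identity~\eqref{e.omegaform0} with the estimate~\eqref{e.nablavaffine} from Lemma~\ref{l.nablavaffine}. Recall from~\eqref{e.omegaform0} that
\begin{equation*}
\nu(\cu,p) - \mu(\cu,q) - \left\langle q, \nabla\!\left[ v(\cdot,\cu,p)\right]_{\Pa}\!\right\rangle_{\cl_\Pa(\cu)} = \frac1{|\cl_\Pa(\cu)|} \frac12 \left\langle \nabla(v-u), \a\nabla(v-u) \right\rangle_{\C_*(\cu)},
\end{equation*}
where I write $v = v(\cdot,\cu,p)$ and $u = u(\cdot,\cu,q)$ for short. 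The linear term appearing here is normalized by $|\cl_\Pa(\cu)|$ in the definition of $\mu$, so in the notation of~\eqref{e.omegaform0} the term $\left\langle q, \nabla[v]_\Pa\right\rangle_{\cl_\Pa(\cu)}$ is actually $\tfrac1{|\cl_\Pa(\cu)|}\left\langle q, \nabla[v]_\Pa\right\rangle_{\cl_\Pa(\cu)}$. The desired identity~\eqref{e.munucomparison} has $p\cdot q$ in place of this linear term, so the entire content of the corollary is precisely the statement that the difference between $\tfrac1{|\cl_\Pa(\cu)|}\left\langle q, \nabla[v]_\Pa\right\rangle_{\cl_\Pa(\cu)}$ and $p\cdot q$ is controlled by $\O_{\frac{2}{2d-1}}(C|p||q|\size(\cu)^{-1/2})$.

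Concretely, I would proceed as follows. First, rearrange~\eqref{e.omegaform0} (keeping track of the $|\cl_\Pa(\cu)|^{-1}$ normalization in the linear term) to write
\begin{multline*}
\left( \nu(\cu,p) - \mu(\cu,q) - p\cdot q\right) - \frac1{|\cl_\Pa(\cu)|} \frac12 \left\langle \nabla(v-u), \a\nabla(v-u) \right\rangle_{\C_*(\cu)} \\
= \frac{1}{|\cl_\Pa(\cu)|}\left\langle q, \nabla\!\left[ v\right]_{\Pa}\!\right\rangle_{\cl_\Pa(\cu)} - p\cdot q.
\end{multline*}
Then apply Lemma~\ref{l.nablavaffine} directly to the right-hand side, which gives exactly $\O_{\frac{2}{2d-1}}(C|p||q|\size(\cu)^{-1/2})$. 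Taking absolute values on both sides yields~\eqref{e.munucomparison}.

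There is essentially no obstacle here: the corollary is an immediate algebraic consequence of the two previously established facts, and all the real work has already been done in the proof of Lemma~\ref{l.nablavaffine} (where the discrete Stokes/summation-by-parts computation and the Caccioppoli-type energy bound~\eqref{e.nuupbound} for $\nu$ enter) and in the derivation of the second-variation identity~\eqref{e.omegaform0}. The only point requiring a moment of care is bookkeeping the normalization factor $|\cl_\Pa(\cu)|^{-1}$ so that the linear terms in~\eqref{e.omegaform0} and in Lemma~\ref{l.nablavaffine} are written consistently; once that is matched up, the proof is a single line of substitution.
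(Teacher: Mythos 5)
Your argument is exactly the paper's: the corollary is obtained by combining the identity~\eqref{e.omegaform0} with Lemma~\ref{l.nablavaffine}, and your bookkeeping of the $|\cl_\Pa(\cu)|^{-1}$ normalization is correct. Nothing further is needed.
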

\begin{proof}
Combine~\eqref{e.omegaform0} and Lemma~\ref{l.nablavaffine}.
\end{proof}

We next show that the combination of the upper bounds in Lemma~\ref{l.upperbounds} and the inequality in Corollary~\ref{c.munucomparison} give us the desired lower bounds.

\begin{lemma}
\label{l.lowerbounds}
There exists a constant $C := C(d,\p, \lambda) < \infty$ such that, for every $\cu\in\T$ and $p,q\in\Rd$,
\begin{equation}
\label{e.mulobound}
-\mu(\cu,q) \geq |q|^2 \left( \frac{1}{C} - \O_{\frac{2}{2d-1}} \left(C\size(\cu)^{-\frac 12}\right) \right).
\end{equation}
and
\begin{equation}
\label{e.nulobound}
\nu(\cu,p) \geq |p|^2 \left(\frac1C - \O_{\frac{2}{2d+1}} \left(C\size(\cu)^{-\frac12} \right) \right).
\end{equation}
\end{lemma}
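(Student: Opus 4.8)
The strategy is the standard one for obtaining lower bounds for subadditive energy quantities from the comparison inequality together with the matching upper bounds, adapted to the percolation setting where the comparison is only approximate. The key structural fact is Corollary~\ref{c.munucomparison}: for every $\cu \in \T$ and $p,q \in \Rd$,
\begin{equation*}
\nu(\cu,p) - \mu(\cu,q) - p\cdot q \geq -\O_{\frac{2}{2d-1}}\left(C|p||q|\size(\cu)^{-\frac12}\right),
\end{equation*}
since the quadratic Dirichlet-form term on the left is nonnegative. First I would prove~\eqref{e.nulobound}. Apply the comparison inequality with $q$ chosen (deterministically, given $p$) to extract a linear-in-$|p|^2$ lower bound from $p\cdot q$: specifically take $q = c_0 p$ for a small constant $c_0(d,\lambda,\p) > 0$ to be fixed. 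Then $p\cdot q = c_0 |p|^2$, so
\begin{equation*}
\nu(\cu,p) \geq \mu(\cu,c_0 p) + c_0|p|^2 - \O_{\frac{2}{2d-1}}\left(Cc_0|p|^2\size(\cu)^{-\frac12}\right).
\end{equation*}
Now bound $\mu(\cu,c_0 p)$ below using the upper bound~\eqref{e.muupboundwhip} for $-\mu$: we have $\mu(\cu,c_0p) = -(-\mu(\cu,c_0 p)) \geq -Cc_0^2|p|^2 - \O_{\frac{1}{2d+1}}(Cc_0^2|p|^2\size(\cu)^{-1})$. Substituting and choosing $c_0$ small enough that $c_0 - Cc_0^2 \geq \tfrac12 c_0 =: 1/C'$ gives
\begin{equation*}
\nu(\cu,p) \geq \tfrac{1}{C'}|p|^2 - \O_{\frac{2}{2d-1}}\left(C|p|^2\size(\cu)^{-\frac12}\right) - \O_{\frac{1}{2d+1}}\left(C|p|^2\size(\cu)^{-1}\right).
\end{equation*}
Since $\size(\cu)^{-1} \leq \size(\cu)^{-1/2}$ and, using~\eqref{e.Osums} together with the fact that $\tfrac{1}{2d+1} \leq \tfrac{2}{2d-1}$ so both $\O$-terms can be absorbed into a single $\O_{\frac{2}{2d+1}}$ term (applying~\eqref{e.improves} or simply the monotonicity $\O_{s_1}(\theta)$ is also $\O_{s_2}(\theta)$ for $s_2 \le s_1$ only after comparison — here just use that the weaker exponent $\tfrac{1}{2d+1} < \tfrac{2}{2d-1}$ controls the combined error), we obtain~\eqref{e.nulobound} with the stated exponent $\tfrac{2}{2d+1}$.

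For~\eqref{e.mulobound} I would argue symmetrically: use the comparison inequality with $p$ chosen as a fixed small multiple of $q$, say $p = c_0 q$, so that $p\cdot q = c_0|q|^2$ and
\begin{equation*}
-\mu(\cu,q) \geq c_0|q|^2 - \nu(\cu,c_0 q) - \O_{\frac{2}{2d-1}}\left(C|q|^2\size(\cu)^{-\frac12}\right).
\end{equation*}
Then apply the (deterministic, non-random) upper bound~\eqref{e.nuupbound}, $\nu(\cu,c_0 q) \leq Cc_0^2|q|^2$, to get $-\mu(\cu,q) \geq (c_0 - Cc_0^2)|q|^2 - \O_{\frac{2}{2d-1}}(C|q|^2\size(\cu)^{-\frac12})$, and fix $c_0$ small so that $c_0 - Cc_0^2 \geq 1/C$. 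This yields~\eqref{e.mulobound} directly, with the exponent $\tfrac{2}{2d-1}$ inherited from Corollary~\ref{c.munucomparison}; note that this case is cleaner because the upper bound on $\nu$ is purely deterministic and contributes no stochastic error.

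The main obstacle, such as it is, is purely bookkeeping: one must be careful that the substitution of $q = c_0 p$ (or $p = c_0 q$) is legitimate — it is, since the comparison inequality holds for \emph{all} $p,q$ and the $\O$-notation is scale-homogeneous in the sense that $X \le \O_s(\theta)$ and the replacement $|p||q| \mapsto c_0|p|^2$ just rescales $\theta$ by the deterministic constant $c_0$ — and that combining the several $\O$-error terms with different exponents is handled via~\eqref{e.Osums} and the monotonicity of the $\O_s$ scale in $s$, landing on the weakest exponent present. There is no genuine analytic difficulty here; the work was already done in establishing Lemma~\ref{l.nablavaffine} and Corollary~\ref{c.munucomparison}. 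One should also double-check that the constant $c_0$ can be chosen depending only on $(d,\lambda,\p)$, which is clear since the constant $C$ in~\eqref{e.muupboundwhip} and~\eqref{e.nuupbound} depends only on those parameters.
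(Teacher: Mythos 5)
Your overall route is the paper's: the one-sided consequence of Corollary~\ref{c.munucomparison}, namely $\nu(\cu,p)-\mu(\cu,q)-p\cdot q\geq -\O_{\frac{2}{2d-1}}\left(C|p||q|\size(\cu)^{-\frac12}\right)$, played off against the upper bounds for $\nu$ and $-\mu$ with an optimized proportional choice of the dual variable. Your proof of~\eqref{e.mulobound} is correct and is exactly the paper's argument (the paper takes $p=q/2C$). For~\eqref{e.nulobound} your variant (deterministic $q=c_0p$ plus~\eqref{e.muupboundwhip}, instead of the paper's random choice $q=C^{-1}\bigl(\fint_\cu \size(\cu_\Pa(x))^{2d-2}\,dx\bigr)^{-1}p$ followed by~\eqref{e.boundwhip}) is equivalent and lands, as in the paper, on the two error terms $\O_{\frac{2}{2d-1}}\left(C|p|^2\size(\cu)^{-\frac12}\right)$ and $\O_{\frac{1}{2d+1}}\left(C|p|^2\size(\cu)^{-1}\right)$.

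The gap is in your final bookkeeping step for~\eqref{e.nulobound}. Merging the two errors ``at the weakest exponent present,'' as you propose, gives only $\O_{\frac{1}{2d+1}}\left(C|p|^2\size(\cu)^{-\frac12}\right)$: monotonicity of the $\O_s$ scale lets you lower an exponent, never raise it, and $\tfrac{1}{2d+1}<\tfrac{2}{2d+1}$, so this is strictly weaker stochastic integrability than the statement claims. Your parenthetical appeal to~\eqref{e.improves} cannot close this by itself either, since interpolating the two error bounds against each other does not produce exponent $\tfrac{2}{2d+1}$ at size $\size(\cu)^{-\frac12}$. The missing ingredient is the almost-sure bound coming from $\nu(\cu,p)\geq 0$: the defect $\bigl(\tfrac1C-\nu(\cu,p)/|p|^2\bigr)_+$ is deterministically bounded by $\tfrac1C$, and interpolating this a.s.\ bound against the $\O_{\frac{1}{2d+1}}\left(C\size(\cu)^{-1}\right)$ bound (via~\eqref{e.improves}, trading the extra factor $\size(\cu)^{-\frac12}$ of smallness for a doubling of the integrability exponent) upgrades that term to $\O_{\frac{2}{2d+1}}\left(C\size(\cu)^{-\frac12}\right)$; the $\O_{\frac{2}{2d-1}}$ term is then absorbed by weakening its (larger) exponent to $\tfrac{2}{2d+1}$. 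This is exactly the paper's closing step, and without it your argument proves a version of~\eqref{e.nulobound} with exponent $\tfrac{1}{2d+1}$ rather than the stated $\tfrac{2}{2d+1}$.
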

\begin{proof}
These estimates are consequences of Lemma~\ref{l.upperbounds} and Lemma~\ref{c.munucomparison}.
We first give the lower bound for $-\mu$. Combining~\eqref{e.nuupbound} and~\eqref{e.munucomparison}, we find that, for every $p,q\in\Rd$,  
\begin{align*}
\mu(\cu,q) 
& \leq \nu(\cu,p) - p\cdot q + \O_{\frac{2}{2d-1}}\left( C|p||q| \size(\cu)^{-\frac12}\right) \\
& \leq C|p|^2 - p\cdot q + \O_{\frac{2}{2d-1}}\left( C|p||q| \size(\cu)^{-\frac12}\right).
\end{align*}
Choosing $p=q/2C$ to minimize the first two terms on the right, we get
\begin{equation*}
\mu(\cu,q) \leq -\frac1{C} |q|^2 + \O_{\frac{2}{2d-1}}\left( C|q|^2 \size(\cu)^{-\frac12}\right),
\end{equation*}
which is~\eqref{e.mulobound}.

\smallskip

To prove the lower bound for $\nu$, we argue similarly: by~\eqref{e.muupbound} and~\eqref{e.munucomparison}, for every $p,q\in\Rd$, we have
\begin{align*}
\nu(\cu,p) & \geq \mu(\cu,q) + p\cdot q - \O_{\frac{2}{2d-1}}\left( C|p||q| \size(\cu)^{-\frac12}\right) \\
& \geq -C|q|^2 \left( \fint_{\cu} \size\left( \cu_\Pa(x) \right)^{2d-2}\,dx \right)+ p\cdot q - \O_{\frac{2}{2d-1}}\left( C|p||q| \size(\cu)^{-\frac12}\right).
\end{align*}
Optimize by taking $q: = C^{-1}  \left( \fint_{\cu} \size\left( \cu_\Pa(x) \right)^{2d-2}\,dx \right)^{-1}p$ in order to maximize the first two terms on the right side, we get
\begin{equation*}
\nu(\cu,p)
\geq |p|^2 \bigg( \frac1C \left( \fint_{\cu} \size\left( \cu_\Pa(x) \right)^{2d-2}\,dx \right)^{-1} - \O_{\frac{2}{2d-1}}\left( C \size(\cu)^{-\frac12} \right)  \bigg).
\end{equation*}
Estimating the first term on the right side of the previous line by~\eqref{e.boundwhip}, we obtain
 \begin{equation*}
\nu(\cu,p)
\geq |p|^2 \bigg( \frac1C - \O_{\frac{1}{2d+1}} \left(C\size(\cu)^{-1} \right) - \O_{\frac{2}{2d-1}}\left( C \size(\cu)^{-\frac12} \right)  \bigg).
\end{equation*}
Since $\nu(\cu,p)$ is nonnegative (and thus bounded below almost surely), the previous line and~\eqref{e.improves} implies
\begin{equation*} \label{}
\nu(\cu,p)
\geq |p|^2 \bigg( \frac1C - \O_{\frac{2}{2d+1}} \left(C\size(\cu)^{-\frac12} \right) \bigg). \qedhere
\end{equation*}
\end{proof}

The final result of this subsection concerns the approximate subadditivity of~$-\mu$ and~$\nu$.

\begin{lemma}
\label{l.subadd}
For every $p,q\in\Rd$ and $m,n\in\N$ with $n\leq m$ and $\cu\in \T_m$,
\begin{equation}
\label{e.musubadd}
\mu(\cu,q) \geq 3^{-d(m-n)} \sum_{z\in3^n\Zd\cap \cu} \mu(\cu_n(z),q) - |q|^2 \O_{\frac{1}{2d-1}} \left( C|q|^2 3^{-\frac n4} \right)
\end{equation}
and
\begin{equation}
\label{e.nusubadd}
\nu(\cu,p) \leq 3^{-d(m-n)} \sum_{z\in3^n\Zd\cap \cu} \nu(\cu_n(z),p) + |p|^2 \O_{\frac{2}{2d-1}}\left(C 3^{-n}\right).
\end{equation}
\end{lemma}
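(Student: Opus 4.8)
The plan is to prove each inequality by restriction, exactly as in the uniformly elliptic setting of~\cite{AS}, but carefully tracking the errors that arise because the partition~$\Pa$ does not respect the triadic subcube structure. Fix $\cu = \cu_m(w) \in \T_m$ and let $\{\cu_n(z)\}_{z \in 3^n\Zd \cap \cu}$ be its partition into subcubes of size~$3^n$. The subtlety is that $\cl_\Pa(\cu)$, $\cl_\Pa(\cu_n(z))$ and the genuine triadic cubes differ near their boundaries by a set of points whose local partition size is, by~\eqref{e.partitionO1}, typically of order one; and that a function in $\A_*(\cu_n(z))$ need not glue into a function in $\A_*(\cu)$, while conversely the restriction to $\C_*(\cu_n(z))$ of a function in $\A_*(\cu)$ is not in $\A_*(\cu_n(z))$ because it fails the equation near $\partial\cl_\Pa(\cu_n(z))$ --- but it \emph{is} a valid competitor if we enlarge the boundary region.

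For~\eqref{e.nusubadd}: let $v := v(\cdot,\cu,p)$. For each $z$, the restriction $v\restriction_{\C_*(\cu_n(z))}$ is an admissible competitor for $\nu(\cu_n(z),p)$ once we replace it near $\partial\cl_\Pa(\cu_n(z))$ by the solution of the corresponding Dirichlet problem; equivalently, using Lemma~\ref{l.nuisdirichlet} and the variational characterization~\eqref{e.nucellproblem}, testing $\nu(\cu_n(z),p)$ with $v - p\cdot x$ as the admissible perturbation (which is zero on $\C_*(\cu_n(z)) \cap \cl_\Pa(\cu_n(z))$ up to a boundary correction) gives
\[
\nu(\cu_n(z),p) \le \frac{1}{2|\cl_\Pa(\cu_n(z))|} \left\langle \nabla v, \a\nabla v\right\rangle_{\C_*(\cu_n(z))} + (\text{boundary error}).
\]
Summing over $z$, using that $\sum_z |\cl_\Pa(\cu_n(z))| \ge |\cu| = 3^{dm}$ while $|\cl_\Pa(\cu)| \le 3^{dm}(1 + \O_1(3^{-m}))$ by~\eqref{e.partitionO1}, and recalling from~\eqref{e.nuform1} that $\nu(\cu,p)$ equals the normalized Dirichlet energy of $v$, the leading terms assemble into $3^{-d(m-n)}\sum_z \nu(\cu_n(z),p)$. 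The boundary error is controlled by the energy of $v$ on a shell of width comparable to $\sup \size(\cu_\Pa(\cdot))$ around the $\partial\cu_n(z)$'s, whose total volume fraction is $\O$ of $3^{-n}$ after using~\eqref{e.partitionO1} and the bound $\nu(\cu,p) \le C|p|^2$ from~\eqref{e.nuupbound}; this yields the $|p|^2\O_{2/(2d-1)}(C3^{-n})$ term. (The stochastic exponent $2/(2d-1)$ comes from the $L^2$ integrability of the partition sizes raised to a power, as in~\eqref{e.coarsegradvL2}.)

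For~\eqref{e.musubadd}, the roles reverse: one uses the minimizers $u_z := u(\cdot,\cu_n(z),q)$ on the subcubes to build a competitor for $\mu(\cu,q)$. Here the gluing is the delicate point, since the $u_z$ have independently-chosen additive constants and do not agree across the interfaces; following~\cite{AS} one modifies each $u_z$ near $\partial\cu_n(z)$ --- again within a shell of width $\sim\sup\size(\cu_\Pa(\cdot))$ --- to interpolate to a common (say affine, slope related to $q$) boundary value, paying the interpolation energy. Because~$-\mu$ involves the linear term $\langle q, \nabla[u]_\Pa\rangle$ rather than $\langle q,\nabla u\rangle$, one also needs to estimate the difference between $\nabla[u]_\Pa$ of the glued function and the sum of the $\nabla[u_z]_\Pa$; this is handled by Lemmas~\ref{l.coarseLs} and~\ref{l.coarsegrads} together with the upper bound~\eqref{e.uupbound} and the coarsened-gradient bound~\eqref{e.coarsegraduL2}. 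The exponent $\frac14$ on $3^{-n}$ (versus $1$ for $\nu$) and the stochastic exponent $\frac{1}{2d-1}$ reflect that the natural bound for $-\mu$ in Lemma~\ref{l.upperbounds} carries the weight $\size(\cu_\Pa)^{2d-2}$, so the Cauchy--Schwarz/H\"older splitting of the boundary shell contribution loses a square root (giving $3^{-n/2}$ from the shell width, degraded to $3^{-n/4}$ after applying~\eqref{e.improves} to interpolate against the almost-sure bound $-\mu \le C|q|^2 + \text{random}$ from~\eqref{e.muupboundwhip}).

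The main obstacle is the gluing construction for~\eqref{e.musubadd} and the bookkeeping of the coarsening error in the linear term of~$\mu$: one must verify that modifying $u_z$ in a boundary shell does not destroy the $\A_*$ membership in a way that breaks the computation, and that the total extra energy --- summed over all $z$ and over a shell whose local thickness is the random quantity $\sup_{x}\size(\cu_\Pa(x))$ --- is genuinely $o(1)$ with the claimed stochastic integrability. All the needed ingredients (volume comparison~\eqref{e.partitionO1}, energy upper bounds~\eqref{e.uupbound}--\eqref{e.vupbound}, coarsening estimates~\eqref{e.coarsegradu}--\eqref{e.coarsegradvL2}, and the interpolation rule~\eqref{e.improves}) are already in place, so this is a matter of careful but routine execution once the shell width and the competitor are chosen correctly.
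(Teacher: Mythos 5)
There is a genuine gap: for both inequalities your strategy bounds the wrong side. Since $\mu(\cu,q)$ is an \emph{infimum}, a competitor built by gluing the subcube minimizers $u_z=u(\cdot,\cu_n(z),q)$ can only produce an upper bound of the form $\mu(\cu,q)\le 3^{-d(m-n)}\sum_z\mu(\cu_n(z),q)+\mathrm{error}$, which is the reverse of~\eqref{e.musubadd}; and the gluing itself is not controllable, because the $u_z$ solve an unconstrained problem (no boundary condition), so their traces on the interfaces differ by genuinely different functions rather than constants, and interpolating them to a common affine value in a thin shell costs an energy you cannot estimate without already knowing the $u_z$ are close to affine, i.e.\ without homogenization. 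Similarly, your displayed inequality for $\nu$, namely $\nu(\cu_n(z),p)\le \frac{1}{2|\cl_\Pa(\cu_n(z))|}\langle\nabla v,\a\nabla v\rangle_{\C_*(\cu_n(z))}+\mathrm{error}$ (coming from the Dirichlet characterization~\eqref{e.nucellproblem}), gives after summation and~\eqref{e.nuform1} the bound $3^{-d(m-n)}\sum_z\nu(\cu_n(z),p)\le\nu(\cu,p)+\mathrm{error}$ --- again the reverse of~\eqref{e.nusubadd}. Moreover the restricted $v(\cdot,\cu,p)$ is not admissible for the subcube Dirichlet problem, and modifying it near $\partial\cl_\Pa(\cu_n(z))$ to match affine data is not a small boundary-shell perturbation: on interior interfaces $v$ is nowhere near affine at this stage of the theory.

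The correct route, which is the paper's, requires no gluing at all: one restricts the \emph{global} optimizers to the subcubes and uses them as competitors in the subcube problems, which is the direction compatible with the inf/sup structure. Testing $\mu(\cu_n(z),q)$ with $u(\cdot,\cu,q)$ gives $\mu(\cu_n(z),q)\,\indc_{\{\cu_n(z)\in\Pas\}}\le$ (local energy minus local coarsened linear term); summing over $z$, the quadratic terms are dominated by the global energy, and the only errors are (i) the edges of $\C_*(\cu)$ missed by the clusters $\C_*(\cu_n(z))$ and (ii) the coarsened linear term over the set $V$ of vertices adjacent to interfaces or to subcubes not in $\Pas$, with $|V|\le\O_1\left(C|\cu|3^{-n}\right)$; H\"older's inequality together with~\eqref{e.coarsegraduL2} then produces the $\O_{\frac1{2d-1}}\left(C|q|^23^{-\frac n4}\right)$ error. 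For $\nu$ one tests each subcube supremum with the restriction of $v(\cdot,\cu,p)$, keeping the original functional $-\frac12\langle\nabla v,\a\nabla v\rangle+\langle p,\a\nabla v\rangle$ (the energy identity~\eqref{e.nuform1} holds only at the subcube maximizer, not at the restricted $v$), and controls the missed interfacial edges via~\eqref{e.vupbound}; no boundary correction arises because competitors in the sup formulation carry no boundary condition.
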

\begin{proof}
We first give the proof of~\eqref{e.musubadd}. Denote~$u:= u(\cdot,\cu,q)$. Testing the definition of~$\mu(z+\cu_n,q)$ for $z\in 3^n\Zd\cap \cu$ with~$u$ gives
\begin{multline}
\label{e.dumbplug}
\mu(z+\cu_n,q) \indc_{\{z+\cu_n\in\Pas\}} \\
\leq  \frac{1}{|\cu_n|}\left( \frac12  \left\langle  \nabla u, \a\nabla u\right\rangle_{\C_* (z+\cu_n)} - \left\langle q, \nabla\! \left[ u \right]_{\Pa}\! \right\rangle_{z+\cu_n} \right) \indc_{\{z+\cu_n\in\Pas\}}.
\end{multline}
If we sum the right side over $z\in 3^n\Zd\cap \cu$, the result is close to $\mu(\cu,q)$. There are two sources of error: (i) $\C_*(\cu)$ contains edges which do not belong to any of the clusters $\C_*(z+\cu_n)$; (ii) when we sum the second term in parentheses we miss the edges between two adjacent subcubes and edges deleted because $z+\cu_n\not\in\Pas$. We treat each of these errors in turn. The claim is that
\begin{multline}
\label{e.testingerrormu}
\frac1{|\cu|}\sum_{z\in3^n\Zd\cap \cu}  \left( \frac12  \left\langle  \nabla u, \a\nabla u\right\rangle_{\C_* (z+\cu_n)} - \left\langle q, \nabla\! \left[ u \right]_{\Pa}\! \right\rangle_{z+\cu_n} \right) \indc_{\{z+\cu_n\in\Pas\}} \\
 \leq \mu(\cu,q) + \O_{\frac{1}{2d-1}} \left( C|q|^2 3^{-\frac n4} \right).
\end{multline}

\smallskip

First, it is clear that while $\C_*(\cu)\cap (z+\cu_n)$ and $\C_*(z+\cu_n)$ may be different, every open edges in the latter cluster belongs to the former. Therefore, since the quadratic form is nonnegative on each edge, we have, for every $z\in 3^n\Zd\cap\cu$,
\begin{equation*}
\sum_{z\in 3^n\Zd \cap \cu} \frac12\left\langle  \nabla u, \a\nabla u\right\rangle_{\C_* (z+\cu_n)} \indc_{\{z+\cu_n\in\Pas\}}  
\leq  \frac12 \left\langle  \nabla u, \a\nabla u\right\rangle_{\C_* (\cu)}.
\end{equation*}
Next, let $V$ be the set of vertices $x \in\cu$ with an adjacent edge $\{x,y\}$ such that $y\not \in \cu_n(x)$ or such that $\cu_n(x) \not \in \Pas$. It is clear that
\begin{equation*}
|V| \leq   C| \cu|3^{-n} + C \sum_{x\in \cu} \indc_{\left\{ \size(\cu_{\Pa}(x))>3^n\right\}} \leq | \cu| \left( C3^{-n} + \O_1\left(3^{-n} \right) \right) \leq \O_1\left( C| \cu|3^{-n} \right). 
\end{equation*}
From this, the H\"older inequality and~\eqref{e.coarsegraduL2} with $\delta = \frac14$, we get
\begin{align*}
\lefteqn{
 \frac{1}{\left|\cu\right|} \left| \left\langle q, \nabla\! \left[ u \right]_{\Pa}\! \right\rangle_{\cu}  -  \sum_{z\in 3^n\Zd \cap \cu} \left\langle q, \nabla\! \left[ u \right]_{\Pa}\! \right\rangle_{z+\cu_n} \indc_{\{z+\cu_n\in\Pas\}}  \right| 
 } \qquad \qquad\qquad& \\
 & \leq \frac{C|q|}{|\cu|} \int_V \left|  \nabla\! \left[ u \right]_{\Pa} \right|(x)\,dx \hspace{2in}\\
&  \leq C|q| \left( \frac{\left| V \right|}{|\cu|} \right)^{\frac12}  \left( \fint_{\cu} \left| \nabla \left[ u \right]_{\Pa} \right|^2(x)\,dx \right)^{\frac12} \\
& \leq C|q|\cdot \O_2\left( C 3^{-\frac n2} \right)\cdot \O_{\frac2{4d-3}}\left(C|q| 3^{\frac n4}\right) \\
& \leq \O_{\frac{1}{2d-1}} \left( C|q|^2 3^{-\frac n4} \right). 
\end{align*}
Combining the above yields~\eqref{e.testingerrormu}. To obtain~\eqref{e.musubadd} from~\eqref{e.dumbplug} and~\eqref{e.testingerrormu}, we just recall that $\mu(z+\cu_n,q) \indc_{\{ z+\cu_n\not\in\Pas\}} = 0$. 

\smallskip

We turn to the proof of~\eqref{e.nusubadd}, which is only slightly different. Testing the definition of $\nu(z+\cu_n,p)$ with $v:=v(\cdot,\cu,p)$ gives
\begin{multline}
\label{e.dumbplugnu}
\nu(z+\cu_n,p) \indc_{\{ z+ \cu_n\in\Pas\}}  \\
\geq  \frac{1}{|\cu_n|}\left( - \frac12  \left\langle  \nabla v, \a\nabla v\right\rangle_{\C_* (z+\cu_n)} 
+ \left\langle p, \a\nabla v \right\rangle_{\C_*(z+\cu_n)} \right) \indc_{\{z+\cu_n\in\Pas\}}.
\end{multline}
As above, we have
\begin{equation*}
\sum_{z\in 3^n\Zd \cap \cu} \frac12\left\langle  \nabla v, \a\nabla v\right\rangle_{\C_* (z+\cu_n)} \indc_{\{z+\cu_n\in\Pas\}}  
\leq  \frac12 \left\langle  \nabla v, \a\nabla v\right\rangle_{\C_* (\cu)}.
\end{equation*}
Let $W$ denote the set of vertices $x\in\cu$ with an edge $\{x,y\}$ belonging to the cluster $\C_*(\cu)$ but not any of the clusters $\C_*(z+\cu_n)$ for $z\in 3^n\Zd$ satisfying $z+\cu_n\in\Pas$. It is clear that $W$ must be contained in the union of elements of $\Pa$ which touch the boundaries of one of the cubes $z+\cu_n$ and those cubes $z+\cu_n$ which do not belong to $\Pas$. Therefore, in view of~\eqref{e.boundaryclip}, 
\begin{align}
\label{e.stupidedges}
|W| 
& \leq \left| \bigcup_{z\in 3^n\Zd \cap\cu} \partial_\Pa (z+\cu_n) \right| + \left|  \bigcup_{z\in 3^n\Zd \cap\cu, \ z+\cu_n\not\in\Pas} (z+\cu_n) \right| \\ \notag
& \leq C \sum_{z\in 3^n\Zd \cap\cu}\left| \partial_\Pa (z+\cu_n) \right| + C \sum_{x\in \cu} \indc_{\left\{ \size(\cu_{\Pa}(x))>3^n\right\}} \\ \notag
& \leq \O_1\left( C|\cu|3^{-n} \right).
\end{align}
By the previous inequality, the H\"older inequality and~\eqref{e.vupbound}, we get
\begin{align*}
\lefteqn{
 \frac{1}{\left|\cu\right|} \left| \left\langle p, \a\nabla v\right\rangle_{\C_*(\cu)}  -  \sum_{z\in 3^n\Zd \cap \cu} \left\langle p, \a\nabla v \right\rangle_{\C_*(z+\cu_n)} \indc_{\{z+\cu_n\in\Pas\}}  \right| 
 } \qquad \qquad\qquad& \\
 & \leq \frac{C|p|}{|\cu|} \int_{W\cap \C_*(\cu)} \left|  \nabla v \indc_{\{\a\neq0\}}\right|(x)\,dx \hspace{2in}\\
&  \leq C|p| \left( \frac{\left| W \right|}{|\cu|} \right)^{\frac12}  \left( \frac{1}{|\cu|}\int_{\C_*(\cu)} \left| \nabla v \indc_{\{\a\neq0\}}\right|^2(x)\,dx \right)^{\frac12} \\
& \leq C|p|^2\cdot \O_2\left( C 3^{-\frac n2} \right)\\
& \leq \O_{2} \left( C|p|^2 3^{-\frac n2} \right). 
\end{align*}
Combining these yields
\begin{multline}
\label{e.testingerrornu}
\frac1{|\cu|}\sum_{z\in3^n\Zd\cap \cu}  \left( - \frac12  \left\langle  \nabla v, \a\nabla v\right\rangle_{\C_* (z+\cu_n)} 
+ \left\langle p, \a\nabla v \right\rangle_{\C_*(z+\cu_n)} \right) \indc_{\{z+\cu_n\in\Pas\}} \\
 \geq \nu(\cu,q) - \O_{2} \left( C|p|^2 3^{-\frac n2} \right).
\end{multline}
Combined with~\eqref{e.dumbplugnu}, this yields
\begin{equation*}
\nu(\cu,p) \leq 3^{-d(m-n)} \sum_{z\in 3^n\Zd\cap \cu} \nu(z+\cu_n,p) \indc_{\{ z+\cu_n\in\Pas\}} + \O_{2}\left(C|p|^2 3^{-\frac n2}  \right).
\end{equation*}
This implies~\eqref{e.nusubadd} since $\nu$ is nonnegative.
\end{proof}

\subsection{Localization and approximate stationarity}

We next define local and stationary versions of $\mu$ and $\nu$ and show that they are the same as the original quantities, up to a small error. For each $m,n \in \N$ satisfying $m>n$, we denote 
\begin{equation*}
\T_m^{(n)} := \left\{ z + \cu_m \, : \, z \in 3^n \Zd \right\}
\end{equation*}
and define the following random family of good cubes
\begin{equation*}
\G^{(n)} := \G \cup \left( \cup \left\{ \cu' \in \T \, : \, \size(\cu') \geq 3^{n} \right\} \right),
\end{equation*}
where $\G$ is the set of cube cubes from Definition~\ref{d.Pa}. We then apply Proposition~\ref{p.partitions} which gives two partitions $\Pa^{(n)}$ and $\Pa^{(n)}_\mathrm{loc}(\cu)$. Notice that thanks to this construction we obtain a partition $\Pa^{(n)}$ which is stationary for translations of vectors within $3^n \Zd$ (in particular, this will ensure that~\eqref{e.munustat} holds) and will be important in Section 6. Before introducing the local and stationary versions of $\mu$ and $\nu$, we prove a quantitative result, showing that $\Pa$ and $\Pa^{(n)}$ are equals on $\cu$ on a set of large probability.
\begin{proposition} \label{Pa.stationaryPa}
For each $m,n \in \N$ satisfying $m>n$, $3^n > C m$ and $\cu\in \T_m^{(n)}$, the following estimates holds
\begin{equation*}
\P \left[ \forall x \in \cu, \, \cu_\Pa(x) = \cu_{\Pa^{(n)}} (x) \right] \geq 1 - C  \exp \left( - C^{-1} 3^{n} \right).
\end{equation*}
\end{proposition}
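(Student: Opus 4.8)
The plan is to compare the two partitions $\Pa$ and $\Pa^{(n)}$ directly through the auxiliary shrunken collections $\overline{\G}$ and $\overline{\G^{(n)}}$ used in the proof of Proposition~\ref{p.partitions}, and to observe that they can only disagree at a point $x\in\cu$ if there is a ``bad'' triadic cube $\cu'$ of size $<3^n$ somewhere in the relevant neighborhood of $x$. Since $\G^{(n)}$ is obtained from $\G$ simply by declaring all triadic cubes of size $\geq 3^n$ to be good by default, the difference in the construction is localized: the maps $\mathcal{K}(\cdot)$ are the same, and $\overline{\G}$ and $\overline{\G^{(n)}}$ agree on all cubes of size $<3^n$ \emph{unless} there is a cube of size $\geq 3^n$ in $\mathcal{K}(\cu')$ that is bad for $\G$ (and hence good for $\G^{(n)}$). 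The key geometric input is~\eqref{e.growthK}: if $\cu'\in\mathcal{K}(\cu)$ then $\dist(\cu,\cu')\leq C\size(\cu')$, so a cube of size $\geq 3^n$ appearing in $\mathcal{K}(\cu')$ for some $\cu'\ni x$ must lie within distance $C\size$ of $x$, and in particular forces $\size(\cu_\S(x))$ or $\size(\cu_{\S^{(n)}}(x))$ to be large.

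First I would make precise the implication: for $x\in\cu$, if $\cu_\Pa(x)\neq \cu_{\Pa^{(n)}}(x)$ then there exists a triadic cube $\cu'$ with $\size(\cu')\geq 3^n$ and a triadic cube $\cu''\ni x$ with $\cu'\in\mathcal{K}(\cu'')$. Indeed, the two constructions coincide step-by-step until one first encounters a cube on which $\G$ and $\G^{(n)}$ differ, and such a cube necessarily has size $\geq 3^n$ (all smaller cubes have identical ``good'' status in the two collections, by definition of $\G^{(n)}$). Then by~\eqref{e.growthK} we get $\dist(x,\cu')\leq \dist(\cu'',\cu')\leq C\size(\cu')$, and since $\size(\cu')\geq 3^n$ there are, for each $j\geq n$, at most $C$ such cubes $\cu'$ of size $3^j$ within distance $C3^j$ of a fixed $x$ (after fixing $\cu''$), and summing over $x\in\cu$ with $|\cu|=3^{md}$ gives at most $C3^{md}3^{-(j-n)d}\cdot 3^{jd}=C3^{md}$ candidate cubes $\cu'$ of size $3^j$ — wait, more carefully: the number of $x\in\cu$ covered by a fixed $\cu'$ of size $3^j$ intersecting a $C3^j$-neighborhood of $\cu$ is at most $C3^{jd}$, and the number of such $\cu'$ of size $3^j$ is at most $C3^{(m-j)d}+C$; either way a union bound over $j\geq n$ together with Lemma~\ref{l.AP} (the probability that a cube of size $3^j$ is bad is at most $C\exp(-c3^j)$) yields
\begin{equation*}
\P\left[\exists x\in\cu,\ \cu_\Pa(x)\neq\cu_{\Pa^{(n)}}(x)\right]
\leq \sum_{j\geq n} C\bigl(3^{(m-j)d}\cdot 3^{jd}+3^{jd}\bigr)\exp\left(-c3^{j}\right).
\end{equation*}

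The final step is the arithmetic: the dominant term is of order $3^{md}\sum_{j\geq n}\exp(-c3^j)\leq C3^{md}\exp(-c3^n)$, and using the hypothesis $3^n>Cm$ (with $C$ large enough depending on $c$ and $d$), one absorbs the polynomial factor $3^{md}\leq\exp(dm\log 3)\leq\exp(\tfrac c2 3^n)$ into the exponential, leaving $\P[\cdots]\leq C\exp(-C^{-1}3^n)$, as claimed. The main obstacle — really the only subtle point — is the combinatorial bookkeeping in the first step: making rigorous the statement that the two constructions can only diverge at a cube of size $\geq 3^n$ and that such a divergence propagates to $x$ only through a chain in $\mathcal{K}(\cdot)$, so that~\eqref{e.growthK} applies. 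Once that is set up cleanly, the probabilistic estimate is a routine union bound as in the proof of Proposition~\ref{p.partitions}(iv), and the choice $3^n>Cm$ is exactly what makes the volume factor harmless.
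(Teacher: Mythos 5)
Your proposal is correct and follows essentially the same route as the paper: reduce a disagreement $\cu_\Pa(x)\neq\cu_{\Pa^{(n)}}(x)$ to the existence of a cube $\cu'\notin\G$ with $\size(\cu')\geq 3^n$ lying within distance $C\size(\cu')$ of $\cu$ (via the $\mathcal{K}$-chains and~\eqref{e.growthK}, exactly as in the proof of Proposition~\ref{p.partitions}(iv)), then conclude by a union bound with Lemma~\ref{l.AP} and absorb the polynomial volume factor using $3^n>Cm$. Your counting of candidate cubes is somewhat cruder than the paper's (which gets a factor $C3^{m-n}$ rather than $C3^{md}$), but this makes no difference since both factors are absorbed into the exponential under the hypothesis $3^n>Cm$.
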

\begin{proof}
For each $x \in \cu$, we have
\begin{equation*}
\cu_\Pa(x) \neq \cu_{\Pa^{(n)}}(x) \iff \exists \cu' \in \mathcal{K} \left( \cu_{\Pa^{(n)}} (x)\right) \setminus \G
\end{equation*}
Thus, with a similar argument as in the proof of (iv) of Proposition~\ref{p.partitions}
\begin{equation*}
\exists x \in \cu, \, \cu_\Pa(x) \neq \cu_{\Pa^{(n)}}(x) \implies \exists \cu' \notin \G, \, \size(\cu') \geq 3^{n} \mbox{ and } \dist( \cu', \cu) \leq C \size(\cu')
\end{equation*}
and thus
\begin{align*}
\P \left[ \forall x \in \cu, \, \cu_\Pa(x) \neq \cu_{\Pa^{(n)}} (x) \right] & \leq C 3^{m-n} \exp \left( - C^{-1} 3^{n} \right) \\
														& \leq C  \exp \left( - C^{-1} 3^{n} \right).
\end{align*}
The proof is complete.
\end{proof}

\smallskip

Now, for $\cu \in \T_m^{(n)}$, consider the event
\begin{equation*}
A_{n}(\cu) := \left\{ \sup_{\cu' \in \Pa_{\mathrm{loc}}^{(n)} (\cu) } \size( \cu') \leq 3^{n-1}\right\}.
\end{equation*}
Observe that $A_{n}(\cu) \in \F(\cu)$.
As in Section~\ref{s.partition}, we define, for $n \in \N$,
\begin{equation*}
\cu^{(n)} := \left\{ x\in \cu\,:\, \dist(x,\partial \cu)\geq 3^n \right\}.
\end{equation*}
Note that on the event $A_{n}(\cu)$, by definition of the local partition~$\Pa_{\mathrm{loc}}^{(n)}$ in Proposition~\ref{p.partitions}, every cube belonging to $\Pa_{\mathrm{loc}}^{(n)}(\cu)$ contained in $\cu^{\left( n\right)}$ is good (in the sense of Definition~\ref{def.goodcube}),
so that the union of the clusters $\left( \C_*\left(\cu_{\Pa_{\mathrm{loc}}^{(n)}(\cu)} (x)\right)\right)_{x \in \cu^{\left( n \right)}}$ over $x\in \cu^{(n)}$ is connected. We then define the local cluster $\C_*^{(n)}(\cu)$ as the maximal cluster which is a subset of $\cu$ and which contains every cluster of the form $\left( \C_*\left(\cu_{\Pa_{\mathrm{loc}}^{(n)}(\cu)} (x)\right)\right)_{x \in \cu^{\left( n \right)}}$ with $x\in \cu^{(n)}$. If $A_n(\cu)$ fails to hold, we define $\C_*^{(n)}(\cu)$ to be empty. 

\smallskip

From the local cluster we define the local versions of the energy quantities by
\begin{equation*} \label{}
\mu_{\mathrm{loc}}^{(n)}(\cu,q)
 := 
 \indc_{A_{n}(\cu)}
\inf_{u \in \A\left(\C_*^{(n)}(\cu)\right)} \frac1{|\cu|} 
\left(
\frac12 \left\langle \nabla u, \a\nabla u \right\rangle_{\C_*^{(n)}(\cu)} 
 - \left\langle q, \nabla\! \left[ u \right]_{\Pa_{\mathrm{loc}}^{(n)}(\cu)} \right\rangle_{\cu^{\left(n\right)}}
 \right)
\end{equation*}
and
\begin{equation*} \label{}
\nu_{\mathrm{loc}}^{(n)}(\cu,q) 
:= 
\indc_{A_n(\cu)} 
\sup_{v \in \A\left(\C_*^{(n)}(\cu)\right)} \frac1{|\cu|} 
\left( 
- \frac12 \left\langle  \nabla v, \a\nabla v \right\rangle_{\C_*^{(n)}(\cu)} 
+ \left\langle p, \a\nabla v \right\rangle_{\C_*^{(n)}(\cu)}
\right).
\end{equation*}
In other words, $\mu_{\mathrm{loc}}^{(n)}(\cu,q)$ is the same as~$\mu(\cu,q)$ except that we use the local partition~$\Pa_{\mathrm{loc}}^{(n)}(\cu)$ instead of~$\Pa$ and that we integrate the second term only on~$\cu^{\left(n\right)}$; meanwhile~$\nu_{\mathrm{loc}}^{(n)}(\cu,q)$ is the same as~$\nu(\cu,q)$, except in the event (which is unlikely for~$\cu$ large and $n$ large) that $A_n(\cu)$ does not hold.

\smallskip

It is immediate from Proposition~\ref{p.partitions}(iv), which gives the locality of $\Pa_{\mathrm{loc}}^{(n)}(\cu)$, that for every $\cu\in \T_m^{(n)}$ and $p,q\in\Rd$, 
\begin{equation} 
\label{e.munuloc}
\mu_{\mathrm{loc}}^{(n)}(\cu,q) 
\ \mbox{and} \ 
\nu_{\mathrm{loc}}^{(n)}(\cu,p)
\quad 
\mbox{are $\F(\cu)$--measurable.}
\end{equation}
It is clear that the construction above yields that $\mu_{\mathrm{loc}}^{(n)}(\cu_m,q)$ and $\nu_{\mathrm{loc}}^{(n)}(\cu_m,p)$ are $3^m\Zd$--stationary and thus
\begin{equation} 
\label{e.munustat}
\mbox{the laws of $\mu_{\mathrm{loc}}^{(n)}(z+\cu_m,q)$ and $\nu_{\mathrm{loc}}^{(n)}(z+\cu_m,p)$ are independent of $z\in 3^n\Zd$.} 
\end{equation}
In particular,~$\left\{ \mu_{\mathrm{loc}}^{(n)}(z+\cu_m,q) \right\}_{z\in 3^m\Zd}$ and~$\left\{ \nu_{\mathrm{loc}}^{(n)}(z+\cu_m,p) \right\}_{z\in 3^m\Zd}$ are~i.i.d.

\smallskip

We denote by $u_{\mathrm{loc}}^{(n)}(\cdot,\cu,q)$ and $v_{\mathrm{loc}}^{(n)}(\cdot,\cu,q)$ the optimizers in the definitions of $\mu_{\mathrm{loc}}^{(n)}(\cu,q)$ and $\nu_{\mathrm{loc}}^{(n)}(\cu,p)$, respectively. We choose the additive constant in the same way as above, so that~\eqref{e.additiveconstants} holds. In the event that $A_n(\cu)$ does not hold, we define $u_{\mathrm{loc}}^{(n)}(\cdot,\cu,q) = v_{\mathrm{loc}}^{(n)}(\cdot,\cu,p) = 0$. 

\smallskip

Most of the estimates that we proved in the previous subsection continue to hold for the localized quantities. In particular, since $\Pa_{\mathrm{loc}}^{(n)}$ is finer than $\Pa^{(n)}$ which is finer than $\Pa$, we have, by the same proof as the one in Lemma~\ref{l.upperbounds}, the bounds
\begin{equation} 
\label{e.munulocbounds}
\left\{
\begin{aligned}
& -\mu_{\mathrm{loc}}^{(n)}(\cu,q) \leq C|q|^2 \fint_\cu \size\left( \cu_{\Pa}(x) \right)^{2d-2}\,dx, \\
& \nu_{\mathrm{loc}}^{(n)}(\cu,p) \leq C|p|^2.
\end{aligned}
\right.
\end{equation}
We also record the fact that, by the same argument as the one leading to~\eqref{e.uupbound},~\eqref{e.vupbound} and~\eqref{e.coarsegraduL2},  we have the estimates
\begin{align}
\label{e.coarsegraduloc}
 & \frac{1}{|\cu|}\int_{\C_*(\cu)} \left| \nabla u_{\mathrm{loc}}^{(n)}(\cdot,\cu,q) \indc_{\left\lbrace \a \neq 0 \right\rbrace} \right|^2(x)\,dx
\leq C|q|^2 \fint_{\cu} \size\left( \cu_{\Pa}(x) \right)^{2d-2}\,dx,
\\ \notag
&\frac{1}{|\cu|} \int_{\C_*^{(n)}(\cu)} \left| \nabla v_{\mathrm{loc}}^{(n)}(\cdot,\cu,p)\right|^2(x)\,dx 
 \leq C|p|^2, \\ \notag
\end{align}

We next estimate the difference between $\mu$ and $\mu_{\mathrm{loc}}^{(n)}$ as well as $\nu$ and $\nu_{\mathrm{loc}}^{(n)}$ and their minimizers. 

\begin{proposition}
\label{p.localization}
There exists $C(d, \lambda, \p)<\infty$ 
such that, 
for every $m,n \in \N$ with $m>n$ and $3^n\geq Cm$, every $\cu\in\T_m^{(n)}$ and $p,q\in\Rd$, we have
\begin{equation} 
\label{e.localization}
\left\{
\begin{aligned}
& \left| \mu(\cu,q) - \mu_{\mathrm{loc}}^{(n)}(\cu,q) \right| \leq \O_{\frac1{2d-1}}\left(C|q|^2\left( 3^{-\frac{m-n}2} + 3^{-n}  \right) \right), \\
& \left| \nu(\cu,p) - \nu_{\mathrm{loc}}^{(n)}(\cu,p) \right| \leq \O_1\left( C|p|^23^{-n}\right). 
\end{aligned}
\right.
\end{equation}
and 
\begin{multline} 
\label{e.ulocloc}
\frac{\indc_{\left\{ \C_*(\cu) = \C^{(n)}_*(\cu) \right\}}}{|\cu|}\int_{\C_*(\cu)} \left| \left(\nabla u(\cdot,\cu,q)- \nabla u_{\mathrm{loc}}^{(n)}(\cdot,\cu,q)\right)\indc_{\{\a\neq 0\}} \right|^2(x)\,dx 
\\
\leq \O_{\frac1{2d-1}}\left(C|q|^2 \left( 3^{-\left(\frac{m-n}{2}\right)}+ 3^{-n}  \right) \right).
\end{multline}
\end{proposition}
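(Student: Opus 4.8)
The plan is to prove the two statements together, exploiting the fact that, by Proposition~\ref{Pa.stationaryPa}, the partitions $\Pa$ and $\Pa^{(n)}$ agree on $\cu$ except on an event of probability at most $C\exp(-C^{-1}3^n)$, and that on this good event the local cluster $\C_*^{(n)}(\cu)$ coincides with $\C_*(\cu)$ up to a boundary layer of width $C3^n$. First I would introduce the ``bad event'' $B_n(\cu)$, the union of $\{$not all $x\in\cu$ have $\cu_\Pa(x)=\cu_{\Pa^{(n)}}(x)\}$, $\{$not $A_n(\cu)\}$, and the event that $\mathcal{M}_{2d-2}(\Pa)$ or any of the minimal scales appearing exceeds $3^{m}$ (or some fixed power), so that $\indc_{B_n(\cu)}\le \O_1(C3^{-n})$ by Proposition~\ref{Pa.stationaryPa}, Proposition~\ref{p.partitions}(iii)--(iv) and Proposition~\ref{p.minimalscales}. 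On $B_n(\cu)$ we simply absorb everything into an $\O$-term using the a~priori upper bounds~\eqref{e.muupbound}, \eqref{e.nuupbound}, \eqref{e.munulocbounds}, \eqref{e.partitionO1} and~\eqref{e.Oprods} (multiplying a bounded-in-$\O_s$ quantity by $\indc_{B_n}$). So the work is entirely on the complement of $B_n(\cu)$, where $\Pa$, $\Pa^{(n)}$ and $\Pa_{\mathrm{loc}}^{(n)}(\cu)$ differ only inside the boundary layer $\cu\setminus\cu^{(n)}$.

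For the $\nu$ comparison: on $\Omega\setminus B_n(\cu)$ one has $\C_*^{(n)}(\cu)\subseteq\C_*(\cu)$, differing by a set of vertices of cardinality $\le C|\cu|3^{-n}$ (the boundary layer, whose $\Pa$-cubes are unit size off $B_n$). Testing $\nu_{\mathrm{loc}}^{(n)}(\cu,p)$ with (the restriction of) $v(\cdot,\cu,p)$ and testing $\nu(\cu,p)$ with the extension of $v_{\mathrm{loc}}^{(n)}(\cdot,\cu,p)$ by an affine function, the energy and linear terms differ only by integrals over this boundary layer; bounding these by H\"older together with the $L^2$ gradient bounds~\eqref{e.vupbound}, \eqref{e.coarsegraduloc} and $|W|\le C|\cu|3^{-n}$ — exactly as in the proof of Lemma~\ref{l.subadd} — yields $|\nu(\cu,p)-\nu_{\mathrm{loc}}^{(n)}(\cu,p)|\le C|p|^2 3^{-n}$ on $\Omega\setminus B_n$, hence the claimed $\O_1(C|p|^23^{-n})$ bound after adding the $B_n$ contribution.

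For the $\mu$ comparison and the minimizer estimate~\eqref{e.ulocloc}: on $\Omega\setminus B_n$, both $\mu(\cu,q)$ and $\mu_{\mathrm{loc}}^{(n)}(\cu,q)$ are defined over $\A_*$ with the same cluster $\C_*(\cu)$ but with the coarsening term integrated over $\cu$ versus $\cu^{(n)}$ and with $\Pa$ versus $\Pa_{\mathrm{loc}}^{(n)}$, which however agree off the boundary layer. Testing each minimization with the other's minimizer and using the second variations~\eqref{e.secondvarmu} and its local analogue, the two-sided comparison of the energies produces, via~\eqref{e.secondvarmu}, simultaneously the bound on $|\mu-\mu_{\mathrm{loc}}^{(n)}|$ and the bound~\eqref{e.ulocloc} on the $L^2$ norm of $\nabla(u-u_{\mathrm{loc}}^{(n)})$. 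The error terms are: (i) the linear term over the boundary layer $\cu\setminus\cu^{(n)}$, controlled by H\"older plus the $L^2$-gradient bound~\eqref{e.coarsegraduL2} with $\delta=\tfrac14$ and $|\cu\setminus\cu^{(n)}|\le C|\cu|3^{-n}$, giving $\O_{1/(2d-1)}(C|q|^23^{-n})$ just as in~\eqref{e.testingerrormu}; and (ii) the change of cluster when passing between $\C_*(\cu)$ and $\C_*^{(n)}(\cu)$, i.e.\ the indicator $\indc_{\{\C_*(\cu)=\C_*^{(n)}(\cu)\}}$ in~\eqref{e.ulocloc}, plus the usual "missing edges" near the cube boundary, which contributes the $3^{-(m-n)/2}$ term (this is where one needs the boundary layer of $\cu$ itself rather than of subcubes, hence the $3^{-(m-n)/2}$ rather than $3^{-n/2}$). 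Summing the two $\O$-terms with~\eqref{e.Osums} and adding the $B_n$ contribution gives~\eqref{e.localization} and~\eqref{e.ulocloc}.

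The main obstacle I anticipate is the bookkeeping in step~(ii): carefully matching up the clusters $\C_*(\cu)$, $\C_*^{(n)}(\cu)$, $\C_*(z+\cu_n)$, the two partitions $\Pa$ and $\Pa_{\mathrm{loc}}^{(n)}$, and the two integration domains $\cu$ and $\cu^{(n)}$, so that the extensions by affine data of the respective minimizers are admissible competitors in the other variational problem, and tracking exactly which edges are lost or gained. Once that geometry is pinned down, each resulting error term is a boundary-layer integral handled routinely by H\"older and the a~priori $L^2$ bounds on the (coarsened) gradients, exactly as in Lemmas~\ref{l.subadd} and~\ref{l.nablavaffine}.
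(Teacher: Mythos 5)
Your overall architecture does match the paper's: split into a good/bad event for the partition, absorb the bad event using the a priori bounds \eqref{e.muupbound}, \eqref{e.nuupbound}, \eqref{e.munulocbounds} together with an indicator of size $\O_1(C3^{-n})$, and on the good event test each minimizer in the other variational problem, with the second variation \eqref{e.secondvarmu} delivering both the bound on $|\mu-\mu_{\mathrm{loc}}^{(n)}|$ and the gradient estimate \eqref{e.ulocloc}. However, there are two concrete problems. First, for the $\nu$-estimate your route cannot produce the stated rate. The paper's proof rests on the observation that on the event $\{\sup_{x\in\cu}\size(\cu_\Pa(x))\le 3^{n}\}$ one has $\cl_\Pa(\cu)=\cu$ and $\C_*^{(n)}(\cu)=\C_*(\cu)$, so that $\nu_{\mathrm{loc}}^{(n)}(\cu,p)=\nu(\cu,p)$ \emph{exactly} (the definition of $\nu$ involves the partition only through the cluster and the normalization); the entire error then comes from the bad event, whose indicator is $\O_1(C3^{-n})$, and this is what gives $\O_1(C|p|^23^{-n})$. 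Your boundary-layer comparison, estimated ``exactly as in the proof of Lemma~\ref{l.subadd}'', gives at best $|p|^2$ times the square root of the layer's volume fraction, i.e.\ a rate $3^{-n/2}$ even granting your cardinality bound — so the second line of \eqref{e.localization} as stated is not reached, and the claim that H\"older plus \eqref{e.vupbound} yields $C|p|^23^{-n}$ on the good event is an overclaim.

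Second, in the $\mu$ part the bookkeeping of the two error terms is crossed, and one intermediate bound is wrong: the layer $\cu\setminus\cu^{(n)}$ has $|\cu\setminus\cu^{(n)}|\le C|\cu|3^{n-m}$, not $C|\cu|3^{-n}$. Consequently the linear coarsening term over this layer (handled via Lemma~\ref{l.coarsegrads} with $s=1$, H\"older restricted to the layer, and \eqref{e.coarsegraduloc}) is precisely what produces the $3^{-\frac{m-n}{2}}$ contribution, while the $3^{-n}$ contribution comes from the bad event (the failure of $\Pa$ and $\Pa_{\mathrm{loc}}^{(n)}(\cu)$ to agree, or of the size bound), estimated by multiplying the a priori bounds by the indicator; your text attributes $3^{-n}$ to the layer term ``as in \eqref{e.testingerrormu}'' and $3^{-\frac{m-n}{2}}$ to the cluster change, which is backwards (the set $V$ in \eqref{e.testingerrormu} has density $3^{-n}$ because it is a union of boundary layers of \emph{all} subcubes of size $3^n$, a different geometry from the single boundary layer of $\cu$ appearing here). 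These points are fixable, but as written neither rate in \eqref{e.localization} is correctly derived.
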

\begin{proof}
First note that on the event $\left\lbrace  \sup_{ x \in \cu } \size( \cu_\Pa(x) ) \leq 3^{n} \right\rbrace$, $\C_*^{(n)}(\cu) = \C_*(\cu)$. Moreover, since $\Pa_{\mathrm{loc}}^{(n)}(\cu)$ is finer than $\Pa$, we have
\begin{equation*}
 \left\lbrace  \sup_{ x \in \cu } \size( \cu_\Pa(x) ) \leq 3^{n} \right\rbrace \subseteq A_n(\cu).
\end{equation*}
By Proposition~\ref{p.partitions} (iii) and the assumption $3^n\geq Cm$, we can estimate the probability of this event by
\begin{equation} \label{e.probabilitysizesmall}
\P \left[ \sup_{ x \in \cu } \size( \cu_\Pa(x) ) > 3^{n} \right]  \leq 3^{dm} \exp( -C^{-1} 3^{n}) \leq C \exp \left( -C^{-1} 3^n \right). 
\end{equation}
It is clear that $\nu_{\mathrm{loc}}^{(n)}(\cu,p) = \nu(\cu,p)$ if $\C_*^{(n)}(\cu) = \C_*(\cu)$ and $A_n(\cu)$ holds. Thus on the event $\left\lbrace  \sup_{ x \in \cu } \size( \cu_\Pa(x) ) \leq 3^{n} \right\rbrace$, $\nu_{\mathrm{loc}}^{(n)}(\cu,p) = \nu(\cu,p)$. Using the bounds~\eqref{e.nuupbound} and~\eqref{e.munulocbounds}, we therefore obtain 
\begin{align*} \label{}
\left| \nu(\cu,p) - \nu_{\mathrm{loc}}^{(n)}(\cu,p) \right|
& \leq \left( \left| \nu(\cu,p) \right| + \left|\nu_{\mathrm{loc}}^{(n)}(\cu,p) \right| \right) \indc_{\left\lbrace  \sup_{ x \in \cu } \size( \cu_\Pa(x) ) > 3^{n} \right\rbrace} \\
& \leq C|p|^2 \indc_{\left\lbrace  \sup_{ x \in \cu } \size( \cu_\Pa(x) ) > 3^{n} \right\rbrace} \\
& \leq \O_1\left( C|p|^23^{-n} \right). 
\end{align*}
We turn to the bound for $\mu$. Denote by $B_{n}(\cu)$ the event
\begin{equation*}
B_{n}(\cu) := \left\lbrace  \sup_{ x \in \cu } \size( \cu_\Pa(x) ) \leq 3^{n} \right\rbrace \bigcap \left\lbrace \forall x \in \cu^{\left( n \right)},~   \cu_\Pa(x) = \cu_{\Pa_{\mathrm{loc}}^{(n)}}(x)  \right\rbrace.
\end{equation*}
By (iv) of Proposition~\ref{p.partitions}, Proposition~\ref{Pa.stationaryPa}, and~\eqref{e.probabilitysizesmall}, we can estimate
\begin{align*}
\P \left[ B_{n}(\cu) \right] & \geq 1 - 3^{dm} \exp \left( -C^{-1} 3^{n}\right) -  C 3^{m-n} \exp \left( -C^{-1} 3^{n}\right) \\
				& \geq 1 - C \exp \left( -C^{-1} 3^{n}\right).
\end{align*}
On this event, we can use the function $u( \cdot, \cu, q)$ as a minimizer candidate for $\mu_{\mathrm{loc}}^{(n)}(\cu,q)$. This yields
\begin{align*}
\mu_{\mathrm{loc}}(\cu, q) & \geq \frac1{|\cu|}  \left( \frac12 \left\langle \nabla u, \a\nabla u \right\rangle_{\C_*(\cu)}  - \left\langle q, \nabla\! \left[ u \right]_{\Pa_{\mathrm{loc}}^{(n)}(\cu)} \right\rangle_{\cu^{\left( n \right)}} \right) \\
						& \geq \frac1{|\cu|}  \left( \frac12 \left\langle \nabla u, \a\nabla u \right\rangle_{\C_*(\cu)}  - \left\langle q, \nabla\! \left[ u \right]_{\Pa} \right\rangle_{\cu^{\left( n \right)}} \right) \\
						& \geq \mu(\cu,q) - \left|\frac1{|\cu|} \left\langle q, \nabla\! \left[ u \right]_{\Pa} \right\rangle_{\cu \setminus \cu^{\left( n \right)}} \right| \\
 & \geq \mu(\cu,q) - |q| \frac{1}{|\cu|}\int_{\cu \setminus \cu^{\left( n \right)}} \left| \nabla \left[ u(\cdot,\cu,q) \right]_{\Pa} \right|(x)\,dx.
\end{align*}
To estimate the last term on the right hand side, we can extract from the proof of Lemma~\ref{l.coarsegrads} with $s=1$ the following inequality 
\begin{align*}
\lefteqn{\int_{\cu \setminus \cu^{\left( n \right)}} \left| \nabla \left[ u_{\mathrm{loc}}^{(n)}(\cdot,\cu,q) \right]_{\Pa} \right|(x)\,dx} \qquad & \\ & \leq C \sum_{\cu' \in \Pa, \cu' \subseteq \cu \setminus \cu^{\left( n-1 \right)}} \size(\cu')^{d-1} \int_{\cu' \cap \C_*(\cu)} \left| \nabla u_{\mathrm{loc}}^{(n)} \indc_{\left\lbrace \a \neq 0 \right\rbrace} \right| (x) \, dx \\
& \leq C  \left( \sum_{\cu' \in \Pa, \cu' \subseteq \cu \setminus \cu^{\left(n-1 \right)}} \size(\cu)^{3d-2} \right)^\frac12 \left( \int_{\left( \cu \setminus \cu^{\left( n-1\right)}\right) \cap \C_*(\cu)} \left| \nabla u_{\mathrm{loc}}^{(n)} \indc_{\left\lbrace \a \neq 0 \right\rbrace} \right|^2 (x) \, dx \right)^\frac12 \\
& \leq  C  \left( \int_{ \cu \setminus \cu^{\left( n-1 \right)}} \size(\cu_\Pa(x))^{2d-2} \, dx\right)^\frac12 \left( \int_{ \C_*(\cu)} \left| \nabla u_{\mathrm{loc}}^{(n)} \indc_{\left\lbrace \a \neq 0 \right\rbrace} \right|^2 (x) \, dx \right)^\frac12
\end{align*}
This gives
\begin{multline*}
\frac1{|\cu|} \int_{\cu \setminus \cu^{\left( n \right)}} \left| \nabla \left[ u_{\mathrm{loc}}^{(n)}(\cdot,\cu,q) \right]_{\Pa} \right|(x)\,dx \\  
\leq  C 3^{\frac{n-m}{2}} \left( \frac1{\left|\cu \setminus \cu^{\left( n-1 \right)}\right|}\int_{\cu \setminus \cu^{\left( n-1 \right)}} \size(\cu_\Pa(x))^{2d-2} \, dx \right)^\frac12 \\
\times \left( \frac 1{|\cu|} \int_{ \C_*(\cu)} \left| \nabla u_{\mathrm{loc}}^{(n)} \indc_{\left\lbrace \a \neq 0 \right\rbrace} \right|^2 (x) \, dx \right)^\frac12.
\end{multline*}
This yields by~\eqref{e.Oavgs} and~\eqref{e.coarsegraduloc}
\begin{align} \label{comparisonmumuloc1}
\mu_{\mathrm{loc}}(\cu, q) \indc_{B_{n}(\cu)} & \geq \mu(\cu,q) \indc_{B_{n}(\cu)} - C 3^{\frac{n-m}{2}}  \O_{\frac1{d-1}}(C)  \O_{\frac1{d-1}}(C)  \\
										& \geq  \mu(\cu,q) \indc_{B_{n}(\cu)} - \O_{\frac1{2d-2}}\left(C|q|^2 3^{\frac{n-m}2} \right). \notag
\end{align}
Similarly and still on the event $B_{n}(\cu)$, using $u_{\mathrm{loc}}^{(n)}$ as a minimizer candidate for $\mu(\cu,q)$ shows
\begin{align*}
\mu(\cu, q) & \geq \frac1{|\cu|}  \left( \frac12 \left\langle \nabla u_{\mathrm{loc}}, \a\nabla u_{\mathrm{loc}} \right\rangle_{\C_*(\cu)}  - \left\langle q, \nabla\! \left[ u_{\mathrm{loc}} \right]_{\Pa} \right\rangle_{\cu} \right) \\
						& \geq \mu_{\mathrm{loc}}(\cu,q) - \left|\frac1{|\cu|} \left\langle q, \nabla\! \left[ u_{\mathrm{loc}} \right]_{\Pa} \right\rangle_{\cu \setminus \cu^{\left( n \right)}} \right| \\
						& \geq  \mu_{\mathrm{loc}}(\cu,q) -  \frac{|q|}{|\cu|} \int_{\cu \setminus \cu^{\left( n\right)}} \left| \nabla \left[ u_{\mathrm{loc}}(\cdot,\cu,q) \right]_{\Pa} \right|(x)\,dx .
\end{align*}
Thus
\begin{equation} 
\label{comparisonmumuloc2}
\mu(\cu, q) \indc_{B_{n}(\cu)}  \geq \mu_{\mathrm{loc}}^{(n)}(\cu,q) \indc_{B_{n}(\cu)} - \O_{\frac1{2d-2}}(C|q|^2 3^{\frac{n-m}{2}}). 
\end{equation}
Combining~\eqref{comparisonmumuloc1} and~\eqref{comparisonmumuloc2} gives
\begin{equation*}
\left|\mu(\cu, q) - \mu_{\mathrm{loc}}^{(n)}(\cu,q)  \right| \indc_{B_{n}(\cu)}  \leq \O_{\frac1{2d-2}}\left(C|q|^2 3^{\frac{n-m}{2}}\right) 
\end{equation*}
On the event $\Omega \setminus B_n(\cu)$, we have by~\eqref{e.munulocbounds},
\begin{align*} \label{}
\left| \mu(\cu,q) - \mu_{\mathrm{loc}}^{(n)}(\cu,q) \right| \indc_{\left\lbrace  \Omega \setminus B_{n}(\cu) \right\rbrace} 
&  \leq \left( \left| \mu(\cu,q) \right| + \left|\mu_{\mathrm{loc}}^{(n)}(\cu,q) \right| \right) \indc_{\left\lbrace  \Omega \setminus B_{n}(\cu) \right\rbrace} \\
& \leq \O_{\frac{1}{2d-2}}(C|q|^2) \O_1\left(C|q|^23^{-n} \right) \\
& \leq \O_{\frac{1}{2d-1}}\left( C|q|^23^{-n} \right).
\end{align*}
Summing the two previous displays completes the proof of~\eqref{e.localization}.

\smallskip

We now turn to the proof of~\eqref{e.ulocloc}. We apply the second variations~\eqref{e.secondvarmu} to $w = u_{\mathrm{loc}}^{(n)}(\cdot, \cu , q)$, on the event $B_{n}(\cu)$, which yields
\begin{multline*}
\frac1{|\cu|} \left( \frac12 \left\langle \nabla u_{\mathrm{loc}}, \a \nabla u_{\mathrm{loc}}^{(n)} \right\rangle_{\C_*(\cu)} - \left\langle q, \nabla\left[ u_{\mathrm{loc}}^{(n)}\right]_\Pa \right\rangle_{\cu}  \right) \\
		= \mu(\cu,q) + \frac1{|\cu|} \frac12\left\langle \nabla u_{\mathrm{loc}}^{(n)} - \nabla u, \a \left( \nabla u_{\mathrm{loc}}^{(n)} - \nabla u  \right)\right\rangle_{\C_*(\cu)}.
\end{multline*}
On the other hand, we have the estimate
\begin{multline*}
\frac1{|\cu|} \left( \frac12 \left\langle \nabla u_{\mathrm{loc}}^{(n)}, \a\nabla u_{\mathrm{loc}}^{(n)} \right\rangle_{\C_*(\cu)} - \left\langle q, \nabla\left[ u_{\mathrm{loc}}^{(n)}\right]_\Pa \right\rangle_{\cu}  \right) \\
\leq \mu_{\mathrm{loc}}^{(n)}(\cu,q) + \frac{|q|}{|\cu|} \int_{\cu \setminus \cu^{\left( n \right)}} \left| \nabla \left[ u_{\mathrm{loc}}^{(n)}(\cdot,\cu,q) \right]_{\Pa} \right|(x)\,dx .
\end{multline*}
Combining the two previous displays with the same computation as the one leading to~\eqref{comparisonmumuloc2}, we obtain
\begin{multline*}
\frac1{|\cu|}  \left\langle \nabla u_{\mathrm{loc}}^{(n)} - \nabla u, \a \left( \nabla u_{\mathrm{loc}}^{(n)} - \nabla u \right)\right\rangle_{\C_*(\cu)} \indc_{B_{n}(\cu)}
\\
\leq 2 \left| \mu_{\mathrm{loc}}^{(n)}(\cu,q) - \mu(\cu,q) \right| + \O_{\frac1{2d-2}}\left(C|q|^2 3^{\frac{n-m}{2}}\right)
 \leq \O_{\frac1{2d-2}}\left(C|q|^2 3^{\frac{n-m}{2}}\right).
\end{multline*}
On the event $\left\{ \C_*(\cu) = \C^{(n)}_*(\cu) \right\} \setminus B_n(\cu)$, we have, by~\eqref{e.uupbound} and~\eqref{e.coarsegraduloc},
\begin{multline*}
\frac{\indc_{\left\{ \left\{ \C_*(\cu) = \C^{(n)}_*(\cu) \right\}\setminus B_{n}(\cu)\right\}}}{|\cu|} \left\langle \nabla u_{\mathrm{loc}}^{(n)} - \nabla u, \a \left( \nabla u_{\mathrm{loc}}^{(n)} - \nabla u \right) \right\rangle_{\C_*(\cu)} 
 \\
  \leq \O_{\frac1{2d-2}}(C |q|^2) \O_1\left(  C3^{-n} \right) 
 \leq \O_{\frac{1}{2d-1}}\left(C|q|^23^{-n} \right).
\end{multline*}
Combining the two previous displays and recalling that $\a \geq \lambda \indc_{\left\lbrace \a \neq 0 \right\rbrace}$ completes the proof of~\eqref{e.ulocloc}.
\end{proof}

We conclude this section by recording some consequences of Proposition~\ref{p.localization}, for our reference. According to~\eqref{e.munustat} and~\eqref{e.localization} with $n=\left\lceil \frac m2\right\rceil$, we have, for every $m\in\N$,
\begin{equation*}
\sup_{z\in3^m\Zd} \left| \E \left[ \mu(z+\cu_m,q) \right] - \E \left[ \mu(\cu_m,q) \right] \right| \leq C|q|^2 3^{-\frac m4}. 
\end{equation*}
Combining this with~\eqref{e.musubadd}, we obtain, for each $m\in\N$,
\begin{equation*}
\E \left[ \mu(\cu_{m+1},q) \right] \geq \E \left[ \mu(\cu_{m},q) \right] - C|q|^23^{-\frac m4}. 
\end{equation*}
Summing this from $n$ to $m-1$ yields that, for every $m,n\in\N$ with $n<m$, 
\begin{equation}
\label{e.almostmonotone}
\E \left[ \mu(\cu_{m},q) \right] \geq \E \left[ \mu(\cu_{n},q) \right] - C|q|^23^{-\frac n4}. 
\end{equation}
By a similar argument, we have the bound
\begin{equation}
\label{e.almostmonotonenu}
\E \left[ \nu(\cu_{m},p) \right] \leq \E \left[ \nu(\cu_{n},p) \right] + C|p|^23^{- n}. 
\end{equation}
As a consequence of~\eqref{e.ulocloc}, we also get a localization estimate for the coarsened functions, summarized in the following lemma. 
\begin{lemma}
\label{l.ulocloccoarse}
There exist $C(d, \lambda, \p)<\infty$ and $s(d)>0$ such that,  
for every $m,n \in \N$ with $3^n\geq Cm$, every $\cu\in\T_m^{(n)}$ and $p,q\in\Rd$, we have
\begin{multline}
\label{e.ulocloccoarse}
\fint_{\cu^{(n)}} \left| \nabla \left[ u(\cdot,\cu,q) \right]_{\Pa} - \nabla \left[ u_{\mathrm{loc}}^{(n)}(\cdot,\cu,q)\right]_{\Pa_{\mathrm{loc}}^{(n)}(\cu)} \right|(x)\,dx
\\
\leq \O_{s}\left(C|q| \left( 3^{-\left(\frac{m-n}{4}\right)}+ 3^{-\frac n4}  \right) \right).
\end{multline}
\end{lemma}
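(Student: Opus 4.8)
\textbf{Proof proposal for Lemma~\ref{l.ulocloccoarse}.}
The plan is to bound the difference of the coarsened gradients on the good event where $\Pa$ and $\Pa_{\mathrm{loc}}^{(n)}(\cu)$ agree and $\C_*(\cu)=\C_*^{(n)}(\cu)$, reducing matters to the already established $L^2$ estimate~\eqref{e.ulocloc} for $\nabla u(\cdot,\cu,q)-\nabla u_{\mathrm{loc}}^{(n)}(\cdot,\cu,q)$, and then to treat the complementary low-probability event by the a priori bounds. Concretely, work on the event $B_n(\cu)$ introduced in the proof of Proposition~\ref{p.localization}, so that $\sup_{x\in\cu}\size(\cu_\Pa(x))\le 3^n$ and $\cu_\Pa(x)=\cu_{\Pa_{\mathrm{loc}}^{(n)}(\cu)}(x)$ for every $x\in\cu^{(n)}$; recall $\P[\Omega\setminus B_n(\cu)]\le C\exp(-C^{-1}3^n)$. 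On $B_n(\cu)$, for all edges $\{x,y\}$ with $x,y$ lying in $\cu^{(n)}$ away from a thin layer of width $\le 3^{n-1}$ near $\partial\cu^{(n)}$, the two partitions induce the same cube structure, so there $\nabla\![u(\cdot,\cu,q)]_\Pa-\nabla\![u_{\mathrm{loc}}^{(n)}(\cdot,\cu,q)]_{\Pa_{\mathrm{loc}}^{(n)}(\cu)}=\nabla\![w]_\Pa$ with $w:=u(\cdot,\cu,q)-u_{\mathrm{loc}}^{(n)}(\cdot,\cu,q)$. The boundary layer has relative measure $O(3^{-(m-n)})$ in $\cu^{(n)}$ and is handled by Hölder together with the a priori bounds~\eqref{e.coarsegradu} for $[u(\cdot,\cu,q)]_\Pa$ and its analogue for $[u_{\mathrm{loc}}^{(n)}(\cdot,\cu,q)]_{\Pa_{\mathrm{loc}}^{(n)}(\cu)}$, which contributes an error of the order $3^{-(m-n)/2}\O_{\frac1{2d-2}}(C|q|)$, absorbed into the right side.

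For the bulk term, apply Lemma~\ref{l.coarsegrads} with $s=1$ to $w$, then the Cauchy--Schwarz argument exactly as in the derivation of~\eqref{e.coarsegradbounds} in the proof of Lemma~\ref{l.upperbounds}:
\begin{multline*}
\frac1{|\cu|}\int_{\cu^{(n)}}\left|\nabla\![w]_\Pa\right|(x)\,dx
\le C\left(\fint_{\cu}\size(\cu_\Pa(x))^{2d-2}\,dx\right)^{\frac12}
\left(\frac1{|\cu|}\int_{\C_*(\cu)}\left|\nabla w\indc_{\{\a\neq0\}}\right|^2(x)\,dx\right)^{\frac12}.
\end{multline*}
The first factor is $\O_{\frac1{d-1}}(C)$ by~\eqref{e.coarsegradu} (more precisely its square root), and on $\{\C_*(\cu)=\C_*^{(n)}(\cu)\}\supseteq B_n(\cu)$ the second factor is, by~\eqref{e.ulocloc}, at most $\O_{\frac2{2d-1}}(C|q|(3^{-(m-n)/4}+3^{-n/2}))$. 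Multiplying these two bounds by means of~\eqref{e.Oprods} yields a bound of the form $\O_{s}(C|q|(3^{-(m-n)/4}+3^{-n/4}))$ for a positive exponent $s=s(d)$ (namely $s=\tfrac{s_1s_2}{s_1+s_2}$ with $s_1=\tfrac1{d-1}$, $s_2=\tfrac2{2d-1}$), after bounding $3^{-n/2}\le 3^{-n/4}$.

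Finally, on $\Omega\setminus B_n(\cu)$ we bound $|\nabla\![u(\cdot,\cu,q)]_\Pa|+|\nabla\![u_{\mathrm{loc}}^{(n)}(\cdot,\cu,q)]_{\Pa_{\mathrm{loc}}^{(n)}(\cu)}|$ in $\underline L^1(\cu^{(n)})$ using~\eqref{e.coarsegradu} and its local analogue (together with $|\cu^{(n)}|\ge\tfrac12|\cu|$), and multiply by $\indc_{\Omega\setminus B_n(\cu)}\le\O_1(C3^{-n})$ via~\eqref{e.Oprods}, getting an error $\O_{s'}(C|q|3^{-n})$ for some $s'(d)>0$. Combining the bulk estimate, the boundary-layer estimate, and the bad-event estimate with~\eqref{e.Osums}, and passing to the smallest of the exponents appearing, gives~\eqref{e.ulocloccoarse}. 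The main obstacle I anticipate is the bookkeeping around the region $\cu^{(n)}$: one must verify carefully that away from the $O(3^{-(m-n)})$-thin boundary layer the two coarsenings genuinely agree edge by edge on $B_n(\cu)$ (so that the difference is a clean coarsening of $w$), and keep precise track of which estimates hold on $B_n(\cu)$ versus on the larger event $\{\C_*(\cu)=\C_*^{(n)}(\cu)\}$ needed to invoke~\eqref{e.ulocloc}; the stochastic-integrability arithmetic via~\eqref{e.Oprods} and~\eqref{e.Osums} is then routine.
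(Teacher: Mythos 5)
Your proposal follows essentially the same route as the paper's proof: split according to the event $B_n(\cu)$, on which the two partitions (hence the two coarsenings) agree on $\cu^{(n)}$ and $\C_*(\cu)=\C_*^{(n)}(\cu)$, estimate the resulting coarsening of $w=u(\cdot,\cu,q)-u_{\mathrm{loc}}^{(n)}(\cdot,\cu,q)$ via Lemma~\ref{l.coarsegrads} and Cauchy--Schwarz combined with~\eqref{e.ulocloc}, and control the complementary event by the a priori coarsened-gradient bounds multiplied by $\indc_{\Omega\setminus B_n(\cu)}\le\O_1(C3^{-n})$, exactly as in the paper. The extra boundary-layer step you add inside $\cu^{(n)}$ is more conservative than what the paper does (the coarsened gradients agree on all edges with both endpoints in $\cu^{(n)}$, so only a width-one layer could even be in question), but it is harmless and does not change the argument.
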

\begin{proof}
Write $u:=u(\cdot,\cu,q)$ and $u_{\mathrm{loc}}^{(n)}:=u_{\mathrm{loc}}^{(n)}(\cdot,\cu,q)$ for short. We recall the definition of the event $B_n(\cu)$:
\begin{equation*}
B_{n}(\cu) := \left\lbrace  \sup_{ x \in \cu } \size( \cu_\Pa(x) ) \leq 3^{n} \right\rbrace \bigcap \left\lbrace \forall x \in \cu^{\left( n \right)},~   \cu_\Pa(x) = \cu_{\Pa_{\mathrm{loc}}^{(n)}}(x)  \right\rbrace.
\end{equation*}
We also record that for some $C := C(d,\p,\lambda) < + \infty$,
\begin{equation*} \label{}
\indc_{\Omega\setminus B_{n}(\cu)} \leq \O_{1}\left(C 3^{- n}\right).
\end{equation*}
We split the left side of~\eqref{e.ulocloccoarse}:
\begin{multline}
\label{e.techsplitting}
\fint_{\cu^{(n)}} \left| \nabla \left[ u \right]_{\Pa} - \nabla \left[ u_{\mathrm{loc}}^{(n)}\right]_{\Pa_{\mathrm{loc}}^{(n)}(\cu)} \right|(x)\,dx \\
=   \left( \fint_{\cu^{(n)}} \left| \nabla \left[ u \right]_{\Pa} - \nabla \left[ u_{\mathrm{loc}}^{(n)}\right]_{\Pa} \right|(x)\,dx \right) \cdot \indc_{B_n(\cu)} \\
+ \left(\fint_{\cu^{(n)}} \left| \nabla \left[ u\right]_{\Pa}  -   \nabla \left[ u_{\mathrm{loc}}^{(n)}\right]_{\Pa_{\mathrm{loc}}^{(n)}(\cu)} \right|(x)\,dx \right) \cdot \indc_{\Omega \setminus B_n(\cu)}.
\end{multline}
We estimate the first term on the right side of~\eqref{e.techsplitting} using~\eqref{e.coarsegrads} and~\eqref{e.ulocloc}: for some $s(d)>0$, we have,
\begin{align*}
\lefteqn{
\fint_{\cu^{(n)}} \left| \nabla \left[ u\right]_{\Pa} - \nabla \left[ u_{\mathrm{loc}}^{(n)}\right]_{\Pa} \right|(x)\,dx \cdot \indc_{B_n(\cu)}
} \quad & \\ \notag
& \leq  \frac{C \indc_{B_n(\cu)}}{\left|\cu^{(n)}\right|}\sum_{\Pa\ni \cu' \subseteq\cu^{(n)}} \size(\cu')^{d-1}  \int_{\cu'\cap \C_*\left(\cu^{(n)}\right)} \left| \left(\nabla u - \nabla u_{\mathrm{loc}}^{(n)} \right) \indc_{\{\a\neq 0\}} \right|(x)\,dx \\ \notag 
& \leq \frac{C \indc_{B_n(\cu)}}{\left|\cu^{(n)}\right|} \left(\sum_{\Pa\ni \cu' \subseteq\cu^{(n)}} \size(\cu')^{3d-2}  \right)^{\frac12}  \left( \sum_{\Pa\ni \cu' \subseteq\cu^{(n)}} \int_{\cu'\cap \C_*\left(\cu^{(n)}\right)} \left|\left(\nabla u - \nabla u_{\mathrm{loc}}^{(n)} \right) \indc_{\{\a\neq 0\}} \right|^2(x)\,dx \right)^{\frac12}  
\\ \notag
& = C \indc_{B_n(\cu)} \left(\frac{1}{\left|\cu^{(n)}\right|} \sum_{x\in\cu^{(n)}} \size(\cu_\Pa(x))^{2d-2}  \right)^{\frac12}  \left( \frac{1}{\left|\cu^{(n)}\right|} \int_{\C_*\left(\cu^{(n)}\right)} \left| \left(\nabla u - \nabla u_{\mathrm{loc}}^{(n)} \right) \indc_{\{\a\neq 0\}} \right|^2(x)\,dx \right)^{\frac12}   
\\ \notag
& \leq C \indc_{B_n(\cu)} \left(\frac{1}{\left|\cu^{(n)}\right|} \sum_{x\in\cu^{(n)}} \size(\cu_\Pa(x))^{2d-2}  \right)^{\frac12}  \left( \frac{1}{|\cu|} \int_{\C_*\left(\cu\right)} \left| \left(\nabla u - \nabla u_{\mathrm{loc}}^{(n)} \right) \indc_{\{\a\neq 0\}} \right|^2(x)\,dx \right)^{\frac12}   
\\ \notag
& \leq C \O_{\frac{2}{2d-1}}\left(C \right) \O_{\frac2{2d-1}}\left(C|q| \left( 3^{-\left(\frac{m-n}{4}\right)}+ 3^{-\frac n2}  \right) \right)
\\ \notag
& \leq \O_{s}\left(C|q| \left( 3^{-\left(\frac{m-n}{4}\right)}+ 3^{-\frac n2}  \right) \right),
\end{align*}
where we used that, for $m> n$, we have $|\cu| \leq 3^d \left|\cu^{(n)}\right|$, $\indc_{B_n(\cu)} \leq \indc_{\left\{ \C_*(\cu) = \C^{(n)}_*(\cu) \right\}}$ and $\C_*\left(\cu^{(n)} \right) \subseteq \C_*(\cu)$.

\smallskip

To estimate the second term on the right side of~\eqref{e.techsplitting}, notice that if $\cl_\Pa(\cu) \neq \cu$ then $\nabla \left[ u\right]_{\Pa}$ is constant. This remark and~\eqref{e.coarsegradu} yields
\begin{multline*}
\fint_{\cu^{(n)}} \left| \nabla \left[ u\right]_{\Pa} \right| (x) \, dx \leq C \fint_\cu \left| \nabla \left[ u\right]_{\Pa} \right| (x) \, dx \\ 
\leq C  |q| \fint_\cu \size(\cu_\Pa(x))^{2d-2} \, dx \leq \O_{\frac1{2d-2}}(C|q|).
\end{multline*}
A similar computation gives the same result with $\left[ u_{\mathrm{loc}}^{(n)}\right]_{\Pa_{\mathrm{loc}}^{(n)}}$ instead of $\left[ u\right]_{\Pa}$. Thus we have, for some (possibly smaller) exponent $s(d)>0$,
\begin{align*}
\lefteqn{
 \fint_{\cu^{(n)}} \left| \nabla \left[ u \right]_{\Pa}  -   \nabla \left[ u_{\mathrm{loc}}^{(n)}\right]_{\Pa_{\mathrm{loc}}^{(n)}(\cu)} \right|(x)\,dx \cdot \indc_{\Omega \setminus B_n(\cu)}
} \quad & \\ \notag
& \leq  \left( \fint_{\cu^{(n)}} \left| \nabla \left[ u \right]_{\Pa} \right|(x) +  \left| \nabla \left[ u_{\mathrm{loc}}^{(n)}\right]_{\Pa_{\mathrm{loc}}^{(n)}(\cu)} \right|(x)\,dx \right) \cdot \indc_{\Omega \setminus B_n(\cu)} \\
& \leq \O_{\frac1{2d-2}} (C |q|) \O_1(C3^{-n}) \\
& \leq \O_{s} \left( C|q|3^{-n} \right). 
\end{align*}
This completes the proof of~\eqref{e.ulocloccoarse}. 
\end{proof}

\section{Convergence of the subadditive quantities}
\label{s.convergence}

An immediate consequence of the approximate subadditivity~\eqref{e.nusubadd} and stationarity~\eqref{e.munustat},~\eqref{e.localization} is the approximate monotonicity of $n\mapsto \E\left[ \nu(\cu_n,p) \right]$: we have that, for every $p\in\Rd$ and $m,n\in\N$ with $n \leq m$,
\begin{equation*} \label{}
\E \left[ \nu (\cu_m,p) \right] \leq \E \left[ \nu(\cu_n,p) \right] + C|p|^2 3^{-n}.
\end{equation*}
It follows that, for each $p\in\Rd$, 
\begin{equation*} \label{}
\lim_{n\to \infty} \E \left[ \nu (\cu_m,p) \right] \quad \mbox{exists.}
\end{equation*}
Since $p\mapsto \E \left[ \nu (\cu_m,p) \right]$ is a quadratic form which, for sufficiently large $n$, is bounded above and below by multiples of $|p|^2$ by~\eqref{e.nuupbound} and~\eqref{e.nulobound}, the same is true of $\lim_{n\to \infty} \E \left[ \nu (\cu_m,p) \right]$. This allows us to make the following definition.

\begin{definition}[{Homogenized diffusion matrix $\ahom$}]
We define $\ahom$ to be the unique (deterministic) positive matrix~$\ahom$
\begin{equation*} \label{}
\frac12 p\cdot \ahom p = \lim_{n\to \infty} \E \left[ \nu (\cu_m,p) \right].
\end{equation*} 
\end{definition}

By the bounds~\eqref{e.nuupbound} and~\eqref{e.nulobound} on $\nu$, there exist $0<c(d,\lambda,\p) \leq C(d,\lambda)<\infty$ such that 
\begin{equation}
\label{e.ahombounds}
cI_d \leq \ahom \leq CI_d. 
\end{equation}
Arguing in a similar manner, we can show that $\E\left[-\mu(\cu_n,q)\right]$ also has a limit as $n\to \infty$ to a quadratic function in~$q$ which is bounded above and below by~$|q|^2$. As we will prove in this section, that quadratic form turns out to be $q\mapsto \frac12 q\cdot \ahom^{-1}q$, the convex dual of the quadratic form $p\mapsto \frac12 p\cdot \ahom p$. Moreover, by the approximate localization property~\eqref{e.localization}, one can argue as in the proof of the subadditive ergodic theorem that these quantities converge~$\P$--a.s.~to these deterministic constants.

\smallskip

The main result of this section is a quantitative rate of convergence for the subadditive quantities to their limits, which is summarized in the following proposition.

\begin{proposition}
\label{p.subadd}
There exist~$s(d)>0$, $\alpha(d,\p,\lambda)\in \left(0,\tfrac14\right]$ and $C(d,\p,\lambda)<\infty$ such that, for every $\cu\in\T$ and $p,q\in\Rd$, 
\begin{equation}
\label{e.convmu}
\left|\frac12 q\cdot \ahom^{-1}q + \mu(\cu,q) \right| 
\leq \O_{s} \left( C|q|^2\size(\cu)^{-\alpha} \right)
\end{equation}
and
\begin{equation} 
\label{e.convnu}
\left|\frac12 p\cdot \ahom p - \nu(\cu,p) \right| 
\leq \O_{s} \left( C|p|^2\size(\cu)^{-\alpha} \right). 
\end{equation}
\end{proposition}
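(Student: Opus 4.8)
\textbf{Proof strategy for Proposition~\ref{p.subadd}.} The plan is to run the Armstrong--Smart iteration~\cite{AS} on the quantity that measures the failure of the convex-duality pairing between $-\mu$ and $\nu$. Concretely, for a cube $\cu\in\T$ introduce
\[
J(\cu,p,q):= \nu(\cu,p) - \mu(\cu,q) - p\cdot q,
\]
which by Corollary~\ref{c.munucomparison} equals, up to an error $\O_{2/(2d-1)}(C|p||q|\size(\cu)^{-1/2})$, the (normalized) energy of the difference $v(\cdot,\cu,p)-u(\cdot,\cu,q)$; in particular $J(\cu,p,q)\ge -\O_{2/(2d-1)}(C|p||q|\size(\cu)^{-1/2})$. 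Because $p\mapsto\nu(\cu,p)$ and $q\mapsto-\mu(\cu,q)$ are nonnegative quadratic forms bounded above and below by $|p|^2$ and $|q|^2$ respectively (Lemmas~\ref{l.upperbounds} and~\ref{l.lowerbounds}), on a scale $3^n$ at least $\mathcal{M}_{2d-2}(\Pa)$ we may define deterministic matrices $\bar\a_n:=\E[\text{form of }\nu(\cu_n,\cdot)]$ and $\hat\a_n:=\E[\text{form of }-\mu(\cu_n,\cdot)]^{-1}$; the goal is to show $\bar\a_n\to\ahom$, $\hat\a_n\to\ahom$ algebraically fast and then to control the fluctuations of $\mu,\nu$ around these deterministic forms.

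\textbf{Main steps, in order.} First, I would establish the key contraction: for $\cu\in\T_{n+k}$ (with $k$ a fixed large integer) the subadditivity bounds~\eqref{e.musubadd}--\eqref{e.nusubadd}, combined with the localization/stationarity~\eqref{e.localization},~\eqref{e.munustat}, let me compare $\E[J(\cu_{n+k},p,q)]$ with $3^{-dk}\sum_z \E[J(\cu_n(z),p,q)]=\E[J(\cu_n,p,q)]$ up to errors $|p|^2\O(3^{-\alpha_0 n})$; and then, taking $p$ to be the optimal slope associated with $q$ via $\bar\a_n$ and using~\eqref{e.omegaform0} together with the \emph{Meyers estimate} (Proposition~\ref{p.meyers}) to bound the spatial-average cross-term, I obtain the strict improvement
\[
\E\bigl[J(\cu_{n+k},p,q_p)\bigr] \le \tfrac12 \E\bigl[J(\cu_n,p,q_p)\bigr] + C|p|^2 3^{-\alpha_0 n},
\]
where $q_p:=\bar\a_n p$. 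Iterating this over a geometric sequence of scales yields $\E[J(\cu_n,p,\ahom p)]\le C|p|^2 3^{-\alpha n}$ for a deterministic limit matrix $\ahom$ (which one identifies with the one in the Definition by sending $n\to\infty$), and consequently $|\bar\a_n-\ahom|+|\hat\a_n-\ahom|\le C3^{-\alpha n}$. Second, I would upgrade the expectation bound to a high-probability bound: using the localized quantities $\mu_{\mathrm{loc}}^{(n)},\nu_{\mathrm{loc}}^{(n)}$, which are $\F(\cu)$-measurable~\eqref{e.munuloc} and i.i.d.\ across a $3^n\Zd$-grid~\eqref{e.munustat}, the approximate subadditivity expresses $\nu(\cu_{m},p)$ as an average of $\sim 3^{d(m-n)}$ independent copies of $\nu_{\mathrm{loc}}^{(n)}(\cdot,p)$ plus a small deterministic-scale error; a concentration (Azuma/Hoeffding-type, done via the $\O_s$ calculus and~\eqref{e.Oavgs}) then gives $|\nu(\cu_m,p)-\E[\nu(\cu_m,p)]|\le \O_s(C|p|^2 3^{-\beta m})$, and symmetrically for $\mu$, where one optimizes the choice of intermediate scale $n$ as a fraction of $m$ exactly as in Proposition~\ref{p.minimalscales}. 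Combining the two steps, replacing the localized quantities back by $\mu,\nu$ via Proposition~\ref{p.localization}, and using~\eqref{e.improves} to reconcile stochastic integrability exponents, produces~\eqref{e.convmu}--\eqref{e.convnu}; the identification of the limit of $-\mu$ as the convex dual form $\tfrac12 q\cdot\ahom^{-1}q$ falls out of the fact that $J(\cu_n,p,\ahom p)\to 0$ and the strict convexity/quadratic structure.

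\textbf{Where the difficulty lies.} The routine parts are the concentration step and the bookkeeping of $\O_s$ exponents; the genuinely delicate point—exactly as flagged in the introduction—is controlling, inside the contraction estimate, the cross-term $\langle q,\nabla[v(\cdot,\cu,p)]_\Pa\rangle$ and more generally the spatial averages of gradients of solutions on regions where the partition $\Pa$ is coarse, so that the energy density can be locally enormous. Here one cannot use a crude pointwise bound; instead one applies the Meyers higher-integrability gain (Proposition~\ref{p.meyers}) to "H\"older away" the weights $\size(\cu_\Pa(x))$ from the gradient without sacrificing the $L^2$ exponent, and couples this with Lemmas~\ref{l.coarseLs},~\ref{l.coarsegrads} and the minimal-scale estimates for $\Lambda_{t}(\cu_m,\Pa)$ from Proposition~\ref{p.minimalscales} to show that the spatial average of $\nabla[u]_\Pa$ genuinely approximates $p$ (resp.\ $q$) on large scales. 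Making this quantitative—i.e.\ extracting a definite algebraic rate $\size(\cu)^{-\alpha}$ rather than merely $o(1)$—is the heart of the argument and is the step I expect to be the main obstacle; everything else is an adaptation of the uniformly elliptic scheme of~\cite{AS}.
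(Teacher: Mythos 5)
Your overall architecture matches the paper's: work with the duality defect $J$ (the paper's $\omega$), prove decay in expectation by an iteration over scales, upgrade to stochastic integrability using the localized i.i.d.\ quantities and exponential moments, and convert control of the defect into both \eqref{e.convmu} and \eqref{e.convnu} via a convexity argument (this last step is exactly Lemma~\ref{l.minimalset}, and your concentration step is essentially Lemma~\ref{l.SIupgrade}). The gap is in the step you call the ``contraction.'' You assert that approximate subadditivity, \eqref{e.omegaform0} and the Meyers estimate yield $\E[J(\cu_{n+k},p,q_p)]\le\tfrac12\E[J(\cu_n,p,q_p)]+C|p|^23^{-\alpha_0 n}$, but none of these ingredients produces a gain of a factor less than one: subadditivity together with stationarity only gives approximate monotonicity of the expectations (that is $\E[J(\cu_{n+k})]\le\E[J(\cu_n)]+\mathrm{error}$), and Meyers is an integrability statement, not a statement that spatial averages of $\nabla[u]_\Pa$ or $\a\nabla v$ approach the deterministic slopes. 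The mechanism that actually makes the defect small at scale $n$ in terms of the \emph{increments} $\tau_k$ of the expectations is a chain you have not supplied: (i) quadratic response across scales (Lemmas~\ref{l.restrictquadmicro} and~\ref{l.restrictquadcoarsen}, the latter being where Meyers really enters); (ii) an independence-based variance estimate for the spatial average of $\nabla[u]_\Pa$, proved by testing against a constructed divergence-free vector field and using the $\F(\cu)$-measurable localized minimizers (Lemma~\ref{l.weakconv}) --- this is where the randomness is harvested, and without it there is no reason the spatial average concentrates near $\ahom^{-1}_{\cu_n}q$; (iii) the multiscale Poincar\'e inequality (Proposition~\ref{p.mspoincare}) to convert control of spatial averages of gradients on all mesoscales into strong $L^2$ flatness of $[u]_\Pa$ and $[v]_\Pa$ (Lemma~\ref{l.flatness}); and (iv) Caccioppoli to return from $L^2$ flatness to the energy of $u-v$, i.e.\ to $J$ itself (Lemma~\ref{l.omegadecay}).

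A second, related issue is that even once this chain is in place, the resulting bound has the form $\E[J(\cu_n)]\le C\bigl(3^{-n\beta}+\sum_{k\le n}3^{k-n}\tau_k\bigr)$, i.e.\ it involves a weighted sum over \emph{all} previous scales rather than the single increment $\tau_n$; the clean one-step contraction you write down is therefore not the correct shape of the inequality, and the paper must iterate a weighted average $\tilde D_n=3^{-n/2}\sum_{k}3^{k/2}D_k$ of the defects (Lemma~\ref{l.omegaiteration}) to extract the geometric decay $3^{-n\alpha}$. Your identification of the ``spatial-average cross-term on coarse regions'' as the delicate point, and your proposal to H\"older away the partition weights via Meyers and the minimal scales, is correct as far as it goes, but it addresses only item (i) of the chain above; items (ii)--(iv) and the weighted iteration are the missing ideas, and without them the claimed rate $\size(\cu)^{-\alpha}$ does not follow.
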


The proof of Proposition~\ref{p.subadd} is an adaptation of arguments in~\cite{AS}. The main step is to control the \emph{expectations} of the quantities under the absolute value signs in~\eqref{e.convmu} and~\eqref{e.convnu}. That is, we want to show that there exists an exponent $\alpha(d,\p,\lambda)>0$ such that, for every $n\in\N$ and $p,q\in\Rd$,
\begin{equation} 
\label{e.convexp}
\left| \E\left[ \mu(\cu_n,q)  \right] + \frac12 q\cdot \ahom^{-1}q \right| 
+ \left| \E\left[  \nu(\cu_n,p)  \right]  - \frac12 p\cdot \ahom p \right| 
\leq C \left(|p|^2+ |q|^2\right) 3^{-n\alpha}.
\end{equation}
Once this is accomplished, we obtain the conclusion of Proposition~\ref{p.subadd} by gaining stochastic integrability via a straightforward use of subadditivity and independence. 

\smallskip

It may appear from~\eqref{e.convexp} that we have two estimates to prove, but one of the insights from~\cite{AS} is that it is really just one estimate. Indeed, let us consider the quantity $\omega(\cu,q)$, defined for each $\cu\in\T$ and $q\in\Rd$ by
\begin{equation*} \label{}
\omega(\cu,q):=  \nu\left( \cu ,\ahom^{-1}q\right) -  \mu(\cu,q) - q\cdot \ahom^{-1}q .
\end{equation*}
In order to prove~\eqref{e.convexp}, it is enough to show that, for every $n\in\N$ and $q\in\Rd$, 
\begin{equation} 
\label{e.omegagoal}
\E \left[ \omega(\cu_n,q)_+ \right] \leq C|q|^2 3^{-n\alpha}. 
\end{equation}
Since this fact motivates the rest of the analysis in this section, we pause now to prove it. 

\begin{lemma}
\label{l.minimalset}
There exists $C(d,\lambda,\p)<\infty$ such that
\begin{multline}
\label{e.minimalset}
\sup_{q\in \Rd} \frac{1}{|q|^2} \left| \mu(\cu,q) + \frac12q\cdot \ahom^{-1} q \right| + \sup_{p\in\Rd} \frac{1}{|p|^2} \left| \nu(\cu,p) - \frac12p\cdot \ahom p\right| \\
 \leq C \sup_{e \in \partial B_1} \omega(\cu,e)_+^{\frac12} + \O_{\frac2{2d+1}}\left( C\size^{-\frac14}(\cu) \right). 
\end{multline}
\end{lemma}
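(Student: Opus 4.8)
The plan is to show that $\mu$ and $\nu$ are essentially determined, in the appropriate sense, by the single quantity $\omega(\cu,\cdot)$ evaluated on the unit sphere, together with the upper and lower bounds from Lemmas~\ref{l.upperbounds} and~\ref{l.lowerbounds}. The mechanism is the convex-duality structure: $-\mu(\cu,\cdot)$ and $\nu(\cu,\cdot)$ are nonnegative quadratic forms, and the inequality $\nu(\cu,p)-\mu(\cu,q)-p\cdot q \geq -\O_{\frac2{2d-1}}(C|p||q|\size(\cu)^{-1/2})$ from Corollary~\ref{c.munucomparison} (with the nonnegative cross-energy term dropped) says precisely that, up to the displayed error, $-\mu(\cu,\cdot)$ and $\nu(\cu,\cdot)$ are approximate convex duals of one another. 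Since $\ahom$ is defined so that $\frac12 p\cdot\ahom p$ is the $\lim$ of $\E[\nu(\cu_n,p)]$, and one should think of $\frac12 q\cdot\ahom^{-1}q$ as the dual form, the content of~\eqref{e.minimalset} is that the defect from these limits is controlled by how far the duality is from being exact, i.e. by $\omega(\cu,\cdot)_+$.

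Concretely, first I would record the two-sided bounds: from~\eqref{e.nuupbound}, \eqref{e.nulobound} we have $\frac1C|p|^2 - \O_{\frac2{2d+1}}(C|p|^2\size(\cu)^{-1/2}) \leq \nu(\cu,p) \leq C|p|^2$, and from~\eqref{e.muupbound} (using~\eqref{e.muupboundwhip}), \eqref{e.mulobound} the analogous bounds for $-\mu(\cu,q)$; in particular both forms are, after absorbing an $\O_{\frac2{2d+1}}(C\size(\cu)^{-1/2})$ defect into the error term, comparable to $|p|^2$ and $|q|^2$ respectively, so $\ahom$ and $\ahom^{-1}$ are genuinely bounded above and below as in~\eqref{e.ahombounds}. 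Next, fix $q\in\Rd$ and set $p_0 := \ahom^{-1}q$, so that $\omega(\cu,q) = \nu(\cu,p_0) - \mu(\cu,q) - q\cdot p_0$ and, by~\eqref{e.munucomparison}, the cross term $\frac1{|\cl_\Pa(\cu)|}\frac12\langle \nabla(v(\cdot,\cu,p_0)-u(\cdot,\cu,q)),\a\nabla(\cdots)\rangle_{\C_*(\cu)}$ equals $\omega(\cu,q)$ up to $\O_{\frac2{2d-1}}(C|q|^2\size(\cu)^{-1/2})$. The strategy is: an upper bound on $\nu(\cu,p_0)$ and a lower bound on $-\mu(\cu,q)$ each follow if we can show $\nu(\cu,p_0) \approx \frac12 p_0\cdot\ahom p_0 = \frac12 q\cdot\ahom^{-1}q$ and likewise for $\mu$; but the reverse inequalities — $\nu$ not too small, $-\mu$ not too large — are exactly what is in danger, and these are controlled by $\omega(\cu,q)_+$ via the duality defect.

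The key algebraic step is the following. For arbitrary $p,q\in\Rd$, Corollary~\ref{c.munucomparison} gives $\nu(\cu,p) - \mu(\cu,q) - p\cdot q \geq -\O_{\frac2{2d-1}}(C|p||q|\size(\cu)^{-1/2})$, hence $-\mu(\cu,q) \geq \frac12 p\cdot\ahom p - \nu(\cu,p) + \bigl(\nu(\cu,p) - \frac12 p\cdot\ahom p\bigr) + \ldots$; optimizing the choice of $p$ against $q$ and using that $p\mapsto \nu(\cu,p) - \frac12 p\cdot\ahom p$ is a quadratic form bounded in operator norm by $\sup_{e\in\partial B_1}|\nu(\cu,e) - \frac12 e\cdot\ahom e|$, one converts a bound on this sphere-supremum into a bound on $-\mu(\cu,q) + \frac12 q\cdot\ahom^{-1}q$, and symmetrically. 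Thus it suffices to bound $\sup_{e\in\partial B_1}|\nu(\cu,e) - \frac12 e\cdot\ahom e|$ and $\sup_{e}|\mu(\cu,e) + \frac12 e\cdot\ahom^{-1}e|$ in terms of each other and of $\sup_e\omega(\cu,e)_+$. Writing $e\mapsto \nu(\cu,e) - \frac12 e\cdot\ahom e$ and $e\mapsto -\mu(\cu,e) - \frac12 e\cdot\ahom^{-1}e$, one of these is $\geq -\O(\cdots)$ automatically (definition of $\ahom$ essentially gives the one-sided bound after invoking~\eqref{e.almostmonotone}, \eqref{e.almostmonotonenu}, although strictly we only need comparability here), while the sum $\nu(\cu,p) + (-\mu(\cu,q))$ evaluated at the dual pair $q = \ahom p$ exceeds $p\cdot q - \omega(\cu,q) = p\cdot\ahom p - \omega(\cu,\ahom p)$; feeding this back, the quadratic forms are squeezed between $\pm(C\sup_e\omega(\cu,e)_+ + \O(\cdots))$, and taking square roots where a product of two such gaps appears (this is where $\omega_+^{1/2}$ enters — the cross-energy term is a \emph{product} of the two deviations, so bounding each deviation individually costs a square root) yields~\eqref{e.minimalset} with the error $\O_{\frac2{2d+1}}(C\size^{-1/4}(\cu))$, after combining the $\O_{\frac2{2d-1}}(\cdots\size(\cu)^{-1/2})$ and $\O_{\frac2{2d+1}}(\cdots\size(\cu)^{-1/2})$ terms and taking their square root via~\eqref{e.Oprods} or~\eqref{e.improves}.

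The main obstacle I anticipate is bookkeeping the stochastic-integrability exponents correctly through the square-root and optimization steps: the cross-energy term in~\eqref{e.munucomparison} carries an $\O_{\frac2{2d-1}}(\cdots\size(\cu)^{-1/2})$ error, and $\omega_+$ itself is a difference of quantities with exponents like $\frac1{2d-2}$ and $\frac2{2d+1}$; after taking $\omega_+^{1/2}$ and multiplying/adding against the $\size(\cu)^{-1/2}$ error terms, one must check — using~\eqref{e.Oprods}, \eqref{e.Osums}, \eqref{e.improves} — that everything collapses into a single $\O_{\frac2{2d+1}}(C\size(\cu)^{-1/4})$ term and a clean $C\sup_e\omega(\cu,e)_+^{1/2}$ term, with no exponent smaller than $\frac2{2d+1}$ surviving and no worse power of $\size(\cu)$ than $-\frac14$. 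The duality algebra itself is a short deterministic computation in the spirit of~\cite{AS}; the care is entirely in the error-term arithmetic.
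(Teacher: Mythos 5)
Your outline has a genuine gap at the step where you claim that one of the two deviations, say $e\mapsto \nu(\cu,e)-\frac12 e\cdot\ahom e$, is bounded below by $-\O(\cdots)$ ``automatically'' from the definition of $\ahom$ together with~\eqref{e.almostmonotone},~\eqref{e.almostmonotonenu}. Those approximate monotonicity statements control only the \emph{expectations} $\E[\mu(\cu_n,q)]$, $\E[\nu(\cu_n,p)]$; for a fixed cube and a fixed realization of the environment, $\nu(\cu,p)$ can lie below $\frac12 p\cdot\ahom p$ (and $-\mu(\cu,q)$ below $\frac12 q\cdot\ahom^{-1}q$) by an order-one amount on a non-negligible event, and bounding exactly these pointwise fluctuations is the content of the lemma. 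Without that one-sided input, your squeeze is circular: Corollary~\ref{c.munucomparison} only tells you that the \emph{sum} of the two deviations at a dual pair $(p,\ahom p)$ equals $\omega(\cu,\ahom p)$ up to the small error, so an upper bound on one deviation requires a lower bound on the other, which is precisely what is missing. (A telltale sign: if the ``automatic'' bound were available, the sum identity would immediately give each deviation $\leq C\sup_e\omega(\cu,e)_+ + \O(\cdots)$ — a \emph{first}-power bound — and your subsequent square-root step, justified by a ``product of the two gaps'' that does not actually appear in the squeeze, would be unnecessary; the cross-energy term in~\eqref{e.munucomparison} is a square of a single difference, not a product of the two deviations.)

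The paper closes this loop differently, and the extra ingredient is the quantitative \emph{uniform convexity} of $-\mu(\cu,\cdot)$ from Lemma~\ref{l.lowerbounds}, not just comparability. One fixes $p$, sets $f(q):=\nu(\cu,p)-\mu(\cu,q)-q\cdot p+k(\cu)|q|^2$ with $k(\cu)\leq \O_{\frac2{2d-1}}(C\size^{-\frac12}(\cu))$ chosen so that $f$ is deterministically uniformly convex; by~\eqref{e.munucomparison} its minimum value is $\geq -\O(\cdots)$, while $f(\ahom p)=\omega(\cu,\ahom p)+k(\cu)|\ahom p|^2$. Uniform convexity then gives $|\ahom p - q_0|^2 \leq C\,\omega(\cu,\ahom p)+\O(\cdots)$ for the minimizer $q_0$, and since $p=-\nabla\mu(\cu,\cdot)(q_0)+2k(\cu)q_0$ and $\nabla\mu(\cu,\cdot)$ is Lipschitz by~\eqref{e.muupboundwhip}, the deviation $|{-\mu(\cu,\ahom p)}-\frac12 p\cdot\ahom p|$ is \emph{linear} in $|\ahom p-q_0|$ — this is where the exponent $\frac12$ on $\omega_+$ really comes from. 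The bound for $\nu$ then follows from the identity $\omega(\cu,\ahom p)=\nu(\cu,p)-\mu(\cu,\ahom p)-p\cdot\ahom p$. Your duality bookkeeping and error arithmetic are in the right spirit, but to repair the argument you need to replace the ``automatic'' one-sided bound by this convexity/optimizer-localization step (or an equivalent one using the strict convexity of $-\mu$).
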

\begin{proof}
According to Lemma~\ref{l.lowerbounds}, there exists $C<\infty$ such that 
\begin{equation} 
\label{e.kcuest}
k(\cu) := \sup_{q\in \Rd} \left(  \frac{1}{C} + \frac1{|q|^2} \mu(\cu,q) \right) 
\leq  \O_{\frac2{2d-1}}\left( C\size^{-\frac12}(\cu) \right).
\end{equation}
Fix $p\in\Rd$. Define the function
\begin{equation*} \label{}
f(q):= \nu(\cu,p) - \mu(\cu,q) - q\cdot p + k(\cu)|q|^2,\quad q\in\Rd. 
\end{equation*}
Observe that $f$ is a quadratic function and 
\begin{equation*} \label{}
f(q) \geq \nu(\cu,p) + \frac1{C}|q|^2 -q\cdot p.
\end{equation*}
It follows that $f$ is uniformly convex. Thus there exists a unique point $q_0\in\Rd$ at which $f$ attains its minimum. From the inequality $f(q_0) \leq f(0)$ we see that $|q_0| \leq C|p|$, and we have
\begin{equation*} \label{}
p = - \nabla \mu(\cu,\cdot)(q_0) + 2k(\cu)q_0.
\end{equation*}
In particular, 
\begin{equation} \label{e.snapghtrgt}
\left| p +  \nabla \mu(\cu,\cdot)(q_0) \right| \leq C|q_0|k(\cu) \leq  \O_{\frac2{2d-1}}\left( C|p|\size^{-\frac12}(\cu) \right).
\end{equation}
By the uniform convexity of $f$ and the fact it achieves its minimum at~$q$,
\begin{equation*} \label{}
f(\ahom p) \geq f(q_0) + \frac1C\left| \ahom p - q_0 \right|^2.
\end{equation*}
By~\eqref{e.munucomparison} and~\eqref{e.kcuest}, 
\begin{equation*} \label{}
f(q_0) \geq 
k(\cu)|q_0|^2 - \O_{\frac{2}{2d-1}}\left(C|p||q_0| \size(\cu)^{-\frac12} \right) 
\geq - \O_{\frac2{2d-1}}\left( C|p|^2\size^{-\frac12}(\cu) \right).
\end{equation*}
Thus we obtain
\begin{align*}
\left| \ahom p - q_0 \right|^2
& \leq C \left( f(\ahom p) - f(q_0) \right) \\
&  \leq Cf(\ahom p) + \O_{\frac2{2d-1}}\left( C|p|^2\size^{-\frac12}(\cu) \right) \\
& \leq  C \omega(\cu,\ahom p) + \O_{\frac2{2d-1}}\left( C|p|^2\size^{-\frac12}(\cu) \right).
\end{align*}
Since $-\mu(\cu,\cdot)$ is quadratic, for every $q\in\Rd$, 
\begin{equation*} \label{}
-\mu(\cu,q) = -\frac12 q \cdot \nabla \mu(\cu,\cdot)(q).
\end{equation*}
The bound~\eqref{e.muupboundwhip} gives us that, for every $q_1,q_2\in\Rd$, 
\begin{equation*} \label{}
\left| \nabla \mu(\cu,\cdot)(q_1) - \nabla \mu(\cu,\cdot)(q_2) \right| \\
\leq  C |q_1-q_2| \left( 1 +  \O_{\frac{2}{2d+1}} \left(C\size^{-\frac12}(\cu)\right) \right).
\end{equation*}
We deduce from the previous three displays, $|\ahom p - q_0| \leq C|p|$ and~\eqref{e.snapghtrgt} that
\begin{align*} \label{}
\lefteqn{
\left| -\mu(\cu,\ahom p) -\frac12 p\cdot \ahom p  \right| 
} \qquad & \\
& = \left| -\frac12 \ahom p \cdot \nabla \mu(\cu,\cdot)(\ahom p) -\frac12 p\cdot \ahom p \right| \\
& \leq \left| \ahom p \right| \left|  \nabla \mu(\cu,\cdot)(\ahom p) + p \right| \\
& \leq C |p| \left( \left|  \nabla \mu(\cu,\cdot)(\ahom p) - \nabla \mu(\cu,\cdot)(q_0)  \right| + \O_{\frac2{2d-1}}\left( C|p|\size^{-\frac12}(\cu) \right) \right)\\
& \leq C |p| \left( \left| \ahom p - q_0  \right| + \O_{\frac2{2d+1}}\left( C|p|\size^{-\frac12}(\cu) \right) \right)\\
& \leq C |p| \left( \omega(\cu,\ahom p) + \O_{\frac2{2d-1}}\left( C|p|^2\size^{-\frac12}(\cu) \right) \right)^{\frac12} + \O_{\frac2{2d+1}}\left( C|p|^2\size^{-\frac12}(\cu) \right).
\end{align*}
From this and~\eqref{e.ahombounds}, we deduce that 
\begin{equation*} \label{}
\left| -\mu(\cu, q) -\frac12 q\cdot \ahom^{-1}q  \right| 
\leq C|q|^2 \sup_{e\in\partial B_1} \omega(\cu,e)_+^{\frac12}  +  \O_{\frac2{2d+1}}\left( C|q|^2\size^{-\frac14}(\cu) \right).
\end{equation*}
Using the previous inequality and           
\begin{equation*}
\left| f(\ahom p) - \omega( \cu,\ahom p) \right| = k(\cu)|\ahom p|^2 \leq \O_{\frac2{2d-1}}\left( C|p|^2\size^{-\frac12} (\cu) \right)
\end{equation*}
we also obtain
\begin{equation*}
\left| \nu(\cu,p) -\frac12 p\cdot \ahom p \right| \leq C|p|^2 \sup_{e\in\partial B_1} \omega(\cu,e)_+^{\frac12}  +  \O_{\frac2{2d+1}}\left( C|p|^2\size^{-\frac14}(\cu) \right).
\end{equation*}
This completes the proof. 
\end{proof}

\smallskip

In view of the previous lemma, we are motivated to prove the bound~\eqref{e.omegagoal}, which would follow if we can show that the expectation of~$\omega(\cu_n,q)_+$ contracts by a factor~$\theta(d,\p,\lambda)<1$ as we pass from scale~$n$ to scale~$n+1$ (so that an iteration produces the desired estimate). It is therefore natural to work with the change in the expectation of $\omega$ between triadic scales~$n$ and~$n+1$. In fact, it is convenient to use the slightly different quantity
\begin{multline*} \label{}
\tau_n:= 
 \sum_{i=1}^d \left(  \E \left[ \mu(\cu_{n+1},\e_i)\right] - \E\left[ \mu(\cu_{n},\e_i)\right] \right)_+ 
 \\
 + \sum_{i=1}^d \left( \E \left[ \nu(\cu_{n},\ahom^{-1}\e_i ) \right] - \E\left[ \nu(\cu_{n+1},\ahom^{-1}\e_i ) \right] \right)_+. 
\end{multline*}
Recall that, by~\eqref{e.almostmonotone} and~\eqref{e.almostmonotonenu}, 
\begin{equation*} \label{}
\left\{ 
\begin{aligned}
&
\E \left[ \mu(\cu_{n+1},q)\right] - \E\left[ \mu(\cu_{n},q)\right] + C|q|^23^{-\frac n4} \geq 0, \ \mbox{and} \\
& \E \left[ \nu(\cu_{n},\ahom^{-1}p ) \right] - \E\left[ \nu(\cu_{n+1},\ahom^{-1}p ) \right] + C|p|^23^{-n} \geq 0. 
\end{aligned}
\right.
\end{equation*}
Since these are quadratic functions of $q$ and $p$, respectively, it follows that they are convex and therefore sums control supremums:
\begin{multline*} \label{}
\sup_{q\in\Rd} \frac{1}{|q|^2} \left( \E \left[ \mu(\cu_{n+1},q)\right] - \E\left[ \mu(\cu_{n},q)\right] + C|q|^23^{-\frac n4} \right) \\
\leq \sum_{i=1}^d \left(\E \left[ \mu(\cu_{n+1},\e_i)\right] - \E\left[ \mu(\cu_{n},\e_i)\right] + C3^{-\frac n4}\right)
\end{multline*}
with a similar inequality for $\nu$. 
Using this observation, we deduce that, for every $n\in\N$ and $p,q\in\Rd$,
\begin{multline} \label{e.suptosum}
\left( \E \left[ \mu(\cu_{n+1},q)\right] - \E \left[ \mu(\cu_{n},q) \right] \right)_+ + \left( \E \left[ \nu(\cu_{n},p )\right] -\E\left[ \nu(\cu_{n+1},p ) \right]\right)_+ \\
\leq C(|p|^2+|q|^2)\left( \tau_n + C3^{-\frac n4} \right).
\end{multline}
Since $\omega(\cu,\cdot)$ is almost nonnegative and the quantities $\omega(\cdot,q)$, $-\mu(\cdot,q)$ and $\nu(\cdot,\ahom^{-1}q)$ are almost subadditive, and therefore their expectations are (almost) monotone, $\tau_n$ is essentially the same (up to negligible errors) as
\begin{equation*} \label{}
\sum_{i=1}^d \left(\E \left[  \omega(\cu_{n},\e_i) \right] - \E\left[ \omega(\cu_{n+1},\e_i) \right] \right). 
\end{equation*}
Thus to prove the inequality $\E \left[ \omega(\cu_{n+1},e)\right] \leq \theta  \E\left[ \omega(\cu_n,e) \right]$ for $\theta<1$, it suffices to show that $\E \left[ \omega(\cu_n,e) \right] \leq C \tau_n$ for some $C<\infty$. We do not prove exactly this, but something close enough (see the statement of Lemma~\ref{l.omegadecay}, below) which can still be iterated to obtain~\eqref{e.omegagoal}; see Lemma~\ref{l.omegaiteration}, below. 

\smallskip

\smallskip

The proof of Proposition~\ref{p.subadd} begins with the simple observation that, by quadratic response, the expected difference in the gradients of $u(\cdot,\cu,q)$ at two successive triadic scales is controlled by $\tau_n$. This will aid us by localizing the functions $u(\cdot,\cu,q)$.  In the uniform elliptic setting, this argument is two lines (cf.~\cite[(2.25)]{AS}). In our situation, the idea is the same but the statement is necessarily weaker and the proof is more technical due to the discreteness and the non-uniformity of the geometry of the clusters. 


\begin{lemma}
\label{l.restrictquadmicro}
There exists $C(d,\lambda,\p)<\infty$  such that, for every $m,n\in\N$ with $n\in \left[ \frac12 m  , m \right)$, $\cu\in\T_m$ and $p,q\in\Rd$, 
\begin{multline}
\label{e.restrictquadmicro}
\E \left[  \frac{1}{|\cu|} \sum_{z\in3^n\Zd\cap \cu} \int_{\C_*(z+\cu_n)} \left| \nabla\left(  u(\cdot,\cu,q) - u(\cdot,z+\cu_n,q) \right) \indc_{\{\a\neq 0\}} \right|^2(x)\,dx \right]  \\
+ \E \left[  \frac{1}{|\cu|} \sum_{z\in3^n\Zd\cap \cu} \int_{\C_*(z+\cu_n)} \left| \nabla\left(  v(\cdot,\cu,p) - v(\cdot,z+\cu_n,p) \right) \indc_{\{\a\neq 0\}} \right|^2(x)\,dx \right] \\
\leq C\left(|p|^2+|q|^2 \right) \left(  \sum_{k=n}^{m-1} \tau_k + 3^{-\frac n4} \right).
\end{multline}
\end{lemma}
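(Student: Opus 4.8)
The strategy is the classical quadratic-response argument: test the coarse-scale optimization problem with the fine-scale minimizer (and vice versa) to show that the energy difference between scales controls the $L^2$ distance of the minimizers. Concretely, fix $q\in\Rd$ and write $u:=u(\cdot,\cu,q)$ and $u_z:=u(\cdot,z+\cu_n,q)$ for $z\in 3^n\Zd\cap\cu$. The idea is to build a competitor for $\mu(\cu,q)$ by gluing together the functions $u_z$ on the subcubes $z+\cu_n$ (for those $z$ with $z+\cu_n\in\Pas$), keeping $u$ on the remaining edges. By the second variation identity \eqref{e.secondvarmu}, the energy gap between this competitor and $u$ dominates $\tfrac12 |\cl_\Pa(\cu)|^{-1}$ times the $L^2$ norm of the difference of their gradients. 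On the other hand, testing the definition of $\mu(z+\cu_n,q)$ with the restriction of $u$ to $\C_*(z+\cu_n)$ (i.e.\ \eqref{e.dumbplug}) and summing over $z$, together with \eqref{e.testingerrormu}, shows that this glued competitor has energy at most $3^{-d(m-n)}\sum_z \mu(z+\cu_n,q)$ plus the negligible error $\O_{\frac1{2d-1}}(C|q|^23^{-n/4})$. Taking expectations and recalling the (approximate) stationarity \eqref{e.munustat} and localization \eqref{e.localization} — which tell us $\E[\mu(z+\cu_n,q)]$ is the same as $\E[\mu(\cu_n,q)]$ up to $C|q|^23^{-n/2}$-type errors, provided $n\geq\tfrac12 m$ so that $3^n\geq Cm$ — we get that the expected glued-competitor energy is at most $\E[\mu(\cu_n,q)]+$ error. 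Combining: the expectation of $\tfrac{1}{|\cu|}\sum_z\int_{\C_*(z+\cu_n)}|\nabla(u-u_z)\indc_{\{\a\neq0\}}|^2$ is bounded (using $\a\geq\lambda\indc_{\{\a\neq0\}}$) by $C|q|^2(\E[\mu(\cu_n,q)]-\E[\mu(\cu_m,q)])_+ $ plus errors. By \eqref{e.almostmonotone}, $\E[\mu(\cu_k,q)]$ is approximately monotone, so the telescoping sum $\E[\mu(\cu_n,q)]-\E[\mu(\cu_m,q)] = \sum_{k=n}^{m-1}(\E[\mu(\cu_k,q)]-\E[\mu(\cu_{k+1},q)])$; applying \eqref{e.suptosum} (after normalizing by $|q|^2$ and using that these are convex quadratic forms) bounds this by $C|q|^2(\sum_{k=n}^{m-1}\tau_k + 3^{-n/4})$.

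The argument for $v(\cdot,\cu,p)$ is entirely parallel but slightly cleaner, since $\nu$ does not carry the coarsening weight in its linear term: one tests $\nu(z+\cu_n,p)$ with the restriction of $v:=v(\cdot,\cu,p)$ (as in \eqref{e.dumbplugnu}), uses \eqref{e.testingerrornu} to control the missing-edges error by $\O_2(C|p|^23^{-n/2})$, and then uses the second variation \eqref{e.secondvarnu} to convert the $\nu$-energy gap into an $L^2$-gradient bound. Summing the $\mu$ and $\nu$ contributions and using the $\O$-calculus rules \eqref{e.Osums}, \eqref{e.Oavgs} to absorb all the error terms of the form $\O_s(C3^{-n/4})$ into a single $3^{-n/4}$ term (after taking expectations — note $\E[\O_s(\theta)]\leq C\theta$), we arrive at \eqref{e.restrictquadmicro}. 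The restriction $n\in[\tfrac12 m, m)$ enters precisely to guarantee $3^n\geq Cm$ so that \eqref{e.localization} and \eqref{e.munustat} are applicable; it also ensures $3^{-(m-n)/2}\leq 3^{-n/4}$ wherever such a term appears, so that all errors collapse into the stated $3^{-n/4}$.

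**Main obstacle.** The delicate point is \emph{not} the quadratic-response mechanism itself, which is standard, but rather the bookkeeping needed to handle the two sources of error identified in the discussion after \eqref{e.dumbplug}: (i) the open edges in $\C_*(\cu)$ that lie in no subcube cluster $\C_*(z+\cu_n)$, and (ii) the edges near subcube boundaries and the subcubes $z+\cu_n\notin\Pas$ that are simply discarded. Controlling these requires the volume bound $|V|\leq\O_1(C|\cu|3^{-n})$ for the bad vertex set together with the $L^2$-gradient estimates \eqref{e.coarsegraduL2}, \eqref{e.vupbound} on the coarsened functions — all of which are available, but one must be careful that the glued function is a legitimate element of $\A_*(\cu)$ (or close enough to one that the second-variation estimate still applies), since gluing solutions across subcube interfaces generically breaks the equation there. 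The cleanest route is to avoid literal gluing: instead, apply the second variation \eqref{e.secondvarmu} directly with $w$ equal to the true minimizer on each piece patched together only in the estimate, exploiting that \eqref{e.secondvarmu} holds for \emph{any} $w\in\A_*(U)$ and that the energy of the minimizer on each subcube is exactly $\mu(z+\cu_n,q)$ by \eqref{e.munufirstvariden}. This turns the inequality into a comparison of energies that never requires the glued function to solve an equation, and the discrepancy is entirely accounted for by \eqref{e.testingerrormu} and \eqref{e.testingerrornu}. Assembling these pieces with the correct $\O$-exponents is the bulk of the work, but each ingredient has already been prepared in Section~\ref{s.quantities}.
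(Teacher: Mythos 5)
This is essentially the paper's proof: the second variation \eqref{e.secondvarmu}/\eqref{e.secondvarnu} applied on each subcube with the coarse-scale minimizer as the test function, summed over $z$ and combined with \eqref{e.testingerrormu}/\eqref{e.testingerrornu}, then expectations, approximate stationarity/localization, and the telescoping bound via \eqref{e.almostmonotone} and \eqref{e.suptosum} to produce $\sum_{k=n}^{m-1}\tau_k+3^{-\frac n4}$ --- precisely the route you settle on in your final paragraph. Your opening ``glued competitor'' framing would not work literally, since the glued function is not in $\A_*(\cu)$ and \eqref{e.secondvarmu} at the large scale is then unavailable, but you flag and fix this yourself; the only detail left implicit is the contribution to the left side of the subcubes with $z+\cu_n\notin\Pas$, which the paper absorbs via the event $\Gamma$ using $\indc_{\Gamma}\leq\O_1\left(C3^{-\frac m2}\right)$ together with \eqref{e.uupbound}.
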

\begin{proof}
For convenience, we write $u:= u(\cdot,\cu,q)$ and $u_z := u(\cdot,z+\cu_n,q)$ for $z\in 3^n\Zd \cap \cu$. The second variation~\eqref{e.secondvarmu} gives, for every $z\in3^n\Zd\cap \cu$, 
\begin{multline*} \label{}
\frac1{|\cu_n|} \int_{\C_*(z+\cu_n)} \left|\nabla (u-u_z) \indc_{\{\a\neq 0\}} \right|^2(x)\,dx \, \indc_{\{ z +\cu_n \in \Pas \}}  \\
\leq C \left( \frac1{|\cu_n|} \left( \frac12  \left\langle  \nabla u, \a\nabla u\right\rangle_{\C_* (z+\cu_n)} - \left\langle q, \nabla\! \left[ u \right]_{\Pa}\! \right\rangle_{z+\cu_n} \right) - \mu(z+\cu_n,q)\right) \indc_{\{ z +\cu_n \in \Pas \}}.
\end{multline*}
Summing this inequality over $z\in 3^n\Zd \cap \cu$  yields
\begin{align*}
\lefteqn{
\frac{\left| \cu_n \right|}{\left|\cu\right|} \sum_{z\in3^n\Zd\cap \cu} \frac{1}{|\cu_{n}|} \int_{\C_*(z+\cu_n)} \left|\nabla (u-u_z) \indc_{\{\a\neq 0\}} \right|^2(x)\,dx \,\indc_{\{ z +\cu_n \in \Pas \}}
} \qquad & \\ \notag
& \leq C \;\Bigg( \frac{1}{\left|\cu\right|}\sum_{z\in 3^n\Zd \cap \cu}  
\left( \frac12  \left\langle  \nabla u, \a\nabla u\right\rangle_{\C_* (z+\cu_n)} - \left\langle q, \nabla\! \left[ u \right]_{\Pa} \right\rangle_{z+\cu_n} \right)
\indc_{\{ z +\cu_n \in \Pas \}} \\ \notag
& \qquad \qquad\qquad\qquad - \frac{1}{\left| 3^n\Zd \cap \cu\right|} \sum_{z\in3^n\Zd \cap \cu}\mu(z+ \cu_{n},q)\,\indc_{\{ z +\cu_n \in \Pas \}} \Bigg).
\end{align*}
Next, we notice that if $z+\cu_n\not\in\Pas$, then $\cl_{\Pa}(z+\cu_n)$ is an element of~$\Pa$, thus all coarsened functions are constant on~$\cl_{\Pa}(z+\cu_n)$ and so $u_z\equiv 0$ and $\mu(z+\cu_n,q) =0$. Thus we may remove the indicator function in the last line of the previous display. 
Combining the above and using~\eqref{e.testingerrormu} gives
\begin{multline}
\label{e.almosdone}
\frac{1}{\left|\cu\right|} \sum_{z\in3^n\Zd\cap \cu} \int_{\C_*(z+\cu_n)} \left|\nabla (u-u_z) \indc_{\{\a\neq 0\}} \right|^2(x)\,dx\,\indc_{\{ z +\cu_n \in \Pas \}} \\
\leq C\left( \mu(\cu,q) - 3^{-d(m-n)} \sum_{z\in3^n\Zd \cap \cu} \mu(z+ \cu_{n},q) \right)+ \O_{\frac{2}{4d-3}}\left( C|q|^23^{-\frac n2} \right). 
\end{multline}
Let $\Gamma$ denote the event 
\begin{equation*} \label{}
\Gamma:= \left\{ \exists z\in 3^n\Zd\cap \cu \ \mbox{such that} \ z+\cu_n \not\in \Pas \right\}. 
\end{equation*}
Observe that 
\begin{equation*} \label{}
\P \left[ \Gamma \right] \leq \sum_{z\in 3^n\Zd\cap\cu} \P \left[ z+\cu_n\not\in\Pas \right] \leq C 3^{d(m-n)} \exp\left( -c3^n \right) \leq C \exp\left( -c3^{\frac m2} \right).
\end{equation*}
Thus $\indc_{\Gamma} \leq O_1\left( C3^{-\frac m2} \right)$. Using this bound,~\eqref{e.Oprods} and~\eqref{e.muupbound} (and again the fact that $u_z = 0$ if $z+\cu_n\not \in \Pas$), we obtain
\begin{align*}
\lefteqn{
\frac1{|\cu|} \sum_{z\in 3^n\Zd} \int_{\C_*(z+\cu_n)} \left|\nabla (u-u_z) \indc_{\{\a\neq 0\}} \right|^2(x)\,dx \, \indc_{\{ z +\cu_n \not\in \Pas \}}  
} \qquad & \\
& = \frac1{|\cu|} \sum_{z\in 3^n\Zd} \int_{\C_*(z+\cu_n)} \left|\nabla u \indc_{\{\a\neq 0\}} \right|^2(x)\,dx \, \indc_{\{ z +\cu_n \not\in \Pas \}}  \\
& \leq \frac1{|\cu|} \int_{\C_*(z+\cu_n)} \left|\nabla u \indc_{\{\a\neq 0\}} \right|^2(x)\,dx \, \indc_{\Gamma}  \\
& \leq \O_{\frac1{2d-2}}\left( C|q|^2 \right)\cdot \O_1\left( C3^{-\frac m2} \right) \\
& \leq \O_{\frac1{2d-1}}\left( C3^{-\frac m2} \right).
\end{align*}
Combining this with~\eqref{e.almosdone}, taking the expectation of the result and applying~\eqref{e.almostmonotone} yields the estimate for the first term on the left side of~\eqref{e.restrictquadmicro}. The estimate of the second term is similar and we omit the details, except for the remark that~\eqref{e.testingerrornu} should be used in place of~\eqref{e.testingerrormu}. 
\end{proof}

We next obtain a version of the previous lemma for the spatial averages of the gradients of the coarsened functions $[u]_\Pa$. This is tricky and somewhat technical, because in passing from $u$ to $[u]_\Pa$, using~\eqref{e.coarsegrads}, we make errors depending on the coarseness of the partition~$\Pa$. If the energy density $|\nabla u|^2$ happens to be concentrated in the very largest cubes of $\Pa$, then this does not give us a good enough estimate. We deal with this issue by using the Meyers estimate, Proposition~\ref{p.meyers}, which allows us to ``H\"older away" the factors representing the coarseness of~$\Pa$ on the right of~\eqref{e.coarsegrads}.

\begin{lemma}
\label{l.restrictquadcoarsen}
There exists $C(d,\lambda,\p)<\infty$  such that, for every $m,n\in\N$ with $n\in \left[ \left( \frac{4d+1}{4d+2}\right) m  , m \right)$, $\cu\in\T_m$ and $p,q\in\Rd$,
\begin{multline}
\label{e.restrictquadE}
\E \left[ \frac{3^{-d(m-n)}}{|\cu_n|^2}\sum_{z\in 3^n\Zd\cap \cu_m }
\left|
\left\langle \left[ \nabla u(\cdot,\cu,q)\right]_{\Pa}\right\rangle_{z+\cu_{n}} -  \left\langle \left[ \nabla u(\cdot,z+\cu_{n},q)\right]_{\Pa} \right\rangle_{z+\cu_{n}} 
\right|^2
 \right]   \\
+ \E\left[ \frac{3^{-d(m-n)}}{|\cu_n|^2} \sum_{z\in 3^n\Zd\cap \cu} 
\left| \left\langle \left[ \nabla v(\cdot,\cu,p)\right]_{\Pa}\right\rangle_{z+\cu_{n}} -  \left\langle \left[ \nabla v(\cdot,z+\cu_{n},p)\right]_{\Pa} \right\rangle_{z+\cu_{n}} \right|^2 
 \right] \\
 \leq 
 C \left(|p|^2+|q|^2 \right) \left(  \sum_{k=n}^{m+1} \tau_k  + 3^{-\frac n4} \right).
\end{multline}
\end{lemma}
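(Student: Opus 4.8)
The plan is to reduce the statement to Lemma~\ref{l.restrictquadmicro} by carefully passing from the full gradients $\nabla u$ to the spatial averages of the coarsened gradients $\langle [\nabla u]_\Pa\rangle$, absorbing all errors arising from the coarseness of $\Pa$ via the Meyers estimate. I would begin by fixing $\cu = \cu_m \in \T_m$ and $z \in 3^n\Zd \cap \cu$, writing $u := u(\cdot,\cu,q)$, $u_z := u(\cdot,z+\cu_n,q)$, and similarly for $v$. The first reduction is to observe that, by Lemma~\ref{l.coarsegrads} applied to the difference $u - u_z$ on the cube $z+\cu_n$ (with $s=2$, and using the already-chosen additive normalization and that $u_z \equiv 0$ when $z+\cu_n \notin \Pas$), one has, for each such $z$ for which $z+\cu_n \in \Pas$,
\begin{equation*}
\left|\left\langle[\nabla u]_\Pa\right\rangle_{z+\cu_n} - \left\langle[\nabla u_z]_\Pa\right\rangle_{z+\cu_n}\right|^2
\leq \left(\fint_{\cl_\Pa(z+\cu_n)}\left|\nabla[u - u_z]_\Pa\right|(x)\,dx\right)^2,
\end{equation*}
and then estimate the right side by Lemma~\ref{l.coarsegrads} with $s=1$ together with the Cauchy--Schwarz inequality. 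This produces a bound of the form
\begin{equation*}
\left(\frac{1}{|\cu_n|}\sum_{\Pa\ni\cu'\subseteq\cl_\Pa(z+\cu_n)}\size(\cu')^{3d-2}\right)
\left(\frac{1}{|\cu_n|}\int_{\C_*(z+\cu_n)}\left|\nabla(u-u_z)\indc_{\{\a\neq0\}}\right|^2\,dx\right),
\end{equation*}
where the first factor is controlled by $\Lambda_{2d-2}(z+\cu_n,\Pa)$, which has good stochastic integrability by~\eqref{e.LambdaOsbound}.

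The main obstacle, and the reason the hypothesis on $n$ is sharpened to $n \geq \left(\tfrac{4d+1}{4d+2}\right)m$, is that after taking the sum over $z$ and the expectation, this ``first factor times second factor'' estimate is not good enough on its own: the energy density $|\nabla(u-u_z)|^2$ might concentrate on the rare large cubes of $\Pa$, and the product of an $\O_{1/(2d-2)}$ random variable with the $L^1$-controlled energy does not close. To get around this I would \emph{not} apply H\"older cube-by-cube, but instead sum first over $z \in 3^n\Zd\cap\cu$ and then apply H\"older's inequality globally: write the sum as
\begin{equation*}
\frac{1}{|\cu|}\sum_{\Pa\ni\cu'\subseteq\cu}\size(\cu')^{3d-2}\int_{\cu'\cap\C_*(\cu)}\left|\nabla(u-u_z)\indc_{\{\a\neq0\}}\right|^2(x)\,dx,
\end{equation*}
and separate the weight $\size(\cu_\Pa(x))^{3d-2}$ from $|\nabla(u-u_z)\indc_{\{\a\neq0\}}|^2$ using H\"older's inequality with an exponent $1+\ep/2$ on the energy density (so its conjugate hits the weight), where $\ep = \ep(d,\lambda,\p)>0$ is the Meyers exponent from Proposition~\ref{p.meyers}. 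On scales above $\mathcal{M}_t(\Qa)$ (which is why we need $n$ close to $m$, so that with high probability $\cu_m$ lies above this random scale — here one uses that $u - u_z$ restricted appropriately is $\a$-harmonic on the relevant subcubes, so the Meyers bound $\mathrm{ME}_\ep(\cu) \leq C$ applies after a covering argument), the gain of integrability converts $\|\nabla(u-u_z)\|_{L^{2+\ep}}$ back to $\|\nabla(u-u_z)\|_{L^2}$, while the weight factor $\fint_{\cu}\size(\cu_\Pa(x))^{t}\,dx$ with $t := \frac{(2+\ep)(d+1)}{ \ep}(3d-2)$ (or a similar explicit exponent) is bounded by a deterministic constant on scales above $\mathcal{M}_t(\Ra)$, by Proposition~\ref{p.minimalscales}. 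On the complementary (low probability) event where $\cu_m$ lies below one of these random scales, the crude bounds~\eqref{e.uupbound},~\eqref{e.vupbound} together with the stretched-exponential decay from~\eqref{e.Mtint},~\eqref{e.Mtint2} and the product rule~\eqref{e.Oprods} give a negligible contribution of size $\O_s(C(|p|^2+|q|^2)3^{-m/2})$, say, which is absorbed into the $3^{-n/4}$ term since $n \geq \left(\tfrac{4d+1}{4d+2}\right)m$ forces $3^{-m/2} \leq 3^{-n/4}$ up to constants.

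Having carried out this H\"older-Meyers step, the expectation of the remaining quantity is
\begin{equation*}
C\,\E\left[\frac{1}{|\cu|}\sum_{z\in3^n\Zd\cap\cu}\int_{\C_*(z+\cu_n)}\left|\nabla(u-u_z)\indc_{\{\a\neq0\}}\right|^2(x)\,dx\right]
\end{equation*}
plus the negligible error, and this is precisely the left side of~\eqref{e.restrictquadmicro} from Lemma~\ref{l.restrictquadmicro} (which applies since $n \geq \left(\tfrac{4d+1}{4d+2}\right)m \geq \tfrac12 m$). Invoking that lemma bounds it by $C(|p|^2+|q|^2)\left(\sum_{k=n}^{m-1}\tau_k + 3^{-n/4}\right)$. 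Running the identical argument for $v(\cdot,\cu,p)$ and $v(\cdot,z+\cu_n,p)$ — using~\eqref{e.testingerrornu} and~\eqref{e.vupbound} in place of their $\mu$-counterparts — and then adding the two contributions and absorbing the error $3^{-n/4}$, together with noticing that $\sum_{k=n}^{m-1}\tau_k \leq \sum_{k=n}^{m+1}\tau_k$, yields~\eqref{e.restrictquadE}. The bookkeeping with the $\O_s(\cdot)$ notation — repeatedly using transitivity~\eqref{e.Osums}, products~\eqref{e.Oprods}, and the averaging statement~\eqref{e.Oavgs} — is routine but needs care to keep all stochastic integrability exponents positive; I expect the cleanest route is to track only that \emph{some} $s(d)>0$ survives, as the final statement only asserts the bound in expectation anyway.
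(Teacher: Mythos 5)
Your plan follows essentially the same route as the paper's proof: pass from the averaged coarsened gradient differences to $\nabla(u-u_z)$ via Lemma~\ref{l.coarsegrads}, decouple the partition weights from the energy density by H\"older's inequality with the Meyers exponent $2+\ep$, use Proposition~\ref{p.meyers} to return from $L^{2+\ep}$ back to $L^2$, control the weights above a random minimal scale, handle the below-scale event with the crude bounds~\eqref{e.coarsegraduL2} --- and this last step is exactly where the hypothesis $n\geq\left(\frac{4d+1}{4d+2}\right)m$ is consumed, since the crude bound carries the volume factor $3^{d(m-n)}$ and one needs $3^{d(m-n)+\frac m4-n}\leq 3^{-\frac n2}$ --- and then invoke Lemma~\ref{l.restrictquadmicro}, treating $v$ identically.

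The one step your sketch does not supply is the boundary layer. The Meyers estimate for $u-u_z$ on a cube $\cu'\in\Qa$ requires $3\cu'\subseteq z+\cu_n$, because $u_z$ is neither defined nor $\a$-harmonic outside $z+\cu_n$; near $\partial(z+\cu_n)$ no gain of integrability is available, and ``a covering argument'' cannot produce it. The paper's proof therefore removes from each subcube a mesoscopic layer $K_z$ made of triadic cubes of size $3^{\lceil n/2\rceil}$ touching $\partial(z+\cu_n)$ --- which, on the good event where every $\Qa$-cube in $\cu_{2n}$ has size at most $3^{\frac n2-1}$, guarantees $3\cu'\subseteq z+\cu_n$ for all remaining $\Qa$-cubes --- and then estimates the discarded layer's contribution to the spatial average of $\nabla[u-u_z]_\Pa$ separately, using $|K_z|\leq C3^{-\frac n2}|\cu_n|$ together with~\eqref{e.coarsegraduL2}; this is the source of one of the $3^{-\frac n4}$ error terms in~\eqref{e.restrictquadE}. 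This omission is fillable with estimates you already cite rather than a wrong turn, but it is an essential piece of the argument, and it also explains why the paper caps the $\Qa$-cube sizes by $3^{n/2}$ on an explicit good event instead of appealing only to the minimal scales $\M_t(\Ra)$, $\M_t(\Qa)$ as your sketch does.
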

\begin{proof}
The proof of the estimate for the second term on the left of~\eqref{e.restrictquadE} will be omitted, since it follows from a very similar argument as for the estimate of the first term. For convenience and simplicity, we give the proof only in the case that $\cu=\cu_m$. Throughout, we work with the partition~$\Qa$ defined in Definition~\ref{d.partQa} which gives us good cubes for the Meyers estimate. 
We estimate the left side of~\eqref{e.restrictquadE} by the left side of~\eqref{e.restrictquadmicro}. For this we use~\eqref{e.coarsegrads}, the H\"older inequality and the Meyers estimate.

\smallskip

We fix $q\in\Rd$ and denote, for $z\in 3^n\Zd\cap \cu$, the functions $u:= u(\cdot,\cu_{m},q)$ and $u_z := u(\cdot,z+\cu_n,q)$.

\smallskip

\emph{Step 1.} We reduce to a ``good" event in which every element of the partition~$\Qa$ in $\cu_{2n} \supseteq\cu_{m}$ is not too large. Denote this event by
\begin{equation*} \label{}
\Gamma:= \left\{ 
\max_{x\in \cu_{2n}} \size( \cu_\Qa(x)) \leq 3^{\frac n2-1}
\right\}.
\end{equation*}
By~\eqref{e.Mtint2}, there exists an exponent $s(d,\lambda,\p)<\infty$ such that 
\begin{equation*} \label{}
\indc_{\Omega \setminus \Gamma} \leq 3^{-\frac{n}2+1}  \sup_{x\in \cu_{2n}} \size(\cu_\Qa(x)) \leq 3^{-\frac n2+1}  \O_s\left( C3^{\frac n8} \right) \leq  \O_s\left( C3^{-\frac{3n}{8}} \right).
\end{equation*}
Using~\eqref{e.coarsegraduL2} with $\delta = \frac18$, the fact that $u_z=0$ (resp. $u=0$) if $z + \cu_n \notin \Pa_*$ (resp. $\cu_m \notin \Pa_*$) and the H\"older inequality, we obtain
\begin{align*}
\lefteqn{
\frac{3^{-d(m-n)}}{|\cu_n|^2} \sum_{z\in 3^n\Zd\cap \cu_m}\left|
\left\langle \nabla \left[ u \right]_{\Pa}\right\rangle_{z+\cu_{n}} -  \left\langle \nabla \left[u_z\right]_{\Pa} \right\rangle_{z+\cu_{n}} 
\right|^2 \indc_{\Omega\setminus \Gamma} 
} \quad & \\ \notag &
\leq C 3^{-d(m-n)}\sum_{z\in 3^n\Zd\cap \cu_m} \left( \fint_{z+\cu_n} \left( \left| \nabla \left[ u \right]_{\Pa}\right|^2\!(x) + 
 \left|  \nabla \left[u_z\right]_{\Pa} \right|^2\!(x) \right) \,dx \right) \, \indc_{\Omega\setminus \Gamma}
\\ \notag &
\leq C 
\left( 
\fint_{\cu_{m}} \left| \nabla \left[ u\right]_{\Pa}\right|^2\!(x)\,dx 
+
3^{-d(m-n)} \sum_{z\in 3^n\Zd\cap \cu_m}\fint_{z+\cu_n}
 \left|  \nabla \left[u_z\right]_{\Pa} \right|^2\!(x) \,dx 
 \right)
  \, \indc_{\Omega\setminus \Gamma} 
 \\ \notag &
\leq \O_{\frac{1}{4d-3}}\left( C|q|^23^{\frac n8} \right) \cdot \O_s\left( C3^{-\frac{3n}{8}} \right).
\end{align*}
We deduce that  
\begin{equation}
\label{e.estoffGamma}
\E \left[ \frac{3^{-d(m-n)}}{|\cu_n|^2} \sum_{z\in 3^n\Zd\cap \cu_m}
\left|
\left\langle \nabla \left[ u \right]_{\Pa}\right\rangle_{z+\cu_{n}} -  \left\langle \nabla \left[u_z\right]_{\Pa} \right\rangle_{z+\cu_{n}} 
\right|^2\, \indc_{\Omega\setminus \Gamma}
\right] 
\leq 
C|q|^23^{-\frac n4}.
\end{equation}

\smallskip

\emph{Step 2.} We further prepare for the use of the Meyers estimate application by removing a boundary layer around each of the subcubes~$z+\cu_n$. This is necessary because Proposition~\ref{p.meyers} is only an interior estimate and we do not have a good boundary condition anyway for minimizers of $\mu$. Let $K_z$ denote the union of the elements of $\T_{\left\lceil n/2\right\rceil}$ which are subsets of~$z+\cu_n$ and intersect the boundary of $z+\cu_n$:
\begin{equation*} \label{}
K_z:= 
\bigcup \,
\left\{ \cu'\in \T\,:\, 
\size(\cu') = 3^{\left\lceil n/2\right\rceil},\ \cu' \subset z+\cu_n, \ \dist(\cu',\partial (z+\cu_n) ) = 0
\right\}.
\end{equation*}
Let $E_z$ denote the edges $(x,y)$ such that $x$ or $y$ belongs to $K_z$. Then by the H\"older inequality, the triangle inequality, and the fact that $|K_z| \leq C3^{-\frac n2} |\cu_n|$, we find that, for each $z\in 3^n\Zd\cap \cu_m$, 
\begin{align*}
\E \left[ \frac{1}{\left| \cu_n \right|^2} \left|\sum_{e\in E_z} 
\nabla\! \left[ u - u_z \right]_{\mathcal{P}}\!(e) 
 \right|^2\right]
& \leq C \E \left[ \frac{|K_z|}{\left| \cu_n \right|^2}\int_{K_z} \left( \left|\nabla\! \left[ u  \right]_{\mathcal{P}} \right|^2\!(x) +  \left|\nabla\! \left[ u_z \right]_{\mathcal{P}} \right|^2\!(x) \right)\,dx \right] 
  \\ \notag &
 \leq C3^{-\frac n2} \E \left[ \fint_{z+\cu_n} \left( \left|\nabla\! \left[ u  \right]_{\mathcal{P}} \right|^2\!(x) +  \left|\nabla\! \left[ u_z \right]_{\mathcal{P}} \right|^2\!(x) \right)\,dx \right].
\end{align*}
Summing over $z\in 3^n\Zd\cap \cu_m$ and using~\eqref{e.coarsegraduL2} with $\delta = \frac14$ gives 
\begin{equation}
\label{e.removeKz}
3^{-d(m-n)} \sum_{z\in3^n\Zd\cap \cu_m} \E \left[ \frac{1}{\left| \cu_n \right|^2} \left|\sum_{e\in E_z} 
\nabla\! \left[ u - u_z \right]_{\mathcal{P}}\!(e) 
 \right|^2\right]
 \leq
 C|q|^23^{-\frac n4}.
\end{equation}

\smallskip

\emph{Step 3.} We estimate the expected difference between $\nabla \left[ u \right]_\Pa$ and $\nabla \left[ u_z \right]_{\Pa}$ in the strong $L^2$ norm, after removing the $K_z$'s and in the case that the good event $\Gamma$ holds. The precise objective of this step is to prove~\eqref{e.step3goal}, below. 

\smallskip

We begin by using~\eqref{e.coarsegrads} to estimate, for each $z\in 3^n\Zd\cap \cu_m$, 
\begin{align*} \label{}
\lefteqn{
\int_{(z+\cu_n)\setminus K_z } \left| \nabla  \left[ u - u_z \right]_{\Pa} \right|^2(x) \,dx \, \indc_{\Gamma}
} \qquad &  \\
& \leq C  \sum_{\Pa\ni\cu'\subseteq (z+\cu_n)\setminus K_z} \size(\cu')^{2d-1} \int_{\C_*(\cu')} \left|\nabla (u-u_z) \indc_{\{\a\neq 0\}} \right|^2(x)\,dx \,  \indc_{\Gamma} \\
& \leq C  \sum_{\Qa\ni\cu'\subseteq (z+\cu_n)\setminus K_z} \size(\cu')^{2d-1} \int_{\C_*(\cu')} \left|\nabla (u-u_z) \indc_{\{\a\neq 0\}} \right|^2(x)\,dx \, \indc_{\Gamma}.
\end{align*}
Here we used that $\Qa$ is coarser than $\Pa$ and that no element of $\Qa$ in $z+\cu_n$ is larger than $3^{\left\lceil n/2\right\rceil}$ on the event~$\Gamma$. Applying the H\"older inequality to the previous sum yields, for every $s\in (2,\infty)$, 
\begin{align*}
\lefteqn{ 
\frac{1}{|\cu_n|} \int_{(z+\cu_n)\setminus K_z  } \left| \nabla  \left[ u - u_z \right]_{\Pa} \right|^2(x) \,dx\, \indc_{\Gamma}
} \qquad & \\
& \leq C \left(\frac1{|\cu_n|} \sum_{\Qa\ni\cu'\subseteq (z+\cu_n)\setminus K_z } \size(\cu')^{\frac{s(2d-1)}{s-2}}  \right)^{\frac{s-2}s}  \\
& \qquad \times
\left( \frac1{|\cu_n|}\sum_{\Qa\ni\cu'\subseteq (z+\cu_n)\setminus K_z } 
 \int_{\C_*(\cu')} \left|\nabla (u-u_z) \indc_{\{\a\neq 0\}} \right|^s(x)\,dx \right)^{\frac2s} \, \indc_{\Gamma}.
\end{align*}
Take $s = 2+\ep$ with $\ep(d,\lambda,\p)>0$ as in Proposition~\ref{p.meyers} and apply the proposition in each $\cu'\in\Qa$, $\cu' \subseteq (z+\cu_n)\setminus K_z$, which we note that on the event~$\Gamma$ implies $3\cu' \subseteq z+\cu_n$, to get an estimate of the second factor on the right side:
\begin{align*}
\lefteqn{
\left(\frac1{|\cu_n|} \sum_{\Qa\ni\cu'\subseteq (z+\cu_n)\setminus K_z} 
 \int_{\C_*(\cu')} \left|\nabla (u-u_z) \indc_{\{\a\neq 0\}} \right|^s(x)\,dx \right)^{\frac2s}\indc_{\Gamma}
} \quad & \\
& \leq \left(\frac1{|\cu_n|} \sum_{\Qa\ni\cu'\subseteq (z+\cu_n)\setminus K_z} \left| \cu' \right|
\left(  \frac1{\left| \cu' \right|} \int_{\C_*(3\cu')} \left|\nabla (u-u_z) \indc_{\{\a\neq 0\}} \right|^2(x)\,dx \right)^{\frac s2} \right)^{\frac2s} \, \indc_{\Gamma} \\
& \leq \frac1{|\cu_n|}\sum_{\Qa\ni\cu'\subseteq (z+\cu_n)\setminus K_z} \left| \cu' \right|^{\frac2s-1} \int_{\C_*(3\cu')} \left|\nabla (u-u_z) \indc_{\{\a\neq 0\}} \right|^2(x)\,dx \, \indc_{\Gamma} \\
& \leq \frac1{|\cu_n|}\int_{\C_*(z+\cu_n)} \left|\nabla (u-u_z) \indc_{\{\a\neq 0\}} \right|^2(x)\,dx 
\end{align*}
To get the last line, we used that every point of $\C_*(z+\cu_n)$ belongs to $\C_*(3\cu')$ for at most~$C$ elements $\cu'$ of the sum, since $\Qa$ satisfies property~(ii) of Proposition~\ref{p.partitions}, and $|\cu'|^{\frac2s-1} \leq 1$. Putting the above inequalities together, we get
\begin{multline}
\label{e.stoppiz}
\frac{1}{|\cu_n|} \int_{(z+\cu_n)\setminus K_z  } \left| \nabla  \left[ u - u_z \right]_{\Pa} \right|^2(x) \,dx\, \indc_{\Gamma}
  \\
\leq C\left(\frac 1{|\cu_n|} \sum_{\Qa\ni\cu'\subseteq z+\cu_n} \size(\cu')^{\frac{s(2d-1)}{s-2}}  \right)^{\frac{s-2}s} \indc_{\Gamma}
\cdot \frac1{|\cu_n|}\int_{\C_*(z+\cu_n)} \left|\nabla (u-u_z) \indc_{\{\a\neq 0\}} \right|^2(x).
\end{multline}
It follows that, for $r:= \frac{s(2d-1)}{s-2} = \frac{s(2d-1)}{\ep}$, which we note depends only on $(d,\lambda,\p)$,
\begin{equation*}
C\left(\frac 1{|\cu_n|} \sum_{\Qa\ni\cu'\subseteq z+\cu_n} \size(\cu')^{\frac{s(2d-1)}{s-2}}  \right)^{\frac{s-2}s} \indc_{\Gamma}\, \indc_{\{n\geq \mathcal{N}_r(z)\}} \leq C. 
\end{equation*}
and therefore 
\begin{multline}
\label{e.smackchta}
\frac{1}{|\cu_n|} \int_{(z+\cu_n)\setminus K_z  } \left| \nabla  \left[ u - u_z \right]_{\Pa} \right|^2(x) \,dx\, \indc_{\Gamma} \, \indc_{\{n\geq \mathcal{N}_r(z)\}}\\
\leq \frac C{|\cu_n|} \int_{\C_*(z+\cu_n)} \left|\nabla (u-u_z) \indc_{\{\a\neq 0\}} \right|^2(x).
\end{multline}
To complete the proof of~\eqref{e.restrictquadE}, we split the expectation of the left side of~\eqref{e.stoppiz}, using the minimal scale for the partition~$\Qa$:
\begin{multline}
\label{e.EsplitNr}
\E \left[ \fint_{(z+\cu_n)\setminus K_z} \left| \nabla  \left[ u - u_z \right]_{\Pa} \right|^2(x) \,dx \, \indc_{\Gamma} \right]  \\
\leq 
\E \left[ \fint_{(z+\cu_n)\setminus K_z} \left| \nabla  \left[ u - u_z \right]_{\Pa} \right|^2(x) \,dx \, \indc_{\Gamma} \, \indc_{\{n\geq \mathcal{N}_r(z)\}} \right]  \\
+ \E \left[ \fint_{(z+\cu_n)\setminus K_z} \left| \nabla  \left[ u - u_z \right]_{\Pa} \right|^2(x) \,dx\, \indc_{\Gamma}\, \indc_{\{n< \mathcal{N}_r(z)\}} \right].
\end{multline}
Taking the expectation of~\eqref{e.smackchta} gives an estimate for the first term on the right side:  
\begin{multline}
\label{e.ENrdn}
\E \left[ \fint_{(z+\cu_n)\setminus K_z} \left| \nabla  \left[ u - u_z \right]_{\Pa} \right|^2(x) \,dx \, \indc_{\Gamma} \, \indc_{\{n\geq \mathcal{N}_r(z)\}} \right]  \\
\leq C \E \left[ \frac{1}{\left| \cu_n \right|}\int_{\C_*(z+\cu_n)}  \left|\nabla (u-u_z) \indc_{\{\a\neq 0\}} \right|^2(x)\,dx \right].
\end{multline}
We  estimate the second term on the right of~\eqref{e.stoppiz} rather crudely: we use~\eqref{e.Oprods} and combine this with~\eqref{e.coarsegraduL2} (with $\delta = \frac14$) to obtain, for some small exponent~$s_0(d,\lambda,\p)>0$,
\begin{align*}
\lefteqn{
\fint_{z+\cu_n } \left| \nabla  \left[ u - u_z \right]_{\Pa} \right|^2(x) \,dx\, \indc_{\{n< \mathcal{N}_r(z)\}} 
} \qquad & \\
& \leq 2\left( 3^{d(m-n)} \fint_{\cu_{m} } \left| \nabla  \left[ u \right]_{\Pa} \right|^2(x)  \,dx + \fint_{z+\cu_n } \left| \nabla  \left[ u_z \right]_{\Pa} \right|^2(x)\,dx  \right) \indc_{\{n< \mathcal{N}_r(z)\}} \\
& \leq \O_{s_0}\left(C|q|^2 3^{d(m-n)+\frac{m}{4} -n}\right).
\end{align*}
Taking the expectation of this yields 
\begin{equation}
\label{e.ENrup}
\E\left[ \fint_{z+\cu_n } \left| \nabla  \left[ u - u_z \right]_{\Pa} \right|^2(x) \,dx\, \indc_{\{n< \mathcal{N}_r\}} \right] \leq C|q|^2 3^{d(m-n)+\frac{m}{4} -n}.
\end{equation}
Notice that the assumptions $n\geq \left( \frac{4d+1}{4d+2}\right)m$ implies that $d(m-n)+\frac{m}{4}-n \leq -\frac n2$.
Combining this observation with~\eqref{e.EsplitNr},~\eqref{e.ENrdn} and~\eqref{e.ENrup} yields
\begin{multline*}
\E \left[ \frac{1}{|\cu_n|} \int_{(z+\cu_n)\setminus K_z} \left| \nabla  \left[ u - u_z \right]_{\Pa} \right|^2(x) \,dx \, \indc_{\Gamma} \right] \\
\leq C|q|^23^{-\frac n2} + C \E \left[ \frac{1}{\left| \cu_n \right|}\int_{\C_*(z+\cu_n)}  \left|\nabla (u-u_z) \indc_{\{\a\neq 0\}} \right|^2(x)\,dx \right].
\end{multline*}
Summing over~$z\in 3^n\Zd\cap \cu_m$ and using Lemma~\ref{l.restrictquadmicro}, we get
\begin{multline}
\label{e.step3goal}
3^{-d(m-n)}\sum_{z\in3^n\Zd\cap \cu_m}
\E \left[ \frac{1}{|\cu_n|}\int_{(z+\cu_n)\setminus K_z} \left| \nabla  \left[ u - u_z \right]_{\Pa} \right|^2(x) \,dx \, \indc_{\Gamma} \right]
\\
\leq C|q|^2 \left(\sum_{k=n}^{m-1} \tau_k + 3^{-\frac n4} \right).
\end{multline}

\smallskip

\emph{Step 4.} The conclusion. Combining~\eqref{e.estoffGamma},~\eqref{e.removeKz} and~\eqref{e.step3goal}, we obtain
\begin{equation*}
\frac{3^{-d(m-n)}}{|\cu_n|^2}\sum_{z\in 3^n\Zd\cap \cu_m }\E \left[ 
\left|
\left\langle \left[ \nabla u\right]_{\Pa}\right\rangle_{z+\cu_{n}} -  \left\langle \left[ \nabla u_z\right]_{\Pa} \right\rangle_{z+\cu_{n}} 
\right|^2
 \right] 
 \leq C|q|^2 \left( \sum_{k=n}^{m-1} \tau_k + 3^{-\frac n4} \right).
\end{equation*}
This completes the proof of the lemma. 
\end{proof}

\smallskip

\begin{definition}[The matrix $\ahom_{\cu_n}$]
We define $\ahom^{-1}_{\cu_n}$ to be the matrix satisfying, for every $q\in\Rd$,
\begin{equation}
\label{e.abarn}
\frac12 q\cdot \ahom^{-1}_{\cu_n} q = \E \left[ -\mu(\cu_n,q) \right].
\end{equation}
Since the right side of~\eqref{e.abarn} is a nonnegative and quadratic function of~$q$, it can be written in terms of a matrix and thus~$\ahom^{-1}_{\cu_n}$ is well-defined.
\end{definition}

We continue with some observations regarding the matrices $\ahom_{\cu_n}$. Notice that~\eqref{e.muupbound} and~\eqref{e.mulobound} imply the existence of~$C(d,\lambda,\p)<\infty$ such that, for every $n\in\N$ with $n\geq C$, 
\begin{equation}
\label{e.ahominvbound}
\frac1C I_d \leq \ahom^{-1}_{\cu_n} \leq C I_d. 
\end{equation}
Since our model is invariant under permutations and reflections of the coordinate axes, the matrices $\ahom^{-1}_{\cu_n}$ (as well as $\ahom$) are actually a multiple of the identity~$I_d$. However, since we do not use this anywhere and our arguments which actually can handle more general models, we ignore this fact. 

\smallskip

According to the first variation~\eqref{e.firstvarmu}, we can also write~$\ahom^{-1}_{\cu_n}$ in terms of the expected spatial average of the minimizers of $\mu(\cu_n,q)$: for every $q,q' \in\Rd$,
\begin{equation}
\label{e.ahominvspatavg}
q'\cdot \ahom^{-1}_{\cu_n} q = \E \left[\frac{1}{\left| \cl_\Pa(\cu_n) \right|} \left\langle q', \nabla \left[ u(\cdot,\cu_n,q) \right]_{\Pa} \right\rangle_{\cl_{\Pa}(\cu_n)}  \right].
\end{equation}
In other words,
\begin{equation*} \label{}
\ahom^{-1}_{\cu_n} q = \E \left[ \frac{1}{\left| \cl_\Pa(\cu_n) \right|} \left\langle \nabla\! \left[ u(\cdot,\cu_{n},q) \right]_{\mathcal{P}} \right\rangle_{\cl_{\Pa}(\cu_n)}  \right].
\end{equation*}
For future reference, we record, for every $m,n\in\N$ with $m>n$, the estimate
\begin{equation}
\label{e.ahominvclose}
\left| \ahom^{-1}_{\cu_m} - \ahom^{-1}_{\cu_{n}} \right| \leq C \left( 3^{-\frac n4}+ \sum_{k=n}^{m-1} \tau_k \right).
\end{equation}
To prove~\eqref{e.ahominvclose}, we use~\eqref{e.almostmonotone} and~\eqref{e.abarn} to see that, for every $q\in\Rd$, 
\begin{align*}
\left| \frac12 q\cdot \ahom^{-1}_{\cu_m} q -  \frac12 q\cdot \ahom^{-1}_{\cu_n} q \right|
& = \left| \E\left[ \mu(\cu_m,q)\right] - \E\left[ \mu(\cu_n,q)\right] \right| \\
& \leq  \sum_{k=n}^{m-1}\left| \E \left[ \mu(\cu_{k+1},q)\right] - \E\left[  \mu(\cu_k,q)\right] \right| \\
& \leq \sum_{k=n}^{m-1}\left( \left( \E \left[ \mu(\cu_{k+1},q)\right] - \E\left[  \mu(\cu_k,q)\right] \right)_+ + C3^{-\frac k4} \right) \\
& \leq C|q|^2 \left( 3^{-\frac n4} +  \sum_{k=n}^{m-1} \sum_{i=1}^d  \left( \E \left[  \mu(\cu_{k+1},\e_i)\right] - \E\left[ \mu(\cu_k,\e_i)\right]\right)_+  \right) \\
& \leq C|q|^2 \left( 3^{-\frac n4} +  \sum_{k=n}^{m-1} \tau_k \right). 
\end{align*}
Taking the supremum of the previous inequality over $q\in \Rd\setminus \{0\}$, after dividing by $|q|^2$, yields~\eqref{e.ahominvclose}.

\smallskip

We show in the next lemma that the variance of $\left\langle \nabla\! \left[ u(\cdot,\cu_{n},q) \right]_{\mathcal{P}} \right\rangle_{\cl_{\Pa}(\cu_n)}$ is controlled by $\tau_n$. This is perhaps the main step in the proof of Proposition~\ref{p.subadd}. It is a variation of~\cite[Lemma 3.2]{AS}.

\begin{lemma}
\label{l.weakconv}
There exists $C(d,\p,\lambda)<\infty$ such that, for every $n\in\N$ and $q \in \Rd$,
\begin{equation} 
\label{e.weakconv}
\E \left[ \left| \frac{1}{|\cu_n|}\left\langle \nabla\! \left[ u(\cdot,\cu_{n},q) \right]_{\mathcal{P}} \right\rangle_{\cu_n}  - \ahom^{-1}_{\cu_n} q \right|^2  \right]
\leq C|q|^2 \left( \tau_n + 3^{-\frac n4} \right).
\end{equation}
\end{lemma}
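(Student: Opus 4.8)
The strategy follows the variance-reduction argument of \cite[Lemma 3.2]{AS}, adapted to account for the coarsening operation and the non-uniform geometry. The key point is the following algebraic identity for quadratic forms: if $Z$ is an $\Rd$-valued random variable and $\bar{Z} := \E[Z]$, then for any matrix $M$ one has the polarization-type decomposition relating $\E[|Z - \bar Z|^2]$ to a difference of energies at successive scales. Concretely, write $u := u(\cdot,\cu_{n+1},q)$ and, for $z \in 3^n\Zd \cap \cu_{n+1}$, $u_z := u(\cdot, z+\cu_n, q)$. By stationarity \eqref{e.munustat} (using the localized partitions and \eqref{e.localization} to pass between $u(\cdot, z+\cu_n,q)$ and its local version, at the cost of an $\O_s(C|q|^2 3^{-n/4})$ error), the spatial averages $\langle \nabla[u_z]_{\Pa} \rangle_{z+\cu_n}$ have a common expectation, which by \eqref{e.ahominvspatavg} is essentially $\ahom^{-1}_{\cu_n} q$ up to errors of the same order as in \eqref{e.ahominvclose}. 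Thus it suffices to control
\begin{equation*}
3^{-d} \sum_{z\in 3^n\Zd \cap \cu_{n+1}} \E\left[ \left| \frac{1}{|\cu_n|}\left\langle \nabla [u_z]_{\Pa}\right\rangle_{z+\cu_n} - \ahom^{-1}_{\cu_{n}} q \right|^2 \right]
\end{equation*}
and, separately, the fluctuation of $\frac{1}{|\cu_{n+1}|}\langle \nabla [u]_{\Pa}\rangle_{\cu_{n+1}}$ around the average of the subcube averages, which is exactly what Lemma~\ref{l.restrictquadcoarsen} delivers (with $m=n+1$, after checking the constraint $n \geq \frac{4d+1}{4d+2}(n+1)$ holds for $n$ large, and absorbing small $n$ into the constant).

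The core computation is the following. By independence of the family $\{\mu_{\mathrm{loc}}^{(n)}(z+\cu_n, q)\}_{z}$ (from \eqref{e.munustat}) together with the second variation formula \eqref{e.secondvarmu}, the sum of the energies of the $u_z$ equals the energy one would assign by testing $u$ (the minimizer on the larger cube) subcube by subcube, up to the subadditivity defect; quantitatively this is \eqref{e.almosdone} in the proof of Lemma~\ref{l.subadd}. Taking expectations and using \eqref{e.almostmonotone} shows that the expected energy gap between scales $n$ and $n+1$ is controlled by $\sum_i(\E[\mu(\cu_{n+1},\e_i)] - \E[\mu(\cu_n,\e_i)])_+ \leq \tau_n$ plus $C|q|^2 3^{-n/4}$. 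Then I would run the ``law of large numbers for the subcube averages'' step: the map $q \mapsto \langle \nabla [u_z]_{\Pa}\rangle_{z+\cu_n}$ is, up to coarsening errors controlled by Lemma~\ref{l.restrictquadmicro} and Lemma~\ref{l.restrictquadcoarsen}, a linear functional of the minimizer, and the second-variation/quadratic-response inequality converts the energy gap into an $L^2$ bound on $\nabla[u]_{\Pa} - \nabla[u_z]_{\Pa}$ in the sense of spatial averages. This is precisely the content of Lemma~\ref{l.restrictquadcoarsen}, so the variance bound reduces to
\begin{equation*}
\E\left[ \left| \frac{1}{|\cu_{n+1}|}\langle \nabla[u]_\Pa\rangle_{\cu_{n+1}} - \frac{3^{-d}}{|\cu_n|}\sum_{z} \langle \nabla[u_z]_\Pa\rangle_{z+\cu_n} \right|^2 \right] \leq C|q|^2\left(\tau_n + \tau_{n+1} + 3^{-n/4}\right)
\end{equation*}
plus the variance of the average of the i.i.d.\ subcube quantities, which is $3^{-dn}$ times a bounded quantity (using the upper bounds \eqref{e.coarsegraduL2} for second moments and \eqref{e.munulocbounds}) and hence negligible. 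Combining everything and replacing $\ahom^{-1}_{\cu_{n}}$ by $\ahom^{-1}_{\cu_n}$ via \eqref{e.ahominvclose} (only a scale-$n$ difference, absorbed into $\tau_n + 3^{-n/4}$) gives \eqref{e.weakconv}. Note one should state the result at scale $\cu_n$ directly; if the inductive step above naturally produces scale $\cu_{n+1}$, one reindexes, which only changes $\tau_n$ to $\tau_{n-1}$, again harmless since the final iteration (Lemma~\ref{l.omegaiteration}) sums all the $\tau_k$.

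The main obstacle is the coarsening: the naive identity from \cite{AS} is exact because in the uniformly elliptic continuum setting $\langle \nabla u \rangle_{\cu} $ is genuinely additive over a partition of $\cu$ and the minimizer's spatial average is literally the slope. Here $\langle \nabla[u]_\Pa\rangle$ is only \emph{approximately} additive — the discrepancy comes from edges straddling subcube boundaries and from cubes $z+\cu_n \notin \Pas$ — and the passage from $\nabla u$ to $\nabla[u]_\Pa$ loses factors of $\size(\cu_\Pa(\cdot))^{2d-2}$ or worse. Controlling these requires exactly the machinery already assembled: the boundary-layer removal and the Meyers higher-integrability estimate in the proof of Lemma~\ref{l.restrictquadcoarsen} to ``Hölder away'' the random weights without losing an exponent, plus the minimal-scale bounds from Proposition~\ref{p.minimalscales} to ensure the weight sums are $O(1)$ above a scale with good stochastic integrability. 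So concretely the proof of this lemma is short \emph{given} Lemmas~\ref{l.restrictquadmicro} and~\ref{l.restrictquadcoarsen}; the genuine work is funneled into those two, and the present lemma is essentially their bookkeeping combination together with the energy-gap estimate \eqref{e.almostmonotone} and the stationarity/localization inputs \eqref{e.munustat}, \eqref{e.localization}.
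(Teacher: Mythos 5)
There is a genuine gap: your argument never produces a source of smallness for the fluctuations, and the reduction you propose is circular. The paper's proof (following the construction in~\cite[Lemma 3.2]{AS}, which you cite but do not implement) hinges on a deterministic ingredient that is absent from your sketch: a bounded vector field $\g$ on $\cu_{n+1}$, built from a smooth solenoidal field, which equals $e$ on the central subcube $\cu_n$ (see~\eqref{e.gconstant}) and is approximately divergence-free in the sense of~\eqref{e.gdivfree}. Discrete integration by parts then makes the pairing $\left\langle \nabla [u(\cdot,\cu_{n+1},q)]_{\Pa}, \g\right\rangle_{\cu_{n+1}}$ \emph{deterministically} of size $O(3^{-n})$ times the $L^1$ norm of $\nabla[u]_\Pa$; combining this with Lemma~\ref{l.restrictquadcoarsen} (to replace the large-cube minimizer by the subcube minimizers) and a boundary-edge estimate yields a bound $C|q|^2(\tau_n+3^{-n/4})$ on the \emph{second moment of the sum} $\sum_z \frac{1}{|\cu_n|}\langle \nabla[u(\cdot,z+\cu_n,q)]_\Pa,\g\rangle_{z+\cu_n}$, which is~\eqref{e.onevarest}. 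Only then does independence enter, and in the opposite direction from your sketch: since the localized subcube quantities are independent, the variance of the single central term (where $\g\equiv e$, i.e.\ the quantity in~\eqref{e.weakconv} via~\eqref{e.ahominvspatavg}) is at most the sum of the variances, which equals the variance of the sum, which is at most the second moment of the sum. Without $\g$ there is simply no mechanism forcing the sum of the subcube spatial averages to have small second moment, so the lemma is not a "bookkeeping combination" of Lemmas~\ref{l.restrictquadmicro} and~\ref{l.restrictquadcoarsen}.

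Concretely, two steps of your route fail. First, reducing to the control of $3^{-d}\sum_z \E\bigl[\,\bigl|\frac{1}{|\cu_n|}\langle\nabla[u_z]_\Pa\rangle_{z+\cu_n}-\ahom^{-1}_{\cu_n}q\bigr|^2\bigr]$ is no reduction at all: by (approximate) stationarity each summand \emph{is} the left side of~\eqref{e.weakconv}. Second, the claim that "the variance of the average of the i.i.d.\ subcube quantities is $3^{-dn}$ times a bounded quantity" is incorrect: with $m=n+1$ there are only $3^d$ subcubes, so this variance is $3^{-d}$ times the variance of a single subcube average, i.e.\ comparable to the very quantity you are trying to bound; and even granting many subcubes, knowing that (i) the large-cube average is $L^2$-close to the average of the subcube averages and (ii) the variance of the \emph{average} of i.i.d.\ terms is small does not control the variance of \emph{one} term. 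What is needed is a bound on the second moment of the (unnormalized) sum, and that is precisely what the divergence-free test field provides. The remaining ingredients you list (localization via~\eqref{e.localization} and Lemma~\ref{l.ulocloccoarse}, the energy-gap control by $\tau_n$, the Meyers/minimal-scale machinery inside Lemma~\ref{l.restrictquadcoarsen}) are indeed the ones the paper uses, but they cannot close the argument without this step.
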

\begin{proof}

Fix a unit direction $e\in \partial B_1$. 

\smallskip

\emph{Step 1.} We construct a (deterministic) compactly supported, bounded and solenoidal vector field $\g$ on $\cu_{n+1}$ which is constant and equal to $e$ on $\cu_n$. Precisely, the claim is that $\g$ is a vector field on $\cu_n$ satisfying
\begin{equation}
\label{e.gbounded}
\sup_{(x,y) \in \Ed(\cu_{n+1})} \left| \g(x,y) \right| \leq C,
\end{equation}
is constant in the middle third subcube, i.e., 
\begin{equation}
\label{e.gconstant}
\g(x,y) = e \cdot (x-y) \quad \mbox{for all} \ x,y\in \cu_{n} \ \mbox{with}  \ x\sim y,
\end{equation}
and is almost (discretely) divergence-free in the sense that, for every $w:\cu_{n+1}\to \R$, we have 
\begin{equation}
\label{e.gdivfree}
\left| \left \langle \nabla w, \g \right\rangle_{\cu_{n+1}} \right| \leq C3^{-n} \sum_{x\in \cu_{n+1}} \left| \nabla w \right|(x).
\end{equation}
According to the proof of~\cite[Lemma 3.2]{AS}, there exists a smooth, (continuum) vector field $\mathbf{g} \in C^\infty(\Rd;\Rd)$ satisfying, for every $k\in\N$,
\begin{equation*}
\supp \mathbf{g} \subseteq [-3,3]^d,  \quad 
\mathbf{g}(x) = e \ \mbox{in} \ [-1,1]^d, \quad
\nabla \cdot \mathbf{g} = 0 \ \mbox{in} \ \Rd, \quad
\left| \nabla^k \mathbf{g} \right| \leq C(k,d). 
\end{equation*}
We define the (discrete) vector field $\g$ by
\begin{equation*}
\g(x,y) = \frac12\left(\mathbf{g}\left(3^{-n}x\right)+\mathbf{g}\left(3^{-n}y\right) \right) \cdot (x-y), \quad x,y\in \cu_{n+1}. 
\end{equation*}
To check~\eqref{e.gdivfree}, we fix $w:\cu_{n+1}\to \R$, assume without loss of generality that $\left( w \right)_{\cu_{n+1}}=0$, and compute
\begin{align*}
\left \langle \nabla w, \g \right\rangle_{\cu_{n+1}} 
& = \frac12\sum_{x,y\in \cu_{n+1},\, x\sim y} \left( w(x) - w(y) \right)\left(\mathbf{g}\left(3^{-n}x\right)+\mathbf{g}\left(3^{-n}y\right) \right) \cdot (x-y) \\
& = \sum_{x\in \cu_{n+1}} w(x) \sum_{y\sim x} \left(\mathbf{g}\left(3^{-n}x\right)+\mathbf{g}\left(3^{-n}y\right) \right) \cdot (x-y).
\end{align*}
For each $x$, we have
\begin{equation*}
\sum_{y\sim x} \mathbf{g}\left(3^{-n}x\right) \cdot (x-y) 
=  \mathbf{g}\left(3^{-n}x\right)  \sum_{y\sim x}\cdot (x-y) = 0,
\end{equation*}
and therefore 
\begin{align*}
 \sum_{y\sim x} \left(\mathbf{g}\left(3^{-n}x\right)+\mathbf{g}\left(3^{-n}y\right) \right) \cdot (x-y) 
 =
\sum_{y\sim x} \left(\mathbf{g}\left(3^{-n}y\right)-\mathbf{g}\left(3^{-n}x\right) \right) \cdot (x-y).
\end{align*}
The divergence-free condition and $\left| \nabla^2 \mathbf{g} \right| \leq C$ yield, for each fixed $x$,
\begin{align*}
\left|
\sum_{y\sim x} \left(\mathbf{g}\left(3^{-n}y\right)-\mathbf{g}\left(3^{-n}x\right) \right) \cdot (x-y) \right| 
& \leq
\left|
\sum_{y\sim x} (x-y) \cdot \nabla \mathbf{g} \left(3^{-n}x\right) (y-x) \right|  + C3^{-2n} \\
& =2 \left|  \nabla \cdot \mathbf{g}\left(3^{-n}x\right) \right| + C3^{-2n} 
 = C3^{-2n}.
\end{align*}
Combining the above yields
\begin{align*}
\left| \left \langle \nabla w, \g \right\rangle_{\cu_{n+1}} \right|
& \leq  \sum_{x\in \cu_{n+1}}\left| w(x) \right| \left| \sum_{y\sim x} \left(\mathbf{g}\left(3^{-n}x\right)+\mathbf{g}\left(3^{-n}y\right) \right) \cdot (x-y) \right| \\
& \leq C3^{-2n} \sum_{x\in \cu_{n+1}} \left|w(x) \right|. 
\end{align*}
The Poincar\'e inequality and $\left( w \right)_{\cu_{n+1}}=0$ now yields~\eqref{e.gdivfree}. 

\smallskip

\emph{Step 2.} We next show that 
\begin{multline}
\label{e.Holderaway}
\left|  
 \left\langle \nabla \left[ u(\cdot,\cu_{n+1},q)  \right]_{\mathcal{P}}, \g \right\rangle_{\cu_{n+1}}
-  \sum_{z\in 3^n\Zd\cap\cu_{n+1}} 
 \left\langle \nabla \left[ u(\cdot,\cu_{n+1},q)  \right]_{\mathcal{P}}, \g \right\rangle_{z+\cu_{n}}
\right|
\\
\leq \O_{\frac{2}{4d-3}} \left( C|\cu_{n+1}||q|3^{-\frac n4} \right). 
\end{multline}
For any vector field $F$, 
\begin{equation*} \label{}
\left\langle F, \g \right\rangle_{\cu_{n+1}} - \sum_{z\in 3^n\Zd\cap\cu_{n+1}} 
 \left\langle F,\g \right\rangle_{z+\cu_{n}}
= \sum_{(x,y) \in D} \left\langle F,\g \right\rangle_{z+\cu_{n}}
\end{equation*}
where $D$ is the set of edges $(x,y)$ such that $x,y\in\cu_{n+1}$ and $x$ and $y$ belong to different subcubes of the form $z+\cu_n$, $z\in \{ -3^n,0,3^n\}^d$. Notice that $|D| \leq C3^{-n}|\cu_{n+1}|$. By the H\"older inequality and the bound $| \g | \leq C$, we therefore obtain
\begin{align*} \label{}
\left| \left\langle F, \g \right\rangle_{\cu_{n+1}} - \sum_{z\in 3^n\Zd\cap\cu_{n+1}} 
 \left\langle F,\g \right\rangle_{z+\cu_{n}} \right|
&
\leq \sum_{(x,y) \in D} \left\langle F,\g \right\rangle_{z+\cu_{n}}
\\ \notag &
\leq C\int_D \left| F\right|(x)\,dx 
\\ \notag &
\leq C |D|^{\frac12} \left( \int_{D} \left| F\right|^2(x)\,dx \right)^{\frac12}
\\ \notag &
\leq C3^{-\frac n2} |\cu_{n+1}|^\frac12 \left( \int_{\cu_{n+1}} \left| F\right|^2(x)\,dx \right)^{\frac12}.
\end{align*}
Applying this inequality with $F=\nabla \left[ u(\cdot,\cu_{n+1},q)  \right]_{\mathcal{P}}$ and then using~\eqref{e.coarsegraduL2} with $\delta = \frac 12$ to bound the right side of the result, we obtain~\eqref{e.Holderaway}.

\smallskip

\emph{Step 3.} The conclusion. We apply~\eqref{e.gdivfree} with  $w = \left[ u(\cdot,\cu_{n+1},q)  \right]_{\mathcal{P}}$ and then use~\eqref{e.uupbound},~\eqref{e.restrictquadE} and~\eqref{e.Holderaway} to get
\begin{equation} \label{e.onevarest}
\E\left[ \left| \sum_{z\in 3^n\Zd\cap\cu_{n+1}} 
 \frac 1{|\cu_n|}\left\langle \nabla \left[ u(\cdot,z+\cu_{n},q)  \right]_{\mathcal{P}}, \g \right\rangle_{z+\cu_{n}} \right|^2 \right]
\leq  C|q|^2\left( \tau_n  + 3^{-\frac n4} \right). 
\end{equation}
We conclude by noticing that, since the elements in the sum on the left side are almost $\P$--independent, the variance of each term in the sum should be controlled by the variance of the entire sum. To make this precise, we prove the following inequality:
\begin{multline}
\label{e.switchloc}
\E \Bigg[  \bigg| \frac{1}{|\cu_n|} \left\langle \nabla\left[ u(\cdot,z+\cu_{n},q)  \right]_{\mathcal{P}} ,  \g(x,y) \right\rangle_{ z+\cu_n}  
\\
- 
 \frac{1}{|\cu_n|} \left\langle \nabla\left[ u^{(\lceil n/2\rceil)}_\mathrm{loc}(\cdot,z+\cu_{n},q)  \right]_{\mathcal{P}_{\mathrm{loc}}(z+\cu_n)} ,  \g(x,y) \right\rangle_{ z+\cu_n}  
\bigg|^2 
\Bigg] 
\leq C3^{-\frac n4}.
\end{multline}
To prove this inequality, we first appeal to~\eqref{e.ulocloccoarse}, which gives 
\begin{multline*}
\E \Bigg[  \bigg| \frac{1}{|\cu_n|}  \left\langle \nabla\left[ u(\cdot,z+\cu_{n},q)  \right]_{\mathcal{P}} ,  \g(x,y) \right\rangle_{\left( z+\cu_n\right)^{(\lceil n/2\rceil)}}  
\\
- 
 \frac{1}{|\cu_n|} \left\langle \nabla\left[ u^{(\lceil n/2\rceil)}_\mathrm{loc}(\cdot,z+\cu_{n},q)  \right]_{\mathcal{P}_{\mathrm{loc}}(z+\cu_n)} ,  \g(x,y) \right\rangle_{\left( z+\cu_n\right)^{(\lceil n/2\rceil)}}  
\bigg|^2 
\Bigg] 
\leq C3^{-\frac n4}.
\end{multline*}
and combine it with the result of the following computation, which is an application of~\eqref{e.coarsegraduL2} (with $\delta = \frac14$):
\begin{align*}
\lefteqn{\E \Bigg[  \left|  \frac{1}{|\cu_n|} \left\langle \nabla\left[ u(\cdot,z+\cu_{n},q)  \right]_{\mathcal{P}} ,  \g(x,y) \right\rangle_{\left( z+\cu_n\right) \setminus \left( z+\cu_n\right)^{(\lceil n/2\rceil)}}  \right|^2
\Bigg] } \qquad & \\
& \leq C \E \Bigg[  \left|  \frac{1}{|\cu_n|} \int_{\left( z+\cu_n\right) \setminus \left( z+\cu_n\right)^{(\lceil n/2\rceil)}} \left| \nabla\left[ u(\cdot,z+\cu_{n},q)  \right]_{\mathcal{P}} \right|(x) \, dx  \right|^2
\Bigg]\\
& \leq  C \E \Bigg[  \frac{\left| \left( z+\cu_n\right) \setminus \left( z+\cu_n\right)^{(\lceil n/2\rceil)} \right|}{| \cu_n|}  \fint_{ z+\cu_n} \left| \nabla\left[ u(\cdot,z+\cu_{n},q)  \right]_{\mathcal{P}}  \right|^2(x) \, dx
\Bigg]\\
& \leq  C3^{-\frac n4 }.
\end{align*}

Since 
\begin{equation*} \label{}
 \left\langle \nabla\left[ u^{(\lceil n/2\rceil)}_\mathrm{loc}(\cdot,z+\cu_{n},q)  \right]_{\mathcal{P}_{\mathrm{loc}}(z+\cu_n)} ,  \g(x,y) \right\rangle_{\left( z+\cu_n\right)^{(\lceil n/2\rceil)}}  
 \quad \mbox{is $\F(z+\cu_n)$--measurable,}
\end{equation*}
and $|\g|\leq C$ by~\eqref{e.gbounded}, we may use independence (in the second line in the display below), ~\eqref{e.switchloc} and the triangle inequality (twice, in the first and third lines) and~\eqref{e.onevarest} (in the fourth line) to obtain
\begin{align*}
\lefteqn{ 
\var\left[ \frac{1}{|\cu_n|} \left\langle \nabla\left[ u(\cdot,\cu_{n},q)  \right]_{\mathcal{P}} ,  \g(x,y) \right\rangle_{\cu_n} \right]
} \quad & \\
& \leq \sum_{z\in 3^n\Zd\cap\cu_{n+1}} \var\left[  \frac{1}{|\cu_n|} \left\langle \nabla\left[ u^{(\lceil n/2\rceil)}_\mathrm{loc}(\cdot,z+\cu_{n},q)  \right]_{\mathcal{P}_{\mathrm{loc}}(z+\cu_n)},  \g(x,y) \right\rangle_{\left( z+\cu_n\right)^{(\lceil n/2\rceil)}}  \right] + C3^{-\frac n4} \\
& =\var\left[   \sum_{z\in 3^n\Zd\cap\cu_{n+1}} \frac{1}{|\cu_n|} \left\langle \nabla\left[ u^{(\lceil n/2\rceil)}_\mathrm{loc}(\cdot,z+\cu_{n},q)  \right]_{\mathcal{P}_{\mathrm{loc}}(z+\cu_n)},  \g(x,y) \right\rangle_{\left( z+\cu_n\right)^{(\lceil n/2\rceil)}}  \right] + C3^{-\frac n4} \\
& \leq \var\left[ \sum_{z\in 3^n\Zd\cap\cu_{n+1}}\frac{1}{|\cu_n|}  \left\langle \nabla\left[ u(\cdot,z+\cu_{n},q)  \right]_{\mathcal{P}} ,  \g(x,y) \right\rangle_{ z+\cu_n} \right]  + C3^{-\frac n4}  \\
& \leq C|q|^2\left( \tau_n  + 3^{-\frac n4} \right). 
\end{align*}
Using~\eqref{e.gconstant}, we may rewrite this as 
\begin{equation*} \label{}
\var\left[ \frac{1}{|\cu_n|} \left\langle \nabla\left[ u(\cdot,\cu_{n},q)  \right]_{\mathcal{P}} , e \right\rangle_{\cu_n} \right]
\leq  C\left( \tau_n  + 3^{-\frac n4} \right). 
\end{equation*}
Summing this over $e\in \{\e_1,\ldots,\e_d\}$ and recalling~\eqref{e.ahominvspatavg} completes the proof.
\end{proof}

We next pass from control of the spatial averages of $\nabla \left[u\right]_{\Pa}$ given in the previous lemma to control over the function~$\left[ u \right]_{\Pa}$ itself. The ingredients for this are~Lemmas~\ref{l.restrictquadcoarsen} and~\ref{l.weakconv} and the (discrete) multiscale Poincar\'e inequality (Proposition~\ref{p.mspoincare}).

\begin{lemma}
\label{l.flatness}
There exists $C(d,\p,\lambda)<\infty$ such that, for $n\in\N$ and $p,q \in \Rd$,
\begin{multline} 
\label{e.flatness}
\E \left[ \fint_{\cu_{n+1}} \left| \left[ u(\cdot,\cu_{n+1},q) \right]_{\Pa}\!(x)  - x\cdot \ahom^{-1}_{\cu_n} q \right|^2\,dx   \right]   
\\
\leq C|q|^2 3^{2n} \left(  3^{-\left(\frac{1}{4d+2}\right)n} + 3^{-n} \sum_{k=0}^n 3^k \tau_k\right).
\end{multline}
and
\begin{multline} 
\label{e.flatnessv}
\E \left[ \fint_{\cu_{n+1}} \left|  \left[ v(\cdot,\cu_{n+1},p) \right]_{\Pa}\!(x)- x\cdot p \right|^2\,dx   \right] 
\\
\leq C|p|^2 3^{2n} \left(  3^{-\left(\frac{2}{2d+1}\right)n}  + 3^{-n} \sum_{k=0}^n 3^k \tau_k\right).
\end{multline}
\end{lemma}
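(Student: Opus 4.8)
\textbf{Proof strategy for Lemma~\ref{l.flatness}.}
The plan is to deduce the two estimates from the multiscale Poincar\'e inequality (Proposition~\ref{p.mspoincare}) applied to the mean-zero functions $w:=\left[ u(\cdot,\cu_{n+1},q)\right]_{\Pa}(\cdot) - (\cdot)\cdot \ahom^{-1}_{\cu_n}q$ and (for~\eqref{e.flatnessv}) $w:=\left[ v(\cdot,\cu_{n+1},p)\right]_{\Pa}(\cdot) - (\cdot)\cdot p$, after adjusting the additive constants by the (harmless) difference between the spatial average of the coarsened function and its normalizing convention in~\eqref{e.additiveconstants}. The multiscale Poincar\'e inequality bounds $\|w\|_{\underline{L}^2(\cu_{n+1})}$ by $\|\nabla w\|_{\underline{L}^2(\cu_{n+1})}$ plus a sum over dyadic (here triadic) scales $k$ of $3^k$ times the $\underline{L}^2$ norm of the local spatial averages $z\mapsto \langle \nabla w\rangle_{z+\cu_k}$ over $z\in 3^k\Zd\cap\cu_{n+1}$. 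So the main task is to control, for each $k\le n$, the quantity
\begin{equation*}
\E\left[ \frac{3^{-d(n+1-k)}}{|\cu_k|^2}\sum_{z\in3^k\Zd\cap\cu_{n+1}} \left| \left\langle \nabla \left[ u(\cdot,\cu_{n+1},q)\right]_{\Pa}\right\rangle_{z+\cu_k} - \ahom^{-1}_{\cu_k}q \right|^2 \right],
\end{equation*}
and the analogous quantity with $\ahom^{-1}_{\cu_k}q$ replaced by $p$ for $v$. The gradient term $\|\nabla w\|_{\underline{L}^2}$ is handled directly by~\eqref{e.coarsegraduL2} (resp.~\eqref{e.coarsegradvL2}) with a small~$\delta$, giving $\O_{\cdot}(C|q|^2 3^{2\delta\cdot})$ times a trivial bound, which is absorbed into the stated error.

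For the local-average term at scale $k$, I would split the difference $\langle \nabla[u(\cdot,\cu_{n+1},q)]_\Pa\rangle_{z+\cu_k} - \ahom^{-1}_{\cu_k}q$ into two pieces: first, $\langle \nabla[u(\cdot,\cu_{n+1},q)]_\Pa\rangle_{z+\cu_k} - \langle \nabla[u(\cdot,z+\cu_k,q)]_\Pa\rangle_{z+\cu_k}$, which is the quantity controlled on average over $z$ by Lemma~\ref{l.restrictquadcoarsen} (provided $k\ge\left(\frac{4d+1}{4d+2}\right)(n+1)$, so for the ``large'' scales) — it costs $C|q|^2(\sum_{j=k}^{n+2}\tau_j + 3^{-k/4})$; and second, $\langle \nabla[u(\cdot,z+\cu_k,q)]_\Pa\rangle_{z+\cu_k} - \ahom^{-1}_{\cu_k}q$, which by stationarity of the localized quantities (up to the exponentially small error from Proposition~\ref{p.localization} and Lemma~\ref{l.ulocloccoarse} comparing $u$ with $u^{(\lceil k/2\rceil)}_{\mathrm{loc}}$) has the same distribution as $\langle\nabla[u(\cdot,\cu_k,q)]_\Pa\rangle_{\cu_k}-\ahom^{-1}_{\cu_k}q$, whose second moment is exactly $\le C|q|^2(\tau_k + 3^{-k/4})$ by Lemma~\ref{l.weakconv}. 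For the ``small'' scales $k < \left(\frac{4d+1}{4d+2}\right)(n+1)$ where Lemma~\ref{l.restrictquadcoarsen} does not apply, I would instead use the crude bound $3^k\|\langle\nabla[u]_\Pa\rangle_{z+\cu_k}\|_{\underline L^2} \le 3^k \|\nabla[u]_\Pa\|_{\underline L^2(\cu_{n+1})}$, and since the sum $\sum_{k<(\frac{4d+1}{4d+2})(n+1)}3^k$ is geometric with top term $\lesssim 3^{(\frac{4d+1}{4d+2})n}$, multiplying by the $\O(C|q|)$ bound on $\|\nabla[u]_\Pa\|_{\underline L^2}$ produces precisely an error of order $3^{2n}\cdot 3^{-n/(4d+2)}|q|^2$, matching the first term on the right of~\eqref{e.flatness}. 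Combining, the $3^k$-weighted sum over all scales telescopes: $\sum_{k=0}^n 3^k (\sum_{j=k}^{n+2}\tau_j) \le C\sum_{j=0}^{n+2}3^j\tau_j$ after swapping the order of summation, which gives the $3^{-n}\sum 3^k\tau_k$ term after dividing by $3^{2n}$ appropriately (careful bookkeeping of the $3^{n+1}$ normalization in the multiscale Poincar\'e inequality is needed here, but it is routine). Taking expectations throughout and using~\eqref{e.Oavgs} to convert the $\O_s$ bounds into bounds on expectations finishes the argument; the case of $v$ is identical, using~\eqref{e.coarsegradvL2}, the $v$-half of Lemma~\ref{l.restrictquadcoarsen}, and the fact that the analogue of Lemma~\ref{l.weakconv} for $v$ (spatial average of $\nabla[v]_\Pa$ close to $p$) follows from Lemma~\ref{l.nablavaffine} together with Lemma~\ref{l.restrictquadcoarsen}.

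The main obstacle I anticipate is the bookkeeping at the threshold scale $k\approx\left(\frac{4d+1}{4d+2}\right)n$: one must verify that the exponent $\frac{1}{4d+2}$ (resp.\ $\frac{2}{2d+1}$ for $v$, which is weaker because the $\nu$-side bounds in Lemma~\ref{l.nablavaffine} and~\eqref{e.coarsegradvL2} have worse stochastic integrability and the comparison is only $\size(\cu)^{-1/2}$) is exactly what comes out of balancing the geometric sum $\sum 3^k$ over small scales against the $3^{-k/4}$ and $3^{-k/2}$ decays coming from Lemmas~\ref{l.restrictquadmicro}--\ref{l.restrictquadcoarsen} over the large scales, and that the restriction $n\in[(\frac{4d+1}{4d+2})m,m)$ in Lemma~\ref{l.restrictquadcoarsen} is compatible with the range of $k$ we need. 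A secondary technical point is ensuring the stationarity argument for the second piece at scale $k$ is legitimate: $u(\cdot,z+\cu_k,q)$ is \emph{not} stationary (since $\Pa$ is not), so one genuinely must route through the localized, $3^k\Zd$-stationary quantities $u^{(\lceil k/2\rceil)}_{\mathrm{loc}}(\cdot,z+\cu_k,q)$ and pay the comparison errors from Lemma~\ref{l.ulocloccoarse}, checking these are summable against the $3^k$ weights — they are, since those errors decay like $3^{-k/4}$ with stretched-exponential tails.
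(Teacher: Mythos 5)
For the first estimate~\eqref{e.flatness}, your route is exactly the paper's: apply the multiscale Poincar\'e inequality (Proposition~\ref{p.mspoincare}) to $[u(\cdot,\cu_{n+1},q)]_{\Pa}(x)-x\cdot\ahom^{-1}_{\cu_n}q$ with bottom mesoscale $n_0\approx\left(\tfrac{4d+1}{4d+2}\right)(n+1)$ (the paper absorbs all smaller scales into the $3^{2n_0}\|\nabla\cdot\|^2$ term rather than summing them crudely, but this is equivalent to your geometric-sum bound), control the gradient term by~\eqref{e.coarsegraduL2} with $\delta=\tfrac{1}{4d+2}$, and handle the scale-$k$ averages by splitting through the subcube optimizers, using Lemma~\ref{l.restrictquadcoarsen} for one piece and Lemma~\ref{l.weakconv} (through the localized, $3^k\Zd$-stationary quantities, as you rightly insist) for the other. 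One small omission: your displayed target compares the local averages to $\ahom^{-1}_{\cu_k}q$, whereas the multiscale Poincar\'e inequality produces comparisons to the fixed slope $\ahom^{-1}_{\cu_n}q$; the bridge is~\eqref{e.ahominvclose}, which costs $C\bigl(3^{-k/4}+\sum_{j\geq k}\tau_j\bigr)$ and is precisely the third term in the paper's three-way split. Routine, but it must appear.

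The genuine problem is your claim that~\eqref{e.flatnessv} follows ``identically.'' First, your aside has the comparison backwards: $\tfrac{2}{2d+1}=\tfrac{4}{4d+2}>\tfrac{1}{4d+2}$, so~\eqref{e.flatnessv} asserts a \emph{stronger} decay than~\eqref{e.flatness}, not a weaker one. Second, a literal repetition of the $u$-argument cannot produce that exponent: Lemma~\ref{l.restrictquadcoarsen} forces the threshold mesoscale $n_0\gtrsim\tfrac{4d+1}{4d+2}\,n$, so the boundary-layer term $3^{2n_0}\fint|\nabla[v]_{\Pa}-p|^2$ (or equivalently your crude geometric sum over small scales) is already of size at least $3^{2n-\frac{n}{2d+1}+\delta n}$, which exceeds the claimed $3^{2n-\frac{2n}{2d+1}}$; the cross term $3^{\,n+\frac34 n_0}$ coming from the $3^{-k/4}$ error in~\eqref{e.restrictquadE} is likewise too large. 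So, as sketched, your argument proves~\eqref{e.flatnessv} only with $3^{-n/(4d+2)}$ in place of $3^{-(\frac{2}{2d+1})n}$, and your stated reason for the discrepancy between the two exponents is not the right one: any gain for $v$ would have to come from exploiting the much better local-average estimate~\eqref{e.nablavaffine} (decay $3^{-k}$ versus $\tau_k+3^{-k/4}$), which your balancing does not do. In fairness, the paper itself dispenses with the $v$-case in one sentence, and only the weaker exponent survives into Lemma~\ref{l.omegadecay} and beyond, so nothing downstream is affected; but as a proof of the inequality~\eqref{e.flatnessv} as stated, your proposal falls short, and you should either track down the stronger exponent or note explicitly that the weaker one suffices for the sequel.
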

\begin{proof}
We first give the proof of~\eqref{e.flatness}. The main tool for passing from spatial averages of gradients to strong norms for the function itself is the multiscale Poincar\'e inequality (Proposition~\ref{p.mspoincare}), which we apply to the function 
$$
x\mapsto \left[ u (\cdot,\cu_{n+1},q)\right]_{\Pa}\! (x) - x\cdot \ahom^{-1}_{\cu_n} q.
$$
We note that this function has zero mean on~$\cu_{n+1}$ by the chosen normalization~\eqref{e.additiveconstants}. Proposition~\ref{p.mspoincare} yields, for $n_0:=\left\lceil \left(\frac{4d+1}{4d+2} \right) (n+1)\right\rceil$, 
\begin{multline}
\label{e.mspoincareappl}
\fint_{\cu_{n+1}} \left| \left[ u (\cdot,\cu_{n+1},q)\right]_{\Pa}\! (x) - x\cdot \ahom^{-1}_{\cu_n} q \right|^2,dx   \\ 
 \leq C 3^{2n_0} \fint_{\cu_{n+1}} \left| \nabla \left[ u (\cdot,\cu_{n+1},q) \right]_{\Pa}\!(x) - \ahom^{-1}_{\cu_n} q \right|^2\,dx \\ 
+ C \left( \sum_{k=n_0}^n 3^k \left( 3^{-d(n-k)} \sum_{z\in 3^k\Zd \cap \cu_{n+1}} \left| \frac{1}{|\cu_k|} \left\langle \nabla \left[ u(\cdot,\cu_{n+1},q) \right]_{\Pa}\!  \right\rangle_{z+\cu_k} - \ahom^{-1}_{\cu_n} q \right|^2 \right)^{\frac12}  \right)^{2}.
\end{multline}
The first term on the right side is controlled by the triangle inequality,~\eqref{e.coarsegraduL2} (with $\delta = \frac{1}{4d+2}$) and~\eqref{e.ahominvbound}, which give
\begin{align}
\label{e.firsttermmsp}
\lefteqn{
\fint_{\cu_{n+1}} \left| \nabla \left[ u(\cdot,\cu_{n+1},q) \right]_{\Pa}\!(x)  - \ahom^{-1}_{\cu_n} q \right|^2\,dx  
} \qquad & \\ \notag
& \leq \fint_{\cu_{n+1}} \left| \nabla \left[ u \right]_{\Pa}\! (\cdot,\cu_{n+1},q)\right|^2\,dx  + \left| \ahom^{-1}_{\cu_n} q \right|^2 \\ \notag
& \leq \O_{\frac1{4d-3}}\left( C|q|^2 3^{\frac{n}{4d+2}} \right) + C|q|^2  \leq  \O_{\frac1{4d-3}}\left( C|q|^2 3^{\frac{n}{4d+2}} \right).
\end{align}
To bound the second term on the right side of~\eqref{e.mspoincareappl}, we have to estimate the expectation of the (square of the) difference between~$\frac{1}{|\cu_k|} \left\langle \nabla \left[ u(\cdot,\cu_{n+1},q) \right]_{\Pa}\!  \right\rangle_{z+\cu_k}$ and~$\ahom^{-1}_{\cu_n} q$ in all successors $z+\cu_k$ of~$\cu_{n+1}$, down to the mesoscale of size $3^{n_0}$. The claim is that, for every $k \geq n_0$,
\begin{multline}
\label{e.mspoinbigshow}
3^{-d(n-k)} \sum_{z\in 3^k\Zd\cap \cu_{n+1}} \E \left[ \left| \frac{1}{|\cu_k|}\left\langle \nabla \left[ u (\cdot,\cu_{n+1},q)  \right]_{\Pa}\!\right\rangle_{z+\cu_k} - \ahom^{-1}_{\cu_n} q \right|^2 \right] \\
\leq C|q|^2 \left( \sum_{m=k}^n \tau_m + 3^{-\frac k4} \right).
\end{multline}
This inequality is a  consequence of what we have shown above. Indeed, by the inequality $(a+b+c)^2\leq 3(a^2+b^2+c^2)$,~\eqref{e.restrictquadE},~\eqref{e.ahominvbound},~\eqref{e.ahominvclose} and~\eqref{e.weakconv}, we have
\begin{align*}
\lefteqn{
3^{-d(n-k)} \sum_{z\in 3^k\Zd\cap \cu_{n+1}} \E \left[ \left| \frac{1}{|\cu_k|}\left\langle \nabla \left[ u  (\cdot,\cu_{n+1},q)\right]_{\Pa}\! \right\rangle_{z+\cu_k} - \ahom^{-1}_{\cu_n} q \right|^2 \right]
} \qquad & \\
& \leq \frac{3^{1-d(n-k)}}{|\cu_k|^2}\sum_{z\in 3^k\Zd\cap \cu_{n+1}}
 \E \left[ \left| \left\langle \nabla \left[ u  (\cdot,\cu_{n+1},q)\right]_{\Pa}\! \right\rangle_{z+\cu_k} 
 -   \left\langle \nabla \left[ u  (\cdot,z+\cu_{k},q)\right]_{\Pa}\! \right\rangle_{z+\cu_k}
  \right|^2 \right]
\\
& \qquad + 3^{1-d(n-k)}\sum_{z\in 3^k\Zd\cap \cu_{n+1}}  \E \left[ \left| \frac{1}{|\cu_k|} \left\langle \nabla \left[ u (\cdot,z+\cu_{k},q) \right]_{\Pa}\! \right\rangle_{z+\cu_k} - \ahom^{-1}_{\cu_k} q \right|^2 \right] \\
& \qquad + 3|q|^2\left| \ahom^{-1}_{\cu_k} - \ahom^{-1}_{\cu_{n}} \right|^2 \\
& \leq C|q|^2 \left( \sum_{m=k}^n \tau_m + 3^{-\frac k4} \right). 
\end{align*}
This is~\eqref{e.mspoinbigshow}.

\smallskip

We next combine~\eqref{e.mspoinbigshow} with~\eqref{e.mspoincareappl} and~\eqref{e.firsttermmsp}, after taking the expectation of the latter two inequalities, to obtain the estimate
\begin{equation*}
\E \left[ \fint_{\cu_{n+1}} \left| \left[ u (\cdot,\cu_{n+1},q)\right]_{\Pa}\! (x) - x\cdot \ahom^{-1}_{\cu_n} q \right|^2\,dx \right] 
\leq C \left( 3^{2n_0+ \frac{n}{4d+2}} |q|^2 + \E \left[ \left(\sum_{k=n_0}^n 3^k X_k^{\frac12} \right)^2 \right] \right) 
\end{equation*}
where the random variable 
\begin{equation*}
X_k:= 3^{-d(n-k)} \sum_{z\in 3^k\Zd\cap \cu_{n+1}} \left| \frac{1}{|\cu_k|}\left\langle \nabla \left[ u(\cdot,\cu_{n+1},q) \right]_{\Pa}\!  \right\rangle_{z+\cu_k} - \ahom^{-1}_{\cu_n} q \right|^2
\end{equation*}
satisfies
\begin{equation*}
\E \left[ X_k \right] \leq C|q|^2 \left( \sum_{m=k}^n \tau_m + 3^{-\frac k4} \right).
\end{equation*}
Using the fact that 
\begin{equation*}
\left( \sum_{k=n_0}^n 3^k X_k^{\frac12} \right)^2 \leq C3^n \sum_{k=n_0}^n 3^k X_k
\end{equation*}
and taking expectations, we obtain 
\begin{align*}
\lefteqn{
\E \left[ \fint_{\cu_{n+1}} \left| \left[ u (\cdot,\cu_{n+1},q)  \right]_{\Pa}\!(x)- x\cdot \ahom^{-1}_{\cu_n} q \right|^2,dx \right]  
} \qquad & \\ \notag &
\leq C|q|^2 \left( 3^{2n_0+ \frac{n}{4d+2}} + 3^n\sum_{k=n_0}^n3^k  \left( \sum_{m=k}^n\tau_m+3^{-\frac k4} \right) \right) 
\\ \notag &
\leq C|q|^2  \left( 3^{2n_0+ \frac{n}{4d+2}} + 3^{n+\frac{3n_0}4} + 3^n\sum_{k=0}^n 3^k \tau_k\right).
\end{align*}
Observe from the definition of~$n_0$ that
\begin{equation*}
3^{n+\frac{3n_0}4} \leq C3^{2n_0+ \frac{n}{4d+2}} \leq C 3^{2n - \frac{n}{4d+2}}.
\end{equation*}
We thus obtain
\begin{equation*}
\E \left[ \fint_{\cu_{n+1}} \left| \left[ u (\cdot,\cu_{n+1},q)  \right]_{\Pa}\!(x)- x\cdot \ahom^{-1}_{\cu_n} q \right|^2,dx \right]
\leq C|q|^2 3^{2n} \left( 3^{-\left(\frac{1}{4d+2}\right)n} + 3^{-n} \sum_{k=0}^n 3^k \tau_k\right).
\end{equation*}
This completes the proof of~\eqref{e.flatness}. The proof of~\eqref{e.flatnessv} is so similar to that of~\eqref{e.flatness} that we omit the details. The only difference is that we do not need Lemma~\ref{l.weakconv} and, in place of~\eqref{e.weakconv}, use the estimate~\eqref{e.nablavaffine}, which implies
\begin{equation*}
\E \left[ \left| \frac{1}{|\cu_k|} \left\langle \nabla \left[ v(\cdot,z+\cu_{k},p) \right]_{\Pa} \right\rangle_{z+\cu_k} - p\right|^2 \right] \leq C|p|^2 3^{-k}.
\end{equation*}
This completes the proof of the lemma. 
\end{proof}

We now combine the previous lemma, the Caccioppoli inequality (Lemma~\ref{l.caccioppoli}) and quadratic response~\eqref{e.secondvarnu} to obtain an estimate on the expectation of a quantity very close to~$\omega(\cu_n,q)$ in terms of a weighted average of $\{\tau_k\}_{k\leq n}$.

\begin{lemma}
\label{l.omegadecay}
There exists a constant $C(d,\p,\lambda)<\infty$ such that for every $n\in\N$ and $q \in \Rd$,
\begin{equation}
\label{e.omegadecay}
\E \left[ \left| \nu\left(\cu_n,\ahom^{-1}_{\cu_n}q\right) - \mu(\cu_n,q) - q\cdot \ahom_{\cu_n}^{-1}q \right| \right] \\
\leq C |q|^2 \left(  3^{-\left(\frac{1}{4d+2}\right)n} + \sum_{k=0}^n 3^{k-n}\tau_k \right).
\end{equation}
\end{lemma}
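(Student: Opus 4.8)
The strategy is to compare $\nu(\cu_n,\ahom^{-1}_{\cu_n}q)$ and $\mu(\cu_n,q)$ by using the minimizer $u := u(\cdot,\cu_{n+1},q)$ of $\mu$ at the next scale as a test function for $\nu$, exploiting that $u$ is close (in the coarsened, $L^2$ sense) to the affine function $x\mapsto x\cdot\ahom^{-1}_{\cu_n}q$ with slope $p := \ahom^{-1}_{\cu_n}q$. Concretely, I would first recall from~\eqref{e.omegaform0} (applied at scale $n$) that
\begin{equation*}
\nu(\cu_n,p) - \mu(\cu_n,q) - \left\langle q, \nabla\!\left[ v(\cdot,\cu_n,p)\right]_{\Pa}\right\rangle_{\cl_\Pa(\cu_n)}\big/|\cl_\Pa(\cu_n)|
\end{equation*}
equals the (normalized) energy of $\nabla(v(\cdot,\cu_n,p) - u(\cdot,\cu_n,q))$, and that by Lemma~\ref{l.nablavaffine} the linear term is $p\cdot q + \O_{2/(2d-1)}(C|q|^2\size(\cu_n)^{-1/2}) = q\cdot\ahom^{-1}_{\cu_n}q$ up to a negligible error. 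Thus~\eqref{e.omegadecay} reduces to bounding $\E$ of the energy of $\nabla(v(\cdot,\cu_n,p)-u(\cdot,\cu_n,q))$. The trick from~\cite{AS} is to instead estimate this by quadratic response for $\nu$: by~\eqref{e.secondvarnu}, for \emph{any} $w\in\A_*(\cu_n)$, the energy of $\nabla(w - v(\cdot,\cu_n,p))$ is $2(\nu(\cu_n,p) - [\text{energy of }w\text{ minus linear term}])$; I would apply this with $w$ chosen to agree with (a suitable modification of) $u(\cdot,\cu_{n+1},q)$ so that $w$ is nearly affine on a large subcube.

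**Key steps, in order.** (1) Use Lemma~\ref{l.flatness}, specifically~\eqref{e.flatness} at scale $n$, which controls $\E\fint_{\cu_{n+1}}|[u(\cdot,\cu_{n+1},q)]_\Pa(x) - x\cdot\ahom^{-1}_{\cu_n}q|^2\,dx$ by $C|q|^2 3^{2n}(3^{-n/(4d+2)} + 3^{-n}\sum_k 3^k\tau_k)$; this says $u(\cdot,\cu_{n+1},q)$ is, in the coarsened $L^2$ sense, close to the affine function with slope $p=\ahom^{-1}_{\cu_n}q$. (2) Apply the Caccioppoli inequality (Lemma~\ref{l.caccioppoli}) on $\cu_n \subseteq \cu_{n+1}$ with $r \sim 3^n$ to convert this $L^2$-flatness of $u - p\cdot x$ into an $L^2$-smallness of $\nabla u \indc_{\{\a\neq0\}} - p$ on $\C_*(\cu_n)$: that is, $\E \frac1{|\cu_n|}\int_{\C_*(\cu_n)}|\nabla u\indc_{\{\a\neq0\}} - p\cdot(x-y)|^2 \leq C|q|^2(3^{-n/(4d+2)} + \sum_k 3^{k-n}\tau_k)$; care is needed since Caccioppoli gives $\nabla$ of $u - p\cdot x$, but $p\cdot x$ is affine so its gradient is exactly the constant field $p$, and $u - [u]_\Pa$ is itself controlled by Lemma~\ref{l.coarseLs} in terms of $\nabla u$, with coarseness weights that can be H\"older-ed away above the relevant minimal scale (Proposition~\ref{p.minimalscales}) exactly as in the Meyers argument. (3) Feed this near-affine solution into quadratic response~\eqref{e.secondvarnu} for $\nu(\cu_n,p)$, testing with $w = u(\cdot,\cu_n,q)$ (or with the restriction of $u(\cdot,\cu_{n+1},q)$, comparing the two via Lemma~\ref{l.restrictquadmicro}), to obtain
\begin{equation*}
\frac1{|\cl_\Pa(\cu_n)|}\left\langle \nabla(u(\cdot,\cu_n,q) - v(\cdot,\cu_n,p)), \a\nabla(u(\cdot,\cu_n,q)-v(\cdot,\cu_n,p))\right\rangle_{\C_*(\cu_n)} \leq C|q|^2\left(3^{-n/(4d+2)} + \sum_k 3^{k-n}\tau_k\right).
\end{equation*}
(4) Plug this back into~\eqref{e.omegaform0}/Corollary~\ref{c.munucomparison} and also replace $\ahom^{-1}_{\cu_n}$-errors using~\eqref{e.ahominvbound}; finally, since the quantity inside $|\cdot|$ in~\eqref{e.omegadecay} is (by Corollary~\ref{c.munucomparison} and $k(\cu)$ being small) up to $\O$-errors equal to $\omega(\cu_n,q)_+$ plus the energy term just bounded, take expectations and collect the error terms, using $\tau_k\geq 0$ and that $3^{-n/4}\leq 3^{-n/(4d+2)}$ to absorb the lower-order pieces.

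**Main obstacle.** The delicate point is step (2)–(3): converting the coarsened $L^2$-flatness from Lemma~\ref{l.flatness} into a genuine (uncoarsened) $L^2$-bound on $\nabla u\indc_{\{\a\neq0\}} - p$ that is valid on the cluster, without losing the gain. The issue is that Caccioppoli controls $\nabla u$ by the oscillation of $u$, but Lemma~\ref{l.flatness} only controls $[u]_\Pa$, so one must reinsert the difference $u - [u]_\Pa$, which by Lemma~\ref{l.coarseLs} costs weighted sums of $\size(\cu_\Pa)^{2d}$ against $|\nabla u|^2$ — and these are exactly the coarseness weights that are \emph{not} uniformly bounded. The resolution is the Meyers estimate (Proposition~\ref{p.meyers}): above the minimal scale $\mathcal{M}_t(\Qa)$, one has enough higher integrability of $\nabla u\indc_{\{\a\neq0\}}$ to H\"older the coarseness weights away at the cost of an arbitrarily small exponent, so the gain $3^{-n/(4d+2)}$ survives (perhaps with a slightly worse, but still positive, exponent). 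One must also handle the low-probability event that the relevant minimal scale exceeds $3^n$, which contributes only a stretched-exponentially small term absorbed into $C|q|^2 3^{-n/(4d+2)}$. Once this technical bridge is in place, the remaining bookkeeping — matching $\ahom^{-1}_{\cu_n}$ at consecutive scales via~\eqref{e.ahominvclose}, and assembling the telescoped $\sum_k 3^{k-n}\tau_k$ weight from the scale-by-scale errors — is routine.
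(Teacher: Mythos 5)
Your opening reduction is the right one (and is the paper's): via \eqref{e.omegaform0} and Lemma~\ref{l.nablavaffine}, i.e.\ Corollary~\ref{c.munucomparison}, the lemma reduces to bounding the expected energy of $\nabla\big(u(\cdot,\cu_n,q)-v(\cdot,\cu_n,\ahom^{-1}_{\cu_n}q)\big)$. But the bridge you build to estimate that energy has a genuine gap. In step (2) you apply the Caccioppoli inequality to $u-p\cdot x$; Lemma~\ref{l.caccioppoli} only applies to $\a$-harmonic functions, and the affine function $x\mapsto p\cdot x$ is \emph{not} a solution on the cluster, so the difference is not in $\A$ and Caccioppoli gives nothing for it. Worse, the conclusion you want from that step --- that $\nabla u\,\indc_{\{\a\neq0\}}$ is close to the constant field $p$ in the strong $L^2$ sense on $\C_*(\cu_n)$ --- is false in general: gradients of minimizers only converge in the weak sense of spatial averages (this is what Lemma~\ref{l.weakconv} quantifies), while the strong $L^2$ distance to $p$ stays of order $|p|$ because of the corrector-type oscillations. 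Finally, in step (3) your use of the second variation \eqref{e.secondvarnu} with $w=u(\cdot,\cu_n,q)$ produces the cross term $|\cl_\Pa(\cu_n)|^{-1}\langle p,\a\nabla u\rangle_{\C_*(\cu_n)}$ (the flux of $u$), and no estimate in the paper shows this is close to $p\cdot q$; Lemma~\ref{l.nablavaffine} controls the coarsened gradient average of $v$ (thanks to its affine boundary data), not the flux of the $\mu$-minimizer.

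The missing idea is to never compare a solution with an affine function through Caccioppoli, but to compare the two solutions with each other: both parts of Lemma~\ref{l.flatness} are needed, namely \eqref{e.flatness} for $u(\cdot,\cu_{n+1},q)$ \emph{and} \eqref{e.flatnessv} for $v(\cdot,\cu_{n+1},\ahom^{-1}_{\cu_n}q)$, which say that both coarsened minimizers are $L^2$-close to the \emph{same} affine function $x\mapsto x\cdot\ahom^{-1}_{\cu_n}q$; the triangle inequality then makes $\big\|[u]_\Pa-[v]_\Pa\big\|_{L^2(\cu_{n+1})}$ small. You never invoke \eqref{e.flatnessv}, and without it your scheme cannot close. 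With it, the Caccioppoli inequality is applied to the difference $u(\cdot,\cu_{n+1},q)-v(\cdot,\cu_{n+1},\ahom^{-1}_{\cu_n}q)$, which is a legitimate difference of elements of $\A_*(\cu_{n+1})$, converting the $L^2$ bound into an energy bound; the coarsening error $u-[u]_\Pa$, $v-[v]_\Pa$ is absorbed using Lemma~\ref{l.coarsegrads} together with the bound on $\sup_x\size(\cu_\Pa(x))$ (a small exceptional event, handled crudely --- the Meyers estimate is not needed here), the passage from scale $n+1$ back to scale $n$ is done by Lemma~\ref{l.restrictquadmicro} at the cost of $C|q|^2(\sum_k\tau_k+3^{-n/2})$, and the result is inserted into Corollary~\ref{c.munucomparison} to conclude. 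This is the paper's argument; your draft captures its outer shell but replaces its core step by an invalid Caccioppoli application and a false strong-convergence claim.
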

\begin{proof}
Denote $\overline{p}_n:=\ahom_{\cu_n}^{-1}q$. 
According to Corollary~\ref{c.munucomparison} and $|p_n| \leq C|q|$,
\begin{multline} \label{e.quadrespuv}
\left| \nu\left( \cu_n ,\overline{p}_n\right) -  \mu(\cu_n,q) - q\cdot \overline{p}_n \right| \\
\leq \frac{1}{|\cu_n|} \int_{\C_*(\cu_n)} \left| \left( \nabla u(\cdot,\cu_n,q) - \nabla v(\cdot,\cu_n,\overline{p}_n) \right)\indc_{\{\a\neq0\}} \right|^2(x)\,dx + \O_{\frac{2}{2d-1}}\left( C|q|^23^{-\frac n2}\right).
\end{multline}
We focus the rest of the argument on estimating the expectation of the first term on the right side of~\eqref{e.quadrespuv}. Using~\eqref{e.restrictquadmicro} and the Caccioppoli inequality (Lemma~\ref{l.caccioppoli}), we have that 
\begin{align}
\label{e.cacciosetup} 
\lefteqn{
 \E \left[ \frac{1}{|\cu_n|} \int_{\C_*(\cu_n)} \left| \left( \nabla u(\cdot,\cu_n,q) - \nabla v(\cdot,\cu_n,\overline{p}_n) \right) \indc_{\{\a\neq0\}}\right|^2(x)\,dx  \right]
 } \quad & \\ \notag
& \leq \E \left[ \frac{1}{|\cu_n|} \int_{\C_*(\cu_n)} \left| \left( \nabla u(\cdot,\cu_{n+1},q) - \nabla v(\cdot,\cu_{n+1},\overline{p}_n)\right)\indc_{\{\a\neq0\}} \right|^2(x)\,dx \right] \\ \notag
& \qquad + C|q|^2 \left(  \sum_{k=n}^{m-1} \tau_k + 3^{-\frac n2} \right) \\ \notag
& \leq \E \left[ \frac{C3^{-2n}}{|\cu_n|} \int_{\C_*(\cu_{n+1})} \left| u(x,\cu_{n+1},q) - v(x,\cu_{n+1},\overline{p}_n) \right|^2\,dx \right] 
\\ \notag
& \qquad + C|q|^2 \left(  \sum_{k=n}^{m-1} \tau_k + 3^{-\frac n2} \right).
\end{align}
This reduces the lemma to an appropriate estimate of the last expectation in the previous display. 

\smallskip

We continue by writing $u:= u(x,\cu_{n+1},q)$ and $v:=v(x,\cu_{n+1},\overline{p}_n)$ for short. We also allow $s$ to be a positive exponent depending on~$d$ which may vary in each occurrence. 
According to Lemma~\ref{l.coarsegrads},  
\begin{multline} \label{e.coarsendiff}
 \int_{\C_*(\cu_{n+1})} \left| (u - v)(x) - \left[ u - v \right]_{\Pa}\!(x) \right|^2\,dx 
  \\ 
 \leq C \sup_{x\in \C_*(\cu_{n+1})} \left| \cu_{\Pa} (x) \right|^{2} \int_{\C_*(\cu_{n+1})} \left| \nabla (u-v) \indc_{\{\a\neq0\}} \right|^2(x)\,dx.
\end{multline}
Let $A_n$ be the event
\begin{equation*}
A_n := \left\{ \sup_{x\in \C_*(\cu_{n+1})} \left| \cu_{\Pa} (x) \right| > 3^{\frac n2}  \right\}. 
\end{equation*}
Then according to Proposition~\ref{p.minimalscales}, there exists $s(d)>0$ such that 
\begin{equation*}
\indc_{A_n} \sup_{x\in \C_*(\cu_{n+1})} \left| \cu_{\Pa} (x) \right|^{2} \leq 3^{-\frac{n}2} \sup_{x\in \C_*(\cu_{n+1})} \left| \cu_{\Pa} (x) \right|^{3} \leq \O_{s} \left( C \right). 
\end{equation*}
Hence, by~\eqref{e.coarsendiff} and the bounds~\eqref{e.uupbound} and~\eqref{e.vupbound}, we get
\begin{align*}
\lefteqn{
 \int_{\C_*(\cu_{n+1})} \left| (u - v)(x) - \left[ u - v \right]_{\Pa}\!(x) \right|^2\,dx 
 } \quad & 
 \\  \notag &
  \leq C \sup_{x\in \C_*(\cu_{n+1})} \left| \cu_{\Pa} (x) \right|^{2} \int_{\C_*(\cu_{n+1})}\left(  \left| \nabla u \indc_{\{\a\neq0\}} \right|^2(x) +  \left| \nabla v \indc_{\{\a\neq0\}} \right|^2(x) \right)\,dx 
  \\ \notag  &
  \leq 
  C 3^n \int_{\C_*(\cu_{n+1})}\left(  \left| \nabla u \indc_{\{\a\neq0\}} \right|^2(x) +  \left| \nabla v \indc_{\{\a\neq0\}} \right|^2(x) \right)\,dx \\
  & \quad + C \indc_{A_n} \sup_{x\in \C_*(\cu_{n+1})} \left| \cu_{\Pa} (x) \right|^{2} \int_{\C_*(\cu_{n+1})}\left(  \left| \nabla u \indc_{\{\a\neq0\}} \right|^2\!(x) +  \left| \nabla v \indc_{\{\a\neq0\}} \right|^2\!(x) \right)\,dx 
  \\ \notag  &
 \leq \left( C3^n + \O_{s} \left( C \right) \right) \O_{\frac{1}{2d+1}} \left( C\left| \cu_n \right| |q|^2 \right) 
 \\ \notag & 
 \leq \O_{s}\left( C\left| \cu_n \right||q|^23^n \right). 
 \end{align*}
Thus
\begin{equation*}
\E \left[ \frac{1}{|\cu_n|} \int_{\C_*(\cu_{n+1})} \left| (u - v)(x) - \left[ u - v \right]_{\Pa}\!(x) \right|^2\,dx \right] \leq C|q|^23^n. 
\end{equation*}
Taking expectations and using the triangle inequality, we get
\begin{multline}
\label{e.handhand}
\E \left[ \frac{C3^{-2n}}{|\cu_n|} \int_{\C_*(\cu_{n+1})} \left| u(x) - v(x) \right|^2\,dx \right] \\
\leq C3^{-2n}\E \left[ \frac{1}{|\cu_n|} \int_{\C_*(\cu_{n+1})} \big| \left[u\right]_\Pa\!(x) - \left[v\right]_\Pa\!(x) \big|^2\,dx\right]  
+ C |q|^23^{-n}.
\end{multline}
Combining~\eqref{e.cacciosetup} and~\eqref{e.handhand}, we get
\begin{multline*}
\E \left[ \frac{1}{|\cu_n|} \int_{\C_*(\cu_{n})} \left| (\nabla u- \nabla v)\indc_{\{\a\neq 0\}} \right|^2(x)\,dx \right] \\
 \leq  C3^{-2n}\E \left[ \frac{1}{|\cu_n|} \int_{\C_*(\cu_{n+1})} \big| \left[u\right]_\Pa\!(x) - \left[v\right]_\Pa\!(x) \big|^2\,dx\right] 
 + C|q|^2 \left(  \sum_{k=n}^{m-1} \tau_k + 3^{-\frac n2} \right).
\end{multline*}
An application of Lemma~\ref{l.flatness} and the triangle inequality yields 
\begin{equation*} 
\E \left[ \int_{\C_*(\cu_{n+1})} \left| \left[ u \right]_{\Pa}\!(x) - \left[v \right]_{\Pa}\!(x) \right|^2\,dx \right] \leq C|q|^2 3^{2n} \left( 3^{-\left(\frac{2}{2d+1}\right)n} + \sum_{k=0}^n 3^{k-n} \tau_k\right). 
\end{equation*}
The previous two displays and~\eqref{e.quadrespuv} imply~\eqref{e.omegadecay} and complete the proof. 
\end{proof}

We next show that an iteration of the result of the previous lemma yields a rate of decay for the expectation of~$\omega$. 

\begin{lemma}
\label{l.omegaiteration} 
There exist an exponent $\alpha(d,\p,\lambda)>0$ and $C(d,\p,\lambda)<\infty$ such that for every $n\in\N$,
\begin{equation}
\label{e.omegadecayrate}
\E\left[\sup_{q\in\Rd}  \frac1{|q|^2}  \left( \omega(\cu_{n},q)\right)_+ \right] \leq C3^{-n\alpha}. 
\end{equation}
\end{lemma}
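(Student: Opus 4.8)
The plan is to convert the estimate of Lemma~\ref{l.omegadecay} into a contraction at the level of expectations and then iterate. First I would recall from~\eqref{e.suptosum} that, for every $n$ and $q\in\Rd$,
\begin{equation*}
\left( \E\left[\mu(\cu_{n+1},q)\right] - \E\left[\mu(\cu_n,q)\right] \right)_+ + \left( \E\left[\nu(\cu_n,\ahom^{-1}q)\right] - \E\left[\nu(\cu_{n+1},\ahom^{-1}q)\right] \right)_+ \leq C|q|^2\left(\tau_n + 3^{-n/4}\right),
\end{equation*}
so that the monotone-defect quantity $\tau_n$ is, up to the negligible error $3^{-n/4}$, the same as the decrement $\E[\omega(\cu_n,\e_i)] - \E[\omega(\cu_{n+1},\e_i)]$ summed over $i$; here one uses that $\E[-\mu(\cu_n,q)]$, $\E[\nu(\cu_n,\ahom^{-1}q)]$ and hence $\E[\omega(\cu_n,q)]$ are almost-monotone in $n$ by~\eqref{e.almostmonotone},~\eqref{e.almostmonotonenu} (together with $|\ahom^{-1}-\ahom^{-1}_{\cu_n}|\to 0$, which follows from~\eqref{e.ahominvclose} and the fact, to be checked en route, that $\ahom^{-1}_{\cu_n}\to\ahom^{-1}$). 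Next I would observe that Lemma~\ref{l.omegadecay} together with~\eqref{e.ahominvclose} (to replace $\ahom^{-1}_{\cu_n}q$ and $\ahom_{\cu_n}^{-1}q$ appearing there by $\ahom^{-1}q$, at the cost of $C|q|^2(3^{-n/4}+\sum_{k\le n}3^{k-n}\tau_k)$ once more) gives
\begin{equation*}
\E\left[\sup_{e\in\partial B_1}\left(\omega(\cu_n,e)\right)_+\right] \leq C\left(3^{-\left(\frac{1}{4d+2}\right)n} + \sum_{k=0}^n 3^{k-n}\tau_k\right),
\end{equation*}
using that $\omega$ is quadratic in $q$ (Lemma~\ref{l.variations}) so the supremum over the unit sphere is comparable to the sum over $\e_1,\dots,\e_d$, and that $\omega$ is almost nonnegative by Lemma~\ref{l.minimalset}.

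The heart of the argument is then a one-parameter recursion. Set $F_n := \E[\sup_{e\in\partial B_1}(\omega(\cu_n,e))_+]$ and, using the almost-monotonicity discussed above, note $\tau_k \leq C\left(F_k - F_{k+1}\right)_+ + C3^{-k/4} \leq C(F_k - F_{k+1}) + C3^{-k/4}$ after absorbing signs (since $F_k\ge 0$ and any excess is paid for by the $3^{-k/4}$ term). Plugging this into the displayed bound and summing by parts in the weighted sum $\sum_{k=0}^n 3^{k-n}\tau_k$, one gets a bound of the form
\begin{equation*}
F_n \leq C3^{-\beta n} + C3^{-n}\sum_{k=0}^n 3^k\left(F_k - F_{k+1}\right),
\end{equation*}
with $\beta := \frac{1}{4d+2}$ (the $3^{-k/4}$ contributions summing to something of the same or better order). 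Abel summation turns $3^{-n}\sum_{k\le n}3^k(F_k-F_{k+1})$ into $3^{-n}\sum_{k\le n} 3^{k-1}\cdot 2\, F_k + (\text{boundary terms bounded by }F_0\text{ and }F_{n+1})$, i.e. into $C3^{-n}\sum_{k\le n}3^k F_k$ up to controllable errors. So $F_n$ satisfies an inequality of the shape $3^n F_n \leq C3^{(1-\beta)n} + C\sum_{k=0}^n 3^k F_k$, and one then runs the standard real-variable iteration lemma (as in~\cite{AS}): if $G_n := 3^n F_n$ obeys $G_n \leq A\,3^{(1-\beta)n} + \theta'\sum_{k=0}^{n}3^{-(n-k)}\cdot(\text{rescaled }G_k)$-type control with the relevant constant strictly less than what would cause blow-up, then $G_n \lesssim 3^{(1-\beta')n}$ for some $\beta'\in(0,\beta]$, giving $F_n \leq C3^{-\beta' n}$. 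Concretely it is cleanest to first establish $F_n\to 0$ (from the raw bound, since $\sum 3^{k-n}\tau_k\to 0$ as $\tau_k$ is summable-after-telescoping), then feed $F_n\to 0$ back to upgrade to a geometric rate via the recursion; this two-pass structure is exactly the one used in~\cite[Section 3]{AS}.

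Finally, having obtained $\E[\sup_{e\in\partial B_1}(\omega(\cu_n,e))_+]\le C3^{-\alpha n}$ for some $\alpha(d,\p,\lambda)>0$, the statement~\eqref{e.omegadecayrate} follows upon replacing $\sup_{e\in\partial B_1}$ by $\sup_{q\in\Rd}|q|^{-2}(\cdot)$, which is legitimate because $q\mapsto\omega(\cu_n,q)$ is quadratic. I expect the main obstacle to be bookkeeping: there are three distinct small-error scales floating around ($3^{-n/4}$ from localization and the stationarity corrections, $3^{-\left(\frac{1}{4d+2}\right)n}$ from Lemma~\ref{l.omegadecay}, and the $|\ahom^{-1}-\ahom^{-1}_{\cu_n}|$ gap from~\eqref{e.ahominvclose}), and one must verify that after the Abel-summation/iteration none of them degrades the exponent below a fixed positive $\alpha$, and in particular that the implicit constant in the recursion is in the regime where the iteration lemma applies (this is where the precise weight $3^{k-n}$ rather than $3^{-(n-k)/2}$ matters, and why the telescoping of $\tau_k$ into $F_k-F_{k+1}$ is essential rather than cosmetic). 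The other mild subtlety is justifying $\ahom^{-1}_{\cu_n}\to\ahom^{-1}$, which I would get by combining convergence of $\E[\nu(\cu_n,p)]\to\frac12 p\cdot\ahom p$ (definition of $\ahom$), convergence of $\E[-\mu(\cu_n,q)]$ to some limiting quadratic form, and the fact that $\E[\omega(\cu_n,q)_+]\to 0$ forces that limiting form to be the convex dual, i.e.\ $q\mapsto\frac12 q\cdot\ahom^{-1}q$ — but this is precisely what the first pass of the iteration delivers, so it is not circular as long as one orders the steps as above.
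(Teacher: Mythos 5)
There is a genuine gap, and it sits exactly at the point you dismiss as a ``mild subtlety.'' Your plan requires converting the bound of Lemma~\ref{l.omegadecay}, which is stated for $\nu(\cu_n,\ahom^{-1}_{\cu_n}q)-\mu(\cu_n,q)-q\cdot\ahom^{-1}_{\cu_n}q$, into a bound on $\E\left[\omega(\cu_n,q)_+\right]$, which involves the true matrix $\ahom$. The cost of that replacement is $C|q|^2\,|\ahom^{-1}_{\cu_n}-\ahom^{-1}|$, and~\eqref{e.ahominvclose} controls this difference not by the weighted past sum $\sum_{k\le n}3^{k-n}\tau_k$ that you write down, but by $3^{-n/4}$ plus the \emph{tail} $\sum_{k\ge n}\tau_k$ over all future scales. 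Before the rate is known, that tail is only $o(1)$ (it is summable because $\E[\mu]$ and $\E[\nu]$ are bounded and almost monotone), and your two-pass repair does not close the loop: through your own telescoping $\tau_k\le C(F_k-F_{k+1})+C3^{-k/4}$ the tail is bounded by $CF_n+C3^{-n/4}$, so re-inserting it into the recursion gives $F_n\le C3^{-\beta n}+C\sum_{k\le n}3^{k-n}\tau_k+CF_n$ with a constant that is in no way small, and nothing can be absorbed; if instead you keep the tail as a rate-free $o(1)$, the iteration only returns $F_n\to0$ qualitatively, never an algebraic rate. The same difficulty infects the final recursion $3^nF_n\le C3^{(1-\beta)n}+C\sum_{k\le n}3^kF_k$: with an unspecified constant the $k=n$ term already makes it vacuous, and the ``standard iteration lemma'' you invoke needs a structural cancellation that your scheme does not supply (you flag this concern yourself but do not resolve it).

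The paper's proof is organized precisely to avoid this circularity, and its route is genuinely different from yours. It iterates on $D_n:=\sum_i\E\left[\nu(\cu_n,\e_i)-\mu(\cu_n,\ahom_{\cu_n}\e_i)-\e_i\cdot\ahom_{\cu_n}\e_i\right]$, built from the finite-volume matrix $\ahom_{\cu_n}$, so no comparison with the unknown limit is needed during the iteration. The decrement inequality $D_{n+1}\le D_n-c\tau_n+C3^{-n/4}$ comes from the exact minimization property~\eqref{e.minimuminq} of $q\mapsto-\E[\mu(\cu_{n+1},q)]-p\cdot q$ at $q=\ahom_{\cu_{n+1}}p$ (an ingredient absent from your plan), and the rate comes from contracting the weighted average $\tilde D_n=3^{-n/2}\sum_k3^{k/2}D_k$: the same weighted sum of the $\tau_k$'s appears as a lower bound for $\tilde D_n-\tilde D_{n+1}$ and as an upper bound for $\tilde D_n$ (via Lemma~\ref{l.omegadecay}), so combining them yields a contraction factor $1-c/C<1$ with no smallness requirement on constants --- this is the structural cancellation your version is missing. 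Only after $D_n\le C3^{-n\alpha}$, hence $\tau_n\le C3^{-n\alpha}$ and $|\ahom_{\cu_n}-\ahom|\le C3^{-n\alpha}$ by summing~\eqref{e.ahominvclose}, does the paper pass to $\omega$ defined with the true $\ahom$, paying only $C|\ahom_{\cu_n}-\ahom|^2$, and it finishes with the convexity argument you correctly describe for replacing the coordinate directions by the supremum over $q\in\Rd$.
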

\begin{proof}
In view of the left side of~\eqref{e.omegadecay}, it is natural to consider the quantity $D_n$ defined for each $n\in\N$ by
\begin{equation*} \label{}
D_n:= \sum_{i=1}^d  \E \left[ \nu\left(\cu_n,\e_i\right) - \mu(\cu_n,\ahom_{\cu_n} \e_i) - \e_i \cdot \ahom_{\cu_n}\e_i   \right]. 
\end{equation*}
Notice that~\eqref{e.nuupbound} and~\eqref{e.muupbound} imply that $D_n$ is bounded and Corollary~\ref{c.munucomparison} implies that $D_n$ has a small negative part: for every $n\in\N$,
\begin{equation}
\label{e.Dnnonneg}
-C3^{-\frac n2} \leq D_n \leq C.
\end{equation}
As we will see in Step~4, below,~$D_n$ controls $\E\left[\sup_{q\in\Rd} |q|^{-2}\left( \omega(\cu_n,q) \right)_+\right]$ in the sense that~\eqref{e.Dnwts} implies~\eqref{e.omegadecayrate}. Therefore our goal is to show that, for $\alpha$ and $C$ as in the statement of the lemma, 
\begin{equation} 
\label{e.Dnwts}
D_n \leq C3^{-n\alpha}.
\end{equation}

\smallskip

\emph{Step 1.} We show that the existence of $c(d,\lambda,\p)>0$, $C(d,\lambda,\p)<\infty$ and $N_0(d,\lambda,\p)\in\N$ such that, for every $n\in\N$ with $n\geq N_0$, 
\begin{equation}
\label{e.Dincrement}
D_{n+1} \leq D_n - c\tau _n + C3^{-\frac{n}4}. 
\end{equation}
By the definition of the matrix $\ahom_{\cu_{n+1}}$, we have that, for each $p\in\Rd$, the map
\begin{equation*} \label{}
q\mapsto -\E \left[ \mu(\cu_{n+1},q) \right] - p\cdot q = \frac12 q \cdot \ahom^{-1}_{\cu_{n+1}} q - p\cdot q
\end{equation*}
achieves its minimum at the point $q=\ahom_{\cu_{n+1}} p$. By quadratic response and the bounds on $\E\left[ \mu(\cu_{n+1},q) \right]$ implied by~\eqref{e.muupbound}, this implies the existence of $C(d,\lambda,\p)<\infty$ such that, for every $q\in\Rd$,
\begin{align} 
\label{e.minimuminq}
\lefteqn{
 -\E \left[ \mu(\cu_{n+1},\ahom_{\cu_{n+1}} \e_i) \right] - \e_i \cdot \ahom_{\cu_{n+1}} \e_i
 } \qquad & \\ \notag
& \leq -\E \left[ \mu(\cu_{n+1},q) \right] - \e_i \cdot q  \\ \notag
& \leq  -\E \left[ \mu(\cu_{n+1},\ahom_{\cu_{n+1}} \e_i) \right] - \e_i \cdot \ahom_{\cu_{n+1}} \e_i + C \left|q-\ahom_{\cu_{n+1}} \e_i\right|^2. 
\end{align}
Using the first line of~\eqref{e.minimuminq} with $q = \ahom_{\cu_{n}} \e_i$ yields
\begin{align*} \label{}
D_{n+1} 
& = \sum_{i=1}^d \left( \E \left[\left( \nu\left(\cu_{n+1},\e_i\right) - \mu(\cu_{n+1},\ahom_{\cu_{n+1}} \e_i) - \e_i \cdot \ahom_{\cu_{n+1}}\e_i \right)  \right] \right) \\
& \leq \sum_{i=1}^d \left( \E \left[\left( \nu\left(\cu_{n+1},\e_i\right) - \mu(\cu_{n+1},\ahom_{\cu_{n}} \e_i) - \e_i \cdot \ahom_{\cu_{n}}\e_i \right)  \right] \right) \\
& = D_n + \sum_{i=1}^d \left( \E \left[\nu\left(\cu_{n+1},\e_i\right) \right] - \E \left[\nu\left(\cu_{n},\e_i\right) \right] \right) \\
& \qquad\qquad -  \sum_{i=1}^d\left( \E \left[\mu(\cu_{n+1},\ahom_{\cu_{n}} \e_i) \right] - \E \left[\mu(\cu_{n},\ahom_{\cu_{n}} \e_i) \right] \right) \\
& \leq D_n - c\tau_n + C 3^{-\frac{n}4},
\end{align*}
where in the last line we used the bounds~\eqref{e.almostmonotone},~\eqref{e.almostmonotonenu} and~\eqref{e.ahominvbound}, which hold for all sufficiently large $n$ depending only on $(d,\lambda,\p)$. This completes the proof of~\eqref{e.Dincrement}. 

\smallskip

In view of the form of the right side of~\eqref{e.omegadecay} as well as the result of the previous step, it is natural to modify $D_n$ slightly by defining, for every $n\geq N_0$, 
\begin{equation*} \label{}
\tilde{D}_n:= 3^{-\frac n2} \sum_{k=N_0}^n 3^{\frac k2} D_k. 
\end{equation*}
Notice that $\tilde{D}_n$ is, up to a constant, a weighted average of $D_{N_0},\ldots,D_n$ and, in particular, by~\eqref{e.Dnnonneg}, we have that 
\begin{equation}
\label{e.DntildeDn}
\tilde{D}_n = D_n + 3^{-\frac n2} \sum_{k=N_0}^{n-1} 3^{\frac k2} D_k \geq D_n - C n 3^{-\frac n2} \geq D_n - C 3^{-\frac n4}.
\end{equation}
Therefore, rather than~\eqref{e.Dnwts}, we may prove the stronger bound 
\begin{equation} 
\label{e.tildeDnwts}
\tilde{D}_n \leq C3^{-n\alpha}.
\end{equation}

\smallskip

\emph{Step 2.} We show that there exists $\theta(d,\lambda,\p) \in \left[\frac12, 1\right)$ and $C(d,\lambda,\p)<\infty$ such that, for every $n\in\N$ with $n \geq N_0$,
\begin{equation}
\label{e.tildeDngettingiteron}
\tilde{D}_{n+1} \leq \theta \tilde{D}_n + C3^{-\left(\frac{2}{2d+1}\right)n}.
\end{equation}
Using~\eqref{e.Dnnonneg} and~\eqref{e.Dincrement}, we find that 
\begin{align}
\label{e.Dtildeincrem}
\tilde{D}_n - \tilde{D}_{n+1} 
& =  3^{-\frac n2} \sum_{k=N_0}^n 3^{\frac k2} \left(D_k - D_{k-1} \right) - 3^{\frac12(N_0-(n+1))} D_{N_0} \\ \notag
& \geq c3^{-\frac n2} \sum_{k=N_0}^n 3^{\frac k2} \left( \tau_k - C 3^{-\frac n4} \right)- C 3^{-\frac n2} \geq c3^{-\frac n2} \sum_{k=N_0}^n 3^{\frac k2} \tau_k - C 3^{-\frac n4}.
\end{align}
Next, we apply Lemma~\ref{l.omegadecay}, which tells us that 
\begin{equation*} \label{}
D_k \leq C\left( 3^{-\left(\frac{2}{2d+1}\right)k} + \sum_{l=0}^k 3^{l-k}\tau_l \right) 
\leq 
C\left( 3^{-\left(\frac{2}{2d+1}\right)k} + \sum_{l=N_0}^n 3^{l-k}\tau_l \right).
\end{equation*}
Summing this over $k\in \{N_0,\ldots,n\}$ gives
\begin{align*}
\tilde{D}_n 
& \leq C3^{-\frac n2} \sum_{k=N_0}^n 3^{\frac k2} \left( 3^{-\left(\frac{2}{2d+1}\right)k} + \sum_{l=N_0}^k 3^{l-k}\tau_l \right) 
\\
& = C3^{-\frac n2}  \sum_{k=N_0}^n \sum_{l=N_0}^k  3^{-\frac k2} 3^{l}\tau_l + C3^{-\left(\frac{2}{2d+1}\right)n} \\
& =  C3^{-\frac n2} \sum_{l=N_0}^n \sum_{k=l}^n   3^{-\frac k2} 3^{l}\tau_l + C3^{-\left(\frac{2}{2d+1}\right)n} \\
& \leq C 3^{-\frac n2} \sum_{l=N_0}^n 3^{\frac l2}\tau_l + C 3^{-\left(\frac{2}{2d+1}\right)n}.
\end{align*}
Combining the previous displays with~\eqref{e.Dtildeincrem} gives 
\begin{equation*}
\tilde{D}_{n+1} \leq (1-c)\tilde{D}_n + C3^{-\left(\frac{2}{2d+1}\right)n}.
\end{equation*}
This completes the proof of~\eqref{e.tildeDngettingiteron}. 

\smallskip

\emph{Step 3.} We complete the proof of~\eqref{e.tildeDnwts}. By an interation of~\eqref{e.tildeDngettingiteron} we get, for every $n\geq N_0$,
\begin{equation*} \label{}
\tilde{D}_n \leq \left( \sum_{k=N_0}^n \theta^k 3^{(\frac{2}{2d-1})(k-n)} \right) \tilde{D}_{N_0}. 
\end{equation*}
Taking $\theta$ closer to~$1$, if necessary, so that $\theta > \left(\frac13\right)^{\frac{2}{2d-1}}$, we get that each term in the sum is at most $\theta^n$. Using also $\tilde{D}_{N_0} \leq C$, we therefore obtain
\begin{equation*} \label{}
\tilde{D}_n \leq Cn\theta^n \leq C\theta^{\frac n2}. 
\end{equation*}
Taking $\alpha:= \log 3 / 2\left| \log \theta \right|$ so that $\theta^{\frac12}=3^{-\alpha}$ yields the desired bound~\eqref{e.tildeDnwts}. 

\smallskip

\emph{Step 4.} We complete the proof of~\eqref{e.omegadecayrate}. First, we observe that, due to Corollary~\ref{e.munucomparison}, we have the function 
\begin{equation*}
q\mapsto \omega(\cu_n,q)  + C3^{-\frac n2} |q|^2. 
\end{equation*}
is nonnegative and quadratic and hence convex. It follows that 
\begin{align*}
\sup_{q\in\Rd} \frac1{|q|^2} \left(\omega(\cu_n,q)\right)_+
 \leq \sup_{q\in\Rd} \frac1{|q|^2} \left(\omega(\cu_n,q)  + C3^{-\frac n2} |q|^2 \right) 
\leq \sum_{i=1}^d \omega(\cu_n,e_i)  + C3^{-\frac n2}.
\end{align*}
Next we observe that~\eqref{e.munucomparison} and~\eqref{e.Dnwts} (which we recall is a consequence of~\eqref{e.DntildeDn} and~\eqref{e.tildeDnwts}) imply that
\begin{equation*}
\left| D_n \right| \leq C3^{-n\alpha}. 
\end{equation*}
Using this and~\eqref{e.Dincrement}, we find that 
\begin{equation*}
\tau_n \leq C3^{-\frac n2} + C (D_n - D_{n+1}) \leq C 3^{-n\alpha}. 
\end{equation*}
According to the previous line and~\eqref{e.ahominvclose}, we deduce that 
\begin{equation*} \label{}
\left| \ahom_{\cu_n} - \ahom \right| \leq C3^{-n\alpha}.
\end{equation*}
Therefore, by the previous line,~\eqref{e.munucomparison} and~\eqref{e.minimuminq} applied with $q=\ahom \e_i$, we find 
\begin{align*}
\lefteqn{
 \left|\E\left[  \omega(\cu_n,\e_i)_+ \right] -\E\left[   \nu\left(\cu_n,\e_i\right) - \mu(\cu_n,\ahom_{\cu_n} \e_i) - \e_i \cdot \ahom_{\cu_n}\e_i  \right]   \right| 
} \qquad & \\
& \leq  \left|\E\left[  \omega(\cu_n,\e_i) \right] - \E\left[   \nu\left(\cu_n,\e_i\right) - \mu(\cu_n,\ahom_{\cu_n} \e_i) - \e_i \cdot \ahom_{\cu_n}\e_i \right]     \right| 
 + C3^{-\frac n2} \\
& \leq C \left| \ahom_{\cu_n} - \ahom \right|^2 + C3^{-\frac n2} \\
& \leq C 3^{-n\alpha}. 
\end{align*}
Combining the above and using~\eqref{e.Dnwts} again, we get 
\begin{align*}
\E \left[ \sup_{q\in\Rd} \frac1{|q|^2} (\omega(\cu_n,q))_+ \right] 
&  \leq  \sum_{i=1}^d \E \left[ (\omega(\cu_n,\e_i) )_+ \right] 
+C3^{-\frac n2} \\
& \leq \sum_{i=1}^d  \E \left[ \nu\left(\cu_n,\e_i\right) - \mu(\cu_n,\ahom_{\cu_n} \e_i) - \e_i \cdot \ahom_{\cu_n}\e_i   \right]  + C 3^{-n\alpha} \\
& = D_n + C3^{-n\alpha} \\
& \leq C 3^{-n\alpha}.
\end{align*}
This completes the argument. 
\end{proof}

To complete the proof of Proposition~\ref{p.subadd}, we need to show that the control over the expectation of $\omega$ given in the previous lemma can be enhanced, using independence, to control over exponential moments of~$\omega$. This is a consequence of the following lemma. 

\begin{lemma}
\label{l.SIupgrade}
There exist exponents $s(d)>0$ and $\alpha(s,\lambda,\p)>0$ and a constant $C(s,d,\lambda,\p)<\infty$ such that, for every $\cu\in \T$,
\begin{equation}
 \sup_{q\in \Rd} \frac{1}{|q|^2} \left| \omega(\cu,q) \right| = \O_{s}\left( C\left(\size(\cu)\right)^{-\alpha} \right). 
\end{equation}
\end{lemma}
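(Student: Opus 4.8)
The plan is to bootstrap the bound on $\E[\sup_q |q|^{-2}(\omega(\cu_n,q))_+]$ from Lemma~\ref{l.omegaiteration} into stretched-exponential stochastic integrability, following the standard mechanism in~\cite{AS} that exploits approximate subadditivity together with independence of the localized quantities. First I would reduce to triadic cubes: for general $\cu\in\T$ it suffices to prove the estimate for $\cu=\cu_n$ and then transfer to an arbitrary cube of comparable size using the approximate stationarity~\eqref{e.munustat} and the localization estimate~\eqref{e.localization} (which shows $\mu$ and $\nu$ on $z+\cu_n$ differ from their stationary, localized counterparts by $\O_{s}$-small errors). It also suffices to control $\omega(\cu_n,e)_+$ for $e\in\partial B_1$ running over the coordinate directions, since $q\mapsto \omega(\cu_n,q)+C3^{-n/2}|q|^2$ is nonnegative and quadratic, hence convex, so the supremum over the unit sphere is comparable to the sum over $\{\e_1,\ldots,\e_d\}$; and the lower bound on $\omega$ (from Corollary~\ref{c.munucomparison}) is already $\O_{s}$-small, so only the positive part needs work.

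The key step is the following: fix $e$ and fix $n$, and for $k<n$ write $\cu_n$ as a disjoint union of $3^{d(n-k)}$ subcubes $z+\cu_k$. By the approximate subadditivity of $-\mu$ and $\nu$ (Lemma~\ref{l.subadd}), plus Lemma~\ref{l.nablavaffine} to handle the linear term in $\omega$, one gets
\begin{equation*}
\omega(\cu_n,e) \leq 3^{-d(n-k)} \sum_{z\in3^k\Zd\cap\cu_n} \omega(z+\cu_k,e) + \O_{s}\left(C3^{-k\beta}\right)
\end{equation*}
for some $\beta(d)>0$, where the error term collects the subadditivity defects. Replacing each $\omega(z+\cu_k,e)$ by the localized, stationary version $\omega_{\mathrm{loc}}^{(k')}(z+\cu_k,e)$ with $k'=\lceil k/2\rceil$ (using~\eqref{e.localization} and the $\F(z+\cu_k)$-measurability~\eqref{e.munuloc}) introduces another $\O_{s}$ error and leaves us averaging i.i.d.\ nonnegative random variables, each bounded by $\O_{s}(C)$ (from the a priori bounds~\eqref{e.muupbound}, \eqref{e.nuupbound}, and~\eqref{e.munulocbounds}) and with expectation $\leq C3^{-k\alpha}$ by Lemma~\ref{l.omegaiteration} (again up to the localization error in expectation). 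A concentration estimate for sums of independent bounded random variables — precisely the kind of Bernstein/Chernoff bound used in~\cite[Section 3]{AS} and codified via the $\O_s$ calculus, e.g.\ along the lines of Lemma~\ref{l.OtoM} and~\eqref{e.Oavgs}, \eqref{e.Oprods} — then shows that the average of $3^{d(n-k)}$ such variables is at most $C3^{-k\alpha} + \O_{s'}(C3^{-d(n-k)/s'})$ for a suitable exponent. Optimizing the choice of $k$ as a fraction of $n$ (as in Step~2 of the proof of Proposition~\ref{p.minimalscales}) balances the deterministic decay $3^{-k\alpha}$ against the independence gain $3^{-d(n-k)}$ and yields $\omega(\cu_n,e)_+ \leq \O_{s}(C3^{-n\alpha'})$ for some smaller exponents $s,\alpha'>0$ depending only on $(d,\p,\lambda)$; the $\O_s$ errors accumulated along the way are all of the same or better form by~\eqref{e.Osums}.

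The main obstacle I anticipate is bookkeeping the many $\O_s$ error terms with degrading exponents: the localization steps, the coarsening estimates, and the passage from $\mu,\nu$ on subcubes to their stationary versions each cost a bit of stochastic integrability and a power of the scale, and one must check that after the concentration argument and the optimization in $k$ there still remains a strictly positive exponent $s$ and a strictly positive algebraic rate $\alpha$. A secondary subtlety is that $\omega(\cu_k,e)$ is not literally nonnegative — it has an $\O_s$-small negative part — so the ``sum of independent nonnegatives'' concentration inequality must be applied to $\omega(\cu_k,e) + C3^{-k\beta}|e|^2$ (or its localized version), and the additive shift tracked through to the end. Once these are handled, combining the bound for $\omega$ with Lemma~\ref{l.minimalset} immediately upgrades to the bounds~\eqref{e.convmu} and~\eqref{e.convnu} for $\mu$ and $\nu$ themselves, completing the proof of Proposition~\ref{p.subadd}.
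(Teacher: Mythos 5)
Your proposal is correct in outline and takes essentially the same route as the paper: reduce to triadic cubes by approximate stationarity and localization, use approximate subadditivity to bound $\omega$ (in the paper, $\rho(\cu)=\sup_q|q|^{-2}(\omega(\cu,q))_+$) on the large cube by an average of independent localized copies on mesoscopic subcubes, feed in the expectation decay from Lemma~\ref{l.omegaiteration}, and conclude with a Chernoff-type exponential moment computation, treating the small negative part via Corollary~\ref{c.munucomparison}. The one step you should make precise is the ``sums of independent bounded variables'' claim: the a priori bounds only give $\O_s(C)$ with $s<1$, which is not enough for the exponential moment calculation, so the paper first truncates the localized quantity at a deterministic constant $C_1$ and discards a bad event of exponentially small probability (using~\eqref{e.muupboundwhip}), handling that event separately.
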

\begin{proof}
The argument is an application of the exponential moment method and subadditivity, modified to take care of the fact that $\omega$ is not a bounded random variable. It is enough to prove the result for cubes of the form $\cu=\cu_m$ for $m\in\N$ by approximate stationarity, see~\eqref{e.munustat} and~\eqref{e.localization}. We fix $m\in\N$ and set $n:= \left\lceil \frac12 m \right\rceil$ and $k:=\left\lceil \frac14 m \right\rceil$. Throughout the argument, we let $s$ denote a positive exponent depending only on~$d$ which may vary in each occurrence. Likewise $\alpha$ is a positive exponent depending only on $(d,\lambda,\p)$ which may vary. 

\smallskip

We denote, for each $\cu\in\T$,
\begin{equation*} \label{}
\rho(\cu):= \sup_{q\in\Rd} \frac1{|q|^2} \left(\omega(\cu,q)\right)_+.
\end{equation*}
To prepare for the use of independence, we also let $\rho_{\mathrm{loc}}^{(k)}$ and $\omega^{(k)}_{\mathrm{loc}}$ denote the same quantities as $\rho$ and $\omega$ except with the local quantities $\mu^{(k)}_{\mathrm{loc}}$ and $\nu^{(k)}_{\mathrm{loc}}$ in place of $\mu$ and $\nu$ in their definitions. That is, 
\begin{equation*} \label{}
\omega^{(k)}_{\mathrm{loc}}(\cu,q)
:= 
\nu^{(k)}_{\mathrm{loc}}\left( \cu ,\ahom^{-1}q\right) -  \mu^{(k)}_{\mathrm{loc}}(\cu,q) - q\cdot \ahom^{-1}q
\end{equation*}
and
\begin{equation*} \label{}
\rho_{\mathrm{loc}}^{(k)}(\cu):= \sup_{q\in\Rd} \frac1{|q|^2} \left(\omega^{(k)}_{\mathrm{loc}}(\cu,q)\right)_+.
\end{equation*}
By~\eqref{e.munuloc} and Proposition~\ref{p.localization}, we see that 
\begin{equation} \label{e.rhomeas}
\mbox{$\rho_{\mathrm{loc}}^{(k)}(\cu)$ and $\omega^{(k)}_{\mathrm{loc}}(\cu,q)$ is $\F(\cu)$--measurable}
\end{equation}
and
\begin{equation} 
\label{e.omegalocal}
\left| \omega(\cu,q) - \omega^{(k)}_{\mathrm{loc}} (\cu,q)\right| 
+ \left| \rho(\cu) - \rho_{\mathrm{loc}}^{(k)}(\cu) \right| 
\leq \O_s\left( C3^{-\frac k4} \right). 
\end{equation}

\smallskip

\emph{Step 1.} By removing a bad event of small probability, we essentially reduce to the case that $\rho(z+\cu_n)$ is bounded on the subcubes $z+\cu_n \subseteq \cu_m$. 

\smallskip

According to Lemma~\ref{l.upperbounds}, its consequence~\eqref{e.muupboundwhip} and the Markov inequality, there exists $C_1(d,\p,\lambda)<\infty$ such that, for every $z\in 3^n\Zd$,
\begin{equation*} \label{}
\P \left[ \rho(z+\cu_n) > C_1 \right] \leq C \exp\left( - \frac1C 3^{\frac n{2d+1}} \right). 
\end{equation*}
By~\eqref{e.omegalocal}, this implies that 
\begin{align*} \label{}
\P \left[ \rho^{(k)}_{\mathrm{loc}}(z+\cu_n) > C_1 \right] 
& \leq C \exp\left( - \frac1C 3^{\frac n{2d+1}} \right) + C\exp\left( -\frac1C 3^{\frac{ks}{4}} \right) \\
& \leq C\exp\left( -\frac1C 3^{m\alpha} \right). 
\end{align*}
Thus
\begin{equation*} \label{}
\P \left[ \sup_{z\in 3^n\Zd \cap \cu_m} \rho^{(k)}_{\mathrm{loc}}(z+\cu_n) > C_1 \right] \leq C 3^{d(m-n)}  \exp\left( - \frac1C 3^{m\alpha} \right)  \leq C\exp\left( -\frac1C 3^{m\alpha} \right). 
\end{equation*}
For each $z\in 3^n\Zd$, denote the events
\begin{equation*} \label{}
G_z:= \left\{ \rho^{(k)}_{\mathrm{loc}}(z+\cu_n) \leq C_1 \right\}
\end{equation*}
and 
\begin{equation*} \label{}
H:=  \left\{ \sup_{z\in 3^n\Zd \cap \cu_m} \rho^{(k)}_{\mathrm{loc}}(z+\cu_n) > C_1 \right\} = \Omega \setminus \left(\bigcap_{z\in 3^n\Zd\cap \cu_m} G_z \right).
\end{equation*}
Note that $G_z\in \F(z+\cu_n)$. 
Note that the estimate above gives a bound for $\indc_H$: 
\begin{equation*} \label{}
\indc_H 
\leq \O_{1}\left( C3^{-m\alpha} \right),
\end{equation*}
and thus we can bound $\omega$ from above on the ``bad" event $H$:
\begin{equation}
\label{e.boundonH}
\rho (\cu_m)\, \indc_{H} \leq \O_{s}\left( C \right) \cdot \O_{1}\left( C3^{-m\alpha} \right) \leq \O_{s}\left( C3^{-m\alpha} \right).
\end{equation}

\smallskip

\emph{Step 2.} The concentration argument. The claimed estimate is\begin{equation}
\label{e.bigestrho}
\rho(\cu_m) \indc_{\Omega\setminus H} \leq \O_{s}\left( C3^{-m\alpha} \right).
\end{equation}
We begin by noticing that~\eqref{e.musubadd},~\eqref{e.nusubadd} and~\eqref{e.omegalocal} give us the approximate subadditivity bound
\begin{align}
\label{e.rhosubadd}
\rho(\cu_m) \indc_{\Omega\setminus H} 
& \leq 3^{-d(m-n)} \sum_{z\in 3^n\Zd\cap \cu_m}
\rho^{(k)}_{\mathrm{loc}}(z+\cu_n) \indc_{G_z} + \O_{s}\left( C3^{-\frac k4} \right) \\ \notag 
& \leq 3^{-d(m-n)} \sum_{z\in 3^n\Zd\cap \cu_m}
\rho^{(k)}_{\mathrm{loc}}(z+\cu_n) \wedge C_1 + \O_{s}\left( C3^{-\frac k4} \right).
\end{align}
Note that for $z,z' \in 3^n\Zd\cap \cu_m$ with $z \neq z'$, 
\begin{equation}
\label{e.expdist}
\rho^{(k)}_{\mathrm{loc}}\left(z + \cu_{n}\right)
\quad \mbox{and} \quad  \rho^{(k)}_{\mathrm{loc}}\left(z' + \cu_{n}\right)
\quad \mbox{are $\P$--independent.}
\end{equation}
We now fix $t>0$ and compute
\begin{align*}
\lefteqn{
 \log \E \left[ \exp\left( t \sum_{z\in 3^n\Zd\cap \cu_m} \rho^{(k)}_{\mathrm{loc}}(z+\cu_n) \wedge C_1\right)\right] 
 } \qquad & \\
& = \log \E \left[ \prod_{z\in 3^n\Zd\cap \cu_m}  \exp\left( t \rho^{(k)}_{\mathrm{loc}}\left(z +\cu_n\right)\wedge C_1\right)\right] & \\
& \leq  \sum_{z\in 3^n\Zd\cap \cu_m} \log  \E\left[ \ \exp\left( t\rho^{(k)}_{\mathrm{loc}}\left(z +\cu_n\right)\wedge C_1\right) \right] & \mbox{(by \eqref{e.expdist})} \\
& = 3^{d(m-n)} \log \E \left[  \exp\left( t\rho^{(k)}_{\mathrm{loc}}(\cu_n)\wedge C_1\right)\right] & \mbox{(by~\eqref{e.munustat}).}
\end{align*}   
We take $t:= 1/K$ and estimate the last term using the elementary inequalities
\begin{equation*}\label{}
\left\{ \begin{aligned}
& \exp(s) \leq 1+Cs && \mbox{for all} \ 0\leq s \leq C_1, \\
& \log(1+s) \leq s && \mbox{for all} \ s\geq 0,
\end{aligned} \right.
\end{equation*}
to get
\begin{align*}  
3^{-d(m-n)} \log \E \left[ \exp\left(  C^{-1}\sum_{z\in 3^n\Zd\cap \cu_m} \rho^{(k)}_{\mathrm{loc}}(z+\cu_n)\wedge C_1\right) \right] \leq  C \E\left[ \rho^{(k)}_{\mathrm{loc}}(\cu_n) \right].
\end{align*}
Applying~\eqref{e.omegadecayrate} and~\eqref{e.omegalocal}, we find that 
\begin{equation*} \label{}
\E\left[ \rho^{(k)}_{\mathrm{loc}}(\cu_n) \right] \leq C3^{-n\alpha} \leq C3^{-\frac{m\alpha}2}. 
\end{equation*}
The previous two lines and Chebyshev's inequality imply that 
\begin{equation*}
\P \left[ 3^{-d(m-n)}\sum_{z\in 3^n\Zd\cap \cu_m} \rho^{(k)}_{\mathrm{loc}}(z+\cu_n) \wedge C_1  > t \right] \leq \exp\left( -c3^{d(m-n)}\left( t - C3^{-\frac{m\alpha}2} \right) \right).
\end{equation*}
This implies that 
\begin{equation*}
 3^{-d(m-n)}\sum_{z\in 3^n\Zd\cap \cu_m} \rho^{(k)}_{\mathrm{loc}}(z+\cu_n) \wedge C_1 \leq \O_1\left( C3^{-\frac{m\alpha}2}\right).
\end{equation*}
Combined with~\eqref{e.rhosubadd}, we get 
\begin{equation*}
\rho(\cu_m) \indc_{\Omega\setminus H}  \leq  \O_1\left( C3^{-\frac{m\alpha}2}\right) + \O_{s}\left( C3^{-\frac k2} \right) \leq \O_{s}\left( C3^{-m\alpha} \right),
\end{equation*}
which is~\eqref{e.bigestrho}.

\smallskip

\emph{Step 3.} We complete the argument. By combining the previous steps, we get
\begin{align*}
\rho(\cu_m) 
 = \rho(\cu_m) \indc_{\Omega\setminus H} +\rho(\cu_m) \indc_{H}  \leq \O_s\left( C3^{-\frac{m\alpha}2} \right). 
\end{align*}
We also recall that, by~\eqref{e.munucomparison}, we have 
\begin{equation*}
\sup_{q\in\Rd} \frac{1}{|q|^2} \omega(\cu_m,q)  \geq - \O_{\frac{2}{2d-1}}\left( C3^{-\frac m2} \right). 
\end{equation*}
The previous two inequalities yield the lemma after we shrink~$\alpha$. 
\end{proof}

\begin{proof}[{Proof of Proposition~\ref{p.subadd}}]
Applying Lemma~\ref{l.minimalset} and then Lemma~\ref{l.SIupgrade}, we obtain
\begin{multline*}
\sup_{q\in \Rd} \frac{1}{|q|^4} \left| \mu(\cu,q) - \frac12q\cdot \ahom^{-1} q \right|^2 + \sup_{p\in\Rd} \frac{1}{|p|^4} \left| \nu(\cu,p) - \frac12p\cdot \ahom p\right|^2 
\\ \leq C \left( \sup_{e \in \partial B_1} \omega(\cu,e)_+ + \O_{\frac{1}{2d+1}}\left( C\size(\cu)^{-\frac12} \right) \right) 
 \leq  \O_{s}\left( C\size(\cu)^{-\alpha} \right).
\end{multline*}
Taking square roots and shrinking~$\alpha$ gives the desired bound for some~$s(d,\lambda,\p)>0$. Using~\eqref{e.improves} to interpolate this results with the bounds~\eqref{e.muupbound} and~\eqref{e.nuupbound}, we can allow the exponent~$s$ to depend only on~$d$ by further shrinking~$\alpha$.
\end{proof}

We conclude this section by introducing $\mathcal{N}$ which will be an important tool in the proofs of Section~6.
\begin{definition} \label{definition.rvN}
For $m \in \N$, we define the random variable $\mathcal{N}$ by
\begin{multline*}
\mathcal{N} := \sup \Bigg\{ 3^m  \,:\, m\in\N, \, n= \left\lceil \frac m4 \right\rceil, \ k = \left\lceil \frac m8 \right\rceil,  \\   \sup_{z \in 3^k \Zd \cap \cu_m} \left( \sup_{q\in \Rd} \frac{1}{|q|^2} \left| \mu(z + \cu_{n},q) - \frac12q\cdot \ahom^{-1} q \right| + \sup_{p\in\Rd} \frac{1}{|p|^2} \left| \nu(z + \cu_{n} ,p) - \frac12p\cdot \ahom p\right| \right) \\
\geq C3^{-m\alpha}  \Bigg\}.
\end{multline*}
Here the exponent $\alpha=\alpha(d,\p, \lambda) > 0$ is defined in the proof of the following proposition, and may be smaller than the one in~Proposition~\ref{p.subadd}. 
\end{definition}

\begin{proposition}
\label{p.boundN}
There exist~$s(d,\p,\lambda)>0$ and~$C(d,\p, \lambda) < \infty$ such that 
\begin{equation} 
\label{Nisbounded}
\mathcal{N} \leq \O_s(C).
\end{equation}
\end{proposition}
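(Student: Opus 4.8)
The plan is to derive the bound on $\mathcal{N}$ directly from Proposition~\ref{p.subadd} by a union bound over the relevant cubes combined with the stochastic integrability notation. Recall that Proposition~\ref{p.subadd} gives exponents $s(d)>0$ and $\alpha(d,\p,\lambda)\in(0,\tfrac14]$ and a constant $C$ such that, for every $\cu\in\T$ and $p,q\in\Rd$,
\begin{equation*}
\sup_{q\in\Rd}\frac1{|q|^2}\left|\mu(\cu,q)+\tfrac12 q\cdot\ahom^{-1}q\right|
+\sup_{p\in\Rd}\frac1{|p|^2}\left|\nu(\cu,p)-\tfrac12 p\cdot\ahom p\right|
\leq \O_s\!\left(C\size(\cu)^{-\alpha}\right),
\end{equation*}
where we have used that the supremum of a convex quadratic over the unit sphere is controlled by the sum over coordinate directions, together with~\eqref{e.Osums}. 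So first I would, for each fixed $m\in\N$ with $n=\lceil m/4\rceil$ and $k=\lceil m/8\rceil$, apply this to each cube $z+\cu_n$ with $z\in 3^k\Zd\cap\cu_m$: each such quantity is $\leq \O_s(C3^{-n\alpha})=\O_s(C3^{-m\alpha/4})$. By the definition of the $\O_s$ notation and Markov's inequality, each exceeds $C'3^{-m\alpha'}$ (for a suitable new constant $C'$ and exponent $\alpha'$, say $\alpha'=\alpha/8$) only with probability at most $2\exp(-c3^{m(\alpha/4-\alpha')s})=2\exp(-c3^{m\alpha''})$ for some $\alpha''>0$.

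Next I would take a union bound over the at most $3^{dm}$ cubes $z+\cu_n$ with $z\in 3^k\Zd\cap\cu_m$ (in fact only about $3^{d(m-k)}$ of them, but $3^{dm}$ suffices). This gives
\begin{equation*}
\P\!\left[\sup_{z\in 3^k\Zd\cap\cu_m}\Big(\cdots\Big)\geq C'3^{-m\alpha'}\right]
\leq 2\cdot 3^{dm}\exp\!\left(-c3^{m\alpha''}\right)\leq C\exp\!\left(-c'3^{m\alpha''}\right),
\end{equation*}
absorbing the polynomial factor $3^{dm}$ into the stretched-exponential tail at the cost of enlarging the constant. This means precisely that the indicator of the event in Definition~\ref{definition.rvN} (with $\alpha$ there taken to be this $\alpha'$) satisfies $\indc_{\{\cdots\geq C'3^{-m\alpha'}\}}\leq \O_1(C3^{-m\alpha''})$, or rather that the tails of $\mathcal N$ decay at a stretched-exponential rate.

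Finally I would convert this tail bound on $\mathcal{N}$ into the statement $\mathcal{N}\leq\O_s(C)$ by invoking Lemma~\ref{l.OtoM}: taking $X_m$ to be a constant multiple of the indicator of the event $\{\sup_{z\in 3^k\Zd\cap\cu_m}(\cdots)\geq C'3^{-m\alpha'}\}$, we have $X_m\leq\O_1(C3^{-m\alpha''})$, so Lemma~\ref{l.OtoM} (applied with $s=1$, $\beta=\alpha''$) yields $\mathcal N=\sup\{3^m:X_m\geq1\}\leq\O_{\alpha''}(C)$, which is of the claimed form with $s=\alpha''(d,\p,\lambda)>0$ (and one may shrink $s$ to make it depend only on $d$ if desired, using~\eqref{e.improves}, though the statement only asks for $s(d,\p,\lambda)$). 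The main thing to be careful about is the bookkeeping of exponents: one must choose $\alpha'$ strictly smaller than the $\alpha$ of Proposition~\ref{p.subadd} so that the gap $(\alpha/4-\alpha')s>0$ provides a genuine stretched-exponential decay that survives the $3^{dm}$ union bound; there is no real analytic obstacle here since all the hard work is already contained in Proposition~\ref{p.subadd}. I would state at the start of the proof that $\alpha$ in Definition~\ref{definition.rvN} is now fixed to be this smaller value.

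\begin{proof}[Proof of Proposition~\ref{p.boundN}]
Let $s(d)>0$ and $\alpha_0:=\alpha(d,\p,\lambda)\in(0,\tfrac14]$ and $C<\infty$ be as in the statement of Proposition~\ref{p.subadd}. Fix $m\in\N$ and set $n:=\lceil m/4\rceil$ and $k:=\lceil m/8\rceil$. For each $z\in 3^k\Zd\cap\cu_m$, the cube $z+\cu_n$ has size $3^n$, and therefore Proposition~\ref{p.subadd}, combined with the observation that for a nonnegative quadratic form $F$ on $\Rd$ one has $\sup_{e\in\partial B_1}F(e)\leq\sum_{i=1}^d F(\e_i)$ and with~\eqref{e.Osums}, gives
\begin{equation*}
Y_z:=\sup_{q\in\Rd}\frac1{|q|^2}\left|\mu(z+\cu_n,q)+\tfrac12 q\cdot\ahom^{-1}q\right|+\sup_{p\in\Rd}\frac1{|p|^2}\left|\nu(z+\cu_n,p)-\tfrac12 p\cdot\ahom p\right|\leq\O_s\!\left(C3^{-n\alpha_0}\right).
\end{equation*}
Since $n\geq m/4$, this yields $Y_z\leq\O_s(C3^{-m\alpha_0/4})$. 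We now set
\begin{equation*}
\alpha:=\frac{\alpha_0}{8},
\end{equation*}
which is the exponent appearing in Definition~\ref{definition.rvN}. By Markov's inequality in the definition of the $\O_s$ notation, for each $z\in 3^k\Zd\cap\cu_m$,
\begin{equation*}
\P\left[Y_z\geq C3^{-m\alpha}\right]\leq 2\exp\left(-\left(3^{m(\alpha_0/4-\alpha)}\right)^s\right)=2\exp\left(-3^{m\alpha s/ 1}\right),
\end{equation*}
where we used $\alpha_0/4-\alpha=\alpha$. Set $\alpha':=\alpha s>0$. Taking a union bound over the at most $3^{dm}$ points $z\in 3^k\Zd\cap\cu_m$, we obtain
\begin{equation*}
\P\left[\sup_{z\in 3^k\Zd\cap\cu_m}Y_z\geq C3^{-m\alpha}\right]\leq 2\cdot 3^{dm}\exp\left(-3^{m\alpha'}\right)\leq C\exp\left(-\tfrac12 3^{m\alpha'}\right),
\end{equation*}
where in the last step we enlarged $C$ to absorb the polynomial factor $3^{dm}$ into the stretched-exponential tail. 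Hence, defining
\begin{equation*}
X_m:=2\cdot\indc_{\left\{\sup_{z\in 3^k\Zd\cap\cu_m}Y_z\geq C3^{-m\alpha}\right\}},
\end{equation*}
we have $X_m\leq\O_1\left(C3^{-m\alpha'}\right)$ for every $m\in\N$. By definition of $\mathcal N$ in Definition~\ref{definition.rvN},
\begin{equation*}
\mathcal N=\sup\left\{3^m:m\in\N,\ X_m\geq1\right\}.
\end{equation*}
Applying Lemma~\ref{l.OtoM} with the parameters $s=1$, $\beta=\alpha'$ and $K=C$, we conclude that
\begin{equation*}
\mathcal N\leq\O_{\alpha'}(C),
\end{equation*}
which is the claimed bound~\eqref{Nisbounded} with $s:=\alpha'(d,\p,\lambda)>0$.
\end{proof}
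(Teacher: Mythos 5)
Your overall scheme (pointwise estimate, union bound over $z$, absorb the polynomial factor into the stretched exponential, then convert the tail bound via Lemma~\ref{l.OtoM}) is exactly the outer shell of the paper's argument, and those steps are fine. But there is a genuine gap at the very first step: you apply Proposition~\ref{p.subadd} to the cubes $z+\cu_n$ with $z\in 3^k\Zd\cap\cu_m$, and since $k=\lceil m/8\rceil<n=\lceil m/4\rceil$, these are in general \emph{not} triadic cubes (a cube $z+\cu_n$ belongs to $\T$ only when $z\in 3^n\Zd$). Proposition~\ref{p.subadd} is stated only for $\cu\in\T$, and this restriction is not cosmetic: the quantities $\mu$ and $\nu$ are built from the partition $\Pa$, which is anchored to the triadic grid, so they are \emph{not} stationary under translations by $z\in 3^k\Zd\setminus 3^n\Zd$, and one cannot transfer the estimate from $\cu_n$ to $z+\cu_n$ by "stationarity of $\P$" or by simply quoting the proposition. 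Supplying this transfer is precisely the content of the paper's proof: it replaces $\mu(z+\cu_n,q)$, $\nu(z+\cu_n,p)$ by the localized quantities $\mu^{(k)}_{\mathrm{loc}}$, $\nu^{(k)}_{\mathrm{loc}}$, which by construction (through the $3^k\Zd$-periodic partition $\Pa^{(k)}$) have laws independent of $z\in 3^k\Zd$ (see~\eqref{e.munustat}), uses Proposition~\ref{p.localization} to show that the localized and global quantities differ only with stretched-exponentially small probability, and only then invokes the convergence estimate for the centered triadic cube $\cu_n$. Without this localization/stationarity detour your pointwise bound on $Y_z$ is unjustified.

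A smaller point: you also pass from the per-$q$ statement of Proposition~\ref{p.subadd} to $\sup_{q}\frac{1}{|q|^2}|\cdots|$ by asserting nonnegativity of the quadratic form $q\mapsto\mu(\cu,q)+\frac12 q\cdot\ahom^{-1}q$, which is not literally true (it is a difference and can change sign, and for a sign-changing form the bound $\sup_{e\in\partial B_1}|F(e)|\leq\sum_i|F(\e_i)|$ fails because of off-diagonal terms). This is repairable — the quantity is nonnegative up to an $\O$-small error (cf.\ the argument around~\eqref{e.suptosum}), and in fact the sup-in-$q$ form of the estimate is already what Lemma~\ref{l.minimalset} and Lemma~\ref{l.SIupgrade} deliver in the proof of Proposition~\ref{p.subadd} — but as written the justification is incorrect. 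The main missing idea, however, remains the treatment of the non-triadic, shifted cubes via localization and approximate stationarity.
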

\begin{proof}
First we prove, for each fixed $m\in\N$ and $z\in 3^k\Zd$, an estimate of the form
\begin{equation*}
\P \left[ \sup_{q\in \Rd} \frac{1}{|q|^2} \left| \mu(z + \cu_{n},q) - \frac12q\cdot \ahom^{-1} q \right|  \geq C3^{-m\alpha} \right] \leq C \exp\left( - C^{-1} 3^{-\alpha m} \right),
\end{equation*}
where $n:=\left\lceil \frac m4\right\rceil$. 
To do this we apply Proposition~\ref{p.localization} and use the stationarity property~\eqref{e.munustat} and~\eqref{e.localization} to obtain
\begin{align*}
\lefteqn{ 
\P \left[ \sup_{q\in \Rd} \frac{1}{|q|^2} \left| \mu(z + \cu_{n},q) - \frac12q\cdot \ahom^{-1} q \right|  \geq C3^{-m\alpha} \right] 
} \qquad & 
\\
& \leq  \P \left[ \sup_{q\in \Rd} \frac{1}{|q|^2} \left| \mu^{(k)}_\mathrm{loc}(z + \cu_{n},q) - \frac12q\cdot \ahom^{-1} q \right|  \geq \frac C2 3^{-m\alpha} \right] + C \exp\left( - C^{-1} 3^{-\alpha m} \right) 
\\
& \leq \P \left[ \sup_{q\in \Rd} \frac{1}{|q|^2} \left| \mu^{(k)}_\mathrm{loc}( \cu_{n},q) - \frac12q\cdot \ahom^{-1} q \right|  \geq \frac C2 3^{-m\alpha} \right] + C \exp\left( - C^{-1} 3^{-\alpha m} \right) 
\\
& \leq \P \left[ \sup_{q\in \Rd} \frac{1}{|q|^2} \left| \mu_\mathrm{loc}( \cu_{n},q) - \frac12q\cdot \ahom^{-1} q \right|  \geq \frac C4 3^{-m\alpha} \right] + C \exp\left( - C^{-1} 3^{-\alpha m} \right) 
\\
& \leq C \exp\left( - C^{-1} 3^{-\alpha m} \right).
\end{align*}
This results and the fact that $3^m \geq Cn$ gives to
\begin{align*}
\lefteqn{ \P \left[ \sup_{z \in 3^k \Zd \cap \cu_m } \sup_{q\in \Rd} \frac{1}{|q|^2} \left| \mu(z + \cu_{n},q) - \frac12q\cdot \ahom^{-1} q \right|  \geq C3^{-m\alpha} \right] } \qquad & \\
				& \leq \sum_{z \in 3^k \Zd \cap \cu_m } \P \left[ \sup_{q\in \Rd} \frac{1}{|q|^2} \left| \mu^{(k)}_\mathrm{loc}(z + \cu_{n},q) - \frac12q\cdot \ahom^{-1} q \right|  \geq \frac C2 3^{-m\alpha} \right] \\
				& \leq 3^{d(m-k)} C \exp\left( - C^{-1} 3^{-\alpha m} \right) \\
				& \leq  C \exp\left( - C^{-1} 3^{-\alpha m} \right).
\end{align*}
Similarly, we obtain
\begin{equation*}
\P \left[ \sup_{z \in 3^k \Zd \cap \cu_m } \sup_{p\in\Rd} \frac{1}{|p|^2} \left| \nu(z + \cu_{n} ,p) - \frac12p\cdot \ahom p\right| \geq C3^{-m\alpha} \right] \leq C \exp\left( - C^{-1} 3^{-\alpha m} \right).
\end{equation*}
The estimate~\eqref{Nisbounded} is now a consequence of Lemma~\ref{l.OtoM} with  $X_m$ defined as the indicator function of the event
\begin{multline*}
\left\{ \sup_{z \in 3^k \Zd \cap \cu_m} \left( \sup_{q\in \Rd} \frac{1}{|q|^2} \left| \mu(z + \cu_{n},q) - \frac12q\cdot \ahom^{-1} q \right| \right.  \right. \\ \left. \left. + \sup_{p\in\Rd} \frac{1}{|p|^2} \left| \nu(z + \cu_{n} ,p) - \frac12p\cdot \ahom p\right| \right) \geq C3^{-m\alpha} \right\}. \qedhere
\end{multline*}
\end{proof}

\section{Homogenization error estimates for the Dirichlet problem}
\label{s.dirichlet}

In this section, we pass from control on the subadditive quantities~$-\mu$ and~$\nu$ to quenched control on the error in homogenization for the Dirichlet problem. Combined with Proposition~\ref{p.subadd}, this allows us to complete the proof of Theorem~\ref{t.homog}. The arguments here are entirely deterministic in the sense that they produce an estimate for the homogenization error in terms of the coarseness of the partition~$\Pa$ and the convergence of~$-\mu$ and~$\nu$ in mesoscopic cubes. In particular, we are not using theory developed in the previous section. The arguments here are a variation of similar ones in~\cite{AS}. 

\smallskip

In this section, we abuse notation by using the symbol $\cu_m$ to also denote the \emph{continuum} cube 
\begin{equation*} \label{}
\left[ -\frac12 (3^m-1) ,\frac12 (3^m-1)\right]^d \subseteq \Rd. 
\end{equation*}
It will be made clear from the context whether $\cu_m$ refers to the continuum cube or the discrete one. Moreover in this section we will use
\begin{equation*}
|\cu_m| = \card (\cu_m) = 3^{dm}
\end{equation*}
which is slightly different from $\mathrm{Leb}(\cu_m) = \left( 3^m - 1 \right)^d$. We will also write $\fint_{\cu_m} = \frac{1}{|\cu_m|} \int_{\cu_m} = 3^{-dm} \int_{\cu_m}$.
We further abuse notation by extending the coarsened function $\left[ u \right]_\Pa$ to be defined on a continuum domain by taking it to be constant on each unit cube of the form $z + \left( - \frac 12 , \frac 12 \right]^d$ with $z\in\Zd$. To avoid confusion, here we will use the symbols $\int$ and $\fint$ only to denote integration with respect to Lebesgue measure on~$\Rd$ and write sums with $\sum$. 

\smallskip

We fix, once and for all, a positive integer $m\in\N$ and a function $u \in \A_*(\cu_m)$. We also fix an exponent $p>2$ and set 
\begin{equation*}
M := \left( \frac{1}{|\cu_m|}\sum_{x \in \C_*(\cu_m)} |\nabla u \indc_{\a \neq 0}|^p (x) \right)^{\frac 1p}.
\end{equation*}
To define $u_{\mathrm{hom}}$ on the continuum cube $\cu_m$, we first define
\begin{equation*} \label{}
\tilde{w}(x) := \left\{ 
\begin{aligned}
& u(x)  & \mbox{if} \ x\in \C_*(\cu_m) \cap \partial \cu_m, \\
& \left[ u \right]_{\Pa} (x) & \mbox{otherwise.} 
\end{aligned}
\right.
\end{equation*}
We then extend $\tilde w$ to the continuum by taking it to be constant on unit cubes of the form $z + \left(-\frac 12, \frac 12\right]^d$ with $z\in\Zd$. We then make it smooth by convolving it with a smooth bump function $\rho \in C_c^\infty(\Rd,\R)$ which is supported in $\left(-\frac 12, \frac 12\right)^d$ and has unit mass. Call the function obtained in this way $w\in C^\infty(\cu_m)$ and notice that, 
\begin{equation*}
\forall x \in \Zd \cap \cu_m, \, w(x) = \tilde w(x).
\end{equation*}
We take $u_{\mathrm{hom}} \in H^1(\cu_m)$ to be the solution of the homogenized Dirichlet problem
\begin{equation*}
\left\{
  \begin{aligned}
 &  - \nabla \cdot \left(\ahom \nabla \uhom \right) = 0 & \mbox{in}  & \ \cu_m, \\
  &  \uhom =  w & \mbox{on} & \ \partial \cu_m.\\
  \end{aligned}
\right.
\end{equation*}
The purpose of this section is to prove the following proposition which, in view of our setup in this section, implies Theorem~\ref{t.homog}. 

\smallskip

(Recall that the random scales $\M_t$ and $\mathcal{N}$ are given in Proposition~\ref{p.minimalscales} and Definition~\ref{definition.rvN}, respectively, and the partition $\Qa$ is the one for the Meyers estimate (see Definition~\ref{d.partQa}) which is coarser than $\Pa$.)

\begin{proposition} \label{prop.dirichlet}
There exist $t := t(d,\p,\lambda,p) < + \infty$, $\alpha := \alpha(d,\p,\lambda,p)> 0$ and $C := C(d,\p,\lambda,p) < + \infty$ such that  $3^m\geq \mathcal{N} \vee \M_t(\Qa)$ implies that 
\begin{equation*}
\frac{1}{|\cu_m|} \sum_{x \in \C_*(\cu_m)} |u(x) - \uhom(x)|^2 \leq C M^2 \size(\cu_m)^{(2- \alpha)}.
\end{equation*}
\end{proposition}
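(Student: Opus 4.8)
The overall strategy follows the two-scale expansion / energy comparison approach of \cite{AS}, adapted to the coarsened setting. The plan is to prove the estimate via the following chain of comparisons: the solution $u$ on $\C_*(\cu_m)$ is compared, via its coarsening $[u]_\Pa$, to a harmonic function $\uhom$ on the continuum cube, using an intermediate two-scale expansion on a mesoscopic scale $3^n$ with $n$ a fixed fraction of $m$ (say $n=\lceil m/4\rceil$, to match Definition~\ref{definition.rvN}). The main quantitative input is Proposition~\ref{p.subadd}: on the event $3^m\ge\mathcal N$, in \emph{every} mesoscopic subcube $z+\cu_n$ with $z\in 3^k\Zd\cap\cu_m$, both $\mu(z+\cu_n,q)$ and $\nu(z+\cu_n,p)$ are within $C|q|^2 3^{-m\alpha}$ (resp.\ $C|p|^2 3^{-m\alpha}$) of the homogenized quantities $\tfrac12 q\cdot\ahom^{-1}q$ and $\tfrac12 p\cdot\ahom p$. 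The partition scale input is $3^m\ge\M_t(\Qa)$, which via Proposition~\ref{p.minimalscales} controls $\Lambda_t(\cu_m,\Qa)$ and hence (using the Meyers estimate, Proposition~\ref{p.meyers}, to absorb the weights $\size(\cu)^{\mathrm{power}}$) lets us ``H\"older away'' the coarseness of $\Pa$ at the cost of an arbitrarily small loss of exponent and a factor of $M=\|\nabla u\indc_{\a\ne0}\|_{\underline L^p(\C_*(\cu_m))}$.

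\textbf{Step 1: Flux and gradient matching on mesoscopic cubes.} First I would show that, on the good event, the spatial average of $\a\nabla u$ over $\C_*(z+\cu_n)$ and the spatial average of $\nabla[u]_\Pa$ over $z+\cu_n$ are approximately dual under $\ahom$: if $q_z := \langle\a\nabla u\rangle_{\C_*(z+\cu_n)}/|\cu_n|$ and $p_z := \langle\nabla[u]_\Pa\rangle_{z+\cu_n}/|\cu_n|$, then $|q_z-\ahom p_z|$ is small relative to the local energy density. This comes from testing the definitions of $\mu(z+\cu_n,q)$ and $\nu(z+\cu_n,p)$ against the restriction of $u$ (the first variations \eqref{e.firstvarmu}, \eqref{e.firstvarnu}), combined with the near-optimality quantified by $\mathcal N$ and the second-variation (quadratic response) identities \eqref{e.secondvarmu}, \eqref{e.secondvarnu}. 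This is the discrete analogue of the statement that ``spatial averages of the flux and the gradient of a solution are related by the homogenized matrix up to $o(1)$.''

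\textbf{Step 2: Two-scale expansion and energy estimate.} Next I would construct the comparison: let $\uhom$ be the given harmonic function and form the two-scale expansion $w_\varepsilon(x) = \uhom(x) + \sum_i \partial_i\uhom * (\text{mesoscopic corrector-type object})$ built from the minimizers $v(\cdot,z+\cu_n,\nabla\uhom(z))$ on the mesoscopic cubes, glued with a partition of unity at scale $3^n$ and with the boundary-layer cubes $\partial_\Pa$ handled separately (as in the proof of the Sobolev inequality and in Lemma~\ref{l.ulocloccoarse}). The energy of $u-w_\varepsilon$ is controlled by: (i) the interior error on each mesoscopic cube, which by Step~1 and quadratic response is $\lesssim 3^{-m\alpha}$ times the local energy; (ii) the mismatch on cube interfaces and on $\partial_\Pa$ cubes, which costs a factor $\lesssim 3^{-n/2}$ times $\|\nabla\uhom\|^2$ plus, via H\"older and the $\M_t(\Qa)$ bound combined with Meyers \eqref{e.meyers}, a factor $M^2$ from passing between $L^2$ and $L^p$ norms of $\nabla u$; (iii) the regularity of $\uhom$ itself ($\|\nabla^2\uhom\|_{L^\infty}\lesssim 3^{-m}\|\nabla\uhom\|_{L^2}$ on the middle of the cube, interior estimates for the constant-coefficient operator $\ahom$), and control of $\|\nabla\uhom\|_{L^2(\cu_m)}$ by $M$, which follows from the fact that $\uhom$ has the same boundary data as the mollification $w$ of $\tilde w$ together with Lemmas~\ref{l.coarseLs}, \ref{l.coarsegrads} and a trace/extension argument. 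Summing, one gets $\|u-w_\varepsilon\|_{\underline L^2}^2 \lesssim M^2 3^{2m}(3^{-m\alpha}+3^{-n/2}+\dots) \lesssim M^2 3^{m(2-\alpha')}$ after shrinking $\alpha$.

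\textbf{Step 3: From $w_\varepsilon$ to $\uhom$ and cleanup.} Finally, $\|w_\varepsilon - \uhom\|_{\underline L^2(\cu_m)}$ is itself $O(M 3^{m(1-\alpha'/2)})$ because the correction term is built from mesoscopic objects with sublinearly-growing coarsened parts (control of $\|[v]_\Pa - p\cdot x\|$ type quantities, cf.\ Lemma~\ref{l.flatness}-style reasoning, though here only a crude mesoscopic version is needed). Combining Steps~2 and~3 with the triangle inequality and converting back from $\C_*(\cu_m)$-norms to coarsened norms via Lemma~\ref{l.coarseLs} (using the $\M_t(\Qa)$ bound once more to control the weighted sum of $\size(\cu)^{\mathrm{power}}$ against $M$ using Meyers), together with the difference $|\cu_m| = 3^{dm}$ versus $\mathrm{Leb}(\cu_m)$ being negligible, yields the claimed bound with some $\alpha(d,\p,\lambda,p)>0$ and $t(d,\p,\lambda,p)<\infty$.

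\textbf{Main obstacle.} The hard part will be item (ii) of Step~2: controlling the error terms concentrated on the boundary layers of the mesoscopic cubes and on the coarse cubes of $\Pa$, because the energy density $|\nabla u|^2$ may be large precisely where $\Pa$ is coarse. The resolution — exactly as flagged in the paper's introduction and in the proof of Lemma~\ref{l.restrictquadcoarsen} — is to use the Meyers higher-integrability estimate (Proposition~\ref{p.meyers}) to trade a tiny bit of the exponent $p>2$ for enough integrability to H\"older the $\size(\cu_\Qa(x))$-weights out against the $\M_t(\Qa)$ bound, thereby controlling everything by $M$ at the cost of the algebraic loss $3^{-m\alpha}$ with a possibly-much-smaller $\alpha$. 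Getting the bookkeeping of exponents to close (so that $t$ and $\alpha$ depend only on $(d,\p,\lambda,p)$) is the most delicate part, but it is entirely deterministic and parallels the argument in \cite{AS}.
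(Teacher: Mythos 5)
Your list of inputs is the right one ($\mathcal N$ for the convergence of $\mu$ and $\nu$ on every mesoscopic cube, $\M_t(\Qa)$ plus the Meyers estimate to H\"older the partition weights against $M$, patching of the local $\nu$-minimizers with affine data $\nabla\uhom$ at scale $3^n$, Poincar\'e and interior $H^2$ regularity of $\uhom$ for the $L^2$-closeness of the patched function to $\uhom$), and this matches half of the paper's argument (its Lemma~\ref{l.lemmaA2}). But there is a genuine gap at the heart of your Step~2: you never give a mechanism that actually bounds $\|\nabla(u-w_\varepsilon)\|_{L^2}$, and the one you name is circular. ``Quadratic response'' (\eqref{e.secondvarmu}, \eqref{e.secondvarnu}) compares a competitor with the minimizer of the \emph{same} local problem; to conclude that $u$ restricted to $y+\cu_n$ is energy-close to the local minimizer with slope $\nabla\uhom(y)$ you would need to know beforehand that the mesoscopic averages of $\nabla[u]_\Pa$ are close to $\nabla\uhom$ --- which is (a strengthening of) the very statement being proved. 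Your Step~1 does not supply this: it only shows that for $u$ itself the local flux average is $\approx\ahom$ times the local gradient average; it says nothing about either of them being close to $\nabla\uhom$.

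What is missing is the reverse energy inequality, i.e.\ the analogue of the paper's Lemma~\ref{l.lemmaA1}: using the convergence of the dual quantity $\mu$ (testing its definition with $u$ on each mesoscopic cube) and a continuum competitor $\tilde u\in w+H^1_0(\cu_m)$ built by mollifying $[u]_\Pa$ at scale $3^n$, one shows $\fint_{\cu_m}\nabla\tilde u\cdot\ahom\nabla\tilde u \le |\cu_m|^{-1}\langle\nabla u,\a\nabla u\rangle_{\C_*(\cu_m)}+CM^23^{-m\alpha}$, hence, since $\uhom$ minimizes the $\ahom$-energy with boundary data $w$, the homogenized energy of $\uhom$ is at most the discrete energy of $u$ plus error. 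Only with both one-sided inequalities (this one and your Step~2(ii)-type bound for the patched function $w_\varepsilon$, which equals $u$ on $\C_*(\cu_m)\cap\partial\cu_m$) can one invoke uniform convexity at the \emph{global} level --- $u$ is the minimizer among functions with its own boundary values, so the parallelogram argument yields $\|\nabla(u-w_\varepsilon)\indc_{\{\a\neq0\}}\|_{\underline L^2}^2\lesssim M^23^{-m\alpha}$ --- and then Poincar\'e (Proposition~\ref{p.sobolev} with $s=2$, using $3^m\ge\M_t(\Qa)$) plus the $L^2$-closeness of $w_\varepsilon$ to $\uhom$ finish the proof. Your Step~2(iii) only records the crude bound $\|\nabla\uhom\|_{L^2}\lesssim M$, which controls error terms but cannot replace this sharp comparison; alternatively you could close via an $H^{-1}$ estimate on $\nabla\cdot\a\nabla w_\varepsilon$, but that again requires the flux information for the \emph{local minimizers} (coming from the joint $\mu$--$\nu$ convergence), not the flux--gradient relation for $u$ that your Step~1 establishes. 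As written, the argument does not close.
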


The main idea in the proof of the proposition is to construct a function $\tilde{u} \in w  + H^1_0(\cu_m)$ which satisfies the property that its continuous homogenized energy is smaller that the discrete heterogeneous energy of $u$. This is explicited in Lemma~\ref{l.lemmaA1}. In the other direction we will construct a function $\tuhom : \C_*(\cu_m) \rightarrow \R$ which statisfies the following three properties:
\begin{enumerate}
\item $\tuhom$ is equal to $w $ on $\C_*(\cu_m) \cap \partial \cu_m$,
\item The discrete heterogenous energy of $\tuhom$ is smaller than the continuous homogenized energy of $\uhom$,
\item The discrete $L^2$-norm of $\tuhom - \uhom$ is small.
\end{enumerate}
This is specified in Lemma~\ref{l.lemmaA2}. We will then deduce Proposition~\ref{prop.dirichlet} from these results.

\begin{lemma} \label{l.lemmaA1}
There exists $t := t(d,\p,\lambda,p) < + \infty$, $\alpha := \alpha(d,\p,\lambda,p)> 0$ and $C := C(d,\p,\lambda,p) < + \infty$ such that, in the case that $3^m \geq  \mathcal{N} \vee \M_t ( \mathcal{Q})$, there exist a function $\tilde{u} \in w  + H^1_0(\cu_m)$ satisfying the energy bound
\begin{equation} \label{e.lemmaA1}
\fint_{\cu_m} \nabla \tilde{u}(x) \cdot \ahom \nabla \tilde{u}(x) \, dx\leq \frac{1}{|\cu_m|}\left\langle \nabla u, \a \nabla u \right\rangle_{\C_*(\cu_m)} +CM^2 3^{-m\alpha }.
\end{equation}
\end{lemma}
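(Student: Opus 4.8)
The plan is to construct $\tilde u$ by partitioning $\cu_m$ into mesoscopic subcubes, on each of which we glue in (a suitable affine correction of) the minimizer $u(\cdot,z+\cu_n,q_z)$ of the $\mu$-problem, with $q_z$ chosen so that the coarsened gradient $\langle\nabla[u(\cdot,z+\cu_n,q_z)]_\Pa\rangle$ matches the spatial average of $\nabla u$ on that subcube. First I would fix a mesoscopic scale $3^n$ with $n\approx\tfrac12 m$ (and $k\approx\tfrac14 m$ the localization scale appearing in the definition of $\mathcal{N}$), and for each $z\in3^n\Zd\cap\cu_m$ set $q_z$ so that $\langle\nabla[u(\cdot,z+\cu_n,q_z)]_\Pa\rangle_{z+\cu_n}$ equals the average of $\nabla[u]_\Pa$ (or rather of $\a\nabla u$ after identifying slopes via $\ahom_{z+\cu_n}$) over $z+\cu_n$; since $3^m\ge\mathcal{N}$, we have $|\mu(z+\cu_n,q)+\tfrac12 q\cdot\ahom^{-1}q|\le C|q|^2 3^{-m\alpha}$ on \emph{every} such subcube, which is exactly the deterministic input we need.

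Next I would build the continuum competitor. On the interior of each $z+\cu_n$ I set $\tilde u$ (extended to the continuum, constant on unit cubes then mollified as $w$ was) to be the coarsened minimizer $[u(\cdot,z+\cu_n,q_z)]_\Pa$ plus an affine function matching the average of $[u]_\Pa$, and then I patch across subcube boundaries and near $\partial\cu_m$ using cutoff functions supported in a boundary layer of width $\sim 3^{n/2}$; on this layer I interpolate between neighboring pieces. The boundary layer must be handled so that $\tilde u=w$ on $\partial\cu_m$ exactly, which forces the patching there to use $w$ itself rather than the local minimizers — this is why the boundary values of $\tilde w$ were defined using $u$, not $[u]_\Pa$. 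The energy of $\tilde u$ then splits as: (i) the sum over subcubes of $\fint_{z+\cu_n}\nabla[u(\cdot,z+\cu_n,q_z)]_\Pa\cdot\ahom\nabla[u(\cdot,z+\cu_n,q_z)]_\Pa$, which by the first variation for $\mu$ and the near-optimality bound from $\mathcal{N}$ is $\le \tfrac1{|\cu_n|}\langle\nabla u(\cdot,z+\cu_n,q_z),\a\nabla u(\cdot,z+\cu_n,q_z)\rangle + C|q_z|^2 3^{-m\alpha}$, hence controlled by the local Dirichlet energy of $u$ itself via minimality; plus (ii) a boundary-layer error. The boundary-layer error is bounded by the measure of the layer ($\sim 3^{-n/2}|\cu_m|$) times the $L^\infty$ or $L^{2+\ep}$ energy density there, and here we invoke the Meyers estimate (Proposition~\ref{p.meyers}, via $\M_t(\Qa)$) to ``H\"older away'' the coarsening weights and to convert the pointwise size of the gradient on the layer into a bounded factor times the global $\underline L^2$ energy — which in turn is $\le M^2$ after one more H\"older step from $\underline L^2$ to $\underline L^p$. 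Summing the subcube contributions and using that $\sum_z |q_z|^2 3^{dn}\lesssim |\cu_m|\, M^2$ (again by Meyers and the choice of $q_z$) yields (\ref{e.lemmaA1}) with $\alpha$ some fraction of the $\alpha$ from Proposition~\ref{p.subadd} (absorbing the loss from $p>2$ into the exponent $t$ of the Meyers minimal scale).

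The main obstacle I expect is controlling the boundary-layer terms without giving up a power of the scale: a naive Caccioppoli/H\"older bound on the layer produces $3^{-n/2}$ times a factor that could be as large as the coarseness of $\Pa$ raised to a power, and this is only harmless because we are above $\M_t(\Qa)$ and can use the Meyers higher integrability to trade the coarseness weights for a deterministic constant — exactly the mechanism highlighted in the discussion preceding Proposition~\ref{p.meyers}. A secondary technical point is that the subcube pieces $[u(\cdot,z+\cu_n,q_z)]_\Pa$ do not agree with each other (or with $w$) on subcube interfaces, so the interpolation must be done carefully using Lemma~\ref{l.flatness}-type flatness of the coarsened minimizers near affine functions, so that the jump across an interface is $O(M 3^{n})\cdot 3^{-c n}$ in $L^2$ and hence contributes an energy error $\lesssim M^2 3^{-cn}$ after dividing by the layer width; tracking the dependence on $p$ through these steps (via H\"older between $\underline L^2$ and $\underline L^p$) fixes the final $t$ and $\alpha$.
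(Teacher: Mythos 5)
Your construction goes in the wrong direction at its central step, and the key inequality you assert in (i) is false. If you set your competitor on $z+\cu_n$ equal to an affine function plus the coarsened minimizer $\left[ u(\cdot,z+\cu_n,q_z)\right]_{\Pa}$, then its gradient still carries the microscopic (corrector-type) oscillations, and measuring that oscillating field with the \emph{homogenized} quadratic form over-counts energy: writing $\nabla\!\left[ u(\cdot,z+\cu_n,q_z)\right]_{\Pa}=\bar p + g$ with $\bar p$ the spatial average and $g$ mean-zero, the $\ahom$-energy equals $\bar p\cdot\ahom\bar p + \fint g\cdot\ahom g$, and the fluctuation term is of order $|q_z|^2$, not $|q_z|^2 3^{-m\alpha}$. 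So the claimed bound by $\tfrac1{|\cu_n|}\langle\nabla u(\cdot,z+\cu_n,q_z),\a\nabla u(\cdot,z+\cu_n,q_z)\rangle + C|q_z|^2 3^{-m\alpha}$ cannot hold; it would require strong ($L^2$-of-gradient) flatness of the minimizer, which is exactly what is \emph{not} available: the random scale $\mathcal{N}$ only controls the \emph{values} of $\mu$ and $\nu$ on mesoscopic cubes, and Lemma~\ref{l.flatness} is an estimate in expectation on spatial averages and on the function itself in $L^2$, not a deterministic gradient-flatness statement above $\mathcal{N}$. You have also partly conflated the two directions of the argument: gluing local minimizers (of $\nu$, with affine data) is the right move for the \emph{reverse} inequality, i.e.\ for constructing the discrete competitor $\tuhom$ in Lemma~\ref{l.lemmaA2}, not for Lemma~\ref{l.lemmaA1}.

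The correct mechanism, and the one the paper uses, requires no local minimizers at all: the continuum competitor must have its microscopic oscillations \emph{removed before} its $\ahom$-energy is computed. One takes $\tilde u = \eta\,\xi + (1-\eta)w$, where $\xi(x)=\fint_{x+\cu_n}w$ is the mesoscopic average of the smoothed coarsening $w$ and $\eta$ is a cutoff vanishing near $\partial\cu_m$, so that on the interior $\nabla\tilde u(y)=p(y)=\fint_{y+\cu_n}\nabla w$ and the $\ahom$-energy density is exactly $p(y)\cdot\ahom p(y)$. The bound by the heterogeneous energy of $u$ then comes from testing $u$ \emph{itself} in the definition of $\mu(y+\cu_n,\ahom p(y))$ and using $3^m\ge\mathcal{N}$ together with the fact that $\langle\nabla[u]_\Pa\rangle_{y+\cu_n}$ is close to $p(y)$ (a boundary-of-cube estimate, not flatness). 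The remaining errors are the overlap/Riemann-sum comparisons and the boundary layer, where one controls $|\xi-w|$ and $\nabla\tilde u$ via Poincar\'e and the $L^{p'}$ bounds on $\nabla w$ from Lemma~\ref{l.propwws} (this is where $p>2$, H\"older, and the scale $\M_t(\Qa)$ enter), with a layer of small relative volume; your proposed width $3^{n/2}$ and the appeal to Meyers on the layer are not where the real difficulty lies once the competitor is built correctly.
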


\begin{lemma}\label{l.lemmaA2}
There exists $t := t(d,\p,\lambda,p) < + \infty$, $\alpha := \alpha(d,\p,\lambda,p)> 0$ and $C := C(d,\p,\lambda,p) < + \infty$ such that, in the case that $3^m \geq  \mathcal{N} \vee \M_t ( \mathcal{Q})$, there exists a function $\tuhom : \C_*(\cu_m) \rightarrow \R$ satisfying the following three properties:
\begin{enumerate}
\item[(i)] Boundary condition:
\begin{equation*}
\forall x \in \C_*(\cu_m) \cap \partial \cu_m, \quad \tuhom(x) = u(x).
\end{equation*}
\item[(ii)] Energy estimate:
\begin{multline*} \label{e.lemmaA2}
	\frac{1}{|\cu_m|} \left\langle \nabla \tuhom, \a(x) \nabla \tuhom\right\rangle_{\C_*(\cu_m)} \\ \leq \fint_{\cu_m} \left( \nabla \uhom(x) \cdot \ahom \nabla \uhom(x) \right)\, dx +C M^2  3^{-m\alpha},
\end{multline*}
\item[(iii)] $L^2$ estimate:
\begin{equation*}
  \frac{3^{-2m}}{|\cu_m|} \sum_{x \in \C_*(\cu_m)} (\tuhom(x)- \uhom(x))^2 \leq  C  M^2 3^{-m\alpha}.
\end{equation*}
\end{enumerate}
\end{lemma}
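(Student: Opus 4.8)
The target is Lemma~\ref{l.lemmaA2}: construct a discrete approximation $\tuhom$ of the continuum homogenized solution $\uhom$ with matching boundary data, comparable energy, and small $L^2$ difference. This is the "hard direction" of the two-sided energy comparison — going from the smooth PDE solution back to the discrete cluster — whereas Lemma~\ref{l.lemmaA1} goes the other way. Let me sketch how I'd prove it.

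Let me think about the structure. The natural construction is a two-scale expansion: partition $\cu_m$ into mesoscopic subcubes $z + \cu_n$ of size $3^n$ with $n$ chosen as a fraction of $m$ (the definition of $\mathcal N$ suggests $n = \lceil m/4\rceil$, $k = \lceil m/8\rceil$), and on each subcube replace $\uhom$ by an affine function plus the corrector-type corrector coming from the maximizer $v(\cdot, z+\cu_n, p_z)$ of $\nu$, where $p_z := \nabla\uhom(z)$ (or the average of $\nabla\uhom$ over $z+\cu_n$). Because $\uhom$ is $\ahom$-harmonic it is smooth and $\nabla\uhom$ varies slowly, so $\uhom$ is close to affine on each mesoscopic cube with error controlled by $3^{n}\|\nabla^2\uhom\|_{L^\infty}$, and interior estimates for $\ahom$-harmonic functions give $\|\nabla^2 \uhom\|_{L^\infty(\frac12\cu_m)} \lesssim 3^{-m}\|\nabla\uhom\|_{\underline L^2(\cu_m)} \lesssim 3^{-m} M$ (after using $w$'s construction and the Meyers/energy bounds to control $\|\nabla w\|$ by $M$). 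On each mesoscopic cube the energy of $v(\cdot,z+\cu_n,p_z)$ equals $\nu(z+\cu_n,p_z)|\cl_\Pa(z+\cu_n)|$, which by $3^m \ge \mathcal N$ and the definition of $\mathcal N$ is within $CM^2 3^{-n\alpha}|p_z|^2$ of $\frac12 p_z\cdot\ahom p_z$; summing over $z$ recovers $\fint_{\cu_m}\nabla\uhom\cdot\ahom\nabla\uhom$ up to the desired error. The boundary condition is handled by keeping $\tuhom = u = w$ on $\C_*(\cu_m)\cap\partial\cu_m$ and only modifying in the interior — one uses a cutoff layer of width $3^n$ near $\partial\cu_m$ where one interpolates between the corrector construction and the boundary data $u$, paying an energy price proportional to the volume of that layer, $\sim 3^{-n}|\cu_m|$, times the energy density, which is again of order $M^2 3^{-n}$ times coarseness factors that are controlled above the minimal scale $\M_t(\Qa)$ via Hölder.

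Here is the order of operations I would follow. (1) Record properties of $w$: from its definition via convolution and Lemmas~\ref{l.coarseLs},~\ref{l.coarsegrads} together with the Meyers estimate, $\|\nabla w\|_{L^\infty(\cu_m)}$ and $\|\nabla^k w\|$ are controlled by $M$ above the minimal scale, hence so are $\|\nabla^k\uhom\|_{L^\infty}$ on interior subcubes by Schauder/interior estimates for the constant-coefficient operator $-\nabla\cdot\ahom\nabla$. (2) Fix $n := \lceil m/4\rceil$; for each $z\in 3^n\Zd\cap\cu_m^{(n)}$ set $p_z := \fint_{z+\cu_n}\nabla\uhom$ and $c_z := \uhom(z)$, and define on $\C_*(z+\cu_n)$ the function $\tuhom(x) := c_z + v(x, z+\cu_n, p_z) - \fint p_z\cdot(\cdot)$-type normalization so that $\tuhom(x) \approx \uhom(x)$ on the boundary of the subcube; glue adjacent pieces and the boundary layer using the partition-of-unity/cutoff argument from~\cite{AS}. (3) Energy estimate: expand $\langle\nabla\tuhom,\a\nabla\tuhom\rangle$ as a sum over mesoscopic cubes; on each use $\langle\nabla v(\cdot,z+\cu_n,p_z),\a\nabla v(\cdot,z+\cu_n,p_z)\rangle = 2\nu(z+\cu_n,p_z)|\cl_\Pa(z+\cu_n)|$ from~\eqref{e.nuform1}, invoke the bound from $\mathcal N$ to replace $\nu(z+\cu_n,p_z)$ by $\frac12 p_z\cdot\ahom p_z$, and bound the gluing/cutoff errors using Hölder against the coarseness sums $\Lambda_t$ controlled above $\M_t(\Qa)$ plus Caccioppoli on $v$. (4) $L^2$ estimate: on each subcube, $\tuhom - \uhom = (c_z + v(\cdot,z+\cu_n,p_z) - p_z\cdot x) + (p_z\cdot x + c_z - \uhom(x))$; the second term is $O(3^n\|\nabla^2\uhom\|_{L^\infty}) = O(3^{n-m}M)$ pointwise, and the first is controlled in $L^2$ by the flatness of $v$, i.e. by $\|v(\cdot,z+\cu_n,p_z) - p_z\cdot x - c_z\|_{\underline L^2}$; this last quantity is $O(3^n\, 3^{-n\beta} M)$ — here I would use the Caccioppoli inequality together with the bound on $\nu(z+\cu_n,p_z) - \frac12 p_z\cdot\ahom p_z$ from $\mathcal N$ and a Poincaré inequality, exactly as the estimate~\eqref{e.flatnessv} was derived (but now as a quenched statement valid because $3^m\ge\mathcal N$). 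Summing $3^{-2m}\|\tuhom-\uhom\|_{\underline L^2}^2$ over $z$ gives $CM^2 3^{-m\alpha}$. (5) Combine; choose $t$ large enough to absorb all Hölder exponents and $\alpha$ small enough.

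The main obstacle is step (4), the $L^2$ bound, and within it the transition from mesoscopic flatness of $v$ to the global $L^2$ estimate for $\tuhom - \uhom$. The subtlety is twofold: first, the boundary condition forces $\tuhom = u$ on $\partial\cu_m$, but $u$ on the boundary differs from the affine/corrector reconstruction, so the cutoff layer near $\partial\cu_m$ must be handled carefully — one needs that $u$ restricted to $\C_*(\cu_m)\cap\partial\cu_m$ is close in an averaged sense to $w = \uhom|_{\partial\cu_m}$, which follows from the construction of $w$ and Lemma~\ref{l.coarseLs} but requires the coarseness of $\Pa$ near the boundary to be controlled, hence the appearance of $\M_t(\Qa)$; second, the flatness estimate for $v(\cdot,z+\cu_n,p_z)$ is only a quenched statement on the event $3^n\ge$ (local minimal scale), and on mesoscopic cubes near the boundary where $3^n$ may not dominate the local scale one must use instead the crude bounds~\eqref{e.vupbound} and pay for a small-volume region — the bookkeeping of "good" versus "bad" mesoscopic cubes, and checking that the bad ones contribute only $O(M^2 3^{-m\alpha})$, is the most delicate part of the argument. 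The rest (steps 1--3) is a fairly mechanical adaptation of~\cite{AS} once the coarsening lemmas and the quenched convergence encoded in $\mathcal N$ are in hand.
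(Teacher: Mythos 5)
Your construction is essentially the paper's: mesoscopic cubes of size $3^{\lceil m/4\rceil}$, affine data $l_y$ with slope $p(y)=\fint_{y+\cu_n}\nabla\uhom$, the local Dirichlet minimizers (i.e.\ the $\nu$-maximizers) patched together, a cutoff interpolation with $\uhom$ near $\partial\cu_m$, the energy identified via the convergence of $\nu$ (and $\mu$) encoded in $\mathcal{N}$, and the gluing/boundary errors controlled by Meyers and interior $H^2$ estimates for $\uhom$ together with the coarseness control above $\M_t(\Qa)$. Two of your steps, however, would not go through as literally stated. First, in step (4) you appeal to a \emph{quenched} flatness estimate $\|v(\cdot,y+\cu_n,p(y))-l_y\|_{\underline L^2}\lesssim 3^{n(1-\beta)}M$, claiming it follows ``exactly as \eqref{e.flatnessv}'' on the event $3^m\geq\mathcal{N}$. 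It does not: \eqref{e.flatnessv} is an estimate in expectation whose proof requires spatial averages of $\nabla[v]_\Pa$ over a whole cascade of scales (the multiscale Poincar\'e inequality and the quantities $\tau_k$), whereas $\mathcal{N}$ only controls $\mu$ and $\nu$ at the single mesoscale $3^{\lceil m/4\rceil}$ on the grid $3^{\lceil m/8\rceil}\Zd$, and controls energies rather than $L^2$ flatness. Fortunately no such gain is needed: since $v_y-l_y$ vanishes on the boundary cluster, the plain Poincar\'e inequality plus the energy bound \eqref{e.vupbound} give $\|v_y-l_y\|_{\underline L^2(y+\cu_n)}\leq C3^n|p(y)|$, and the $3^{-2m}$ normalization in (iii) together with $n\approx m/4$ already yields the claimed smallness; this is exactly how the paper argues.

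Second, your boundary layer of width $3^n$ is too thin for the gluing estimate as you set it up. The jump of the patched function across the interface between two mesoscopic cubes is of size $|p(y)-p(y')|+|p(y)\cdot(y'-y)+\zeta(y)-\zeta(y')|\lesssim 3^{2n}\fint|\nabla\nabla\uhom|$, and the interior $H^2$ estimate only gives $\fint|\nabla\nabla\uhom|^2\lesssim D^{-2}M^2$ at distance $D$ from $\partial\cu_m$; with the interpolation region reaching within $3^n$ of the boundary, the naive bound on the gluing energy is of order $3^{3n-2n}M^2=3^nM^2$, which diverges. The paper avoids this by taking a thick layer of width $3^l$, $l=\lceil 7m/8\rceil$, so that $3^{3n-2l}\leq 3^{-m}$ (and it interpolates between $\tilde v$ and $\uhom$, the boundary condition $\tuhom=u$ on $\C_*(\cu_m)\cap\partial\cu_m$ being automatic since $\uhom=w=u$ at lattice points of $\partial\cu_m$); the extra boundary-strip energy of $\uhom$ is then absorbed using the Meyers estimate exactly as you propose. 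Your thin layer could be rescued by summing dyadically in the distance to the boundary, but as written the step fails, and the simple fix is the paper's choice of a wide layer.
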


We now prove Proposition~\ref{prop.dirichlet} and postpone the proof of these Lemmas.

\begin{proof}[Proof of Proposition~\ref{prop.dirichlet}]
Using that $u \in \A_*(\cu_m)$, the definition of $\uhom$, Lemma~\ref{l.lemmaA1} and Property (ii) of Lemma~\ref{l.lemmaA2}, we have the following sequence of inequalities
\begin{align} \label{uhomtutuhom.est}
 \fint_{\cu_m} \nabla \uhom(x) \cdot \ahom \nabla \uhom(x)  \, dx & \leq  \fint_{\cu_m} (\nabla \tilde{u}  \cdot \ahom \nabla \tilde{u}) \, dx \notag \\
												& \leq  \frac{1}{|\cu_m|} \left\langle  \nabla u, \a \nabla u \right\rangle_{\cu_m}   + C 3^{-m\alpha}  \\
												& \leq \frac{1}{|\cu_m|} \left\langle  \nabla  \tuhom, \a \nabla  \tuhom \right\rangle_{\cu_m}  + C 3^{-m\alpha}  \notag \\
												& \leq \fint_{\cu_m} \nabla \uhom(x) \cdot \ahom \nabla \uhom(x) \, dx + C 3^{-m\alpha} . \notag
\end{align} 
Using that for every $e \in \Bd(\cu_m),~\a(e) \in \left\{ 0 \right\} \times \left[ \lambda, 1 \right]$, we obtain, for every $e \in  \Bd(\C_*(\cu_m))$ such that $\a(e) \neq 0$
\begin{align*}
\lambda \left( \nabla \tuhom(e) - \nabla u(e)\right)^2 & \leq \a(e) \left(\nabla \tuhom(x) - \nabla u(e)\right)^2  \\
						 &\leq 2  \a(e) \left( \nabla \tuhom(e) \right)^2 + 2  \a(e) \left( \nabla u(e) \right)^2\\
						 & - 4 \a(e) \left( \frac{\nabla \tuhom(e) + \nabla u(e)}{2} \right) .
\end{align*}
 Summing over $e \in \Bd(\C_*(\cu_m))$ and using \eqref{uhomtutuhom.est} yields
\begin{multline*}
\frac{1}{|\cu_m|} \sum_{x \in \C_*(\cu_m )}  \left|\nabla \left( \tuhom - u\right)  \indc_{\left\{ \a \neq 0 \right\}}\right|^2 (x) \leq C  3^{-m\alpha} \\ + C \left( 4\frac{1}{|\cu_m|}  \left\langle \nabla u, \a \nabla u \right\rangle_{\cu_m}  \right. 
 \left. - 4 \frac{1}{|\cu_m|}  \left\langle \frac{\nabla \tuhom + \nabla u}{2}, \a \frac{\nabla \tuhom + \nabla u}{2} \right\rangle_{\cu_m} \right).
\end{multline*}
By Property (i) of Lemma~\ref{l.lemmaA2}, $\tuhom = u$ on $\C_*(\cu_m) \cap \partial \cu_m$. Thus, since $u \in \A_*(\cu_m)$,
\begin{equation*}
\left\langle \frac{\nabla \tuhom + \nabla u}{2}, \a \frac{\nabla \tuhom+ \nabla u}{2} \right\rangle_{\cu_m}  \leq \left\langle \nabla u, \a \nabla u \right\rangle_{\cu_m} .
\end{equation*}
This finally gives
\begin{equation*} 
\frac{1}{|\cu_m|} \sum_{x \in \C_*(\cu_m)}  \left|\nabla \left( \tuhom - u\right)  \indc_{\left\{ \a \neq 0 \right\}} \right|^2 (x) \leq C M^2 3^{-m\alpha}.
\end{equation*}
The Poincar\'e inequality (Proposition~\ref{p.sobolev} with $s= 2$) then gives
\begin{multline*}
\frac{3^{-2m}}{|\cu_m|}\sum_{x \in \C_*(\cu_m)}  \left( \tuhom(x) - u(x)  \right)^2 
\\ 
\leq \left( \sup_{x \in \cu_m } \size\left( \cu_\Pa(x) \right)^{2d}\right) \frac{1}{|\cu_m|} \sum_{x \in \C_*(\cu_m)}  \left|\nabla \left( \tuhom - u\right)  \indc_{\left\{ \a \neq 0 \right\}} \right|^2 (x)
\end{multline*}
Thus since $\Pa$ is finer than $\mathcal{Q}$, and $3^m \geq \M_t(\mathcal{Q})$, we have, for $t$ large enough,
\begin{equation} \label{blackboxfinal.est1}
\frac{3^{-2m}}{|\cu_m|}\sum_{x \in \C_*(\cu_m)}  \left( \tuhom(x) - u(x)  \right)^2 \leq C M^2 3^{-m\alpha}.
\end{equation}
Combining this with Property (iii) of Lemma~\ref{l.lemmaA2} shows
\begin{equation*}
\frac{3^{-2m}}{|\cu_m|}\sum_{x \in \C_*(\cu_m)}  \left( \uhom(x) - u(x)  \right)^2 \leq C M^2 3^{-m\alpha}
\end{equation*}
and the proof is complete.
\end{proof}

\smallskip

Before starting the proof of Lemmas~\ref{l.lemmaA1} and~\ref{l.lemmaA2}, we need to introduce some definitions and vocabulary which will be useful for the proof of both Lemmas. We keep the same notations as in Definition~\ref{definition.rvN} by setting
\begin{equation*}
n := \left\lceil \frac{m}{4} \right\rceil \mbox{ and } k := \left\lceil \frac{m}{8} \right\rceil, 
\end{equation*}
so that by definition of $\mathcal{N}$, for every $p,q \in \Rd$ and $z \in 3^k \Zd \cap \cu_m$,
\begin{equation} \label{muhasconverged}
\left| \frac 12 q \cdot \ahom^{-1} q + \mu(z + \cu_n,q) \right| \leq C |q|^2 3^{-m\alpha}
\end{equation}
and
\begin{equation} \label{nuhasconverged}
\left| \frac 12 p \cdot \ahom p + \nu(z + \cu_n,p) \right| \leq C |p|^2 3^{-m\alpha}.
\end{equation}
We also pick $t$ large enough such that
\begin{equation*}
3^m \geq \M_t(\mathcal{Q}) \implies \sup_{x \in \cu_m} \size\left( \cu_\mathcal{Q}(x) \right) \leq 3^k,
\end{equation*}
in particular since $\Pa$ is finer than $\mathcal{Q}$,
\begin{equation*}
3^m \geq \M_t(\mathcal{Q}) \implies \sup_{x \in \cu_m} \size\left( \cu_\Pa(x) \right) \leq 3^k.
\end{equation*}

\smallskip

We define $l$ the size of a boundary layer we need to remove in our argument:
\begin{equation*}
l := \left\lceil \frac{7 m}{8} \right\rceil,
\end{equation*}
so that $3^l \simeq \size(\cu_m)^{\frac 78}$ is the size of the boundary layer. Notice that $k \leq m \leq l$. From this we define the two interior cubes
\begin{equation*}
\cu^\circ := \left[ -\frac{3^m - 3^l}{2} , \frac{3^m - 3^l}{2}\right]^d  
\end{equation*}
and
\begin{equation*}
\cu^\circ_{\mathrm{int}} := \left[ -\frac{3^m -2\cdot 3^l}{2} , \frac{3^m - 2 \cdot 3^l}{2}\right]^d.
\end{equation*}
They correspond to the cube $\cu_m$ minus a boundary layer of size respectively $3^l$ and $2 \cdot 3^l$. From this we define $\eta \in C_c^\infty(\cu_m)$ which satisfies the following properties
\begin{equation} \label{def.eta.cutoff}
\eta = 1 \mbox{ on } \cu^\circ_{\mathrm{int}},~ \eta = 0 \mbox{ on } \cu_m \setminus \cu^\circ \mbox{ and } \left| \nabla \eta \right| \leq C 3^{-l}.
\end{equation}

In what follows, it is convenient to work with three different scales: the microscopic scale (of size $1$), the macroscopic scale (of size $\size(\cu_m) = 3^m$) and the mesoscopic scale (of size $\size(\cu_m)^\frac 14 = 3^{\frac m4}$). The last step before starting the proofs of Lemmas~\ref{l.lemmaA1} and~\ref{l.lemmaA2} involves proving some estimates related to $w $.
\begin{lemma} \label{l.propwws}
The following properties hold
\begin{enumerate}
\item[(i)] For every $x \in \Zd \cap \cu_m$ such that $\dist(x, \partial \cu_m) > 1$ and for every $y \in x + \left[-\frac 12, \frac 12\right]^d$
\begin{equation*}
|w (y) - \left[ u\right]_\Pa(x)|  \leq  \sum_{z \in \Zd : |z-x|_\infty \leq 1} |\nabla \left[ u\right]_\Pa|(z)
\end{equation*}
and for every $x \in \Zd \cap \cu_m$ such that $\dist(x, \partial \cu_m) \leq 1$, every $y \in x + \left[-\frac 12, \frac 12\right]^d \cap \cu_m$,
\begin{multline*}
|w (y) - \left[ u\right]_\Pa(x)|  \leq \sum_{z \in \Zd : |z-x|_\infty \leq 1} |\nabla \left[ u\right]_\Pa|(z) \\ +\sum_{z\in \C_*(\cu_m) \cap \cu': \cu' \in \Pa \dist(\cu',\cu_\Pa(x)) \leq 1} |\nabla  u\indc_{\left\{ \a \neq 0 \right\}}|(z).
\end{multline*}
\item[(ii)] There exists a constant $C := \sup_\Rd |\nabla \rho| < + \infty$ such tha, for every $x \in \Zd \cap \cu_m$ satisfying $\dist(x, \partial \cu_m) > 1$ and for every $y \in x + \left[-\frac 12, \frac 12\right]^d$
\begin{equation*}
|\nabla w (y)|  \leq C \sum_{z \in \Zd : |z-x|_\infty \leq 1} |\nabla \left[ u\right]_\Pa|(z)
\end{equation*}
and for every $x \in \Zd \cap \cu_m$ such that $\dist(x, \partial \cu_m) \leq 1$, every $y \in x + \left[-\frac 12, \frac 12\right]^d \cap \cu_m$,
\begin{equation*}
|\nabla w (y)|  \leq C \sum_{z \in \Zd : |z-x|_\infty \leq 1} |\nabla \left[ u\right]_\Pa|(z) + C \sum_{z\in \C_*(\cu_m) \cap \cu': \cu' \in \Pa \dist(\cu',\cu_\Pa(x)) \leq 1} |\nabla  u \indc_{\left\{ \a \neq 0 \right\}}|(z).
\end{equation*}
\item[(iii)] For every $p' \in \left[ 2, \frac{p+2}{2}\right]$, there exists a constant $C := C(s,d,\p,p) < + \infty$ such that
\begin{equation*}
\left( \fint_{\cu_m}  |\nabla w  (x)|^{p'} \, dx \right)^{\frac1{p'}} \leq C M.
\end{equation*}
\item[(iv)] For every $p' \in \left[ 2, \frac{p+2}{2}\right]$, there exists $C := C(s,d,\p,p) < + \infty$ such that
\begin{equation*}
\left( \fint_{\cu_m} \left| w(x) \right|^{p'} \right)^{\frac1{p'}} \leq CM3^m.
\end{equation*}
\end{enumerate}
\end{lemma}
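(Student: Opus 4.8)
\textbf{Proof plan for Lemma~\ref{l.propwws}.}

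The plan is to unwind the definitions layer by layer, moving from the coarsened function $[u]_\Pa$ (and the genuine solution $u$ near the boundary) up to $\tilde w$, then to its unit-cube extension, and finally to the mollification $w = \tilde w * \rho$. For \textbf{(i)}: fix $x\in\Zd\cap\cu_m$ and $y\in x+[-\tfrac12,\tfrac12]^d$. Since $w(y) = \int \tilde w(y-z)\rho(z)\,dz$ and $\rho$ is supported in $(-\tfrac12,\tfrac12)^d$ with unit mass, $w(y)$ is a weighted average of the values of $\tilde w$ on the (at most $2^d$) unit cubes adjacent to the cube centered at $x$; each such value is $\tilde w$ at some lattice point $x'$ with $|x'-x|_\infty\le 1$. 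When $\dist(x,\partial\cu_m)>1$, all these cubes lie inside $\cu_m$ and $\tilde w = [u]_\Pa$ there, so $|w(y)-[u]_\Pa(x)| \le \max_{|x'-x|_\infty\le1}|[u]_\Pa(x') - [u]_\Pa(x)|$, which telescopes along a lattice path of bounded length into $\sum_{|z-x|_\infty\le1}|\nabla[u]_\Pa|(z)$. When $\dist(x,\partial\cu_m)\le1$, some neighboring lattice points $x'$ may lie on $\partial\cu_m$, where $\tilde w(x') = u(x')$ rather than $[u]_\Pa(x')$; to compare $u(x')$ with $[u]_\Pa(x)$ one writes $u(x') - [u]_\Pa(x) = (u(x') - [u]_\Pa(x')) + ([u]_\Pa(x') - [u]_\Pa(x))$ and estimates the first difference via a path inside $\cl_\Pa(\cu_m)$ from $x'$ to $\bar z(\cu_\Pa(x'))$ as in the proof of Lemma~\ref{l.coarseLs} (equation~\eqref{e.justintegrate}), giving the extra sum over $\nabla u\indc_{\{\a\neq0\}}$ on cubes $\cu'\in\Pa$ with $\dist(\cu',\cu_\Pa(x))\le1$.

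For \textbf{(ii)}, the argument is the same with one differentiation: $\nabla w(y) = \int \tilde w(y-z)\nabla\rho(z)\,dz = \int(\tilde w(y-z)-\tilde w(y-z_0))\nabla\rho(z)\,dz$ for any fixed reference point $z_0$ (using $\int\nabla\rho = 0$), so $|\nabla w(y)| \le \|\nabla\rho\|_{L^\infty}\cdot\bigl(\text{oscillation of }\tilde w\text{ over the adjacent unit cubes}\bigr)$, and that oscillation is bounded exactly as in (i). This gives the constant $C = \sup_{\Rd}|\nabla\rho|$ and the same two right-hand sides. For \textbf{(iii)}: raise the pointwise bound from (ii) to the $p'$-th power, integrate over $y\in\cu_m$, and note each lattice point $z$ is counted a bounded number of times; the interior contribution is controlled by $C\fint_{\cu_m}|\nabla[u]_\Pa|^{p'}$ and the boundary-layer contribution by a sum of $\size(\cu')^{d}\cdot$(average of $|\nabla u\indc_{\{\a\neq0\}}|^{p'}$ over $\cu'\cap\C_*(\cu_m)$) over the $O(3^{(d-1)m})$ cubes $\cu'\in\Pa$ touching $\partial\cu_m$. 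One then invokes Lemma~\ref{l.coarsegrads} with $s = p'$ to pass from $\nabla[u]_\Pa$ back to $\nabla u\indc_{\{\a\neq0\}}$, H\"older's inequality to separate the $\size(\cu_\Pa(\cdot))$ weights (losing an arbitrarily small exponent, legitimate since $p' \le \tfrac{p+2}{2} < p$), and the hypothesis $3^m\ge\M_t(\Qa)\ge\M_t(\Pa)$ (for $t$ large) to bound $\sup_{x\in\cu_m}\size(\cu_\Pa(x))\le 3^k$ and the averaged weight sums by a constant; what remains is $C\bigl(\fint_{\cu_m}|\nabla u\indc_{\{\a\neq0\}}|^p\bigr)^{1/p} = CM$ after one more H\"older step up to exponent $p$. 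For \textbf{(iv)}: write $w(x) = (w(x) - [u]_\Pa(x)\text{-type constant on a large cube}) + \text{that constant}$; more directly, bound $\|w\|_{\underline L^{p'}(\cu_m)} \le \|w - c\|_{\underline L^{p'}(\cu_m)} + |c|$ with $c$ the mean, use the discrete Sobolev--Poincar\'e inequality (Proposition~\ref{p.sobolev}, or its elementary $\Zd$ version) to get $\|w-c\|_{\underline L^{p'}} \le C\,3^m\,\|\nabla w\|_{\underline L^{p'}}\le C M 3^m$ by (iii), and control $|c|$ similarly — or simply integrate $|w(x)| \le |w(0)| + \sum_{\text{path}}|\nabla w|$ and use $|\cu_m|^{1/p'}\cdot 3^m$ scaling. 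Either way the $3^m$ factor appears from the Poincar\'e constant.

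The main obstacle is \textbf{(iii)}: the boundary layer cubes $\cu'\in\Pa$ touching $\partial\cu_m$ may be large (their sizes are only stochastically bounded), and one must ensure their total contribution is still $O(M^{p'})$ after the weights are H\"oldered out. The point is that, above the minimal scale $\M_t(\Qa)$, the relevant weighted sums $\Lambda_t(\cu_m,\Pa)$-type quantities are bounded by a deterministic constant (Proposition~\ref{p.minimalscales}, \eqref{e.minimalboundbis}), so one trades a negligible power of $p'$ for full control of the partition sizes; the bookkeeping of exponents ($p' \le \tfrac{p+2}{2}$ leaves headroom up to $p$) is what makes this go through. Everything else is routine: mollification estimates, telescoping along bounded lattice paths, and H\"older.
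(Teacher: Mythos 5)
Your treatment of (i), (ii) and (iii) follows the paper's proof essentially verbatim: the mollification reduces everything to the oscillation of $\tilde w$ over the neighboring unit cubes, the boundary case is handled exactly through~\eqref{e.justintegrate} from the proof of Lemma~\ref{l.coarseLs}, part (ii) is the same computation after differentiating the mollifier against a subtracted constant (your $\int\nabla\rho=0$ trick is the paper's subtraction of $[u]_\Pa(x)$), and (iii) is obtained, as in the paper, by splitting into the interior and the boundary layer $\partial_\Pa\cu_m$, passing from $\nabla[u]_\Pa$ to $\nabla u\indc_{\{\a\neq0\}}$ via Lemma~\ref{l.coarsegrads}, and H\"oldering out the partition weights using $3^m\geq\M_t(\Qa)$ with $t$ large (together with the fact that $\Pa$ is finer than $\Qa$). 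No issues there.

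The gap is in (iv). You cannot ``control $|c|$ similarly'': the mean of $w$ is not controlled by any gradient quantity, and the stated bound $\left(\fint_{\cu_m}|w|^{p'}\right)^{1/p'}\leq CM3^m$ is simply false for a general $u\in\A_*(\cu_m)$, since adding a constant $a$ to $u$ adds $a$ to $w$ while leaving $M$ unchanged. The estimate holds only because of the normalization~\eqref{e.additiveconstants}, i.e.\ $\fint_{\cl_\Pa(\cu_m)}[u]_\Pa=0$ (harmless for the Dirichlet-problem application, where $u-\uhom$ is unaffected by adding constants, but it must be invoked explicitly). This is precisely what the paper's proof does: it bounds the $L^{p'}$ norm of $w$ by the normalized $\ell^{p'}$ norm of $[u]_\Pa$ plus a gradient term, via part (i) and $3^m\geq\M_t(\Qa)$, and then applies the Sobolev inequality on $\Zd$,~\eqref{ususobineqZd}, to the mean-zero function $[u]_\Pa$, which is what produces the factor $3^m$ times a gradient norm bounded by $CM$ as in (iii). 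Your fallback, integrating $|w(x)|\leq|w(0)|+\sum_{\mathrm{path}}|\nabla w|$, suffers from the same defect, since $|w(0)|$ is not controlled without the normalization.
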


\begin{proof} We prove (i). For every $x \in \Zd \cap \cu_m$ and every $y \in  \left( x + \left[-\frac 12, \frac 12\right]^d \right) \cap \cu_m$,
\begin{equation*}
|w (y) - \left[ u\right]_\Pa(x)| \leq \int_\Rd \left|\tilde w(z) - \left[ u\right]_\Pa(x)\right| \rho(y-z) \, dz.
\end{equation*}
Since $\supp \rho \subseteq (-\frac 12, \frac 12)^d$ we have
\begin{align*}
|w (y) - \left[ u\right]_\Pa(x)| & \leq \sup_{ z \in \cu_m \cap \left(  x + \left[- 1, 1 \right]^d \right)}  \left|\tilde w(z) - \left[ u\right]_\Pa(x)\right| 
\\ &
 \leq  \sup_{z \in \Zd\cap \cu_m : |z-x|_\infty \leq 1} \left|\tilde{w}(z) - \left[ u\right]_\Pa(x)\right|.
\end{align*}
If $\dist(x,\partial \cu_m) > 1$, then for every $z\in \Zd \cap \cu_m$ such that $|z-x|_\infty \leq 1,~ z \in \intr \cu_m$ and thus $\tilde w(z) = \left[ u\right]_\Pa(z)$. Hence
\begin{align*}
|w (y) - \left[ u\right]_\Pa(x)| & \leq  \sup_{z \in \Zd : |z-x|_\infty \leq 1} \left|\tilde w(z) - \left[ u\right]_\Pa(x)\right| \\
							& \leq \sup_{z \in \Zd : |z-x|_\infty \leq 1} \left|\left[ u\right]_\Pa (z) - \left[ u\right]_\Pa(x)\right| \\
							& \leq \sum_{z \in \Zd : |z-x|_\infty \leq 1} |\nabla \left[ u\right]_\Pa|(z) .
\end{align*}
If $\dist(x,\partial \cu_m) \leq 1$, then for every $z\in \Zd \cap \cu_m$ such that $|z-x|_\infty \leq 1$ we have either $\tilde w(z) = u(z)$ or $\tilde w(z) =  \left[ u\right]_\Pa(z)$. Thus
\begin{align*}
|w (y) - \left[ u\right]_\Pa(x)| & \leq  \sup_{z \in \Zd \cap \cu_m: |z-x|_\infty \leq 1} \left|\tilde w(z) - \left[ u\right]_\Pa(x)\right| \\
							& \leq \sup_{z \in \Zd \cap \cu_m: |z-x|_\infty \leq 1} \left( \left|\left[ u\right]_\Pa (z) - \left[ u\right]_\Pa(x)\right|  + \left|\tilde w (z) - \left[ u\right]_\Pa(z)\right|  \right) \\
							& \leq \sum_{z \in \Zd \cap \cu_m : |z-x|_\infty \leq 1} |\nabla \left[ u\right]_\Pa|(z)  + \sup_{z \in \Zd \cap \cu_m : |z-x|_\infty \leq 1} \left|\tilde w(z) - \left[ u\right]_\Pa(z)\right|.
\end{align*}
Using the first inequality~\eqref{e.justintegrate} in the proof of Lemma~\ref{l.coarseLs} yields
\begin{equation*}
\sup_{z \in \Zd \cap \cu_m : |z-x|_\infty \leq 1} \left|\tilde w(z) - \left[ u\right]_\Pa(z)\right|  \leq   \sum_{z\in \C_*(\cu_m) \cap \cu': \cu' \in \Pa \dist(\cu',\cu_\Pa(x)) \leq 1} |\nabla u \indc_{\left\{ \a \neq 0 \right\}}|(z).
\end{equation*}
The proof of (i) is complete. To prove (ii), notice that for every $x \in \Zd \cap \cu_m$ and $y \in \left( x + \left[-\frac 12, \frac 12\right]^d \right) \cap \cu_m,$
\begin{align*}
\left| \nabla w  (y) \right| 
 =  \left| \nabla \left( w  - \left[ u\right]_\Pa (x) \right) (y) \right| 
			& \leq \left| \left( \left( w - \left[ u\right]_\Pa (x) \right) * \nabla \rho\right) (y) \right| \\
			& \leq \int_\Rd \left| \tilde w (z) - \left[ u\right]_\Pa (x) \right| \left|\nabla \rho (y-z) \right| \, dz \\
			& \leq \left( \sup_\Rd |\nabla \rho| \right) \sup_{ z \in \cu_m \cap \left(  x + \left[- 1, 1 \right]^d \right)}  \left|\tilde w(z) - \left[ u\right]_\Pa(x)\right| \\
			 & \leq  C \sup_{z \in \Zd \cap \cu_m : |z-x|_\infty \leq 1} \left|\tilde w(z) - \left[ u\right]_\Pa(x)\right|.
\end{align*} 
The end of the proof of (ii) is similar to the proof of (i) and thus omitted.

To prove (iii), we split the integral
\begin{equation*}
\fint_{\cu_m}  |\nabla w  (x)|^{p'} \, dx =\frac{1}{|\cu_m|} \int_{\cu_m \setminus \partial_\Pa \cu_m}  |\nabla w  (x)|^{p'} \, dx +  \frac{1}{|\cu_m|} \int_{\partial_\Pa \cu_m}  |\nabla w  (x)|^{p'} \, dx.
\end{equation*}
The first term of the right-hand side can be estimated by using the first inequality of (ii)
\begin{align*}
 \int_{\cu_m \setminus \partial_\Pa \cu_m}  |\nabla w  (x)|^{p'} \, dx & \leq C \sum_{x \in \cu_m \setminus \partial_\Pa \cu_m}  \left| \sum_{z \in \Zd : |z-x|_\infty \leq 1} |\nabla \left[ u\right]_\Pa|(z) \right|^{p'} \\
													& \leq C \sum_{x \in \cu_m \setminus \partial_\Pa \cu_m}  \sum_{z \in \Zd : |z-x|_\infty \leq 1} |\nabla \left[ u\right]_\Pa|^{p'}(z) \\
													& \leq C \sum_{x \in \cu_m} |\nabla \left[ u\right]_\Pa|^{p'}(x).
\end{align*}
Applying Lemma~\ref{l.coarsegrads} yields
\begin{align*}
\lefteqn{ \int_{\cu_m \setminus \partial_\Pa \cu_m}  |\nabla w  (x)|^{p'} \, dx} \qquad & \\
 & \leq C  \sum_{\cu \in\Pa,\, \cu\subseteq \cl_{\Pa}(\cu_m)} \size(\cu)^{p'd-1} \sum_{\cu\cap\C_*(\cu_m)} \left|\nabla u \indc_{\{\a\neq 0\}} \right|^{p'}(x) \\
& \leq C  \left( \sum_{\cu\in\Pa,\, \cu\subseteq \cl_{\Pa}(\cu_m)} \size(\cu)^{(p'd-1)\frac{p}{p-p'}} \right)^{1 - \frac{p'}{p} }  \left( \sum_{x \in \C_*(\cu_m)} \left|\nabla u \indc_{\{\a\neq 0\}} \right|^{p}(x) \right)^{\frac{p'}{p}}.
\end{align*}
Taking the exponent $t$ large enough and using the assumption that  $3^m \geq \mathcal{M}_t(\Qa)$, we obtain
\begin{equation} \label{borne.nablaws.1}
\frac{1}{|\cu_m|} \int_{\cu_m \setminus \partial_\Pa \cu_m}  |\nabla w  (x)|^{p'} \, dx \leq C M^{p'}.
\end{equation}
The second term of the right-hand side can be estimated by using the second inequality of (ii):
\begin{align*}
\lefteqn{\int_{\partial_\Pa \cu_m}  |\nabla w  (x)|^{p'} \, dx} \qquad & \\
& \leq C \sum_{x \in \partial_\Pa \cu_m} \left(  \sum_{z \in \Zd \cap \cu_m : |z-x|_\infty \leq 1} |\nabla \left[ u\right]_\Pa|(z)  +  \sum_{z\in \C_*(\cu_m) \cap \cu': \cu' \in \Pa \dist(\cu',\cu_\Pa(x)) \leq 1} |\nabla  u \indc_{\left\{ \a \neq 0 \right\}}|(z) \right)^{p'} \\
& \leq C \sum_{x \in \cu_m}  |\nabla \left[ u\right]_\Pa|^{p'}(x) +C  \sum_{x \in \partial_\Pa \cu_m} \size(\cu_\Pa (x))^{d\left( 1-\frac{1}{p'} \right)} 
\\ & \qquad 
\times \sum_{z\in \C_*(\cu_m) \cap \cu',\, \cu' \in \Pa,\, \dist(\cu',\cu_\Pa(x)) \leq 1} |\nabla u \indc_{\left\{ \a \neq 0 \right\}}|^{p'}(z)  \\
& \leq  C \sum_{x \in \cu_m} |\nabla \left[ u\right]_\Pa|^{p'}(x) + C  \sum_{x \in  \C_*( \cu_m) }  \size(\cu_\Pa(x))^{d\left( 2 -\frac{1}{p'} \right)} |\nabla u \indc_{\left\{ \a \neq 0 \right\}}|^{p'}(x) \\
& \leq C \sum_{x \in \cu_m} |\nabla \left[ u\right]_\Pa|^{p'}(x)  + C  \left( \sum_{x \in  \cu_m }  \size(\cu_\Pa(x))^{d\left( 2 -\frac{1}{p'} \right)\frac{p}{p-p'} } \right)^{1 - \frac{p'}{p} } 
\\ & \qquad 
\times \left( \sum_{x \in  \C_*( \cu_m) }  |\nabla u \indc_{\left\{ \a \neq 0 \right\}}|^p (x) \right)^{\frac{p'}{p}}.
\end{align*}
Taking the exponent $t$ large enough and using $\size(\cu_m) = 3^m \geq \mathcal{M}_t(\Qa)$ again, we get\begin{equation} \label{borne.nablaws.2}
\frac{1}{|\cu_m|}\int_{\partial_\Pa \cu_m}  |\nabla w  (x)|^{p'} \, dx \leq C M^{p'}.
\end{equation}
Combining~\eqref{borne.nablaws.1} and~\eqref{borne.nablaws.2} yields (iii).

The proof of (iv) is a consequence the usual Sobolev inequality on $\Zd$~\eqref{ususobineqZd}, the assumption~\eqref{e.additiveconstants} and the estimate which follows easily from Lemma~\ref{l.propwws}[(i)] and the assumption $3^m \geq \mathcal{M}_t(\Qa)$,
\begin{equation*}
\left( \fint_{ \cu_m}  | w  (x)|^{p'} \, dx \right)^{\frac{1}{p'}} \leq C \left( \frac1{|\cu_m|}\sum_{x \in \cu_m} | \left[ u\right]_\Pa|^{p'}(x) \right)^{\frac{1}{p'}} + \left( \frac1{|\cu_m|}\sum_{x \in \cu_m} | \nabla \left[ u\right]_\Pa|^{p}(x) \right)^{\frac{1}{p}}.
\end{equation*}
\end{proof}

We can now prove Lemma~\ref{l.lemmaA1}.

\begin{proof}[Proof of Lemma \ref{l.lemmaA1}]
We now construct the function $\tilde{u} \in C^\infty(\cu_m)$ by removing the microscopic oscillations from $u$. More precisely we are removing the microscopic oscillation of $w $, which is close to $u$, by considering spatial averages on a mesoscopic scale of $w $. We thus define, for every $x \in \cu_m$ such that $x + \cu_n \subseteq \cu_m$,
\[ \xi(x) := \fint_{x + \cu_n} w  (z) \, dz. \]
We next modify $\xi$ in order to get an element of $C^{\infty} (\cu_m)$, equal to $w $ on $\partial \cu_m$, by setting
\[ \tilde{u}(x) := \eta(x) \xi(x) + (1-\eta(x)) w (x). \]
It is clear that $\tilde{u} \in w  + H^1_0(\cu_m)$, so we now focus on proving~\eqref{e.lemmaA1}. We break the argument into several step.

\smallskip

\textit{Step 1.} 
Denote for each $y \in \cu^\circ$,
\begin{equation*}
 p(y) := \nabla \xi (y) =  \frac{1}{|\cu_n|}\int_{y+\cu_n} \nabla w (z) \, dz \in \Rd \mbox{ and } q(y) := \ahom p(y) \in \R^d.
\end{equation*}
For $y \in 3^k \Zd \cap \cu^\circ$, testing $u$ as a minimizer candidate in the definition of $\mu(y + \cu_n, q(y))$, gives
\begin{equation} \label{u.minimizer.candidate.k}
\mu(y + \cu_n, q(y)) \leq \frac{1}{2|\cu_n|} \left\langle \nabla u, \a \nabla u \right\rangle_{\C_*(y+\cu_n)} - \frac{1}{|\cu_n|} \left\langle q(y), \nabla \left[ u\right]_\Pa \right\rangle_{y + \cu_n}.
\end{equation}
Combining this result with~\eqref{muhasconverged}
\begin{align*}
-\frac 12 q(y)\cdot \ahom^{-1} q(y) & \leq \frac{1}{2|\cu_n|} \left\langle \nabla u,\a \nabla u \right\rangle_{\C_*(y+\cu_n)} - \frac{1}{|\cu_n|} \left\langle q(y), \nabla \left[ u\right]_\Pa \right\rangle_{y + \cu_n}+ C |q(y)|^2  3^{-m\alpha} \\
							& \leq \frac{1}{2|\cu_n|} \left\langle \nabla u,\a \nabla u \right\rangle_{\C_*(y+\cu_n)} - q(y) \cdot p(y) + C  |q(y)|^2  3^{-m\alpha} \\
								& + |q(y)| \left| \frac{1}{|\cu_n|} \left\langle  \nabla \left[ u\right]_\Pa \right\rangle_{y + \cu_n}  - p(y) \right| .
\end{align*}
Thus by definition of $q(y)$
\begin{multline*}
p(y)\cdot \ahom p(y) \leq \frac{1}{|\cu_n|}\left\langle \nabla u, \a \nabla u\right\rangle_{\C_*(y+\cu_n)}+ C |q(y)|^2  3^{-m\alpha} \\ + 2 |q(y)| \left| \frac{1}{|\cu_n|} \left\langle  \nabla \left[ u\right]_\Pa \right\rangle_{y + \cu_n}  - p(y) \right| .
\end{multline*}
Summing over $y \in  3^{k} \Zd \cap \cu^\circ_{\mathrm{int}}$ and multiplying by $\frac{3^{k}}{|\cu^\circ_{\mathrm{int}}|} $ yields
\begin{align} \label{almost.result}
\lefteqn{\frac{3^{dk}}{|\cu^\circ_{\mathrm{int}}|}  \sum_{y \in  3^k \Zd \cap \cu^\circ_{\mathrm{int}} } p(y)\cdot \ahom p(y)} \qquad & \\
 & \leq  \frac{3^k}{|\cu^\circ_{\mathrm{int}}|}  \sum_{y \in  3^k \Zd \cap \cu^\circ_{\mathrm{int}} } \frac{1}{|\cu_n|}\left\langle \nabla u, \a \nabla u\right\rangle_{\C_*(y+\cu_n)} +C M^2 3^{-m\alpha } \notag \\
 & + \frac{3^{dk}}{|\cu^\circ_{\mathrm{int}}|} \sum_{y \in  3^k \Zd \cap \cu^\circ_{\mathrm{int}} }  2 |q(y)| \left| \frac{1}{|\cu_n|} \left\langle  \nabla \left[ u\right]_\Pa \right\rangle_{y + \cu_n}  - p(y) \right|  . \notag
\end{align}
Since by (iii) of Lemma~\ref{l.propwws} with $p' = 2 $
\begin{align*}
\frac{3^{dk}}{|\cu^\circ_{\mathrm{int}}|} \sum_{y \in  3^{k} \Zd \cap \cu^\circ_{\mathrm{int}} }  |q(y)|^2 & \leq C  \frac{3^{dk}}{|\cu^\circ_{\mathrm{int}}|} \sum_{y \in  3^{k} \Zd \cap \cu^\circ_{\mathrm{int}} }  |p(y)|^2 \\
																	& \leq C \fint_{\cu^\circ_{\mathrm{int}}} |\nabla w  (z)|^2 \, dz \\
																	& \leq C \fint_{\cu} |\nabla w  (z)|^2 \, dz \\
																	& \leq C M^2.
\end{align*}
The rest of the proof is organized as follow
\begin{itemize}
\item Step 2: We show 
\begin{equation*}
\frac{3^{dk}}{|\cu^\circ_{\mathrm{int}}|} \sum_{y \in  3^k \Zd \cap \cu^\circ_{\mathrm{int}} }  |q(y)| \left| \frac{1}{|\cu_n|} \left\langle  \nabla \left[ u\right]_\Pa \right\rangle_{y + \cu_n}  - p(y) \right|  \leq C M^2 3^{-m \alpha}.
\end{equation*}
\item  Step 3: We show
\begin{multline*}
\frac{3^{dk}}{|\cu^\circ_{\mathrm{int}}|} \sum_{y \in  3^{k} \Zd \cap \cu^\circ_{\mathrm{int}} } \frac{1}{|\cu_n|} \left\langle \nabla u, \a \nabla u \right\rangle_{\C_*(y+\cu_n)} \, dy 
\\
\leq \frac{1}{|\cu_m|} \left\langle \nabla u, \a \nabla u \right\rangle_{\C_*(\cu_m)} + C M^2 3^{- m \alpha }.
\end{multline*}
\item  Step 4 and Step 5 : We show
\begin{equation*}
\fint_{\cu_m} \nabla \tilde{u}(y) \cdot \ahom \nabla \tilde{u}(y)\, dy\leq \frac{3^{dk}}{|\cu^\circ_{\mathrm{int}}|}  \sum_{y \in  3^k \Zd \cap \cu^\circ_{\mathrm{int}} } p(y)\cdot \ahom p(y) + CM^2 3^{-m\alpha } .
\end{equation*}
\end{itemize}
Combining these three results with~\eqref{almost.result} completes the proof of Lemma~\ref{l.lemmaA1}.

\smallskip

\textit{Step 2.} We want to show
\begin{equation} \label{mainresultstep2}
\frac{3^{dk}}{|\cu^\circ_{\mathrm{int}}|} \sum_{y \in  3^k \Zd \cap \cu^\circ_{\mathrm{int}} }  |q(y)| \left| \frac{1}{|\cu_n|} \left\langle  \nabla \left[ u\right]_\Pa \right\rangle_{y + \cu_n}  - p(y) \right|  \leq C M^2 3^{-m \alpha}.
\end{equation}
We already saw that
\begin{equation*}
\frac{3^{dk}}{|\cu^\circ_{\mathrm{int}}|} \sum_{y \in  3^{k} \Zd \cap \cu^\circ_{\mathrm{int}} }  |q(y)|^2  \leq C M^2.
\end{equation*}
By the Cauchy Schwartz inequality it is enough to obtain~\eqref{mainresultstep2} to prove
\begin{equation} \label{mainresultstep2bis}
\frac{3^{dk}}{|\cu^\circ_{\mathrm{int}}|} \sum_{y \in  3^k \Zd \cap \cu^\circ_{\mathrm{int}} }  \left| \frac{1}{|\cu_n|} \left\langle  \nabla \left[ u\right]_\Pa \right\rangle_{y + \cu_n}  - p(y) \right|^2  \leq C M^2 3^{- m \alpha}.
\end{equation}
To prove this, we will prove, for every $y \in 3^k \Zd \cap \cu^\circ_{\mathrm{int}}$,
\begin{equation} \label{nablacoarsenu.ws.close}
\left|\left\langle  \nabla \left[ u\right]_\Pa  \right\rangle_{y + \cu_n} -  \int_{y+\cu_n} \nabla w (z) \, dz \right| \leq C \sum_{x\in \partial \cu_n} |\nabla \left[ u\right]_\Pa|(x).
\end{equation}
For the sake of simplicity we assume $y=0$ to prove~\eqref{nablacoarsenu.ws.close}. By the discrete Stokes formula,
\begin{equation*}
\left\langle  \nabla \left[ u\right]_\Pa  \right\rangle_{\cu_n} =   \sum_{x \in \partial \cu_n} \left[ u\right]_\Pa(x) \mathbf{n}(x).
\end{equation*}
For $i= \{ -d,\ldots ,-1,1,\ldots, d\}$, denote by $ \partial_{i} \cu_n $ the $i$th face of $\cu_n$ given by
\begin{equation*} \label{}
\partial_{i} \cu_n  := \left\{ x \in  \Zd \cap \cu_n~:~ x_{|i|}= \sign(i) \frac12 (3^n-1)  \right\}.
\end{equation*}
Denote also by $\mathbf{n}_i$ the associated outer normal vector, i.e, $\mathbf{n}_i = \e_i$ for $i$ positive and $\mathbf{n}_i = - \e_i$ for $i$ negative.
The previous identity can be rewritten
\begin{equation*}
\left\langle \nabla \left[ u\right]_\Pa \right\rangle_{ \cu_n}=  \sum_{i=\pm 1, \cdots, \pm d} \sum_{x \in \partial_i \cu_n} \left[ u\right]_\Pa(x) \mathbf{n}_i.
\end{equation*}
Thus
\begin{equation*}
\left\langle  \nabla \left[ u\right]_\Pa  \right\rangle_{\cu_n} -  \int_{\cu_n} \nabla w (z) \, dz  = \sum_{i=\pm 1, \cdots, \pm d} \left( \sum_{x \in \partial_i \cu_n}  \left[ u\right]_\Pa(x)  - \int_{\partial_i  \cu_n} w (z) \, dz \right) \mathbf{n}_i.
\end{equation*}
Without loss of generality, it is sufficient to prove~\eqref{nablacoarsenu.ws.close} to show
\begin{equation} \label{particularcasei=1}
\left| \sum_{x \in \partial_1 \cu_n \cup \partial_{-1} \cu_n}  \left[ u\right]_\Pa(x)  - \int_{\partial_1 \cu_n \cup \partial_{-1} \cu_n} w (z) \, dz \right| \leq C \sum_{x\in \partial \cu_n} |\nabla \left[ u\right]_\Pa|(x).
\end{equation}
With a few modifications of the proof of (i) of Lemma~\ref{l.propwws}, we can show: for every $x \in \partial_1 \cu_n \cup \partial_{-1} \cu_n$ and every $z \in \left( x + \lbrace 0 \rbrace \times \left[-\frac 12, \frac 12\right]^{d-1} \right) \bigcap \partial \cu_n$
\begin{equation} \label{wscoarsenu.close}
|w (z) - \left[ u\right]_\Pa(x)|  \leq \sum_{y \in \Zd \cap \partial \cu_n : |y-x|_\infty \leq 1} |\nabla \left[ u\right]_\Pa|(y).
\end{equation}
The idea of the proof of~\eqref{particularcasei=1} is to apply~\eqref{wscoarsenu.close} with $x \in \partial_1 \cu_n$ $ \left(\mbox{resp. } \partial_{-1} \cu_n\right)$ and $z\in \left( x + \left\lbrace 0 \right\rbrace \times \left[-\frac 12, \frac 12\right]^{d-1} \right) \bigcap \partial \cu_n$. Unfortunately, a technical difficulty appears if $x$ lies on the boundary of $\partial_1 \cu_n$ $ \left(\mbox{resp. } \partial_{-1} \cu_n\right)$  which we denote
\begin{equation*}
 \partial \partial_1 \cu_n := \left\lbrace x \in \partial_1 \cu_n ~:~ \exists i \in \lbrace 2,\cdots,d \rbrace,~ x_i \mbox{ is maximal or minimal} \right\rbrace.
\end{equation*}
 We define similarly $\partial \partial_{-1} \cu_n$. Thus we distinguish two cases, whether $x \in \partial \partial_1 \cu_n$ $ \left(\mbox{resp. } \partial \partial_{-1} \cu_n\right)$ or not.

\emph{Case 1:} $x \in \partial_1 \cu_n \setminus \partial \partial_1 \cu_n$, then by~\eqref{wscoarsenu.close}
\begin{equation*}
\left| \left[ u\right]_\Pa(x) - \int_{x + \left\lbrace 0 \right\rbrace \times \left[-\frac 12, \frac 12\right]^{d-1}} w (z) \, dz \right| \leq \sum_{z \in \partial \cu_n : |z-x|_\infty \leq 1} |\nabla \left[ u\right]_\Pa|(z).
\end{equation*}
Symmetrically for $x \in \partial_{-1} \cu_n \setminus \partial \partial_{-1} \cu_n$
\begin{equation*}
\left| \left[ u\right]_\Pa(x) - \int_{x + \left\lbrace0 \right\rbrace \times \left[-\frac 12, \frac 12\right]^{d-1}} w (z) \, dz \right| \leq \sum_{z \in \partial \cu_n : |z-x|_\infty \leq 1} |\nabla \left[ u\right]_\Pa|(z).
\end{equation*}

\emph{Case 2:} $x \in \partial \partial_1 \cu_n$ then denote by $\hat{x} := x - (3^n - 1 ) \e_1 \in \partial \partial_{-1} \cu_n$. We have
\begin{equation*}
 \left| \left[ u\right]_\Pa(x) - \left[ u\right]_\Pa(\hat{x}) \right| \leq \sum_{i = 0}^{3^n - 1} \left| \nabla \left[ u\right]_\Pa\right| (x - i \e_1) .
\end{equation*}
Summing over $x \in \partial \partial_1 \cu_n$ yields
\begin{align*}
\sum_{x \in \partial \partial_1 \cu_n} \left| \left[ u\right]_\Pa(x) - \left[ u\right]_\Pa(\hat{x}) \right| & \leq \sum_{x \in \partial \partial_1 \cu_n} \sum_{i = 0}^{3^n-1} \left| \nabla \left[ u\right]_\Pa\right| (x - i \e_1) \\
																				& \leq \sum_{x \in \partial \cu_n} \left| \nabla \left[ u\right]_\Pa\right|(x),
\end{align*}
since for every $x \in  \partial \partial_1 \cu_n$ and every $i \in \lbrace 0, \cdots , 3^n -1 \rbrace$, $x - i \e_1 \in \partial \cu_n$ (and for every $y \in \partial \cu_n$ there  exists at most one $x \in \partial \partial_1  \cu_n$ and one $i \in \lbrace 0, \cdots , 3^n -1 \rbrace$ such that $y = x - i \e_1$).

Moreover, for every $x \in \partial \partial_1 \cu_n$ and every $z \in \left( x + \lbrace 0 \rbrace \times \left[-\frac 12, \frac 12\right]^{d-1} \right) \bigcap \partial \cu_n$, we define $\hat{z} := z - (3^n -1 ) \e_1$. By~\eqref{wscoarsenu.close} and the previous computations
\begin{align*}
|w (z) - w (\hat{z})|  & \leq |w (z) - \left[ u\right]_\Pa(x)| + |\left[ u\right]_\Pa(x) - \left[ u\right]_\Pa(\hat{x})| + |\left[ u\right]_\Pa(\hat{x}) - w (\hat{z})| \\
				& \leq  \sum_{y \in \cu_m : |y-x|_\infty \leq 1} |\nabla \left[ u\right]_\Pa|(y) + \sum_{i = 1}^{3^m} \left| \nabla \left[ u\right]_\Pa\right| (x - i \e_1) 
\\ &
\qquad + \sum_{y \in \cu_m : |y-\hat{x}|_\infty \leq 1} |\nabla \left[ u\right]_\Pa|(y).
\end{align*}
Thus, integrating over $\left( x + \left\lbrace0 \right\rbrace \times \left[-\frac 12, \frac 12\right]^{d-1} \right) \bigcap \partial \cu_n$
\begin{multline*}
\left| \int_{\left( x + \left\lbrace0 \right\rbrace \times \left[-\frac 12, \frac 12\right]^{d-1} \right) \bigcap \partial \cu_n} w (z) \, dz -  \int_{\left( \hat{x} + \left\lbrace0 \right\rbrace \times \left[-\frac 12, \frac 12\right]^{d-1}\right) \bigcap \partial \cu_n} w (z) \, dz \right| \\  \leq \sum_{y \in \cu_n : |y-x|_\infty \leq 1} |\nabla \left[ u\right]_\Pa|(y) + \sum_{i = 0}^{3^n-1} \left| \nabla \left[ u\right]_\Pa\right| (x - i \e_1) + \sum_{y \in \cu_n : |y-\hat{x}|_\infty \leq 1} |\nabla \left[ u\right]_\Pa|(y).
 \end{multline*}
Summing over $x \in \partial \partial_1 \cu_n$ yields
\begin{multline*}
\sum_{x \in  \partial \partial_1 \cu_n} \left|  \int_{\left( x + \left\lbrace0 \right\rbrace \times \left[-\frac 12, \frac 12\right]^{d-1} \right) \bigcap \partial \cu_n} w (z) \, dz -  \int_{\left( \hat{x} + \left\lbrace0 \right\rbrace \times \left[-\frac 12, \frac 12\right]^{d-1}\right) \bigcap \partial \cu_n} w (z) \, dz \right| \\   \leq C \sum_{y \in \partial \cu_n} \left| \nabla \left[ u\right]_\Pa\right| (y). 
 \end{multline*}

\smallskip

Combining the displays of cases 1 and 2 and using the triangle inequality shows
\begin{equation*}
\left| \sum_{x \in \partial_1 \cu_n \cup \partial_{-1} \cu_n}  \left[ u\right]_\Pa(x)  - \int_{\partial_1 \cu_n \cup \partial_{-1} \cu_n} w (z) \, dz \right| \\ \leq C \sum_{x\in \partial \cu_n} |\nabla \left[ u\right]_\Pa|(x),
\end{equation*}
which is~\eqref{nablacoarsenu.ws.close}. We now turn to the proof of~\eqref{mainresultstep2bis}.

Applying the Cauchy-Schwartz inequality yields
\begin{align*}
\left|\frac{1}{|\cu_n|}\left\langle  \nabla \left[ u\right]_\Pa  \right\rangle_{y + \cu_n}  - \frac{1}{|\cu_n|} \int_{y+\cu_n} \nabla w (z) \, dz \right|^2 & \leq C \left( \frac{|\partial \cu_n |}{|\cu_n|}\right)  \left( \frac{1}{|\cu_n|} \sum_{z \in (y+\cu_n)} \left| \nabla \left[ u\right]_\Pa \right|^2 (z) \right) \\
																			& \leq C 3^{- n }  \left( \frac{1}{|\cu_n|} \sum_{z \in (y+\cu_n)} \left| \nabla \left[ u\right]_\Pa \right|^2 (z) \right).
\end{align*}
Summing this over $3^k \Zd \cap \cu^\circ_{\mathrm{int}}$ and applying (iii) of Lemma~\ref{l.propwws} with $p' = 2$ yields~\eqref{mainresultstep2bis} and consequently the main result of this step~\eqref{mainresultstep2}.

\smallskip

\textit{Step 3.} We want to show 
\begin{equation}\label{step2.est}
\frac{3^{dk}}{\left|\cu^\circ_{\mathrm{int}}\right|} \sum_{y \in  3^{k} \Zd \cap \cu^\circ_{\mathrm{int}} } \frac{1}{|\cu_n|} \left\langle \nabla u, \a \nabla u \right\rangle_{\C_*(y+\cu_n)} \leq \frac{1}{|\cu_m|} \left\langle \nabla u, \a \nabla u \right\rangle_{\C_*(\cu_m)} + C M^2 3^{-m \alpha}.
\end{equation}
Notice that, while $\C_*(\cu_m) \cap \left( z + \cu_n\right)$ and $\C_*(z + \cu_n)$ may be different, every open edges in the latter cluster belongs to the former. This remark shows the following inequality, for each $y \in  3^{k} \Zd \cap \cu^\circ_{\mathrm{int}}$,
\begin{equation*}
\left\langle \nabla u, \a \nabla u \right\rangle_{\C_*(y+\cu_n)} \leq \left\langle \nabla u, \a \nabla u \right\rangle_{\C_*(\cu_m) \cap \left( y + \cu_n \right)}.
\end{equation*}
This allows us to bound
\begin{multline*}
\frac{3^{dk}}{|\cu^\circ_{\mathrm{int}}|} \sum_{y \in  3^{k} \Zd \cap \cu^\circ_{\mathrm{int}} }  \frac{1}{|\cu_n|} \left\langle \nabla u, \a \nabla u \right\rangle_{\C_*(y+\cu_n)} 
\\
 \leq \frac{3^{dk}}{|\cu^\circ_{\mathrm{int}}|} \sum_{y \in  3^{k} \Zd \cap \cu^\circ_{\mathrm{int}} }  \frac{1}{|\cu_n|} \left\langle \nabla u, \a \nabla u \right\rangle_{\C_*(\cu_m) \cap \left( y + \cu_n \right)} 
\leq \frac{1}{|\cu^\circ_{\mathrm{int}}|}  \left\langle \nabla u, \a \nabla u \right\rangle_{\C_*\left(\cu_m \right)} .
\end{multline*}
To complete the proof, we need to show the following estimate
\begin{equation*}
\frac{1}{|\cu^\circ_{\mathrm{int}}|} \left\langle \nabla u, \a \nabla u \right\rangle_{\C_*(\cu_m )} \leq \frac{1}{|\cu_m|} \left\langle \nabla u, \a \nabla u \right\rangle_{\C_*(\cu_m)} + CM^2 3^{-m \alpha }.
\end{equation*}
The previous estimate follows from the following computation
\begin{align*}
\lefteqn{ \frac{1}{|\cu^\circ_{\mathrm{int}}|} \left\langle \nabla u, \a \nabla u \right\rangle_{\C_*\left(\cu_m\right)} - \frac{1}{|\cu_m|} \left\langle \nabla u, \a \nabla u \right\rangle_{\C_*(\cu_m)}}\qquad & \\
							&\leq  \frac{|\cu_m \setminus \cu^\circ_{\mathrm{int}} |}{|\cu_m|^2}  \left( \sum_{x \in \C_*(\cu_m)} |\nabla u \indc_{\left\{ \a \neq 0 \right\}} |^2 (x) \right) + \frac{1}{|\cu_m|}  \left( \sum_{x \in \C_*(\cu_m)\setminus \cu^\circ_{\mathrm{int}}} |\nabla u\indc_{\left\{ \a \neq 0 \right\}} |^2(x) \right).
\end{align*}
Recall the definition of $l$, the size of the boundary layer. Using this definition, we have
\begin{equation*}
\frac{|\cu_m \setminus \cu^\circ_{\mathrm{int}} |}{|\cu_m|} \leq C 3^{l-m} \leq C 3^{- m\alpha}.
\end{equation*}
This allows to bound the first term on the right hand side
\begin{equation*}
\frac{|\cu_m \setminus \cu^\circ_{\mathrm{int}} |}{|\cu_m|^2}  \left( \sum_{x \in \C_*(\cu_m)} |\nabla u \indc_{\left\{ \a \neq 0 \right\}} |^2 (x) \right) \leq C  3^{-m \alpha} M^2.
\end{equation*}
The second term can be bounded by applying the H\"older inequality,
\begin{align*}  
\lefteqn{
\frac {1}{ |\cu_m|}  \sum_{x \in \C_*(\cu_m)\setminus \cu^\circ_{\mathrm{int}}} |\nabla u \indc_{\left\{ \a \neq 0 \right\}}|^2(x) 
} \qquad & \\
												&\leq  C\left(  \frac{|\C_*(\cu_m) \setminus \cu^\circ_{\mathrm{int}} |}{|\cu_m|} \right)^{\frac{p-2}{p}} \left( \frac{1}{ |\cu_m|}  \sum_{x \in \C_*(\cu_m)\setminus \cu^\circ_{\mathrm{int}}} |\nabla u \indc_{\left\{ \a \neq 0 \right\}}|^{p} (x) \right)^{\frac{2}{p}} \\
												&\leq C 3^{(l-m)\frac{p-2}{p}} \left( \frac{1}{|\cu_m|}  \sum_{x \in \C_*(\cu_m)} |\nabla u \indc_{\left\{ \a \neq 0 \right\}}|^p(x) \right)^{\frac 2p}\notag \\ 
												& \leq C 3^{- m\alpha} M^2. 
\end{align*}
The proof of~\eqref{step2.est} is complete.

\smallskip
\textit{Step 4.} We want to show
\begin{equation} \label{est.lA.1.step2}
 \fint_{\cu^\circ_{\mathrm{int}}} p(x)\cdot \ahom p(x) \, dx \leq \frac{3^{dk}}{|\cu^\circ_{\mathrm{int}}|} \sum_{x \in  3^{k} \Zd \cap \cu^\circ_{\mathrm{int}} }  p(y)\cdot \ahom p(y) + C M^2 3^{-m \alpha }.
\end{equation}
For every $x \in \cu^\circ_{\mathrm{int}}$ and $y \in x + \left[ -\frac{3^k}{2}, \frac{3^k}{2} \right]^d$
\begin{align*}
|p(x) - p(y)| &\leq \frac{1}{|\cu_n|} \int_{(x + \cu_n) \Delta (y + \cu_n)} \left| \nabla w (z) \right| \, dz \\
			& \leq \left( \frac{|(x + \cu_n) \Delta (y + \cu_n)|}{|\cu_n|} \right)^\frac12 \left(  \frac{1}{|\cu_n|}  \int_{(x + \cu_n) \Delta (y + \cu_n)} \left| \nabla w (z) \right|^2 \, dz\right)^\frac12 \\
			& \leq C 3^{\frac{k-n}{2}} \left( \fint_{x +\cu_n+ \cu_k} \left| \nabla w (z) \right|^2 \, dz\right)^\frac12 \\
			& \leq C 3^{-m \alpha } \left( \fint_{x +\cu_n+ \cu_k} \left| \nabla w (z) \right|^2 \, dz\right)^\frac12.
\end{align*}
Thus
\begin{align*}
\lefteqn{
\left| p(x)\cdot \ahom p(x) - \fint_{x + \left[ -\frac{3^k}{2}, \frac{3^k}{2} \right]^d} p(y) \cdot \ahom p(y) \, dy \right| 
} \quad & \\
& \leq \sup_{y \in x + \left[ -\frac{3^k}{2}, \frac{3^k}{2} \right]^d} |p(x) - p(y)| \left( |p(x)| + |p(y)| \right) 
& \leq C 3^{-m \alpha } \left( \fint_{y +\cu_m+ \cu_k} \left| \nabla w (z) \right|^2 \, dz\right).
\end{align*}
Summing over $x \in 3^{k} \Zd \cap \cu^\circ_{\mathrm{int}}$ and using (iii) of Lemma~\ref{l.propwws}
\begin{align*}
\lefteqn{ \left| \frac{3^{dk}}{|\cu^\circ_{\mathrm{int}}|} \sum_{x \in  3^{k} \Zd \cap \cu^\circ_{\mathrm{int}}}  p(x)\cdot \ahom p(x) - \fint_{\cu^\circ_{\mathrm{int}}} p(x)\cdot \ahom p(x) \, dx \right| } \qquad & \\ & \leq 3^{-m \alpha } \frac{3^{dk}}{|\cu^\circ_{\mathrm{int}}|} \sum_{x \in  3^{k} \Zd \cap \cu^\circ_{\mathrm{int}} } \left( \fint_{x+\cu_n+ \cu_k} \left| \nabla w (z)\right|^2 \, dz\right) \\
																		& \leq 3^{-m \alpha } \left( \frac{1}{|\cu^\circ_{\mathrm{int}}|} \int_{\cu_m} \left| \nabla w (z)\right|^2 \, dz\right) \\
																		&  \leq C  M^2  3^{-m \alpha }.
\end{align*}
The proof of~\eqref{est.lA.1.step2} is complete.

\smallskip
\textit{Step 5.} We want to show
\begin{equation} \label{step3.est}
 \fint_{\cu_m} \nabla \tilde{u}(y) \cdot \ahom \nabla \tilde{u}(y)\, dy \leq \fint_{\cu^\circ_{\mathrm{int}}}  p(y) \cdot \ahom p(y) \, dx + C M^2 3^{-m \alpha }.
\end{equation}
First we need to prove the following estimate: there exists $C := C(s,d,\p, \lambda,p) < + \infty$ such that  for each $p' \in \left[2, \frac{p+2}{2}\right]$,
\begin{equation} \label{uu.est}
\fint_{\cu_m} |\nabla \tilde{u}(x)|^{p'} \, dx \leq C M^{p'}
\end{equation}
Differentiating the expression for $\tilde{u}$, we get
\begin{equation*}
\nabla \tilde{u}(x) =  \nabla \eta(x) (\xi(x) - w (x)) + \eta(x) (\nabla \xi(x) - \nabla w (x)) + \nabla w (x).
\end{equation*}
By Lemma~\ref{l.propwws}(iii), to prove~\eqref{uu.est} it suffices to prove the following estimate:
\begin{equation} \label{xiu.est}
\fint_{\cu^\circ} \left( 3^{-l}\left| \xi(x) - w  (x)\right|+ \left|  \nabla \xi(x)\right| \right)^{p'} \,dx  \leq C  M^{p'}.
\end{equation}
We first estimate the second term in the integrand:
\begin{equation*}
\begin{aligned}
\fint_{\cu^\circ} \left| \nabla \xi(x) \right|^{p'} \,dx  & = \fint_{\cu^\circ} \left| \fint_{x+\cu_n}  \nabla w (y)\,dy\right|^{p'} \,dx \\
							 & \leq \fint_{\cu^\circ} \fint_{x+\cu_n} \left| \nabla w (y)\right|^{p'} \,dy\,dx \\
 							 & \leq \frac{|\cu_m|}{|\cu^\circ|} \fint_{\cu_m}  \left| \nabla w (x)\right|^{p'} \,dx \\
 							 & \leq CM^{p'}.
\end{aligned}
\end{equation*}
To estimate the first term in the integrand, we use that for every $y \in \cu^\circ$,
\begin{align*}
\lefteqn{\frac{1}{|\cu_n|}\int_{y+\cu_n} \left| \xi(x) - w (x)\right|^{p'} \,dx} \qquad & \\  &  = \frac{1}{|\cu_n|} \int_{y+\cu_n} \left| w (x) - \frac{1}{|\cu_n|} \int_{x+\cu_n} w (z)\,dz \right|^{p'} \,dx \\
& \leq C\frac{1}{|\cu_n|} \int_{y+\cu_n} \left| w (x) - \frac{1}{\mathrm{Leb}(\cu_n)} \int_{y+\cu_n} w (z)\,dz \right|^{p'} \,dx \\
 & \qquad + C\frac{1}{|\cu_n|} \int_{y+\cu_n} \left| \frac{1}{|\cu_n|} \int_{x+\cu_n} w (z)\,dz - \frac{1}{|\cu_n|}\int_{y+\cu_n} w (z)\,dz \right|^{p'} \,dx \\
& \qquad +C  \left( \left( 3^n - 1 \right)^{-d} - 3^{-dn} \right)^{p'} \left| \int_{y+\cu_n}  w (z)\,dz \right|^{p'} \,dx.
\end{align*}
Thanks to the Poincar\'e inequality, we can bound the first term on the right side:
\begin{equation*} \label{}
\fint_{y+\cu_n} \left| w (x) -\frac{1}{\mathrm{Leb}(\cu_n)} \int_{y+\cu_n} w (z)\,dz  \right|^{p'} \,dx \leq C3^{p'n} \fint_{y+\cu_n} \left| \nabla w (x) \right|^{p'} \,dx.
\end{equation*}
To compute the second term, we observe that for every $y\in \cu^\circ$ and $x\in y+\cu_n$,
\begin{align*}
\left| \fint_{x+\cu_n} w (z)\,dz - \fint_{y+\cu_n} w (z)\,dz \right| & = \left| \fint_{\cu_n} \int_0^1 (x-y) \cdot \nabla w (tx+(1-t)y+z)\,dt\,dz \right| \\
& \leq C3^{n} \fint_{y+\cu_{n+1}} \left| \nabla w (z) \right| \, dz.
\end{align*}
Assembling these yields
\begin{equation*} \label{}
\fint_{y+\cu_n} \left| \xi(x) - w (x)\right|^{p'} \,dx \leq C3^{p'n} \fint_{y+\cu_{n+1}} \left| \nabla w (x) \right|^{p'} \,dx + 3^{-p'n}\fint_{y+ \cu_n} \left| w(x) \right|^{p'} \, dx
\end{equation*}
and then integrating over $y\in \cu^\circ$ and applying Lemma~\ref{l.propwws}[(iii) and (iv)] yields
\begin{equation*}
\fint_{\cu^\circ} \left| \xi(x) - w (x)\right|^{p'} \,dx \leq C \left( 3^{p'n} + 3^{-p'n + p'm} \right).
\end{equation*}
Inequality~\eqref{uu.est} is then a consequence of the two estimates $l \geq n$ and $l \geq m-n$.

\smallskip

To prove \eqref{step3.est}, notice that for $y \in \cu^\circ,~ \nabla \tilde{u}(y) = p(y)$. Hence we can write
\begin{align*}
 \lefteqn{\fint_{\cu_m} \nabla \tilde{u}(y) \cdot \ahom \nabla \tilde{u}(y)\, dy - \fint_{\cu^\circ} p(y) \cdot \ahom p(y) \, dy} \qquad & \\
& \leq  \frac{|\cu_m \setminus \cu^\circ|}{|\cu_m|} \fint_{\cu^\circ} |\nabla \tilde{u}(x)|^2 \, dx + \frac{1}{|\cu_m|} \int_{\cu_m \setminus \cu^\circ} |\nabla \tilde{u}(x)|^2 \, dx  \\
& \leq C 3^{-m\alpha} M^2 + \left( \frac{|\cu_m \setminus \cu^\circ|}{|\cu|} \right)^{\frac{p-2}{p+2}}\left( \frac{1}{|\cu_m|} \int_{\cu_m \setminus \cu^\circ} |\nabla \tilde{u}(x)|^{\frac{p+2}{2}}\, dx \right)^{\frac{4}{p+2}}\\
& \leq C 3^{-m\alpha} M^2 + C 3^{-m\alpha} \left(  \fint_{\cu_m} |\nabla \tilde{u}(x)|^{\frac{p+2}{2}}\, dx \right)^{\frac{4}{p+2}} \\
& \leq C 3^{-m\alpha} M^2.
\end{align*}
This completes the proof of~\eqref{step3.est} and thus the proof of Lemma~\ref{l.lemmaA1}.
\end{proof}

Before starting the proof of Lemma~\ref{l.lemmaA2} we need to record two estimates from the regularity theory
\begin{proposition}[Meyer and $H^2$ estimates~\cite{Giu}] 
\label{MeyerplusH2}
Suppose $t \in (2, \infty)$, $f \in W^{1,t}(\cu_m)$ and $v \in H^1(\cu_m)$ satisfy
\begin{equation*}
\left\{ 
	\begin{aligned}
		\nabla \cdot \ahom \nabla v = 0 \mbox{ in } \cu_m \\
		v= f  \mbox{ in } \partial \cu_m
	\end{aligned}
\right.
\end{equation*}
Then there exist $r := r(d, \lambda, t) \in (2,t)$ and a constant $C := C (d, \lambda, t) < + \infty$ such that $v \in W^{1,r}(\cu_m)$ and
\begin{equation*}
\left( \fint_{\cu_m} |\nabla v(x)|^r \, dx \right)^{\frac 1r} \leq C \left( \fint_{\cu_m} |\nabla v(x)|^t \, dx \right)^{\frac 1t}.
\end{equation*}
Moreover for every cube $\cu' \subseteq \cu_m$ with $\dist(\cu', \partial \cu_m) > 0, \, v \in H^2(\cu_m)$ and
\begin{equation*}
\dist(\cu', \partial \cu_m) \left( \fint_{\cu'} |\nabla\nabla \tilde{w} (x)|^2 \, dx \right)^{\frac 12} \leq C \left( \fint_{\cu_m} |\nabla v(x)|^t \, dx \right)^{\frac 1t}.
\end{equation*}
\end{proposition}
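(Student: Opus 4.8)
\textbf{Plan for the proof of Proposition~\ref{MeyerplusH2}.} The statement is a combination of two classical facts in elliptic regularity theory for the constant-coefficient operator $-\nabla\cdot\ahom\nabla$ in the cube $\cu_m$: the global Meyers higher-integrability estimate up to the boundary for the Dirichlet problem, and the interior $H^2$ estimate. Both are stated in~\cite{Giu} in Euclidean balls, so the only work is to transfer them to cubes and to record the correct scaling. The plan is to first reduce, by translating and dilating, to the unit cube $Q:=\left(-\tfrac12,\tfrac12\right)^d$; this is legitimate because $\ahom$ is constant and satisfies the uniform ellipticity bounds~\eqref{e.ahombounds}, so the class of equations is invariant under this rescaling and the constants depend only on $(d,\lambda,t)$ through the ellipticity ratio.

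\textbf{Global Meyers estimate.} On the unit cube, I would invoke the boundary version of the Meyers estimate (see~\cite{Giu} and the reverse H\"older/Gehring machinery already used in the proof of Proposition~\ref{p.meyers}): for $v\in H^1(Q)$ solving $\nabla\cdot\ahom\nabla v=0$ with $v=f$ on $\partial Q$ and $f\in W^{1,t}(Q)$ for some $t>2$, there is $r=r(d,\lambda,t)\in(2,t)$ with $v\in W^{1,r}(Q)$ and
\begin{equation*}
\left(\fint_{Q}|\nabla v(x)|^{r}\,dx\right)^{1/r}
\leq C\left(\fint_{Q}|\nabla f(x)|^{t}\,dx\right)^{1/t}.
\end{equation*}
The key input is the reverse H\"older inequality with increasing supports, obtained from the Caccioppoli inequality (applied to $v-\bar v$ on interior balls and to $v-f$ on boundary balls, using that $v-f\in H^1_0(Q)$) together with the Sobolev--Poincar\'e inequality; the exterior-domain version is needed only near $\partial Q$ and is handled by extending $f$ to a neighborhood. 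Then Gehring's lemma (Theorem~6.6 and Corollary~6.1 of~\cite{Giu}) upgrades this to the stated higher integrability, with $r$ and $C$ depending only on $(d,\lambda,t)$. Since on the right-hand side we have $\|\nabla f\|_{L^t(Q)}$ and $v=f$ on $\partial Q$, we may replace $\nabla f$ by $\nabla v$ in the statement as written, using that $v$ itself is admissible test data; this yields the first displayed inequality. Undoing the rescaling introduces no new powers of the side length because both sides are written with normalized integrals of the gradient.

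\textbf{Interior $H^2$ estimate.} For the second part, fix $\cu'\subseteq\cu_m$ with $\rho:=\dist(\cu',\partial\cu_m)>0$. By difference quotients (or by the standard interior Schauder/$L^2$ argument for constant-coefficient operators), any solution $v$ of $\nabla\cdot\ahom\nabla v=0$ in $\cu_m$ satisfies $v\in H^2_{\mathrm{loc}}(\cu_m)$, and on the slightly enlarged cube $\cu''$ with $\dist(\cu'',\partial\cu_m)\geq\rho/2$ and $\cu'\subseteq\cu''$ one has
\begin{equation*}
\left(\fint_{\cu'}|\nabla\nabla v(x)|^{2}\,dx\right)^{1/2}
\leq \frac{C}{\rho}\left(\fint_{\cu''}|\nabla v(x)|^{2}\,dx\right)^{1/2}
\leq \frac{C}{\rho}\left(\fint_{\cu_m}|\nabla v(x)|^{t}\,dx\right)^{1/t},
\end{equation*}
where the last step is H\"older's inequality together with $|\cu_m|/|\cu''|\leq C$. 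This is exactly the claimed bound after multiplying through by $\rho=\dist(\cu',\partial\cu_m)$. The constant depends only on $(d,\lambda)$ (and on $t$ only through the final H\"older step), consistent with the statement.

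\textbf{Main obstacle.} There is no real obstacle here: everything is classical for constant-coefficient operators. The only point requiring mild care is the \emph{global} (up-to-the-boundary) nature of the Meyers estimate in a cube with corners rather than a smooth domain; this is handled by the standard device of covering $\partial\cu_m$ by boundary balls on which $v-f$ vanishes on a portion of the boundary after a bi-Lipschitz flattening, and the corners of the cube cause no trouble because the reverse H\"older inequality only requires the relative isoperimetric/Poincar\'e inequality, which holds for cubes with a constant depending only on $d$. I would simply cite~\cite{Giu} for both statements and indicate the rescaling to the unit cube, exactly as was done for the discrete analogues in Lemma~\ref{l.reverse.Holder} and Proposition~\ref{p.meyers}.
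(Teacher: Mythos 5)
The paper gives no proof of this proposition: it is quoted as a classical result and attributed to~\cite{Giu}, and your sketch is exactly the standard argument behind that citation (boundary Caccioppoli plus Sobolev--Poincar\'e giving a reverse H\"older inequality, then Gehring's lemma, for the Meyers part; difference quotients and Caccioppoli for derivatives, then H\"older, for the interior $H^2$ part). So in approach you are aligned with the paper, and the substance of the sketch is the right one. Two steps, however, do not hold as written.

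First, the sentence in which you pass from the estimate with $\|\nabla f\|_{L^t}$ on the right to the paper's literal display with $\|\nabla v\|_{L^t}$ (``we may replace $\nabla f$ by $\nabla v$ \dots using that $v$ itself is admissible test data'') is not a valid implication: energy comparison gives $\|\nabla v\|_{L^2}\leq C\|\nabla f\|_{L^2}$, but there is no reverse bound of $\|\nabla f\|_{L^t}$ by $\|\nabla v\|_{L^t}$. The correct reading is that the right-hand sides of both displays in the proposition should involve $\nabla f$ (and $\tilde w$ should read $v$); this is how the result is used immediately afterwards, where $f=w$, Lemma~\ref{l.propwws}(iii) gives $\bigl(\fint_{\cu_m}|\nabla w|^{t}\bigr)^{1/t}\leq CM$, and one concludes~\eqref{M.estimateuhom} and~\eqref{int.H2estimateuhom}. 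The version you actually prove is therefore the one that is needed; the reconciliation sentence should simply be deleted. Second, your interior chain of normalized inequalities is inconsistent for small $\cu'$: with $\rho=\dist(\cu',\partial\cu_m)$, the Caccioppoli step $\bigl(\fint_{\cu'}|\nabla\nabla v|^2\bigr)^{1/2}\leq C\rho^{-1}\bigl(\fint_{\cu''}|\nabla v|^2\bigr)^{1/2}$ requires $|\cu''|\leq C|\cu'|$, while your H\"older step requires $|\cu_m|\leq C|\cu''|$, and both can hold only when $|\cu'|\geq c\,|\cu_m|$. Indeed, with the normalization $\fint_{\cu'}$ the second display cannot hold for arbitrary cubes $\cu'$: take boundary data $f$ with a unit-size bump near a boundary point and $\cu'$ a unit cube at unit distance from it; then the left side is of order one while the right side is of order $3^{-dm/t}$ (for either reading of the right-hand side). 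This imprecision is inherited from the statement itself and is harmless in the paper, where the proposition is only applied with $\cu'=\cu^\circ$, whose volume is comparable to $|\cu_m|$; but in your write-up you should either put $|\cu_m|^{-1}\int_{\cu'}$ on the left or restrict to cubes $\cu'$ with $|\cu'|\geq c\,|\cu_m|$.
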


Applying this result with $t = \frac{p+2}{2}$, $f = w $, $\cu' = \cu^\circ$ and using (iii) of Lemma~\ref{l.propwws} shows: there exists $r := (d, \lambda, p) \in \left(2, \frac{p+2}{2}\right)$ such that
\begin{equation}\label{M.estimateuhom}
\left( \fint_{\cu_m} |\nabla \uhom(x)|^r \, dx \right)^{\frac 1r} \leq C M
\end{equation}
and
\begin{equation} \label{int.H2estimateuhom}
3^l \left( \fint_{\cu^\circ} |\nabla\nabla \uhom (x)|^2 \, dx \right)^{\frac 12} \leq C M.
\end{equation}
These two estimates will be useful in the proof of Lemma~\ref{l.lemmaA2}.

\smallskip

\begin{proof}[Proof of Lemma \ref{l.lemmaA2}]
We construct $\tuhom$ from $\uhom$ by a stitching together mesoscopic minimizers.

For every $y$ in $\cu^\circ$, we set
\begin{multline*}
\zeta(y) := \frac{1}{|\cu_n|}\int_{y+\cu_{n}} \uhom(x) \, dx,~ p(y) := \nabla \zeta(y) =  \frac{1}{|\cu_n|} \int_{y+\cu_{n}} \nabla \uhom (x) \, dx  \\ \mbox{ and } q(y) = \ahom p(y).
\end{multline*}
We begin the construction by defining an affine approximation to $\uhom$ in the mesoscopic cube $y + \cu_{n}$ by setting, for each $y \in \cu^\circ \cap 3^n\Z^d$
\begin{equation*}
l_y := p(y) \cdot (x-y) + \zeta(y).
\end{equation*}
For each $y \in \cu^\circ \cap 3^n\Z^d$, denote by $v_y$ the unique element of $$\A(y + \cu_n) \cap \left(  l_y + \mathcal{C}_0\left(\C_*(y+ \cu_{n})\right) \right),$$  that is, $v_y$ is the unique element of $ l_y + \mathcal{C}_0\left(\C_*(y+ \cu_{n})\right) $ which satisfies:
\begin{equation*}
\left\langle \nabla v_y, \a \nabla v_y \right\rangle_{y + \cu_n} \leq \left\langle \nabla w, \a \nabla w \right\rangle_{y + \cu_n} \mbox{ for every }  w  \in l_y + \mathcal{C}_0\left(\C_*(y+ \cu_{n})\right).
\end{equation*} 
The objective is then to patch these functions together to obtain a function defined on $\C_*(\cu_m)$ but in general we don't have $\C_*(\cu_m) \cap \left( y + \cu_n\right) = \C_*(y+ \cu_{n})$.
To deal with this technical point, we extend $v_y$ to $\C_*(\cu_m) \cap \left( y + \cu_n\right)$ by setting
\begin{equation*}
v_y (x) := l_y(x) \mbox{ for every } x \in  \left( \C_*(\cu_m) \cap \left( y + \cu_n\right) \right) \setminus \C_*(y+ \cu_{n}).
\end{equation*}
In other words, $v_y$ is the maximizer in the definition of $\nu(y+\cu_n,p(y))$ except that we have added a constant to it and extended its definition to the slightly larger set $\C_*(\cu_m) \cap \left( y + \cu_n\right)$. 
We patch these functions together by setting for each $x \in \C_*(\cu_m)$
\begin{equation} \label{tv.def}
\tilde{v}(x):= \sum_{y \in \cu^\circ \cap 3^n\Z^d}  v_y(x) \indc_{\left\{ x \in y + \cu_n \right\}}.
\end{equation}
Finally, we modify $\tilde{v}$ to match the boundary condition. Take $\eta \in C^\infty_0(\Rd)$ to be the cutoff function satisfying~\eqref{def.eta.cutoff} and define, for each $x \in \C_*(\cu_m)$
\begin{equation*} \label{}
\tuhom (x) := \eta(x) \tilde{v}(x) + \left(1 - \eta(x) \right) \uhom (x).
\end{equation*}
With this definition, it is clear that $\tuhom $ satisfies the boundary condition (i):
\begin{equation*}
\forall x \in \C_*(\cu_m) \cap \partial \cu_m,~ \tuhom(x) = u (x).
\end{equation*}
We now prove the energy estimate (ii). We split the proof into three steps
\begin{itemize}
\item In step 1 and 2, we prove the interior estimate
\begin{equation*}
\frac{1}{|\cu^\circ|} \left\langle \nabla \tilde{v}, \a \nabla \tilde{v} \right\rangle_{\C_*(\cu_m) \cap \cu^\circ}  -\fint_{\cu^\circ}  \nabla \uhom(x) \cdot \ahom \nabla \uhom(x) \, dx  \leq  C  M^2 3^{-m\alpha}.
\end{equation*}
\item In step 3 we prove the two boundary estimates
\begin{equation*}
\frac{1}{|\cu_m|} \left\langle \nabla \tuhom, \a \nabla \tuhom \right\rangle_{\C_*(\cu_m)}  \leq \frac{1}{|\cu^\circ|} \left\langle \nabla \tilde{v}, \a \nabla \tilde{v} \right\rangle_{\C_*(\cu_m) \cap \cu^\circ} + C  M^2 3^{-m\alpha}
\end{equation*}
and
\begin{equation*}
\fint_{\cu^\circ}  \nabla \uhom(x) \cdot \ahom \nabla \uhom(x) \, dx \leq \fint_{\cu_m}  \nabla \uhom(x) \cdot \ahom \nabla \uhom(x) \, dx +  C  M^2 3^{-m\alpha}.
\end{equation*}
\end{itemize}
Combining these three results gives the energy estimate (ii).

\smallskip

\textit{Step 1.} In this step, we show the following interior estimate: for each $y \in 3^n \Zd \cap \cu^\circ$
\begin{equation}\label{intermediate.estimate}
 \frac{1}{|\cu_n|}\left\langle \nabla v_y, \a \nabla v_y \right\rangle_{\C_*(\cu_m) \cap (y + \cu_n)} 
 - \frac{1}{|\cu_n|} \int_{y+\cu_{n}} \nabla\uhom(x)\cdot \ahom \nabla\uhom(x) \, dx \leq |p(y)|^2 3^{-m\alpha}.
\end{equation}
For $y \in 3^n \Zd \cap \cu^\circ$, using $v_y$ as a test function for $\mu(y + \cu_n, q(y))$ yields
\begin{equation*} \label{tv.almost.minimizer}
\mu(y + \cu_n, q(y)) \leq \frac{1}{|\cu_n|} \left\langle \nabla v_y, \a \nabla v_y \right\rangle_{\C_*(y + \cu_n)} - \frac{1}{|\cu_n|} \left\langle q(y), \nabla \left[ v_y \right]_\Pa\right\rangle_{y + \cu_n}.
\end{equation*}
As in the proof of Lemma~\ref{l.nablavaffine} we have
\begin{multline*}
\left| \frac{1}{|\cu_n|} \left\langle q(y) , \nabla \left[ v_y \right]_\Pa \right\rangle_{y + \cu_n} - q(y) \cdot p(y) \right|
\\
 \leq C |p(y)||q(y)| \frac{|\partial_\Pa (y + \cu_n)|}{|\cu_n|} 
 +C |p(y)||q(y)| \left( \frac{1}{|\cu_n|} \sum_{x \in \partial (y + \cu_n)} \size(\cu_\Pa (x))^{2d-1} \right)^\frac12.
\end{multline*}
Taking $t$ large enough and using that $ 3^m \geq \mathcal{M}_t(\Qa)$, we have 
\begin{equation*} \label{}
\max_{x \in \cu_m} \size(\cu_\Pa(x)) \leq C 3^{\frac{dm}{d+t}}
\end{equation*}
and thus
\begin{align*}
\frac{1}{|\cu_n|} \sum_{x \in \partial (y + \cu_n)} \size(\cu_\Pa (x))^{2d-1}  \leq C  \frac{3^{\frac{d(2d-1)m}{d+t}}}{3^n}  
															 \leq C 3^{-m \alpha}
\end{align*}
and
\begin{align*}
\frac{|\partial_\Pa (y + \cu_n)|}{|\cu_n|} &= \frac{1}{|\cu_n|}  \sum_{x \in \partial (y + \cu_n)} \size(\cu_\Pa (x)) 
									 \leq  C  \frac{3^{\frac{dm}{d+t}}}{3^m} 
									 \leq C 3^{-m \alpha}.
\end{align*}
Combining the two previous displays yields
\begin{align*}
  \frac{1}{|\cu_n|} \left\langle \nabla v_y, \a \nabla v_y \right\rangle_{\C_*(y+\cu_n)}  - q(y) \cdot p(y) \leq\mu(y + \cu_n, q(y)) + C |p(y)|^2  3^{-m\alpha}.
\end{align*}
Then, by~\eqref{muhasconverged},
\begin{align*}
 | p(y)\cdot \ahom p(y) - q(y) \cdot p(y)  - \mu(y + \cu_n, q(y)) | & = |p(y) \cdot \ahom p(y) +  \mu(y + \cu_n, q(y))|   \\
														& \leq |p(y)|^2 3^{-m\alpha}.
\end{align*}
This shows
\begin{equation*}
 \frac{1}{|\cu_n|} \left\langle \nabla v_y, \a \nabla v_y \right\rangle_{\C_*(y+\cu_n)}  -  p(y)\cdot \ahom p(y)  \leq C |p(y)|^2 3^{-m\alpha}.
\end{equation*}
To complete the proof of~\eqref{intermediate.estimate}, it is sufficient to show the two following inequalities
\begin{equation} \label{intermediate.estimate1}
\fint_{y+\cu_{n}} \nabla\uhom(x)\cdot \ahom \nabla\uhom(x) \, dx \geq p(y)\cdot \ahom p(y) - C |p(y)|^2 3^{-m\alpha}
\end{equation}
and
\begin{equation} \label{intermediate.estimate2}
\frac{1}{|\cu_n|} \left\langle \nabla v_y, \a \nabla v_y \right\rangle_{\C_*(\cu_m) \cap \left( y + \cu_n \right)} \leq  \frac{1}{|\cu_n|} \left\langle \nabla v_y, \a \nabla v_y \right\rangle_{\C_*(y+\cu_n)} + C |p(y)|^2  3^{-m\alpha}.
\end{equation}
The proof of~\eqref{intermediate.estimate1} relies on a convexity argument: we have, for every $p \in \Rd$,
\begin{equation*}
p\cdot \ahom p \geq p(y) \cdot \ahom p(y) + 2 p(y) \cdot \ahom (p-p(y))
\end{equation*}
and rewriting this inequality with $p = \nabla\uhom(x) $ and integrating over $y + \cu_n$ gives
\begin{align*}
\lefteqn{\fint_{y+\cu_{n}} \nabla\uhom(x)\cdot \ahom \nabla\uhom(x) \, dx  } \qquad &
\\
 & \geq \frac{(3^n-1)^d}{|\cu_n|} p(y) \cdot \ahom p(y) + 2 \fint_{y+\cu_{n}} p(y) \cdot \ahom (\nabla\uhom(x)-p(y)) \, dx  
\\ & \geq \frac{(3^n-1)^d}{|\cu_n|} p(y) \cdot \ahom p(y) \geq p(y)\cdot \ahom p(y) - C |p(y)|^2 3^{-m\alpha},
\end{align*}
by definition of $p(y)$. This gives~\eqref{intermediate.estimate1} and we turn to the proof of~\eqref{intermediate.estimate2}. By the construction of the partition $\Pa$, it is clear that $\left( \C_*(\cu_m) \cap \left( y + \cu_n\right) \right) \setminus  \C_*\left( y + \cu_n\right)$ must be contained in the union of elements of $\Pa$ which touch the boundary of the cube $y + \cu_n$. Therefore,
\begin{equation*}
|\left( \C_*(\cu_m) \cap \left( y + \cu_n\right) \right) \setminus  \C_*\left( y + \cu_n\right)| \leq \left| \partial_\Pa (y + \cu_n)  \right|
\end{equation*}
By definition of $v_y$,
\begin{equation*}
v_y (x) := l_y(x) \mbox{ for every } x \in  \left( \C_*(\cu_m) \cap \left( y + \cu_n\right) \right) \setminus \C_*(y+ \cu_{n}).
\end{equation*}
Combining the two previous displays yields
\begin{align*}
\frac{1}{|\cu_n|} \left\langle \nabla v_y, \a \nabla v_y \right\rangle_{\C_*(\cu_m) \cap (y + \cu_n)} & \leq \frac{1}{|\cu_n|} \left\langle \nabla v_y, \a \nabla v_y \right\rangle_{\C_*(y+\cu_n)} + C |p(y)|^2 \frac{|\partial_\Pa (y + \cu_n)|}{|\cu_n|} \\
																& \leq \frac{1}{|\cu_n|} \left\langle \nabla v_y, \a \nabla v_y \right\rangle_{\C_*(y+\cu_n)}  + C  |p(y)|^2 3^{- m \alpha}.
\end{align*}
This completes the proof of~\eqref{intermediate.estimate2} and consequently the proof of~\eqref{intermediate.estimate}.

\smallskip

\textit{Step 2.}  In this step we prove the following interior estimate
\begin{equation} \label{step42.est}
\frac{1}{\left| \cu^\circ_{\mathrm{int}}\right|} \left\langle \nabla \tilde{v}, \a \nabla \tilde{v} \right\rangle_{\cu^\circ_{\mathrm{int}}}  -\fint_{\cu^\circ_{\mathrm{int}}}  \nabla \uhom(x) \cdot \ahom \nabla \uhom(x) \, dx  \leq  C  M^2 3^{-m\alpha}. 
\end{equation}
Summing~\eqref{intermediate.estimate} over all $y \in \cu^\circ_{\mathrm{int}} \cap 3^m\Zd$ and noticing that
\begin{equation*}
 \frac{3^{dn}}{\left|\cu^\circ_{\mathrm{int}}\right|}  \sum_{y \in \cu^\circ_{\mathrm{int}} \cap 3^n\Zd} |p(y)|^2  \leq C  \fint_{\cu_m} |\nabla \uhom(x)|^2 \, dx \leq CM^2
\end{equation*}
gives 
\begin{equation*}
 \frac{1}{\left|\cu^\circ_{\mathrm{int}}\right|} \sum_{y \in \cu^\circ_{\mathrm{int}} \cap 3^n\Zd} \left\langle \nabla \tilde{v}, \a \nabla \tilde{v} \right\rangle_{\C_*(\cu_m) \cap \left( y + \cu_n \right) } - \fint_{\cu^\circ} \nabla \uhom(x) \cdot \ahom \nabla \uhom(x) \, dx \leq  C   M^2  3^{-m \alpha}. 
\end{equation*}
The technical point relies on the fact that $\C_*(\cu_m)$ contains edges which do not belong to any of the clusters $\left( \C_*(\cu_m) \cap \left( y + \cu_m \right) \right)_{y \in 3^n \Zd \cap \cu^\circ_{\mathrm{int}}} $. These are contained in the set $V$ of edges linking two vertices in different triadic cubes of size $3^n$, i.e,
\begin{equation*}
V := \left\{ \lbrace x,z \rbrace~:~x,z \in \Zd,~ \exists y,y' \in 3^n \Zd \cap \cu^\circ_{\mathrm{int}},~y \neq y',~ x \in y + \cu_n \mbox{ and } z \in y' + \cu_n \right\}.
\end{equation*}
Let $e =  \lbrace x,z \rbrace \in \mathcal{B}_d\left(\C_*(\cu_m)\right) \cap V$ be an edge linking the two triadic cubes $y + \cu_n$ and $y' + \cu_n$. We have
\begin{align*}
|\nabla \tilde{v} (e)| & =  |l_y(x) - l_{y'}(z)| \\
				& = |p(y) \cdot (x-y) + \zeta(y) - p(y') \cdot (z-y') - \zeta(y')| \\
				& \leq |(p(y) - p(y')) \cdot (x-y)| + |p(y)| |(x - z)| + |p(y)\cdot (y' - y) + \zeta(y) - \zeta(y')| \\
				&  \leq 3^n |p(y) - p(y')| + |p(y)| + |p(y) (y' - y) + \zeta(y) - \zeta(y')|.
\end{align*}
We estimate the first term on the right hand side
\begin{align*}
|p(y) - p(y')| & = \left|  \fint_{y + \cu_n} \left( \nabla \uhom(x) -  \nabla \uhom(x + y' - y) \right) \, dx \right| \\
			& \leq  \left|  \fint_{y + \cu_n} \int_0^1 \nabla\nabla \uhom(x +(1- t)(y'-y)) \cdot (y-y')\, dt \, dx \right| \\
			& \leq |y-y'|   \int_0^1  \fint_{y + \cu_n}\left| \nabla\nabla \uhom(x +(1- t)(y'-y)) \right| \, dx \, dt \\
			& \leq C 3^n \int_0^1 \fint_{y + \cu_{n+1}} \left| \nabla\nabla \uhom(x) \right| \, dx \, dt \\
			& \leq C 3^n \fint_{y + \cu_{n+1}} \left| \nabla\nabla \uhom(x) \right| \, dx.
\end{align*}
A similar computation yields
\begin{equation*}
|p(y) \cdot(y' - y) + \zeta(y) - \zeta(y')| \leq C 3^{2n} \fint_{y + \cu_{n+1}} \left| \nabla\nabla \uhom(x) \right| \, dx.
\end{equation*}
For $y \in 3^n \Zd \cap \cu^\circ$, denote by $V_y$ the set of edges of $V$ which link $y + \cu_n$ to another cube, i.e,
\begin{equation*}
V_y := \left\{ \lbrace x,z \rbrace \in V~:~ x \in y + \cu_n \mbox{ or } z \in  y + \cu_n \right\}.
\end{equation*}
The previous displays yields, for $y \in 3^n \Zd \cap \cu^\circ_{\mathrm{int}}$,
\begin{align*}
\frac{1}{|\cu_m|} \sum_{e \in V_y \cap \C_*(\cu_m)} |\nabla \tilde{v}(e)|^2 & \leq C \frac{|\partial \cu_n|}{|\cu_n|} \left( |p(y)|^2 + (3^{2n} + 3^{4n}) \fint_{y + \cu_{n+1}} \left| \nabla\nabla \uhom(x) \right|^2 \, dx \right)\\
															& \leq C 3^{-n} \left( |p(y)|^2 +  3^{4n} \fint_{y + \cu_{n+1}} \left| \nabla\nabla \uhom(x) \right| \, dx \right).
\end{align*}
Thus
\begin{align*}
\lefteqn{ \left| \frac{1}{|\cu^\circ|} \sum_{y \in \cu^\circ \cap 3^n\Zd} \left\langle \nabla \tilde{v}, \a \nabla \tilde{v} \right\rangle_{\C_*(\cu_m) \cap \left( y + \cu_n \right) } -  \frac{1}{|\cu^\circ|} \left\langle \nabla \tilde{v}, \a \nabla \tilde{v} \right\rangle_{\C_*(\cu_m) \cap \cu^\circ } \right| } \qquad & \\
& \leq  \frac{1}{|\cu^\circ|} \sum_{y \in \cu^\circ \cap 3^n\Zd} \sum_{e \in V_y \cap \C_*(\cu_m)} |\nabla \tilde{v}(e)|^2 \\
& \leq C \frac{|\cu_m|}{|\cu^\circ|} \sum_{y \in \cu^\circ \cap 3^n\Zd}  \left( 3^{-n} |p(y)|^2 +  3^{3n} \fint_{y + \cu_{n+1}} \left| \nabla\nabla \uhom(x) \right|^2 \, dx \right) \\
& \leq C 3^{-n} M^2 + C 3^{3n} \left(  \fint_{\cu^\circ} \left| \nabla\nabla \uhom(x) \right|^2 \, dx \right) \\
& \leq C 3^{-n} M^2 + C 3^{3n - 2l} M^2.
\end{align*}
Here we used~\eqref{int.H2estimateuhom} to get the last line. 
Recall that we defined $m = \left\lceil \frac{ m}{4}\right\rceil$ and $l = \left\lceil \frac{ 3m}{4}\right\rceil$ and so, for some $\alpha > 0$, 
\begin{equation*}
3^{-n} \leq C 3^{-m\alpha} \mbox{ and }  3^{3n - 2l} \leq 3^{-m\alpha}.
\end{equation*}
This completes the proof of~\eqref{step42.est}.

\smallskip

\textit{Step 3.} In this final step, we estimate the contribution of $\uhom$ and $\tuhom$ in the boundary strip $ \cu_m \setminus \cu^\circ_{\mathrm{int}}$. The claim is that
\begin{equation}\label{2step32}
\fint_{\cu^\circ_{\mathrm{int}}}  \nabla \uhom(x) \cdot \ahom \nabla \uhom(x) \, dx \leq \fint_{\cu_m}  \nabla \uhom(x) \cdot \ahom \nabla \uhom(x) \, dx +  C  M^2 3^{-m\alpha}
\end{equation}
and
\begin{equation} \label{2step31}
\frac{1}{|\cu_m|} \left\langle \nabla \tuhom, \a \nabla \tuhom \right\rangle_{\C_*(\cu_m)}  \leq \frac{1}{|\cu^\circ_{\mathrm{int}}|} \left\langle \nabla \tilde{v}, \a \nabla \tilde{v} \right\rangle_{\C_*(\cu_m) \cap \cu^\circ_{\mathrm{int}}} + C  M^2 3^{-m\alpha}.
\end{equation}
To prove~\eqref{2step32}, we first recall the Meyer estimate~\eqref{M.estimateuhom} which gives us, for some $r := (d, \lambda, p) \in \left(2, \frac{p+2}{2}\right)$,
\begin{equation*}
\left( \fint_{\cu_m} |\nabla \uhom(x)|^r \, dx \right)^{\frac 1r} \leq C M.
\end{equation*}
This allows to compute
\begin{align*}
\lefteqn{\fint_{\cu^\circ_{\mathrm{int}}}  \nabla \uhom(x) \cdot \ahom \nabla \uhom(x) \, dx - \fint_{\cu_m}  \nabla \uhom(x) \cdot \ahom \nabla \uhom(x) \, dx} \qquad & \\ &  \leq \frac{|\cu_m \setminus \cu^\circ_{\mathrm{int}}|}{|\cu|} \fint_{\cu_m}  |\nabla \uhom(x)|^2 \, dx + \frac{1}{|\cu_m|} \int_{\cu_m \setminus \cu^\circ_{\mathrm{int}}} |\nabla \uhom(x)|^2 \, dx \\
& \leq C 3^{l-n} M^2 + \left( \frac{|\cu_m \setminus \cu^\circ_{\mathrm{int}}|}{|\cu_m|} \right)^{1 - \frac 2r} \left( \frac{1}{|\cu_m|}  \int_{\cu_m \setminus \cu^\circ_{\mathrm{int}}} |\nabla \uhom(x)|^r \, dx \right)^\frac 2r \\
& \leq C M^2 3^{-m \alpha} + 3^{(l-n) \left( 1 - \frac 2r \right)}\left(  \fint_{\cu_m} |\nabla \uhom(x)|^r \, dx \right)^\frac 2r \\
& \leq  C M^2 3^{-m \alpha} .
\end{align*}
This completes the proof of~\eqref{2step32}. 

\smallskip

The proof of~\eqref{2step31} follows from a similar computation but we need to prove the following discrete estimates.
\begin{equation} \label{step52.est}
\frac{1}{|\cu_m|} \sum_{x \in \C_*(\cu_m) \cap \left( \cu_m \setminus \cu^\circ_{\mathrm{int}}\right)} |\nabla \tuhom \indc_{\lbrace \a \neq 0 \rbrace}|^2(x)  \leq  C M^2 3^{-m \alpha}.
\end{equation}
and
\begin{equation}\label{step522.est}
\frac{1}{|\cu_m|} \sum_{x \in \C_*(\cu_m)} |\nabla \tuhom \indc_{\lbrace \a \neq 0 \rbrace}|^2(x)  \leq  C M^2 .
\end{equation}
We will only prove~\eqref{step52.est}. The proof of~\eqref{step522.est} is almost the same and can be easily extracted from the proof of~\eqref{step52.est}.

\smallskip

For $x,y \in \C_*(\cu_m)$ such that $x \sim y$ and $\a(\{ x , y \}) \neq 0$, we compute
\begin{multline*}
\nabla \tuhom((x,y)) \\
 = \eta(y) \nabla \tilde{v}((x,y)) + (1- \eta(y)) \nabla \uhom ((x,y)) + \nabla \eta((x,y)) ( \tilde{v}(x) - \uhom (x)).
\end{multline*}
Thus to prove~\eqref{step52.est} it is sufficient to prove the following three estimates
\begin{enumerate}
\item An estimate on $\nabla \tilde{v}$:
\begin{equation*}
\frac{1}{|\cu_m|} \sum_{x \in \C_*(\cu_m) \cap \left( \cu^\circ \setminus \cu^\circ_{\mathrm{int}}\right) } |\nabla \tilde{v} \indc_{\lbrace \a \neq 0 \rbrace}|^2(x) \leq C M^2  3^{-m \alpha}.
\end{equation*}
\item An estimate on $\nabla \uhom$:
\begin{equation*}
\frac{1}{|\cu_m|} \sum_{x \in \C_*(\cu_m) \cap \left( \cu_m \setminus \cu^\circ_{\mathrm{int}}\right) } |\nabla \uhom \indc_{\lbrace \a \neq 0 \rbrace}|^2(x) \leq C M^2  3^{-m \alpha}.
\end{equation*}
\item An estimate on $ \tilde{v}- \uhom $:
\begin{equation*}
\frac{3^{-2l}}{|\cu_m|} \sum_{x \in \C_*(\cu_m) \cap \cu^\circ } ( \tilde{v}(x) - \uhom (x))^2 \leq  C M^2  3^{-m \alpha}.
\end{equation*}
\end{enumerate}
We prove the first estimate (1). For $y \in 3^n \Zd \cap \cu^\circ$, we have
\begin{align*}
\lefteqn{
\frac{1}{|\cu_n|} \sum_{x \in \C_*(\cu_m) \cap (y + \cu_n)} |\nabla v_y \indc_{\lbrace \a \neq 0 \rbrace} |^2(x)
} 
\ \ & \\ & \leq \frac{1}{|\cu_n|} \sum_{x \in \C_*(y + \cu_n)} |\nabla v_y \indc_{\lbrace \a \neq 0 \rbrace} |^2(x) + \frac{1}{|\cu_n|} \sum_{x \in  \C_*(\cu_m) \cap (y + \cu_n) \setminus \C_*(y + \cu_n)} |\nabla l_y \indc_{\lbrace \a \neq 0 \rbrace}|^2(x)\\
& \leq  \frac{1}{|\cu_n|} \left\langle \nabla v_y, \a \nabla v_y \right\rangle_{ \C_*(y + \cu_n)} + \frac{C}{|\cu_n|} \sum_{x \in  \C_*(\cu_m) \cap (y + \cu_n) \setminus \C_*(y + \cu_n)} |p(y)|^2 \\
& \leq \frac{1}{|\cu_n|} \left\langle \nabla l_y, \a \nabla l_y \right\rangle_{ \C_*(y + \cu_n)}  +C |p(y)|^2 \\
& \leq C |p(y)|^2.
\end{align*}
As in the second step, we prove that for every edges $e$ belonging to the cluster $\C_*(\cu_m)$ and connecting the triadic cube $y + \cu_n$ to another triadic cube of the form $y' + \cu_n$
\begin{equation*}
 |\nabla \tilde{v}(e)|^2 \leq |p(y)|^2 +  3^{4n} \fint_{y + \cu_{n+1}} \left| \nabla\nabla \uhom(x) \right|^2 \, dx.
\end{equation*}
Notice that there are at most $C 3^{(d-1)n}$ such edges since they must lie on the boundary of $y + \cu_n$. Combining the two previous displays yields
\begin{align*}
\lefteqn{
\frac{1}{|\cu_n|} \sum_{x \in \C_*(\cu_m) \cap (y + \cu_n)}  |\nabla \tilde{v}|^2 (x) 
} \qquad & \\
&\leq C |p(y)|^2 + C 3^{-n} |p(y)|^2 + C   3^{3n} \fint_{y + \cu_{n+1}} \left| \nabla\nabla \uhom(x) \right|^2 \, dx \\
																& \leq C |p(y)|^2 + C   3^{3n} \fint_{y + \cu_{n+1}} \left| \nabla\nabla \uhom(x) \right|^2 \, dx \\
																& \leq C\fint_{y + \cu_{n}} \left| \nabla \uhom(x) \right|^2 \, dx + C 3^{3n} \fint_{y + \cu_{n+1}} \left| \nabla\nabla \uhom(x) \right|^2 \, dx.
\end{align*}
Summing over $y \in 3^n \Zd \cap (\cu^\circ \setminus \cu^\circ_{\mathrm{int}})$ gives
\begin{multline*}
\frac{1}{|\cu_m|} \sum_{x \in \C_*(\cu_m) \cap \left( \cu^\circ \setminus \cu^\circ_{\mathrm{int}} \right)}  |\nabla \tilde{v}|^2 (x)  \leq \frac{C}{|\cu_m|} \int_{\cu^\circ \setminus \cu^\circ_{\mathrm{int}}} \left| \nabla \uhom(x) \right|^2 \, dx \\ + C \frac{3^{3n}}{|\cu_m|} \int_{ \cu^\circ \setminus \cu^\circ_{\mathrm{int}} + \cu_n} \left| \nabla\nabla \uhom(x) \right|^2 \, dx.
\end{multline*}
The first term of the right hand side can be estimated using the Meyer's estimate as in the proof of~\eqref{2step32}. We obtain
\begin{equation*}
\frac{1}{|\cu_m|} \int_{\cu^\circ \setminus \cu^\circ_{\mathrm{int}}} \left| \nabla \uhom(x) \right|^2 \, dx \leq C M^2 3^{-m \alpha}.
\end{equation*}
The second term of the right hand side can be estimated using the interior $H^2$ estimate in Proposition~\ref{MeyerplusH2}
\begin{align*}
 \frac{3^{3n}}{|\cu_m|}  \fint_{ \cu^\circ \setminus \cu^\circ_{\mathrm{int}} + \cu_n} \left| \nabla\nabla \uhom(x) \right|^2 \, dx & \leq  \frac{3^{3n}}{|\cu_m|} \fint_{ \cu^\circ \setminus \cu^\circ_{\mathrm{int}} + \cu_n} \left| \nabla\nabla \uhom(x) \right|^2 \, dx \\
																			& \leq  \frac{3^{3n}}{|\cu_m|}  \fint_{ \cu^\circ + \cu_n} \left| \nabla\nabla \uhom(x) \right|^2 \, dx \\
																			& \leq C 3^{3n - 2l} M^2 \\
																			& \leq C M^2 3^{-m\alpha}.
\end{align*}
The proof of (1) is complete.

To prove (2) we will prove the stronger result
\begin{equation*}
\frac{1}{|\cu_m|} \sum_{x \in \cu_m} |\nabla \uhom|^{r \wedge \frac{p+2}{2}}(x) \leq C M^{r \wedge \frac{p+2}{2}}  3^{-m\alpha}.
\end{equation*}
where $r > 2$ is the exponent which appears in the Meyer's estimate~\eqref{int.H2estimateuhom}. This implies (2) by the same argument as in the proof of~\eqref{2step32}. 
Let $x,z \in \Zd \cap \cu_m$ with $x \sim z$. We need to distinguish three cases.

\textit{Case 1: $x,z \in \intr(\cu_m)$.} If  $x,z \in \intr(\cu_m)$
\begin{equation*}
\nabla \uhom\left(\lbrace x,z \rbrace \right) = \uhom(z) - \uhom(x) = \int_0^1 \nabla \uhom(tx + (1-t)z) \cdot (x-z) \, dt.
\end{equation*}
Since $\uhom$ is $\ahom$-harmonic, $\nabla u$ is also $\ahom$-harmonic so it satisfies the mean value principle: for every $x \in \cu_m$ and every $R > 0$ such that $B(x,R) \subseteq \cu_m$,
\begin{equation*}
\nabla u = \det(\ahom)^{-1} \fint_{\ahom^{-1} B(x,R)} \nabla u (y) \, dy.
\end{equation*}
Denote for $x,z \in \intr \cu_m$,
\begin{equation*}
W_{x,z} := \left( x + \left(-\frac 12, \frac 12\right)^d \right) \bigcup \left( z + \left(-\frac 12, \frac 12\right)^d \right).
\end{equation*}
From this we deduce that, for every $x,z \in \intr \cu_m$ and every $t \in [0,1]$
\begin{equation*}
\left| \nabla \uhom(tx + (1-t)z)\right| \leq C \int_{W_{x,z}}  |\nabla u (y)| \, dy,
\end{equation*}
thus
\begin{equation*}
\left| \nabla \uhom\left(\lbrace x,z \rbrace \right) \right| \leq C \int_{W_{x,z}}   |\nabla u (y)|  \, dy.
\end{equation*}
By the Jensen inequality, we obtain
\begin{equation*}
\left| \nabla \uhom\left(\lbrace x,z \rbrace \right) \right|^{r \wedge \frac{p+2}{2}} \leq C \int_{W'_{x,z}} \left|  \nabla \uhom(y ) \right|^{r \wedge \frac{p+2}{2}} \, dy.
\end{equation*}

\textit{Case 2: $x,z \in \partial \cu_m$.}
\begin{equation*}
\nabla \uhom\left(\lbrace x,z \rbrace \right) = \nabla w \left(\lbrace x,z \rbrace \right).
\end{equation*}

\textit{Case 3:  $x \in \intr(\cu_m)$ and $z \in \partial \cu_m$.}
Without loss of generality, we assume that $x-z = \e_1$. Denote by $S_0$ the surface
\begin{equation*}
S_0 := z + \left\{ 0 \right\} \times \left(-\frac 14, \frac 14 \right)^{d-1}
\end{equation*}
and $S_1$ its translation of vector $\e_1$
\begin{equation*}
S_1 := S_0 + \e_1 = z + \left\{ 1 \right\} \times \left(-\frac 14, \frac 14 \right)^{d-1}.
\end{equation*}
By definition of $w $ we have for each $y \in S_0$,
\begin{equation*}
w (y) = w (z).
\end{equation*}
Since $\uhom = w $ on the boundary of $\partial \cu_m$, for each $y \in S_0$,
\begin{equation*}
\uhom(z) = \uhom(y).
\end{equation*} 
With this in mind we have
\begin{align*}
\lefteqn{
\left| \uhom(z) - \uhom(x)\right| 
} \qquad & \\
& = \left| \fint_{S_0} \uhom (y) \, dy - \uhom(x)\right| \\
								& \leq C \left| \int_0^1 \fint_{S_0} \nabla \uhom(y+ t\e_1) \, dy \, dt\right| 
								 + C   \left| \fint_{S_1} \uhom(y) \, dy - \uhom(x)\right| \\
								& \leq  C \int_0^1   \fint_{S_0}\left| \nabla \uhom(y + t\e_1) \right| \, dy \, dt +  C   \fint_{S_1} \left|  \uhom(y)- \uhom(x)\right|  \, dy ,
\end{align*}
or for each $y \in S_1$,
\begin{align*}
 \left|  \uhom(y)- \uhom(x)\right|& \leq  \left| \int_0^1 \nabla \uhom(ty + (1-t)x ) \, dt \right| \\
								& \leq \int_0^1  \left|  \nabla \uhom(ty + (1-t)x ) \right|  \, dt
\intertext{by the mean value property}
								& \leq C  \int_0^1 \int_{ x+(-\frac 12, \frac 12)^d}   \left|  \nabla \uhom(y ) \right| \, dy  \, dt \\
								& \leq  C \int_{ x+(-\frac 12, \frac 12)^d}   \left|  \nabla \uhom(y ) \right| \, dy.
\end{align*}
Denote by
\begin{equation*}
W'_{x,z} := \left( x + \left(-\frac 12, \frac 12\right)^d \right) \bigcup \left( \left( z + \left(-\frac 12, \frac 12\right)^d \right) \cap \cu_m \right).
\end{equation*}
The previous computation yields
\begin{equation*}
\left| \nabla \uhom\left(\lbrace x,z \rbrace \right) \right| \leq C \int_{W'_{x,z}} \left|  \nabla \uhom(y ) \right| \, dy.
\end{equation*}
We then apply the Jensen inequality to obtain
\begin{equation*}
\left| \nabla \uhom\left(\lbrace x,z \rbrace \right) \right|^{r \wedge \frac{p+2}{2}} \leq C \int_{W'_{x,z}} \left|  \nabla \uhom(y ) \right|^{r \wedge \frac{p+2}{2}} \, dy.
\end{equation*}
Summing over all the edges of $\cu_m$ gives
\begin{equation*}
\frac{1}{|\cu_m|} \sum_{x \in \cu_m} |\nabla \uhom|^{r \wedge \frac{p+2}{2}} (x)  \leq \fint_{\cu_m} \left|  \nabla \uhom(z ) \right|^{r \wedge \frac{p+2}{2}} \, dz + \frac{1}{|\cu_m|} \sum_{x \in \partial \cu_m} \left| \nabla w  \right|^{r \wedge \frac{p+2}{2}}  (x).
\end{equation*}
By Lemma~\ref{l.propwws} (ii) and a similar computation as in the proof of Lemma~\ref{l.propwws} (iii), we obtain
\begin{equation*}
\frac{1}{|\cu_m|} \sum_{x \in \partial \cu_m} \left| \nabla w  \right|^{r \wedge \frac{p+2}{2}}  (x) \leq C M^{r \wedge \frac{p+2}{2}}.
\end{equation*}
Combining the two previous displays yields
\begin{equation*}
\frac{1}{|\cu_m|} \sum_{x \in \cu_m} |\nabla \uhom|^{r \wedge \frac{p+2}{2}} (x)\leq C M^{r \wedge \frac{p+2}{2}} .
\end{equation*}

We now prove (3). Since $3^m \geq \M_t(\mathcal{Q})$, picking $t$ large enough, we obtain by applying the Poincar\'e inequality (which is a consequence of Proposition~\ref{p.sobolev} with $s=2$) combined with the bound on the $L^2$ norm of $v_y$ given by~\eqref{e.vupbound}: for every $y \in 3^n \Zd \cap \cu$,
\begin{align*}
\lefteqn{
\frac{3^{-2n}}{|\cu_n|} \sum_{x \in \C_*(\cu_m) \cap \left( y + \cu_n\right) } ( v_y(x) - l_y(x))^2 
} \qquad & \\
& \leq \frac{C}{|\cu_n|}  \sum_{x \in \C_*(\cu_m) \cap \left( y + \cu_n\right) } \left| \nabla (v_y - l_y)\indc_{\{ \a \neq 0 \}} \right|^2 (x) \\
																				& \leq  \frac{C}{|\cu_n|}  \sum_{x \in \C_*(\cu_m) \cap \left( y + \cu_n\right) } \left| \nabla v_y\indc_{\{ \a \neq 0 \}} \right|^2 (x) + C |p(y)|^2\\
																				& \leq C  |p(y)|^2.
\end{align*}
Since $l_y$ is also $\ahom$-harmonic we can apply the mean value principle to the function $l_y(x) -  \uhom(x)$ as in the proof of (2),
\begin{equation*} 
\frac{3^{-2l}}{|\cu_m|} \sum_{x \in \C_*(\cu_m) \cap \left( y + \cu_n\right) } ( l_y(x) -  \uhom(x))^2 \leq C 3^{-2l} \fint_{y+\cu_{n}} ( l_y(x) -  \uhom(x))^2 \, dx.
\end{equation*}
 Applying the Poincar\'e's inequality  twice and taking into account that $\fint_{y+  \cu_n} = 3^{-dn} \int_{y+  \cu_n} \neq \frac{1}{\mathrm{Leb}(\cu_n)}\int_{y+  \cu_n}$ by the conventions established at the beginning of this section then gives
\begin{align*}
\lefteqn{\frac{3^{-2l}}{|\cu_n|} \sum_{x \in \C_*(\cu_m) \cap \left( y + \cu_n\right) } ( l_y(x) -  \uhom(x))^2} \qquad & \\ & \leq C 3^{-2l} \fint_{y+\cu_{n}} ( l_y(x) -  \uhom(x))^2 \, dx  \\ 
																					& \leq C3^{2n-2l} \fint_{y+\cu_{n}} |p(y) - \nabla\uhom(x)|^2 \, dx + C  3^{-2n-2l} \fint_{y+\cu_{n}} |\uhom(x)|^2 \, dx   \\
																					&\leq C 3^{4n-2l} \fint_{y+\cu_{n}} | \nabla\nabla\uhom(x)|^2 \, dx + C 3^{-2l} |p(y)|^2 + C 3^{-2n-2l} \fint_{y+\cu_{n}} |\uhom(x)|^2 \, dx . 
\end{align*}
Combining the two previous displays and using that $l - n \geq \alpha m $ yields, for each $y \in 3^n \Zd \cap \cu$
\begin{multline*}
\frac{3^{-2l}}{|\cu_n|} \sum_{x \in \C_*(\cu_m) \cap \left( y + \cu_n\right) } ( v_y(x)-  \uhom(x))^2 \\
 \leq C 3^{2n} \fint_{y+\cu_{n}} | \nabla\nabla\uhom(x)|^2 \, dx + C 
|p(y)|^23^{-m\alpha} 
+ C 3^{-2n-2l} \fint_{y+\cu_{n}} |\uhom(x)|^2 \, dx.
\end{multline*}
Summing over $y \in 3^m \Zd \cap \cu^\circ$ gives
\begin{multline*} 
\frac{3^{-2l}}{|\cu_m|} \sum_{x \in \C_*(\cu_m) \cap \cu^\circ } ( v_y(x)-  \uhom(x))^2  \leq C \frac{3^{2n}}{|\cu_m|} \int_{\cu^\circ} | \nabla\nabla\uhom(x)|^2 \, dx  \\ + \frac{C3^{-m\alpha}}{|\cu_m|} \int_{\cu^\circ} | \nabla \uhom(x)|^2 \, dx + C \frac{3^{-2n-2l}}{|\cu_m|} \int_{\cu^\circ} |\uhom(x)|^2 \, dx
\end{multline*}
To estimate the last term on the right-hand side, we recall that $\uhom \in w + H^1_0(\cu_m)$ by applying the Poincar\'e inequality
\begin{align*}
\frac{1}{|\cu_m|} \int_{\cu^\circ} |\uhom(x)|^2 \, dx & \leq \fint_{\cu_m} |\uhom(x)|^2 \, dx \\
										& \leq C \fint_{\cu_m}  |w(x)|^2 \, dx + C 3^{2m}  \fint_{\cu_m}  |\nabla \uhom (x) - \nabla w(x)|^2 \, dx \\
										& \leq  C \fint_{\cu_m}  |w(x)|^2 \, dx + C 3^{2m}  \fint_{\cu_m}  |\nabla w(x)|^2 \, dx \\
										& \leq C M^23^{2m},
\end{align*}
by Lemma~\ref{l.propwws}[(iii) and (iv)]. The estimate stated in (3) is then a consequence of the interior $H^2$ estimate of Proposition~\ref{MeyerplusH2} and the fact that by definition of $l$ and $n$ we have $l + n - m \geq \alpha m$. This completes the proof of (3) and consequently the proof of Lemma~\ref{l.lemmaA2}(ii) is complete.

\smallskip

We now prove the $L^2$ estimate in (iii). 
By definition of $\uhom$, we have
\begin{equation*}
\tuhom(x)- \uhom(x) = \eta(x) (\tilde{v}(x)- \uhom(x)).
\end{equation*}
Using
the estimates above and $m- l \geq \alpha m$ gives
\begin{align*}
 \frac{3^{-2m}}{|\cu_m|} \sum_{x \in \C_*(\cu_m)} (\tuhom(x)- \uhom(x))^2 & \leq 3^{2(l-m)} \frac{3^{-2l}}{|\cu_m|} \sum_{x \in \C_*(\cu_m) \cap \cu^\circ } (\tilde{v}(x)- \uhom(x))^2\\
														& \leq  C  M^2 3^{-m\alpha}
\end{align*}
and the proof of (iii) is complete.
\end{proof}

\section{Regularity theory}
\label{s.regularity}

With Theorem~\ref{t.homog} now proved, the second main result of the paper, Theorem~\ref{t.reg}, essentially follows from the arguments introduced in the uniformly elliptic case in~\cite{AS} and elaborated in~\cite{AKM1,AKM2}. The main idea is that an appropriate quantitative homogenization result, like Theorem~\ref{t.homog}, can be thought of as a result about harmonic approximation: it implies that an arbitrary solution of the heterogeneous equation can be well-approximated by an $\ahom$-harmonic function. This allows us to transfer the regularity possessed by $\ahom$--harmonic functions to $\a$-harmonic functions, following the classical ideas from elliptic regularity theory (as in, for instance, the proofs of the Schauder estimates). Of course, the regularity we obtain will only be valid on length scales on which the harmonic approximation is valid, which in our situation is all scales larger than a fixed (random) length scale of size $\O_s(C)$. 

\smallskip

In this section, we abuse notation by letting~$\cu_m$ denote the \emph{continuum} cube 
\begin{equation*} \label{}
\left( -\frac12 3^m ,\frac123^m\right)^d \subseteq \Rd. 
\end{equation*}
It will be made clear from the context whether $\cu_m$ refers to the continuum cube or the discrete one. We further abuse notation by extending the coarsened function $\left[ u \right]_\Pa$ to be defined on a continuum domain by taking it to be constant on each unit cube of the form $z + \cu_0$ with $z\in\Zd$. To avoid confusion, here we will use the symbols $\int$ and $\fint$ only to denote integration with respect to Lebesgue measure on~$\Rd$ and write sums with $\sum$. 

\smallskip

The first step in the proof of Theorem~\ref{t.reg} is to post-process the error estimate proved in Theorem~\ref{t.homog} by writing it in a form that is more convenient for the analysis in this section. We put it in terms of the coarsened functions $\left[ u \right]_{\Pa}$ and emphasize harmonic approximation. The coarsening causes some technicalities to appear in the statement, so we emphasize that the second and third terms of the right side of~\eqref{e.harmapproxcoarse} can be considered to be ``small." 

\begin{lemma}
\label{l.harmapproxcoarse}
There exist~$s(d,\lambda,\p)>0$,~$\alpha(d,\lambda,\p)>0$,~$C(d,\lambda,\p)<\infty$ and a random variable $\X$ satisfying
\begin{equation} 
\label{e.Xharmapprox}
\X \leq \O_s\left( C \right) 
\end{equation}
such that, for every $m\in\N$ with  $3^m\geq \X$ and every $u \in \A(\C_*(\cu_{m+3}))$,
\begin{multline}
\label{e.harmapproxcoarse}
\inf_{w\in \overline{\A}(\cu_m)} \left\| \left[ u \right]_{\Pa} - w \right\|_{\underline{L}^2(\cu_m)} 
\\
\leq 
C3^{-m\alpha} \inf_{a\in\R} \left\| \left[ u \right]_\Pa - a \right\|_{ \underline{L}^2(\cu_{m})} 
+ C3^{-m\alpha} \sum_{k=1}^{n-1} 3^{-k\left(m+\frac32(k+1)\right)} \inf_{a\in\R} \left\| \left[ u \right]_\Pa - a \right\|_{ \underline{L}^2(\cu_{m+3k})} 	
\\ 
  + C 3^{-m\alpha-n\left(m+\frac32(n+1)\right)} \inf_{a\in\R} \left| \cu_{m+3n} \right|^{-\frac 1{2}} \left\| u - a \right\|_{ {L}^2(\C_*(\cu_{m+3n}))}.
\end{multline}
\end{lemma}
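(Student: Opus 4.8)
The plan is to derive Lemma~\ref{l.harmapproxcoarse} from Theorem~\ref{t.homog} (equivalently Proposition~\ref{prop.dirichlet}) by a telescoping iteration over triadic scales, combined with the Caccioppoli and Meyers estimates to convert $L^2$ oscillation of $u$ into $L^p$ control of $\nabla u\indc_{\{\a\neq0\}}$, and then back to $L^2$ control of $\left[u\right]_\Pa$ via Lemmas~\ref{l.coarseLs} and~\ref{l.coarsegrads}. The random variable $\X$ will be taken to be a suitable maximum of the minimal scales $\mathcal{M}_t(\Pa)$, $\mathcal{M}_t(\Qa)$, $\mathcal{N}$ and the scale from Proposition~\ref{p.meyers}, all of which satisfy $\O_s(C)$ bounds by~\eqref{e.Mtint},~\eqref{e.Mtint2}, Proposition~\ref{p.boundN} and Proposition~\ref{p.meyers}; then~\eqref{e.Xharmapprox} follows from~\eqref{e.Osums}.

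First I would fix $m$ with $3^m\geq\X$ and apply Theorem~\ref{t.homog} on the cube $\cu_{m+1}$ (or $\cu_{m+3}$, adjusting constants) to the solution $u$, after subtracting an arbitrary constant $a$ — both $u-a$ and the homogenized approximant are unchanged in gradient, so the Dirichlet-problem error estimate~\eqref{e.homog} gives, for the $\ahom$-harmonic $\uhom$,
\begin{equation*}
\left(\frac{1}{|\cu_m|}\sum_{x\in\C_*(\cu_m)}\left|u(x)-a-\uhom(x)\right|^2\right)^{\frac12}
\leq C3^{m(1-\alpha)}\left(\frac{1}{|\cu_{m+1}|}\sum_{x\in\C_*(\cu_{m+1})}\left|\nabla u\indc_{\{\a\neq0\}}\right|^p(x)\right)^{\frac1p}.
\end{equation*}
The right-hand side must be estimated by the $L^2$ oscillation of $u$ on a slightly larger cube: here the Meyers estimate (Proposition~\ref{p.meyers}), valid above the scale $\mathcal{M}_t(\Ra)$, upgrades $L^2$ to $L^{2+\ep}\supseteq L^p$ (if $p$ is close enough to $2$; for general $p>2$ one iterates the reverse-H\"older/Meyers chain, which costs a bounded number of extra triadic scales and explains the appearance of $\cu_{m+3}$ and of the higher-scale terms in~\eqref{e.harmapproxcoarse}), and then Caccioppoli (Lemma~\ref{l.caccioppoli}) turns $\|\nabla u\indc_{\{\a\neq0\}}\|_{\underline L^2}$ into $3^{-m}\inf_a\|u-a\|_{\underline L^2}$ on one scale larger. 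To pass from $u$ to $\left[u\right]_\Pa$ on the left, I use Lemma~\ref{l.coarseLs}: the difference $\|u-\left[u\right]_\Pa\|_{L^2}$ is controlled by the weighted $L^2$ norm of $\nabla u\indc_{\{\a\neq0\}}$ with weights $\size(\cu_\Pa)^{2d}$, which above the scale $\mathcal{M}_{t}(\Pa)$ is bounded by $\Lambda_t(\cu,\Pa)\leq C$ times the same gradient energy, again absorbed by Caccioppoli; and the $\ahom$-harmonic function $\uhom$ already lies in $\overline{\A}(\cu_m)$ (or is approximated by an element of it up to a negligible error from extending the coarsening to the continuum). Assembling these, and using~\eqref{e.homogbnd} to bound $\inf_a\|\uhom-a\|_{\underline L^2(\cu_m)}$ by $\inf_a\|\left[u\right]_\Pa-a\|_{\underline L^2}$ up to the $C3^{-m\alpha}$ factor, yields the first term on the right of~\eqref{e.harmapproxcoarse}.

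The second and third terms arise precisely from iterating the Meyers/reverse-H\"older step when $p$ is large: each application gains a fixed power of the scale but requires passing to a cube three triadic steps larger, and the gradient energy on $\cu_{m+3k}$ is converted by Caccioppoli into $3^{-(m+3k)}\inf_a\|u-a\|_{L^2(\cu_{m+3k})}$, which after normalization and coarsening produces exactly the $3^{-k(m+\frac32(k+1))}$ weights; the final term keeps the genuinely microscopic (un-coarsened) $L^2$ norm at the top scale $\cu_{m+3n}$ because one cannot coarsen there without a further application of Lemma~\ref{l.coarseLs}. The main obstacle is bookkeeping the interaction between (i) the non-uniform Sobolev/Meyers inequalities, which hold only above the various random minimal scales and whose constants carry the $\Lambda_t$ weights, and (ii) the need for an $L^p$ rather than $L^2$ gradient bound with $p$ a prescribed exponent larger than the Meyers exponent $2+\ep$ — this forces the multiscale telescoping and the somewhat baroque form of the right-hand side of~\eqref{e.harmapproxcoarse}. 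Everything else is a careful but routine combination of results already established: Theorem~\ref{t.homog}, Lemmas~\ref{l.caccioppoli},~\ref{l.coarseLs},~\ref{l.coarsegrads}, Propositions~\ref{p.sobolev},~\ref{p.meyers},~\ref{p.minimalscales}, and the stochastic-integrability calculus~\eqref{e.Osums}--\eqref{e.improves}.
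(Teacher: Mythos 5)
Your toolbox is the right one (Theorem~\ref{t.homog}, Caccioppoli, Meyers, Lemmas~\ref{l.coarseLs}--\ref{l.coarsegrads}, and the minimal scales for $\X$), and your sketch of the single-scale step --- homogenization error on $\cu_{m+1}$, Meyers plus Caccioppoli to convert the $L^p$ gradient norm into $3^{-m}\inf_a\|u-a\|_{\underline L^2(\C_*(\cu_{m+3}))}$, then coarsening to get an estimate for $[u]_\Pa$ --- matches the paper's Steps 1--2. But your explanation of where the second and third terms of~\eqref{e.harmapproxcoarse} come from is a genuine gap. You attribute them to an iteration of the Meyers/reverse-H\"older step, claiming it is forced because the exponent $p$ in Theorem~\ref{t.homog} may exceed the Meyers exponent $2+\ep$. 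This is not the mechanism and would not work: in the proof one is free to \emph{choose} $p:=2+\ep$ (Theorem~\ref{t.homog} holds for any fixed $p>2$, and the lemma's statement involves no prescribed $p$), so no upgrading beyond the Meyers exponent is ever needed; moreover a Gehring-type gain does not self-improve under iteration to arbitrary exponents, and even formally such an iteration would produce per-scale factors like $3^{-(m+3k)}$, not the quadratic-in-$k$ exponents $3^{-k(m+\frac32(k+1))}$.

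The actual source of the multiscale sum is different: after Steps 1--3 one has, for every admissible scale, the comparison
\begin{equation*}
\inf_{a\in\R}\left|\cu_m\right|^{-\frac12}\left\|u-a\right\|_{L^2(\C_*(\cu_m))}
\leq
\inf_{a\in\R}\left\|\left[u\right]_\Pa-a\right\|_{\underline L^2(\cu_m)}
+C3^{-m}\inf_{a\in\R}\left|\cu_m\right|^{-\frac12}\left\|u-a\right\|_{L^2(\C_*(\cu_{m+3}))},
\end{equation*}
i.e.\ the un-coarsened oscillation of $u$ at scale $m$ is controlled by the coarsened oscillation at scale $m$ plus a small multiple of the un-coarsened oscillation three triadic scales up. The right-hand side of the one-scale harmonic approximation estimate is naturally expressed in terms of the un-coarsened oscillation of $u$ (that is what Caccioppoli produces), so one iterates this comparison from scale $m$ up to scale $m+3n$; the prefactors multiply telescopically, $\prod_{j=1}^{k}C3^{-(m+3j)}=C^k 3^{-k\left(m+\frac32(k+1)\right)}$, which is exactly the weight in the second term, and the genuinely microscopic norm survives only at the top scale $\cu_{m+3n}$, which is the third term. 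Without this scale-by-scale replacement of $u$ by $[u]_\Pa$, your argument has no way to produce those weights or to terminate the recursion, so as written the proof of the full inequality~\eqref{e.harmapproxcoarse} is incomplete.
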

\begin{proof}
We take $p:= 2+\ep$, where $\ep(d,\lambda,\p)>0$ is as in the statement of Proposition~\ref{p.meyers}. We also take~$\alpha(d,\lambda,\p)>0$ to be the exponent given in Theorem~\ref{t.homog} with respect to the exponent~$p$ above (and which may be made smaller, if desired, in the course of the argument) and $\X$ to be the maximum of the random variable $\X$ appearing in the statement of Theorem~\ref{t.homog} and $\M_t(\Qa)+C'$ appearing in~\eqref{e.Meyersimpl} with $C'$ and $t$ are large constants depending on $(d,\lambda,\p)$ to be selected in the course of the argument. It is clear then that $\X$ satisfies~\eqref{e.Xharmapprox} for an exponent $s(d,\lambda,\p)>0$ and constant $C(d,\lambda,\p)>0$. 

\smallskip

\emph{Step 1.} We slightly tweak the statement of the error estimate. The claim is that there exists an exponent $p'(d,\lambda,\p)>2$ such that, 
for every $m\in\N$ with $3^m\geq \X$ and every $u \in \A(\C_*(\cu_{m+3}))$,
\begin{equation}
\label{e.tweaked}
\inf_{w\in \overline{\A}(\cu_m)} \left| \cu_m\right|^{-\frac1{p'}}\left\| u - w \right\|_{{L}^{p'}(\C_*(\cu_m))} 
\leq
C3^{-m\alpha} \inf_{a\in\R}  \left| \cu_m\right|^{-\frac1{2}}\left\| u - a \right\|_{{L}^2(\C_*(\cu_{m+3}))}.
\end{equation}
Fix $u\in\A(\cu_{m+2})$ and take $w = u_{\mathrm{hom}} \in \overline{\A}(\cu_{m+1})$ to be the $\ahom$-harmonic function given in the statement of Theorem~\ref{t.homog} for the domain $\cu=\cu_{m+1}$. Then the conclusion of Theorem~\ref{t.homog} gives the estimate
\begin{equation*} 
\left| \cu_m \right|^{-\frac12} \left\| u - w \right\|_{{L}^2(\C_*(\cu_{m+1}))} 
\leq C3^{m(1-\alpha)} 
\left| \cu_m \right|^{-\frac1p} \left\| \nabla u \indc_{\{\a\neq0\}} \right\|_{{L}^p(\cu_{m+1})}.
\end{equation*}
The Meyers estimate (Proposition~\ref{p.meyers}) and the Caccioppoli inequality (Lemma~\ref{l.caccioppoli}) yield
\begin{align} 
\label{e.meyercacc}
\left| \cu_m \right|^{-\frac1p} \left\| \nabla u \indc_{\{\a\neq0\}} \right\|_{{L}^p(\cu_{m+1})}
& \leq C\left| \cu_m \right|^{-\frac12} \left\| \nabla u \indc_{\{\a\neq0\}} \right\|_{{L}^2(\cu_{m+2})} 
\\ \notag
& \leq C3^{-m}\inf_{a\in\R}  \left| \cu_m\right|^{-\frac1{2}}\left\| u - a \right\|_{{L}^2(\C_*(\cu_{m+3}))}. 
\end{align}
The last two displays give us
\begin{equation}
\label{e.L2est}
\left\| u - w \right\|_{{L}^2(\C_*(\cu_{m+1}))} 
\leq C3^{-m\alpha}\inf_{a\in\R}  \left| \cu_m\right|^{-\frac1{2}}\left\| u - a \right\|_{{L}^2(\C_*(\cu_{m+3}))}.
\end{equation}
It remains to improve the norm on the left side from $L^2$ to $L^p$. It is clear from the construction in Section~\ref{s.dirichlet}, namely Lemma~\ref{l.propwws}(iii), and the fact that $w$ is harmonic that 
\begin{align} 
\label{e.wgradbound}
\left\| \nabla w \right\|_{L^\infty(\cu_m)} 
\leq
\left\| \nabla w \right\|_{\underline{L}^2(\cu_{m+1})} 
& \leq C\left\| \nabla u \indc_{\{\a\neq0\}} \right\|_{{L}^p(\cu_{m+1})} 
\\ \notag
& \leq C3^{-m} \inf_{a\in\R}  \left| \cu_m\right|^{-\frac1{2}}\left\| u - a \right\|_{{L}^2(\C_*(\cu_{m+3}))}. 
\end{align}
We used~\ref{e.meyercacc} again in the last line above. Now take $p' := \frac12(2+p)$ and apply the Sobolev-Poincar\'e inequality (Proposition~\ref{p.sobolev}) and then the H\"older inequality to deduce that, by taking $t(d,\lambda,\p)<\infty$ sufficiently large, 
\begin{align*} 
\label{}
\lefteqn{
 \left|\cu_m\right|^{-\frac1{p'}} \left\| u - w \right\|_{{L}^{p'}(\C_*(\cu_{m}))} 
 } \qquad & \\
 & \leq C 3^m \left( \frac1{|\cu_m|} \sum_{\cu \in\Pa, \ \cu\subseteq\cu_m} \size(\cu)^t \right)^{\frac1p - \frac1{p'}}   \left|\cu_m\right|^{-\frac1p}  \left\| \nabla (u-w) \indc_{\{\a\neq0\}} \right\|_{L^p(\C_*(\cu_{m}))}
  \\
 & \leq C \inf_{a\in\R}  \left| \cu_m\right|^{-\frac1{2}}\left\| u - a \right\|_{{L}^2(\C_*(\cu_{m+3}))}. 
 \end{align*}
The previous line and~\eqref{e.L2est} give us, by interpolation between $L^2$ and $L^{p'}$ the bound, for the exponent $p'':=\left( \frac12\left( \frac12 + \frac1{p'}\right) \right)^{-1}>2$, of
\begin{equation*} \label{}
 \left|\cu_m\right|^{-\frac1{p''}} \left\| u - w \right\|_{{L}^{p''}(\C_*(\cu_{m}))} \leq C3^{-m\alpha/2}  \inf_{a\in\R}  \left| \cu_m\right|^{-\frac1{2}}\left\| u - a \right\|_{{L}^2(\C_*(\cu_{m+3}))}. 
\end{equation*}
After redefining $\alpha$ to be slightly smaller, this yields~\eqref{e.tweaked}. 

\smallskip

\emph{Step 2.} We estimate the $L^2$ difference between $\left[ u \right]_\Pa$ and $w$ on the entire (continuum) cube $\cu_m$, using the estimate from the previous step. The claim is that, for every $m\in\N$ with $3^m\geq \X$ and every $u \in \A(\C_*(\cu_{m+3}))$,
\begin{equation} 
\label{e.coarsenharmonicapprox}
\inf_{w\in \overline{\A}(\cu_m)}  \left\| \left[ u \right]_\Pa - w \right\|_{\underline{L}^2(\cu_m)} 
\leq 
C3^{-m\alpha} \inf_{a\in\R}  \left| \cu_m\right|^{-\frac12}\left\| u - a \right\|_{{L}^2(\C_*(\cu_{m+3}))}.
\end{equation}
Taking $w$ as in the previous step and using that $ \left[ u \right]_\Pa$ is constant and equal to $u(\bar{z}(\cu))$ on every element $\cu$ of $\Pa$, we see that 
\begin{multline*} \label{}
\left\| \left[ u \right]_\Pa - w \right\|_{\underline{L}^2(\cu_m)}^2 
= \frac{1}{|\cu_m|} \sum_{\cu\in\Pa, \; \cu\subseteq\cu_m} \int_\cu \left| w(x) - u(\bar{z}(\cu)) \right|^2  \\
 \leq \frac{2}{|\cu_m|} \sum_{\cu\in\Pa, \; \cu\subseteq\cu_m} \left( |\cu| \cdot \left| w(\bar{z}(\cu)) - u(\bar{z}(\cu)) \right|^2 + \int_\cu \left| w(x) - w(\bar{z}(\cu)) \right|^2 \right).
\end{multline*}
The estimate for the second term inside the sum follows easily from~\eqref{e.wgradbound}:
\begin{multline*} \label{}
\frac{1}{|\cu_m|} \sum_{\cu\in\Pa, \; \cu\subseteq\cu_m}\int_\cu \left| w(x) - w(\bar{z}(\cu)) \right|^2\,dx \\
\leq \frac{1}{|\cu_m|} \sum_{\cu\in\Pa, \; \cu\subseteq\cu_m} \size(\cu)^2 \left\| \nabla w \right\|_{L^\infty(\cu_m)}^2 
\leq C3^{-2m} \inf_{a\in\R}  \left| \cu_m\right|^{-1}\left\| u - a \right\|_{{L}^2(\C_*(\cu_{m+3}))}^2.
\end{multline*}
The estimate for the first term follows from~\eqref{e.tweaked} and the H\"older inequality, after making the parameter $t(d,\lambda,\p)<\infty$ larger once again:
\begin{align*} \label{}
\lefteqn{
\frac{1}{|\cu_m|} \sum_{\cu\in\Pa, \; \cu\subseteq\cu_m} |\cu| \cdot \left| w(\bar{z}(\cu)) - u(\bar{z}(\cu)) \right|^2
} \qquad & \\
& \leq 
\left( \frac{1}{|\cu_m|} \sum_{\cu\in\Pa, \; \cu\subseteq\cu_m}  |\cu|^{\frac{p'}{p'-2}}  \right)^{\frac{p'-2}{p'}}
\left( \frac{1}{|\cu_m|} \sum_{\cu\in\Pa, \; \cu\subseteq\cu_m} \left| w(\bar{z}(\cu)) - u(\bar{z}(\cu)) \right|^{p'} \right)^{\frac{2}{p'}} \\
& \leq 
C \left( \frac{1}{|\cu_m|} \sum_{x\in \C_*(\cu_m)} \left| w(x) - u(x) \right|^{p'} \right)^{\frac{2}{p'}} \\
& \leq C3^{-2m\alpha} \inf_{a\in\R}  \left| \cu_m\right|^{-1}\left\| u - a \right\|_{{L}^2(\C_*(\cu_{m+3}))}^2.
\end{align*}
Combining the previous three displays yields~\eqref{e.coarsenharmonicapprox}. 

\smallskip

\emph{Step 3.} We compare $u$ and $\left[ u \right]_\Pa$. The claim is that there exists $p'(d,\lambda,\p)>2$ such that, 
for every $m\in\N$ with $3^m\geq \X$ and every $u\in \A\left( \C_*(\cu_{m+3})\right)$, 
\begin{equation} 
\label{e.coarsecmp}
\left| \cu_m \right|^{-\frac 1{p'}} \left\| u - \left[ u \right]_{\Pa} \right\|_{ {L}^{p'}(\C_*(\cu_{m}))}
\leq 
C3^{-m} \inf_{a\in\R} \left| \cu_m \right|^{-\frac 1{2}} \left\| u - a \right\|_{ {L}^2(\C_*(\cu_{m+3}))}.
\end{equation}
In fact, we can take $p':= \frac12(p+2)$. 
Then by~\eqref{e.coarseLs}, the H\"older inequality, and taking $t(d,\lambda,\p)<\infty$ to be large enough, we get
\begin{align*}
\lefteqn{
 \left|\cu_m\right|^{-1}\left\| u - \left[ u \right]_{\Pa} \right\|_{{L}^{p'}(\C_*(\cu_{m+1}))}^{p'}
} \qquad & \\ & 
\leq 
 \frac{C}{ \left| \cu_{m+1} \right|} \sum_{\cu \in \Pa,\, \cu \subseteq \cu_{m+1}}  \size(\cu)^{p'd} \int_{\cu}\left| \nabla u\indc_{\{\a\neq 0\}}\right|^{p'}(x)  \,dx
\\ & 
\leq C \left( \frac{1}{ \left| \cu_{m+1} \right|} \sum_{\cu\in\Pa, \cu\subseteq \cu_{m+1}}  \size(\cu)^{\frac{d(p'+1)p}{p-p'}}   \right)^{\frac{p-p'}{p}} 
 \left\| \nabla u \indc_{\{\a\neq 0\}} \right\|_{\underline{L}^p(\cu_{m+1})}^{p'}
 \\ & 
 \leq C \left\| \nabla u \indc_{\{\a\neq 0\}} \right\|_{\underline{L}^p(\cu_{m+1})}^{p'}.
\end{align*}
Combining this with~\eqref{e.meyercacc}, we get
\begin{equation*} \label{}
\left| \cu_m \right|^{-\frac 1{p'}} \left\| u - \left[ u \right]_{\Pa} \right\|_{ {L}^{p'}(\C_*(\cu_{m+1}))}
\leq
C3^{-m} \inf_{a\in\R} \left| \cu_m \right|^{-\frac 1{2}}  \left\| u-a\right\|_{ {L}^2(\C_*(\cu_{m+3}))}.
\end{equation*}
This yields~\eqref{e.coarsecmp}.

\smallskip

\emph{Step 4.} We complete the proof of the lemma by combining ingredients proved in the previous steps. According to~\eqref{e.coarsecmp} and the triangle inequality, for every $n\in\N$ with $3^n\geq \X$ and every $u\in \A\left( \C_*(\cu_{n+3})\right)$, 
\begin{align*}
\lefteqn{
 \inf_{a\in\R} \left| \cu_m \right|^{-\frac 1{2}} \left\| u - a \right\|_{ {L}^2(\C_*(\cu_{m}))}
}  \qquad & \\ & 
\leq 
 \inf_{a\in\R} \left| \cu_m \right|^{-\frac 1{2}} \left\| \left[ u \right]_\Pa - a \right\|_{ {L}^2(\C_*(\cu_{m}))}
+ \left| \cu_m \right|^{-\frac 1{2}} \left\| u - \left[ u \right]_\Pa \right\|_{ {L}^2(\C_*(\cu_{m}))} \\ & 
\leq 
 \inf_{a\in\R} \left\| \left[ u \right]_\Pa - a \right\|_{ \underline{L}^2(\cu_{m})}
 + C3^{-m} \inf_{a\in\R} \left| \cu_m \right|^{-\frac 1{2}} \left\| u - a \right\|_{ {L}^2(\C_*(\cu_{m+3}))}.
\end{align*}
An iteration of this inequality yields, for every $m,n\in\N$ with $3^m\geq \X$ and every  $u\in \A\left( \C_*(\cu_{m+3n})\right)$, the bound
\begin{align*}
\lefteqn{
 \inf_{a\in\R} \left| \cu_m \right|^{-\frac 1{2}} \left\| u - a \right\|_{ {L}^2(\C_*(\cu_{m}))}
}  \qquad & \\ & 
\leq 
 \inf_{a\in\R} \left\| \left[ u \right]_\Pa - a \right\|_{ \underline{L}^2(\cu_{m})}  +
 C \sum_{k=1}^{n-1} 3^{-\sum_{j=1}^k (m+3j)} \inf_{a\in\R} \left\| \left[ u \right]_\Pa - a \right\|_{ \underline{L}^2(\cu_{m+3k})}  \\ & \qquad 
+ C 3^{-\sum_{j=1}^n (m+3j)} \inf_{a\in\R} \left| \cu_{m+3n} \right|^{-\frac 1{2}} \left\| u - a \right\|_{ {L}^2(\C_*(\cu_{m+3n}))} \\ &
=  \inf_{a\in\R} \left\| \left[ u \right]_\Pa - a \right\|_{ \underline{L}^2(\cu_{m})}  +
 C \sum_{k=1}^{n-1} 3^{-k\left(m+\frac32(k+1)\right)} \inf_{a\in\R} \left\| \left[ u \right]_\Pa - a \right\|_{ \underline{L}^2(\cu_{m+3k})} 	
  \\ & \qquad 
  + C 3^{-n\left(m+\frac32(n+1)\right)} \inf_{a\in\R} \left| \cu_{m+3n} \right|^{-\frac 1{2}} \left\| u - a \right\|_{ {L}^2(\C_*(\cu_{m+3n}))}.
\end{align*}
Applying~\eqref{e.coarsenharmonicapprox}, we deduce that, for every $m,n\in\N$ with $3^m\geq \X$ and every  $u\in \A\left( \C_*(\cu_{m+3n})\right)$
\begin{align*}
\lefteqn{
\inf_{w\in \overline{\A}(\cu_m)}  \left\| \left[ u \right]_\Pa - w \right\|_{\underline{L}^2(\cu_m)} 
} \quad & \\ &
\leq 
C3^{-m\alpha} \inf_{a\in\R} \left\| \left[ u \right]_\Pa - a \right\|_{ \underline{L}^2(\cu_{m})} 
+ C3^{-m\alpha} \sum_{k=1}^{n-1} 3^{-k\left(m+\frac32(k+1)\right)} \inf_{a\in\R} \left\| \left[ u \right]_\Pa - a \right\|_{ \underline{L}^2(\cu_{m+3k})} 	
\\ & \qquad  
  + C 3^{-m\alpha-n\left(m+\frac32(n+1)\right)} \inf_{a\in\R} \left| \cu_{m+3n} \right|^{-\frac 1{2}} \left\| u - a \right\|_{ {L}^2(\C_*(\cu_{m+3n}))}.
\end{align*}
This is~\eqref{e.harmapproxcoarse}. 
\end{proof}

With the result of the previous lemma in mind, we next give an elementary real analysis lemma which formalizes the transfer of regularity from harmonic functions to functions which are well-approximated by harmonic functions on large scales. It is a variation of~\cite[Lemma 2.4]{AKM1}.

\begin{lemma}
\label{l.kiter}
Fix $A\geq 2$, $R\geq 2$, $\alpha>0$ and $u\in L^2(B_R)$. For each $k\in\N$ and $s\in (0,R]$, denote
\begin{equation*} \label{}
D_k(s):= \inf_{w\in \overline{\A}_k} \left\| u - w \right\|_{\underline{L}^2(B_s)}.
\end{equation*}
Assume that $X \in [1,A^{-1} R]$ and $E>0$ have the property that, for every $r\in \left[ X, A^{-1}R \right]$,
\begin{equation}
\label{e.harmapproxhypo}
\inf_{v \in \overline{\A}(B_r)} \left\| u - v \right\|_{\underline{L}^2(B_r)}
 \leq 
 r^{1-\alpha}\left( \sup_{Ar\leq s \leq R} \frac{D_0(s)}{s} + \frac{E}{r} \right). 
\end{equation}
Then, for each $k\in\N$, there exists a constant $C(k,A,\alpha,d,\lambda,\p)<\infty$ such that, for every $r\in \left[ X\vee C ,\frac12 R\right]$, 
\begin{equation}
\label{e.mesoregresult}
D_k(r) \leq C\left( \frac rR \right)^{k+1} D_k(R) + Cr^{1-\alpha} \left(  \frac{D_0(R)}{R} + \frac{E}{r} \right). 
\end{equation}
\end{lemma}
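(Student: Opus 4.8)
\textbf{Proof plan for Lemma~\ref{l.kiter}.}
The plan is to run a Campanato-type iteration, transferring decay of $D_k$ from the scale $R$ down to the scale $r$ by comparing, at each dyadic (or triadic) step, the function $u$ to its harmonic approximant and then to the Taylor polynomial of degree $k$ of that approximant. The key point is the classical fact that an $\ahom$-harmonic function $v$ on $B_{\theta^{-1}\rho}$ satisfies the interior estimate
\begin{equation*}
\inf_{p\in\overline{\A}_k} \left\| v - p \right\|_{\underline{L}^2(B_{\theta\rho})} \leq C\theta^{k+1} \inf_{p\in\overline{\A}_k}\left\| v-p\right\|_{\underline{L}^2(B_{\rho})},
\end{equation*}
valid for all $\theta\in(0,\tfrac12]$, which is a consequence of smoothness of $\ahom$-harmonic functions and the fact that $\overline{\A}_k$ is the space of $\ahom$-harmonic polynomials of degree $\leq k$ (here $\ahom$ is a constant-coefficient positive matrix, by~\eqref{e.ahombounds}). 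Combining this with the harmonic-approximation hypothesis~\eqref{e.harmapproxhypo} and the triangle inequality yields a one-step recursion of the schematic form
\begin{equation*}
D_k(\theta\rho) \leq C\theta^{k+1}D_k(\rho) + C\rho^{1-\alpha}\left( \sup_{A\rho\leq s\leq R}\frac{D_0(s)}{s} + \frac{E}{\rho}\right),
\end{equation*}
valid for $\rho\in[X, A^{-1}R]$, after fixing $\theta=\theta(k,d,\lambda,\p)$ small enough that $C\theta^{k+1}\leq\tfrac12$ (say).

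The first concrete step I would carry out is to reduce the comparison with $\overline{\A}(B_r)$ (a general $\ahom$-harmonic function) to comparison with $\overline{\A}_k$ by subtracting off a Taylor polynomial: if $v\in\overline{\A}(B_{\theta^{-1}\rho})$ realizes the infimum in~\eqref{e.harmapproxhypo} at scale $\theta^{-1}\rho$, then $D_k(\theta\rho) \leq \|u-v\|_{\underline{L}^2(B_{\theta\rho})} + \inf_{p\in\overline{\A}_k}\|v-p\|_{\underline{L}^2(B_{\theta\rho})}$, and the second term is controlled via the interior estimate above and then via $\inf_p\|v-p\|_{\underline{L}^2(B_\rho)}\leq \|v-u\|_{\underline{L}^2(B_\rho)} + D_k(\rho)$. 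One has to be slightly careful with the change of scales ($B_{\theta\rho}\subseteq B_\rho\subseteq B_{\theta^{-1}\rho}$) and absorb volume factors $\theta^{-d}$ into the constants, but this is routine. This produces the displayed one-step recursion.

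The main obstacle — and the part requiring the most care — is the iteration itself, because the error term $\sup_{A\rho\leq s\leq R} D_0(s)/s$ couples $D_k$ at scale $\rho$ to $D_0$ at \emph{all larger} scales, so one cannot simply iterate a scalar recursion. The standard device (as in~\cite[Lemma 2.4]{AKM1}) is: first apply the argument with $k=0$ to get a bound on $D_0(s)$ for all $s\in[X\vee C, \tfrac12 R]$ of the form $D_0(s)\leq C(s/R)D_0(R) + Cs^{1-\alpha}(D_0(R)/R + E/s)$ — here one uses that $\overline{\A}_0 = \R$ so $D_0(s)$ is itself ``almost monotone'' up to the volume normalization and the interior estimate with $k=0$ gives the linear-in-$\theta$ gain; this requires summing a geometric series in the iteration and using $\alpha>0$ and $s\geq C$ large to dominate cross terms. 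Then one substitutes this $D_0$ bound back into the error term, so that $\sup_{A\rho\leq s\leq R}D_0(s)/s \leq C(D_0(R)/R + E/\rho)$ (modulo the $\rho^{-\alpha}$-type losses), turning the $D_k$ recursion into an honest scalar one. Finally one iterates the $D_k$ recursion from $\rho=\tfrac12 R$ (or the largest triadic scale below it) down to $\rho=r$: writing $r=\theta^{j}\cdot(\tfrac12 R)$ approximately and summing,
\begin{equation*}
D_k(r) \leq (2C\theta^{k+1})^{j}D_k(\tfrac12 R) + C\sum_{i=0}^{j-1}(2C\theta^{k+1})^{i}(\theta^{j-i}\tfrac R2)^{1-\alpha}\left(\frac{D_0(R)}{R}+\frac{E}{r}\right),
\end{equation*}
and since $2C\theta^{k+1}\leq\tfrac12$ while the power $\theta^{(j-i)(1-\alpha)}$ decays, both the geometric factor $(2C\theta^{k+1})^{j}\simeq (r/R)^{?}$ and the sum can be bounded by $C(r/R)^{k+1}D_k(R) + Cr^{1-\alpha}(D_0(R)/R + E/r)$; one absorbs $D_k(\tfrac12 R)\leq CD_k(R)$ by another application of the interior estimate (or just monotonicity up to volume factors) and chooses $C$ large depending on $(k,A,\alpha,d,\lambda,\p)$. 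The condition $r\geq X\vee C$ is exactly what is needed so that every scale touched in the iteration lies in $[X,A^{-1}R]$ where~\eqref{e.harmapproxhypo} applies and so that the large-$s$ correction terms are genuinely small.

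One bookkeeping subtlety worth flagging: the hypothesis~\eqref{e.harmapproxhypo} is stated for $r\in[X,A^{-1}R]$ with the $A r$-lower-limit in the supremum, whereas the conclusion is stated for $r\in[X\vee C,\tfrac12 R]$; bridging the gap between $A^{-1}R$ and $\tfrac12 R$ is harmless since it only involves finitely many ($\lesssim\log A$) scales, on each of which the trivial bound $D_k(r)\leq CD_k(R)$ (adjusting the volume) suffices, and this is absorbed into the constant $C(k,A,\alpha,d,\lambda,\p)$.
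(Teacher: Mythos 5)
Your overall strategy — interior estimate for $\ahom$-harmonic functions, triangle inequality against the harmonic approximant from~\eqref{e.harmapproxhypo}, one-step recursion, then iteration — is the same as the paper's, and your Step-1-type reduction and the remark about bridging $[A^{-1}R,\tfrac12R]$ are fine. But there is a genuine gap at exactly the point you yourself flag as the main obstacle: your treatment of the coupling term $\sup_{A\rho\leq s\leq R}D_0(s)/s$. You propose to close the loop by ``applying the argument with $k=0$'' and summing a geometric series, on the grounds that the interior estimate with $k=0$ gives a linear-in-$\theta$ gain. That gain, however, is exactly cancelled by the normalization: the quantity you must control is $D_0(s)/s$, and the one-step inequality $D_0(\theta\rho)\leq C\theta D_0(\rho)+\dots$ becomes, after dividing by $\theta\rho$, $\tfrac{D_0(\theta\rho)}{\theta\rho}\leq C\tfrac{D_0(\rho)}{\rho}+\dots$ with a constant $C>1$ coming from the gradient/interior estimate for $\ahom$-harmonic functions. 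Iterating this over $\sim\log(R/r)$ scales produces a factor $C^{\#\mathrm{scales}}$, i.e.\ a positive power of $R/r$, so the $k=0$ recursion does not close by itself; there is no geometric series to sum.

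The missing idea (which is the content of the longest step of the paper's proof) is to control $D_0(s)/s$ through the \emph{degree-one} approximations: normalizing $w_{0,R}=0$, one bounds the slope $\left|\nabla w_{1,R}\right|\leq CD_0(R)/R$, shows that the slopes of the best affine approximants change slowly across scales, $\left|\nabla w_{1,\theta^nR}-\nabla w_{1,\theta^{n+1}R}\right|\leq C\bigl(\tilde D_1(\theta^nR)+\tilde D_1(\theta^{n+1}R)\bigr)$ with $\tilde D_1(s):=s^{-1}D_1(s)$, and telescopes to get $D_0(\theta^nR)/(\theta^nR)\leq C\bigl(D_0(R)/R+\sum_j\tilde D_1(\theta^jR)\bigr)$. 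The quantity $\tilde D_1$ genuinely contracts (gain $\theta^{2}$ versus normalization $\theta^{-1}$), and since the remaining $\sup D_0(s)/s$ term carries the small prefactor $C(\theta^nR)^{-\alpha}$, a downward induction over scales absorbs it once $r\geq X\vee C$ — this is precisely why the constant $C$ appears in the range of $r$. Without this mechanism your argument fails at the $k=0$ stage, and consequently the substitution of the $D_0$ bound into the general-$k$ recursion has no foundation. (A secondary, fixable point: in your final summation the factor $(2C\theta^{k+1})^{j}$ does not by itself yield $(r/R)^{k+1}$; one must iterate the normalized quantities $r^{-k}D_k(r)$, as the paper does, and even then the general-$k$ case requires the additional bookkeeping of~\cite[Lemma~2.4]{AKM1} to which the paper defers.)
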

\begin{proof}
Fix $k\in\N$. Throughout we denote by $C$ and $c$ positive constants which depend only on $(k,A,\alpha,d,\lambda,\p)$ and may vary in each occurrence. 

\smallskip

\emph{Step 1.} We show that, for every $r\in \left[ X, \frac12R\right]$ and every $s\in \left(0,\frac12r\right]$, we have
\begin{equation}
\label{e.triangletransfer0}
D_k(s)  \leq 
C\left( \frac sr \right)^{k+1} \inf_{w \in \overline{\A}_k} \left\| u - w \right\|_{\underline{L}^1(B_r)} 
+ C \left( \frac sr \right)^{-\frac d2} \inf_{v \in \overline{\A}} \left\| u - v \right\|_{\underline{L}^2(B_r)}. 
\end{equation}
Select $v$ so that 
\begin{equation*} \label{}
\left\| u - v \right\|_{\underline{L}^2(B_r)}
=
\inf_{v' \in \overline{\A}} \left\| u - v' \right\|_{\underline{L}^2(B_r)}.
\end{equation*}
Since $v$ is an $\ahom$-harmonic function, we have that
\begin{equation*} \label{}
\inf_{w \in \A_k} \left\| v - w \right\|_{{L}^\infty(B_s)}
\leq  C\left( \frac sr \right)^{k+1} \inf_{w \in \overline{\A}_k} \left\| v - w \right\|_{\underline{L}^1(B_r)}.
\end{equation*}
Using this and the triangle inequality twice, we find that 
\begin{align*}
D_k(s) 
& = \inf_{w \in \overline{\A}_k} \left\| u - w \right\|_{\underline{L}^2(B_s)} \\
& \leq \inf_{w \in \overline{\A}_k} \left\| v - w \right\|_{\underline{L}^2(B_s)} + \left\| u - v \right\|_{\underline{L}^2(B_s)}\\
& \leq C\left( \frac sr \right)^{k+1} \inf_{w \in \overline{\A}_k} \left\| v - w \right\|_{\underline{L}^1(B_r)}
 + C \left( \frac{|B_r|}{|B_s|} \right)^{\frac12} \left\| u - v \right\|_{\underline{L}^2(B_r)} \\
& \leq C\left( \frac sr \right)^{k+1} \inf_{w \in \overline{\A}_k} \left\| u - w \right\|_{\underline{L}^1(B_r)}
+ C\left(  \left( \frac sr \right)^{k+1} + \left( \frac{r}{s}\right)^{\frac d2} \right) \left\| u - v \right\|_{\underline{L}^2(B_r)} \\
& \leq C\left( \frac sr \right)^{k+1} \inf_{w \in \overline{\A}_k} \left\| u - w \right\|_{\underline{L}^1(B_r)}
+ C \left( \frac sr \right)^{-\frac d2} \left\| u - v \right\|_{\underline{L}^2(B_r)} \\
& = C\left( \frac sr \right)^{k+1} \inf_{w \in \overline{\A}_k} \left\| u - w \right\|_{\underline{L}^1(B_r)}
+ C \left( \frac sr \right)^{-\frac d2} \inf_{v' \in \overline{\A}} \left\| u - v' \right\|_{\underline{L}^2(B_r)}.
\end{align*}
This is~\eqref{e.triangletransfer0}. Note that it also implies
\begin{equation}
\label{e.triangletransfer}
D_k(s)  \leq 
C\left( \frac sr \right)^{k+1} D_k(r)
+ C \left( \frac sr \right)^{-\frac d2} \inf_{v \in \overline{\A}} \left\| u - v \right\|_{\underline{L}^2(B_r)}. 
\end{equation}

\smallskip

\emph{Step 2.} We organize the rest of the argument. Denote
\begin{equation*} \label{}
\tilde{D}_k(r):= r^{-k} D_k(r) = r^{-k}\inf_{w \in \overline{\A}_k} \left\| u - w \right\|_{\underline{L}^2(B_r)}.
\end{equation*}
We also take $w_{k,r} \in \overline{\A}_k$ such that 
\begin{equation*} \label{}
\left\| u - w_{k,r} \right\|_{\underline{L}^2(B_r)} = \inf_{w \in \overline{\A}_k} \left\| u - w \right\|_{\underline{L}^2(B_r)} = D_k(r).
\end{equation*}
By~\eqref{e.triangletransfer}, there exists $\theta(k,d,\lambda,\p)\in \left(0 ,\frac12 \right]$ such that, for every $r\in \left[X,\frac12R \right]$, 
\begin{equation*} \label{}
\tilde{D}_k(\theta r) \leq \frac12 \tilde{D}_k(r) + Cr^{-k}  \inf_{v \in \overline{\A}} \left\| u - v \right\|_{\underline{L}^2(B_r)}.
\end{equation*}
Using the harmonic approximation hypothesis~\eqref{e.harmapproxhypo}, we get 
\begin{equation}
\label{e.readytoiterate}
\tilde{D}_k(\theta r) \leq \frac12 \tilde{D}_k(r) + Cr^{1-k-\alpha} \left( \sup_{Ar\leq s \leq R} \frac{D_0(s)}{s} + \frac{E}{r} \right). 
\end{equation}
We will complete the proof of the lemma by iterating~\eqref{e.readytoiterate}. We first must take care of the case $k=0$ before handling general $k\in\N$. 

\smallskip

\emph{Step 3.} We prove~\eqref{e.mesoregresult} for $k=0$. That is, we claim that, for every $r\in \left[ X\vee C, \frac12R \right]$, 
\begin{equation} 
\label{e.k=0}
\frac{D_0(r)}{r} \leq C\left( \frac{D_0(R)}{R} + \frac{E}{r^{1+\alpha}} \right). 
\end{equation}
By adding a constant to $u$, we may suppose that $w_{0,R} = 0$. Then 
\begin{align*} \label{}
\left\| w_{1,R} \right\|_{\underline{L}^2(B_R)}
\leq \left\| u \right\|_{\underline{L}^2(B_R)} + \left\| u - w_{1,R} \right\|_{\underline{L}^2(B_R)}
= D_0(R) + D_1(R) \leq 2D_0(R)
\end{align*}
and therefore (note that $w_{1,r}$ is affine so that $\nabla w_{1,r}$ is a constant vector)
\begin{equation} 
\label{e.firstslope}
\left| \nabla w_{1,R} \right| \leq \frac{C}{R} D_0(R). 
\end{equation}
Using the triangle inequality, we find that, for every $n\in\N$ with $\theta^nR \geq X\vee C$, 
\begin{align*} \label{}
\left\| w_{1,\theta^nR} - w_{1,\theta^{n+1}R} \right\|_{\underline{L}^2\left(B_{\theta^{n+1}R}\right)}
& \leq \left\| w_{1,\theta^nR} - u \right\|_{\underline{L}^2\left(B_{\theta^{n+1}R}\right)}
+ \left\| u - w_{1,\theta^{n+1}R} \right\|_{\underline{L}^2\left(B_{\theta^{n+1}R}\right)}
\\ &
\leq \left( \frac{|B_{\theta^nR}|}{|B_{\theta^{n+1}R}|} \right)^{\frac12} D_1\left( \theta^nR \right) + D_1\left( \theta^{n+1}R \right) \\ & 
= \theta^{-\frac d2} D_1(\theta^nR) +  D_1\left( \theta^{n+1}R \right). 
\end{align*}
Hence
\begin{align*} \label{}
 \left| \nabla w_{1,\theta^n R} - \nabla w_{1,\theta^{n+1} R} \right| 
& \leq \frac{1}{\theta^{n+1}R}\left\| w_{1,\theta^nR} - w_{1,\theta^{n+1}R} \right\|_{\underline{L}^2\left(B_{\theta^{n+1}R}\right)} \\ 
& \leq C \left( \tilde{D}_1(\theta^nR) +  \tilde{D}_1\left( \theta^{n+1}R \right) \right).
\end{align*}
Summing this and using~\eqref{e.firstslope}, we deduce that, for every $n\in\N$ with $\theta^nR\geq X\vee C$, 
\begin{align*} \label{}
\left| \nabla w_{1,\theta^{n+1}R} \right| 
& \leq \left| \nabla w_{1,R} \right|  + \sum_{k=0}^n  \left| \nabla w_{1,\theta^k R} - \nabla w_{1,\theta^{k+1} R} \right|  \\
& \leq \frac{C}{R} D_0(R) +  C \sum_{k=0}^{n+1} \tilde{D}_1(\theta^kR).
\end{align*}
Since the triangle inequality gives us
\begin{equation*} \label{}
D_0(\theta^{n+1}R) \leq \theta^{n+1}R \left| \nabla w_{1,\theta^{n+1}R} \right|  + D_1(\theta^{n+1}R),
\end{equation*}
we obtain
\begin{equation} 
\label{e.boomzcha}
\frac{D_0(\theta^{n+1}R)}{\theta^{n+1}R} \leq C \left( \frac{D_0(R)}{R}+  \sum_{k=0}^{n+1} \tilde{D}_1(\theta^kR) \right).
\end{equation}
By an iteration of~\eqref{e.readytoiterate}, we get
\begin{equation*} \label{}
\tilde{D}_1(\theta^{k} R) \leq 2^{-k} \tilde{D}_1(R) + C\left( \theta^kR\right)^{-\alpha} \left( \sup_{A \theta^k \leq s \leq R} \frac{D_0(s)}{s} + \left( \theta^kR\right)^{-1}E \right)
\end{equation*}
and thus 
\begin{equation*} \label{}
\sum_{k=0}^{n+1} \tilde{D}_1(\theta^kR) \leq C\tilde{D}_1(R) +  C\left( \theta^nR\right)^{-\alpha} \left( \sup_{A \theta^{n+1} \leq s \leq R} \frac{D_0(s)}{s} + \left( \theta^nR\right)^{-1}E \right).
\end{equation*}
Combining the above with~\eqref{e.boomzcha} and using $\tilde{D}_1(R) \leq R^{-1}D_0(R)$ yields 
\begin{equation*} \label{}
\frac{D_0(\theta^{n+1}R)}{\theta^{n+1}R} \leq  C\left( \frac{D_0(R)}{R} +\left( \theta^{n+1}R\right)^{-\alpha} \left( \sup_{ \theta^{n+1} \leq s \leq R} \frac{D_0(s)}{s} + \left( \theta^{n+1}R\right)^{-1}E \right) \right).
\end{equation*}
If we take $C$ sufficiently large, we obtain that $\theta^nR\geq X \vee C$ implies $C(\theta^nR)^{-\alpha} \le CX^{-\alpha} \leq \frac14\theta^{1+\frac d2}$ and we get from this and the previous display that 
\begin{equation*} \label{}
\frac{D_0(\theta^{n+1}R)}{\theta^{n+1}R} \leq  \frac{C D_0(R)}{R} +\frac12 \sup_{\theta^n \leq s \leq R} \frac{D_0(s)}{s} + \frac{E}{(\theta^{n+1}R)^{1+\alpha}}.
\end{equation*}
This and an easy induction argument gives us, for all such $n\in\N$,
\begin{equation*} \label{}
\frac{D_0(\theta^{n}R)}{\theta^{n}R} \leq  \frac{C D_0(R)}{R} +  \frac{E}{(\theta^{n+1}R)^{1+\alpha}}.
\end{equation*}
This implies~\eqref{e.k=0}. 

\smallskip

\emph{Step 3.} The proof of~\eqref{e.mesoregresult} for general $k\in\N$. In view of \eqref{e.k=0}, we can improve~\eqref{e.readytoiterate} to the bound 
\begin{equation}
\label{e.morereadytoiterate}
\tilde{D}_k(\theta r) \leq \frac12 \tilde{D}_k(r) + Cr^{1-k-\alpha} \left( \frac{D_0(R)}{R} + \frac{E}{r} \right). 
\end{equation}
The details of the proof of~\eqref{e.mesoregresult} for general $k\in\N$ now follows very closely from the argument in Step~3 in the proof of~\cite[Lemma 2.4]{AKM1}. We omit the details. 
\end{proof}

Following~\cite{AKM1,AKM2}, we next show that Theorem~\ref{t.homog} and Lemma~\ref{l.kiter} imply a form of higher regularity for coarsenings of~$\a$-harmonic functions on mesoscopic scales. The following lemma can be compared to~\cite[Theorem 2.1]{AKM1}.

\begin{lemma}
\label{l.mesoreg}
There exist exponents $s(d)>0$, $\delta(d,\p,\lambda)>0$ and a random variable $\X$ satisfying
\begin{equation*} \label{}
\X \leq \O_s\left( C(d,\p,\lambda) \right)
\end{equation*}
and, for each $k\in\N$, a constant $C(k,d,\p,\lambda)<\infty$ such that, for every $R \ge 2\X$, $u \in \A(\C_\infty\cap B_R)$ and $r\in \left[ \X, \frac12 R \right]$,
\begin{multline} 
\label{e.mesoreg}
\inf_{w\in \overline{\A}_k} \left|B_r \right|^{-\frac12} \left\| u - w \right\|_{{L}^2(\C_\infty\cap B_r)} 
\\
\leq
C \left( \frac rR \right)^{k+1} \inf_{w\in \overline{\A}_k}  \left|B_R \right|^{-\frac12}  \left\| u - w \right\|_{{L}^2(\C_\infty\cap B_R)} 
+ Cr^{-\delta} \left( \frac rR \right) \left| B_R \right|^{-\frac12} \left\| u \right\|_{{L}^2(\C_\infty\cap B_R)}.
\end{multline}
\end{lemma}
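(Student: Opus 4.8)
\textbf{Proof proposal for Lemma~\ref{l.mesoreg}.}
The plan is to deduce the mesoscopic $C^{k,1}$-type regularity estimate by combining the harmonic approximation result of Lemma~\ref{l.harmapproxcoarse} with the real-analysis iteration lemma, Lemma~\ref{l.kiter}, applied to the coarsened function $\left[ u \right]_\Pa$, and then to translate the conclusion back from $\left[ u \right]_\Pa$ to $u$ itself using Lemma~\ref{l.coarseLs} together with the Meyers and Caccioppoli estimates. The random scale $\X$ will be taken to be the maximum of the random scales appearing in Lemma~\ref{l.harmapproxcoarse} (call it $\X_1$) and in Proposition~\ref{p.minimalscales} for a suitable exponent $t$, so that above $\X$ we simultaneously have the harmonic approximation bound and the control $\sup_{x\in\cu_m}\size(\cu_\Pa(x)) \le 3^{dm/(d+t)}$ and $\Lambda_t(\cu_m,\Pa)\le C$; the bound $\X\le\O_s(C)$ is then immediate from~\eqref{e.Xharmapprox} and~\eqref{e.minimalbound}.

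First I would verify the harmonic approximation hypothesis~\eqref{e.harmapproxhypo} of Lemma~\ref{l.kiter} for the function $\left[ u \right]_\Pa$ on dyadic/triadic balls. Starting from~\eqref{e.harmapproxcoarse}, the first term on the right is already of the form $r^{1-\alpha}\cdot r^{-1}\inf_a\|[u]_\Pa-a\|_{\underline{L}^2(\cu_m)}$, i.e.\ $r^{1-\alpha}D_0(r)/r$; the intermediate sum $\sum_{k=1}^{n-1}3^{-k(m+\frac32(k+1))}\inf_a\|[u]_\Pa-a\|_{\underline{L}^2(\cu_{m+3k})}$ telescopes favorably because the geometric decay $3^{-km}$ beats the growth of the $\underline{L}^2$ oscillation over the larger cubes, and each term is controlled by $\sup_{Ar\le s\le R}D_0(s)/s$ after absorbing the factors $3^{-km}$ into the gap between $r$ and $R$; the last term, involving $|\cu_{m+3n}|^{-1/2}\|u-a\|_{L^2(\C_*(\cu_{m+3n}))}$, is where one sets $3^{m+3n}\simeq R$ and absorbs it into the quantity $E := \sup_{\X\le s\le R}\|u\|_{\underline{L}^2(\C_\infty\cap B_s)}$ or, more precisely, into $R\cdot|B_R|^{-1/2}\|u\|_{L^2(\C_\infty\cap B_R)}$. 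This packaging step — choosing $n$ as a function of $m$ and $R$, switching between triadic cubes $\cu_m$ and Euclidean balls $B_r$ (at a harmless cost of fixed multiplicative constants), and comparing $\inf_a\|u-a\|_{L^2(\C_*(\cu))}$ with $\inf_a\|[u]_\Pa-a\|_{\underline{L}^2(\cu)}$ via~\eqref{e.coarseLs} and Caccioppoli — is the main bookkeeping obstacle, though it is entirely deterministic once $3^m\ge\X$.

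Having checked~\eqref{e.harmapproxhypo} for $\left[ u \right]_\Pa$ with some $E$ proportional to $|B_R|^{-1/2}\|u\|_{L^2(\C_\infty\cap B_R)}$, I would invoke Lemma~\ref{l.kiter} to conclude that, for every $r\in[\X\vee C,\frac12R]$,
\begin{equation*}
\inf_{w\in\overline{\A}_k}\left\|[u]_\Pa - w\right\|_{\underline{L}^2(B_r)}
\le C\left(\frac rR\right)^{k+1}\inf_{w\in\overline{\A}_k}\left\|[u]_\Pa-w\right\|_{\underline{L}^2(B_R)}
+ Cr^{1-\delta}\left(\frac{D_0(R)}{R} + \frac{E}{r}\right),
\end{equation*}
with $\delta=\alpha$ (possibly shrunk). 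The final step is to replace $\left[ u \right]_\Pa$ by $u$ on both sides. On the left, $\inf_w|B_r|^{-1/2}\|u-w\|_{L^2(\C_\infty\cap B_r)} \le \inf_w\|[u]_\Pa-w\|_{\underline{L}^2(B_r)} + |B_r|^{-1/2}\|u-[u]_\Pa\|_{L^2(\C_\infty\cap B_r)}$, and the last term is bounded using~\eqref{e.coarseLs}, then the Caccioppoli inequality~\eqref{e.caccioppoli} and the Meyers estimate~\eqref{e.Meyersimpl} (valid since $3^{\log_3 r}\ge\X\ge\M_t(\Qa)$), yielding a bound of order $r^{-1}$ times the energy, which after Caccioppoli on $B_{2r}$ is itself $\lesssim r^{-1}|B_{2r}|^{-1/2}\|u\|_{L^2}$ and hence of lower order; a similar comparison on $B_R$ handles the first term on the right, and the oscillation $\inf_a$ there is bounded by $\|u\|$ after subtracting the mean. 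Collecting terms and noting $D_0(R)/R \lesssim |B_R|^{-1/2}\|u\|_{L^2(\C_\infty\cap B_R)}$ and $E/r \lesssim r^{-1}|B_R|^{-1/2}\|u\|_{L^2(\C_\infty\cap B_R)}$ gives exactly~\eqref{e.mesoreg}. The only genuinely delicate point, beyond the index bookkeeping, is ensuring that every passage between $u$, $[u]_\Pa$, and their $L^2$ oscillations loses only powers of $r$ that are strictly smaller than the gain $r^{-\delta}$, which is guaranteed by choosing $t$ in the minimal-scale bound large enough that $\size(\cu_\Pa(x))$ contributes only an arbitrarily small power of $r$.
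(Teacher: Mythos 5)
Your proposal follows essentially the same route as the paper's proof: verify the hypothesis of Lemma~\ref{l.kiter} for the coarsened function $\left[ u \right]_{\Pa}$ via Lemma~\ref{l.harmapproxcoarse}, apply the iteration lemma, and then translate the conclusion back to $u$ using the coarsening comparisons (which rest on Lemma~\ref{l.coarseLs}, Caccioppoli and Meyers, exactly the ingredients behind the paper's estimate~\eqref{e.coarsecmp} and its Step 2 bound~\eqref{e.puttildeD}, including the $L^\infty$ gradient control for the optimal polynomial that you gesture at when bounding the scale-$R$ oscillation by $\|u\|$). This matches the paper's argument in both structure and substance.
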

\begin{proof}
We take $\X$ as in the proof of Lemma~\ref{l.harmapproxcoarse} and fix $R \ge 2\X$ and $u \in \A(\C_\infty\cap B_R)$. Note that, due to the definition of~$\X$, for every $r\geq \X$ and $m\in\N$ such that $B_r \subseteq \cu_m$, we have that $\C_\infty\cap B_r \subseteq \C_*(\cu_{m+1})$. As in the statement of Lemma~\ref{l.kiter}, we denote
\begin{equation*} \label{}
D_k(s):= \inf_{w\in\overline{\A}_k} \left\|  \left[ u \right]_{\Pa} - w \right\|_{\underline{L}^2(B_s)}. 
\end{equation*}
Also define
\begin{equation*} \label{}
E:= \frac1R \inf_{a\in\R} R^{-\frac d2} \left\| u - a\right\|_{L^2(\C_\infty\cap B_R)}. 
\end{equation*}
Note that if $r \leq \frac1{27} d^{-\frac12}R$, then $\cu_{m+3}\subseteq B_R$. Thus~\eqref{e.harmapproxcoarse} implies that, for every $r\in \left[\X,cR\right]$, 
\begin{equation*}
\inf_{w\in \overline{\A}(B_r)} \left\| \left[ u \right]_{\Pa} - w \right\|_{\underline{L}^2(B_r)} 
\leq 
Cr^{1-\alpha} \sup_{ r\leq s \leq R } \left( \frac{D_0(s)}{s} + \frac{E}{r} \right)
\end{equation*}
An application of Lemma~\ref{l.kiter} therefore gives us that, for every $r\in \left[ \X,cR\right]$,
\begin{equation} 
\label{e.mesoregforcoarse0}
D_k(r) \leq C\left( \frac rR \right)^{k+1} D_k(cR) + Cr^{1-\alpha}  \left( \frac{D_0(cR)}{cR} + \frac{E}{r} \right).
\end{equation}
Using~\eqref{e.triangletransfer0}, we also have that 
\begin{equation*} \label{}
D_k(cR) \leq  C\inf_{w \in \overline{\A}_k} \left\| \left[ u \right]_{\Pa} - w \right\|_{\underline{L}^1(B_{cR})}
\end{equation*}
(the second $c$ is larger) and therefore we deduce that 
\begin{equation} 
\label{e.mesoregforcoarse}
D_k(r) \leq C\left( \frac rR \right)^{k+1} \inf_{w \in \overline{\A}_k} \left\| \left[ u \right]_{\Pa} - w \right\|_{\underline{L}^1(B_{cR})}  + Cr^{1-\alpha}  \left( \frac{D_0(cR)}{cR} + \frac{E}{r} \right).
\end{equation}
The bound~\eqref{e.mesoregforcoarse} is close to the desired result. What is left to do is to rewrite it in terms of~$u$ rather than~$\left[ u \right]_{\Pa}$. To this end, it is useful to define
\begin{equation*} \label{}
\tilde{D}_k(s):= \inf_{w\in\overline{\A}_k} \left| B_s \right|^{-\frac12} \left\| u - w \right\|_{L^2(\C_\infty\cap B_s)}.
\end{equation*}
The required approximations are presented in the following two steps and then the conclusion in the third and final step. As usual, $C$ denotes a positive constant depending only on $(k,d,\p,\lambda)$ which may vary. 

\smallskip

\emph{Step 1.} We claim there exists $C<\infty$ such that, for every $r\in\left[ \X, C^{-1}R \right]$, 
\begin{equation}
\label{e.finalapprox1}
\tilde{D}_k(r) \leq C \left( D_k(Cr) + \frac{1}{r} \tilde{D}_0(Cr) \right).  
\end{equation}
Take $m\in\N$ such that $3^{m-1} \leq r \leq 3^{m}$. Observe that the condition on $r$, if $C$ is large enough, gives us that $3^m \geq \X$ and that $\cu_{m+4} \subseteq B_{R}$. We may therefore apply~\eqref{e.coarsecmp} to deduce 
\begin{align*} \label{}
\tilde{D}_k(r) 
& \leq C \inf_{w\in\overline{\A}_k} \left| \cu_m \right|^{-\frac12} \left\| u - w \right\|_{L^2(\C_\infty\cap \cu_m)} 
\\ &
\leq C \inf_{w\in\overline{\A}_k} \left| \cu_{m} \right|^{-\frac12} \left\| u - w \right\|_{L^2(\C_*(\cu_{m+1}))} 
\\ &
\leq C \inf_{w\in\overline{\A}_k} \left| \cu_{m} \right|^{-\frac12} \left\| \left[ u \right]_\Pa - w \right\|_{L^2(\C_*(\cu_{m+1}))} 
+ C 3^{-m} \inf_{a\in\R} \left| \cu_m \right|^{-\frac12}  \left\| u - a \right\|_{L^2(\C_* (\cu_{m+4}))} 
\\ & 
\leq C \left( D_k(Cr) + \frac1r \tilde{D}_0(Cr) \right),
\end{align*}
which confirms~\eqref{e.finalapprox1}.

\smallskip

\emph{Step 2.} We claim that there exists $C<\infty$ such that
\begin{equation}
 \label{e.puttildeD}
 \inf_{w \in \overline{\A}_k} \left\| \left[ u \right]_{\Pa} - w \right\|_{\underline{L}^1(B_{cR})}
 \leq 
C\left( \tilde{D}_k(R) + \frac{D_0(R)}{R} \right). 
\end{equation}
With $m$ as in Step~1 for $r=cR$, we compute, for any $w\in\overline{\A}_k$, 
\begin{align*} \label{}
\lefteqn{
 \left\| \left[ u \right]_{\Pa} - w \right\|_{\underline{L}^1(B_{cR})} 
 } \qquad & \\
& \leq C \left| \cu_m \right|^{-\frac12} \left\| \left[ u \right]_\Pa - w \right\|_{L^1(\cu_m)} 
\\ &
=   \frac{C}{\left| \cu_{m} \right|} \sum_{\cu\in\Pa, \, \cu\subseteq \cu_m} \int_\cu\left| u(\bar{z}(\cu))- w(x) \right|\,dx 
\\ &
\leq \frac{C}{\left| \cu_{m} \right|} \sum_{\cu\in\Pa, \, \cu\subseteq \cu_m}
\left( 
\left| \cu\right| \cdot \left| u(\bar{z}(\cu))- w(\bar{z}(\cu)) \right| + \int_\cu\left| w(x)- w(\bar{z}(\cu)) \right| \,dx
\right)
\\ & 
\leq C\left| \cu_{m} \right|^{-\frac12}\left\| u -w \right\|_{L^2(\C_\infty\cap \cu_{m+1})} + \left\| \nabla w \right\|_{L^\infty(\cu_m)}.
\end{align*}
To get the last line, we use H\"older's inequality and make the exponent $t$ in the definition of $\X$ larger, if necessary. We now take $w\in \overline{\A}_k$ to achieve the infimum in the definition of $\tilde{D}_k(R)$. This yields that 
\begin{equation*} \label{}
\inf_{w\in\overline{\A}_k}  \left\| \left[ u \right]_{\Pa} - w \right\|_{\underline{L}^1(B_{cR})} 
\leq
C\tilde{D}_k(R) + C\left\| \nabla w \right\|_{L^\infty(\cu_m)}.
\end{equation*}
It remains to show that 
\begin{equation} 
\label{e.nabw}
\left\| \nabla w \right\|_{L^\infty(\cu_m)} \leq CR^{-1} \tilde{D}_0(R).
\end{equation}
To see this, we note that since $w$ is a harmonic polynomial, we have 
\begin{equation*} \label{}
\left\| \nabla w \right\|_{L^\infty(\cu_m)} \leq C 3^{-m} \inf_{a\in\R} \left\| w - a\right\|_{\underline{L}^1(\cu_m)}.
\end{equation*}
By the Holder inequality, we have, for any $a\in\R$,
\begin{align*}
\left\| w - a\right\|_{\underline{L}^1(\cu_m)}
& 
\leq \frac{C}{|\cu_m|} \sum_{\cu\in\Pa,\, \cu\subseteq \cu_m} \left\| w - a\right\|_{{L}^1(\cu)} 
\\ & 
\leq \frac{C}{|\cu_m|} \sum_{x\in \C_\infty\cap \cu_m} \left( \left| w(x) - a\right| + \size(\cu_\Pa(x))^{d+2} \left\| \nabla w\right\|_{L^\infty(\cu_m)}  \right) 
\\ & 
\leq C | \cu_m|^{-1} \left\| w - a \right\|_{{L}^1(\C_\infty\cap \cu_m)} + C  \left\| \nabla w\right\|_{L^\infty(\cu_m)} 
\end{align*}
Combining these after optimizing over $a\in\R$, we get 
\begin{equation*} \label{}
\left\| \nabla w \right\|_{L^\infty(\cu_m)} \leq C 3^{-m}\inf_{a\in\R} | \cu_m|^{-1} \left\| w - a \right\|_{{L}^1(\C_\infty\cap \cu_m)} + C3^{-m}  \left\| \nabla w\right\|_{L^\infty(\cu_m)}.
\end{equation*}
Since $3^m\geq cR \geq C$, we can absorb the second term on the right side to get
\begin{equation*} \label{}
\left\| \nabla w \right\|_{L^\infty(\cu_m)} \leq \frac{C}{R} \inf_{a\in\R} | \cu_m|^{-1} \left\| w - a \right\|_{{L}^1(\C_\infty\cap \cu_m)}.
\end{equation*}
Since $w$ achieves the infimum in the definition of $\tilde{D}_k(R)$, we have
\begin{align*} \label{}
\inf_{a\in\R} | \cu_m|^{-1} \left\| w - a \right\|_{{L}^1(\C_\infty\cap \cu_m)} &
 \leq C\inf_{a\in\R}|B_{R}|^{-\frac12} \left\| w - a\right\|_{\underline{L}^2(B_{R})} \\
& \leq 
\tilde{D}_0(R) + \tilde{D}_k(R)
\leq
2\tilde{D}_0(R). 
\end{align*}
This completes the proof of~\eqref{e.nabw} and thus of~\eqref{e.puttildeD}.

\smallskip

\emph{Step 3.} The conclusion. Combining~\eqref{e.mesoregforcoarse},~\eqref{e.finalapprox1} and~\eqref{e.puttildeD}, we obtain that, for every $r\in\left[ \X,C^{-1} R\right]$,   
\begin{equation} 
\label{e.mesoregapproach}
\tilde{D}_k(r) \leq C\left( \frac rR \right)^{k+1} \tilde{D}_k(R) + Cr^{1-\delta} \frac{\tilde{D}_0(R)}{R}. 
\end{equation}
By adjusting the constants $C$ we obtain the same inequality for all $r\in\left[ \X,\frac12R\right]$. This yields~\eqref{e.mesoreg} and completes the proof. 
\end{proof}

Notice that Lemma~\ref{l.mesoreg} already gives the statement of Theorem~\ref{t.reg} in the case $k=0$ and gives us the Lipschitz estimate~\eqref{e.lipschitzestimate}. To complete the proof of Theorem~\ref{t.reg}, we need to do an induction on $k$.

\begin{proof}[{Proof of Theorem~\ref{t.reg}}]
Now that we have proved Lemma~\ref{l.mesoreg}, the proof of Theorem~\ref{t.reg} closely follows the argument of~\cite[Proposition 3.1]{AKM2} with only very minor (mostly notational) modifications, using Theorem~\ref{t.homog} and Lemma~\ref{l.mesoreg} in place of~\cite[Proposition 3.2]{AKM2} and~\cite[Proposition 3.3]{AKM2}, respectively. We therefore refer the reader to~\cite{AKM2} and do not repeat the argument here.  
\end{proof}

\appendix

\section{Multiscale Poincar\'e inequality}

The purpose of this appendix is to recall a useful inequality introduced in~\cite{AKM1}, modified here for the discrete lattice, which allows for controlling the $L^2$ norm of a function by the spatial averages of its gradient. 

\smallskip

In this appendix, we will deviate from the notation for cubes introduced in Section~\ref{s.partition} and used in the rest of the paper by denoting 
\begin{equation*} \label{}
\cu_m:= \left( -\frac 12 3^m , \frac 12 3^m \right)^d \subseteq \Rd. 
\end{equation*}
Thus~$\cu_m$ is an open subset of~$\Rd$ and not just a collection of integer lattice points. We will also reserve the symbols $\int$ and $\fint$ to denote integration with respect to Lebesgue measure on subsets of~$\Rd$, with discrete sums denoted by~$\sum$. 

\smallskip

The inequality we consider here is a refinement of the usual Poincar\'e inequality which asserts that, for a constant $C(d)<\infty$ and every $u\in H^1(\cu_m)$, 
\begin{equation}
\label{e.usualPoincare}
\fint_{\cu_m} \left| u(x) - \left( u \right)_{\cu_m} \right|^2\,dx \leq C 3^{2m} \fint_{\cu_m} \left| \nabla u(x) \right|^2\,dx.
\end{equation}
The discrete version of this inequality can be written
\begin{equation}
\label{e.discreteusualPoincare}
\sum_{x\in \Zd \cap \cu_m} \left| u(x) - \left( u \right)_{\cu_m} \right|^2 \leq C 3^{2m} \sum_{x\in \Zd\cap \cu_m} \left| \nabla u \right|^2(x).
\end{equation}
Here we denote averages by~$\left( u \right)_U:= \fint_U u(x)\,dx$ or~$\left( u \right)_U:= |\Zd\cap U|^{-1} \sum_{x\in \Zd\cap U} u(x)$, depending on whether $u$ denotes a continuum or discrete function (which will always be clear from the context). It is not possible to improve the scaling of the constant~$C3^{2m}$ on the right side of these inequalities for general functions, as we see by considering the case that~$u$ is affine. However, if the gradient $\nabla u$ of $u$ has spatial averages on large scales which are very small compared to the (normalized) $L^2$ norm of~$\left| \nabla u \right|$, then it is possible to improve the scaling of the constant. In other words, we can improve the scaling in the Poincar\'e inequality if we use the weaker~$H^{-1}$ norm on the right side rather than the~$L^2$ norm. This is, of course, the situation we often find ourselves in when considering highly oscillating functions. The following result, which we call the \emph{multiscale Poincar\'e inequality}, was proved in~\cite{AKM1}. 

\begin{proposition}[{\cite[Proposition 6.1]{AKM1}}]
\label{p.mspoincare.old}
Fix $n,m\in\N$ with $n<m$. Then there exists a constant $C(d)<\infty$ such that, for every $u\in H^1(\cu_m)$,
\begin{multline}
\label{e.multiscalepoincare0}
\left\| u - \left( u \right)_{\cu_m} \right\|_{\underline{L}^2(\cu_m)} \\
\leq C 3^{n} \left\|\nabla u \right\|_{\underline{L}^2(\cu_m)}  
+  C \sum_{k=n}^{m-1} 3^k    \left( \frac{1}{\left| 3^k\Zd\cap \cu_m \right|} \sum_{y\in 3^k\Zd\cap \cu_m}\left| \left( \nabla u\right)_{y+\cu_{k}} \right|^2 \right)^{\frac12}.
\end{multline}
\end{proposition}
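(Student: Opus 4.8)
The plan is to establish this exactly as the discrete-lattice version of \cite[Proposition 6.1]{AKM1}, following the same two-step scheme: reduce the general Poincar\'e estimate to a bound on the difference between a mesoscopic average of $u$ and the global mean $\left(u\right)_{\cu_m}$, and then telescope this difference across the triadic scales $3^n,3^{n+1},\ldots,3^m$. First I would extract the term $C3^n\left\|\nabla u\right\|_{\underline{L}^2(\cu_m)}$ in the standard way: partition $\cu_m$ into the $3^{d(m-n)}$ congruent subcubes $z+\cu_n$, $z\in 3^n\Zd\cap\cu_m$, apply the ordinary Poincar\'e inequality~\eqref{e.usualPoincare} on each of them, and average. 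Writing $u^\sharp_n$ for the piecewise-constant function equal to $\left(u\right)_{z+\cu_n}$ on $z+\cu_n$, this gives
\begin{equation*}
\left\| u - u^\sharp_n \right\|_{\underline{L}^2(\cu_m)} \leq C3^n \left\| \nabla u \right\|_{\underline{L}^2(\cu_m)},
\end{equation*}
and since $\left(u^\sharp_n\right)_{\cu_m}=\left(u\right)_{\cu_m}$ it remains only to bound $\left\|u^\sharp_n-\left(u\right)_{\cu_m}\right\|_{\underline{L}^2(\cu_m)}$ by the sum on the right of~\eqref{e.multiscalepoincare0}.

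For the second step I would introduce the analogous coarsenings $u^\sharp_k$ at every scale $n\leq k\leq m$ (so $u^\sharp_m\equiv\left(u\right)_{\cu_m}$), write the telescoping identity $u^\sharp_n-\left(u\right)_{\cu_m}=\sum_{k=n}^{m-1}\bigl(u^\sharp_k-u^\sharp_{k+1}\bigr)$, and estimate each increment in $\underline{L}^2(\cu_m)$. On each scale-$3^{k+1}$ cube $Q$ the difference $u^\sharp_k-u^\sharp_{k+1}$ is a mean-zero piecewise-constant function taking the $3^d$ values $\left(u\right)_{z'+\cu_k}-\left(u\right)_Q$; using the elementary identity $\sum_i a_i^2=\tfrac1{2N}\sum_{i,j}(a_i-a_j)^2$ for mean-zero families, the contribution of $Q$ is controlled by $3^{2k}$ times an average of the quantities $\bigl|\left(\nabla u\right)_{y+\cu_k}\bigr|^2$ over scale-$3^k$ cubes meeting $Q$, once a difference of cube-averages of $u$ over neighbouring scale-$3^k$ cubes is expressed, via the fundamental theorem of calculus along a short chain of translates, in terms of those gradient averages (the bounded chain length being absorbed into the constant). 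Squaring, summing over $Q$ and then over $k$ with weight $3^k$, and finally reindexing the mesoscopic sums onto the grids $3^k\Zd\cap\cu_m$, yields~\eqref{e.multiscalepoincare0}.

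The step I expect to be the main obstacle is precisely this passage in the increment estimate from the raw gradient $\nabla u$ to its mesoscopic spatial averages $\left(\nabla u\right)_{y+\cu_k}$: a na\"ive application of Jensen's inequality to the sliding-cube integral only reproduces the ordinary Poincar\'e inequality, so one must argue as in \cite{AKM1} --- equivalently, smooth $\nabla u$ at scale $3^k$ and use that the increment $u*\Phi_{3^k}-u*\Phi_{3^{k+1}}$ is a convolution of $\nabla u$ against a mean-zero kernel of $L^1$-size $\lesssim 3^k$, whose $L^2$ norm sees $\nabla u$ only through its local averages. This is also why the bounded-domain version requires an even reflection of $u$ across $\partial\cu_m$ (there being no boundary condition on $u$), so that near the boundary layer the reflected copies' gradient averages stay comparable to those of $u$. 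The remaining bookkeeping --- the affine-interpolation passage between continuum integrals over $\cu_m$ and sums over $3^k\Zd\cap\cu_m$, and the comparison of the reflected averages with the original ones --- is routine, and the statement then follows verbatim from~\cite[Proposition 6.1]{AKM1}.
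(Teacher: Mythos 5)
The paper does not actually prove this proposition: it is imported verbatim from~\cite{AKM1}, and the only ``proof'' present is the citation (the appendix then derives the discrete version, Proposition~\ref{p.mspoincare}, from it by affine interpolation). The argument in~\cite{AKM1} is structured quite differently from yours: one first reduces to $\left\|u-\left(u\right)_{\cu_m}\right\|_{\underline{L}^2(\cu_m)}\leq C\left\|\nabla u\right\|_{\underline{H}^{-1}(\cu_m)}$ by solving a Neumann problem and integrating by parts, and then bounds the $H^{-1}$ norm of $f=\nabla u$ by duality: pairing $f$ against a test function $g$, telescoping the piecewise-constant coarsenings $S_kf$ of the \emph{gradient} across the triadic scales, and applying the ordinary Poincar\'e inequality to $g$ (not to $u$) on each scale-$3^{k+1}$ cube. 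The quantities $\bigl(\left|3^k\Zd\cap\cu_m\right|^{-1}\sum_{y}|\left(\nabla u\right)_{y+\cu_k}|^2\bigr)^{1/2}$ then appear directly as $\left\|S_k\nabla u\right\|_{\underline{L}^2(\cu_m)}$, and the conversion step you flag as the main obstacle simply never arises.

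Your route --- telescoping the coarsenings $u^\sharp_k$ of $u$ itself and then converting $\left(u\right)_{z_1+\cu_k}-\left(u\right)_{z_2+\cu_k}$ for adjacent subcubes into the grid averages $\left(\nabla u\right)_{y+\cu_k}$ --- has a genuine gap at exactly the step you identify, and neither of the fixes you sketch closes it. The identity $\left(u\right)_{z_2+\cu_k}-\left(u\right)_{z_1+\cu_k}=3^k\int_0^1\left(\partial_i u\right)_{z_1+t3^ke_i+\cu_k}\,dt$ produces a tent-weighted average of $\partial_i u$ over the union of the two cubes, which is \emph{not} controlled by $3^k\bigl(|\left(\nabla u\right)_{z_1+\cu_k}|+|\left(\nabla u\right)_{z_2+\cu_k}|\bigr)$: in $d=1$, take $u$ to be an upward bump on $z_1+\cu_k$ and a downward bump on $z_2+\cu_k$, vanishing at the three grid points; then both grid averages of $u'$ vanish while the difference of the $u$-averages is of order $3^k\left\|u'\right\|_{\underline{L}^2}$, so the increment estimate degenerates back to the ordinary Poincar\'e inequality. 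The convolution variant suffers from the same defect: for a mean-zero kernel $K$ of $L^1$-mass $\lesssim 3^k$, $\left\|\left(\nabla u\right)*K\right\|_{L^2}$ is controlled by averages of $\nabla u$ over \emph{arbitrary} translates of $\cu_k$, and these cannot be bounded by the averages over the fixed grid $3^k\Zd$ (same example); decomposing an off-grid cube into grid cubes of finer scales leaves boundary layers whose Cauchy--Schwarz contribution, once summed with the weights $3^k$, overshoots the right side of~\eqref{e.multiscalepoincare0}. The even reflection across $\partial\cu_m$ is likewise not part of the cited proof and is not needed (the cube is convex, so the Neumann problem has global $H^2$ regularity). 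To make this self-contained you should reproduce the duality argument of~\cite{AKM1} rather than telescope $u$ directly.
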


The purpose of this appendix is to explain how to use affine interpolation to derive from Proposition~\ref{p.mspoincare.old} the following discrete version of it. 

\begin{proposition}
\label{p.mspoincare}
Fix $n,m\in\N$ with $n\in \left[ \frac m2, m \right]$. Then there exists a constant $C(d)<\infty$ such that, for every $u: \Zd\cap \cu_m \to \R$,
\begin{multline}
\label{e.multiscalepoincare}
\left\| u - \left( u \right)_{\cu_m} \right\|_{\underline{L}^2(\Zd\cap \cu_m)} \\
\leq C 3^{n} \left\|\nabla u \right\|_{\underline{L}^2(\Zd\cap \cu_m)}  
+  C \sum_{k=n}^{m-1} 3^k    \left( \frac{1}{\left| 3^k\Zd\cap \cu_m \right|} \sum_{y\in 3^k\Zd\cap \cu_m}\left|  \frac{1}{\left| \cu_k \right|}  \left\langle \nabla u\right\rangle_{\Zd\cap(y+\cu_{k})} \right|^2 \right)^{\frac12}.
\end{multline}
\end{proposition}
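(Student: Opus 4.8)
The plan is to deduce the discrete multiscale Poincar\'e inequality~\eqref{e.multiscalepoincare} from its continuum counterpart, Proposition~\ref{p.mspoincare.old}, by a standard affine interpolation argument. Given a function $u:\Zd\cap\cu_m\to\R$, I first build a continuum representative $\bar{u}\in H^1(\cu_m)$ (or on a slightly smaller cube, to avoid boundary issues) by piecewise-affine interpolation of $u$ over a triangulation of $\cu_m$ whose vertices are the lattice points $\Zd\cap\cu_m$; for instance, using the Kuhn (Freudenthal) simplicial subdivision of each unit cube. The basic properties of this interpolation are: (i) $\|\bar{u}-(\bar{u})_{\cu_m}\|_{\underline{L}^2(\cu_m)}$ is comparable, up to a dimensional constant, to $\|u-(u)_{\Zd\cap\cu_m}\|_{\underline{L}^2(\Zd\cap\cu_m)}$; (ii) $\nabla\bar{u}$ is constant on each simplex, and $\|\nabla\bar{u}\|_{\underline{L}^2(\cu_m)}$ is comparable to $\|\nabla u\|_{\underline{L}^2(\Zd\cap\cu_m)}$, since the gradient of the affine piece on a simplex is a bounded linear combination of the nearest-neighbor differences of $u$ at its vertices; and (iii) most importantly, for each mesoscopic cube $y+\cu_k$ with $y\in 3^k\Zd$, the continuum average $(\nabla\bar{u})_{y+\cu_k}$ differs from the normalized discrete circulation $|\cu_k|^{-1}\langle\nabla u\rangle_{\Zd\cap(y+\cu_k)}$ by at most a boundary-layer error of relative size $O(3^{-k})$ times a local $L^2$ average of $|\nabla u|$.

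Once these comparison facts are in hand, I would apply~\eqref{e.multiscalepoincare0} to $\bar{u}$ and then translate each term back to the discrete setting via (i), (ii), (iii). The first term on the right of~\eqref{e.multiscalepoincare0} becomes $C3^n\|\nabla u\|_{\underline{L}^2(\Zd\cap\cu_m)}$ directly by (ii). For the sum, property (iii) lets me replace $(\nabla\bar{u})_{y+\cu_k}$ by $|\cu_k|^{-1}\langle\nabla u\rangle_{\Zd\cap(y+\cu_k)}$; the accumulated error terms take the form
\begin{equation*}
C\sum_{k=n}^{m-1}3^k\cdot 3^{-k}\left(\frac{1}{|3^k\Zd\cap\cu_m|}\sum_{y\in3^k\Zd\cap\cu_m}\fint_{y+\cu_{k+1}}|\nabla\bar{u}|^2\right)^{\frac12}\leq C\sum_{k=n}^{m-1}\|\nabla u\|_{\underline{L}^2(\Zd\cap\cu_m)}\leq Cm\,\|\nabla u\|_{\underline{L}^2(\Zd\cap\cu_m)},
\end{equation*}
which I can absorb into the main gradient term after observing that the hypothesis $n\in[\tfrac m2,m]$ gives $m\leq 2n\leq C3^n$, so $Cm\leq C3^n$. (This is exactly why the restriction $n\geq m/2$, which is absent from the continuum statement, appears in Proposition~\ref{p.mspoincare}.)

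The one genuinely delicate point, and the step I expect to require the most care, is property (iii): showing that the continuum mesoscale gradient average and the discrete circulation agree up to an $O(3^{-k})$ boundary error. The interior of $y+\cu_k$ contributes identically in both averages because $\int_{\text{simplex}}\nabla\bar{u}$ telescopes over the faces and matches the discrete sum of differences; the discrepancy is confined to the simplices meeting $\partial(y+\cu_k)$, a set of relative volume $O(3^{-k})$, on which one uses a crude pointwise bound $|\nabla\bar{u}|\lesssim$ (sum of adjacent lattice differences) together with Cauchy--Schwarz to produce the stated local $L^2$ average of $|\nabla u|$ over a slightly enlarged cube $y+\cu_{k+1}$. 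A clean way to organize this is to note that $\langle\nabla u\rangle_{\Zd\cap(y+\cu_k)}$ can itself be computed, by the discrete Stokes formula (as used elsewhere in the paper), as a sum of the boundary values of $[u]$-type quantities, and to compare that directly with the corresponding continuum flux integral for $\bar{u}$; the terms that do not cancel are all localized near $\partial(y+\cu_k)$. I would also handle the minor mismatch between $|\cu_k|=3^{dk}$ and $\mathrm{Leb}(\cu_k)=3^{dk}$ (here they coincide, since the appendix redefines $\cu_m$ as the open continuum cube of side $3^m$) and between $|3^k\Zd\cap\cu_m|$ and its continuum analogue, both of which introduce only constant factors. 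Assembling these estimates and using $m\leq C3^n$ yields~\eqref{e.multiscalepoincare}.
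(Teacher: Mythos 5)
Your overall route is the same as the paper's: extend $u$ to a continuum $H^1$ function (the paper uses the piecewise-constant extension mollified at scale $\tfrac12$ rather than a Kuhn triangulation, which is an immaterial difference), apply Proposition~\ref{p.mspoincare.old}, and convert each term back by comparing the continuum mesoscale averages $(\nabla\bar u)_{y+\cu_k}$ with the discrete quantities $|\cu_k|^{-1}\langle\nabla u\rangle_{\Zd\cap(y+\cu_k)}$ via Stokes, with an error supported in a unit-width layer around $\partial(y+\cu_k)$.

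However, your key claim (iii) is quantitatively wrong as stated, and the displayed accumulated-error bound $Cm\,\|\nabla u\|_{\underline L^2}$ does not follow. The per-cube discrepancy is controlled by $|\cu_k|^{-1}\sum_{x\in\text{layer}}|\nabla u|(x)$; since the layer has $\sim 3^{(d-1)k}$ points, Cauchy--Schwarz gives a bound of order $3^{-k/2}\bigl(\fint_{y+\cu_{k+1}}|\nabla u|^2\bigr)^{1/2}$, not $3^{-k}\bigl(\fint_{y+\cu_{k+1}}|\nabla u|^2\bigr)^{1/2}$. (You can write the error as $3^{-k}$ times the $L^2$ average over the \emph{layer}, but converting that to the full-cube average costs the factor $3^{k/2}$ back; the loss is genuine, as one sees by taking $\nabla u$ concentrated on the boundary layer of the mesoscale cube.) Consequently the accumulated error in the multiscale sum is $C\sum_{k=n}^{m-1}3^{k}\cdot 3^{-k/2}\|\nabla u\|_{\underline L^2}\leq C3^{m/2}\|\nabla u\|_{\underline L^2}$ rather than $Cm\|\nabla u\|_{\underline L^2}$, and the hypothesis $n\geq m/2$ is needed precisely to absorb $3^{m/2}\leq 3^{n}$ into the first term --- not merely to dominate $m$ by $3^n$. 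With this correction the argument closes and coincides with the paper's proof, which records exactly the $3^{-k/2}$ per-cube estimate.
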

\begin{proof}
We construct a smooth $\tilde{u} \in C^\infty(\cu_m)$ which is close to the discrete function $u$ by first extending $u$ to be constant on each cube of the form $z+ \cu_0$ with $z\in \Zd\cap \cu_m$ and then taking the convolution of it against a smooth approximation of the identity with supported contained in $B_{1/2}$. It follows that $\tilde{u}(z) = u(z)$ for each $z\in \Zd\cap\cu_m$ and, for each $z\in \Zd\cap \cu_m$,
\begin{equation*} \label{}
\sup_{x\in z+\cu_0} \left| \nabla \tilde{u}(x) \right| \leq C \sum_{y \in \Zd, |y-z|_\infty \leq 1} \left| \nabla u\right|\!(z). 
\end{equation*}
We then check from these facts, the discrete and continuum Stokes formulas and a similar calculation as in \eqref{particularcasei=1} that, for each $z\in\Zd\cap \cu_m$ and $k\in\N$ with $k<m$,
\begin{align*} \label{}
\left| 
\left( \nabla \tilde{u} \right)_{z+\cu_k} 
- \frac{1}{\left| \cu_k \right|} \left\langle \nabla {u} \right\rangle_{\Zd\cap(z+\cu_{k})} 
\right|
& \leq C \frac{1}{\left| \cu_k \right|} \sum_{y \in \Zd\cap \partial(z+\cu_k)}  \left| \nabla {u} \right|\!(y) \\
& \leq C 3^{-\frac k2} \left( \frac{1}{\left| \cu_k \right|} \sum_{y \in \Zd\cap \partial(z+\cu_k)} \left| \nabla {u} \right|^2\!(y)\right)^{\frac12}.
\end{align*}
Applying Proposition~\ref{p.mspoincare.old} to $\tilde{u}$ and using the above inequalities to rewrite the result in terms of $u$, we get~\eqref{e.multiscalepoincare}, as desired. 
\end{proof}

\small
\bibliographystyle{abbrv}
\bibliography{holes}

\end{document}